\theoremstyle{plain}
\newtheorem{thm}[equation]{Theorem}
\newtheorem{lem}[equation]{Lemma}
\newtheorem{prop}[equation]{Proposition}
\newtheorem{cor}[equation]{Corollary}
\theoremstyle{definition}
\newtheorem{defin}[equation]{Definition}
\theoremstyle{remark}
\newtheorem{remark}[equation]{Remark}
\numberwithin{equation}{subsection}
\def\sheafEnd{\mathcal{E} \hspace{-1pt} \mathit{nd}}
\def\sheafHom{\mathcal{H} \hspace{-1pt} \mathit{om}}
\def\Av{{\rm A} \hspace{-1pt} {\rm v}}
\newcommand{\bk}{\Bbbk}
\newcommand{\bbY}{\mathbb Y}
\newcommand{\bbX}{\mathbb X}
\newcommand{\bbZ}{\mathbb Z}
\newcommand{\calA}{\mathcal{A}}
\newcommand{\calB}{\mathcal{B}}
\newcommand{\calC}{\mathcal{C}}
\newcommand{\calD}{\mathcal{D}}
\newcommand{\calE}{\mathcal{E}}
\newcommand{\calF}{\mathcal{F}}
\newcommand{\calG}{\mathcal{G}}
\newcommand{\calH}{\mathcal{H}}
\newcommand{\calI}{\mathcal{I}}
\newcommand{\calJ}{\mathcal{J}}
\newcommand{\calK}{\mathcal{K}}
\newcommand{\calL}{\mathcal{L}}
\newcommand{\calM}{\mathcal{M}}
\newcommand{\calN}{\mathcal{N}}
\newcommand{\calO}{\mathcal{O}}
\newcommand{\calP}{\mathcal{P}}
\newcommand{\calQ}{\mathcal{Q}}
\newcommand{\calR}{\mathcal{R}}
\newcommand{\calS}{\mathcal{S}}
\newcommand{\calT}{\mathcal{T}}
\newcommand{\calU}{\mathcal{U}}
\newcommand{\calX}{\mathcal{X}}
\newcommand{\calY}{\mathcal{Y}}
\newcommand{\calZ}{\mathcal{Z}}
\newcommand{\wcalN}{\widetilde{\mathcal{N}}}
\newcommand{\wfrakg}{\widetilde{\mathfrak{g}}}
\newcommand{\wcalD}{\widetilde{\mathcal{D}}}
\newcommand{\frakn}{\mathfrak{n}}
\newcommand{\frakg}{\mathfrak{g}}
\newcommand{\frakt}{\mathfrak{t}}
\newcommand{\frakb}{\mathfrak{b}}
\newcommand{\frakh}{\mathfrak{h}}
\newcommand{\frakZ}{\mathfrak{Z}}
\newcommand{\frakp}{\mathfrak{p}}
\newcommand{\frakR}{\mathfrak{R}}
\newcommand{\frakS}{\mathfrak{S}}
\newcommand{\mbfC}{\mathbf{C}}
\newcommand{\mbfE}{\mathbf{E}}
\newcommand{\mbfF}{\mathbf{F}}
\newcommand{\mbfG}{\mathbf{G}}
\newcommand{\lotimes}{{\stackrel{_L}{\otimes}}}
\newcommand{\rcap}{{\stackrel{_R}{\cap}}}
\newcommand{\Gm}{{\mathbb{G}}_{\mathbf{m}}}
\newcommand{\Hom}{{\rm Hom}}
\newcommand{\Ext}{{\rm Ext}}
\newcommand{\Ind}{{\rm Ind}}
\newcommand{\Coind}{{\rm Coind}}
\newcommand{\For}{{\rm For}}
\newcommand{\Id}{{\rm Id}}
\newcommand{\Ker}{{\rm Ker}}
\newcommand{\Ima}{{\rm Im}}
\newcommand{\Coh}{{\rm Coh}}
\newcommand{\QCoh}{{\rm QCoh}}
\newcommand{\Mod}{{\rm Mod}}
\newcommand{\qis}{{\rm qis}}
\newcommand{\rad}{{\rm rad}}
\newcommand{\soc}{{\rm soc}}
\newcommand{\pro}{{\rm prop}}
\newcommand{\rk}{{\rm rk}}
\newcommand{\qc}{{\rm qc}}
\newcommand{\fg}{{\rm fg}}
\newcommand{\gr}{{\rm gr}}
\newcommand{\dg}{{\rm dg}}
\newcommand{\fd}{{\rm fd}}
\newcommand{\SL}{{\rm SL}}
\newcommand{\op}{{\rm op}}
\newcommand{\Shift}{{\rm Shift}}
\newcommand{\scra}{\mathscr{A}}
\newcommand{\scrb}{\mathscr{B}}
\newcommand{\rmi}{\rm{(i)}}
\newcommand{\rmii}{\rm{(ii)}}
\newcommand{\rmiii}{\rm{(iii)}}
\newcommand{\rmiv}{\rm{(iv)}}
\newcommand{\rmv}{\rm{(v)}}
\newcommand{\aff}{{\rm aff}}
\newcommand{\DGCoh}{{\rm DGCoh}}
\newcommand{\DGSh}{\rm{DGSh}}
\newcommand{\hdot}{^{\:\raisebox{3pt}{\text{\circle*{2}}}}}
\newcommand{\Fr}{{\rm Fr}}
\author{Simon Riche}
\address{Universit{\'e} Pierre et Marie Curie, Institut de Math{\'e}matiques de
  Jussieu (UMR 7586 du CNRS), {\'E}quipe d'Analyse Alg{\'e}brique, Paris, France.}
\email{riche@math.jussieu.fr}
\subjclass[2000]{Primary: 17B20; secondary: 16S37, 16E45.}
\thanks{\emph{Author's current address}: Laboratoire de Mathématiques de l'Université Blaise Pascal (UMR 6620 du CNRS), Campus Universitaire des Cézeaux, 63177 Aubière cedex, France.}
\title[Koszul duality and representations of Lie algebras]{Koszul
  duality and modular representations of semi-simple Lie algebras}
\begin{document}

\begin{abstract}

In this paper we prove that if $G$ is a connected, simply-connected, semi-simple algebraic group over an algebraically closed field of sufficiently large characteristic, then all the blocks of the restricted enveloping algebra $(\calU \frakg)_0$ of the Lie algebra $\frakg$ of $G$ can be endowed with a Koszul grading (extending results of Andersen, Jantzen, Soergel). We also give information about the Koszul dual rings. In the case of the block associated to a regular character $\lambda$ of the Harish-Chandra center, the dual ring is related to modules over the specialized algebra $(\calU \frakg)^{\lambda}$ with generalized trivial Frobenius character. Our main tool is the localization theory developed by Bezru\-kavnikov, Mirkovi{\'c}, Rumynin.

\end{abstract}

\maketitle

\tableofcontents

\section*{Introduction}

\setcounter{subsection}{0}

\subsection{}

Since \cite{BGS}, Koszul duality has proved to be a very useful and
powerful tool in Lie theory. In \cite{BGS}, Beilinson,
Ginzburg and Soergel prove that every block of the category $\calO$ of
a complex semi-simple Lie algebra is governed by a Koszul ring, whose
dual ring governs another subcategory of the category $\calO$. In this
paper we obtain, using completely different methods, counterparts of these results for modules over the Lie
algebra $\frakg$ of a connected, simply-connected, semi-simple
algebraic group $G$ over an algebraically closed field $\bk$ of sufficiently
large positive characteristic. In particular we prove that every
block of the category of finitely generated modules over the
restricted enveloping algebra $(\calU \frakg)_0$ is governed by a
Koszul ring, whose dual ring is also related to the representation
theory of $\frakg$.  The Koszulity of the regular blocks was already
proved (under the same assumptions) using different methods by Andersen, Jantzen and
Soergel in \cite{AJS}. The Koszulity for singular blocks, as well as
the information on the dual ring (in all cases) are new, however.

As in \cite{BGS} we use a geometric picture to prove Koszulity. Over
$\mathbb{C}$, the authors of \cite{BGS} use perverse sheaves and Beilinson-Bernstein localization; over $\bk$ we rather use coherent sheaves, and the localization theory in positive characteristic developed by
Bezrukavnikov, Mirkovi{\'c} and Rumynin.

\subsection{} \label{ss:introlkd}

The base of our arguments is a geometric interpretation, due to
Mirko\-vi{\'c}, of Koszul duality between symmetric and
exterior algebras. For simplicity, consider first the case of a finite dimensional
vector space $V$. Usual Koszul duality (see \cite{BGG, BGS, GKM}) relates modules (or dg-modules) over the symmetric algebra
${\rm S}(V)$ of $V$ and modules (or dg-modules) over the exterior
algebra $\Lambda(V^*)$ of the dual vector space. Geometrically, ${\rm
  S}(V)$ is the ring of functions on the variety $V^*$. As for
$\Lambda(V^*)$, there exists a quasi-isomorphism of
dg-algebras $\Lambda(V^*) \cong \bk \, \lotimes_{{\rm S}(V^*)} \, \bk$,
where $\Lambda(V^*)$ is equipped with the trivial differential, and
the grading such that $V^*$ is in degre $-1$. Hence $\Lambda(V^*)$ is
the ring of functions on the ``derived intersection'' $\{0\} \,
\rcap_{V} \, \{0\},$ considered as a dg-scheme. An
extension of the constructions of \cite{GKM} yields similarly, if $E$ is a
vector bundle over a non-singular variety $X$ and $F \subset E$ is a
sub-bundle, a Koszul duality
between a certain category of (dg)-sheaves on $F$ and a certain
category of (dg)-sheaves on the derived intersection $F^{\bot} \,
\rcap_{E^*} \, X,$ where $E^*$ is the dual vector bundle, $F^{\bot}
\subset E^*$ the orthogonal of $F$, and $X$ the zero
section of $E^*$ (see Theorem \ref{thm:thmlkd}).

We have proved a result of the same flavor with Mirkovi{\'c}, in a more general context, in \cite{MRlkd}. Let us point out, however, that Theorem \ref{thm:thmlkd} is \emph{not} a particular case of the main result of \cite{MRlkd}. In particular, our equivalence here is covariant, while the equivalence of \cite{MRlkd} is contravariant.

\subsection{}

Return to our connected, simply-connected, semi-simple group
$G$ over the field $\bk$ of (sufficiently large)
positive characteristic $p$. Let $\frakg$ be its Lie algebra, and $\calU
\frakg$ the enveloping algebra of $\frakg$. Fix a maximal torus $T$,
with Lie algebra $\frakt$, and a Borel subgroup $B \supset T$,
with Lie algebra $\frakb$. Let $U$ be the unipotent radical of $B$,
and $\frakn$ its Lie algebra. Let $W$ be the Weyl group. The center $\frakZ$ of $\calU \frakg$
has two parts: the Frobenius center $\frakZ_{{\rm Fr}}$,
isomorphic to ${\rm S}(\frakg^{(1)})$ (the supscript denotes
Frobenius twist), and the Harish-Chandra center
$\frakZ_{{\rm HC}} \cong {\rm S}(\frakt)^{(W,\bullet)}$. Hence a character of $\frakZ$ is given by a
``compatible pair'' $(\lambda,\chi)$ where $\lambda \in \frakt^*$, $\chi \in \frakg^* {}^{(1)}$. We only consider the case when
$\chi=0$, and $\lambda$ is integral. Let
$\Mod^{\fg}_{(\lambda,\chi)}(\calU \frakg)$ be the category of
finitely generated $\calU \frakg$-modules on which $\frakZ$ acts with
generalized character $(\lambda,\chi)$. Let $(\calU \frakg)^{\lambda}:=\calU \frakg \otimes_{\frakZ_{{\rm HC}}} \bk_{\lambda}$ be the specialization, and $\Mod^{\fg}_0((\calU \frakg)^{\lambda})$ the category of finitely generated modules on which $\frakZ_{{\rm HC}}$ acts via $\lambda$ and $\frakZ_{{\rm Fr}}$ acts with generalized character $\chi=0$. Similarly, let $(\calU \frakg)_0:=\calU
\frakg \otimes_{\frakZ_{{\rm Fr}}} \bk_0$ be the restricted enveloping algebra, and $\Mod^{\fg}_{\lambda}((\calU \frakg)_0)$ the category of finitely generated restricted modules on which $\frakZ_{{\rm HC}}$ acts with generalized character $\lambda$.

Fix a regular weight $\lambda \in X^*(T)$; we denote similarly the induced element of $\frakt^*$. Important results of
Bezru\-kavnikov, Mirkovi{\'c} and Rumynin (\cite{BMR, BMR2}) give geometric pictures for the derived categories $\calD^b
\Mod^{\fg}_{(\lambda,0)}(\calU \frakg)$ and $\calD^b
\Mod^{\fg}_{0}((\calU \frakg)^{\lambda})$, as follows. Denote by $\calB$ the flag variety, $\wcalN$ the Springer resolution, and $\wfrakg$ the Grothendieck resolution. Then \begin{equation}
  \label{eq:equivintro1} \left\{ \begin{array}{ccc} \calD^b
  \Coh_{\calB^{(1)}}(\wcalN^{(1)}) & \cong & \calD^b
  \Mod^{\fg}_0((\calU \frakg)^{\lambda}), \\ \calD^b
  \Coh_{\calB^{(1)}}(\wfrakg^{(1)}) & \cong & \calD^b
  \Mod^{\fg}_{(\lambda,0)}(\calU \frakg), \end{array} \right. \end{equation} As a first step we
derive from \eqref{eq:equivintro1} a localization theorem for restricted $\calU
\frakg$-modules with generalized character $\lambda$ (see Theorem \ref{thm:localizationfixedFr}): \[ \DGCoh((\wfrakg
  \, \rcap_{\frakg^* \times \calB} \, \calB)^{(1)}) \ \cong \ \calD^b
  \Mod^{\fg}_{\lambda}((\calU \frakg)_0), \] where $\wfrakg \, \rcap_{\frakg^* \times \calB} \, \calB$ is the derived intersection
of $\wfrakg$ and the zero section $\calB$ inside the trivial vector
bundle $\frakg^* \times
\calB$, and $\DGCoh((\wfrakg \, \rcap_{\frakg^* \times \calB} \,
\calB)^{(1)})$ is the deri\-ved category of coherent dg-sheaves on the
Frobenius twist of this intersection.

\subsection{} \label{ss:introkeypoint}

Under our assumptions there is an isomorphism of
$G$-equiva\-riant vector bundles $(\frakg^* \times \calB)^* \cong
\frakg^* \times \calB$, such that $\wfrakg$ identifies
with the orthogonal of $\wcalN \subset \frakg^* \times \calB$. Hence
the Koszul duality of \S \ref{ss:introlkd} yields a duality between certain
dg-sheaves on $\wcalN^{(1)}$ and on the derived intersection $(\wfrakg \,
\rcap_{\frakg^* \times \calB} \, \calB)^{(1)}$. Now observe that there is an inclusion $\calD^b
\Mod^{\fg}_0((\calU \frakg)^{\lambda}) \hookrightarrow \calD^b
\Coh(\wcalN^{(1)})$, induced by
\eqref{eq:equivintro1}. Using the results of \S \ref{ss:introlkd}, we construct categories $\calC^{\gr}$,
$\calD^{\gr}$ of $\Gm$-equivariant dg-sheaves, endowed with auto-equivalences $\langle 1
\rangle$ (the \emph{internal shift}), an equivalence $\kappa :
\calC^{\gr} \xrightarrow{\sim} \calD^{\gr}$ (Koszul duality), and a diagram \[
\xymatrix@R=16pt{ & \calC^{\gr} \ar[d]_-{\For} \ar[r]^-{\kappa}_-{\sim} &
  \calD^{\gr} \ar[d]^-{\For} \\ \calD^b \Mod^{\fg}_0((\calU
  \frakg)^{\lambda}) \ar@{^{(}->}[r] & \calD^b \Coh(\wcalN^{(1)}) &
  \calD^b \Mod^{\fg}_{\lambda}((\calU \frakg)_0). } \] In other words, we construct a ``Koszul duality'' which relates two categories of $\calU \frakg$-modules with central character determined by the pair $(\lambda,0)$: one in which the \emph{Frobenius character} is generalized, and one in which the \emph{Harish-Chandra character} is generalized (the other one being fixed).

Let $W_{\aff}':=W \ltimes X^*(T)$ be
the extended affine Weyl group. By a celebrated theorem of Curtis (\cite{CUR}) and by the decription of
the Harish-Chandra center $\frakZ_{{\rm HC}}$, the simple
objects in the categories $\Mod^{\fg}_0((\calU \frakg)^{\lambda})$ and
$\Mod^{\fg}_{\lambda}((\calU \frakg)_0)$ are the (restrictions of the) simple
$G$-modules $L(\mu)$ for $\mu \in \bbX$ dominant restricted, in the orbit of
$\lambda$ under the dot-action of the extended affine Weyl group
$W_{\aff}'$. The category $\Mod^{\fg}_{\lambda}((\calU \frakg)_0)$ is
the category of finitely generated modules over the finite dimensional
algebra $(\calU \frakg)_0^{\hat{\lambda}}$ (the block of $(\calU
\frakg)_0$ associated to $\lambda$). We denote by $P(\mu)$ the
projective cover of $L(\mu)$ in this category. The objects $L(\mu)$
can be lifted to the category $\calC^{\gr}$, uniquely up to a shift. The same is true for the objects
$P(\mu)$ and the category $\calD^{\gr}$.

Assume $\lambda$ is in the fundamental alcove. Consider $\tau_0:=t_{\rho}
\cdot w_0 \in W_{\aff}'$, where $t_{\rho}$ is the translation by the half sum of positive roots
$\rho$, and $w_0 \in W$ is the longest
element. Our key-point is the following
(see Theorem \ref{thm:mainthm}): \[ \begin{array}{c} \text{Assume } p \gg 0. \text{ There exists a unique choice of the lifts} \\[2pt]
  L^{\gr}(\mu) \in \calC^{\gr}, \ P^{\gr}(\mu) \in \calD^{\gr} \ \text{ such that if } w \in W_{\aff}' \text{ and } \ w \bullet \lambda \ \text{ is} \\[2pt] \text{dominant restricted, then } \kappa(L^{\gr}(w \bullet \lambda)) \cong   P^{\gr}(\tau_0 w \bullet \lambda). \end{array} \] In other words,
our ``geometric'' Koszul duality exchanges semi-simple and projective
modules. This result was supported by calculations in small ranks obtained with Bezrukavnikov and published as an appendix to \cite{BMR}\footnote{See \cite{RPhD} for a more detailed version of these computations, and the case $G=\SL(2)$.}.

\subsection{} \label{ss:introlusztig}

Our proof of this key-point relies on the study of ``geometric
counterparts'' of the reflection functors $\frakR_{\delta}^{\gr} :
\calD^{\gr} \to \calD^{\gr}$ (here $\delta$ is an affine simple root),
which send (lifts of) projectives to (lifts of) projectives. We identify the ``Koszul dual'' (i.e.~the conjugate by
$\kappa$) of these functors, which are related to some functors
$\frakS_{\delta}^{\gr}$ which send (lifts of) some semi-simple modules to
(lifts of) semi-simple modules (see Theorem
\ref{thm:dualreflection}). Then we only have to check our key-point when
$\ell(w)=0$, which can be done explicitly.
 
To prove the ``semi-simplicity'' of the functors $\frakS_{\delta}^{\gr}$ we
use Lusztig's conjecture on the characters of simple
$G$-modules, see \cite{LUSPro} (or rather an equivalent formulation due to Andersen, see \cite{ANDInv}). By the work of Andersen-Jantzen-Soergel (\cite{AJS}), combined with works of
Kazhdan-Lusztig (\cite{KL, LUSMon}) and Kashiwara-Tanisaki (\cite{KT}), (see \cite{ABG, FIEShe} for other approaches), this conjecture is true for $p$ sufficiently large
(with no explicit bound). Recently Fiebig has given a proof of this conjecture for $p$ bigger than a (very large) explicit bound (see \cite{FIELus}). This explains our restriction on $p$.

Let us remark that related ideas (in particular, a contruction of graded versions of translation functors) were considered by Stroppel in \cite{STRCat} for the category $\calO$ in characteristic $0$. However, our techniques are different.

\subsection{}

We derive from the key-point of \S \ref{ss:introkeypoint} the Koszulity of regular blocks of $(\calU \frakg)_0$. For this we use a general criterion
for a graded ring to be Morita equivalent to a Koszul ring, proved in Theorem \ref{thm:thmkoszulity}. More precisely we
obtain the following result (see Theorem \ref{thm:thmkoszul}): \[ \begin{array}{c} \text{There exists a
    (unique) grading on the block } (\calU 
\frakg)_0^{\hat{\lambda}} \text{ which makes it a} \\[2pt] \text{Koszul ring. The Koszul dual ring controls the category } \Mod^{\fg}_0((\calU \frakg)^{\lambda}). \end{array}\] Hence, from a ``geometric'' Koszul duality between dg-sheaves on the dg-schemes $\wcalN^{(1)}$ and $(\wfrakg \,
\rcap_{\frakg^* \times \calB} \, \calB)^{(1)}$ we derive an
``algebraic'' Koszul duality between the abelian categories $\Mod^{\fg}_0((\calU
\frakg)^{\lambda})$ and $\Mod^{\fg}_{\lambda}((\calU \frakg)_0)$.

\subsection{}

Finally we consider ``parabolic analogues'' of our geometric
duality, where $\calB$ is replaced by a partial flag variety
$\calP$. We prove a version of our restricted localization theorem for
singular weights (Theorem
\ref{thm:localizationfixedFrparabolic}). Then we derive from our key-point
(see \S \ref{ss:introkeypoint}) a version of it for this ``parabolic''
duality, and we deduce Koszulity of singular blocks of $(\calU
\frakg)_0$ (Theorem \ref{thm:thmkoszulsingular}). In this case the
Koszul dual ring is related to a quotient of $\calU \frakg$ introduced
in \cite[{\S}1.10]{BMR2}. In particular it follows that, for $p \gg 0$, $(\calU \frakg)_0$ can be endowed with a (unique) Koszul grading, i.e.~a grading which makes it a Koszul ring (Corollary \ref{cor:KoszulUg0}). This fact was conjectured (for $p>h$) by Soergel in \cite{SOEICM}.

\subsection{}

Our results also give information on the complexes of coherent sheaves corresponding, under equivalences \eqref{eq:equivintro1}, to simple and projective $\calU \frakg$-modules. (The question of computing these objects was raised in \cite[1.5.1]{BMR2}.) Namely, the objects corresponding to indecomposable projectives and to simples are related by the simple geometric construction of \S \ref{ss:introlkd}. We also provide a way to
``generate'' these objects: namely, to compute them it suffices to apply explicit functors to explicit sheaves, and to take direct factors. In practice
these computations are very difficult, however.

\subsection{Organization of the paper} In section \ref{sec:sectiondgalg} we develop the necessary background on derived categories of sheaves of dg-modules over sheaves of dg-algebras, extending results of \cite{BLEqu, SPARes}. We also introduce some notions related to
dg-schemes in the sense of \cite{CK}. In section \ref{sec:sectionKoszulduality} we give a geometric
version of Koszul duality, and study how this duality behaves under proper flat base change, and with respect to sub-bundles. In section \ref{sec:sectionRT} we prove a localization theorem for
restricted $\calU \frakg$-modules, completing
\cite{BMR, BMR2}.

In section \ref{sec:statement} we state a version of our key-point. Sections \ref{sec:sectionbraidgpaction} to \ref{sec:sectionmainthm} are devoted to the proof of this theorem. In section \ref{sec:sectionbraidgpaction} we introduce useful tools
for our study, in particular braid group actions, using the main
result of \cite{RAct}. In section \ref{sec:sectionreflectionfunctors} we study the projective
$(\calU \frakg)_0^{\hat{\lambda}}$-modules and their geometric counterparts,
and their behaviour under reflection functors. In section \ref{sec:sectionsimplemodules} we study the simple
restricted $(\calU \frakg)^{\lambda}$-modules and their geometric
counterparts, and their behaviour under the ``semi-simple'' functors
$\frakS_{\delta}$. In section \ref{sec:sectionmainthm} we finally prove that
the ``geometric'' Koszul duality exchanges the indecomposable projective
$(\calU \frakg)_0^{\hat{\lambda}}$-modules and the simple restricted $(\calU
\frakg)^{\lambda}$-modules.

In section \ref{sec:sectionapplications} we prove that there is an ``algebraic'' Koszul duality relating
$(\calU \frakg)^{\hat{\lambda}}_0$-modules and $(\calU
\frakg)^{\lambda}$-modules with generalized trivial Frobenius
character. Finally, in section \ref{sec:sectionparabolicanalogues} we extend some of our
results to singular characters. In particular we prove Koszulity of
singular blocks of $(\calU \frakg)_0$.

\subsection{Acknowledgments}

The author deeply thanks R. Bezrukav\-nikov for suggesting lots of
these results to him, and for his useful help, remarks and support. He
also thanks P. Polo for his very careful reading of several
earlier versions of this paper and for his encouragement, I. Mirkovi{\'c} for allowing him to use his ideas on Koszul duality, and J.~C. Jantzen for pointing out an inaccuracy in section \ref{sec:sectionparabolicanalogues}.

This work is part of the author's PhD thesis at Paris VI University,
under the joint supervision of R. Bezrukavnikov and P. Polo. Part of
it was done while the author was a Visiting Student at the
M.I.T., supported by the {\'E}.N.S. Paris. He thanks both institutions for their support and
hospitality.

\section{Sheaves of dg-algebras and dg-modules} \label{sec:sectiondgalg}

In this section we extend classical results on dg-algebras and ringed spaces
(see \cite{BLEqu,SPARes}) to the case of a sheaf of dg-algebras on a
ringed space\footnote{In this section we have tried to use only elementary methods, and to construct derived functors ``concretely'', using resolutions. Using more ``modern'' tools of homological algebra, it would certainly be possible to prove similar results under weaker assumptions.}.

We fix a commutative ringed space
$(X,\calO_{X})$, and write simply $\otimes$ for $\otimes_{\calO_{X}}$.

\subsection{Definitions}\label{ss:paragraphdefinitions}

Let $\calA=\bigoplus_{p \in \mathbb{Z}} \calA^p$ be a sheaf of $\mathbb{Z}$-graded $\calO_X$-algebras on $X$, and denote by $\mu_{\calA} : \calA \otimes \calA \to \calA$ the multiplication map.

\begin{defin}

$\calA$ is a \emph{sheaf of dg-algebras} if it is provided with an
endomorphism of
$\calO_{X}$-modules $d_{\calA} : \calA \to \calA$, of degree $1$,
such that $d_{\calA} \circ d_{\calA}=0$, and satisfying the
following formula on $\calA^{p} \otimes \calA$, for any $p \in
\mathbb{Z}$: \[d_{\calA} \circ \mu_{\calA} = \mu_{\calA} \circ
(d_{\calA} \otimes \Id_{\calA}) +
(-1)^{p} \mu_{\calA} \circ (\Id_{\calA^p} \otimes d_{\calA}).\] A \emph{morphism of sheaves of dg-algebras} is a
morphism of sheaves of graded algebras commuting with the
differentials.

A \emph{sheaf of dg-modules} over $\calA$ (in short: $\calA$-dg-module) is a
sheaf of graded left $\calA$-modules $\calF$ on $X$, provided with an
endomorphism of $\calO_{X}$-modules $d_{\calF} : \calF \to \calF$,
of degree $1$, such that $d_{\calF} \circ d_{\calF}=0$, and
satisfying the following formula on $\calA^{p} \otimes \calF$ for
$p \in \mathbb{Z}$, where $\alpha_{\calF} : \calA \otimes \calF
\to \calF$ is the action map:
$$d_{\calF} \circ \alpha_{\calF} = \alpha_{\calF} \circ (d_{\calA}
\otimes \Id_{\calF}) +
(-1)^{p} \alpha_{\calF} \circ (\Id_{\calA^p} \otimes d_{\calF}).$$ A
\emph{morphism of sheaves of dg-modules} is a
morphism of sheaves of graded $\calA$-modules commuting with the
differentials.

\end{defin}

We will always consider $\calO_{X}$ as a sheaf of dg-algebras
concentrated in degree $0$, provided with the zero differential.
In the rest of this section we fix a sheaf of dg-algebras
$\calA$.

We denote by $\calC(X,\calA)$ (or sometimes $\calC(\calA)$) the
category of sheaves of dg-modules
over $\calA$. There is a natural \emph{translation functor} $[1] : \calC(X,\calA) \to
\calC(X,\calA)$, defined as in \cite[10.3]{BLEqu}. One defines as usual the
homotopy category $\calH(X,\calA)$ (see \cite[10.3.1]{BLEqu}). It is triangulated (\cite[10.3.5]{BLEqu}).

If $\calF$ is an object of $\calC(X,\calA)$ or
$\calH(X,\calA)$, we define its cohomology to be
the graded sheaf of $\calO_X$-modules
$H(\calF)=\Ker(d_{\calF})/\Ima(d_{\calF})$. A dg-module $\calF$ is
said to be \emph{acyclic} if $H(\calF)=0$. A morphism $\phi :
\calF \to \calG$ in $\calC(X,\calA)$ or $\calH(X,\calA)$ is said to be
a \emph{quasi-isomorphism} if it induces an isomorphism $H(\phi) :
H(\calF) \xrightarrow{\sim} H(\calG)$. Finally, one defines the
derived category $\calD(X,\calA)$, which is also triangulated, as in \cite[10.4.1]{BLEqu}.

One defines similarly the category $\calC^{\mathrm{r}}(X,\calA)$ of sheaves of
right $\calA$-dg-mo\-dules, its homotopy category $\calH^{\mathrm{r}}(X,\calA)$
and its derived category $\calD^{\mathrm{r}}(X,\calA)$. One defines the
opposite sheaf of dg-algebras $\calA^{\op}$ as in \cite[10.6.2]{BLEqu}. There is a natural equivalence of
categories $\calC^{\mathrm{r}}(X,\calA)
  \xrightarrow{\sim} \calC(X,\calA^{\op})$, defined as in \cite[10.6.3]{BLEqu}. A sheaf of dg-algebras $\calA$ is said to be
\emph{graded-commutative} if the identity map $\Id : \calA \to
\calA^{\op}$ is an isomorphism of sheaves of dg-algebras. In this case
we have an equivalence $\calC(X,\calA) \cong \calC^{\mathrm{r}}(X,\calA)$.

\subsection{Hom, Tens and (co)induction}\label{ss:paragraphHomTens}

Let $\calF$ and $\calG$ be objects of $\calC(X,\calA)$. We define
the sheaf of $\calO_{X}$-dg-modules $\sheafHom_{\calA}(\calF,\calG)$ having, as degree $p$ component,
the $\calO_{X}$-module of local homomorphisms of graded
$\calA$-modules $\calF \to \calG[p]$ (not necessarily commuting
with the differentials), and provided with the differential given
by \begin{equation} \label{eq:diffHom}d(\phi)=d_{\calG} \circ
  \phi - (-1)^{p}\phi \circ d_{\calF}\end{equation} if $\phi \in
(\sheafHom_{\calA}(\calF,\calG))^{p}$. This construction defines a
bifunctor $$\sheafHom_{\calA}(-,-) : \calC(X,\calA)^{\op} \times
\calC(X,\calA) \to \calC(X,\calO_{X}).$$ One checks that
$\sheafHom_{\calA}(-,-)$
preserves homotopy, hence defines a bifunctor between homotopy categories. If $\calA$ is graded-commutative,
this construction even defines a bifunctor with values in $\calH(X,\calA)$.

We define the functor $\Hom_{\calA}(-,-)$, from
$\calC(X,\calA)^{\op} \times \calC(X,\calA)$ to the
complexes of abelian groups, by $(\Hom_{\calA}(\calF,\calG))^i
:= \Gamma(X, (\sheafHom_{\calA}(\calF,\calG))^i),$ the differential
being that of \eqref{eq:diffHom}. Then
$\Hom_{\calC(X,\calA)}(\calF,\calG)$ is the kernel of the
differential $d^{0}$, and
$\Hom_{\calH(X,\calA)}(\calF,\calG) \cong
H^{0}(\Hom_{\calA}(\calF,\calG))$.

Let $\calF$ be in $\calC^{\mathrm{r}}(X,\calA)$, and $\calG$ in $\calC(X,\calA)$. We define the sheaf of
$\calO_{X}$-dg-modules $\calF \otimes_{\calA} \calG$, graded in
the natural way, and having the differential given on local sections
of $\calF^{p} \otimes_{\calA} \calG$ by
\begin{equation*} d(f
\otimes g)=d(f) \otimes g + (-1)^p f \otimes d(g). \end{equation*} This
construction defines a bifunctor \[ (-
\otimes_{\calA} -) : \calC^{\mathrm{r}}(X,\calA)
\times \calC(X,\calA) \to \calC(X,\calO_{X}).\] One checks that
$(- \otimes_{\calA} -)$ preserves homotopy, hence
defines a bifunctor between homotopy categories. As usual the tensor product is
associative.\medskip

Let us define the induction functor in the usual way: \[ \Ind : \left\{
\begin{array}{ccc}
  \calC(X,\calO_{X}) & \to & \calC(X,\calA) \\
  \calF & \mapsto & \calA \otimes_{\calO_{X}} \calF. \\
\end{array} \right. \] This functor is a left adjoint to the
forgetful functor $\For: \calC(X,\calA) \to \calC(X,\calO_{X})$. More
precisely, for $\calF$ in $\calC(X,\calO_X)$ and $\calG$ in $\calC(X,\calA)$, we have a functorial isomorphism
\begin{equation} \label{eq:adjunctionInd}
  \sheafHom_{\calA}(\Ind(\calF),\calG) \ \cong \
  \sheafHom_{\calO_{X}}(\calF, \For(\calG)). \end{equation} The functor $\Ind$ also induces a
functor $\calH(X,\calO_X) \to \calH(X,\calA)$, which is left adjoint
to the forgetful functor.

Now we define the coinduction functor
\[ \Coind : \left\{ \begin{array}{ccc}
  \calC(X,\calO_{X}) & \to & \calC(X,\calA) \\
  \calG & \mapsto & \sheafHom_{\calO_{X}}(\calA,\calG) \\
\end{array} \right.\] (and similarly for the homotopy categories)
where the grading and differential are defined as in \eqref{eq:diffHom}, and
the action of $\calA$ is given by
\[ (\alpha \cdot \phi)(\gamma) = (-1)^{\deg(\alpha) \deg(\phi) +
\deg(\alpha) \deg(\gamma)}\phi(\gamma \alpha). \] One easily checks that the functor $\Coind$ is a right adjoint to
the forgetful functor $\calC(X,\calA) \to \calC(X,\calO_{X})$. More precisely, for $\calF$ in $\calC(X,\calA)$ and $\calG$ in $\calC(X,\calO_{X})$ there is a functorial isomorphism \[ \sheafHom_{\calO_{X}}(\calF,\calG) \ \cong \ \sheafHom_{\calA}(\calF,
\Coind(\calG)).\]

For later use, let us remark that the adjunction morphism $\Ind(\calF) \to \calF$, resp. $\calF \to \Coind(\calF)$, is surjective, resp. injective, for $\calF \in \calC(X,\calA)$.

\subsection{Existence of K-flat and K-injective resolutions}
\label{ss:existenceresolutions}

To ensure the existence of the usual derived functors, we have to show that there are enough ``nice'' objects in $\calC(\calA)$. We follow Spaltenstein's approach (\cite{SPARes}).

\begin{defin}\label{def:defKinjective}

Let $\calF$ be an object of $\calC(\calA)$. 

(a) $\calF$ is said to be \emph{K-injective} if the following equivalent\footnote{See \cite[10.12.2.2]{BLEqu} for details on the equivalence of these conditions.} properties hold:

$\quad$ $\rmi$ For every $\calG \in \calC(\calA)$,
$\Hom_{\calH(\calA)}(\calG,\calF)=\Hom_{\calD(\calA)}(\calG,\calF)$;

$\quad$ $\rmii$ For every $\calG \in \calC(\calA)$ such that
$H(\calG)=0$, $H(\Hom_{\calA}(\calG,\calF))=0$.

(b) $\calF$ is said to be \emph{K-flat} if for
every $\calG \in \calC^{\mathrm{r}}(\calA)$ such that $H(\calG)=0$, $H(\calG \otimes_{\calA} \calF)=0$.

\end{defin}

Easy applications of the basic properties of induction and coinduction
functors give the following lemma:

\begin{lem}\label{lem:coinductionKinjective}

$\rmi$ If $\calF$ is a K-flat $\calO_X$-dg-module, then $\Ind(\calF)$ is a
K-flat $\calA$-dg-module. If $\calG$ is a K-injective
$\calO_X$-dg-module, then $\Coind(\calG)$ is a K-injective $\calA$-dg-module.

$\rmii$ Assume $\calA$ is K-flat as an $\calO_X$-dg-module. Then every
K-injective $\calA$-dg-module is also K-injective as an
$\calO_X$-dg-module. Similarly, every K-flat $\calA$-dg-module is also
K-flat as an $\calO_X$-dg-module.

\end{lem}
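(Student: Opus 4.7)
The proof hinges on combining the adjunction isomorphisms (for $\Ind \dashv \For$ and $\For \dashv \Coind$) with associativity of the tensor product at the level of dg-complexes, then reading off the statements about acyclicity. The main observation is that these adjunctions, established in \S\ref{ss:paragraphHomTens} as isomorphisms of sheaves of dg-modules, give rise to isomorphisms of the relevant Hom/tensor dg-complexes, so vanishing of cohomology can be transferred back and forth. Throughout, I will use without comment that the forgetful functor $\For : \calC(X,\calA) \to \calC(X,\calO_X)$ is exact and preserves cohomology (hence acyclicity), since the cohomology sheaf is defined via the underlying $\calO_X$-module structure.

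For part \rmi, to check that $\Ind(\calF) = \calA \otimes_{\calO_X} \calF$ is K-flat over $\calA$, I would pick an acyclic $\calG \in \calC^{\mathrm{r}}(X,\calA)$ and use the natural isomorphism of $\calO_X$-dg-modules
\[
\calG \otimes_{\calA} (\calA \otimes_{\calO_X} \calF) \ \cong \ \calG \otimes_{\calO_X} \calF
\]
coming from associativity. Since $\For(\calG)$ is acyclic and $\calF$ is K-flat over $\calO_X$, the right-hand side is acyclic, so $\Ind(\calF)$ is K-flat over $\calA$. For the coinduction half, given a K-injective $\calO_X$-dg-module $\calG$ and an acyclic $\calF \in \calC(X,\calA)$, I would take global sections of the adjunction isomorphism $\sheafHom_{\calA}(\calF, \Coind(\calG)) \cong \sheafHom_{\calO_X}(\calF, \calG)$ to get an isomorphism of complexes $\Hom_{\calA}(\calF, \Coind(\calG)) \cong \Hom_{\calO_X}(\calF, \calG)$; the right-hand side is acyclic by K-injectivity of $\calG$, so $\Coind(\calG)$ satisfies condition \rmii{} of Definition \ref{def:defKinjective}(a).

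For part \rmii, assume $\calA$ is K-flat over $\calO_X$. Given a K-injective $\calA$-dg-module $\calI$ and an acyclic $\calO_X$-dg-module $\calG$, I would use the adjunction $\Hom_{\calA}(\Ind(\calG), \calI) \cong \Hom_{\calO_X}(\calG, \calI)$. To conclude I need $\Ind(\calG) = \calA \otimes_{\calO_X} \calG$ to be acyclic in $\calC(X,\calA)$, and this is exactly what K-flatness of $\calA$ over $\calO_X$ provides. Similarly, for a K-flat $\calA$-dg-module $\calF$ and an acyclic right $\calO_X$-dg-module $\calG$, the associativity isomorphism
\[
(\calG \otimes_{\calO_X} \calA) \otimes_{\calA} \calF \ \cong \ \calG \otimes_{\calO_X} \calF
\]
reduces the problem to showing that $\calG \otimes_{\calO_X} \calA$ is acyclic as right $\calA$-dg-module, which again follows from the assumed K-flatness of $\calA$ over $\calO_X$ (applied on the right).

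The step that requires the most care is verifying that the cited adjunction and associativity isomorphisms from \S\ref{ss:paragraphHomTens}, which are written as identities of sheaves of $\calO_X$-dg-modules, pass to isomorphisms of complexes of global sections (for the K-injective arguments) and that all signs in the differentials match, so that acyclicity genuinely transfers. Once that bookkeeping is done, the four assertions reduce to the two-line arguments sketched above.
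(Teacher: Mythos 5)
Your proof is correct, and since the paper states the lemma with no written proof (only the remark that it follows from "easy applications of the basic properties of induction and coinduction functors"), the adjunction-plus-associativity argument you give is exactly the intended one. The only detail to spell out, which you rightly flag, is that the sheaf-level adjunctions of \S\ref{ss:paragraphHomTens} pass through $\Gamma(X,-)$ to isomorphisms of Hom-complexes with the correct differentials, and that the associativity isomorphisms for the tensor product are compatible with the Koszul sign convention on differentials; both are routine checks.
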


Now we prove that
there exist enough K-flat modules in
$\calC(X,\calA)$. The case $\calA=\calO_X$ is treated
in \cite{SPARes}, and will be the base of our
proofs. If $\calM$ is a complex of
sheaves, we denote by $Z(\calM)$ the graded sheaf $\Ker(d_{\calM})$.

\begin{thm}\label{thm:Kflatresolution}

For every sheaf of $\calA$-dg-modules $\calF$, there exists a
K-flat sheaf of $\calA$-dg-modules $\calP$ and a quasi-isomorphism
$\calP \xrightarrow{\text{qis}} \calF$.

\end{thm}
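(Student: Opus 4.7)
The strategy is to adapt Spaltenstein's construction of K-flat resolutions \cite{SPARes} from the case $\calA = \calO_X$ to the case of a general sheaf of dg-algebras, using the induction functor $\Ind(\calN) = \calA \otimes_{\calO_X} \calN$ as a bridge. Spaltenstein's theorem supplies K-flat $\calO_X$-dg-modules, and Lemma \ref{lem:coinductionKinjective}(i) promotes them to K-flat $\calA$-dg-modules. Accordingly, $\calP$ will be built as an iterated extension of such inductions, in such a way that quasi-iso\-mor\-phism with $\calF$ can be arranged simultaneously with K-flatness.

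\textbf{Construction.} First I would treat the case $\calF^i = 0$ for $i \gg 0$ (bounded above). Here $\calP$ is produced as an increasing union $\calP = \bigcup_{n \geq 0} \calP_n$ of $\calA$-dg-modules, with $\calP_0 = 0$, each inclusion $\calP_n \hookrightarrow \calP_{n+1}$ split as a map of graded $\calA$-modules, and the quotient $\calP_{n+1}/\calP_n$ of the form $\Ind(\calQ_n)$ for some K-flat $\calO_X$-dg-module $\calQ_n$ supplied by Spaltenstein's theorem. Compatible morphisms $\varphi_n : \calP_n \to \calF$ are built inductively so that the cone of $\varphi_n$ has vanishing cohomology in an increasing range: at each step one chooses a K-flat $\calO_X$-dg-module $\calQ_n$ mapping to the forgetful of that cone and hitting the obstructing cohomology, then lifts it across the adjunction \eqref{eq:adjunctionInd} to the desired $\calA$-linear attaching map. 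The colimit $\varphi : \calP \to \calF$ is then a quasi-isomorphism. For the general unbounded case, I would apply this construction to the stupid truncations $\sigma^{\leq n}\calF$ and assemble the resulting resolutions into a compatible tower $\cdots \to \calP_{n+1} \to \calP_n$, taking $\calP = \varprojlim \calP_n$ (or a telescope variant).

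\textbf{K-flatness and main obstacle.} By Lemma \ref{lem:coinductionKinjective}(i), each ``cell'' $\Ind(\calQ_n)$ is K-flat. K-flatness is stable under extensions and under filtered colimits whose transition maps are termwise split injections with K-flat cokernels, so the bounded-above $\calP$ is K-flat: for acyclic $\calG \in \calC^{\mathrm{r}}(X,\calA)$, the complex $\calG \otimes_\calA \calP$ is the colimit of the acyclic complexes $\calG \otimes_\calA \calP_n$, and acyclicity propagates along the extensions. The genuine obstacle is the unbounded case: neither a naive inverse limit nor a naive colimit of K-flat resolutions of the truncations is automatically both K-flat and quasi-isomorphic to $\calF$. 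One must refine the bounded-above construction so that the transitions $\calP_{n+1} \to \calP_n$ fit into short exact sequences whose \emph{kernels} are themselves K-flat as $\calA$-dg-modules (not merely as $\calO_X$-dg-modules), so that $\varprojlim$ preserves both quasi-isomorphism and K-flatness. This tightening is the essential technical content of Spaltenstein's argument, transplanted here to the setting of sheaves of dg-algebras, and is what ensures the theorem holds in full generality rather than only in the bounded case.
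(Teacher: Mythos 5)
Your bounded-above construction is sound and essentially equivalent to the paper's (the increasing union $\bigcup \calP_n$ with $\calP_{n+1}/\calP_n \cong \Ind(\calQ_n)$ is the same object as the paper's $\calP_{\leq n} = {\rm Tot}^{\oplus}(\calP_n \to \cdots \to \calP_0)$, just built step by step rather than assembled at the end). The genuine gap is in your handling of the unbounded case, and, notably, the difficulty you flag does not actually arise in the paper's argument because the paper never passes to truncations or inverse limits at all.

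The paper's route is this. One builds, by iterating induction of Spaltenstein's surjective K-flat $\calO_X$-resolutions, a \emph{horizontal} exact sequence of $\calA$-dg-modules
$\cdots \to \calP_1 \to \calP_0 \to \calF \to 0$
with each $\calP_p$ K-flat, but also with the crucial \emph{extra} condition that the induced sequence on cycles $\cdots \to Z(\calP_1) \to Z(\calP_0) \to Z(\calF) \to 0$ is exact. (This comes for free: the morphism $\Ind(\calQ) \twoheadrightarrow \Ind(\calF) \twoheadrightarrow \calF$ is surjective on cycles because $\calQ \twoheadrightarrow \calF$ was a surjective quasi-isomorphism of $\calO_X$-dg-modules, and one checks this persists through the induction.) Then $\calP := {\rm Tot}^{\oplus}(\cdots \to \calP_1 \to \calP_0)$, a \emph{direct-sum} totalization, is a filtered colimit of the K-flat $\calP_{\leq p}$ along split injections with K-flat quotients, hence K-flat by \cite[5.4(c)]{SPARes}. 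The point that would worry anyone — why is $\calP \to \calF$ a quasi-isomorphism when the horizontal complex is unbounded — is handled by an argument borrowed from Keller \cite[3.3]{KELDer}: one writes the mapping-cone complex $\calX$ as a colimit of truncated double complexes $\calX_m$ (truncated in the \emph{vertical} direction, and with a layer of cycles $Z(\calP_{-i})$ inserted at the boundary), each of which is acyclic because it has a finite filtration with acyclic subquotients. This is exactly where the surjectivity-on-cycles condition is used, and it is what makes the direct-sum totalization work in the unbounded setting without any $\varprojlim$.

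So the tightening you predict is needed — make the kernels of the tower maps K-flat so that $\varprojlim$ behaves — is the wrong direction: inverse limits of K-flat objects are not K-flat in any useful generality, and that road is substantially harder than what the paper actually does. The insight you are missing is the surjectivity-on-cycles refinement of the horizontal resolution together with Keller's filtration argument, which together let the direct-sum total complex serve as the resolution in full generality, bounded or not.
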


\begin{proof} First, let us consider $\calF$ as an
$\calO_X$-dg-module. By \cite[5.6]{SPARes}, there exists a K-flat
$\calO_{X}$-dg-module
$\calQ_{0}$ and a surjective $\calO_X$-quasi-isomorphism
$\calQ_{0} \twoheadrightarrow \calF$. Thus there
exists a surjective morphism of $\calA$-dg-modules
$$\calP_{0}:=\Ind(\calQ_{0}) \twoheadrightarrow \Ind(\calF)
\twoheadrightarrow \calF,$$
and the $\calA$-dg-module $\calP_{0}$ is K-flat, by Lemma
\ref{lem:coinductionKinjective}$\rmi$. One can check that the induced morphism
$Z(\calP_0) \to Z(\calF)$ is also surjective.

Doing the same construction for the kernel of the morphism $\calP_0
\to \calF$, and repeating, we obtain an exact sequence of
$\calA$-dg-modules $$\cdots \to \calP_{1} \to \calP_{0} \to \calF
\to 0$$ where each $\calP_{p}$ is K-flat, and such that the
induced sequence $$ \cdots \to Z(\calP_{1}) \to Z(\calP_{0}) \to Z(\calF)
\to 0$$ is also exact. Now we take
the $\calA$-dg-module $\calP:={\rm Tot}^{\oplus}(\cdots \to \calP_{1} \to
\calP_{0} \to 0 \to \cdots)$, where $\calP_p$ is in horizontal degree
$-p$. It is K-flat, as the direct limit of the K-flat
$\calA$-dg-modules $\calP_{\leq p}:={\rm Tot}^{\oplus}(\cdots \to 0
\to \calP_p \to \cdots \to \calP_{0} \to 0 \to \cdots)$ (see
\cite[5.4.(c)]{SPARes}). Now we prove that the natural morphism $\calP
\to \calF$ is a quasi-isomorphism, i.e.~that the complex $\calX:={\rm
  Tot}^{\oplus}(\cdots \to \calP_{1} \to \calP_{0} \to \calF \to 0 \to
\cdots)$, where $\calF$ is in horizontal degree $1$, is acyclic. 

The argument for this is borrowed from \cite[3.3]{KELDer},
\cite{KELDerCor}. We put $\calP_{-1}:=\calF$, and $\calP_p=0$ if $p <
-1$. Consider, for $m \geq 1$, the double complex of
$\calO_X$-modules $\calX_{m}$ defined by $(\calX_{m})^{i,j}=0$
if $j \notin [-m, m]$, $(\calX_{m})^{i,j}=(\calP_{-i})^j$ if $j \in
[-m, m-1]$, and $(\calX_{m})^{i,m}=Z(\calP_{-i})^m$. Then $\calX$ is
the direct limit of the complexes ${\rm Tot}^{\oplus}(\calX_{m})$,
which are acyclic because they admit a finite filtration with acyclic
subquotients. Hence $\calX$ is acyclic. \end{proof}

We will also need the following result, which is borrowed from
\cite[5.7]{SPARes}:

\begin{lem}\label{lem:Kflatsplit}

If $\calP$ in $\calC(\calA)$ is K-flat and acyclic, then for any
$\calF$ in $\calC^{\mathrm{r}}(\calA)$ the $\calO_{X}$-dg-module $\calF
\otimes_{\calA} \calP$ is acyclic.

\end{lem}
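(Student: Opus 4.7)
The plan is to reduce the claim to a direct application of K-flatness of $\calP$ by inserting a K-flat right $\calA$-dg-module between $\calF$ and the tensor product, and then invoking symmetry of the definition of K-flatness.

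First I would establish a right-module analog of Theorem \ref{thm:Kflatresolution}: every $\calF \in \calC^{\mathrm{r}}(X,\calA)$ admits a quasi-isomorphism $\calQ \xrightarrow{\qis} \calF$ with $\calQ$ K-flat as a right $\calA$-dg-module. This is obtained by applying Theorem \ref{thm:Kflatresolution} to the opposite sheaf of dg-algebras $\calA^{\op}$, via the equivalence $\calC^{\mathrm{r}}(X,\calA) \cong \calC(X,\calA^{\op})$ recalled in \S\ref{ss:paragraphdefinitions}. Let $\calC$ denote the cone of this resolution in $\calC^{\mathrm{r}}(X,\calA)$, so that $\calC$ is acyclic and there is a distinguished triangle $\calQ \to \calF \to \calC \xrightarrow{+1}$ in $\calH^{\mathrm{r}}(X,\calA)$.

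Next I would apply the functor $(- \otimes_{\calA} \calP)$, which descends to a triangulated functor $\calH^{\mathrm{r}}(X,\calA) \to \calH(X,\calO_X)$ by the observations of \S\ref{ss:paragraphHomTens}, producing a distinguished triangle
\[
\calQ \otimes_{\calA} \calP \to \calF \otimes_{\calA} \calP \to \calC \otimes_{\calA} \calP \xrightarrow{+1}
\]
in $\calH(X,\calO_X)$. Since $\calP$ is K-flat in $\calC(X,\calA)$ and $\calC$ is acyclic, Definition \ref{def:defKinjective}(b) forces $\calC \otimes_{\calA} \calP$ to be acyclic, so the first morphism is a quasi-isomorphism. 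It then remains to show that $\calQ \otimes_{\calA} \calP$ is acyclic; but this is the symmetric instance of K-flatness, with $\calQ$ K-flat as a right $\calA$-dg-module and $\calP$ an acyclic left $\calA$-dg-module. Applying Definition \ref{def:defKinjective}(b) to $\calA^{\op}$ (after the same identification as above) yields the desired vanishing, concluding the argument.

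The main obstacle I anticipate is the purely formal justification that $(- \otimes_{\calA} \calP)$ sends distinguished triangles in $\calH^{\mathrm{r}}(X,\calA)$ to distinguished triangles in $\calH(X,\calO_X)$, i.e.~that the tensor product of a mapping cone is functorially the mapping cone of the tensor products. This is a sign-sensitive but routine verification from the definitions of \S\ref{ss:paragraphHomTens} and of the translation functor, and once it is in place the proof is essentially bookkeeping around the left--right conventions and the equivalence $\calC^{\mathrm{r}}(X,\calA) \cong \calC(X,\calA^{\op})$.
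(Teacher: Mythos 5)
Your proof is correct, and it reproduces the standard argument from Spaltenstein's \cite[5.7]{SPARes}, which is precisely what the paper cites for this lemma (the paper itself gives no proof). The key reductions — taking a K-flat resolution of $\calF$ as a right module via the $\calA^{\op}$ identification, using K-flatness of $\calP$ to dispose of the cone term, and then invoking K-flatness of $\calQ$ against acyclicity of $\calP$ — are exactly the ingredients that argument uses, and there is no circularity since Theorem~\ref{thm:Kflatresolution} does not rely on this lemma.
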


{F}rom now on in this section
we make the following assumptions: $$\begin{array}{cc} (\dag) &
  \text{All our topological spaces are noetherian of finite
  dimension}. \\ (\dag \dag) & \text{All our dg-algebras are
  concentrated in non-positive degrees}. \end{array}$$ These
assumptions are needed for our proofs and sufficient
for our applications, but we hope they are not essential. In order to
construct resolutions by K-injective $\calA$-dg-modules, we begin with
bounded below dg-modules.

\begin{lem}\label{lem:lemKinjective}

For $\calF$ a bounded-below $\calA$-dg-module, there exists a quasi-isomorphism of
$\calA$-dg-modules $\calF \xrightarrow{\qis} \calI$, where $\calI$ is
K-injective, bounded below with the same
bound as $\calF$ and such that $\calI^p$ is a flabby sheaf for
$p \in \mathbb{Z}$.

\end{lem}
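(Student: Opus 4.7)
The strategy is to dualize the proof of Theorem \ref{thm:Kflatresolution}: instead of a K-flat resolution built by iterated surjections, we build a K-injective coresolution by iterated injections, then totalize. The main input is the coinduction functor $\Coind$, which by Lemma \ref{lem:coinductionKinjective}$\rmi$ converts K-injective $\calO_X$-dg-modules into K-injective $\calA$-dg-modules, together with the injectivity of the unit $\calF \hookrightarrow \Coind(\calF)$ noted at the end of \S\ref{ss:paragraphHomTens}.

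\emph{Step 1: the $\calO_X$-level resolution.} Say $\calF^p = 0$ for $p < n$. I would first produce a quasi-isomorphism $\For(\calF) \xhookrightarrow{\qis} \calJ$ in $\calC(X,\calO_X)$ with $\calJ$ bounded below by $n$ and each $\calJ^p$ an injective (in particular flabby) $\calO_X$-module. On a noetherian space of finite dimension this is standard: build $\calJ$ as a Cartan--Eilenberg-type resolution using injective embeddings in each degree; K-injectivity of a bounded-below complex of injectives then follows from Grothendieck's bound on cohomological dimension.

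\emph{Step 2: first embedding over $\calA$.} Set $\calI_0 := \Coind(\calJ)$. By Lemma \ref{lem:coinductionKinjective}$\rmi$, $\calI_0$ is K-injective in $\calC(X,\calA)$. The degree count $\calI_0^p = \bigoplus_{q \in [n-p,\,0]} \sheafHom_{\calO_X}(\calA^{-q},\calJ^{p+q})$, which uses crucially assumption $(\dag\dag)$ that $\calA$ lives in non-positive degrees, shows that $\calI_0$ is bounded below by $n$ and that each $\calI_0^p$ is a \emph{finite} direct sum. Moreover each $\sheafHom_{\calO_X}(\calA^{-q},\calJ^{p+q})$ is flabby: for $U \subset V$ open, the restriction map is computed by applying $\Hom(-,\calJ^{p+q}|_V)$ to the inclusion $j_{U!}(\calA^{-q}|_U) \hookrightarrow \calA^{-q}|_V$, which is surjective because $\calJ^{p+q}|_V$ is injective. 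Composing the (injective) unit $\calF \hookrightarrow \Coind(\For(\calF))$ with $\Coind(\For(\calF)) \hookrightarrow \Coind(\calJ) = \calI_0$ (left-exactness of $\Coind = \sheafHom_{\calO_X}(\calA,-)$) yields an injection $\calF \hookrightarrow \calI_0$ in $\calC(X,\calA)$.

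\emph{Step 3: iterate and totalize.} Since $\calI_0/\calF$ is again bounded below by $n$, iterating Step~2 on successive cokernels gives an exact sequence $0 \to \calF \to \calI_0 \to \calI_1 \to \cdots$ in $\calC(X,\calA)$ with each $\calI_k$ K-injective, flabby in each degree, and bounded below by $n$. Let $\calI := {\rm Tot}(\calI_\bullet)$ with $\calI_k$ placed in horizontal position $k$; in each total degree $\calI^p = \bigoplus_{i+j=p,\ j \geq n,\ i \geq 0}(\calI_i)^j$ is a finite direct sum of flabby sheaves, so $\calI$ is flabby and bounded below by $n$. Exactness of the coresolution, together with a bounded-below double-complex argument dual to (and simpler than) the one using $\calX_m$ in the proof of Theorem \ref{thm:Kflatresolution}, shows that $\calF \to \calI$ is a quasi-isomorphism. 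Finally, K-injectivity of $\calI$ follows from writing it as the inverse limit of the finite truncations ${\rm Tot}(\calI_0 \to \cdots \to \calI_k)$, each of which is K-injective as an iterated extension of K-injectives; since the system stabilizes in each degree, a standard Mittag--Leffler-type argument transfers K-injectivity to the limit.

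\emph{Main obstacle.} The delicate point is Step~2: arranging that coinduction of an $\calO_X$-injective resolution both lands in flabby sheaves and preserves the lower bound. Both of these rely essentially on $\calA$ being concentrated in non-positive degrees (for the bound) and on using genuine injectives rather than only flabby sheaves in the $\calO_X$-resolution (for the flabbiness of $\sheafHom_{\calO_X}(\calA^{-q},-)$). The remaining verification, in Step~4, that the totalization inherits K-injectivity is routine given the degreewise finiteness.
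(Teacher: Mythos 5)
Your proposal takes essentially the same route as the paper's proof: embed $\For(\calF)$ into a bounded-below complex of injective $\calO_X$-modules, coinduce via Lemma~\ref{lem:coinductionKinjective}, check boundedness and flabbiness using assumption $(\dag\dag)$, iterate on cokernels, and totalize. The only small divergences are that your Step~1 builds a full $\calO_X$-level K-injective \emph{resolution} where the paper needs only an injection into a bounded-below complex of injectives, and that your appeal to ``a standard Mittag--Leffler-type argument'' for K-injectivity of the totalization is imprecise as phrased (Mittag--Leffler alone does not give K-injectivity of an inverse limit of K-injectives); the paper instead observes that the transition maps $\calK_{p+1} \to \calK_p$ are surjective and split as morphisms of graded $\calA$-modules, so the system is ``special'' in Spaltenstein's sense and \cite[2.3, 2.4]{SPARes} applies --- but your degree-wise stabilization observation does supply exactly that splitness, so your argument is sound once rephrased.
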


\begin{proof} First, there exists
a bounded below $\calO_{X}$-dg-module $\calJ_{0}$ (with the same
bound as $\calF$), all of whose components are injective $\calO_{X}$-modules,
and an injective morphism $\phi : \calF \hookrightarrow
\calJ_{0}$. Then $\calJ_{0}$ is a K-injective
$\calO_{X}$-dg-module by \cite[1.2, 2.2.(c), 2.5]{SPARes}.
By Lemma \ref{lem:coinductionKinjective}$\rmi$, $\calI_{0}:=\Coind(\calJ_{0})$ is a
K-injective $\calA$-dg-module, and one obtains an injective
morphism of $\calA$-dg-modules $$\calF \hookrightarrow \Coind(\calF)
\hookrightarrow \calI_{0}.$$ This module is bounded below, again
with the same bound, and its graded components
are flabby (by \cite[II.2.4.6.(vii)]{KSShe}, and the fact that a
product of flabby sheaves is flabby). Repeating the same construction for
the cokernel, and then again and again, we obtain an exact sequence of
$\calA$-dg-modules (bounded below with the same bound for all the modules)
\[0 \to \calF \to \calI_{0} \to \calI_{1} \to \calI_{2} \to
\cdots\] where each $\calI_{p}$ is K-injective and has flabby components.

Consider the $\calA$-dg-module
$\calI:=\text{Tot}^{\oplus}(\cdots \to 0 \to \calI_0 \to \calI_1 \to \cdots)$. This
module is the inverse limit of the $\calA$-dg-modules
$\calK_p:=\text{Tot}^{\oplus}( \cdots \to 0 \to \calI_{0} \to \cdots
\to \calI_{p}
\to 0 \to \cdots)$ for $p \geq 0$. Each
$\calK_p$ is K-injective (because it has a finite
filtration with K-injective subquotients). Moreover, the
morphisms $\calK_{p+1} \to \calK_p$ are surjective, and split as
morphisms of graded $\calA$-modules. Hence this inverse system is
``special'' in the sense of \cite[2.1]{SPARes}. We deduce that $\calI$ is
K-injective (\cite[2.3, 2.4]{SPARes}). This module has
flabby components, and one can checks that the morphism $\calF \to \calI$ is a
quasi-isomorphism. \end{proof}

Now we can treat the general case. Recall the
definition of the truncation functors $\tau_{\geq n}$ (\cite[(1.3.11)]{KSShe}). Because of our assumption $(\dag \dag)$, this definition still
makes sense (and has the usual properties) for $\calA$-dg-modules.

\begin{thm}\label{thm:Kinjresolution}

For every $\calA$-dg-module $\calF$, there exists a quasi-isomor\-phism of
$\calA$-dg-modules $\calF \xrightarrow{\text{qis}}
\calI$ where $\calI$ is K-injective.

\end{thm}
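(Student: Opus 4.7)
The plan is to reduce the unbounded case to Lemma \ref{lem:lemKinjective} by Spaltenstein's truncation-and-inverse-limit technique. Thanks to assumption $(\dag\dag)$, the sheaf $\calA$ is concentrated in non-positive degrees, so the $\calA$-action cannot raise cohomological degree, and each truncation $\tau_{\geq n}\calF$ inherits the structure of a quotient $\calA$-dg-module. The transitions $\tau_{\geq -n-1}\calF \twoheadrightarrow \tau_{\geq -n}\calF$ form an inverse system that is eventually constant in every fixed degree $k$ (equal to $\calF^k$ as soon as $n \geq -k+1$), hence $\calF \cong \varprojlim_n \tau_{\geq -n}\calF$ as $\calA$-dg-modules.

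Next, I would build inductively a tower $\{\calI_n\}_{n\geq 0}$ of K-injective $\calA$-dg-modules, with quasi-isomorphisms $\tau_{\geq -n}\calF \xrightarrow{\qis} \calI_n$ lifting the truncation projections, and surjective transitions $\calI_{n+1} \twoheadrightarrow \calI_n$ that split as morphisms of graded $\calA$-modules. The base step is Lemma \ref{lem:lemKinjective} applied to $\tau_{\geq 0}\calF$. For the induction step, observe that the kernel $\calK_{n+1}$ of $\tau_{\geq -n-1}\calF \twoheadrightarrow \tau_{\geq -n}\calF$ is a bounded complex concentrated in degrees $-n-1$ and $-n$, so Lemma \ref{lem:lemKinjective} provides a bounded-below K-injective resolution $\calK_{n+1} \xrightarrow{\qis} \calL_{n+1}$ with flabby graded components. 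Then define $\calI_{n+1}$ as an extension fitting in a short exact sequence $0 \to \calL_{n+1} \to \calI_{n+1} \to \calI_n \to 0$ that lifts $0 \to \calK_{n+1} \to \tau_{\geq -n-1}\calF \to \tau_{\geq -n}\calF \to 0$: concretely, take $\calI_{n+1} := \calL_{n+1} \oplus \calI_n$ as a graded $\calA$-module, with differential twisted by a homotopy-lift whose existence is guaranteed by the K-injectivity of $\calI_n$. Then $\calI_{n+1}$ is K-injective (as an extension of K-injectives), the transition map to $\calI_n$ is split-surjective on the nose, and the required quasi-isomorphism is compatible with the previous stage.

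Setting $\calI := \varprojlim_n \calI_n$, the tower $\{\calI_n\}$ is a \emph{special} inverse system in Spaltenstein's sense \cite[2.1]{SPARes}: surjective transitions that split as graded morphisms, with K-injective terms. Hence \cite[2.3, 2.4]{SPARes} (whose proofs adapt verbatim to the $\calA$-linear setting) yields the K-injectivity of $\calI$. To see that the induced map $\calF \to \calI$ is a quasi-isomorphism, note that in each fixed degree $k$ the system $(\calI_n)^k$ is eventually constant (since $\calI_n$ is bounded below by $-n$ and the transitions are split surjective), so $H^k(\calI) = H^k(\calI_n) = H^k(\tau_{\geq -n}\calF) = H^k(\calF)$ for $n$ large enough. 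Assumption $(\dag)$, combined with the flabbiness of the components of each $\calI_n$ inherited from Lemma \ref{lem:lemKinjective}, ensures that this stabilization is preserved by the formation of cohomology sheaves via vanishing of the relevant $\varprojlim^1$.

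The main obstacle will be the inductive step: producing an \emph{honest} split short exact sequence $0 \to \calL_{n+1} \to \calI_{n+1} \to \calI_n \to 0$ matching the truncation sequence strictly, rather than merely up to homotopy, so that the tower is a genuine inverse system. This requires combining the K-injectivity of $\calI_n$ (which only provides lifts up to homotopy) with a careful choice of twisted differential on $\calL_{n+1} \oplus \calI_n$ to rectify the homotopy into an actual morphism of $\calA$-dg-modules, so that Spaltenstein's special-system criterion applies directly. Once this bookkeeping is in place, passage to the limit and verification of the two conclusions is essentially formal.
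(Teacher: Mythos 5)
Your overall strategy is the same as the paper's: truncate, build a compatible tower of K-injective resolutions $\calI_n$ of $\tau_{\geq -n}\calF$ with split-surjective transitions, pass to the inverse limit using Spaltenstein's ``special inverse system'' criterion for K-injectivity, and then verify the limit map is a quasi-isomorphism. The construction of the tower and the K-injectivity of $\varprojlim \calI_n$ are handled correctly in outline, and you correctly identify that the splitting must be achieved on the nose rather than merely up to homotopy.

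However, the final step contains a genuine error. You claim that ``in each fixed degree $k$ the system $(\calI_n)^k$ is eventually constant (since $\calI_n$ is bounded below by $-n$ and the transitions are split surjective).'' This is false. The kernel $\calK_{n+1}$ of $\calI_{n+1} \twoheadrightarrow \calI_n$ is a \emph{flabby resolution} of $\calH^{-n-1}(\calF)[n+1]$, and such a resolution, while bounded below in degree $-n-1$, typically has nonzero components in \emph{all} degrees $\geq -n-1$. Since $-n-1 \leq k$ for all large $n$, the graded piece $(\calI_n)^k$ does not stabilize. Being bounded below by $-n$ (which \emph{decreases} with $n$) plus split surjectivity gives no such constancy.

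What does stabilize, and what the paper's proof actually uses, is the cohomology $H^k(\Gamma(U,\calI_n))$ for fixed $U$ open in $X$. The key is that $\calK_n$ is a $\Gamma(U,-)$-acyclic (flabby) resolution of a sheaf shifted into degree $-n$, so $H^k(\Gamma(U,\calK_n))$ computes $R^{k+n}\Gamma(U,\calH^{-n}(\calF))$, which vanishes once $k+n > \dim X$ by Grothendieck's vanishing theorem (this is where assumption $(\dag)$ enters, as condition $3.12.(1)$ of Spaltenstein). This gives stabilization of $H^k(\Gamma(U,\calI_n))$ for $n$ large relative to $k$ and $\dim X$, and, combined with the Mittag-Leffler condition coming from the split surjections, yields the desired quasi-isomorphism upon passage to the limit. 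Your invocation of ``$\varprojlim^1$'' and flabbiness gestures at the right ingredients, but the actual claim about complex-level stabilization is wrong and cannot be repaired without replacing it by the Grothendieck-vanishing argument.
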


\begin{proof} Using the preceding lemma, the construction of
\cite[3.7]{SPARes} generalizes: there
exists an inverse system of morphisms $f_n :
\tau_{\geq -n}\calF \xrightarrow{\qis} \calI_{n}$, where $f_n$ is a
quasi-isomorphism, $\calI_n$ is a K-injective $\calA$-dg-module with
$\calI_n^p=0$ for $p < -n$ and $\calI_n^p$ flabby for $p \geq -n$,
and, furthermore, the morphisms $\calI_{n+1} \to \calI_n$ are
surjective and split as morphisms of graded $\calA$-modules. Then, as
in the previous lemma, $\varprojlim \calI_n$ is
K-injective. It remains only to prove that
$f:=\varprojlim f_n$ is a quasi-isomorphism. For this we can follow
\cite[3.13]{SPARes}. Indeed, using Grothendieck's
vanishing theorem (\cite[III.2.7]{HARAG}), condition $3.12.(1)$ of
\cite{SPARes} is satisfied with $\mathfrak{B}=\Mod(\calO_X)$, and
$d_x={\rm dim}(X)$ for any $x \in X$. Moreover, in the proof of
\cite[3.13]{SPARes}, the fact that the $\calI_n$ are K-injective over
$\calO_X$ is not really needed. In fact, we only need to know that,
for every $n$, the kernel $\calK_n$ of the morphism $\calI_n \to
\calI_{n-1}$ is a resolution of $\calH^{-n}(\calF)[n]$ which is
acyclic for the functors $\Gamma(U,-)$ for every open $U \subset
X$. In our case, $\calK_n$ is a flabby resolution of
$\calH^{-n}(\calF)[n]$. Hence $f$ is a
quasi-isomorphism. \end{proof}

\subsection{Derived functors}\label{ss:paragraphderivedfunctors}

In this subsection we construct the derived functors (see \cite[1.2]{DELCoh}, \cite[sections
13-15]{KELUse}) of
$\Hom_{\calA}(-,-)$ and $(- \otimes_{\calA} -)$.

Let $(X,\calO_X)$ and $(Y,\calO_Y)$ be commutative ringed spaces, and let $\calA$, resp. $\calB$, be a dg-algebra on $X$, resp. $Y$. Let $F: \calH(\calA) \to
\calH(\calB)$ be a triangulated functor. Following Deligne, one says that the right derived functor $RF$ is defined at $\calF \in \calH(\calA)$ if $\calF$ has a right
resolution $\calX$ which is $F$-split on the right, i.e.~every right
resolution $\calY$ of $\calX$ has itself a right resolution $\calZ$
such that $F$ induces a quasi-isomorphism between $F(\calX)$ and
$F(\calZ)$. Similarly, left derived functors are
defined at objects which are $F$-split on the left. Remark that a K-injective $\calA$-dg-module
is $F$-split on the right for any such functor (see
Definition \ref{def:defKinjective}$\rmi$). Hence, under
assumptions $(\dag)$, $(\dag \dag)$, right derived functors are
defined on the whole category $\calD(\calA)$, by Theorem
\ref{thm:Kinjresolution}.

Let ${\rm Ab}$ be the category of abelian groups, $\calH({\rm
  Ab})$ its homotopy category,
and $\calD({\rm Ab})$ its derived category. Consider the
bifunctor $\Hom_{\calA}(-,-) : \calH(\calA)^{\op} \times \calH(\calA)
\to \calH({\rm Ab}).$ Fix $\calF \in \calH(\calA)^{\op}$. Then we define the
functor $R\Hom_{\calA}(\calF,-) : \calD(\calA) \to
\calD({\rm Ab})$ as the right derived functor of
$\Hom_{\calA}(\calF,-)$. Now for each $\calG \in
\calD(\calA)$, the functor $R\Hom_{\calA}(-,\calG) :
\calH(\calA)^{\op} \to \calD({\rm Ab})$ sends quasi-isomorphisms to
isomorphisms, hence factorizes through a functor
$\calD(\calA)^{\op} \to \calD({\rm Ab})$, denoted
similarly. This defines the bifunctor
\[R\Hom_{\calA}(-,-) : \calD(\calA)^{\op} \times \calD(\calA)
\to \calD({\rm Ab}).\]

Now consider the bifunctor $(- \otimes_{\calA} -) :
\calH^{r}(\calA) \times \calH(\calA) \to \calH(\calO_{X}).$ As
above, for each $\calF$ in $\calH^{r}(\calA)$, by Theorem
\ref{thm:Kflatresolution} and Lemma \ref{lem:Kflatsplit} there are enough
objects split on the left (e.g.~K-flat dg-modules) for the functor
$(\calF \otimes_{\calA}
-) : \calH(\calA) \to \calH(\calO_{X})$. Hence, its left derived
functor $(\calF \lotimes_{\calA} -) : \calD(\calA) \to
\calD(\calO_{X})$ is well defined. Hence we have the derived bifunctor
\[(- \lotimes_{\calA} -) : \calD^{r}(\calA) \times \calD(\calA)
\to \calD(\calO_{X}).\]

\subsection{Direct and inverse image
functors}\label{ss:directinverseimage}

Use the same notation as in \S \ref{ss:paragraphderivedfunctors}. A pair such as $(X,\calA)$ is called a \emph{dg-ringed
space}. A \emph{morphism of dg-ringed spaces} $f : (X,\calA) \to
(Y,\calB)$ is a morphism $f_{0} : (X,\calO_{X}) \to (Y,\calO_{Y})$
of ringed spaces, together with a morphism of sheaves of
dg-algebras $f_{0}^{*}\calB \to \calA$.

We have a natural direct image functor $f_{*} : \calC(X,\calA) \to
\calC(Y,\calB)$ and its right derived functor
$$Rf_{*} : \calD(X,\calA) \to \calD(Y,\calB),$$ which can be computed
by means of right K-injective resolutions (see \S \ref{ss:paragraphderivedfunctors}). Similarly, there is a
natural inverse image functor \[f^{*} : \left\{
\begin{array}{ccc}
  \calC(Y,\calB) & \to & \calC(X,\calA) \\
  \calF & \mapsto & \calA \otimes_{f_{0}^{*}\calB} f_{0}^{*}\calF \\
\end{array} \right. .\] Its left derived functor $Lf^{*} : \calD(Y,\calB) \to
\calD(X,\calA)$ is defined on the whole of $\calD(\calA)$, and can be
computed by means
of left K-flat resolutions.

The following definition is adapted from \cite[5.11]{SPARes}:

\begin{defin}

The $\calA$-dg-module $\calF$ is said to be \emph{weakly K-injective}
if $\Hom_{\calA}(\calG,\calF)$ is acyclic for any acyclic K-flat
$\calA$-dg-module $\calG$.

\end{defin}

It is clear that K-injective implies
weakly K-injective. The following lemma is a more general (but weaker)
version of Lemma \ref{lem:coinductionKinjective}$\rmii$.

\begin{lem}\label{lem:directimageweaklyK-injective}

Let $\calF$ be a weakly K-injective $\calA$-dg-module. Then $f_* \calF$
is a weakly K-injective $\calB$-dg-module. In particular, a weakly
K-injective $\calA$-dg-module is also weakly K-injective when
considered as an $\calO_X$-dg-module.

\end{lem}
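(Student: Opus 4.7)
The plan is to use the adjunction between $f^*$ and $f_*$ combined with the defining property of weak K-injectivity. Let $\calG$ be an acyclic K-flat $\calB$-dg-module; we must show that the complex $\Hom_{\calB}(\calG, f_* \calF)$ is acyclic. The adjunction provides a canonical isomorphism of complexes
\[
\Hom_{\calB}(\calG, f_* \calF) \ \cong \ \Hom_{\calA}(f^* \calG, \calF),
\]
where, after the usual simplification via associativity, $f^* \calG \cong \calA \otimes_{f_0^{-1} \calB} f_0^{-1} \calG$. So it is enough to check that $f^* \calG$ is both acyclic and K-flat as an $\calA$-dg-module; the weak K-injectivity of $\calF$ then yields the result.

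The critical step is to show that $f_0^{-1} \calG$ is K-flat over $f_0^{-1} \calB$. Acyclicity is automatic from the exactness of $f_0^{-1}$. For K-flatness, I would argue stalkwise: since $(f_0^{-1} \calG)_x = \calG_{f_0(x)}$ and $(f_0^{-1} \calB)_x = \calB_{f_0(x)}$, it suffices to verify that each stalk $\calG_y$ is K-flat over $\calB_y$ in the sense of dg-modules over a dg-ring. This is the main obstacle, and I would handle it by a ``skyscraper'' construction: given an acyclic right $\calB_y$-dg-module $P$, the sheaf of right $\calB$-dg-modules whose sections over an open $V$ equal $P$ if $y \in V$ and $0$ otherwise is itself acyclic with stalk $P$ at $y$, and the K-flatness hypothesis on $\calG$ then forces $P \otimes_{\calB_y} \calG_y$ to be acyclic.

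Granted K-flatness of $f_0^{-1} \calG$ over $f_0^{-1} \calB$, for any acyclic right $\calA$-dg-module $\calM$, associativity gives
\[
\calM \otimes_{\calA} f^* \calG \ \cong \ \calM \otimes_{f_0^{-1} \calB} f_0^{-1} \calG,
\]
which is acyclic since $\calM$ is acyclic as a complex of abelian sheaves. This gives K-flatness of $f^* \calG$ over $\calA$. Its acyclicity follows from Lemma \ref{lem:Kflatsplit} applied to the dg-ringed space $(X, f_0^{-1} \calB)$, with K-flat acyclic dg-module $f_0^{-1} \calG$ and the right dg-module $\calA$. The ``in particular'' statement is the special case $(Y, \calB) = (X, \calO_X)$, $f_0 = \Id_X$, with $\calO_X \to \calA$ the structure map, so that $f_* \calF$ is literally $\calF$ regarded as an $\calO_X$-dg-module. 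Everything outside the stalkwise detection of K-flatness is a formal manipulation of adjunctions and associativity of the tensor product.
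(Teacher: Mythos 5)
Your proof is correct and reconstructs exactly the argument the paper delegates to \cite[5.15(b)]{SPARes}: reduce via the $\Hom$-complex adjunction to showing that $f^*$ carries acyclic K-flat $\calB$-dg-modules to acyclic K-flat $\calA$-dg-modules, verify K-flatness of $f_0^{-1}\calG$ at the stalks (the skyscraper construction is the right tool for this), and obtain acyclicity of $f^*\calG$ from Lemma~\ref{lem:Kflatsplit}. The ``in particular'' deduction via the morphism $(X,\calA)\to(X,\calO_X)$ is also precisely what the paper indicates.
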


\begin{proof} The first statement can be proved as in \cite[5.15(b)]{SPARes}. The second one follows, using the natural morphism $(X,\calA) \to (X,\calO_X)$. \end{proof}

Let $\For_X :
\calD(X,\calA) \to \calD(X,\calO_{X})$ and $\For_Y : \calD(Y,\calB) \to
\calD(Y,\calO_{Y})$ denote the forgetful functors. Let
$R(f_{0})_{*} : \calD(X,\calO_{X}) \to \calD(Y,\calO_{Y})$ be the
right derived functor of the morphism of dg-ringed spaces $f_{0}$.

\begin{cor}\label{cor:directimagediagram}

$\rmi$ There exists an isomorphism of functors $\For_Y \circ Rf_* \cong R(f_0)_* \circ \For_X$. In other words, $Rf_*$ is compatible with $R(f_0)_*$.

$\rmii$ If $(Z,\calC)$ is a dg-ringed space and $g : (Y,\calB)
\to (Z,\calC)$ a morphism, the natural
morphism of functors $R(g \circ f)_{*} \to Rg_{*} \circ Rf_{*}$ is
an isomorphism.

\end{cor}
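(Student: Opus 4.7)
The plan is to use K-injective resolutions (Theorem \ref{thm:Kinjresolution}) together with Lemma \ref{lem:directimageweaklyK-injective} in order to reduce both assertions to the fact that \emph{weakly} K-injective dg-modules are adapted for the direct image functor, in the spirit of Spaltenstein \cite[5.15]{SPARes}. So the preliminary step I would carry out is to check that if $\calI$ is a weakly K-injective $\calA$-dg-module that is acyclic, then $f_* \calI$ is acyclic; the argument of \cite[5.15]{SPARes} goes through, using assumption $(\dag)$ (via Grothendieck's vanishing theorem) and the existence of K-flat resolutions (Theorem \ref{thm:Kflatresolution}) to reduce to checking acyclicity of $\Hom$ with K-flats. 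Consequently, weakly K-injective dg-modules may be used to compute $Rf_*$.

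For $\rmi$, I would pick a K-injective resolution $\calF \xrightarrow{\qis} \calI$ over $\calA$, so that by definition $Rf_* \calF \cong f_* \calI$ in $\calD(Y,\calB)$. Since the forgetful functors do not modify the underlying complexes of $\calO$-modules, one has a canonical identification $\For_Y(f_* \calI) = (f_0)_* (\For_X \calI)$ in $\calC(Y,\calO_Y)$. By Lemma \ref{lem:directimageweaklyK-injective} (combined with the obvious fact that K-injective implies weakly K-injective), $\For_X(\calI)$ is weakly K-injective as an $\calO_X$-dg-module, hence computes $R(f_0)_*$ by the preliminary step. One obtains the desired isomorphism $\For_Y \circ Rf_* \cong R(f_0)_* \circ \For_X$.

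For $\rmii$, the same strategy applies: choose a K-injective resolution $\calF \xrightarrow{\qis} \calI$ over $\calA$. Then $Rf_*(\calF) \cong f_*(\calI)$ in $\calD(Y,\calB)$, and by Lemma \ref{lem:directimageweaklyK-injective}, $f_*(\calI)$ is weakly K-injective as a $\calB$-dg-module. Applying the preliminary step on $(Y,\calB)$, this object is adapted for $Rg_*$, so that
\[
Rg_*(Rf_* \calF) \cong Rg_*(f_* \calI) \cong g_*(f_* \calI) = (g \circ f)_* \calI \cong R(g \circ f)_* \calF,
\]
and one checks that this chain of isomorphisms identifies with the natural morphism $R(g \circ f)_* \to Rg_* \circ Rf_*$.

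The main obstacle is really the preliminary step, i.e.~showing that weakly K-injective dg-modules compute $Rf_*$ for sheaves of dg-modules over sheaves of dg-algebras. Everything else is formal manipulation of resolutions. The noetherian and finite-dimensional hypothesis $(\dag)$ is essential here, as it controls the cohomological dimension uniformly; the boundedness assumption $(\dag \dag)$ on the dg-algebras ensures the truncation functors used to build K-injective resolutions behave well, so that the machinery of Theorem \ref{thm:Kinjresolution} actually applies.
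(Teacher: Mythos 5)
Your proof is correct and follows essentially the same route as the paper: take a K-injective resolution over $\calA$, use Lemma \ref{lem:directimageweaklyK-injective} to descend weak K-injectivity along forgetful and direct image functors, and invoke the Spaltenstein-style fact that acyclic weakly K-injective dg-modules have acyclic direct image. The only thing you elide (and the paper flags explicitly by citing \cite[14.2]{KELUse}) is the formal step from your preliminary acyclicity lemma to the statement that weakly K-injectives are $f_*$-split in Deligne's sense; this does follow, essentially because the class of weakly K-injective dg-modules is closed under cones, so the cone of the quasi-isomorphism from your weakly K-injective object to a further K-injective resolution is itself acyclic and weakly K-injective, and your preliminary step then applies to it.
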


\begin{proof} $\rmi$ The isomorphism follows from Lemma \ref{lem:directimageweaklyK-injective} and
\cite[6.7]{SPARes} (which says that $R(f_0)_*$ can be
computed using a weakly K-injective resolution).

$\rmii$ If $\calF$ is a weakly K-injective, acyclic $\calA$-dg-module, then $\calF$ is also acyclic and weakly K-injective as an
$\calO_X$-dg-module (by Lemma
\ref{lem:directimageweaklyK-injective}). Hence $f_* \calF=(f_0)_* \calF$
is acyclic
(see \cite[5.16]{SPARes}). It follows that weakly K-injective
dg-modules are split for direct image functors. Then the result
follows from classical facts on derived
functors (see \cite[14.2]{KELUse}). \end{proof}

Similarly, one can prove:

\begin{prop}\label{prop:compositioninverseimages}

If $g : (Y,\calB) \to (Z,\calC)$ is a second morphism of dg-ringed
spaces, then there exists an isomorphism of functors $L(g
\circ f)^{*} \cong Lf^{*} \circ Lg^{*}$.

\end{prop}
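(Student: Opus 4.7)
The plan is to mirror the proof of Corollary \ref{cor:directimagediagram}$\rmii$, swapping the roles of K-injective/weakly K-injective resolutions (suited to right-derived direct images) with K-flat resolutions (suited to left-derived inverse images), whose existence is guaranteed by Theorem \ref{thm:Kflatresolution}. The general principle from \cite[14.2]{KELUse} then reduces the problem to showing that the class used to compute $Lg^*$ is preserved by $g^*$, and lands in a class which computes $Lf^*$.

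Concretely, the first step is to prove that if $\calP$ is a K-flat $\calC$-dg-module, then $g^*\calP$ is a K-flat $\calB$-dg-module. For any right $\calB$-dg-module $\calG$ there is a canonical isomorphism
\[ \calG \otimes_{\calB} g^*\calP \;=\; \calG \otimes_{\calB} \bigl( \calB \otimes_{g_0^*\calC} g_0^*\calP \bigr) \;\cong\; \calG \otimes_{g_0^*\calC} g_0^*\calP, \]
where $\calG$ is viewed as a right $g_0^*\calC$-dg-module via the structure morphism $g_0^*\calC \to \calB$. This reduces the claim to the corresponding statement about ordinary ringed spaces, namely that $g_0^*$ preserves K-flatness of $\calO$-dg-modules, which is proved in \cite[5.4.(f)]{SPARes}. (Acyclicity of $\calG$ as a $\calB$-dg-module coincides with acyclicity of its underlying complex of abelian sheaves, hence with its acyclicity as a $g_0^*\calC$-dg-module, so the K-flatness criterion of Definition \ref{def:defKinjective}$\rmii$ applies.)

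Second, given $\calF \in \calD(Z,\calC)$, choose a K-flat resolution $\calP \xrightarrow{\qis} \calF$ by Theorem \ref{thm:Kflatresolution}. By construction $Lg^*\calF \cong g^*\calP$ in $\calD(Y,\calB)$. By the first step, $g^*\calP$ is K-flat over $\calB$, so it computes $Lf^*$: $Lf^*(g^*\calP) \cong f^*g^*\calP$. On the other hand, $\calP$ is already K-flat over $\calC$, so it also computes $L(g\circ f)^*$, giving $L(g\circ f)^*\calF \cong (g\circ f)^*\calP$. Since at the level of underived functors one has the obvious equality $f^*\circ g^* = (g\circ f)^*$, we obtain
\[ Lf^* \circ Lg^*(\calF) \;\cong\; f^*g^*\calP \;=\; (g\circ f)^*\calP \;\cong\; L(g\circ f)^*\calF. \]

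The third step is to check that this identification is functorial, i.e.~independent of the choice of resolution up to canonical isomorphism and natural in $\calF$. This is standard once the class of K-flat resolutions has the two stability properties used above (existence, and preservation by $g^*$): it is precisely the setup of Deligne's formalism recalled in \cite[1.2]{DELCoh}, applied as in \cite[14.2]{KELUse}. The main obstacle is the first step — the K-flatness preservation under $g^*$ in the dg-setting — but as indicated it reduces cleanly to Spaltenstein's result for ringed spaces by rewriting the tensor product; all subsequent manipulations are formal.
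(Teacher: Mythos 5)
Your proof is correct and is exactly the argument the paper leaves implicit behind the word ``Similarly'': mirror the proof of Corollary \ref{cor:directimagediagram}$\rmii$ with K-flat objects in place of weakly K-injective ones, using that K-flat resolutions exist (Theorem \ref{thm:Kflatresolution}) and are preserved by $g^*$, then invoke Deligne's formalism. One small precision worth making explicit: after the cancellation $\calG \otimes_{\calB} g^*\calP \cong \calG \otimes_{g_0^*\calC} g_0^*\calP$, a second cancellation gives $\calG \otimes_{g_0^{-1}\calC} g_0^{-1}\calP$, which reduces the claim to K-flatness of $g_0^{-1}\calP$ over $g_0^{-1}\calC$ — a statement about a possibly noncommutative sheaf of dg-algebras that is not literally Spaltenstein's $\calO$-module result \cite[5.4(f)]{SPARes}, but follows by the identical (stalkwise, or $\underrightarrow{\mathfrak P}$-based) argument since $g_0^{-1}$ is exact and commutes with stalks and tensor products.
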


\begin{defin} 

The morphism $f: (X,\calA) \to (Y,\calB)$ is a
\emph{quasi-isomor\-phism} if $X=Y$, $f_{0}=\Id$,
and the associated morphism $\phi : \calB \to \calA$ induces an isomorphism
on cohomology. 

\end{defin}

The following result is an immediate extension of
\cite[Theorem 10.12.5.1]{BLEqu}, and can be proved similarly. It says that the category $\calD(X,\calA)$
depends on $\calA$ only up to quasi-isomor\-phism.

\begin{prop} \label{prop:qisequivalence}

Let $f : (X,\calA) \to (X,\calB)$ be a quasi-isomorphism. Then
$$Rf_{*} : \calD(X,\calA) \to \calD(X,\calB) \quad \text{and} \quad
Lf^{*} : \calD(X,\calB) \to \calD(X,\calA)$$ are
equivalences of categories, quasi-inverse to each other.

\end{prop}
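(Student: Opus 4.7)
The plan is to verify that the unit and counit of the derived adjunction $(Lf^* \dashv Rf_*)$, which is induced from the underived adjunction of \S\ref{ss:paragraphHomTens} via the existence of K-flat and K-injective resolutions (\S\ref{ss:existenceresolutions}), are natural isomorphisms. Since $f_0 = \mathrm{Id}_X$, the underived functor $f_*$ is just restriction of scalars along $\phi : \calB \to \calA$: it preserves the underlying complexes of $\calO_X$-modules, hence preserves quasi-isomorphisms, so $Rf_* = f_*$ requires no resolution. On the other hand $Lf^*$ is computed by $\calA \otimes_\calB \calP$ for $\calP \xrightarrow{\qis} \calM$ a K-flat resolution over $\calB$, which exists by Theorem \ref{thm:Kflatresolution}.

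The central technical observation is the following: for any K-flat $\calB$-dg-module $\calP$, the canonical map
\[
\phi \otimes \mathrm{Id}_\calP \ : \ \calP \ = \ \calB \otimes_\calB \calP \ \longrightarrow \ \calA \otimes_\calB \calP
\]
is a quasi-isomorphism. Indeed, its mapping cone is canonically $\mathrm{cone}(\phi) \otimes_\calB \calP$, and $\mathrm{cone}(\phi)$ is acyclic (since $\phi$ is a quasi-isomorphism) as a right $\calB$-dg-module via $\phi$; hence Lemma \ref{lem:Kflatsplit} yields the acyclicity.

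From here the verification is direct. For $\calM \in \calD(X,\calB)$ with K-flat resolution $\calP \xrightarrow{\qis} \calM$ over $\calB$, the unit $\calM \to Rf_* Lf^*(\calM)$ is represented in $\calD(X,\calB)$ by the map $\calP \to \calA \otimes_\calB \calP$ of the previous paragraph, which is a quasi-isomorphism. For $\calN \in \calD(X,\calA)$, take a K-flat resolution $\pi : \calP \to \calN$ of the underlying $\calB$-dg-module; then $Lf^* Rf_*(\calN) = \calA \otimes_\calB \calP$, and the counit is the action map $a \otimes p \mapsto a \cdot \pi(p)$. In the factorization
\[
\calP \ \longrightarrow \ \calA \otimes_\calB \calP \ \longrightarrow \ \calN,
\]
the composition equals $\pi$ (a quasi-isomorphism), and the first arrow is a quasi-isomorphism by the previous step; by the 2-out-of-3 property the counit is a quasi-isomorphism. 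The only delicate bookkeeping is to remember, at each step, over which algebra a given dg-module is being resolved K-flatly; apart from that, the situation is entirely parallel to the single dg-ring case of \cite[Theorem 10.12.5.1]{BLEqu}, so I do not expect any essential obstacle.
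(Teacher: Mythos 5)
Your proof is correct and spells out the argument the paper delegates to \cite[Theorem 10.12.5.1]{BLEqu}: $Rf_* = f_*$ is exact restriction of scalars, $Lf^*$ is computed on a K-flat resolution, and the key quasi-isomorphism $\calP \to \calA \otimes_{\calB} \calP$ is established via the acyclic cone of $\phi$. One small slip: the acyclicity of $\mathrm{cone}(\phi) \otimes_{\calB} \calP$ is a direct instance of the \emph{definition} of K-flatness (Definition \ref{def:defKinjective}(b), applied to the acyclic right $\calB$-dg-module $\mathrm{cone}(\phi)$ and the K-flat left $\calB$-dg-module $\calP$), not of Lemma \ref{lem:Kflatsplit}, which handles the complementary case where the K-flat module is itself acyclic.
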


\subsection{Adjunction}\label{ss:paragraphadjunction}

Let $f : (X,\calA) \to (Y,\calB)$ be a morphism of dg-ringed
spaces. In this subsection we show that $Rf_*$ and $Lf^*$ are adjoint
functors.

Following \cite[5.0]{SPARes}, we denote by $\mathfrak{P}(X)$ the class of
dg-modules $\calF$ in $\calC(X,\calO_X)$ which are bounded above, and
such that for each $i \in \mathbb{Z}$, $\calF^i$ is a direct sum of
sheaves of the form $\calO_{U \subset X}$ (the extension by zero of
$\calO_X |_{U}$ to $X$) for $U$ open in $X$. We denote\footnote{This
  subcategory is a priori
smaller than the one considered in \cite[2.9]{SPARes}, which allows more
general direct limits, but it will be sufficient for us.} by
$\underrightarrow{\mathfrak{P}}(X)$ the smallest full subcategory of
$\calC(X,\calO_X)$ containing $\mathfrak{P}(X)$ and such that for any
direct system $(\calF_n)_{n \geq 0}$ of objects of
$\underrightarrow{\mathfrak{P}}(X)$ such that the morphisms $\calF_n
\to \calF_{n+1}$ are injective and split as morphisms of graded
$\calA$-modules, the object $\varinjlim \calF_n$ is in
$\underrightarrow{\mathfrak{P}}(X)$. The objects in
$\underrightarrow{\mathfrak{P}}(X)$ are K-flat (as in \cite[5.5]{SPARes}).

\begin{lem}\label{lem:resolutionsRHom}

Let $\calF$ be a K-flat $\calA$-dg-module, and $\calG$ a weakly
K-injective, acyclic $\calA$-dg-module. Then $\Hom_{\calA}(\calF,\calG)$ is acyclic.

\end{lem}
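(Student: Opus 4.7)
The plan is to reduce the statement to the case $\calA = \calO_X$, which is \cite[5.12]{SPARes}, by resolving $\calF$ using the construction in the proof of Theorem \ref{thm:Kflatresolution}.

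Apply that construction to produce a quasi-isomorphism $\calP \xrightarrow{\qis} \calF$, where $\calP = \varinjlim_p \calP_{\leq p}$ is a direct limit along morphisms which split as morphisms of graded $\calA$-modules, and each $\calP_{\leq p}$ carries a finite filtration (also splitting as a filtration of graded $\calA$-modules) whose subquotients are shifts of induced dg-modules $\Ind(\calQ_j)$, with $\calQ_j$ K-flat over $\calO_X$. Since $\calF$ is K-flat by hypothesis and $\calP$ is K-flat, the cone $\calN$ of $\calP \to \calF$ is K-flat and acyclic; hence by the definition of weak K-injectivity, $\Hom_\calA(\calN, \calG)$ is acyclic. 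From the triangle $\calN \to \calP \to \calF \to \calN[1]$ it thus suffices to show that $\Hom_\calA(\calP, \calG)$ is acyclic.

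Since $\calP = \varinjlim \calP_{\leq p}$ with split transition maps, we have $\Hom_\calA(\calP, \calG) \cong \varprojlim \Hom_\calA(\calP_{\leq p}, \calG)$, and the transition morphisms in this inverse system are surjective. By the ``special inverse system'' argument of \cite[2.1--2.4]{SPARes}, it is enough to show that each $\Hom_\calA(\calP_{\leq p}, \calG)$ is acyclic. Using the finite filtration of $\calP_{\leq p}$, which splits at the graded level and therefore yields short exact sequences of $\Hom$-complexes, an induction on the filtration length and the associated long exact sequences in cohomology reduce the problem to the acyclicity of $\Hom_\calA(\Ind(\calQ_j), \calG)$ for each $j$.

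Finally, by the adjunction \eqref{eq:adjunctionInd}, we have $\Hom_\calA(\Ind(\calQ_j), \calG) \cong \Hom_{\calO_X}(\calQ_j, \For(\calG))$, where $\calQ_j$ is a K-flat $\calO_X$-dg-module and $\For(\calG)$ is weakly K-injective over $\calO_X$ by Lemma \ref{lem:directimageweaklyK-injective} (and still acyclic). This is the case $\calA = \calO_X$ of the statement, which is \cite[5.12]{SPARes}. The main obstacle is the inverse limit step: one must use the splitting properties of the filtration built into the construction of Theorem \ref{thm:Kflatresolution} to guarantee that the Mittag-Leffler condition holds, so that the inverse limit of the acyclic complexes $\Hom_\calA(\calP_{\leq p}, \calG)$ remains acyclic.
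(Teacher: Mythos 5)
Your proof takes essentially the same route as the paper's: reduce to $\calP$ via the K-flat acyclic cone and weak K-injectivity of $\calG$, then use the filtration of the $\calP_{\leq p}$ with induced subquotients, adjunction \eqref{eq:adjunctionInd} to pass to $\calO_X$, and the special-inverse-system argument of \cite[2.3, 2.4]{SPARes}.

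One small point worth double-checking: the paper keeps track of the fact that the $\calO_X$-dg-modules $\calQ_j$ produced by the construction of Theorem \ref{thm:Kflatresolution} lie in the specific class $\underrightarrow{\mathfrak{P}}(X)$, and accordingly invokes \cite[5.20]{SPARes} (which is a statement about that class) rather than \cite[5.12]{SPARes}. You only record that the $\calQ_j$'s are K-flat over $\calO_X$ and cite 5.12 for the case $\calA = \calO_X$; you should verify that 5.12 actually covers arbitrary K-flat $\calO_X$-dg-modules. If it does not, the gap is easy to close, since the construction you invoke does produce $\calQ_j \in \underrightarrow{\mathfrak{P}}(X)$, so the more careful tracking lets you use \cite[5.20]{SPARes} exactly as the paper does.
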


\begin{proof} By Lemma \ref{lem:directimageweaklyK-injective}, $\calG$ is
also weakly K-injective as an $\calO_X$-dg-module. Consider the class
$\mathfrak{Q}$ of objects $\calE$ of $\calC(X,\calA)$ such that
$\Hom_{\calA}(\calE,\calG)$ is acyclic. By \cite[5.20]{SPARes} and
\eqref{eq:adjunctionInd}, $\mathfrak{Q}$ contains the class
$\mathfrak{C}$ of objects of the form $\Ind(\calM)$
for $\calM \in \underrightarrow{\mathfrak{P}}(X)$. Now, using the same
proof as that of Theorem \ref{thm:Kflatresolution}, there exists a direct
system $(\calP_{\leq n})_{n \geq 0}$ of $\calA$-dg-modules such that
each $\calP_{\leq n}$ has a finite filtration which subquotients in
$\mathfrak{C}$ and such that the morphisms $\calP_{\leq n} \to
\calP_{\leq n+1}$ are injective and split as morphisms of graded
$\calA$-modules, and a
quasi-isomorphism $\calP:=\varinjlim \calP_{\leq n} \to \calF$. Using again
\cite[2.3, 2.4]{SPARes}, $\calP$ is in
$\mathfrak{Q}$. As $\calG$ is weakly K-injective, and $\calF$ and
$\calP$ are K-flat, the morphism
$\Hom_{\calA}(\calF,\calG) \to \Hom_{\calA}(\calP,\calG)$ is a
quasi-isomorphism. \end{proof}

Using Lemma \ref{lem:resolutionsRHom}, the following result can be proved as in \cite[6.7(c)]{SPARes}. Alternatively, the adjunction statement also follows from \cite[13.6]{KELUse}.

\begin{thm}

The functors $Lf^*$ and $Rf_*$ are adjoint. More precisely, for $\calF \in \calD(Y,\calB)$ and $\calG \in \calD(X,\calA)$ there
exists a functorial isomorphism \[R\Hom_{\calA}(Lf^* \calF,\calG) \cong
R\Hom_{\calB}(\calF, Rf_* \calG).\]

\end{thm}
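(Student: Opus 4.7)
The plan is to reduce the derived adjunction to the underived adjunction between $f^{*}$ and $f_{*}$ at the complex level, and then upgrade the result using the resolutions provided by \S\ref{ss:existenceresolutions}. First, choose a K-flat resolution $\calP \xrightarrow{\qis} \calF$ in $\calC(Y,\calB)$ (Theorem \ref{thm:Kflatresolution}) and a K-injective resolution $\calG \xrightarrow{\qis} \calI$ in $\calC(X,\calA)$ (Theorem \ref{thm:Kinjresolution}), so that $Lf^{*}\calF \cong f^{*}\calP$ in $\calD(X,\calA)$ and $Rf_{*}\calG \cong f_{*}\calI$ in $\calD(Y,\calB)$. The heart of the argument is the chain of natural morphisms
\[ R\Hom_{\calA}(Lf^{*}\calF,\calG) \;\cong\; \Hom_{\calA}(f^{*}\calP,\calI) \;\cong\; \Hom_{\calB}(\calP, f_{*}\calI) \;\cong\; R\Hom_{\calB}(\calF, Rf_{*}\calG), \]
where the first isomorphism holds because K-injectivity of $\calI$ makes $\Hom_{\calA}(-,\calI)$ already compute $R\Hom_{\calA}(-,\calI)$, and the middle isomorphism is the $\Hom$-complex version of the standard sheaf-level $(f^{*},f_{*})$ adjunction, upgraded to dg-modules using the compatibility of the formula \eqref{eq:diffHom} with the adjunction.

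The delicate point is the last isomorphism, since $\calP$ is not a resolution of the right kind to compute $R\Hom_{\calB}(\calF, f_{*}\calI)$ in the first variable. I would address it by picking a further K-injective resolution $f_{*}\calI \xrightarrow{\qis} \calJ$, with acyclic cone $\calC$. By Lemma \ref{lem:directimageweaklyK-injective}, $f_{*}\calI$ is weakly K-injective (since $\calI$ is K-injective and hence weakly K-injective on $X$), and $\calJ$ is trivially weakly K-injective; since weakly K-injective dg-modules form a triangulated subcategory of $\calH(\calB)$, the cone $\calC$ is weakly K-injective and acyclic. Applying $\Hom_{\calB}(\calP,-)$ to the distinguished triangle $f_{*}\calI \to \calJ \to \calC \to$ and invoking Lemma \ref{lem:resolutionsRHom} (which applies because $\calP$ is K-flat and $\calC$ is acyclic and weakly K-injective), we conclude that $\Hom_{\calB}(\calP,\calC)$ is acyclic, so $\Hom_{\calB}(\calP, f_{*}\calI) \xrightarrow{\qis} \Hom_{\calB}(\calP, \calJ)$. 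Combined with the quasi-isomorphism $\Hom_{\calB}(\calF, \calJ) \xrightarrow{\qis} \Hom_{\calB}(\calP, \calJ)$ (from the qis $\calP \to \calF$ and K-injectivity of $\calJ$), this identifies $\Hom_{\calB}(\calP, f_{*}\calI)$ with $R\Hom_{\calB}(\calF, f_{*}\calI) = R\Hom_{\calB}(\calF, Rf_{*}\calG)$.

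Functoriality in both arguments follows because resolutions are canonical up to quasi-isomorphism in the respective derived categories, and K-injective targets turn quasi-isomorphisms into isomorphisms in the homotopy category. The main obstacle is really the last step: the direct image of a K-injective module is in general only weakly K-injective rather than K-injective, and this is precisely the mismatch that the auxiliary notion of weak K-injectivity and Lemma \ref{lem:resolutionsRHom} were introduced to handle.
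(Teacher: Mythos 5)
Your proof is correct and follows essentially the same route the paper indicates: the paper cites Spaltenstein \cite[6.7(c)]{SPARes} together with Lemma \ref{lem:resolutionsRHom}, and your argument is precisely a worked-out version of that reference, with the crucial step being exactly the one you highlight — using Lemma \ref{lem:resolutionsRHom} together with Lemma \ref{lem:directimageweaklyK-injective} to show that $\Hom_{\calB}(\calP, f_*\calI) \to \Hom_{\calB}(\calP, \calJ)$ is a quasi-isomorphism even though $f_*\calI$ is only weakly K-injective. All the intermediate claims (the cone of $f_*\calI \to \calJ$ is acyclic and weakly K-injective, the underived adjunction is a dg-level isomorphism compatible with the differential \eqref{eq:diffHom}) are valid and match what Spaltenstein's argument requires.
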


\subsection{The $\Gm$-equivariant case}\label{ss:gradedcase}

In this subsection we explain how one can adapt the preceding
constructions to the case when $\calA$ is equipped with an additionnal
grading, called the ``internal grading''. More precisely, in
addition to the assumptions of \S \ref{ss:paragraphdefinitions},
we assume we are given a decomposition $\calA \cong \oplus_{n \in
  \mathbb{Z}} \calA_n$ as an $\calO_X$-dg-module such that, for every
$n,m$ in $\mathbb{Z}$, $\mu_{\calA}(\calA_n \otimes
\calA_m) \subset \calA_{n+m}$. We call such a data a
$\Gm$-\emph{equivariant dg-algebra} (in short:
$\Gm$-dg-algebra). Geometrically, if we equip the
space $X$ with a trivial $\Gm$-action, such a grading indeed
corresponds to a $\Gm$-equivariant structure. In what follows,
$\calO_X$ will be considered as a $\Gm$-equivariant dg-algebra
concentrated in degree $0$ for both gradings.

To avoid confusion, the first grading of $\calA$ will be called the
``cohomological grading''. When a homogeneous element of $\calA$ has
cohomological degree $i$ and internal degree $j$, we also say
that it has bidegree $(i,j)$.

We keep the assumptions $(\dag)$ and $(\dag \dag)$ of
\S \ref{ss:existenceresolutions}. We define as above the notion of $\Gm$-\emph{equivariant}
$\calA$-\emph{dg-module} (in short: $\Gm$-$\calA$-dg-module). This is
a sheaf of bigraded $\calA$-modules
$\calF=\bigoplus_{n,m \in \mathbb{Z}} \calF^n_m$, equipped with a
differential $d_{\calF}$ of bidegree $(1,0)$ satisfying the natural
compatibility condition. In a similar way we define morphisms between
dg-modules, and the categories $\calC_{\Gm}(X,\calA)$,
$\calH_{\Gm}(X,\calA)$, $\calD_{\Gm}(X,\calA)$. We also have natural
bifunctors $\Hom_{\calA,\Gm}(-,-)$ and $(- \otimes_{\calA,\Gm}
-)$. More precisely if $\calF$ and $\calG$ are $\Gm$-$\calA$-dg-modules, then $\Hom_{\calA,\Gm}(\calF,\calG)$ is the
complex of $\mathbb{Z}$-graded abelian groups whose $(p,q)$ term
consists of morphisms of $\calA$-modules mapping $\calF^i_j$ in
$\calG^{i+p}_{j+q}$. 

We also define the notions of $\Gm$-\emph{equivariant K-injective}
and $\Gm$-\emph{equiva\-riant K-flat} $\calA$-dg-modules, replacing
the bifunctors $\Hom_{\calA}(-,-)$, $(- \otimes_{\calA} -)$ by $\Hom_{\calA,\Gm}(-,-)$, $(- \otimes_{\calA,\Gm}
-)$. If $\calA=\calO_X$, a $\Gm$-equivariant dg-module is just a
direct sum of $\calO_X$-dg-modules indexed by $\mathbb{Z}$.

The proof of the following lemma is easy, and left to the reader.

\begin{lem} \label{lem:GmO_X}

A $\Gm$-equivariant $\calO_X$-dg-module $\calG$ is $\Gm$-equivariant
K-injective (resp. K-flat) if and only if each of its internal
graded components $\calG_m$ is K-injective (resp. K-flat).

\end{lem}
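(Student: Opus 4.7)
The plan is to verify both equivalences directly from the definitions, exploiting the fact that the internal $\Gm$-grading provides a direct sum decomposition which is respected by the bifunctors $\Hom_{\calO_X,\Gm}(-,-)$ and $(-\otimes_{\calO_X,\Gm}-)$. Concretely, for $\Gm$-equivariant $\calO_X$-dg-modules $\calF=\bigoplus_n \calF_n$ and $\calG=\bigoplus_m \calG_m$, I would first record the natural identifications
\[
\Hom_{\calO_X,\Gm}(\calF,\calG)^{p,q} \;=\; \prod_{n \in \mathbb{Z}} \Hom_{\calO_X}(\calF_n, \calG_{n+q})^p,
\]
\[
(\calF \otimes_{\calO_X,\Gm} \calG)_n \;=\; \bigoplus_{m \in \mathbb{Z}} \calF_m \otimes_{\calO_X} \calG_{n-m},
\]
and observe that, since cohomology commutes with direct sums of complexes of $\calO_X$-modules, a $\Gm$-equivariant $\calO_X$-dg-module $\calF$ is acyclic if and only if each of its internal components $\calF_n$ is acyclic.

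For the \emph{if} direction in the K-injective case, assuming every $\calG_m$ is K-injective, I would take an arbitrary acyclic $\Gm$-equivariant $\calF$; then each $\calF_n$ is acyclic, so each complex $\Hom_{\calO_X}(\calF_n,\calG_{n+q})$ is acyclic, and since arbitrary products of acyclic complexes of abelian groups remain acyclic (products being exact in the category of abelian groups), the $q$-th internal-degree slice of $\Hom_{\calO_X,\Gm}(\calF,\calG)$ is acyclic for every $q$; hence the whole complex is acyclic. The K-flat case is handled identically, with direct sums in place of products.

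For the \emph{only if} direction, given an arbitrary acyclic ordinary $\calO_X$-dg-module $\calF_0$ and a fixed $m \in \mathbb{Z}$, I would view $\calF_0$ as a $\Gm$-equivariant dg-module concentrated in internal degree $0$. It is then $\Gm$-equivariantly acyclic, so by hypothesis $\Hom_{\calO_X,\Gm}(\calF_0,\calG)$ (resp. $\calF_0\otimes_{\calO_X,\Gm}\calG$) is acyclic; reading off its internal-degree-$m$ slice recovers precisely $\Hom_{\calO_X}(\calF_0,\calG_m)$ (resp. $\calF_0\otimes_{\calO_X}\calG_m$), which is therefore acyclic. Since $\calF_0$ was arbitrary, this yields K-injectivity (resp. K-flatness) of $\calG_m$.

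The entire argument is formal bookkeeping and I foresee no real obstacle; the only minor care required is the $\prod$-versus-$\bigoplus$ distinction between the Hom and tensor cases, which is harmless because each operation preserves acyclicity of complexes of abelian groups.
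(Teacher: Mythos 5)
The paper leaves the proof of this lemma to the reader, so there is no argument in the paper to compare against; judged on its own terms, your proof is correct and complete. The two identifications you record follow directly from the paper's definition of $\Hom_{\calA,\Gm}(-,-)$ in \S 1.6 and the observation there that a $\Gm$-equivariant $\calO_X$-dg-module is just a $\mathbb{Z}$-indexed direct sum of ordinary $\calO_X$-dg-modules, and the rest is the routine bookkeeping you describe, with the $\prod$-versus-$\bigoplus$ distinction and the concentration-in-internal-degree-$0$ trick correctly handled.
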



It follows from this lemma that there are enough K-injective and
K-flat objects in $\calC_{\Gm}(X,\calO_X)$. Then the proofs of
Theorems \ref{thm:Kflatresolution} and \ref{thm:Kinjresolution} generalize,
thus there are enough K-injective and K-flat objects in
$\calC_{\Gm}(X,\calA)$. Then one constructs the
derived bifunctors $R\Hom_{\calA,\Gm}(-,-)$ and $(-
\lotimes_{\calA,\Gm} -)$.

Let $\For : \calC_{\Gm}(X,\calA) \to \calC(X,\calA)$ denote the
natural forgetful functor. The following lemma is clear.

\begin{lem}\label{lem:lemKflatsGm}

For every $\Gm$-$\calA$-dg-module $\calF$, there exists a
$\Gm$-equivariant K-flat $\calA$-dg-module $\calP$ and a
$\Gm$-equivariant quasi-isomor\-phism $\calP \to \calF$ such that the image
$\For(\calP) \to \For(\calF)$ is a K-flat resolution in
$\calC(X,\calA)$.

\end{lem}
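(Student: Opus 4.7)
The plan is to adapt the proof of Theorem \ref{thm:Kflatresolution} to the $\Gm$-equivariant setting, maintaining K-flatness in both the $\Gm$-equivariant and non-equivariant senses at each step of the construction. The key point is that the operations involved (induction from $\calO_X$, direct sums over internal degrees, and totalization of direct limits) preserve both flavors of K-flatness simultaneously.

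First, for each internal degree $m$ I would invoke \cite[5.6]{SPARes} to produce a K-flat $\calO_X$-dg-module $(\calQ_0)_m$ together with a surjective quasi-isomorphism $(\calQ_0)_m \twoheadrightarrow \calF_m$. Summing over $m$ yields a $\Gm$-equivariant $\calO_X$-dg-module $\calQ_0$ and a $\Gm$-equivariant surjective quasi-isomorphism $\calQ_0 \twoheadrightarrow \For(\calF)$. By Lemma \ref{lem:GmO_X} together with the fact that a direct sum of K-flat $\calO_X$-dg-modules is K-flat, the module $\calQ_0$ is K-flat both $\Gm$-equivariantly and as an ordinary $\calO_X$-dg-module. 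Now set $\calP_0 := \Ind(\calQ_0) = \calA \otimes_{\calO_X} \calQ_0$, endowed with its natural $\Gm$-equivariant structure, producing a $\Gm$-equivariant surjection $\calP_0 \twoheadrightarrow \calF$. The projection formula $\calG \otimes_{\calA} \Ind(\calQ_0) \cong \calG \otimes_{\calO_X} \calQ_0$ (valid whether $\calG$ is $\Gm$-equivariant or not) gives, exactly as in Lemma \ref{lem:coinductionKinjective}$\rmi$, that $\calP_0$ is K-flat both $\Gm$-equivariantly over $\calA$ and as an ordinary $\calA$-dg-module. As in the proof of Theorem \ref{thm:Kflatresolution}, one also checks that $Z(\calP_0) \twoheadrightarrow Z(\calF)$ is surjective.

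Iterating this construction on the kernel of $\calP_0 \twoheadrightarrow \calF$ produces an exact sequence $\cdots \to \calP_1 \to \calP_0 \to \calF \to 0$ in $\calC_{\Gm}(X,\calA)$ with each $\calP_p$ doubly K-flat, such that the induced sequence $\cdots \to Z(\calP_1) \to Z(\calP_0) \to Z(\calF) \to 0$ is still exact. Setting $\calP := \text{Tot}^{\oplus}(\cdots \to \calP_1 \to \calP_0 \to 0)$, one realizes $\calP$ as the direct limit of the finite totalizations $\calP_{\leq p}$ and invokes \cite[5.4.(c)]{SPARes} in both categories to conclude that $\calP$ is K-flat in both senses. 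The acyclicity of the cone of $\calP \to \calF$ then follows from the double-complex argument at the end of the proof of Theorem \ref{thm:Kflatresolution}, which only uses the underlying $\calO_X$-module structure.

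The only real obstacle is the bookkeeping needed to verify that the analog of Lemma \ref{lem:coinductionKinjective}$\rmi$ and the filtration/direct-limit statements from \cite[5.4.(c)]{SPARes} hold in the $\Gm$-equivariant setting compatibly with the forgetful functor $\For$; both are straightforward given Lemma \ref{lem:GmO_X}, which reduces K-flatness for $\Gm$-equivariant $\calO_X$-dg-modules to K-flatness of each internal graded component.
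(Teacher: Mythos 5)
Your proof is correct, and it is essentially the argument the paper has in mind: the paper states this lemma with no proof at all (``The following lemma is clear''), precisely because the non-equivariant construction of Theorem \ref{thm:Kflatresolution} transfers verbatim once one knows, via Lemma \ref{lem:GmO_X}, that $\Gm$-equivariant K-flatness over $\calO_X$ is checked componentwise in the internal grading. Your write-up spells out exactly the three points that make the transfer go through: (1) the componentwise construction of $\calQ_0$ forces it to be simultaneously $\Gm$-equivariantly K-flat (Lemma \ref{lem:GmO_X}) and non-equivariantly K-flat (direct sums of K-flat modules are K-flat); (2) the induction $\Ind = \calA \otimes_{\calO_X}(-)$ and the projection formula respect the $\Gm$-structure, so the analog of Lemma \ref{lem:coinductionKinjective}$\rmi$ holds in both categories; and (3) the totalization and the final acyclicity check only involve the underlying $\calO_X$-dg-module structure and direct limits, both of which commute with $\For$. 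Nothing is missing.
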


By Lemma \ref{lem:lemKflatsGm}, $(-
\lotimes_{\calA,\Gm} -)$ and $(- \lotimes_{\calA} -)$ correspond under
the forgetful functors. Hence we denote both
bifunctors by $(- \lotimes_{\calA} -)$.

Now we consider direct and inverse image functors. Let $f:
(X,\calA) \to (Y,\calB)$ a $\Gm$-equivariant morphism of dg-ringed
spaces. There are functors $(f_{\Gm})_*$, $(f_{\Gm})^*$, and
their derived functors {\small \[ R(f_{\Gm})_* : \calD_{\Gm}(X,\calA)
  \to \calD_{\Gm}(Y,\calB) \ \text{ and } \ L(f_{\Gm})^* :
  \calD_{\Gm}(Y,\calB) \to \calD_{\Gm}(X,\calA).\]}As above, these functors are
adjoint. It follows from Lemma \ref{lem:lemKflatsGm} that the following diagram
is commutative: \[ \xymatrix@R=16pt{ \calD_{\Gm}(Y,\calB)
  \ar[rr]^-{L(f_{\Gm})^*} \ar[d]_-{\For} & & \calD_{\Gm}(X,\calA)
  \ar[d]^{\For} \\ \calD(Y,\calB) \ar[rr]^-{Lf^*} & & \calD(X,\calA). }
\] 

In order to prove the similar result for $R(f_{\Gm})_*$, we need
some preparation. First, consider the case of $\calO_X$. Recall the
notation of \S \ref{ss:paragraphadjunction}.

\begin{defin}

$\calF \in \calC(X,\calO_X)$ is said to be \emph{K-limp} if
$\Hom_{\calO_X}(\calG,\calF)$ is acyclic for every acyclic complex
$\calS$ in $\mathfrak{P}(X)$.

\end{defin}

This notion (defined in \cite[5.11]{SPARes}) is weaker than
weak K-injecti\-vity.

As $X$ is assumed to be noetherian, a direct sum of flabby sheaves on
$X$ is flabby (\cite[III.2.8]{HARAG}). Moreover, for every open $U \subset X$ the functor
$\Gamma(U,-)$ commutes with infinite direct sums (\cite[III.2.9]{HARAG}). Hence
the functor $R\Gamma(U,-)$ commutes with infinite direct sums in the
case of a family of $\calO_X$-dg-modules which are uniformly bounded
below. Let us generalize this fact.

\begin{lem}\label{lem:directsumK-limp}

A direct sum of K-limp $\calO_X$-dg-modules is K-limp.

\end{lem}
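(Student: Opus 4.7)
The plan is to adapt Spaltenstein's argument for the analogous result in $\calC(X,\calO_X)$ (see \cite[5.16]{SPARes}); the setup here is exactly the one in which that argument was designed to work, and the two facts the author has just recalled --- that on the noetherian space $X$ direct sums of flabby sheaves are flabby, and that by Grothendieck's vanishing theorem $R\Gamma(U,-)$ has cohomological dimension bounded by $\dim X$ --- are precisely the hypotheses one needs.

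First I would unfold the definition: given a family $(\calF_\alpha)_\alpha$ of K-limp $\calO_X$-dg-modules, fix an acyclic $\calS \in \mathfrak{P}(X)$; the goal is to prove that $\Hom_{\calO_X}(\calS, \bigoplus_\alpha \calF_\alpha)$ is acyclic. Writing $\calS^i=\bigoplus_{\beta\in B_i}\calO_{U_\beta^i\subset X}$ with $\calS^i=0$ for $i>N$, and using the identity $\Hom_{\calO_X}(\calO_{U\subset X},\calH)=\Gamma(U,\calH)$, one rewrites $\Hom^{\bullet}_{\calO_X}(\calS,\calF)$ as a bounded-above complex built out of products $\prod_i\prod_{\beta\in B_i}\Gamma(U_\beta^i,\calF^{\bullet+i})$. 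The comparison we want is between this complex for $\calF=\bigoplus_\alpha\calF_\alpha$ and $\bigoplus_\alpha\Hom^{\bullet}_{\calO_X}(\calS,\calF_\alpha)$, the latter being acyclic by hypothesis on each $\calF_\alpha$.

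Next I would filter $\calS$ by its brutal truncations $\sigma^{\geq -m}\calS$; each of these is a bounded complex in $\mathfrak{P}(X)$ and the transition maps are surjective and split as maps of graded $\calO_X$-modules, so the inverse/direct limit behaviour is controlled (as in \cite[2.1--2.4]{SPARes}). On each such bounded truncation the Hom complex involves only finitely many cohomological degrees of $\calS$, and for the computation of any fixed cohomology group $H^n\Hom_{\calO_X}(\sigma^{\geq -m}\calS,\calF)$ one may use the finite cohomological dimension of $R\Gamma(U,-)$ to truncate $\calF$ in a finite range of degrees. In that finite range, $\Gamma(U,-)$ commutes with direct sums and the relevant products over $\beta\in B_i$ interact trivially with the direct sum over $\alpha$, so the natural map
\[
\bigoplus_\alpha\Hom_{\calO_X}(\sigma^{\geq -m}\calS,\calF_\alpha)\ \longrightarrow\ \Hom_{\calO_X}\Bigl(\sigma^{\geq -m}\calS,\bigoplus_\alpha\calF_\alpha\Bigr)
\]
is a quasi-isomorphism. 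Passing to the limit $m\to\infty$, and using that both sides arise as limits of ``special'' inverse systems in Spaltenstein's sense, yields the desired quasi-isomorphism for $\calS$ itself, whence acyclicity of $\Hom_{\calO_X}(\calS,\bigoplus_\alpha\calF_\alpha)$.

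The main obstacle is the step I sketched above: the naive map $\bigoplus_\alpha\Hom_{\calO_X}(\calS,\calF_\alpha)\to \Hom_{\calO_X}(\calS,\bigoplus_\alpha\calF_\alpha)$ is \emph{not} an isomorphism of complexes, because $\Hom$ out of the infinite direct sums $\calS^i=\bigoplus_\beta \calO_{U_\beta^i\subset X}$ produces infinite products that do not commute with direct sums in $\calF$. The technical core is thus to show it is nevertheless a quasi-isomorphism, and this is exactly where the noetherian and finite-dimensional hypotheses on $X$ enter crucially: finite cohomological dimension allows one to replace ``infinite product over $\beta$'' with ``finite product'' up to quasi-isomorphism, after which the comparison becomes formal.
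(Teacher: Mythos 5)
Your approach tries to verify the definition of K-limp directly, by showing that $\Hom_{\calO_X}(\calS, \bigoplus_\alpha \calF_\alpha)$ is acyclic for every acyclic $\calS \in \mathfrak{P}(X)$. You correctly identify the central obstruction: because each $\calS^i = \bigoplus_{\beta\in B_i}\calO_{U^i_\beta\subset X}$, the $\Hom$ complex out of $\calS$ is built from \emph{products} over the index sets $B_i$, and products do not commute with the direct sum over $\alpha$. But the claimed resolution of this obstruction does not work. After passing to a bounded brutal truncation $\sigma^{\geq -m}\calS$ and truncating $\calF = \bigoplus_\alpha\calF_\alpha$ to a finite range of cohomological degrees, the individual terms of $\Hom_{\calO_X}(\sigma^{\geq -m}\calS, \calF)$ are still products $\prod_{\beta\in B_i}\Gamma(U^i_\beta, -)$ over the possibly infinite sets $B_i$ --- truncation in the cohomological direction does nothing to shrink $B_i$. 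The assertion that in a finite range ``the relevant products over $\beta \in B_i$ interact trivially with the direct sum over $\alpha$'' is false: for instance $\bigoplus_\alpha\prod_{\beta\in B_i} V_{\alpha,\beta}$ and $\prod_{\beta\in B_i}\bigoplus_\alpha V_{\alpha,\beta}$ differ whenever both index sets are infinite, and this discrepancy persists at the level of cohomology of the $\Hom$ complexes. You have given no reason for the natural map $\bigoplus_\alpha\Hom_{\calO_X}(\sigma^{\geq -m}\calS,\calF_\alpha) \to \Hom_{\calO_X}(\sigma^{\geq -m}\calS,\bigoplus_\alpha\calF_\alpha)$ to be a quasi-isomorphism, and I do not see one.

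The paper sidesteps this entirely by invoking a different characterization of K-limp complexes, \cite[5.17]{SPARes}: a complex $\calG$ is K-limp iff for every open $U \subset X$ the natural map $\Gamma(U,\calG) \to R\Gamma(U,\calG)$ is a quasi-isomorphism. Applying this one $U$ at a time removes the product over $\beta$ from the picture altogether. One then uses the specific K-injective resolution $\calI = \varprojlim_n \calI_n$ of $\bigoplus_j \calF_j$ built from resolutions of the truncations $\tau_{\geq -n}(\bigoplus_j\calF_j) = \bigoplus_j\tau_{\geq -n}\calF_j$; finite cohomological dimension (Grothendieck vanishing) gives, for fixed $m$ and $N \gg 0$, $H^m(\Gamma(U,\calI)) \cong H^m(\Gamma(U,\calI_N)) = R^m\Gamma(U,\bigoplus_j\tau_{\geq -N}\calF_j)$; the remark preceding the lemma (that $R\Gamma(U,-)$ commutes with direct sums of uniformly bounded-below complexes) converts this to $\bigoplus_j R^m\Gamma(U,\tau_{\geq -N}\calF_j)$; and K-limpness of each $\calF_j$ together with \cite[6.4]{SPARes} gives $R^m\Gamma(U,\calF_j) = H^m\Gamma(U,\calF_j)$, closing the loop. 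Your sketch uses several of the same ingredients (finite cohomological dimension, the inverse-limit structure of the Spaltenstein K-injective resolution), but without the \cite[5.17]{SPARes} reformulation you remain stuck with the product-versus-sum mismatch, and the argument as written does not go through.
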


\begin{proof} Let $(\calF_j)_{j \in J}$ be K-limp
$\calO_X$-dg-modules. Let $\bigoplus_{j \in J} \calF_j \to \calI$ be a
K-injective resolution, constructed as in \cite[3.7, 3.13]{SPARes}. Using
\cite[5.17]{SPARes}, it will be sufficient to
prove that for every open $U \subset X$, the morphism
$\Gamma(U,\bigoplus_{j \in J} \calF_j)=\bigoplus_{j \in J}
\Gamma(U,\calF_j) \to \Gamma(U,\calI)$ is a quasi-isomorphism. We fix
an open $U$, and $m \in \mathbb{Z}$. We have $\calI \cong
\varprojlim_n \calI_n$ where $\calI_n$ is a K-injective resolution of
$\tau_{\geq -n} (\bigoplus_{j \in J} \calF_j) \cong
\bigoplus_{j \in J} \tau_{\geq -n} \calF_j$. Then for $N$ sufficiently
large, we have an isomorphism $H^m(\Gamma(U,\calI)) \cong
H^m(\Gamma(U,\calI_N))$ (see the proof of \cite[3.13]{SPARes}). But
$H^m(\Gamma(U,\calI_N)) \cong R^m \Gamma(U, \bigoplus_{j \in J}
\tau_{\geq -N} \calF_j)$. Using the remark before the lemma, the
latter is isomorphic to $\bigoplus_{j \in J} R^m \Gamma(U,\tau_{\geq -N}
\calF_j)$. For the same reason, for $N$ sufficiently large
(uniformly in $j$) we have $R^m \Gamma(U,\tau_{\geq -N} \calF_j)
\cong R^m \Gamma(U,\calF_j)$. We conclude using the fact that, as $\calF_j$ is
K-limp, by \cite[6.4]{SPARes} we have $R^m \Gamma(U,\calF_j) \cong
H^m(\Gamma(U,\calF_j))$. \end{proof}

Let $f:(X,\calO_X) \to (Y,\calO_Y)$ be a morphism of ringed spaces, also considered as a morphism of $\Gm$-equivariant dg-ringed
spaces.

\begin{cor}\label{cor:cordirectimageGm}

For every family of objects $(\calF_i)_{i \in I}$ of
$\calC(X,\calO_X)$ we have $Rf_*( \bigoplus_{i \in I} \calF_i) \cong
\bigoplus_{i \in I} Rf_*
(\calF_i)$. Hence the following diagram commutes: \[
\xymatrix@R=16pt{ \calD_{\Gm}(X,\calO_X) \ar[rr]^-{R(f_{\Gm})_*}
  \ar[d]_-{\For} & & \calD_{\Gm}(Y,\calO_Y) \ar[d]^{\For} \\
  \calD(X,\calO_X) \ar[rr]^-{Rf_*} & & \calD(Y,\calO_Y). } \]

\end{cor}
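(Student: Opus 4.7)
The plan is to deduce both assertions from the K-limp machinery developed in the preceding lemmas, together with the elementary observation that $f_*$ commutes with direct sums of sheaves on a noetherian space.

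For the direct-sum formula, I would first pick K-injective resolutions $\calF_i \xrightarrow{\qis} \calI_i$ in $\calC(X,\calO_X)$, whose existence is guaranteed by Theorem \ref{thm:Kinjresolution}. K-injective dg-modules are a fortiori K-limp, so Lemma \ref{lem:directsumK-limp} shows that $\bigoplus_i \calI_i$ is K-limp, and the induced map $\bigoplus_i \calF_i \to \bigoplus_i \calI_i$ is still a quasi-isomorphism. By \cite[6.7]{SPARes}, K-limp resolutions (in fact already weakly K-injective resolutions) compute $Rf_*$, hence
\[
Rf_*\Bigl(\bigoplus_i \calF_i\Bigr) \ \cong \ f_*\Bigl(\bigoplus_i \calI_i\Bigr).
\]
Next, since $X$ is noetherian, every open subset $U \subset X$ is quasi-compact, and an easy check (finiteness of refinements of covers) shows that the presheaf direct sum $U \mapsto \bigoplus_i \calG_i(U)$ of a family of sheaves on $X$ is already a sheaf. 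Applied to the family $(\calI_i)$ evaluated on the preimages $f^{-1}V$ (themselves quasi-compact), this gives $f_*(\bigoplus_i \calI_i) = \bigoplus_i f_*\calI_i$, and each $f_*\calI_i$ represents $Rf_*\calF_i$, yielding the desired isomorphism in $\calD(Y,\calO_Y)$.

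For the commutative diagram I would invoke Lemma \ref{lem:GmO_X}: a $\Gm$-equivariant $\calO_X$-dg-module is $\Gm$-equivariant K-injective if and only if each of its internal-grading components is K-injective. Hence any $\Gm$-equivariant K-injective resolution $\calF \xrightarrow{\qis} \calI$ in $\calC_{\Gm}(X,\calO_X)$ splits, after applying $\For$, as a direct sum $\bigoplus_m (\calF_m \xrightarrow{\qis} \calI_m)$ of K-injective resolutions of the components. Combining with the first part,
\[
\For \bigl(R(f_{\Gm})_* \calF\bigr) \ = \ \bigoplus_m f_* \calI_m \ \cong \ \bigoplus_m Rf_* \calF_m \ \cong \ Rf_*\Bigl(\bigoplus_m \calF_m\Bigr) \ = \ Rf_* (\For \calF),
\]
and naturality in $\calF$ follows from functoriality of the chosen resolutions.

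I do not anticipate a serious obstacle: the genuine content sits in Lemma \ref{lem:directsumK-limp} (which has already digested the relevant results of \cite{SPARes}) and in the $\Gm$-equivariant Lemma \ref{lem:GmO_X}. What remains is formal assembly, together with the standard fact that sheaf pushforward commutes with arbitrary direct sums when the source space is noetherian.
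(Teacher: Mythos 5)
Your proof is correct and follows essentially the same route as the paper: K-limp resolutions compute $Rf_*$ by \cite[6.7]{SPARes}, direct sums of K-limp dg-modules are K-limp by Lemma \ref{lem:directsumK-limp}, $f_*$ commutes with direct sums over the noetherian space $X$, and the commutativity of the diagram drops out of the observation that a $\Gm$-equivariant K-injective resolution has K-injective internal components (Lemma \ref{lem:GmO_X}). The only cosmetic difference is that you start from K-injective rather than K-limp resolutions, which is harmless since K-injective implies K-limp.
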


\begin{proof} The isomorphism follows from the facts that $f_*$
commutes with direct sums, that $Rf_*$ can
be computed by means of K-limp resolutions (\cite[6.7]{SPARes}), and Lemma
\ref{lem:directsumK-limp}. Then the commutativity of the diagram follows from the obvious isomorphism $\For \circ R(f_{\Gm})_* (\calF) \cong
\bigoplus_{n \in \mathbb{Z}} Rf_* (\calF_n)$.\end{proof}

Let $f: (X,\calA) \to (Y,\calB)$ be a morphism of $\Gm$-dg-ringed spaces.

\begin{cor}\label{cor:cordiagramRf_*Gm}

The following diagrams are commutative: \[
\xymatrix@R=16pt@C=35pt{\calD_{\Gm}(X,\calA) \ar[r]^{R(f_{\Gm})_*} \ar[d]_{\For}
 & \calD_{\Gm}(Y,\calB) \ar[d]^{\For} & \calD_{\Gm}(X,\calA) \ar[r]^{R(f_{\Gm})_*} \ar[d]_{\For} & \calD_{\Gm}(Y,\calB) \ar[d]^{\For} \\ \calD_{\Gm}(X,\calO_X)
  \ar[r]^{R(f_{0,\Gm})_*} & \calD_{\Gm}(Y,\calO_Y), & \calD(X,\calA)
  \ar[r]^{Rf_*} & \calD(Y,\calB). } \]

\end{cor}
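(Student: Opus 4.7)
The plan is to produce, for a given $\Gm$-equivariant $\calA$-dg-module $\calF$, a single resolution that can be used to compute all four direct images appearing in the two diagrams. The natural candidate is a $\Gm$-equivariant weakly K-injective resolution $\calF \to \calI$, whose existence follows from the $\Gm$-equivariant analog of Theorem~\ref{thm:Kinjresolution} discussed in this subsection. The commutativity of each diagram then reduces to showing that $\calI$ remains appropriately ``split'' after applying the respective forgetful functor.

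For the left diagram, I would first establish a $\Gm$-equivariant version of Lemma~\ref{lem:directimageweaklyK-injective}: a $\Gm$-equivariant weakly K-injective $\calA$-dg-module, viewed as a $\Gm$-equivariant $\calO_X$-dg-module, is still $\Gm$-equivariant weakly K-injective. This follows the pattern of \cite[5.15(b)]{SPARes}, using the $\Gm$-equivariant analog of Lemma~\ref{lem:coinductionKinjective} (the coinduction--forgetful adjunction passes verbatim to the bigraded setting). With this in hand, for a $\Gm$-equivariant weakly K-injective resolution $\calF \to \calI$, both $R(f_{\Gm})_*\calF$ and $R(f_{0,\Gm})_*(\For\,\calF)$ are computed by the underived direct image of $\calI$, and these agree as $\Gm$-equivariant $\calO_Y$-dg-modules since $f_* = (f_0)_*$ at the level of sheaves.

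For the right diagram, the key step is to verify that if $\calI$ is $\Gm$-equivariant weakly K-injective over $\calA$, then $\For\,\calI$ is $Rf_*$-split in $\calC(X,\calA)$. Combining the $\Gm$-equivariant version of Lemma~\ref{lem:coinductionKinjective}(ii) established above with Lemma~\ref{lem:GmO_X} shows that each internal graded component of $\For\,\calI$ is K-injective, hence K-limp, as an $\calO_X$-dg-module. By Lemma~\ref{lem:directsumK-limp}, the underlying $\calO_X$-dg-module of $\For\,\calI$ (a direct sum over internal degrees) is K-limp, and by \cite[5.16, 6.7]{SPARes} this guarantees that the underived direct image $f_*(\For\,\calI)$ computes $Rf_*(\For\,\calF)$. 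The commutativity on the $\calA$/$\calB$-side then follows because direct image is the same operation at the sheaf level in both settings.

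The main obstacle is precisely this right-hand diagram: the forgetful functor $\calD_{\Gm}(Y,\calB) \to \calD(Y,\calB)$ is not faithful, so one cannot simply combine the left diagram with Corollary~\ref{cor:cordirectimageGm} and Corollary~\ref{cor:directimagediagram}(i) and a further forgetful to $\calD(Y,\calO_Y)$ to conclude. One is forced to directly control the $Rf_*$-splitness of $\For\,\calI$, which is what makes the careful interplay between the $\Gm$-equivariant weak K-injectivity over $\calA$, graded-component K-injectivity over $\calO_X$, and Spaltenstein's K-limp framework for direct images unavoidable.
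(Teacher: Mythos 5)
For the left diagram your argument is the paper's (one $\Gm$-equivariant resolution, pass to $\Gm$-$\calO_X$-dg-modules, argue componentwise via K-limpness), with two misreferences: the lemma you need a $\Gm$-equivariant version of is Lemma~\ref{lem:directimageweaklyK-injective}, not Lemma~\ref{lem:coinductionKinjective}(ii), which would require $\calA$ to be K-flat over $\calO_X$ --- not assumed here; and after forgetting to $\calO_X$ the graded components of $\calI$ are only \emph{weakly} K-injective (so the componentwise statement you want is a weakly-K-injective analogue of Lemma~\ref{lem:GmO_X}, not that lemma itself, which addresses K-injectives and K-flats). Neither slip is fatal since weak K-injectivity gives K-limpness, which is all that is used.

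For the right diagram you genuinely diverge. The paper derives it from the left diagram together with Corollaries~\ref{cor:directimagediagram} and \ref{cor:cordirectimageGm}, i.e.\ by chasing the resulting cube after forgetting all the way to $\calD(Y,\calO_Y)$, whereas you argue directly that $\For\,\calI$ is $f_*$-split as an $\calA$-dg-module (being K-limp over $\calO_X$ by Lemma~\ref{lem:directsumK-limp}), so that a single underived direct image computes everything. Your stated objection to the paper's route is slightly off target: what one must cancel is a forgetful $\calD(Y,\calB)\to\calD(Y,\calO_Y)$, and while that functor is indeed not faithful, it \emph{is} conservative (cohomology is computed identically over $\calB$ and over $\calO_Y$), so the paper's deduction does go through once one has a canonical comparison morphism of functors and has checked that its image under the forgetful matches the isomorphisms obtained from the other faces of the cube --- bookkeeping the paper leaves implicit. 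Your approach avoids that bookkeeping and is the more self-contained; both are correct.
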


\begin{proof} The commutativity of the second diagram follows from
the commutativity of the first one and Corollaries
\ref{cor:directimagediagram} and \ref{cor:cordirectimageGm}. Now consider a $\Gm$-equivariant
K-injective $\calA$-dg-module $\calF$. By an analogue of Lemma
\ref{lem:directimageweaklyK-injective}, $\calF$ is weakly K-injective as a
$\Gm$-$\calO_X$-dg-module. Hence its graded
components are weakly K-injective as $\calO_X$-dg-modules. The result follows, since one can compute
$R(f_{0,\Gm})_*$ using K-limp resolutions of each components. \end{proof}

Proofs similar to those of \S \ref{ss:directinverseimage} show
that if $g : (Y,\calB) \to (Z,\calC)$
is a second morphism of $\Gm$-equivariant dg-algebras, one has
isomorphisms {\small \begin{equation} \label{eq:compositiondirectimageGm} R((g
  \circ f)_{\Gm})_* \cong
  R(g_{\Gm})_* \circ R(f_{\Gm})_*, \ L((g \circ f)_{\Gm})^* \cong
  L(f_{\Gm})^* \circ L(g_{\Gm})^*. \end{equation}}\begin{remark}\label{rk:rkregrading} One motivation for introducing
$\Gm$-dg-modules comes from the following situation,
that will be encountered in section \ref{sec:sectionKoszulduality}. Consider the
dg-algebra $\calA=S_{\calO_X}(\calF)$, the symmetric algebra of an $\calO_X$-module $\calF$, with trivial differential and the grading such
that $\deg(\calF)=2$. It is not concentrated in
non-positive degrees, hence we cannot apply the constructions of
\S\S \ref{ss:existenceresolutions}--\ref{ss:paragraphadjunction}. Now,
consider $\calA$ as a $\Gm$-dg-algebra, with
$\calF$ in bidegree $(2,-2)$. Let $\calB$ denote the $\Gm$-dg-algebra which is also isomorphic to $S_{\calO_X}(\calF)$ as a sheaf
of algebras, with trivial differential, and with $\calF$ in bidegree
$(0,-2)$. Then the ``regrading'' functor $\xi: \calD_{\Gm}(\calA) \to
\calD_{\Gm}(\calB)$ defined by $\xi (\calM)^i_j:=\calM^{i-j}_j$ is an
equivalence of categories. Using this and the fact that
$\calB$ is concentrated in non-positive degrees, all the constructions
and results obtained in \S \ref{ss:gradedcase} can be transfered to $\calA$. \end{remark}

\subsection{Dg-schemes and dg-sheaves}\label{ss:dgschemes}

In this subsection we define dg-sche\-mes, following \cite{CK} (with some modifications according to our purposes).

\begin{defin}

A \emph{dg-scheme} is a dg-ringed space $X=(X^{0},
\calO_{X}\hdot)$ where $X^{0}$ is
a scheme and $\calO_{X}\hdot$ is a sheaf of
non-positively graded, graded-commuta\-tive dg-algebras on $X^{0}$,
such that each $\calO^{i}_{X}$ is a quasi-coherent
$\calO_{X^{0}}$-module.

A \emph{morphism of dg-schemes} $f : X \to Y$ is a morphism of dg-ringed
spaces $f : (X,\calO_{X}\hdot) \to (Y,\calO_{Y}\hdot)$ (see
\S \ref{ss:directinverseimage}).

We denote by $\DGSh(X)$ the full subcategory\footnote{It is not clear from this definition that $\DGSh(X)$ is a \emph{triangulated} subcategory. In fact it is the case under reasonable conditions. In this paper we essentially consider \emph{coherent} dg-sheaves over bounded dg-algebras, hence this point will not be a problem.} of
$\calD(X^0,\calO_{X}\hdot)$ whose objects are the dg-modules $\calF$ such that each
$H^{i}(\calF)$ is a quasi-coherent $\calO_{X^{0}}$-module, and by $\DGCoh(X)$ the full subcategory of $\DGSh(X)$ whose objects are
the dg-modules $\calF$ whose cohomology $H(\calF)$ is locally finitely generated over $H(\calO_{X}\hdot)$.

\end{defin}

\begin{remark} $\rmi$ If $X$ is an ordinary scheme (i.e.~$\calO_{X}^{0}=\calO_{X^{0}}$ and $\calO_{X}^{i}=0$ for $i \neq
0$) which is quasi-compact and separated,
$\DGSh(X)$ is equivalent to the derived category of $\QCoh(X)$
(\cite[5.5]{BNHom}). If moreover $X$
is noetherian, then $\DGCoh(X)$ is equivalent to the
bounded derived category of $\Coh(X)$ (\cite[II.2.2.2.1]{SGA6}\footnote{See also \cite[VI.2.B]{BDmod}
for a more elementary proof, following Bernstein and
Deligne.}).

$\rmii$ If $f:X \to Y$ is a morphism of dg-schemes, then it induces
functors $Rf_* : \calD(X^0, \calO_X\hdot) \to \calD(Y^0,\calO_Y\hdot)$
and $Lf^* : \calD(Y^0,\calO_Y\hdot) \to \calD(X^0, \calO_X\hdot)$. It
is not clear in general if these functors restrict to functors
between $\DGSh(X)$ and $\DGSh(Y)$, or between $\DGCoh(X)$ and
$\DGCoh(Y)$. It will always be the case in this paper; we
will prove it in each particular case\footnote{See \cite{MRHec} for other remarks on this question.}. \end{remark}

Recall that if $f: X \to Y$ is a quasi-isomorphism, then $Rf_*$ and $Lf^*$ are equivalences (Proposition \ref{prop:qisequivalence}). Moreover, if $g: Y \to Z$ is a morphism of dg-schemes, by Corollary
\ref{cor:directimagediagram} and Proposition \ref{prop:compositioninverseimages} we have isomorphisms $R(g \circ f)_* \cong Rg_* \circ Rf_*$ and $L(g \circ f)^* \cong Lf^* \circ Lg^*$. Hence the functors
$Rg_*$ and $Lg^*$ restrict to functors between
$\DGSh(Y)$ and $\DGSh(Z)$ (or
between $\DGCoh(Y)$ and $\DGCoh(Z)$) iff the functors $R(g \circ
f)_*$ and $L(g \circ f)^*$ restrict to functors between $\DGSh(X)$ and
$\DGSh(Z)$ (or $\DGCoh(X)$ and $\DGCoh(Z)$). These properties allows one to replace a given dg-scheme by a
quasi-isomorphic one when convenient. Hence we
will consider dg-schemes only ``up to quasi-isomorphism''.

As a typical example, we define the derived intersection as follows. Consider a scheme $X$, and two closed subschemes
$Y$ and $Z$. Let $i : Y \hookrightarrow X$ and $j : Z \hookrightarrow X$ be the
closed embeddings. Consider the sheaf of dg-algebras $i_* \calO_Y
\, \lotimes_{\calO_X} \, j_* \calO_Z$ on $X$. It is defined up to
quasi-isomorphism: if $\calA_Y \to i_* \calO_Y$ and $\calA_Z
\to j_* \calO_Z$ are quasi-isomorphisms of non-positively graded,
graded-commutative sheaves of dg-algebras on $X$, with $\calA_Y$ and
$\calA_Z$ quasi-coherent and K-flat over $\calO_X$, then $i_* \calO_Y
\, \lotimes_{\calO_X} \, j_* \calO_Z$ is quasi-isomorphic to $\calA_Y
\otimes_{\calO_X} j_* \calO_Z$, or to $i_* \calO_Y \otimes_{\calO_X}
\calA_Z$, or to $\calA_Y \otimes_{\calO_X} \calA_Z$.

\begin{defin}

The \emph{right derived intersection} of $Y$ and $Z$ in $X$ ``is'' \[ Y \, \rcap_X \, Z := (X, \, i_* \calO_Y \, \lotimes_{\calO_X} \, j_* \calO_Z),\]
a dg-scheme considered up to quasi-isomorphism.

\end{defin}

\begin{remark}\label{rk:rkderivedintersection} Keep the notation
  as above. The sheaf of dg-algebras
$\calA_Y \otimes_{\calO_X} j_*\calO_Z$ is isomorphic to $j_*(j^* \calA_Y)$, hence the functor $j_* :
\calC(Z, j^* \calA_Y) \to \calC(X,\calA_Y \otimes_{\calO_X}
j_*\calO_Z)$ is an equivalence of categories. As a consequence, we have an equivalence $\DGCoh(Z, j^*\calA_Y) \cong \DGCoh(Y \, \rcap_X \, Z)$. \end{remark}

\section{Linear Koszul duality} \label{sec:sectionKoszulduality}

Usual Koszul duality (\cite{BGG, BGS, GKM}) relates
modules over the symmetric algebra $S(V)$ of a
vector space $V$ and modules over the exterior algebra
$\Lambda(V^*)$ of the dual vector space. In this section we give
a relative version of this duality, and a geometric interpretation in
terms of derived intersections (due to I. Mirkovi{\'c}). Our approach is similar to that of \cite{GKM}.

\subsection{Reminder on Koszul duality}\label{ss:koszulduality}

fix a scheme $(X, \calO_X)$. Let $\calF$ be a locally
free sheaf of finite rank over $X$. We denote by
\[\calS:=S_{\calO_X}(\calF^{\vee})\] the symmetric algebra of
$\calF^{\vee}:=\sheafHom_{\calO_X}(\calF, \calO_X)$ over $\calO_X$,
considered as a sheaf of dg-algebras with trivial
differential, and with the grading such that $\calF^{\vee}$ is in
degree $2$. Similarly, we denote by
\[\calT:=\Lambda_{\calO_X}(\calF)\] the exterior algebra of $\calF$, a sheaf of dg-algebras with trivial
differential and the grading such that $\calF$ is in degree
$-1$. For the categories of dg-modules over these dg-algebras, we
use the notation of section \ref{sec:sectiondgalg}. Let $\calC^+(\calS)$ be the category of bounded below
$\calS$-dg-modules. We define similarly $\calC^+(\calT)$, $\calH^+(\calS)$, $\calH^+(\calT)$,
$\calD^+(\calS)$ and $\calD^+(\calT)$ using the usual procedures.

Following \cite{GKM}, we define the functor \[\mathscr{A} :
\calC^+(\calS) \to \calC^+(\calT)\] by setting
$\mathscr{A}(\calM):=\sheafHom_{\calO_X}(\calT,\calM) \cong
\calT^{\vee} \otimes_{\calO_X} \calM$,
where the $\calT$-module structure is given by the formula $(t
\cdot \phi)(s)=(-1)^{\deg(t)(\deg(t)+1)/2} \phi(ts)$ and the
differential is the sum of $d_1$ and $d_2$, where
$d_1(\phi)(t)=(-1)^{\deg(t)}
  d_M(\phi(t))$, and $d_2$ is defined as
follows. Consider the canonical morphism $\calO_X \to
\sheafHom_{\calO_X}(\calF,\calF) \cong \calF \otimes_{\calO_X}
\calF^{\vee}$. Then $d_2$ is the opposite of the composition
$$\calT^{\vee} \otimes_{\calO_X} \calM \to \calT^{\vee}
\otimes_{\calO_X} \calF \otimes_{\calO_X} \calF^{\vee}
\otimes_{\calO_X} \calM \xrightarrow{\beta \otimes \alpha_{\calF}}
\calT^{\vee} \otimes_{\calO_X} \calM$$ where $\alpha_{\calF}$ is the
action $\calF^{\vee} \otimes_{\calO_X} \calM \to \calM$ and
$\beta$ is the (right) action of $\calF$ on $\calT^{\vee}$ which is the
transpose of left multiplication. If $t$ is a local section of $\calT$
in a neighborhood of $x$, with $\{y_i\}$ a basis of $\calF_x$
as $\calO_{X,x}$-module and $\{y_i^*\}$ the dual basis of $(\calF^{\vee})_x$, we have
$d_2(\phi)(t)=- \sum_i y_i^* \phi(y_i
  t)$. One
easily checks that $d_1 + d_2$ is a differential, and that
$\mathscr{A}(\calM)$ is a $\calT$-dg-module.

We also define the functor \[\mathscr{B} : \calC^+(\calT) \to
\calC^+(\calS)\] by setting $\mathscr{B}(\calN):=\calS
\otimes_{\calO_X} \calN$,
where the $\calS$-module structure is by left multiplication and the differential is the sum $d_3 + d_4$, where
$d_3(s \otimes n)=s \otimes d_{\calN}(n)$ and $d_4$ is the composition
$\calS \otimes_{\calO_X} \calN \to \calS \otimes_{\calO_X}
\calF^{\vee} \otimes_{\calO_X} \calF \otimes_{\calO_X} \calN \to
\calS \otimes_{\calO_X} \calN$. With the
same notation as above, we have $d_4(s
  \otimes m)=\sum_i s y_i^* \otimes y_i n$. One again easily checks that $d_3
+ d_4$ is a differential, and that $\mathscr{B}(\calN)$ is a
$\calS$-dg-module.

Taking the stalks at a point and using spectral sequence arguments
(see \cite[9.1]{GKM}), one proves
that $\mathscr{A}$ and $\mathscr{B}$ send quasi-isomorphisms to
quasi-isomorphisms, and hence
define functors $$\mathscr{A} : \calD^+(\calS) \to \calD^+(\calT) \quad
\text{and} \quad \mathscr{B} : \calD^+(\calT) \to \calD^+(\calS).$$

\begin{thm}

The functors $\mathscr{A}$ and $\mathscr{B}$ are equivalences of categories
between $\calD^+(\calS)$ and $\calD^+(\calT)$, quasi-inverse to
each other.

\end{thm}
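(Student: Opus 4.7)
The plan is to produce, for each $\calM \in \calC^+(\calS)$ and $\calN \in \calC^+(\calT)$, explicit chain-level natural transformations
\[ \epsilon_{\calM} : \mathscr{B}\mathscr{A}(\calM) \to \calM \quad \text{and} \quad \eta_{\calN} : \calN \to \mathscr{A}\mathscr{B}(\calN), \]
and to show that each is a quasi-isomorphism. Since by construction $\mathscr{A}$ and $\mathscr{B}$ preserve quasi-isomorphisms, this suffices: the triangle identities are straightforward at the chain level, so the induced functors on $\calD^+$ are mutually quasi-inverse equivalences.

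To build $\epsilon_{\calM}$, I would observe that as a graded $\calS$-module $\mathscr{B}\mathscr{A}(\calM) = \calS \otimes_{\calO_X} \calT^{\vee} \otimes_{\calO_X} \calM$, equipped with the total differential $d_{\calM} + d_2 + d_4$. Define $\epsilon_{\calM}$ as the composition
\[ \calS \otimes_{\calO_X} \calT^{\vee} \otimes_{\calO_X} \calM \twoheadrightarrow \calS \otimes_{\calO_X} \calO_X \otimes_{\calO_X} \calM = \calS \otimes_{\calO_X} \calM \to \calM, \]
where the first arrow projects $\calT^{\vee}$ onto its degree-$0$ component $\calO_X$ and the second is the $\calS$-action. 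A direct (but notation-heavy) sign-chase using~\eqref{eq:diffHom} shows this is a morphism of $\calS$-dg-modules: the $d_2$ and $d_4$ contributions vanish after projection. The morphism $\eta_{\calN}$ is built dually, via the unit $\calO_X \hookrightarrow \calS$ and the $\calT$-action.

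The heart of the argument is to verify that $\epsilon_{\calM}$ is a quasi-isomorphism. Being a quasi-isomorphism is local, and $\calF$ is locally free of finite rank, so I would reduce to $X = \operatorname{Spec}(R)$ with $\calF$ free on a basis $y_1, \dots, y_n$. On such a patch, I would equip $\mathscr{B}\mathscr{A}(\calM)$ with the decreasing filtration by $\calS$-degree (the number of $\calF^{\vee}$-factors from $\calS$): the terms $d_{\calM}$ and $d_2$ preserve this filtration, while $d_4$ strictly increases it, so the associated graded carries only the classical Koszul differential $d_{\mathrm{Kos}}$ coupling $\calS$ and $\calT^{\vee}$, tensored with $(\calM, d_{\calM})$. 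The bounded-below hypothesis on $\calM$ guarantees convergence of the induced spectral sequence. On the $E_1$-page one applies the key fact that $(\calS \otimes_R \calT^{\vee}, d_{\mathrm{Kos}})$ is a free $\calS$-resolution of $R$, so that $E_1 \cong \calM$ as a graded object and the residual differential is identified with $d_{\calM}$. Hence $E_2 = H(\calM) = E_{\infty}$, and comparison with $\epsilon_{\calM}$ shows that it is the identity on cohomology. The proof for $\eta_{\calN}$ runs in parallel, filtering instead by $\calN$-degree.

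The main obstacle I anticipate is the bookkeeping in the first step: writing down $\epsilon$ and $\eta$ with the correct signs and checking that the cross-terms $d_2, d_4$ disappear after the projection, a calculation that is conceptually transparent but technically fiddly. Once the chain maps are in hand, choosing the right filtration requires some care (filtering naively by $\calM$-degree leaves $d_{\calM}$ on the associated graded, which complicates the analysis), but the spectral sequence converges cleanly thanks to the bounded-below hypothesis, which is precisely why the theorem is stated on $\calD^+$ rather than on all of $\calD$.
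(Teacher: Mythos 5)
Your strategy matches the paper's, which defers to \cite[section~16]{GKM}: build chain-level adjunction maps, observe that quasi-isomorphism is a stalkwise/local statement, and run a spectral sequence whose input is the acyclicity of the Koszul complex. Your construction of $\epsilon_{\calM}$ by projecting $\calT^{\vee}$ onto its degree-$0$ part and then acting is correct, and the claimed cancellation of the $d_2$ and $d_4$ contributions after projection does hold (both produce terms of the form $\pm\sum_i \psi(y_i)\, s\, y_i^{*} m$, which cancel).

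However, the filtration step contains a genuine error, and it is visible inside your own paragraph. You filter by $\calS$-degree and correctly observe that $d_4$ \emph{strictly increases} it while $d_{\calM}$ and $d_2$ preserve it. But then you assert that the associated graded carries the Koszul differential coupling $\calS$ and $\calT^{\vee}$ — and that differential \emph{is} $d_4$. A piece of the differential that strictly increases a decreasing filtration degree induces zero on the associated graded, so what actually survives there is $d_{\calM}+d_2$, i.e.\ the differential of $\calS\otimes\mathscr{A}(\calM)$ with $\calS$ a passive tensor factor. The resulting $E_1$-page is $\calS\otimes H(\mathscr{A}(\calM))$, which is circular: $H(\mathscr{A}(\calM))$ is exactly what one is trying to understand.

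The filtration you need is the one you dismissed, and your reason for dismissing it is also backwards. Filter $\mathscr{B}\mathscr{A}(\calM)=\calS\otimes\calT^{\vee}\otimes\calM$ decreasingly by the cohomological degree of the $\calM$-factor. With the grading conventions of the paper ($\calF^{\vee}$ in degree $2$, $\calF$ in degree $-1$), a quick degree count on $\calS^k\otimes(\calT^{\vee})^l\otimes\calM^j$ shows that $d_4$ fixes $j$, $d_{\calM}$ raises $j$ by $1$, and $d_2$ raises $j$ by $2$ — the opposite of your claim that this leaves $d_{\calM}$ on the associated graded. So only $d_4=d_{\mathrm{Kos}}$ survives on the associated graded, giving $\calM\otimes_{\calO_X}(\calS\otimes\calT^{\vee},d_{\mathrm{Kos}})$ with $\calM$ carrying no differential; the $E_1$-page is $\calM$ by exactness of the Koszul resolution, the $d_1$-differential of the spectral sequence is the one induced by $d_{\calM}$, and the bounded-below hypothesis on $\calM$ (together with $\rk\calF<\infty$) gives the separatedness needed for convergence. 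With that correction your proof goes through and is essentially the GKM argument the paper cites.
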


To prove this theorem, one constructs morphisms of functors $\Id \to
\mathscr{A}
\circ \mathscr{B}$, $\mathscr{B} \circ \mathscr{A} \to \Id$ as in
\cite[section 16]{GKM}. To prove that they are isomorphisms, it
suffices to look at the stalks. Then the proof of \cite{GKM} works similarly.

\subsection{Restriction to certain subcategories}
\label{ss:paragraphrestriction}

Now we assume that $X$ is a non-singular algebraic variety over an
algebraically closed field $\bk$. If $\calA$ is a dg-algebra on $X$,
we denote by $\calD^{\qc}(\calA)$, resp. $\calD^{\qc,\fg}(\calA)$ the full
subcategory of $\calD(\calA)$ consisting of dg-modules whose
cohomology is $\calO_X$-quasi-coherent, resp. whose
cohomology is $\calO_X$-quasi-coherent and locally finitely
generated over the sheaf of algebras $H(\calA)$. Similarly we define
$\calD^{+,\qc}(\calA)$, $\calD^{+,\qc,\fg}(\calA)$, and bigraded
analogues. Let $\calF$, $\calS$, $\calT$ be as in
\S \ref{ss:koszulduality}.

\begin{lem}\label{lem:restrictionKoszul}

The equivalences $\mathscr{A}$ and $\mathscr{B}$ restrict to
equivalences between the categories $\calD^{+,\qc,\fg}(\calS)$ and
$\calD^{+,\qc,\fg}(\calT)$.

\end{lem}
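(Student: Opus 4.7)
My plan is to verify that both $\mathscr{A}$ and $\mathscr{B}$ preserve the respective subcategories; since they are quasi-inverse, this will prove the claim. Quasi-coherence of cohomology is essentially formal in both directions: tensoring with the finite-rank locally free sheaf $\calT^{\vee}$ (for $\mathscr{A}$) or with the quasi-coherent sheaf $\calS$ (for $\mathscr{B}$) preserves quasi-coherence, as one sees via the spectral sequence coming from the natural filtration by $\calT^{\vee}$- or $\calS$-degree: the $E_1$ page is $\calT^{\vee}\otimes_{\calO_X}H(\calM)$, resp.\ $\calS\otimes_{\calO_X}H(\calN)$, and subquotients of quasi-coherent sheaves are quasi-coherent.

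For the finite generation condition on $\mathscr{B}$, I would use that $\calT$ is a bounded dg-algebra of finite rank over $\calO_X$, so the condition on $\calN\in\calD^{+,\qc,\fg}(\calT)$ is equivalent to $H(\calN)$ being bounded and coherent as an $\calO_X$-module. I would then induct on the cohomological amplitude of $H(\calN)$, using the truncation triangle $\tau_{<k}\calN\to\calN\to\tau_{\geq k}\calN$ and the triangulated structure of $\calD^{+,\qc,\fg}(\calS)$. In the base case $\calN\cong\calH[-n]$ in $\calD(\calT)$ with $\calH$ a coherent $\calO_X$-module, the action of $\calT^{<0}$ is forced to vanish by degree considerations, so direct inspection of the formulas for $d_3$ and $d_4$ shows both differentials vanish, giving $\mathscr{B}(\calN)\cong\calS\otimes_{\calO_X}\calH[-n]$ with zero differential, which is manifestly in $\calD^{+,\qc,\fg}(\calS)$.

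The main obstacle is to show that $\mathscr{A}$ preserves finite generation, because for $\calM\in\calD^{+,\qc,\fg}(\calS)$ the cohomology $H(\calM)$ can be unbounded above, so the previous inductive strategy fails. I would instead exploit the spectral sequence associated to the filtration of $\mathscr{A}(\calM)=\calT^{\vee}\otimes_{\calO_X}\calM$ by internal $\calT^{\vee}$-degree. This filtration is finite (since $\calT^{\vee}$ is concentrated in degrees $0,1,\dots,\rk(\calF)$), so the spectral sequence converges strongly. Its $E_1$ page is $\calT^{\vee}\otimes_{\calO_X}H(\calM)$ with differential induced by the Koszul-type differential $d_2$; hence its $E_2$ page is the Koszul cohomology of $H(\calM)$ viewed as a graded $\calS$-module, and using that $\calS\otimes_{\calO_X}\calT^{\vee}$ is a graded free $\calS$-resolution of $\calO_X$, this $E_2$ identifies with $\Tor_*^{\calS}(\calO_X,H(\calM))$ in the appropriate bigrading. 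Since $X$ is a non-singular variety and $\calF$ is a locally free sheaf of finite rank, $\calS$ is locally a polynomial $\calO_X$-algebra on $\rk(\calF)$ generators; Hilbert's syzygy theorem then yields a finite bound (uniform over $X$, since $X$ is noetherian of finite dimension) on the length and on the internal degrees appearing in a graded free $\calS$-resolution of any locally finitely generated graded $\calS$-module. Consequently the Tor groups are coherent over $\calO_X$ and concentrated in finitely many bidegrees, $E_2$ is bounded, and the spectral sequence forces $H(\mathscr{A}(\calM))$ to be bounded and coherent, hence locally finitely generated over $\calT$, as required. The delicate point in this last argument is tracking the interplay between the cohomological and internal gradings in the identification of $E_2$ with a Tor group; once this identification is made, the classical Hilbert syzygy bounds give the needed finiteness.
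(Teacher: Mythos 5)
Your proof is correct, and it diverges from the paper's in an interesting way on the $\mathscr{B}$ direction. For $\mathscr{A}$, your argument (filter by $\calT^{\vee}$-degree, identify $E_2$ with $\Tor^{\calS}_*(\calO_X,H(\calM))$, conclude boundedness and coherence) is in substance the same as the paper's, which simply defers to the stalkwise argument of \cite[16.7]{GKM} together with the remark that $\calO_{X,x}$ has finite homological dimension; you spell out the Tor identification that GKM carries out. One small observation: you do not actually need Hilbert's syzygy theorem or the regularity of $X$ to bound the $\Tor$-degree, since the Koszul complex $\calS\otimes_{\calO_X}\calT^{\vee}\to\calO_X$ already gives a free $\calS$-resolution of $\calO_X$ of length $\rk(\calF)$, so $\Tor_i=0$ for $i>\rk(\calF)$ automatically; boundedness of each $\Tor_i$ in internal degree then follows just from noetherianness of $\calS$ and finite generation of $H(\calM)$. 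For $\mathscr{B}$, your route is genuinely different: you induct on cohomological amplitude via truncation triangles, reducing to the base case of a coherent sheaf in a single degree where $d_3$ and $d_4$ vanish for degree reasons, whereas the paper works stalkwise, noting that $E_1=\calS_x\otimes H(\calM_x)$ is finitely generated over the noetherian ring $\calS_x$ and hence so is every subquotient up to the abutment. Your truncation argument is arguably more elementary (it avoids spectral sequences entirely for this half, relying only on the triangulated structure of $\calD^{+,\qc,\fg}(\calS)$ and the vanishing of the Koszul differential on a one-degree module), while the paper's is more uniform in style with the $\mathscr{A}$ case. Both are valid.
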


\begin{proof} First, $\mathscr{A}$ and $\mathscr{B}$ restrict to equivalences
$\calD^{+,\qc}(\calS) \cong \calD^{+,\qc}(\calT)$. Indeed, we only have to prove that $\mathscr{A}$ and $\mathscr{B}$ map
these subcategories one into each other; but this is
clear from the existence of the spectral sequences (of sheaves)
analogous to the ones of \cite[9.1]{GKM}. 

Now we have to prove that $\mathscr{A}$ maps
$\calD^{+,\qc,\fg}(\calS)$ into $\calD^{+,\qc,\fg}(\calT)$, and that
$\mathscr{B}$ maps $\calD^{+,\qc,\fg}(\calT)$ into
$\calD^{+,\qc,\fg}(\calS)$. Let us consider
$\mathscr{B}$. Let $\calM$ be an object of
$\calD^{+,\qc,\fg}(\calT)$. Then $\mathscr{B}(\calM)
\in \calD^{+,\qc}(\calS)$. Moreover, for any $x \in
X$, the $\calS_x$-dg-module $\mathscr{B}(\calM)_x$ has finitely
generated cohomology. Indeed, $H(\calM_x)$ is finitely generated
over $\calO_{X,x}$ (because it is finitely generated over $\calT_x$,
which is a finitely generated $\calO_{X,x}$-module). Thus,
the $E_1$-term of the spectral sequence analogous to
\cite[9.1.4]{GKM} is finitely generated over $\calS_x$. The result
follows since $\calS_x$ is a noetherian ring.

Concerning $\mathscr{A}$, again taking stalks, one can use the
arguments of \cite[16.7]{GKM}. (Since $X$ is non-singular, $\calO_{X,x}$ has
finite homological dimension.) \end{proof}

The inclusion $\calC^+(\calT) \subset \calC(\calT)$ induces a
functor $\calD^{+,\qc,\fg}(\calT) \to \calD^{\qc,\fg}(\calT)$. Recall the definition of the functors $\tau_{\geq n}$ given just before Theorem \ref{thm:Kinjresolution}.

\begin{lem}\label{lem:equivalenceT}

The functor $\calD^{+,\qc,\fg}(\calT) \to
\calD^{\qc,\fg}(\calT)$ is an equivalence.

\end{lem}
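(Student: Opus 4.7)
The key observation is that $\calT=\Lambda_{\calO_X}(\calF)$ is \emph{bounded}: it lives in cohomological degrees $[-r,0]$ where $r=\rk\calF$, and since the differential is trivial, $H(\calT)=\calT$. I would first deduce that every $\calM\in\calD^{\qc,\fg}(\calT)$ has cohomology concentrated in a bounded range of degrees. At each stalk, $H(\calM)_x$ is finitely generated over $\calT_x$, hence generated by elements in finitely many degrees; since $\calT_x$ lives in the bounded range $[-r,0]$, the submodule generated by such elements is supported in a bounded range of cohomological degrees. The noetherian (hence quasi-compact) space $X$ can be covered by finitely many opens on which the cohomology is finitely generated, yielding a uniform bound: there exist $N\leq M$ with $H^i(\calM)=0$ for $i\notin[N,M]$.

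Second, I would construct a bounded below representative via a good truncation performed at the level of dg-modules. For $\calM$ with $H^i(\calM)=0$ for $i<N$, define the subsheaf $\calK\subset\calM$ by $\calK^i=\calM^i$ for $i<N$, $\calK^N=\Ima(d_\calM^{N-1})$, and $\calK^i=0$ for $i>N$. Because $\calT$ is non-positively graded, $\calT^j\cdot\calK^i\subset\calK^{i+j}$ for every $j\leq 0$ (the only nontrivial check, for $j=0$, reduces to $\calO_X$-linearity of $d_\calM^{N-1}$), so $\calK$ is a sub-$\calT$-dg-module of $\calM$. A direct computation using the vanishing of $H^i(\calM)$ for $i<N$ shows that $\calK$ is acyclic, hence the quotient map $\calM\twoheadrightarrow\tau_{\geq N}\calM:=\calM/\calK$ is a quasi-isomorphism. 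Since $\tau_{\geq N}\calM$ lies in $\calC^+(\calT)$ and has the same (quasi-coherent, locally finitely generated) cohomology as $\calM$, this yields essential surjectivity of the inclusion.

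For full faithfulness, given $\calM,\calN\in\calD^{+,\qc,\fg}(\calT)$ represented by bounded below dg-modules, any morphism in $\calD^{\qc,\fg}(\calT)$ is given by a roof $\calM\xrightarrow{g}\calL\xleftarrow{s}\calN$ with $s$ a quasi-isomorphism. Then $H(\calL)\cong H(\calN)$ is bounded below, so composing both legs of the roof with the quasi-isomorphism $\calL\to\tau_{\geq N}\calL$ (for $N$ suitably low) produces an equivalent roof $\calM\to\tau_{\geq N}\calL\xleftarrow{\qis}\calN$ lying entirely in $\calC^+(\calT)$. The dual argument, applied to the mapping cone of a morphism becoming zero in $\calD^{\qc,\fg}(\calT)$, shows that two morphisms of $\calD^{+,\qc,\fg}(\calT)$ that coincide in $\calD^{\qc,\fg}(\calT)$ were already equal.

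The main technical point is the construction in the second step: producing $\tau_{\geq N}\calM$ as an actual $\calT$-dg-module (not merely as a complex of $\calO_X$-modules) so that the quasi-isomorphism $\calM\to\tau_{\geq N}\calM$ lives in $\calC(\calT)$. This is precisely where the standing assumption $(\dag\dag)$ that $\calT$ is concentrated in non-positive degrees is essential; without it the candidate subsheaf $\calK$ would not be stable under the $\calT$-action, and one would have to replace the direct truncation argument with a resolution-based construction.
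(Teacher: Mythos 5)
Your proof is correct and follows the same route as the paper's: the paper's entire argument is the reduction to finding a bounded-below quasi-isomorphic replacement, the observation that cohomology is bounded (because $H(\calT)=\calT$ is bounded, together with noetherianity of $X$), and the application of the good truncation $\tau_{\geq n}$, which is well-defined on $\calA$-dg-modules under assumption $(\dag\dag)$. You are simply supplying in full the details (the stalkwise boundedness estimate, the verification that the subcomplex $\calK$ is $\calT$-stable, and the roof argument for fullness and faithfulness) that the paper's two-sentence proof leaves implicit.
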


\begin{proof} We only have to prove that for every
$\calT$-dg-module $\calN$ whose cohomology is locally finitely
generated, there exists a bounded below $\calT$-dg-module $\calN'$ and a
quasi-isomorphism $\calN \xrightarrow{\qis} \calN'$. Now the
cohomology of $\calN$ is bounded. If $H^i(\calN)=0$ for $i < n$, we
may take $\calN'=\tau_{\geq n} \calN$. \end{proof}

\begin{remark} \label{rk:rkequivalenceS} We cannot use such an argument
  for $\calS$, and we do
  not know if the natural functor $\calD^{+,\qc,\fg}(\calS) \to
\calD^{\qc,\fg}(\calS)$ is an equivalence.\end{remark}

Combining Lemmas \ref{lem:restrictionKoszul} and \ref{lem:equivalenceT}, one
gets an equivalence of categories \[ \calD^{+,\qc,\fg}(X,\calS) \cong
  \calD^{\qc,\fg}(X,\calT).\] Now we
give a geometric interpretation of this equivalence.

\subsection{Linear Koszul Duality} \label{ss:lkd}

Let $E$ be a vector bundle
over $X$ (of finite rank), $F \subset E$ a sub-bundle, and $p : E \to X$ the natural projection. Let $\calE$ and
$\calF$ be the sheaves of sections of $E$ and $F$. (These are locally
free $\calO_X$-modules of finite rank.) Let $E^*$ be the vector bundle
dual to $E$, $F^{\bot} \subset E^*$ the orthogonal of $F$
(a sub-bundle of $E^*$), and $q : E^* \to X$ the projection. We define
an action of $\Gm$ on $E$ and
$F$, letting $t \in \bk^{\times}$ act by multiplication by $t^2$ on
the fibers. This induces a dual action on $E^*$ and $F^{\bot}$. We denote by $\calS$ and $\calT$ the
following $\Gm$-dg-algebras with trivial differential:
\[\begin{array}{cl}
\calS:=S_{\calO_X}(\calF^{\vee}) & \quad \text{with} \ \calF^{\vee} \
\text{in bidegree} \ (2,-2) \\ \calT:=\Lambda_{\calO_X}(\calF) & \quad
\text{with} \ \calF \ \text{in bidegree} \ (-1,2). \end{array}\] 

Bigraded analogues of the
constructions of \S\S \ref{ss:koszulduality}, \ref{ss:paragraphrestriction} yield
an equivalence
\begin{equation}\label{eq:KoszuldualityGm}
  \calD_{\Gm}^{+,\qc,\fg}(X,\calS) \cong
  \calD_{\Gm}^{\qc,\fg}(X,\calT),\end{equation} where $\calD_{\Gm}^+
(X,\calS)$ is the derived category of $\Gm$-$\calS$-dg-modules
which are bounded below for the cohomological degree (uniformly in
the internal degree), and $\calD_{\Gm}^{+,\qc,\fg}(X,\calS)$,
$\calD_{\Gm}^{\qc,\fg}(X,\calT)$ are defined as in \S \ref{ss:paragraphrestriction}.

\begin{lem}\label{lem:lemdirectimage}

There exists a natural equivalence of categories \begin{equation}
  \label{eq:equivalenceCoh(E)nonGm}
  \calD^b \Coh(E) \ \cong \ \calD^{\qc,\fg}(X,S_{\calO_X}(\calE^{\vee})),
\end{equation} where $S_{\calO_X}(\calE^{\vee})$ is a dg-algebra in degree $0$, with trivial
differential. Similarly, if $S_{\calO_X}(\calE^{\vee})$ is considered as a $\Gm$-dg-algebra with $\calE^{\vee}$ in bidegree
$(0,-2)$,
\begin{equation}\label{eq:equivalenceCoh(E)} \calD^b \Coh^{\Gm}(E) \ \cong \
  \calD^{\qc,\fg}_{\Gm}(X,S_{\calO_X}(\calE^{\vee})). \end{equation}

\end{lem}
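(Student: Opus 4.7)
The plan is to combine the equivalence of categories induced by the affine morphism $p : E \to X$ with the identification of the dg-module derived category with a classical derived category when the dg-algebra is concentrated in degree zero.

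First, I would recall that $p : E \to X$ is an affine morphism and that by the very construction of the total space of a vector bundle one has a canonical isomorphism of sheaves of $\calO_X$-algebras $p_* \calO_E \cong S_{\calO_X}(\calE^{\vee})$. Direct image therefore induces an equivalence of abelian categories between $\QCoh(E)$ and the category of quasi-coherent sheaves of $S_{\calO_X}(\calE^{\vee})$-modules on $X$, restricting to an equivalence between $\Coh(E)$ and the full subcategory of locally finitely generated objects. Since $p$ is affine, $p_*$ is exact on $\QCoh(E)$, so this equivalence extends to bounded derived categories, identifying $\calD^b \Coh(E)$ with $\calD^b$ of the abelian category of locally finitely generated quasi-coherent $S_{\calO_X}(\calE^{\vee})$-modules on $X$.

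Next, I would identify the latter derived category with $\calD^{\qc,\fg}(X, S_{\calO_X}(\calE^{\vee}))$. Setting $\calS := S_{\calO_X}(\calE^{\vee})$, viewed as a dg-algebra concentrated in cohomological degree zero with trivial differential, a $\calS$-dg-module is nothing but a complex of sheaves of $\calS$-modules, and $H(\calS) = \calS$ sits in degree zero. Consequently the requirement that $H(\calM)$ be locally finitely generated over $H(\calS)$ forces the total cohomology $\bigoplus_i H^i(\calM)$ to have only finitely many nonzero graded components, each locally finitely generated over $\calS$. Hence $\calD^{\qc,\fg}(X,\calS)$ coincides with the bounded derived category of the previous paragraph, and \eqref{eq:equivalenceCoh(E)nonGm} follows. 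To pass from the derived category of the abelian category to the $\DGSh$-type framework, one invokes the standard identification recalled in \S\ref{ss:dgschemes}, noting that $E$ is noetherian and separated because $X$ is.

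Finally, for the $\Gm$-equivariant version, the $\Gm$-action on $E$ (fibrewise multiplication by $t^2$) corresponds under $p_* \calO_E \cong S_{\calO_X}(\calE^{\vee})$ to the grading on the symmetric algebra placing $\calE^{\vee}$ in internal weight $-2$; since the cohomological degree is zero, this matches the prescribed bidegree $(0,-2)$. The argument above then carries over verbatim in the equivariant setting, using the formalism of \S\ref{ss:gradedcase}, yielding \eqref{eq:equivalenceCoh(E)}. The only point requiring any care is the identification of the dg-module derived category with the derived category of the corresponding abelian category for the algebra $\calS$ (which is not a coherent $\calO_X$-module); here the crucial observation is that boundedness on the dg-module side is automatic from finite generation over an algebra concentrated in a single cohomological degree, so the standard results for ordinary schemes recalled in \S\ref{ss:dgschemes} transfer without difficulty.
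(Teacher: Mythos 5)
Your proof follows essentially the same route as the paper's: use the affine morphism $p$ to identify $\Coh(E)$ and $\QCoh(E)$ with the categories of locally finitely generated / quasi-coherent $S_{\calO_X}(\calE^{\vee})$-modules on $X$, then upgrade to derived categories, and finally identify the bounded derived category of the abelian category of quasi-coherent $\calS$-modules with the subcategory $\calD^{\qc,\fg}(X,\calS)$ of the derived category of \emph{all} $\calS$-dg-modules. The key steps line up with the paper's.

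The one place where you are a bit imprecise is the last identification. You appeal to the remark in \S\ref{ss:dgschemes}, but that remark only treats dg-modules over the structure sheaf of an ordinary noetherian scheme (so applying it to $E$ gives $\calD^b\Coh(E)\cong\DGCoh(E)$, which is a category of $\calO_{E}$-dg-modules, not of $\calS$-dg-modules on $X$); it does not by itself yield the equivalence $\calD^b\Coh(X,\calS)\cong\calD^{\qc,\fg}(X,\calS)$ for the nontrivial quasi-coherent $\calO_X$-algebra $\calS$. What is really needed here is the Bernstein-type theorem for sheaves of algebras — the paper cites \cite[VI.2.10, VI.2.11]{BDmod} precisely for this — together with your (correct) observation that $X$ noetherian forces the cohomology to be bounded. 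So the gap is in the citation, not the mathematics: replace the appeal to \S\ref{ss:dgschemes} by a direct appeal to the Bernstein argument over $X$ and the proof is complete.
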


\begin{proof} We only give the proof of
\eqref{eq:equivalenceCoh(E)nonGm}. Let $\QCoh(X,S_{\calO_X}(\calE^{\vee}))$ be the
category of modules
over $S_{\calO_X}(\calE^{\vee})$ which are
$\calO_X$-quasi-coherent, and let
$\Coh(X,S_{\calO_X}(\calE^{\vee}))$ be the full subcategory whose objects are locally finitely
generated over $S_{\calO_X}(\calE^{\vee})$. First, $p_*$ induces equivalences (\cite[1.4.3]{EGA2}): \[ \QCoh(E) \xrightarrow{\sim} 
\QCoh(X,S_{\calO_X}(\calE^{\vee})), \quad \Coh(E)
\xrightarrow{\sim} \Coh(X,S_{\calO_X}(\calE^{\vee})). \] By arguments similar to those of \cite[VI.2.11]{BDmod},
$\calD^b \Coh(X,S_{\calO_X}(\calE^{\vee}))$ identifies with the full
subcategory of $\calD^b \QCoh(X,S_{\calO_X}(\calE^{\vee}))$ whose
objects have their cohomology sheaves in
$\Coh(X,S_{\calO_X}(\calE^{\vee}))$. Now,
by a theorem of Bernstein (\cite[VI.2.10]{BDmod}), $\calD^b
\QCoh(X,S_{\calO_X}(\calE^{\vee}))$ is equivalent to the full subcategory of
$\calD^b \Mod(X,S_{\calO_X}(\calE^{\vee}))$ (the bounded derived
category of \emph{all} $S_{\calO_X}(\calE^{\vee})$-modules) whose
objects have quasi-coherent cohomology. Hence $\calD^b \Coh(E)$ is
equivalent to the full subcategory of $\calD^b
\Mod(X,S_{\calO_X}(\calE^{\vee}))$ whose objects have their cohomology
in $\Coh(X,S_{\calO_X}(\calE^{\vee}))$. Then \eqref{eq:equivalenceCoh(E)nonGm} is clear. \end{proof}

Let us now introduce the
following $\Gm$-dg-algebra with trivial differential:
\[ \calR:=S_{\calO_X}(\calF^{\vee}) \qquad \text{with} \ \calF^{\vee} \
\text{in bidegree} \ (0,-2).\] We have
equivalences of categories (``regrading''): \[\xi :
\calC_{\Gm}(X,\calS) \xrightarrow{\sim} \calC_{\Gm}(X,\calR), \quad \xi :
\calD_{\Gm}(X,\calS) \xrightarrow{\sim} \calD_{\Gm}(X,\calR)\] sending the
$\calS$-dg-module $M$ to the $\calR$-dg-module defined by
$\xi(M)^i_j:=M^{i-j}_j$ (with the same action of
$S_{\calO_X}(\calF^{\vee})$, and the same differential). Composing $\xi$ with the inclusion $\calD_{\Gm}^{+,\qc,\fg}(X,\calS) \hookrightarrow
\calD_{\Gm}^{\qc,\fg}(X,\calS)$ and using \eqref{eq:equivalenceCoh(E)} applied to $F$, we obtain a functor
\begin{equation}\label{eq:GmF} \calD_{\Gm}^{+,\qc,\fg}(X,\calS) \ \to \ \calD^b
  \Coh^{\Gm}(F).\end{equation} Hence we consider
$\calD_{\Gm}^{+,\qc,\fg}(X,\calS)$ as a graded version of $\calD^b \Coh(F)$, and put
\begin{equation}\label{eq:DGCoh1} \DGCoh^{\gr}(F) \ := \
\calD_{\Gm}^{+,\qc,\fg}(X,\calS).\end{equation} Note that there exists
a natural forgetful functor \begin{equation} \label{eq:forgetDGCoh(F)}
  \For: \DGCoh^{\gr}(F) \ \to \ \calD^b \Coh(F), \end{equation} the
composition of \eqref{eq:GmF} with the forgetful functor from $\calD^b
\Coh^{\Gm}(F)$ to $\calD^b \Coh(F)$ or, equivalently, the composition
\[\calD_{\Gm}^{+,\qc,\fg}(X,\calS) \to \calD_{\Gm}^{\qc,\fg}(X,\calS)
\cong \calD_{\Gm}^{\qc,\fg}(X,\calR) \to \calD^{\qc,\fg}(X,\calR)
\cong \calD^b \Coh(F).\]

Consider the dg-scheme $F^{\bot} \, \rcap_{E^*} \, X$. As modules over
$q_*\calO_{E^*} \cong S_{\calO_X}(\calE)$, we have $q_* \calO_{F^{\bot}} \cong S_{\calO_X}(\calE) / (\calF \cdot
S_{\calO_X}(\calE))$. Hence there is a Koszul resolution
\[ S_{\calO_X}(\calE) \otimes_{\calO_X} \Lambda_{\calO_X}(\calF)
\xrightarrow{\qis} S_{\calO_X}(\calE) / (\calF \cdot
S_{\calO_X}(\calE)),\] where the generators of
$\Lambda_{\calO_X}(\calF)$ are in degree $-1$. Using Remark
\ref{rk:rkderivedintersection}, we deduce an equivalence of categories
$\DGCoh(F^{\bot} \, \rcap_{E^*} \, X) \cong
\calD^{\qc,\fg}(X,\calT).$ We are also interested
in the ``graded version'' \begin{equation}\label{eq:DGCoh2} \DGCoh^{\gr}(F^{\bot}
\, \rcap_{E^*} \, X) \ := \ \calD_{\Gm}^{\qc,\fg}(X,\calT).\end{equation} By
definition we have a natural forgetful functor \begin{equation}
  \label{eq:ForDGCohderivedintersection} \For:  \DGCoh^{\gr}(F^{\bot}
\, \rcap_{E^*} \, X) \ \to \ \DGCoh(F^{\bot} \, \rcap_{E^*} \, X). \end{equation}

Finally, with notations \eqref{eq:DGCoh1} and \eqref{eq:DGCoh2}, equivalence \eqref{eq:KoszuldualityGm} becomes:

\begin{thm}\label{thm:thmlkd}

The functors $\scra$ and $\scrb$ of \S \ref{ss:koszulduality} induce an equivalence of categories, called \emph{linear Koszul
  duality}, \[ \DGCoh^{\gr}(F) \ \cong \ \DGCoh^{\gr}(F^{\bot} \, \rcap_{E^*} \, X). \]

\end{thm}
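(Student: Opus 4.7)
The plan is to observe that Theorem \ref{thm:thmlkd} is essentially a repackaging of the $\Gm$-equivariant Koszul equivalence \eqref{eq:KoszuldualityGm}, combined with the identification of each side with a geometric dg-category via \eqref{eq:DGCoh1} and \eqref{eq:DGCoh2}. Accordingly I would not try to re-do any Koszul-style calculation, but rather assemble three already-available pieces.

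First I would promote the functors $\scra$ and $\scrb$ of \S\ref{ss:koszulduality} to the $\Gm$-equivariant setting. The chosen bidegrees $(2,-2)$ on $\calF^{\vee} \subset \calS$ and $(-1,2)$ on $\calF \subset \calT$ are precisely what is needed to make the structural maps used in the definition of $\scra$ and $\scrb$ (namely the canonical section of $\calF \otimes \calF^{\vee}$, the actions $\alpha_{\calF}$ and $\beta$, and the module structures) homogeneous of internal bidegree $0$ and cohomological bidegree $(1,0)$ for the relevant differentials. Consequently $\scra$ and $\scrb$ lift to bigraded functors between $\calC_{\Gm}^+(X,\calS)$ and $\calC_{\Gm}^+(X,\calT)$; the stalkwise spectral sequence argument of \cite{GKM}, reproduced in \S\ref{ss:koszulduality} and \S\ref{ss:paragraphrestriction}, goes through verbatim in the bigraded setting (the internal grading is simply carried along as an extra index), yielding the bigraded equivalence \eqref{eq:KoszuldualityGm} together with its restriction to finitely generated cohomology by the argument of Lemma \ref{lem:restrictionKoszul}.

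Second, I would identify the left-hand side: by definition \eqref{eq:DGCoh1} we have $\DGCoh^{\gr}(F) = \calD_{\Gm}^{+,\qc,\fg}(X,\calS)$, so the left-hand side of \eqref{eq:KoszuldualityGm} is already what we want. Third, I would identify the right-hand side with the derived intersection. Since $q : E^* \to X$ is affine, $q_* \calO_{F^{\bot}} \cong S_{\calO_X}(\calE)/(\calF \cdot S_{\calO_X}(\calE))$, and the displayed Koszul complex $S_{\calO_X}(\calE) \otimes_{\calO_X} \Lambda_{\calO_X}(\calF)$ is a K-flat, $\Gm$-equivariant, quasi-coherent, graded-commutative resolution of this sheaf of dg-algebras over $q_*\calO_{E^*} \cong S_{\calO_X}(\calE)$. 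Applying Remark \ref{rk:rkderivedintersection} with this resolution and the zero section $X \hookrightarrow E^*$, one identifies the structure dg-algebra of $F^{\bot} \, \rcap_{E^*} \, X$ (pushed to $X$) with $\calT = \Lambda_{\calO_X}(\calF)$, so by \eqref{eq:DGCoh2} the right-hand side of \eqref{eq:KoszuldualityGm} is exactly $\DGCoh^{\gr}(F^{\bot} \, \rcap_{E^*} \, X)$.

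The only real obstacle is bookkeeping: one has to verify that the bidegrees $(2,-2)$ and $(-1,2)$ chosen for $\calS$ and $\calT$ really correspond to the natural $\Gm$-equivariant structures in play. On the $F$-side this is built into the regrading procedure of Remark \ref{rk:rkregrading} applied to \eqref{eq:equivalenceCoh(E)}. On the $F^{\bot} \, \rcap_{E^*} \, X$-side, the $\Gm$-action on $E^*$ dual to the fiberwise scaling by $t^2$ on $E$ places $\calE \subset S_{\calO_X}(\calE) = q_* \calO_{E^*}$ in internal degree $+2$; since the Koszul differential sends $\calF$ isomorphically onto the subsheaf $\calF \subset \calE$, the generators of $\Lambda_{\calO_X}(\calF)$ must live in bidegree $(-1,+2)$, matching the convention for $\calT$. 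Once these degree conventions are checked, Theorem \ref{thm:thmlkd} follows immediately by combining \eqref{eq:KoszuldualityGm} with the two identifications above.
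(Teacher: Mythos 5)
Your proof is correct and follows essentially the same route as the paper: the paper also obtains Theorem \ref{thm:thmlkd} by asserting the bigraded analogues of the constructions in \S\ref{ss:koszulduality}--\ref{ss:paragraphrestriction} to get equivalence \eqref{eq:KoszuldualityGm}, and then identifying the two sides via \eqref{eq:DGCoh1} and the Koszul resolution of $q_*\calO_{F^{\bot}}$ together with Remark \ref{rk:rkderivedintersection}. You flesh out the degree bookkeeping slightly more than the paper does, but the argument is the same.
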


Finally we have constructed the following
  diagram, which allows to relate (dg-)sheaves on $F$ and on $F^{\bot} \, \rcap_{E^*} \, X$: \[ \xymatrix@R=16pt{ \DGCoh^{\gr}(F)
  \ar@{<->}[rr]^-{\sim}_-{\ref{thm:thmlkd}}
  \ar[d]_-{\eqref{eq:forgetDGCoh(F)}}^-{\For} & & \DGCoh^{\gr}(F^{\bot}
  \, \rcap_{E^*} \, X) \ar[d]^-{\eqref{eq:ForDGCohderivedintersection}}_-{\For}
  \\ \calD^b \Coh(F) & & \DGCoh(F^{\bot} \, \rcap_{E^*} \, X). } \]

\subsection{Linear Koszul duality and base change}
\label{ss:koszulbasechange}

Let $X$, $Y$ be non-singular varieties, and $\pi : X \to Y$ a
\emph{flat} and \emph{proper} morphism. Let $E$ be a
vector bundle over $Y$, $F \subset E$ a sub-bundle, and $\calE$, $\calF$ their sheaves of sections. Consider the vector bundles $E_X := E \times_{Y} X$, $F_X := F \times_Y X$
over $X$: their sheaves of sections are $\pi^* \calE$
and $\pi^* \calF$ (\cite[1.7.11]{EGA2}). Let $\pi_F : F_X
\to F$ be the morphism induced by $\pi$. We consider the following
$\Gm$-dg-algebras with trivial differential: \[\begin{array}{ccc}
  \calS_Y:=S_{\calO_Y}(\calF^{\vee}), & \calS_X:=S_{\calO_X}(\pi^*
  \calF^{\vee}), & \text{with} \ \calF^{\vee} \ \text{in bidegree} \
  (2,-2); \\ \calR_Y:=S_{\calO_Y}(\calF^{\vee}), &
  \calR_X:=S_{\calO_X}(\pi^* \calF^{\vee}), & \text{with} \
  \calF^{\vee} \ \text{in bidegree} \ (0,-2); \\
  \calT_Y:=\Lambda_{\calO_Y}(\calF), &
  \calT_X:=\Lambda_{\calO_X}(\pi^* \calF), & \text{with} \ \calF \
  \text{in bidegree} \ (-1,2).\end{array}\]

In this situation we have two Koszul dualities (see Theorem \ref{thm:thmlkd}): $\kappa_X$ on $X$ and $\kappa_Y$ on $Y$. In this
subsection we construct functors fitting in the following diagram, and
prove some compatibility results: \begin{equation} \label{eq:compatibilitybasechange}
\xymatrix@R=18pt{\DGCoh^{\gr}(F_X)
  \ar@<0.5ex>[rr]^-{R(\tilde{\pi}_{\Gm})_*}
  \ar[d]_-{\kappa_X}^-{\wr} & & \DGCoh^{\gr}(F) 
  \ar@<0.5ex>[ll]^-{L(\tilde{\pi}_{\Gm})^*}
  \ar[d]^-{\kappa_Y}_-{\wr} \\ \DGCoh^{\gr}(F_X^{\bot} \, \rcap_{E_X^*} \,
  X) \ar@<0.5ex>[rr]^-{R(\hat{\pi}_{\Gm})_*} & &
  \DGCoh^{\gr}(F^{\bot}
  \, \rcap_{E^*} \, Y) \ar@<0.5ex>[ll]^-{L(\hat{\pi}_{\Gm})^*}. } \end{equation}

Recall that, by definition, {\small \[
  \DGCoh^{\gr}(F^{\bot} \, \rcap_{E^*} \, Y) \cong
  \calD_{\Gm}^{\qc,\fg}(Y,\calT_Y), \quad \DGCoh^{\gr}(F_X^{\bot}
  \, \rcap_{E_X^*} \, X) \cong \calD^{\qc,\fg}_{\Gm}(X,\calT_X).
\]}The morphism $\pi$ induces a morphism of
$\Gm$-equivariant dg-ringed spaces $\hat{\pi} :
(X,\calT_X) \to (Y,\calT_Y).$
In \S \ref{ss:gradedcase} we have constructed functors
{\small \[ R(\hat{\pi}_{\Gm})_* :
  \calD_{\Gm}(X,\calT_X) \to \calD_{\Gm}(Y,\calT_Y), \ \
  L(\hat{\pi}_{\Gm})^* : \calD_{\Gm}(Y,\calT_Y) \to
  \calD_{\Gm}(X,\calT_X). \]}As $\pi^*(\calT_Y) \cong \calT_X$, $\hat{\pi}^*$ is simply
$\pi^*$, and similarly for the $\Gm$-analogues, i.e.~the following
diagram is commutative: \[ \xymatrix@R=15pt{ \calC_{\Gm}(Y,\calT_Y) \ar[d]_{\For}
  \ar[rr]^-{(\hat{\pi}_{\Gm})^*} & & \calC_{\Gm}(X, \calT_X)
  \ar[d]^{\For} \\ \calC_{\Gm}(Y,\calO_Y)
  \ar[rr]^-{(\pi_{\Gm})^*} & & \calC_{\Gm}(X,\calO_X). } \] As
$\pi$ is flat,
$(\hat{\pi}_{\Gm})^*$ is exact, and the
similar diagram of derived categories
also commutes. As
$\Lambda_{\calO_Y}(\calF)$ is locally finitely generated over
$\calO_Y$, a
$\Lambda_{\calO_Y}(\calF)$-module is locally finitely generated iff it is locally finitely generated over $\calO_Y$. The same is true
for $\Lambda_{\calO_X}(\pi^* \calF)$. Hence $L(\hat{\pi})^*$ and $L(\hat{\pi}_{\Gm})^*$ restrict to functors making the following diagram commute: \[
  \xymatrix@R=14pt{\DGCoh^{\gr}(F^{\bot} \, \rcap_{E^*} \, Y) \ar[d]_{\For}
    \ar[rr]^-{L(\hat{\pi}_{\Gm})^*} & & \DGCoh^{\gr}(F_X^{\bot}
    \, \rcap_{E^*_X} \, X) \ar[d]^{\For} \\ \DGCoh(F^{\bot}
  \, \rcap_{E^*} \, Y) \ar[rr]^-{L(\hat{\pi})^*} & & \DGCoh(F_X^{\bot}
  \, \rcap_{E^*_X} \, X). } \]

We have seen in \S \ref{ss:gradedcase} that the following diagrams
are commutative: \[ \xymatrix@R=16pt@C=34pt{\calD_{\Gm}(X, \calT_X) \ar[d]_{\For}
  \ar[r]^-{R(\hat{\pi}_{\Gm})_*} & \calD_{\Gm}(Y,\calT_Y)
  \ar[d]^{\For} & \calD(X,\calT_X) \ar[d]_{\For}
  \ar[r]^-{R(\hat{\pi})_*} & \calD(Y,\calT_Y) \ar[d]^{\For} \\ \calD(X,\calT_X)
  \ar[r]^-{R(\hat{\pi})_*} & \calD(Y,\calT_Y), &
  \calD(X,\calO_X) \ar[r]^-{R\pi_*} &
  \calD(Y,\calO_Y). } \] As $\pi$ is proper, as above
the functors $R(\hat{\pi})_*$ and $R(\hat{\pi}_{\Gm})_*$ restrict to functors
between the full subcategories whose objects have quasi-coherent,
locally finitely generated cohomology. Moreover, the
following diagram commutes: \[
  \xymatrix@R=15pt{ \DGCoh^{\gr}(F_X^{\bot} \, \rcap_{E^*_X} \, X) \ar[d]_{\For}
    \ar[rr]^-{R(\hat{\pi}_{\Gm})_*} & & \DGCoh^{\gr}(F^{\bot}
    \, \rcap_{E^*} \, Y) \ar[d]^{\For} \\ \DGCoh(F_X^{\bot} \, \rcap_{E^*_X} \, X)
    \ar[rr]^-{R(\hat{\pi})_*} & & \DGCoh(F^{\bot} \, \rcap_{E^*} \, Y). } \]

As a step towards the categories $\DGCoh^{\gr}(F)$ and
$\DGCoh^{\gr}(F_X)$, we now study
$\calD^{\qc,\fg}_{\Gm}(X,\calS_X)$ and
$\calD^{\qc,\fg}_{\Gm}(Y,\calS_Y)$. The morphism $\pi$ induces a
morphism of $\Gm$-equivariant dg-ringed spaces $\tilde{\pi}:
(X,\calS_X) \to (Y,\calS_Y).$ By Remark \ref{rk:rkregrading}, the following derived
functors are well defined: {\small \[ R(\tilde{\pi}_{\Gm})_*
  : \calD_{\Gm}(X,\calS_X) \to \calD_{\Gm}(Y,\calS_Y), \
  L(\tilde{\pi}_{\Gm})^* : \calD_{\Gm}(Y,\calS_Y) \to
  \calD_{\Gm}(X,\calS_X). \]}As above, we show that these functors restrict to appropriate subcategories, and that the natural diagrams
commute.

As $\pi^* \calS_Y \cong \calS_X$, the functor $(\tilde{\pi}_{\Gm})^*$
is exact, and corresponds to
$\pi^*: \calD(Y,\calO_Y) \to \calD(X,\calO_X)$ under the forgetful
functor. Hence it restricts to a functor
$\calD_{\Gm}^{\qc,\fg}(Y,\calS_Y) \to
\calD_{\Gm}^{\qc,\fg}(X,\calS_X)$. Moreover,
the following diagram is clearly commutative (see
\eqref{eq:equivalenceCoh(E)} for the vertical arrows):
\begin{equation}\label{eq:pi^*} \xymatrix@R=16pt{\calD_{\Gm}^{\qc,\fg}(Y,\calR_Y) \ar@{<->}[d]_{\wr} &
  \calD_{\Gm}^{\qc,\fg}(Y,\calS_Y)
\ar[r]^-{L(\tilde{\pi}_{\Gm})^*} \ar[l]_{\xi_Y}^{\sim} &
\calD_{\Gm}^{\qc,\fg}(X,\calS_X) \ar[r]^{\xi_X}_{\sim} & \calD_{\Gm}^{\qc,\fg}(X,\calR_X) \ar@{<->}[d]^{\wr} \\ \calD^{b}\Coh^{\Gm}(F) \ar[r]_{\For} &
\calD^b \Coh(F) \ar[r]^-{L(\pi_F)^*} &
\calD^b \Coh(F_X) & \calD^{b}\Coh^{\Gm}(F_X) \ar[l]^{\For} } \end{equation}

Now, consider the functor $R(\tilde{\pi}_{\Gm})_*$. If $\calF$ is in $\calD^{\qc,\fg}_{\Gm}(X,\calS_X)$, then $\xi_X(\calF)$ is in $\calD^{\qc,\fg}_{\Gm}(X,\calR_X)$, and $\For \circ \xi_X(\calF)$
in $\calD^{\qc,\fg}(X,\calR_X) \cong \calD^b \Coh(F_X)$ (see \eqref{eq:equivalenceCoh(E)nonGm}). Hence,
as $\pi_F$ is proper, $R(\pi_F)_* \circ \For \circ \xi_X(\calF)$ is
in $\calD^b \Coh(F)$. But this object coincides by construction with
the object $\For \circ \xi_Y \circ R(\tilde{\pi}_{\Gm})_* \calF$ of
$\calD(Y,\calR_Y)$. Hence $R(\tilde{\pi}_{\Gm})_* \calF$ belongs to
the subcategory $\calD^{\qc,\fg}_{\Gm}(Y,\calS_Y)$ of
$\calD_{\Gm}(Y,\calS_Y)$. This proves that $R(\tilde{\pi}_{\Gm})_*$
restricts to a functor between
$\calD_{\Gm}^{\qc,\fg}(X,\calS_X)$ and
$\calD_{\Gm}^{\qc,\fg}(Y,\calS_Y)$, and also that the analogue of
diagram \eqref{eq:pi^*} for $R(\tilde{\pi}_{\Gm})_*$ and $R(\pi_F)_*$
commutes.\smallskip

Let us now extend these results to \emph{bounded below}
$\Gm$-dg-modules.

\begin{lem} \label{lem:lemmafunctors+1}

The functors \[ (\tilde{\pi}_{\Gm}^+)_* :
  \calC_{\Gm}^+(X,\calS_X) \to \calC_{\Gm}^+(Y,\calS_Y), \quad
  (\tilde{\pi}_{\Gm}^+)^* : \calC_{\Gm}^+(Y,\calS_Y) \to
  \calC_{\Gm}^+(X,\calS_X) \]
admit a right, resp. left, derived functor. The
following diagrams commute: \[ \xymatrix@R=14pt@C=30pt{
  \calD_{\Gm}^+(X,\calS_X) \ar[d]
  \ar[r]^-{R(\tilde{\pi}_{\Gm}^+)_*} &
  \calD_{\Gm}^+(Y,\calS_Y) \ar[d] & \calD_{\Gm}^+(Y,\calS_Y) \ar[d]
  \ar[r]^-{L(\tilde{\pi}_{\Gm}^+)^*} &
  \calD_{\Gm}^+(X,\calS_X) \ar[d] \\
  \calD_{\Gm}(X,\calS_X) \ar[r]^-{R(\tilde{\pi}_{\Gm})_*} &
  \calD_{\Gm}(Y,\calS_Y), & \calD_{\Gm}(Y,\calS_Y)
  \ar[r]^-{L(\tilde{\pi}_{\Gm})^*} &
  \calD_{\Gm}(X,\calS_X). } \]

\end{lem}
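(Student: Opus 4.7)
The strategy is to reduce everything to the unbounded derived functors already constructed in \S \ref{ss:koszulbasechange}: I will show that $R(\tilde{\pi}_{\Gm})_*$ and $L(\tilde{\pi}_{\Gm})^*$ preserve the bounded-below subcategories, define $R(\tilde{\pi}_{\Gm}^+)_*$ and $L(\tilde{\pi}_{\Gm}^+)^*$ by restriction, and then note that the commutativity of the two diagrams is tautological.

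For $L(\tilde{\pi}_{\Gm}^+)^*$ the argument is immediate. Flatness of $\pi$, together with the identification $\tilde{\pi}^{*}\calS_Y \cong \calS_X$ recalled in \S \ref{ss:koszulbasechange}, implies that $(\tilde{\pi}_{\Gm})^*$ is exact, and is therefore its own left derived functor. Exactness forces preservation of boundedness below, and the second diagram commutes by construction.

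The substance of the lemma is the direct-image statement: for $\calM \in \calD_{\Gm}^{+}(X,\calS_X)$, one must check that $R(\tilde{\pi}_{\Gm})_{*}(\calM)$ lies in $\calD_{\Gm}^{+}(Y,\calS_Y)$. The analogue of Corollary \ref{cor:cordiagramRf_*Gm} for $\tilde{\pi}$, established in \S \ref{ss:koszulbasechange}, identifies the image of $R(\tilde{\pi}_{\Gm})_{*}(\calM)$ under the forgetful functor $\calD_{\Gm}(Y,\calS_Y) \to \calD_{\Gm}(Y,\calO_Y)$ with $R(\pi_{\Gm})_{*}(\For \calM)$; by Corollary \ref{cor:cordirectimageGm} and Lemma \ref{lem:GmO_X} this further decomposes as $\bigoplus_{j} R\pi_{*}\bigl((\For \calM)_{j}\bigr)$. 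Hence it is enough to show that $R\pi_{*} : \calD(X,\calO_X) \to \calD(Y,\calO_Y)$ preserves the bounded-below subcategory, with a uniform lower bound. This is the standard consequence of Grothendieck's vanishing theorem (\cite[III.2.7]{HARAG}): under assumption $(\dag)$ the functors $R^{i}\pi_{*}$ vanish for $i > \dim X$ on quasi-coherent sheaves, and the hypercohomology spectral sequence $E_{2}^{p,q} = R^{p}\pi_{*} H^{q}(-) \Rightarrow H^{p+q} R\pi_{*}(-)$ then forces $R\pi_{*}(\calG)$ to have cohomological lower bound equal to that of $\calG$, uniformly in the internal degree.

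No genuine obstacle appears: once the compatibility with the forgetful functors from \S \ref{ss:koszulbasechange} is in hand, the lemma is bookkeeping built on top of the finite cohomological dimension of $\pi$ and the flatness hypothesis. The commutativity of the first diagram is then by construction.
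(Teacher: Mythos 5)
Your treatment of the inverse image matches the paper's (both dismiss it as the tautological case of an exact functor), and your observation that $R\pi_*$ preserves bounded-below complexes with a uniform bound, via Grothendieck vanishing and the hypercohomology spectral sequence, is correct. But the approach to the direct image is genuinely different from the paper's, and it has a gap.

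The lemma asserts that $(\tilde{\pi}_{\Gm}^+)_*$ \emph{admits} a right derived functor, and in this paper ``derived functor'' is Deligne's notion (see \S\ref{ss:paragraphderivedfunctors}): one must exhibit, for each $\calM \in \calC^+_{\Gm}(X,\calS_X)$, a right resolution by an object that is $(\tilde{\pi}_{\Gm}^+)_*$-split \emph{inside the bounded-below category}. You instead attempt to define $R(\tilde{\pi}_{\Gm}^+)_*$ by restriction of $R(\tilde{\pi}_{\Gm})_*$. Two things go wrong. First, you need to pass an object of $\calD^+_{\Gm}(X,\calS_X)$ through the comparison functor into $\calD_{\Gm}(X,\calS_X)$, apply $R(\tilde{\pi}_{\Gm})_*$, and then lift the result back to $\calD^+_{\Gm}(Y,\calS_Y)$; this lift is not available for free, since the natural functor $\calD^+_{\Gm} \to \calD_{\Gm}$ is not known to be fully faithful here --- indeed Remark \ref{rk:rkequivalenceS} expressly flags that the analogous bounded-below/unbounded comparison for $\calS$-dg-modules is not understood, and the truncation functors $\tau_{\ge n}$ that would manufacture a bounded-below representative are only defined for dg-algebras in non-positive degrees, which $\calS_X$ is not (one has to pass through the regrading $\xi$, and $\xi$ does not respect boundedness below). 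Second, even granting a restriction, you would still need to verify that the restricted functor satisfies the universal property of a right derived functor of $(\tilde{\pi}_{\Gm}^+)_*$, which again comes down to producing split objects in $\calC^+_{\Gm}$.

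The paper avoids all of this by building, for each $\calM \in \calC^+_{\Gm}(X,\calS_X)$, a right resolution $\calI$ in $\calC^+_{\Gm}(X,\calS_X)$ whose components $\calI^i_j$ are flabby (via Godement). Such an $\calI$ is $(\tilde{\pi}_{\Gm}^+)_*$-split, which gives existence directly, and the same resolution computes $R(\tilde{\pi}_{\Gm})_*(\calM)$ after passing down forgetful functors (Corollary \ref{cor:cordiagramRf_*Gm}), which gives the commutativity of the diagram in the same breath. Your spectral-sequence computation is the reason the Godement resolution stays useful (flabbies are acyclic for $\pi_*$, and $\pi_*$ has finite cohomological dimension), but the missing ingredient in your write-up is the explicit bounded-below split resolution; without it, the existence claim is not established.
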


\begin{proof} The case of the inverse image is easy, and left
to the reader. We have to show that the functor $(\tilde{\pi}_{\Gm}^+)_*$ admits a right derived functor. But
each object $\calM \in \calC^+_{\Gm}(S_{\calO_X}(\pi^* \calF^{\vee}))$
admits a right resolution $\calI \in \calC^+_{\Gm}(S_{\calO_X}(\pi^*
\calF^{\vee}))$ all of whose components $\calI^i_j$ are flabby (which can be constructed e.g.~using the Godement resolution, see \cite[II.4.3]{GODTop}). This
dg-module $\calI$ is $(\tilde{\pi}_{\Gm}^+)_*$-split, hence the
derived functor is defined at $\calM$. The commutation of the diagram follows from this construction and Corollary
\ref{cor:cordiagramRf_*Gm}. \end{proof}

\begin{cor}\label{cor:functors+}

The functors $R(\tilde{\pi}_{\Gm}^+)_*$ and
$L(\tilde{\pi}_{\Gm}^+)^*$ restrict to the subcategories whose
objects have quasi-coherent, locally finitely generated
cohomology. Moreover, recalling {\rm \eqref{eq:DGCoh1}}, {\rm \eqref{eq:forgetDGCoh(F)}}, the
following diagrams commute: \[
\xymatrix@R=16pt{ \DGCoh^{\gr}(F_X) \ar[d]_-{\For}
  \ar[r]^-{R(\tilde{\pi}_{\Gm}^+)_*} &
  \DGCoh^{\gr}(F) \ar[d]^-{\For} & \DGCoh^{\gr}(F) \ar[d]_-{\For}
  \ar[r]^-{L(\tilde{\pi}_{\Gm}^+)^*} &
  \DGCoh^{\gr}(F_X) \ar[d]^-{\For} \\ \calD^b \Coh(F_X) \ar[r]^-{R(\pi_F)_*}
  & \calD^b \Coh(F), & \calD^b
  \Coh(F) \ar[r]^-{L(\pi_F)^*} & \calD^b \Coh(F_X). } \]

\end{cor}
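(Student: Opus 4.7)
The plan is to reduce both claims to the unbounded versions already established in the discussion preceding the corollary, via Lemma~\ref{lem:lemmafunctors+1} and the regrading equivalence $\xi$.

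For the inverse image $L(\tilde{\pi}_{\Gm}^+)^*$ the argument is essentially formal. Flatness of $\pi$ makes $(\tilde{\pi}_{\Gm}^+)^*$ exact, and Lemma~\ref{lem:lemmafunctors+1} identifies its derivation with the restriction of $L(\tilde{\pi}_{\Gm})^*$ to bounded-below dg-modules. Using diagram~\eqref{eq:pi^*} and the fact that $\pi_F$, being flat of finite type, satisfies $L(\pi_F)^* = \pi_F^*$ and preserves $\calD^b \Coh$, one concludes that $L(\tilde{\pi}_{\Gm}^+)^*$ restricts to a functor $\DGCoh^{\gr}(F) \to \DGCoh^{\gr}(F_X)$, and the right-hand square of the statement commutes by construction of the forgetful functor~\eqref{eq:forgetDGCoh(F)}.

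For the direct image $R(\tilde{\pi}_{\Gm}^+)_*$ the key point is the following. Given $\calM \in \DGCoh^{\gr}(F_X)$, by Lemma~\ref{lem:lemmafunctors+1} the image of $R(\tilde{\pi}_{\Gm}^+)_* \calM$ in $\calD_{\Gm}(Y,\calS_Y)$ equals $R(\tilde{\pi}_{\Gm})_* \calM$. Since the bounded-below subcategory embeds in $\calD^{\qc,\fg}_{\Gm}(X,\calS_X)$, the argument given just after~\eqref{eq:pi^*} (which uses the properness of $\pi_F$, a base change of the proper morphism $\pi$) shows that $R(\tilde{\pi}_{\Gm})_* \calM$ lies in $\calD^{\qc,\fg}_{\Gm}(Y,\calS_Y)$ and corresponds under $\For \circ \xi_Y$ to $R(\pi_F)_*(\For \circ \xi_X(\calM)) \in \calD^b \Coh(F)$. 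Combined with the preservation of uniform boundedness below (which holds because $\pi$ has finite cohomological dimension by assumption~$(\dag)$ and Grothendieck vanishing, so that the Godement-type flabby resolution of Lemma~\ref{lem:lemmafunctors+1} may be truncated to yield a complex bounded below with the same uniform bound up to a finite shift), this gives $R(\tilde{\pi}_{\Gm}^+)_* \calM \in \DGCoh^{\gr}(F)$, and also the commutativity of the left-hand diagram.

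The main technical point, already overcome in the unbounded case, is the interplay between the $\Gm$-grading and the regrading $\xi$, which lets the finiteness condition ``locally finitely generated over $\calS$'' be detected through the honest category $\calD^b \Coh(F)$. Beyond this, the extension from the unbounded setting to the bounded-below setting is essentially bookkeeping; the only additional ingredient is the finite cohomological dimension of $\pi$, needed to ensure that the direct image of a uniformly bounded-below object remains uniformly bounded below.
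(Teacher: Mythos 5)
Your argument follows the paper's implicit route: reduce to the unbounded case via Lemma~\ref{lem:lemmafunctors+1}, then use the finiteness results already established for $R(\tilde{\pi}_{\Gm})_*$ and $L(\tilde{\pi}_{\Gm})^*$ (the discussion around diagram~\eqref{eq:pi^*}, which uses properness of $\pi_F$ and the regrading equivalence $\xi$) to check that cohomology stays quasi-coherent and locally finitely generated. The commutativity of both squares then follows from the compatibilities recorded in Lemma~\ref{lem:lemmafunctors+1} together with the corresponding diagrams in the unbounded setting, since the forgetful functor~\eqref{eq:forgetDGCoh(F)} is defined as a composite through those categories.

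One remark: the appeal to finite cohomological dimension and truncation to ensure that $R(\tilde{\pi}_{\Gm}^+)_*\calM$ remains uniformly bounded below is unnecessary and in fact slightly off target. The Godement-type flabby resolution constructed in the proof of Lemma~\ref{lem:lemmafunctors+1} is bounded below in cohomological degree, with the \emph{same} lower bound as $\calM$, and applying $(\tilde{\pi}_{\Gm}^+)_*$ termwise does not change the grading degrees; so bounded-below-ness is preserved automatically, with no shift. Truncation and Grothendieck vanishing would become relevant only if you needed a bounded-\emph{above} representative, which is not required here since $\DGCoh^{\gr}(F)$ imposes only a lower bound. The rest of the argument is sound.
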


Because of these results, we will not write the superscript ``$+$'' anymore.

\begin{prop}\label{prop:koszulbasechangeprop}

Consider diagram \eqref{eq:compatibilitybasechange}. There are
isomorphisms \[\left\{
  \begin{array}{ccc} R(\hat{\pi}_{\Gm})_* \circ \kappa_X & \cong &
    \kappa_Y \circ R(\tilde{\pi}_{\Gm})_*, \\[2pt]
    L(\hat{\pi}_{\Gm})^* \circ \kappa_Y & \cong & \kappa_X \circ
    L(\tilde{\pi}_{\Gm})^*. \\ \end{array} \right.\]

\end{prop}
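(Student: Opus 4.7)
The plan is to exploit the very explicit form of linear Koszul duality. Recall that $\kappa_Y$ is induced by the functor $\scra_Y(\calM) = \calT_Y^\vee \otimes_{\calO_Y} \calM$, with the $\calT_Y$-action and the differential $d_1 + d_2$ constructed from the coevaluation $\calO_Y \to \calF \otimes_{\calO_Y} \calF^\vee$; an entirely parallel description holds for $\kappa_X$ (the quasi-inverse being $\scrb$, with $\scrb_Y(\calN) = \calS_Y \otimes_{\calO_Y} \calN$).

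First, I would observe that the two displayed isomorphisms are equivalent: since $(L\hat\pi_{\Gm}^*, R\hat\pi_{\Gm*})$ and $(L\tilde\pi_{\Gm}^*, R\tilde\pi_{\Gm*})$ are adjoint pairs (by the adjunction theorem of \S\ref{ss:paragraphadjunction}, adapted to the $\Gm$-equivariant setting), and $\kappa_X$, $\kappa_Y$ are equivalences, the second isomorphism is obtained from the first by conjugation with the adjunctions. So it suffices to prove the pullback compatibility.

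Next I would verify the compatibility at the level of non-derived functors on $\calC^+_{\Gm}$. Because $\pi^*\calF = \pi^*\calF$ is the sheaf of sections of $F_X$, one has canonical identifications $\pi^*\calT_Y \cong \calT_X$ and $\pi^*(\calT_Y^\vee) \cong \calT_X^\vee$ (using that $\calT_Y$ is locally free of finite rank). The projection formula for the underlying $\calO$-modules then yields a canonical isomorphism
\[
\tilde\pi^*\bigl(\calT_Y^\vee \otimes_{\calO_Y} \calM\bigr) \;\cong\; \calT_X^\vee \otimes_{\calO_X} \tilde\pi^*\calM,
\]
and one checks from the explicit formulas recalled in \S\ref{ss:koszulduality} that this isomorphism is compatible with the $\calT_X$-action (transported along $\pi^*\calT_Y \xrightarrow{\sim} \calT_X$) and with both pieces $d_1$, $d_2$ of the differential. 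The compatibility with $d_2$ is the only nontrivial point; it comes down to naturality of the coevaluation, i.e.\ the commutativity of $\pi^*$ with the map $\calO_Y \to \calF \otimes_{\calO_Y} \calF^\vee$. This gives an isomorphism of functors $\hat\pi^* \circ \scra_Y \cong \scra_X \circ \tilde\pi^*$ on $\calC^+_{\Gm}$, and identically on the homotopy categories.

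Finally I would derive this to obtain the claimed isomorphism on derived categories. Since $\pi$ is flat, both $\tilde\pi^*$ and $\hat\pi^*$ are already exact functors on $\calC^+_{\Gm}$, hence $L\tilde\pi_{\Gm}^*$, $L\hat\pi_{\Gm}^*$ can be computed without taking K-flat resolutions (they coincide with the underived functors on the level of complexes, compatibly with the forgetful functors to $\calC_{\Gm}(X,\calO_X)$, $\calC_{\Gm}(Y,\calO_Y)$ -- this is precisely what Corollary \ref{cor:functors+} and the diagram below it express). Applying this to the isomorphism of underived functors produced above yields the pullback compatibility, and hence, by the adjunction argument, the pushforward one. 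The main obstacle is purely bookkeeping: one must check that the $\Gm$-equivariant structures, the bidegree conventions (bidegree $(2,-2)$ for $\calF^\vee$ in $\calS$ versus $(-1,2)$ for $\calF$ in $\calT$), and the preservation of the subcategories with quasi-coherent finitely generated cohomology all interact correctly with the natural isomorphism constructed on underlying dg-modules.
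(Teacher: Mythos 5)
Your proof is correct, and it takes a genuinely different route from the paper. The paper proves the first (pushforward) isomorphism directly: it uses the projection formula to produce a morphism $\kappa_Y \circ R(\tilde\pi_{\Gm})_* \to R(\hat\pi_{\Gm})_* \circ \kappa_X$ from $\calT_Y^\vee \otimes_{\calO_Y}\pi_*\calM \cong \pi_*(\calT_X^\vee\otimes_{\calO_X}\calM)$ together with the natural transformation $\pi_* \to R(\hat\pi_{\Gm})_*$, and then checks locally over $Y$ (reducing to $\calF$ free) that this morphism is an isomorphism. The second (pullback) isomorphism is declared ``easy'' and left to the reader. You instead prove the pullback compatibility directly (which is indeed the easy direction, for the reasons you give: $\pi^*\calT_Y\cong\calT_X$, flatness makes both $\hat\pi^*_{\Gm}$ and $\tilde\pi^*_{\Gm}$ exact, and $\scra$ is built from $\otimes$ with a locally free sheaf and a coevaluation, both of which commute with flat pullback) and then obtain the pushforward isomorphism by passing to right adjoints of both sides, using that $\kappa_X,\kappa_Y$ are equivalences and that $(L\hat\pi^*_{\Gm}, R\hat\pi_{\Gm *})$ and $(L\tilde\pi^*_{\Gm}, R\tilde\pi_{\Gm *})$ restrict to adjoint pairs on the full subcategories $\DGCoh^{\gr}$. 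This adjunction reduction is clean and avoids both the projection formula and the local verification; the main thing it buys is that one never has to construct and verify a morphism of derived functors by hand. The only point worth spelling out a little more carefully than you do is that the adjunction between $L\hat\pi^*_{\Gm}$ and $R\hat\pi_{\Gm *}$ on the full triangulated category $\calD_{\Gm}(-,\calT)$ does restrict to the full subcategories $\DGCoh^{\gr}$, which is automatic by fullness once one knows both functors preserve those subcategories (as established earlier in \S\ref{ss:koszulbasechange}); and that the bounded-below hypothesis on the $\calT$-side is harmless thanks to the equivalence of Lemma \ref{lem:equivalenceT}.
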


\begin{proof} The second isomorphism is easy, and left to the
reader. The first one can be proved just like \cite[II.5.6]{HARRD}. More
precisely, let $\calM$ be an object of
$\DGCoh^{\gr}(F_X)$, with flabby components. Then $\kappa_Y \circ
R(\tilde{\pi}_{\Gm})_* (\calM) \cong \calT_Y^{\vee}
\otimes_{\calO_Y} \pi_* \calM$. Next, by the projection formula (\cite[ex.~II.5.1]{HARAG}), $(\calT_Y)^{\vee}
\otimes_{\calO_Y} \pi_* \calM \cong \pi_* (\calT_X^{\vee}
\otimes_{\calO_X} \calM)$. Finally, as $R\pi_*=\For \circ
R(\hat{\pi}_{\Gm})_*$, one has a natural morphism $\pi_*
(\calT_X^{\vee} \otimes_{\calO_X} \calM) \to R(\hat{\pi}_{\Gm})_*
(\calT_X^{\vee} \otimes_{\calO_X} \calM)$. This defines a morphism of
functors $\kappa_Y \circ R(\tilde{\pi}_{\Gm})_* \to
R(\hat{\pi}_{\Gm})_* \circ \kappa_X$. To show that it is an
isomorphism, as the question is local over $Y$, we can assume $\calF$
is free; then it is clear. \end{proof}

\subsection{Linear Koszul duality and sub-bundles}
\label{ss:koszulinclusion}

Consider the following situation: $F_1 \subset F_2
\subset E$ are vector bundles over the non-singular variety $X$. Let
$\calF_1$ and $\calF_2$ be the sheaves of sections of $F_1$, $F_2$. We
define as above the $\Gm$-equivariant dg-algebras with trivial
differential: \[\begin{array}{ccc}
  \calS_1:=S_{\calO_X}(\calF_1^{\vee}), &
  \calS_2:=S_{\calO_X}(\calF_2^{\vee}), & \text{with} \ \calF_i^{\vee} \
  \text{in bidegree} \ (2,-2), \\
  \calR_1:=S_{\calO_X}(\calF_1^{\vee}), &
  \calR_2:=S_{\calO_X}(\calF_2^{\vee}), & \text{with} \ \calF_i^{\vee} \
  \text{in bidegree} \ (0,-2), \\ \calT_1:=\Lambda_{\calO_X}(\calF_1),
  & \calT_2:=\Lambda_{\calO_X}(\calF_2), & \text{with} \ \calF_i \
  \text{in bidegree} \ (-1,2). \end{array}\] We
have two Koszul dualities (see Theorem \ref{thm:thmlkd}): 
$\kappa_1$ for the choice $F=F_1$, and $\kappa_2$ for $F=F_2$. We will study the compatibility as in \S \ref{ss:koszulbasechange}. 

The inclusion
$f: F_1 \to F_2$ induces morphisms $\calF_1 \hookrightarrow
\calF_2$ and $\calF_2^{\vee} \twoheadrightarrow
\calF_1^{\vee}$. Let $g: (X,\calT_2) \to (X,\calT_1)$ be the natural
morphism of $\Gm$-dg-ringed spaces. Let us consider the categories $\DGCoh^{\gr}(F_i^{\bot}
\, \rcap_{E^*} \, X)$. We have functors {\small \[ R(g_{\Gm})_* :
  \calD_{\Gm}(X,\calT_2) \to \calD_{\Gm}(X,\calT_1), \ \
  L(g_{\Gm})^* : \calD_{\Gm}(X,\calT_1) \to \calD_{\Gm}(X,\calT_2).
\]}The functor
$R(g_{\Gm})_*$ is the restriction of scalars, and
$L(g_{\Gm})^*$ is the functor $\calM \mapsto
\Lambda_{\calO_X}(\calF_2) \otimes_{\Lambda_{\calO_X}(\calF_1)}
\calM$. Both are induced by exact functors on abelian
categories. They clearly preserve the conditions
``$\qc,\fg$'', hence induce functors making the following diagram
commute: \[ \xymatrix@R=16pt{\DGCoh^{\gr}(F_2^{\bot} \, \rcap_{E^*} \, X)
  \ar[d]_{\For}
\ar@<0.5ex>[rr]^-{R(g_{\Gm})_*} & & \DGCoh^{\gr}(F_1^{\bot} \, \rcap_{E^*} \, X)
\ar[d]^{\For} \ar@<0.5ex>[ll]^-{L(g_{\Gm})^*} \\
\DGCoh(F_2^{\bot} \, \rcap_{E^*} \, X) \ar@<0.5ex>[rr]^-{Rg_*}  & & \DGCoh(F_1^{\bot}
\, \rcap_{E^*} \, X). \ar@<0.5ex>[ll]^-{Lg^*} } \]

Now, consider the categories $\calD^{\qc,\fg}_{\Gm}(X,\calS_i)$. We have a morphism
of $\Gm$-dg-ringed spaces $\tilde{f} :
(X,\calS_1) \to (X,\calS_2)$
and functors $R(\tilde{f}_{\Gm})_*$,
$L(\tilde{f}_{\Gm})^*$ (see again Remark \ref{rk:rkregrading}). Easy arguments show that $R(\tilde{f}_{\Gm})_*$
restricts to the subcategories whose objects have quasi-coherent,
locally finitely generated cohomology, and that the
following diagram commutes:
\begin{equation} \label{eq:f_*}
\xymatrix@R=14pt{ \calD_{\Gm}^{\qc,\fg}(X,\calR_1) \ar@{<->}[d]_{\wr} & \calD_{\Gm}^{\qc,\fg}(X,\calS_1)
  \ar[r]^-{R(\tilde{f}_{\Gm})_*} \ar[l]^{\sim}_{\xi_1} &
  \calD_{\Gm}^{\qc,\fg}(X,\calS_2) \ar[r]^{\xi_2}_{\sim} & \calD_{\Gm}^{\qc,\fg}(X,\calS_2) \ar@{<->}[d]^{\wr} \\ \calD^b
  \Coh^{\Gm}(F_1) \ar[r]_{\For} &
  \calD^b \Coh(F_1) \ar[r]^{Rf_*} & \calD^b \Coh(F_2) & \calD^b \Coh^{\Gm}(F_2).
  \ar[l]_{\For} }
\end{equation} The functor $L(\tilde{f}_{\Gm})^*$
is given by $\calM \mapsto \calS_1 \lotimes_{\calS_2}
\calM$. Arguments similar to those for
$R(\tilde{\pi}_{\Gm})_*$ in \S \ref{ss:koszulbasechange} show that
$L(\tilde{f}_{\Gm})^*$ induces a functor from
$\calD_{\Gm}^{\qc,\fg}(X,\calS_2)$ to
$\calD_{\Gm}^{\qc,\fg}(X,\calS_1)$ and that the diagram analogous to
\eqref{eq:f_*} commutes.


\begin{lem} \label{lem:lemmafunctors+}

The functors \[ (\tilde{f}_{\Gm}^+)_* :
  \calC^+_{\Gm}(X,\calS_1) \to \calC^+_{\Gm}(X,\calS_2), \quad
  (\tilde{f}_{\Gm}^+)^* : \calC^+_{\Gm}(X,\calS_2)
  \to \calC^+_{\Gm}(X,\calS_1) \]
admit a right, resp. left, derived functor. The
following diagrams commute: \[
\xymatrix@R=14pt@C=34pt{\calD^+_{\Gm}(X,\calS_1)
  \ar[r]^-{R(\tilde{f}_{\Gm}^+)_*} \ar[d] &
  \calD^+_{\Gm}(X,\calS_2) \ar[d] & \calD^+_{\Gm}(X,\calS_2) \ar[d]
  \ar[r]^-{L(\tilde{f}_{\Gm}^+)^*} &
  \calD^+_{\Gm}(X,\calS_1) \ar[d] \\
  \calD_{\Gm}(X,\calS_1) \ar[r]^-{R(\tilde{f}_{\Gm})_*} &
  \calD_{\Gm}(X,\calS_2), & \calD_{\Gm}(X,\calS_2)
  \ar[r]^-{L(\tilde{f}_{\Gm})^*} &
  \calD_{\Gm}(X,\calS_1). } \]

\end{lem}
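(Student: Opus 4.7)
The structure of the argument parallels that of Lemma \ref{lem:lemmafunctors+1}, but the relative difficulty of the two cases is reversed: since the underlying morphism of ringed spaces is now the identity, the direct image becomes trivial while the inverse image requires a resolution.

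For $R(\tilde{f}_{\Gm}^+)_*$, the crucial observation is that $\tilde{f}_0 = \Id_X$. Hence $(\tilde{f}_{\Gm}^+)_*$ is simply restriction of scalars along the surjective morphism of sheaves of graded algebras $\calS_2 \twoheadrightarrow \calS_1$ induced by $\calF_1 \hookrightarrow \calF_2$. This functor does not alter the underlying sheaves of $\calO_X$-modules; in particular it is exact and preserves quasi-isomorphisms. It therefore descends tautologically to derived categories, and the commutation of the first diagram is immediate.

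For $L(\tilde{f}_{\Gm}^+)^*$, the plan is to exhibit a single bounded K-flat resolution of $\calS_1$ over $\calS_2$ and use it uniformly for all bounded below $\calM$. Let $\calG := \Ker(\calF_2^\vee \twoheadrightarrow \calF_1^\vee)$, a locally free $\calO_X$-module of finite rank, which I place in bidegree $(1,-2)$. The $\Gm$-equivariant Koszul complex
\[ \calK \ := \ \calS_2 \otimes_{\calO_X} \Lambda_{\calO_X}(\calG), \]
with its standard Koszul differential, is then a $\Gm$-equivariant $\calS_2$-dg-module with differential of bidegree $(1,0)$, and the natural augmentation $\calK \xrightarrow{\qis} \calS_1$ is a quasi-isomorphism of $\Gm$-equivariant $\calS_2$-dg-modules. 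Each graded component $\calS_2 \otimes_{\calO_X} \Lambda^k(\calG)$ is locally free of finite rank over $\calS_2$, and $\Lambda^k(\calG) = 0$ for $k > \rk(\calG)$, so $\calK$ is a bounded complex of flat $\calS_2$-modules, hence K-flat.

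Given $\calM \in \calC_{\Gm}^+(X,\calS_2)$, I then define $L(\tilde{f}_{\Gm}^+)^*(\calM) := \calK \otimes_{\calS_2} \calM$, which is canonically isomorphic to $\Lambda_{\calO_X}(\calG) \otimes_{\calO_X} \calM$. Since $\Lambda_{\calO_X}(\calG)$ is concentrated in cohomological degrees in $[0,\rk(\calG)]$, this dg-module is bounded below whenever $\calM$ is, with the same cohomological bound. Because $\calK$ is K-flat over $\calS_2$, the same formula computes $L(\tilde{f}_{\Gm})^*(\calM)$ in the unbounded setting, yielding both the existence of the derived functor on $\calD^+_{\Gm}(X,\calS_2)$ and the commutation of the second diagram. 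The only delicate point is the bookkeeping of bidegrees needed to check that the Koszul differential has bidegree $(1,0)$ with $\calG$ placed in bidegree $(1,-2)$; this amounts to a routine verification, and one can then transfer everything to $\calS_i$ via the regrading of Remark \ref{rk:rkregrading} if preferred.
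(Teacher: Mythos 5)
Your treatment of $R(\tilde{f}_{\Gm}^+)_*$ is correct and matches the paper: restriction of scalars along $\calS_2 \twoheadrightarrow \calS_1$ is exact and preserves bounded-below-ness, so there is nothing to derive. Your bidegree bookkeeping for the Koszul complex is also right.

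The inverse image, however, has a genuine gap. You build $\calK := \calS_2 \otimes_{\calO_X} \Lambda_{\calO_X}(\calG)$ with $\calG = \Ker(\calF_2^\vee \twoheadrightarrow \calF_1^\vee)$, a K-flat resolution of $\calS_1$ \emph{as a right} $\calS_2$\emph{-module}, and declare $L(\tilde{f}_{\Gm}^+)^*(\calM) := \calK \otimes_{\calS_2} \calM$. But $\calK$ carries no left $\calS_1$-action: $\calS_1$ is a quotient of $\calS_2$, the ideal sheaf $\calG \subset \calS_2 \subset \calK$ does not act by zero on $\calK$, and there is no morphism of algebras $\calS_1 \to \calK$. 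Consequently $\calK \otimes_{\calS_2} \calM$ is merely a $\calK$-dg-module (equivalently an $\calS_2$-dg-module), not an object of $\calC_{\Gm}^+(X,\calS_1)$, so your formula does not define the desired functor into $\calD^+_{\Gm}(X,\calS_1)$. One could try to salvage the situation via the quasi-isomorphism equivalence $\calD(X,\calK) \cong \calD(X,\calS_1)$ of Proposition \ref{prop:qisequivalence}, but this requires applying $\calS_1 \lotimes_{\calK}(-)$ and for that you would have to resolve $\calK \otimes_{\calS_2}\calM$ over $\calK$, so the proposal does not deliver a ready-made derived functor as claimed.

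The paper avoids this by resolving $\calS_1$ over the enveloping algebra $\calS := \calS_1 \otimes_{\calO_X}\calS_2 = S_{\calO_X}\bigl((\calF_1 \oplus \calF_2)^\vee\bigr)$: with $\calG'$ the orthogonal of the diagonal embedding $\calF_1 \hookrightarrow \calF_1 \oplus \calF_2$, the Koszul complex $\calS \otimes_{\calO_X}\Lambda_{\calO_X}(\calG')$ is a bounded K-flat resolution of $\calS_1$ which is, by construction, a left $\calS_1$-module and a right $\calS_2$-module. Tensoring this bimodule resolution against $\calM$ genuinely lands in $\calC_{\Gm}^+(X,\calS_1)$ and gives the derived functor on bounded-below objects. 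Your shorter Koszul complex (of length $\rk(\calF_2) - \rk(\calF_1)$ rather than $\rk(\calF_2)$) computes the correct underlying $\calO_X$-complex, but the bimodule structure is precisely the extra data you need and cannot obtain from it.
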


\begin{proof} The case of the direct image is easy, and left
to the reader. We define $\calF:=\calF_1 \oplus \calF_2$, and denote
by $\calS$ the $\Gm$-dg-algebra
$\calS:=S_{\calO_X}(\calF^{\vee})$, with trivial differential and
$\calF^{\vee}$ in bidegree $(2,-2)$. Recall that $(\tilde{f}_{\Gm}^+)^*$ is
the tensor product $\calM \mapsto \calS_1 \otimes_{\calS_2}
\calM$. Here $\calS_1$ is considered as a
  $\calS_1$-$\calS_2$-bimodule; we can also consider it as a module over $\calS_1 \otimes_{\calO_X}
\calS_2 \cong \calS$. Now the natural morphism $\calS \to \calS_1$ is
induced by the 
transpose of the diagonal embedding $\calF_1 \hookrightarrow \calF$. Hence, if we denote by $\calG$ the orthogonal of the image of
$\calF_1$ in this embedding, we have a (bounded below) Koszul resolution
$\calS \otimes_{\calO_X} \Lambda_{\calO_X}(\calG) \xrightarrow{\qis}
\calS_1.$ The first dg-module is K-flat over
$\calS_2$. Thus the tensor product with this
dg-module defines a functor $L(\tilde{f}_{\Gm}^+)^* :
\calD^+_{\Gm}(X,\calS_2) \to \calD^+_{\Gm}(X,\calS_1).$ The commutativity of
the diagram is obvious. \end{proof}

\begin{cor} \label{cor:functors+2}

The functors $R(\tilde{f}_{\Gm}^+)_*$ and $L(\tilde{f}_{\Gm}^+)^*$
restrict to the subcategories whose objects have quasi-coherent, locally
finitely generated cohomology. Moreover, the following diagrams are
commutative: \[ \xymatrix@R=16pt{\DGCoh^{\gr}(F_1) \ar[d]_-{\For}
  \ar[r]^-{R(\tilde{f}_{\Gm}^+)_*} & \DGCoh^{\gr}(F_2) \ar[d]^-{\For} & \DGCoh^{\gr}(F_2) \ar[d]_-{\For} \ar[r]^-{L(\tilde{f}_{\Gm}^+)^*} & \DGCoh^{\gr}(F_1) \ar[d]^-{\For} \\
  \calD^b \Coh(F_1) \ar[r]^-{Rf_*} & \calD^b \Coh(F_2) & \calD^b \Coh(F_2)
\ar[r]^-{Lf^*} & \calD^b \Coh(F_1). } \]

\end{cor}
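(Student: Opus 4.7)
The plan is to parallel the argument that yields Corollary \ref{cor:functors+} in the base-change setting, since the non-bounded-below analogues of both assertions have already been recorded in the discussion preceding Lemma \ref{lem:lemmafunctors+}. Specifically, diagram \eqref{eq:f_*} shows that $R(\tilde{f}_{\Gm})_*$ restricts to a functor $\calD_{\Gm}^{\qc,\fg}(X,\calS_1) \to \calD_{\Gm}^{\qc,\fg}(X,\calS_2)$ corresponding to $Rf_*$ under the regrading $\xi_i$ combined with \eqref{eq:equivalenceCoh(E)}, and the analogous statement for $L(\tilde{f}_{\Gm})^*$ is asserted (with the same proof pattern) just afterwards. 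It remains to lift these two facts from the non-$+$ setting to the bounded-below setting.

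First I would check that the bounded-below condition is preserved. For $R(\tilde{f}_{\Gm}^+)_*$, the underlying map $\tilde{f}_0$ is the identity on $X$ and $(\tilde{f}_{\Gm}^+)_*$ is mere restriction of scalars along the surjection $\calS_2 \twoheadrightarrow \calS_1$; this functor is exact, so the right derived functor coincides with the underived one and obviously preserves boundedness from below. For $L(\tilde{f}_{\Gm}^+)^*$ I would reuse the explicit Koszul resolution $\calS \otimes_{\calO_X} \Lambda_{\calO_X}(\calG) \xrightarrow{\qis} \calS_1$ constructed in the proof of Lemma \ref{lem:lemmafunctors+}: it is bounded (since $\calG$ has finite rank) and K-flat over $\calS_2$, so tensoring a bounded-below $\calO_X$-quasi-coherent dg-module with it stays bounded-below and quasi-coherent. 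Local finite generation of the cohomology over $\calS_1$ follows from a stalkwise spectral-sequence argument identical in spirit to the one used in Lemma \ref{lem:restrictionKoszul}, since all local rings of $X$ are noetherian and the fiberwise Koszul complex has only finitely many non-zero terms.

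Combining the preservation of bounded-belowness with the commutative squares of Lemma \ref{lem:lemmafunctors+} and the already-established restriction in the non-$+$ case yields at once that $R(\tilde{f}_{\Gm}^+)_*$ and $L(\tilde{f}_{\Gm}^+)^*$ restrict to functors between $\DGCoh^{\gr}(F_1) = \calD_{\Gm}^{+,\qc,\fg}(X,\calS_1)$ and $\DGCoh^{\gr}(F_2) = \calD_{\Gm}^{+,\qc,\fg}(X,\calS_2)$. The commutativity of the two diagrams of the corollary then follows by pasting: the vertical forgetful composites in each square factor as $\xi_i$ followed by the identification \eqref{eq:equivalenceCoh(E)} followed by $\For: \calD^b \Coh^{\Gm}(F_i) \to \calD^b \Coh(F_i)$, and the commutativity of the non-$+$ version \eqref{eq:f_*} (together with its $L$-analogue) delivers exactly the required compatibility, while Lemma \ref{lem:lemmafunctors+} identifies the $+$-derived functor with the restriction of the full derived functor to the bounded-below subcategory.

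The only real subtlety is in the $L(\tilde{f}_{\Gm}^+)^*$ step, where one must exhibit a bounded K-flat resolution of $\calS_1$ over $\calS_2$ in order to ensure both boundedness from below and finite generation of cohomology; fortunately this is provided directly by the Koszul resolution built in Lemma \ref{lem:lemmafunctors+}, so the argument reduces to careful bookkeeping of the various regradings and forgetful functors, exactly as in Corollary \ref{cor:functors+}.
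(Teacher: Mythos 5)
Your proof is correct and takes essentially the same route the paper intends: the corollary is obtained by combining Lemma \ref{lem:lemmafunctors+} (which identifies the ``$+$''-derived functors with the restrictions of the full derived functors to bounded-below objects) with the facts established just before it, namely that $R(\tilde f_{\Gm})_*$ and $L(\tilde f_{\Gm})^*$ preserve the ``$\qc,\fg$'' condition with commuting diagram \eqref{eq:f_*} and its $L$-analogue; the bounded-belowness of the output follows for $R$ from exactness of restriction of scalars and for $L$ from the bounded Koszul resolution already built in the proof of Lemma \ref{lem:lemmafunctors+}. Your bookkeeping of the forgetful-functor factorization through $\xi_i$ and \eqref{eq:equivalenceCoh(E)} is exactly the unwinding the paper leaves implicit, so there is no substantive difference in approach.
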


As in \S \ref{ss:koszulbasechange}, from now on we will not write the superscript
``$+$'' anymore. Now we study
compatibility. We
assume that $\calF_i$ is of constant rank
$n_i$ ($i=1,2$). Consider the line bundles $\calL_i:=\Lambda_{\calO_X}^{n_i}(\calF_i)$. One has isomorphisms $\psi_i :
\calT_i \to \calT_i^{\vee} \otimes_{\calO_X} \calL_i [n_i]$, induced
by \[\left\{ \begin{array}{ccc} \Lambda^j_{\calO_X}(\calF_i)
  \otimes_{\calO_X} \Lambda^{n_i - j}_{\calO_X}(\calF_i) & \to &
  \calL_i \\ t \otimes u & \mapsto & (-1)^{j(j+1) / 2} t \wedge u \\
\end{array} \right. . \] Under $\psi_i$, the action of
  $\calT_i$ by left multiplication
  corresponds to the action on the dual defined as in
  \S \ref{ss:koszulduality}:
  $\psi_i(st)(u)=(-1)^{\deg(s)(\deg(s)+1)/2} \psi_i(t)(su)$. We denote
  by $\langle 1 \rangle$ the shift in the $\Gm$-grading defined by $(M
  \langle 1 \rangle)_n=M_{n-1}$, and by $\langle j \rangle$ its $j$-th 
  power. This functor corresponds to the tensor product with the
  one-dimensional $\Gm$-module corresponding to $\Id_{\Gm}$. Taking
  the $\Gm$-structure into account, $\psi_i$ becomes an isomorphism
  $\calT_i \cong \calT_i^{\vee} \otimes_{\calO_X}  \calL_i [n_i]
  \langle 2n_i \rangle$.

\begin{prop}\label{prop:koszulinclusionprop}

Consider the diagram \[ \xymatrix@R=18pt{ \DGCoh^{\gr}(F_1)
\ar@<0.5ex>[rr]^-{R(\tilde{f}_{\Gm})_*} \ar[d]^-{\kappa_1}_-{\wr} & &
\DGCoh^{\gr}(F_2)
\ar@<0.5ex>[ll]^-{L(\tilde{f}_{\Gm})^*}
\ar[d]^-{\kappa_2}_-{\wr} \\
\DGCoh^{\gr}(F_1^{\bot} \, \rcap_{E^*} \, X) \ar@<0.5ex>[rr]^-{L(g_{\Gm})^*} & &
\DGCoh^{\gr}(F_2^{\bot} \, \rcap_{E^*} \, X) \ar@<0.5ex>[ll]^-{R(g_{\Gm})_*}. } \] There are natural isomorphisms of functors {\small \[\left\{ \begin{array}{ccl}
      \kappa_1 \circ
      L(\tilde{f}_{\Gm})^* & \cong
    & R(g_{\Gm})_* \circ \kappa_2, \\[2pt] \kappa_2 \circ
    R(\tilde{f}_{\Gm})_* & \cong & (L(g_{\Gm})^* \circ
    \kappa_1) \otimes_{\calO_X} \calL_1 \otimes_{\calO_X} \calL_2^{-1} [n_1
    - n_2] \langle 2 n_1 - 2 n_2 \rangle. \end{array} \right.\]}

\end{prop}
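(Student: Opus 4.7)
The plan is to follow the strategy of the proof of Proposition \ref{prop:koszulbasechangeprop}: compute both sides of each isomorphism explicitly, construct a natural transformation, and verify that it is a quasi-isomorphism by reducing to the case of a local splitting. The question being local over $X$, we may assume $\calF_1$ and $\calF_2$ are free and pick a splitting $\calF_2 \cong \calF_1 \oplus \calH$, where $\calH := \calF_2/\calF_1$ is locally free of rank $n_2 - n_1$. Then $\calS_2 \cong \calS_1 \otimes_{\calO_X} S_{\calO_X}(\calH^{\vee})$ and $\calT_2 \cong \calT_1 \otimes_{\calO_X} \Lambda_{\calO_X}(\calH)$ as $\Gm$-equivariant sheaves of dg-algebras.

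For the first isomorphism, given $\calM$ over $\calS_2$, the right-hand side $R(g_{\Gm})_*\kappa_2(\calM)$ is simply $\calT_2^{\vee} \otimes_{\calO_X} \calM$ viewed as a $\calT_1$-dg-module via $\calT_1 \hookrightarrow \calT_2$. For the left-hand side, I would use the $\Gm$-equivariant Koszul resolution $\calS_2 \otimes_{\calO_X} \Lambda_{\calO_X}(\calH^{\vee}) \xrightarrow{\qis} \calS_1$ (where $\calH^{\vee}$ sits in appropriate bidegree so that the Koszul differential has bidegree $(1,0)$, exactly as in the proof of Lemma~\ref{lem:lemmafunctors+}), which is K-flat over $\calS_2$. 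This gives $L(\tilde{f}_{\Gm})^*\calM \cong \Lambda_{\calO_X}(\calH^{\vee}) \otimes_{\calO_X} \calM$, hence $\kappa_1 \circ L(\tilde{f}_{\Gm})^*(\calM) \cong \calT_1^{\vee} \otimes_{\calO_X} \Lambda_{\calO_X}(\calH^{\vee}) \otimes_{\calO_X} \calM$. The splitting yields $\calT_2^{\vee} \cong \calT_1^{\vee} \otimes_{\calO_X} \Lambda_{\calO_X}(\calH)^{\vee} \cong \calT_1^{\vee} \otimes_{\calO_X} \Lambda_{\calO_X}(\calH^{\vee})$, so both sides agree as $\Gm$-equivariant graded $\calT_1$-modules. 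One then checks that the differentials coincide: the ``Koszul'' part of the differential on $L(\tilde{f}_{\Gm})^*\calM$ matches the extra piece of the differential on $\kappa_2(\calM)$ coming from the action of $\calH^{\vee} \subset \calF_2^{\vee}$, while the $\kappa_1$-differential and the $\calM$-differential match on both sides. Finally, one must produce a canonical (splitting-free) morphism of functors; this can be done using the universal formula defining $\mathscr{A}$ and $\mathscr{B}$ together with the adjunction between restriction and extension of scalars along $\calS_2 \to \calS_1$.

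For the second isomorphism, the key input is the self-duality $\psi_i : \calT_i \xrightarrow{\sim} \calT_i^{\vee} \otimes_{\calO_X} \calL_i[n_i]\langle 2n_i\rangle$ recalled just before the proposition. Inverting $\psi_1$ gives $\calT_1^{\vee} \cong \calT_1 \otimes_{\calO_X} \calL_1^{-1}[-n_1]\langle -2n_1\rangle$ as a left $\calT_1$-module (compatibly with the action-by-transposition of \S\ref{ss:koszulduality}). Hence
\[
L(g_{\Gm})^*\kappa_1(\calM) \,\cong\, \calT_2 \otimes_{\calT_1}^L \calT_1^{\vee} \otimes_{\calO_X} \calM \,\cong\, \calT_2 \otimes_{\calO_X} \calM \otimes_{\calO_X} \calL_1^{-1}[-n_1]\langle -2n_1\rangle,
\]
where the derived tensor product collapses because $\calT_1^{\vee}$ is free over $\calT_1$ via $\psi_1$. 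Applying $\psi_2$ converts $\calT_2$ into $\calT_2^{\vee} \otimes_{\calO_X} \calL_2[n_2]\langle 2n_2\rangle$, so after twisting by $\calL_1 \otimes \calL_2^{-1}[n_1 - n_2]\langle 2n_1 - 2n_2\rangle$, the line bundles and shifts cancel and we recover $\calT_2^{\vee} \otimes_{\calO_X} \calM \cong \kappa_2(R(\tilde{f}_{\Gm})_* \calM)$.

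The main obstacle in both parts is not the identification of graded sheaves but the careful bookkeeping of differentials, signs, and bidegrees; in particular, one must check that the two pieces of the $\kappa_i$-differential (the ``$d_1$-piece'' coming from the module differential and the ``$d_2$-piece'' coming from the action of $\calF_i^{\vee}$) decompose correctly under the splitting $\calF_2 = \calF_1 \oplus \calH$, and that the self-duality $\psi_i$ intertwines the two natural actions as stated. Once these verifications are in place, both isomorphisms are canonical and extend globally.
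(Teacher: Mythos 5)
Your argument for the second isomorphism is essentially the paper's, including the key observation that $\calT_2 \lotimes_{\calT_1}(-)$ collapses to an underived tensor because $\calT_1^{\vee}$ is free over $\calT_1$ via $\psi_1$. For the first isomorphism your overall plan is sound, but the paper takes a route that avoids the delicate points in your sketch in three ways. First, it proves the inverse statement $L(\tilde{f}_{\Gm})^* \circ \kappa_2^{-1} \cong \kappa_1^{-1} \circ R(g_{\Gm})_*$, so the Koszul functor that appears is $\scrb(\calN) = \calS_i \otimes_{\calO_X}\calN$, which interacts with a Koszul resolution more transparently than $\scra$ does. Second, it uses the global resolution $\calP := \calS \otimes_{\calO_X}\Lambda_{\calO_X}(\calG) \xrightarrow{\qis} \calS_1$ from the proof of Lemma \ref{lem:lemmafunctors+}, where $\calS := \calS_1 \otimes_{\calO_X}\calS_2$ and $\calG$ is the orthogonal of the diagonal copy of $\calF_1$ in $\calF_1 \oplus \calF_2$; this is a $\calS_1$-$\calS_2$-bimodule resolution, so the $\calS_1$-module structure on the derived tensor is visible from the start, whereas your $\calS_2 \otimes_{\calO_X}\Lambda(\calH^{\vee})$ resolves $\calS_1$ only as a left $\calS_2$-module, and the local splitting $\calF_2 \cong \calF_1 \oplus \calH$ is needed before one can even see $\calS_1$ acting on $\Lambda(\calH^{\vee}) \otimes \calM$. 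Third, the paper never matches differentials term by term: once the natural transformation induced by $\calP \to \calS_1$ is in hand, it reduces to the case $d_{\calN}=0$ via the sequence $\Ima(d_\calN) \hookrightarrow \calN \twoheadrightarrow \calN/\Ima(d_\calN)$, and then filters $\calP \otimes_{\calO_X}\calN$ as a double complex so that the quasi-isomorphism claim follows from Lemma \ref{lem:Kflatsplit} and convergence of the first spectral sequence. That device replaces precisely the sign-and-bidegree verification you defer to the end, which in a direct $\scra$-computation is where most of the work hides; your outline would likely go through after that bookkeeping, but the $\scrb$-side plus trivial-differential reduction is meaningfully less laborious.
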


\begin{proof} Let us first prove the first isomorphism, or rather an isomorphism $L(\tilde{f}_{\Gm})^* \circ
(\kappa_2)^{-1}
\cong (\kappa_1)^{-1} \circ R(g_{\Gm})_*$. Recall the notation
$\calF$, $\calS$ and
$\calG$ introduced in the proof of Lemma
\ref{lem:lemmafunctors+}. Let $\calN \in \DGCoh^{\gr}(F_2^{\bot} \, \rcap_{E^*} \, X)$, assumed to be
bounded below (Lemma \ref{lem:equivalenceT}). Then $(\kappa_1)^{-1} \circ
R(g_{\Gm})_* (\calN) \cong \calS_1 \otimes_{\calO_X} \calN$,
where $\calN$ is considered as a $\calT_1$-dg-module. On the
other hand, \begin{multline*} L(\tilde{f}_{\Gm})^* \circ
  (\kappa_2)^{-1} (\calN) \ \cong \ L(\tilde{f}_{\Gm})^* (\calS_2
  \otimes_{\calO_X} \calN) \\ \cong \ (\calS \otimes_{\calO_X} \Lambda(\calG))
  \otimes_{\calS_2} (\calS_2 \otimes_{\calO_X} \calN) \ \cong \
  (\calS \otimes_{\calO_X} \Lambda(\calG)) \otimes_{\calO_X}
  \calN. \end{multline*} Hence there is a natural morphism
$L(\tilde{f}_{\Gm})^* \circ (\kappa_2)^{-1} \to (\kappa_1)^{-1} \circ
R(\tilde{g}_{\Gm})_*,$ induced by $\calS
\otimes_{\calO_X} \Lambda(\calG) \to \calS_1$. We want to prove that
it is an
isomorphism. Using the exact sequence of dg-modules $\Ima(d_{\calN}) \hookrightarrow \calN \twoheadrightarrow \calN / \Ima(d_{\calN})$ we
can assume, in addition,
that $\calN$ has trivial differential. (The dg-modules
$\Ima(d_{\calN})$, $\calN / \Ima(d_{\calN})$ may not have
quasi-coherent, locally finitely generated cohomology, but we will not need this assumption.) 

Set $\calP:=\calS \otimes_{\calO_X} \Lambda(\calG)$. It is a K-flat
$\calO_X$-dg-module, as well as $\calS_1$, and $\calP \to \calS_1$ is
a quasi-isomorphism. We want to prove that the morphism $\calP
\otimes_{\calO_X} \calN \to \calS_1 \otimes_{\calO_X} \calN$ is a
quasi-isomorphism, too. One can consider 
$\calP \otimes_{\calO_X}
\calN$ as the total complex of the double complex with $(p,q)$-term $\calP^{q+2p} \otimes_{\calO_X} \calN^{-p}$, with first
differential the Koszul differential, and second differential $d_{\calP}
\otimes \Id$. The first grading of this double complex is bounded
above, hence the associated first
spectral sequence (\cite{GODTop}) converges. The same is true for $\calS_1 \otimes_{\calO_X} \calN$ (in
this case the second differential is trivial). Hence we
can forget about the Koszul differentials. Then
the result follows from Lemma \ref{lem:Kflatsplit}.

Let us now prove the second isomorphism. For $\calM$ in
$\DGCoh^{\gr}(F_1)$, we have $\kappa_2 \circ R(\tilde{f}_{\Gm})_*
(\calM) \cong \calT_2^{\vee} \otimes_{\calO_X} \calM \cong (\calT_2 \otimes_{\calO_X} \calM) \otimes_{\calO_X}
\calL_2^{-1}[-n_2] \langle -2n_2 \rangle.$ On the other
hand, we have $L(g_{\Gm})^* \circ \kappa_1 (\calM) \cong \calT_2
\otimes_{\calT_1} (\calT_1^{\vee} \otimes_{\calO_X} \calM)$, which,
using the same remarks,
is isomorphic to the dg-module $\calT_2 \otimes_{\calO_X} \calM
\otimes_{\calO_X} \calL_1^{-1} [-n_1] \langle -2n_1 \rangle$. This
concludes the proof. \end{proof}

\section{Localization for restricted $\frakg$-modules}
\label{sec:sectionRT}

In this section we prove localization theorems for restricted $\calU \frakg$-modules.

\subsection{Notation and introduction}\label{ss:intro3}

Let $\bk$ be an algebraically closed field of characteristic
$p>0$. Let $R$ be a root system, and $G$ the corresponding connected,
semi-simple, simply-connected
algebraic group over $\bk$. Let $h$ be
the Coxeter number of $G$. In the whole paper we assume that \[ p>h. \] Let $B$ be a
Borel subgroup of $G$, $T \subset B$ a maximal torus, $U$ the unipotent radical
of $B$, $B^+$ the Borel subgroup opposite to $B$, and $U^+$ its
unipotent radical. Let $\frakg$, $\frakb$, $\frakt$, $\frakn$, $\frakb^+$,
$\frakn^+$ be their respective Lie algebras. Let $R^+ \subset R$
be the positive roots, chosen as the roots in $\frakn^+$, and $\Phi$ be
the corresponding set of simple roots. We denote by $U_{\alpha}
\subset G$ the one-parameter subgroup corresponding to the root
$\alpha$. Let $\calB:=G/B$ be the flag variety of $G$, and
$\wcalN:=T^*\calB$ its cotangent bundle. We have the geometric description
$$\wcalN=\{(X,gB) \in \frakg^* \times \calB \mid X_{|g \cdot
  \frakb}=0\}.$$ We also introduce the ``extended cotangent bundle''
  $$\wfrakg:=\{(X,gB) \in \frakg^* \times \calB \mid X_{|g \cdot
    \frakn}=0\}.$$ Let $\frakh$ denote the ``abstract'' Cartan
  subalgebra of $\frakg$, isomorphic to
  $\frakb_0/[\frakb_0,\frakb_0]$ for any Borel subalgebra $\frakb_0$
  of $\frakg$. The Lie algebras $\frakt$ and $\frakh$ are naturally isomorphic, via the morphism $\frakt \xrightarrow{\sim} \frakb / \frakn \cong \frakh$.

For each positive root $\alpha$, we choose isomorphisms of algebraic groups
$u_{\alpha} : \bk \xrightarrow{\sim} U_{\alpha}$ and $u_{-\alpha}
: \bk \xrightarrow{\sim} U_{-\alpha}$ such that for $t \in T$, $t
\cdot u_{\alpha}(x) \cdot t^{-1}=u_{\alpha}(\alpha(t) x)$ and $t
\cdot u_{-\alpha}(x) \cdot t^{-1}=u_{-\alpha}(\alpha(t)^{-1} x)$,
and such that these morphisms extend to a morphism of algebraic groups
$\psi_{\alpha} :
\SL(2,\bk) \to G$ such that \[ \psi_{\alpha} \left(%
\begin{array}{cc}
  1 & x \\
  0 & 1 \\
\end{array}%
\right) = u_{\alpha}(x), \quad \psi_{\alpha} \left(%
\begin{array}{cc}
  1 & 0 \\
  x & 1 \\
\end{array}%
\right) = u_{-\alpha}(x). \] We define the elements
$e_{\alpha}:=d(u_{\alpha})_0(1)$, $e_{-\alpha}:=d(u_{-\alpha})_0(1)$, $h_{\alpha}:=[e_{\alpha},e_{-\alpha}]$.

Let $\bbY:=\mathbb{Z} R$ be the root latice of $R$, and
$\bbX:=X^*(T)$ the weight lattice. Let $W$ be the Weyl group of $(G,T)$,
$W_{\aff}:=W \ltimes \bbY$ the affine Weyl group, and $W_{\aff}':=W
\ltimes \bbX$ the extended affine Weyl group. We let $\ell$ denote the
length function of $W$ and
$W_{\aff}$ (for our choice of $R^+$); it extends canonically to
$W_{\aff}'$ (\cite[\S
1]{IM}). Let $\rho \in \bbX$ be the half sum of the positive roots, and \[ C_0:=\{ \lambda \in
\bbX \mid \forall \alpha \in
R^+, \ 0 < \langle \lambda + \rho, \alpha^{\vee} \rangle < p \},\]
the set of integral weights in the fundamental
alcove (which contains $0$). We consider the action of $W_{\aff}'$ on $\bbX$ defined by $w \bullet \lambda = w(\lambda+\rho) - \rho$. Denote by $L(\lambda)$, $\Ind_B^G(\lambda)$ the simple and induced $G$-modules with highest weight $\lambda$.

Let us apply the results of section
\ref{sec:sectionKoszulduality} in the following
situation: $X=\calB^{(1)}$, the Frobenius twist\footnote{See
\cite[1.1.1]{BMR} for Frobenius twists.} of $\calB$, $E=(\frakg^* \times \calB)^{(1)}$, and  $F=\wcalN^{(1)} \subset (\frakg^* \times
\calB)^{(1)}$. Let $\calT_{\calB^{(1)}}$ denote the tangent bundle to
$\calB^{(1)}$; $\calT_{\calB^{(1)}}^{\vee}$ is the sheaf of
sections of $\wcalN^{(1)}$. Under our hypothesis $p>h$, there exists a
$G$-equivariant isomorphism $\frakg^* \cong \frakg$, which induces an
isomorphism $E \cong E^*$. Under
this isomorphism, $(\wcalN^{(1)})^{\bot}$ identifies with
$\wfrakg^{(1)}$. By Theorem \ref{thm:thmlkd}, we obtain a Koszul duality
\begin{equation}\label{eq:defkappa} \kappa_{\calB} : \DGCoh^{\gr}(\wcalN^{(1)})
  \xrightarrow{\sim} \DGCoh^{\gr}((\wfrakg \rcap_{\frakg^*
\times \calB} \calB)^{(1)}). \end{equation} This equivalence is given
by the following formula, for $\calM$ in $\DGCoh^{\gr}(\wcalN^{(1)})$:
$\kappa_{\calB}(\calM) =
  (\Lambda(\calT_{\calB^{(1)}}^{\vee}))^{\vee}
  \otimes_{\calO_{\calB^{(1)}}} \calM.$ We have an
isomorphism $\Lambda^{{\rm top}}(\calT_{\calB^{(1)}}^{\vee}) \cong
\calO_{\calB^{(1)}}(-2\rho).$ Hence we have
$(\Lambda(\calT_{\calB^{(1)}}^{\vee}))^{\vee} \cong
\Lambda(\calT_{\calB^{(1)}}^{\vee}) \otimes \calO_{\calB^{(1)}}(2\rho)
[-N] \langle -2N \rangle,$ where
$N=\rk(\calT_{\calB^{(1)}}^{\vee})=\# R^+$. It follows that, for $\calM$ in
$\DGCoh^{\gr}(\wcalN^{(1)})$, \begin{equation}
  \label{eq:formulakappa} \kappa_{\calB}(\calM) =
  \Lambda(\calT_{\calB^{(1)}}^{\vee}) \otimes \calM \otimes
  \calO_{\calB^{(1)}}(2\rho) [-N] \langle -2N \rangle. \end{equation}

In section \ref{sec:sectionKoszulduality} (see e.g.~equation \eqref{eq:DGCoh2}) we have used the realization
\begin{equation}\label{eq:DGCohkoszul1} \DGCoh((\wfrakg
  \rcap_{\frakg^* \times \calB} \calB)^{(1)})
\ \cong \ \calD^{\qc,\fg}
(\calB^{(1)},\Lambda_{\calO_{\calB^{(1)}}}(\calT^{\vee}_{\calB^{(1)}})),
\end{equation} where
$\Lambda_{\calO_{\calB^{(1)}}}(\calT^{\vee}_{\calB^{(1)}})$ has trivial differential, and
$\calT^{\vee}_{\calB^{(1)}}$ is in degree $-1$. Consider the embeddings $i: \wfrakg^{(1)}
\hookrightarrow (\frakg^* \times \calB)^{(1)}$, $j: \calB^{(1)}
\hookrightarrow (\frakg^* \times \calB)^{(1)}$. The realization \eqref{eq:DGCohkoszul1} was constructed using
a resolution of
$i_* \calO_{\wfrakg^{(1)}}$ over $\calO_{(\frakg^* \times
  \calB)^{(1)}}$. We can obtain another realization using the Koszul resolution $\calO_{(\frakg^* \times
  \calB)^{(1)}} \otimes_{\bk} \Lambda(\frakg^{(1)})
\xrightarrow{\qis} j_* \calO_{\calB^{(1)}}$, which we will rather use from now on. More precisely, using Remark
\ref{rk:rkderivedintersection} we have:

\begin{prop}\label{prop:DGCohkoszul2nongr}

There exists an equivalence of triangulated categories
\[ \DGCoh((\wfrakg
  \rcap_{\frakg^* \times \calB}
\calB)^{(1)}) \ \cong \ \calD^{\qc,\fg}(\wfrakg^{(1)},
\calO_{\wfrakg^{(1)}} \otimes_{\bk}
\Lambda(\frakg^{(1)})) \] where $\calO_{\wfrakg^{(1)}}
\otimes_{\bk} \Lambda(\frakg^{(1)})$ is a dg-algebra with the
generators of $\Lambda(\frakg^{(1)})$ in degree $-1$, equipped with a
Koszul differential.

\end{prop}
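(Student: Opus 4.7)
The statement is a variant of the definition of $\DGCoh$ of the derived intersection given just before Remark \ref{rk:rkderivedintersection}: the original realization \eqref{eq:DGCohkoszul1} uses a Koszul resolution of $i_*\calO_{\wfrakg^{(1)}}$ (viewing $\wfrakg^{(1)}$ as the zero section of the vector bundle $(\frakg^*\times\calB)^{(1)} \to \calB^{(1)}$ cut out by the $\calT^{\vee}_{\calB^{(1)}}$-valued section of its dual), whereas the realization to be established uses instead a Koszul resolution of $j_*\calO_{\calB^{(1)}}$. The whole proof therefore consists in (i) producing such a resolution, (ii) invoking the symmetric version of Remark \ref{rk:rkderivedintersection}, and (iii) identifying the resulting dg-algebra on $\wfrakg^{(1)}$.

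For step (i), observe that the closed embedding $j : \calB^{(1)} \hookrightarrow (\frakg^*\times\calB)^{(1)}$ is the zero section of the \emph{trivial} vector bundle $(\frakg^*\times\calB)^{(1)} = \calB^{(1)} \times \frakg^{*(1)} \to \calB^{(1)}$, whose sheaf of sections is the free $\calO_{\calB^{(1)}}$-module $\calO_{\calB^{(1)}}\otimes_{\bk}\frakg^{(1)}$. The associated Koszul complex yields a quasi-isomorphism
\[
\calA_Z \ :=\ \calO_{(\frakg^*\times\calB)^{(1)}}\otimes_{\bk}\Lambda(\frakg^{(1)}) \ \xrightarrow{\qis}\ j_*\calO_{\calB^{(1)}}
\]
of non-positively graded, graded-commutative, quasi-coherent dg-$\calO_{(\frakg^*\times\calB)^{(1)}}$-algebras, with $\frakg^{(1)}$ in degree $-1$ and the standard Koszul differential. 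This $\calA_Z$ is locally free (in particular K-flat) over $\calO_{(\frakg^*\times\calB)^{(1)}}$, so by the discussion preceding Remark \ref{rk:rkderivedintersection} it can be used as a model for the derived tensor product defining $\wfrakg \rcap_{\frakg^*\times\calB} \calB$.

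For step (ii), since $i : \wfrakg^{(1)} \hookrightarrow (\frakg^*\times\calB)^{(1)}$ is a closed immersion and $\calA_Z$ is $\calO_{(\frakg^*\times\calB)^{(1)}}$-flat, the projection formula for closed embeddings gives a canonical isomorphism of sheaves of dg-algebras
\[
i_*\calO_{\wfrakg^{(1)}} \otimes_{\calO_{(\frakg^*\times\calB)^{(1)}}} \calA_Z \ \cong\ i_*(i^*\calA_Z),
\]
so the functor $i_* : \calC\bigl(\wfrakg^{(1)}, i^*\calA_Z\bigr) \to \calC\bigl((\frakg^*\times\calB)^{(1)}, i_*\calO_{\wfrakg^{(1)}} \otimes \calA_Z\bigr)$ is an equivalence exactly as in Remark \ref{rk:rkderivedintersection}, and it preserves quasi-coherence and the finite generation condition over the respective cohomology sheaves. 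Combining this equivalence with Proposition \ref{prop:qisequivalence} applied to the natural quasi-isomorphism between the two models of the derived-intersection dg-algebra yields the desired equivalence of triangulated categories
\[
\DGCoh\bigl((\wfrakg \rcap_{\frakg^*\times\calB}\calB)^{(1)}\bigr) \ \cong\ \calD^{\qc,\fg}\bigl(\wfrakg^{(1)},\, i^*\calA_Z\bigr).
\]

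For step (iii), the pullback $i^*\calA_Z$ is, as an $\calO_{\wfrakg^{(1)}}$-algebra, nothing but $\calO_{\wfrakg^{(1)}}\otimes_{\bk}\Lambda(\frakg^{(1)})$ with the generators of $\Lambda(\frakg^{(1)})$ in degree $-1$, equipped with the differential obtained by restricting the Koszul differential along $i$: concretely, the component $\frakg^{(1)} \to \calO_{(\frakg^*\times\calB)^{(1)}}$ of the Koszul differential is the canonical pairing, and its restriction to $\wfrakg^{(1)}$ is again Koszul in nature. This is exactly the dg-algebra appearing in the statement, completing the proof. No step is a genuine obstacle here; the only small point to keep track of is the compatibility of $i_*$ with the quasi-coherent and locally-finitely-generated-cohomology conditions, but this is automatic since $i$ is a closed immersion and $\Lambda(\frakg^{(1)})$ is finite-dimensional over $\bk$, so $H(i^*\calA_Z)$ is $\calO$-coherent.
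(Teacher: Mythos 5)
Your proof is correct and takes the same route as the paper, which establishes the proposition simply by applying (the symmetric version of) Remark~\ref{rk:rkderivedintersection} to the Koszul resolution $\calO_{(\frakg^*\times\calB)^{(1)}}\otimes_{\bk}\Lambda(\frakg^{(1)}) \xrightarrow{\qis} j_*\calO_{\calB^{(1)}}$. You have merely spelled out the projection-formula argument and the change-of-model step via Proposition~\ref{prop:qisequivalence} that the paper leaves implicit.
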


\subsection{Review of \cite{BMR, BMR2}}\label{ss:reviewlocalization}

If $X$ is a variety, and $Y \subset X$ a closed subscheme, one says that an
$\calO_X$-module $\calF$ is \emph{supported on} $Y$ if $\calF_{|X - Y}=0$. If $\calF$ is coherent, this is equivalent to requiring that the ideal
of definition $\calI_Y \subset \calO_X$ of $Y$ acts nilpotently. We write
$\Coh_Y(X)$ for the full subcategory of $\Coh(X)$ whose objects are supported on $Y$.

If $P \subseteq G$ is a parabolic subgroup containing $B$, $\frakp$
its Lie algebra, $\frakp^{{\rm u}}$ the nilpotent radical of $\frakp$, and
$\calP=G/P$ the corresponding flag variety, we consider the
following analogue of the variety $\wfrakg$:
$$\wfrakg_{\calP}:=\{(X,gP) \in \frakg^* \times
\calP \mid X_{|g \cdot \frakp^{{\rm u}}}=0 \}.$$ In particular,
$\wfrakg_{\calB}=\wfrakg$. The morphism $\pi_{\calP} : \calB
\to \calP$ induces a morphism \begin{equation} \label{eq:defpiP}
  \widetilde{\pi}_{\calP}: \wfrakg \to
\wfrakg_{\calP}.\end{equation} In this situation, we also denote by
$W_{P} \subseteq W$ the Weyl group of $P$. If $\alpha \in \Phi$, and $P_{\alpha}$ is the
minimal parabolic subgroup containing $B$ associated to $\alpha$, we
simplify the notation by setting
$\wfrakg_{\alpha}:=\wfrakg_{G/P_{\alpha}}$,
$\widetilde{\pi}_{\alpha}:=\widetilde{\pi}_{G/P_{\alpha}}$.\smallskip

Let $\frakZ$ be the center of $\calU \frakg$, the enveloping
algebra of $\frakg$. The subalgebra of $G$-invariants
$\frakZ_{{\rm HC}}:=(\calU \frakg)^G$ is central in $\calU \frakg$. This
is the ``Harish-Chandra part'' of the center, isomorphic
to $S(\frakt)^{(W,\bullet)}$, the algebra of $W$-invariants in the
symmetric algebra of $\frakt$, for the dot-action. The center
$\frakZ$ also has an other part, the ``Frobenius part''
$\frakZ_{{\rm Fr}}$, which is generated as an algebra by the elements
$X^p - X^{[p]}$ for $X \in \frakg$. It is isomorphic to
$S(\frakg^{(1)})$, the functions on
$\frakg^* {}^{(1)}$. Under our assumption $p>h$, there is an isomorphism (see
\cite{MRCen}):
$$\frakZ_{{\rm HC}} \otimes_{\frakZ_{{\rm Fr}} \cap \frakZ_{{\rm HC}}}
\frakZ_{{\rm Fr}}
\xrightarrow{\sim} \frakZ.$$ Hence a character of $\frakZ$ is
given by a ``compatible pair'' $(\nu,\chi) \in \frakt^* \times
\frakg^{*}{}^{(1)}$. In this paper we only consider the case when
$\chi=0$, and $\nu$ is \emph{integral}, i.e.~in the image
of the natural map $\bbX \to \frakt^*$ (such a pair is always ``compatible''). If $\lambda \in
\bbX$, we still denote by $\lambda$ its image in $\frakt^*$. We
consider the specializations \[ (\calU
\frakg)^{\lambda} := (\calU \frakg) \otimes_{\frakZ_{{\rm HC}}}
\bk_{\lambda}, \ \ (\calU \frakg)_{0}:=(\calU \frakg)
\otimes_{\frakZ_{{\rm Fr}}} \bk_{0}, \ \ (\calU
\frakg)^{\lambda}_{0}:=(\calU \frakg) \otimes_{\frakZ}
\bk_{(\lambda,0)}. \]

Let $\Mod^{\fg}(\calU \frakg)$ be the category of finitely
generated $\calU \frakg$-modules. If $\lambda \in \bbX$, we denote by
$\Mod^{\fg}_{(\lambda,0)}(\calU
\frakg)$ the subcategory of $\calU \frakg$-modules on
which $\frakZ$ acts with generalized character $(\lambda,0)$.
We define similarly the categories $\Mod^{\fg}_{0}((\calU
\frakg)^{\lambda})$, $\Mod^{\fg}_{\lambda}((\calU \frakg)_{0})$,
$\Mod^{\fg}((\calU \frakg)^{\lambda}_{0})$. We also denote by
$\Mod^{\fg}_{\lambda}(\calU \frakg)$ the subcategory of $\calU \frakg$-modules on
which $\frakZ_{{\rm HC}}$ acts with generalized character
$\lambda$. Hence we have inclusions \[
\xymatrix@R=13pt{ \Mod^{\fg}((\calU \frakg)^{\lambda}_{0}) \ar@{^{(}->}[r] \ar@{_{(}->}[rd] & \Mod^{\fg}_{0}((\calU \frakg)^{\lambda}) \ar@{^{(}->}[r]
  & \Mod^{\fg}_{(\lambda,0)}(\calU
\frakg) \ar@{^{(}->}[r] \ar@{^{(}->}[rd] & \Mod^{\fg}_{\lambda}(\calU \frakg) \ar@{^{(}->}[d] \\ &
\Mod^{\fg}_{\lambda}((\calU \frakg)_{0}) \ar@{^{(}->}[ru] & & \Mod^{\fg}(\calU \frakg) } \]

Recall that a weight $\lambda \in \bbX$ is called
\emph{regular} if, for any root $\alpha$, $\langle \lambda +
\rho, \alpha^{\vee} \rangle \notin p \mathbb{Z}$, i.e.~if $\lambda$ is not on any reflection hyperplane of $W_{\aff}$ (for the dot-action). If $\mu \in \bbX$, we denote by ${\rm Stab}_{(W_{\aff},\bullet)}(\mu)$ the stabilizer of $\mu$ for the dot-action of $W_{\aff}$ on $\bbX$. Under our hypothesis $p>h$, we have $(p \bbX) \cap \bbY = p \bbY$. It follows that ${\rm Stab}_{(W_{\aff},\bullet)}(\mu)$ is also the stabilizer of $\mu$ for the action of $W_{\aff}'$ on $\bbX$.

We have (see \cite[5.3.1]{BMR} for $\rmi$, and
\cite[1.5.1.c, 1.5.2.b]{BMR2} for $\rmii$):

\begin{thm}\label{thm:thmBMR}

$\rmi$ Let $\lambda \in \bbX$ be regular. There exist equivalences \begin{align} \label{eq:equivBMR} \calD^b
  \Coh_{\calB^{(1)}}(\wfrakg^{(1)})
  \ & \cong \ \calD^b\Mod^{\fg}_{(\lambda,0)}(\calU \frakg), \\ \label{eq:equivBMR2}
  \calD^b\Coh_{\calB^{(1)}}(\wcalN^{(1)}) \ & \cong \
  \calD^b\Mod^{\fg}_{0}((\calU \frakg)^{\lambda}). \end{align}

$\rmii$ More generally, let $\mu \in \bbX$, and let $P$ be a parabolic
subgroup of $G$ containing $B$ such that\footnote{Equivalently, this means that $\mu$ is on the reflection hyperplane corresponding to any simple root of $W_{P}$, but not on any hyperplane of a reflection (simple or not) in $W_{\aff} - W_{P}$.} ${\rm
  Stab}_{(W_{\aff},\bullet)}(\mu)=W_P$. Let $\calP=G/P$ be the corresponding
flag variety. Then there exists an equivalence of categories
\begin{equation} \calD^b\Coh_{\calP^{(1)}}(\wfrakg_{\calP}^{(1)}) \
  \cong \ \calD^b\Mod^{\fg}_{(\mu,0)}(\calU \frakg). \end{equation}

\end{thm}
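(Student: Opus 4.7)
The plan is to follow the localization theory of Bezrukavnikov, Mirković and Rumynin, which relates representations of $\calU\frakg$ in positive characteristic to coherent sheaves on the Springer and Grothendieck resolutions by means of sheaves of crystalline differential operators, exploiting the large center these sheaves acquire in characteristic $p$. The two key external ingredients are a Beilinson--Bernstein-type global sections equivalence at the level of $\calD$-modules, and an Azumaya algebra splitting on a suitable formal neighborhood.

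For part $\rmi$, I would introduce the sheaf $\calD^{\lambda}$ of $\lambda$-twisted crystalline differential operators on $\calB$. The first step is to prove that for regular $\lambda$ (i.e. $\langle \lambda + \rho, \alpha^{\vee}\rangle \notin p\mathbb{Z}$ for every root $\alpha$) the global sections functor $R\Gamma$ induces an equivalence between the bounded derived category of finitely generated $\calD^{\lambda}$-modules and $\calD^{b}\Mod^{\fg}_{\lambda}(\calU\frakg)$, with $\Gamma(\calB, \calD^{\lambda}) \cong (\calU\frakg)^{\lambda}$; this is the direct modular analogue of classical Beilinson--Bernstein, with regularity ensuring the vanishing of higher cohomology and the exactness of the quasi-inverse functor. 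The second step is that $\calD^{\lambda}$ is an Azumaya algebra on $\wfrakg^{(1)}$ (and, after specialization, on $\wcalN^{(1)}$), whose center one identifies explicitly via the Harish-Chandra and Frobenius parts of $\frakZ$. Choosing a splitting bundle on the formal neighborhood of each moment map fibre turns the Morita picture into an actual equivalence between modules over the Azumaya algebra and coherent sheaves on $\wfrakg^{(1)}$; further specializing the Frobenius character to $0$ restricts everything to the zero fiber of the Chevalley map $\frakg^{*\,(1)} \to \frakg^{*\,(1)}/\!/G$, which is precisely $\wcalN^{(1)} \subset \wfrakg^{(1)}$. The support condition $\Coh_{\calB^{(1)}}$ appearing on the geometric side corresponds exactly to \emph{generalized} (rather than strict) Frobenius character $0$: under the Morita equivalence, nilpotence of the ideal of $0 \in \frakg^{*\,(1)}$ on a module translates into set-theoretic support on the zero section $\calB^{(1)}$.

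For part $\rmii$, I would replace $(\calB, \calD^{\lambda})$ by $(\calP, \calD^{\mu}_{\calP})$, where $\calD^{\mu}_{\calP}$ is the parabolic analogue of the twisted sheaf of crystalline differential operators on $\calP = G/P$. The hypothesis ${\rm Stab}_{(W_{\aff},\bullet)}(\mu) = W_{P}$ is precisely what ensures that $R\Gamma$ is still an equivalence between the derived category of finitely generated $\calD^{\mu}_{\calP}$-modules and $\calD^{b}\Mod^{\fg}_{\mu}(\calU\frakg)$; this is directly parallel to the singular Beilinson--Bernstein equivalence for complex category $\calO$, where a singular block gets matched with the partial flag variety whose Weyl group is the stabilizer of the infinitesimal character. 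The Azumaya splitting argument then transports the picture to $\wfrakg_{\calP}^{(1)}$, yielding the stated equivalence. The main technical obstacle, handled in \cite{BMR, BMR2}, is the construction of a global splitting bundle on an appropriate formal neighborhood and the verification of its compatibility with the projections $\widetilde{\pi}_{\calP} : \wfrakg \to \wfrakg_{\calP}$ from \eqref{eq:defpiP}, so that the $\calP$-indexed family of equivalences is coherent with the geometric maps between the resolutions; granting this, the argument for $\rmii$ runs in perfect parallel with that of $\rmi$.
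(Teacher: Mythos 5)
Your proposal is correct and follows essentially the same route as the paper, which simply cites \cite[5.3.1]{BMR} and \cite[1.5.1.c, 1.5.2.b]{BMR2} for the theorem and then sketches the construction of \eqref{eq:equivBMR} via the global sections equivalence together with the Azumaya splitting of $\wcalD$. One minor imprecision: the relevant Azumaya algebra lives on $\wfrakg^{(1)} \times_{\frakh^*{}^{(1)}} \frakh^*$ rather than on $\wfrakg^{(1)}$ itself, and identifying the formal neighborhood of $\calB^{(1)} \times \{\lambda\}$ there with the formal neighborhood of $\calB^{(1)}$ in $\wfrakg^{(1)}$ is a separate step (\cite[1.5.3.c]{BMR2}) which the paper's sketch makes explicit and which your appeal to specialization glosses over.
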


Let us recall briefly how equivalence \eqref{eq:equivBMR} can be
constructed. We use
the notation of \cite{BMR}. Consider the sheaf of
algebras $\wcalD$ on $\calB$; it can also be
considered as a sheaf of algebras on $\wfrakg^{(1)}
\times_{\frakh^*{}^{(1)}} \frakh^*$, and it is an
Azumaya algebra on this space (see \cite[3.1.3]{BMR}). We denote by
$\Mod^{{\rm c}}(\wcalD)$ the
category of quasi-coherent, locally finitely generated
$\wcalD$-modules (equivalently, either on $\calB$ or on $\wfrakg^{(1)}
\times_{\frakh^*{}^{(1)}} \frakh^*$). For $\nu \in \frakt^* \cong \frakh^*$ we denote by $\Mod^{{\rm
    c}}_{\nu}(\wcalD)$, resp. $\Mod^{{\rm c}}_{(\nu,0)}(\wcalD)$, the
full subcategory of $\Mod^{{\rm c}}(\wcalD)$ whose objects are
supported on $\wcalN^{(1)} \times \{ \nu \} \subset
\wfrakg^{(1)} \times_{\frakh^*{}^{(1)}} \frakh^*$, respectively on
$\calB^{(1)} \times \{ \nu \} \subset \wfrakg^{(1)}
\times_{\frakh^*{}^{(1)}} \frakh^*$. If $\lambda \in \bbX$ is
regular, the global sections functor $R\Gamma : \calD^b \Mod^{{\rm c}}_{\lambda}(\wcalD)
\to \calD^b \Mod^{\fg}_{\lambda}(\calU \frakg)$ is an equivalence of
categories. Its inverse is the localization functor
$\calL^{\hat{\lambda}}$. These functors restrict to equivalences
between $\calD^b \Mod^c_{(\lambda,0)}(\wcalD)$ and
$\calD^b\Mod^{\fg}_{(\lambda,0)}(\calU \frakg)$.

Next, the Azumaya algebra $\wcalD$ splits on the formal neighborhood
of $\calB^{(1)} \times \{ \lambda \}$ in $\wfrakg^{(1)}
\times_{\frakh^*{}^{(1)}} \frakh^*$. Hence, the choice of a splitting
bundle on this formal neighborhood yields an equivalence of
categories $\Coh_{\calB^{(1)} \times \{ \lambda \} }(\wfrakg^{(1)}
\times_{\frakh^*{}^{(1)}} \frakh^*) \cong
\Mod^c_{(\lambda,0)}(\wcalD)$. Finally, the projection $\wfrakg^{(1)}
\times_{\frakh^{*}{}^{(1)}} \frakh^* \to
\wfrakg^{(1)}$ induces an isomorphism between the formal
neighborhood of $\calB^{(1)} \times \{ \lambda \}$
and the formal neighborhood of $\calB^{(1)}$ (see \cite[1.5.3.c]{BMR2}). This isomorphism induces
an equivalence $\Coh_{\calB^{(1)} \times
  \{ \lambda \} }(\wfrakg^{(1)} \times_{\frakh^*{}^{(1)}} \frakh^*) \cong
\Coh_{\calB^{(1)}}(\wfrakg^{(1)})$, and gives \eqref{eq:equivBMR}.

We choose the normalization of the splitting bundles as in
\cite[1.3.5]{BMR2}. We denote by $\calM^{\lambda}$
the splitting bundle associated to $\lambda$ (it is denoted
$\calM^{\calB}_{0,\lambda}$ in \cite{BMR2}), and the associated equivalence by \[\gamma_{\lambda}^{\calB} :
\calD^b\Coh_{\calB^{(1)}}(\wfrakg^{(1)}) \xrightarrow{\sim}
\calD^b\Mod^{\fg}_{(\lambda,0)}(\calU \frakg).\] Similarly, for $\lambda, \mu, \calP$ as in Theorem \ref{thm:thmBMR}, we
denote by \begin{align*} \epsilon_{\lambda}^{\calB} :
\calD^b\Coh_{\calB^{(1)}}(\wcalN^{(1)}) \ & \xrightarrow{\sim} \
\calD^b\Mod^{\fg}_{0}((\calU \frakg)^{\lambda}),\\ \gamma^{\calP}_{\mu}
: \calD^b\Coh_{\calP^{(1)}}(\wfrakg_{\calP}^{(1)})
\ & \xrightarrow{\sim} \ \calD^b\Mod^{\fg}_{(\mu,0)}(\calU \frakg) \end{align*} the
equivalences obtained with the normalization of \cite[1.3.5]{BMR2}.

If $\lambda \in \bbX$ is regular and $\nu \in \bbX$, then 
$\Mod^{\fg}_{(\lambda,0)}(\calU \frakg)$ and
$\Mod^{\fg}_{(\lambda + p \nu,0)}(\calU \frakg)$ coincide. But the
equivalences $\gamma^{\calB}_{\lambda}$ and $\gamma^{\calB}_{\lambda +
  p \nu}$ differ by a shift: $\gamma^{\calB}_{\lambda + p
  \nu}(\calF)=\gamma^{\calB}_{\lambda}(\calO_{\wfrakg^{(1)}}(\nu)
\otimes_{\calO_{\wfrakg^{(1)}}} \calF)$ for $\calF$ in $\calD^b
\Coh_{\calB^{(1)}}(\wfrakg^{(1)})$.

\subsection{An equivalence of derived categories}

In this subsection we prove an equivalence of derived categories that
will be needed later. Recall the notation $\qc$ and $\fg$
introduced in \S \ref{ss:paragraphrestriction}.

Let $X$ be a variety, and $\calY$ be a sheaf of
dg-algebras on $X$, non-positively graded and
$\calO_X$-quasi-coherent. We also consider the sheaf of
algebras $\calA=\calY^0$. Let $Z \subset X$ be a closed subscheme. We denote by
$\calD^{\qc}_Z(X, \, \calY)$ the full subcategory of
$\calD^{\qc}(X, \, \calY)$ whose objects
have their cohomology supported on $Z$ (and similarly with $\qc$
replaced by $\qc,\fg$).

\begin{lem}\label{lem:Kinjresolutionsupport}

Let $\calF$ be a $\calY$-dg-module which is $\calO_X$-quasi-coherent, supported
on $Z$, and bounded below. There exists a K-injective
$\calY$-dg-module $\calI$, $\calO_X$-quasi-coherent and supported on
$Z$, and a quasi-isomorphism $\calF \xrightarrow{\qis} \calI$.

\end{lem}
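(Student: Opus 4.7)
The plan is to mimic the proof of Lemma \ref{lem:lemKinjective}, propagating the additional conditions of $\calO_X$-quasi-coherence and support in $Z$ through each step of the Godement-like construction used there.

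First, I would produce an initial embedding $\calF \hookrightarrow \calI_0$ into a $\calY$-dg-module which is K-injective in $\calC(X,\calY)$, $\calO_X$-quasi-coherent, supported on $Z$, bounded below with the same bound as $\calF$, and with flabby components. To this end I would embed $\calF$ (viewed as an $\calO_X$-dg-module) into a bounded-below $\calO_X$-dg-module $\calJ_0$ whose components are injective objects of $\QCoh_Z(X)$. This is possible because $X$, being a variety, is a noetherian scheme, so $\QCoh_Z(X)$ is a Grothendieck abelian category with enough injectives; moreover on a noetherian scheme such injectives remain injective in $\Mod(\calO_X)$, so $\calJ_0$ is K-injective as an $\calO_X$-dg-module by \cite[1.2, 2.2.(c), 2.5]{SPARes}. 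I would then form $\Coind(\calJ_0) = \sheafHom_{\calO_X}(\calY,\calJ_0)$, which is K-injective in $\calC(X,\calY)$ by Lemma \ref{lem:coinductionKinjective}$\rmi$ and is clearly supported on $Z$ (a local section on $U \subseteq X \setminus Z$ is an $\calO_X$-linear map from $\calY|_U$ to $\calJ_0|_U = 0$), together with the canonical injection $\calF \hookrightarrow \Coind(\calF) \hookrightarrow \Coind(\calJ_0)$.

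Next, I would iterate this construction on the cokernel of $\calF \hookrightarrow \calI_0$, which is again $\calO_X$-quasi-coherent, supported on $Z$, and bounded below with the same bound, to obtain an exact sequence of $\calY$-dg-modules $0 \to \calF \to \calI_0 \to \calI_1 \to \calI_2 \to \cdots$ with each $\calI_p$ satisfying all the listed properties. Setting $\calI := {\rm Tot}^{\oplus}(\cdots \to 0 \to \calI_0 \to \calI_1 \to \cdots)$ and invoking the ``special inverse system'' argument after \cite[2.3, 2.4]{SPARes}, exactly as at the end of the proof of Lemma \ref{lem:lemKinjective}, one sees that $\calI$ is K-injective and that $\calF \to \calI$ is a quasi-isomorphism. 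Quasi-coherence and the support condition on $Z$ are preserved under both direct sums and inverse limits on the noetherian scheme $X$, so $\calI$ satisfies all the required conditions.

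The one delicate step, which will be the main obstacle, is ensuring that $\Coind(\calJ_0)$ is actually $\calO_X$-quasi-coherent: since $\calY$ is only assumed $\calO_X$-quasi-coherent rather than coherent, $\sheafHom_{\calO_X}(\calY,\calJ_0)$ need not itself be quasi-coherent. I would handle this by replacing $\Coind(\calJ_0)$ by its \emph{quasi-coherator}, the right adjoint on the noetherian scheme $X$ to the inclusion $\QCoh(X,\calY) \hookrightarrow \Mod(X,\calY)$. The image of the injection from $\calF$ lies in the quasi-coherator (since it is quasi-coherent), the support condition is inherited, and the key remaining point is that K-injectivity in $\calC(X,\calY)$ is preserved under this operation. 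This last point is not formal and is what makes the proof genuinely more delicate than that of Lemma \ref{lem:lemKinjective}; I would verify it by analysing Hom-complexes $\Hom_{\calY}(\calM,-)$ from K-flat $\calY$-dg-modules $\calM$ with quasi-coherent components, which one may arrange using the construction of Theorem \ref{thm:Kflatresolution}, reducing the claim to the fact that on a noetherian scheme the quasi-coherator commutes with $\sheafHom_{\calY}(\calM,-)$ for such $\calM$.
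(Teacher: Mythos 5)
Your first two paragraphs reconstruct the paper's (very terse) argument faithfully: the case $\calY=\calO_X$ is precisely the content of the Hartshorne reference, giving an embedding of $\calF$ into a bounded-below $\calO_X$-dg-module $\calJ_0$ whose components are injective objects of the category of quasi-coherent sheaves supported on $Z$ (hence also injective in $\Mod(\calO_X)$ on the noetherian $X$), and the general case is handled by coinducing and iterating as in Lemma \ref{lem:lemKinjective}. You are also right to flag that $\Coind(\calJ_0)=\sheafHom_{\calO_X}(\calY,\calJ_0)$ need not be $\calO_X$-quasi-coherent when $\calY$ is merely quasi-coherent; this point is passed over silently in the paper's proof (harmlessly, since in every application of this lemma $\calY$ is in fact $\calO_X$-coherent, and the blanket coherence hypothesis on $\calY$ is introduced only a few lines further down).

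However, the repair in your third paragraph does not work. First, the assertion that ``the support condition is inherited'' by the quasi-coherator $Q$ is false: on an affine open with ring $R$, $Q$ replaces an $\calO_X$-module by the quasi-coherent sheaf attached to its module of global sections, and the infinite products appearing in $\sheafHom_{\calO_X}(\calY,\calJ_0)$ do not commute with localization. Concretely, take $R=\bk[x]$, $Z=V(x)$, $\calY^{-1}$ a countable direct sum of copies of $\calO_X$, and $\calJ$ the injective hull of $\bk=R/(x)$; then $\sheafHom_{\calO_X}(\calY^{-1},\calJ)\cong\prod_n\calJ$ vanishes on $D(x)$, but the quasi-coherent sheaf attached to its global sections $\prod_n\Gamma(\calJ)$ does not, because an infinite product of $x$-power-torsion modules need not be $x$-torsion. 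Second, even where the support condition would hold, your reduction only establishes K-injectivity relative to quasi-coherent test objects: by the adjunction defining $Q$ one has $\Hom_{\calY}(\calG,Q\Coind(\calJ_0))\cong\Hom_{\calY}(\calG,\Coind(\calJ_0))$ for every quasi-coherent $\calG$, but Definition \ref{def:defKinjective} tests against \emph{every} acyclic $\calG\in\calC(X,\calY)$, with no quasi-coherence restriction, and for non-quasi-coherent $\calG$ the adjunction gives no control over $\Hom_{\calY}(\calG,Q\Coind(\calJ_0))$. The clean repair is simply to add the hypothesis that $\calY$ is $\calO_X$-coherent to the statement of the lemma (as the paper effectively does shortly afterwards, and as holds in every application): then each $\sheafHom_{\calO_X}(\calY^q,\calJ_0^{p+q})$ is quasi-coherent, the relevant products are finite in each cohomological degree by the boundedness hypotheses, and the coinduction argument of Lemma \ref{lem:lemKinjective} carries over directly.
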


\begin{proof} The case $\calY=\calO_X$ can be derived from \cite[II.7.18 and its
proof]{HARRD}. The general case follows, as in the proof of Lemma \ref{lem:lemKinjective}.\end{proof}

\begin{lem}\label{lem:Kinjresolutionsupport2}

Let $\calF$ be an object of $\calD^{\qc}_Z(X, \, \calY)$, whose cohomology is
bounded. There exists a K-injective $\calY$-dg-module $\calG$, which is
$\calO_X$-quasi-coherent and supported on $Z$, and a quasi-isomorphism
$\calF \xrightarrow{\qis} \calG$.

\end{lem}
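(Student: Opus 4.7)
The natural strategy is to reduce to Lemma \ref{lem:Kinjresolutionsupport} by first replacing $\calF$ with a quasi-isomorphic bounded-below $\calY$-dg-module $\calF'$ that is itself $\calO_X$-quasi-coherent and supported on $Z$. Granting such a $\calF'$, Lemma \ref{lem:Kinjresolutionsupport} produces a K-injective quasi-coherent supported-on-$Z$ resolution $\calF' \xrightarrow{\qis} \calG$; then K-injectivity of $\calG$ (Definition \ref{def:defKinjective}$\rmi$) allows us to lift the composite isomorphism $\calF \cong \calF' \to \calG$ in $\calD(X,\calY)$ to a genuine chain map $\calF \to \calG$ in $\calC(X,\calY)$, which is automatically a quasi-isomorphism of the required kind.

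The construction of $\calF'$ I would carry out by induction on the amplitude $b-a$ of the smallest interval $[a,b]$ containing the (bounded) cohomology of $\calF$. In the base case $a=b$, the object $\calF$ is isomorphic in $\calD(X,\calY)$ to $H^a(\calF)[-a]$ via the zigzag $\calF \leftarrow \tau_{\leq a}\calF \to H^a(\calF)[-a]$, and the $\calO_X$-quasi-coherent sheaf $H^a(\calF)$ (supported on $Z$ by hypothesis) carries a natural $\calY$-dg-module structure: place it in cohomological degree $a$, let $\calY^{<0}$ act as zero, and let $\calY^0$ act through the quotient $\calY^0 \twoheadrightarrow H^0(\calY)$. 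Assumption $(\dag\dag)$ that $\calY$ is concentrated in non-positive degrees, together with the fact that $d(\calY^{-1}) \subset \calY^0$ acts trivially on $H^0(\calY)$, ensures that the dg-module axioms hold.

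For the inductive step, when the cohomology is spread over $[a,b]$ with $a<b$, the standard distinguished triangle
\[ \tau_{\leq b-1} \calF \to \calF \to H^b(\calF)[-b] \to \tau_{\leq b-1}\calF[1] \]
splits $\calF$ into two pieces of strictly smaller amplitude, each of which has a representative of the required form by the inductive hypothesis. Replacing the representative of $\tau_{\leq b-1}\calF$ by the K-injective resolution provided by Lemma \ref{lem:Kinjresolutionsupport} permits the connecting morphism $H^b(\calF)[-b] \to \tau_{\leq b-1}\calF[1]$ to be lifted to an honest chain map; the corresponding mapping cone has components that are finite direct sums of components from the two constituent dg-modules, and is therefore manifestly bounded-below, $\calO_X$-quasi-coherent, supported on $Z$, and fits into the required distinguished triangle representing $\calF$.

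The main obstacle I expect is maintaining all three structural properties (quasi-coherence, support on $Z$, and boundedness below of the components) simultaneously through the cone constructions of the inductive step; this is precisely why one insists on passing to a K-injective resolution at each stage, in order to realize the derived-category connecting morphism by a genuine chain map whose mapping cone retains these properties on the nose.
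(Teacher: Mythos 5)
Your proof is correct and follows the same essential strategy as the paper's: induction on the cohomological amplitude, splitting off one extremal cohomology degree as a complex concentrated in a single degree, recursing on both pieces, realizing the connecting morphism via K-injectivity, and taking a mapping cone. The paper peels off the \emph{bottom} degree: after a $\tau_{\geq j}$-truncation (so $\calF^k=0$ for $k<j$), it forms the sub-dg-module $\calK$ with $\calK^j = \ker d^j_{\calF} = H^j(\calF)$ (this is a sub-$\calY$-dg-module precisely because $\calY$ is non-positively graded and $\calF$ vanishes below degree $j$), applies Lemma \ref{lem:Kinjresolutionsupport} to $\calK$, applies the inductive hypothesis to $\calG=\calF/\calK$, realizes $\calI_2[-1]\to\calI_1$ as a chain map by K-injectivity of $\calI_1$, and takes the cone. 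You peel off the \emph{top} degree instead via $\tau_{\leq b-1}\calF \to \calF \to H^b(\calF)[-b]$; the two are mirror images and both are legitimate.

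One organizational point worth flagging. The paper's inductive statement is directly the conclusion of the lemma, so the cone $\mathrm{Cone}(\calI_2[-1]\to\calI_1)$ is a cone between two K-injectives and hence automatically K-injective; no boundedness-below of the cone is needed (nor asserted in the lemma). Your version routes through an intermediate object $\calF'$ that is bounded below, quasi-coherent, and supported on $Z$, intending to apply Lemma \ref{lem:Kinjresolutionsupport} once at the very end. This means your inductive cone $\mathrm{Cone}(\calF'_2[-1]\to\calI_1)$ must itself be bounded below, which silently uses that the K-injective resolution produced by Lemma \ref{lem:Kinjresolutionsupport} preserves boundedness below. That is true by the construction (it follows the proof of Lemma \ref{lem:lemKinjective}, which explicitly keeps the same lower bound), but it is not part of the statement of Lemma \ref{lem:Kinjresolutionsupport}, so it should be said. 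If you instead carry the K-injective resolutions themselves through the induction — resolve both pieces by K-injectives supported on $Z$, lift the connecting map at the chain level, take the cone — this issue evaporates and the argument becomes identical to the paper's up to the top/bottom symmetry. Also, where you write that the inductive hypothesis provides a representative for each piece but then invoke ``the K-injective resolution provided by Lemma \ref{lem:Kinjresolutionsupport}'' for $\tau_{\leq b-1}\calF$: note that Lemma \ref{lem:Kinjresolutionsupport} does not apply directly to $\tau_{\leq b-1}\calF$ (which need not be quasi-coherent or supported on $Z$ as a dg-module, only its cohomology is), so what you mean is to apply it to the bounded-below replacement $\calF'_1$ supplied by the inductive hypothesis.
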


\begin{proof} We prove the lemma by induction on $l(\calF)$, where $l(\calF):={\rm max}\{i \in \mathbb{Z} \mid
H^i(\calF) \neq 0\} - {\rm min}\{i \in \mathbb{Z} \mid
H^i(\calF) \neq 0\}$ if $H(\calF) \neq 0$, and $l(\calF)=-1$
otherwise. If $l(\calF)=-1$, the result is obvious. Now assume $l(\calF)=n \geq 0$, and
the result is true for any dg-module $\calG$ with $l(\calG) <
n$. Let $j$ be
the lowest integer such that $H^j(\calF) \neq 0$. Using a truncation
functor, we can assume that
$\calF^k=0$ for $k < j$. Then $\ker(d_{\calF}^j)=H^j(\calF)$ is quasi-coherent and supported on $Z$. Let $\calK$ denote the sub-$\calY$-dg-module of $\calF$ concentrated in degree $j$, with $\calK^j=\ker(d_{\calF}^j)$. By Lemma
\ref{lem:Kinjresolutionsupport}, there exists a K-injective
$\calY$-dg-module $\calI_1$, quasi-coherent and supported on $Z$, and a
quasi-isomorphism $\calK \xrightarrow{\qis} \calI_1$. Consider $\calG:=\mathrm{Coker}(\calK \hookrightarrow \calF)$. Then
$l(\calG) < l(\calF)$. Hence, by induction, there exists a K-injective
$\calY$-dg-module $\calI_2$, quasi-coherent and supported on $Z$, and a quasi-isomorphism $\calG \xrightarrow{\qis} \calI_2$.

There exists a natural morphism $\calG[-1] \to \calK$ in
$\calD(X,\calY)$, hence also a morphism $\calI_2[-1] \to \calI_1$. By K-injectivity of $\calI_1$ (see Definition
\ref{def:defKinjective}), one can represent this morphism
by an actual morphism of $\calY$-dg-modules $f: \calI_2[-1] \to
\calI_1$ (unique up to homotopy). Let
$\calI_3$ be the cone of $f$. Then $\calI_3$ is K-injective,
quasi-coherent and supported on $Z$. Moreover, one easily checks that there exists a quasi-isomorphism $\calF \to \calI_3$. \end{proof}

{F}rom now on we assume in addition that $\calY$ is coherent as an $\calO_X$-module. In particular, as $\calA$ is coherent over
$\calO_X$, an $\calA$-module quasi-coherent over $\calO_X$ is locally
finitely generated over $\calA$ if and only if it is coherent over
$\calO_X$. The same applies for $\calA$ replaced by $H(\calY)$.

\begin{lem}\label{lem:dgmodulecoherent}

Every $\calY$-dg-module $\calF$ which is bounded, $\calO_X$-quasi-coherent, and whose cohomology is $\calO_X$-coherent is the
inductive limit of $\calO_X$-coherent sub-$\calY$-dg-modules which are quasi-isomorphic
to $\calF$ under the inclusion map.

\end{lem}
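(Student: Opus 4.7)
The plan is to reduce the assertion to a local enlargement statement and then prove that enlargement by a finite iteration downward through the degrees.

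First I would observe that $\calF$ is the filtered union of its coherent sub-$\calY$-dg-modules: any local section of $\calF$ lies in some coherent $\calO_X$-subsheaf $\calE$ (by quasi-coherence of $\calF$ on the noetherian space $X$), and the sub-$\calY$-dg-module of $\calF$ generated by $\calE$ is $\calO_X$-coherent, since $\calF$ is bounded (so only finitely many cohomological degrees contribute) and $\calY$ is $\calO_X$-coherent (so each $\calY^k \cdot \calE^j$ and $\calY^k \cdot d_{\calF}(\calE^j)$ is coherent, and in each degree only finitely many such terms appear). It therefore suffices to show that every coherent sub-$\calY$-dg-module $\calG_0 \subset \calF$ is contained in a coherent sub-$\calY$-dg-module $\calG$ such that $\calG \hookrightarrow \calF$ is a quasi-isomorphism, as this makes the family of such $\calG$ cofinal in the filtered family of all coherent sub-dg-modules.

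For the enlargement, say $\calF$ is concentrated in degrees $[a, b]$. I would first adjust for surjectivity on cohomology: for each $i \in [a,b]$, the quotient $H^i(\calF)/\Ima(H^i(\calG_0) \to H^i(\calF))$ is coherent and is a quotient of the quasi-coherent sheaf $Z^i(\calF)$, so it admits a coherent lift $\calC^i \subset Z^i(\calF)$ (a standard fact on a noetherian scheme). Enlarging $\calG_0$ to the sub-dg-module $\calG_1$ generated by $\calG_0$ together with the cycles $\calC^i$ (noting that $d_{\calY}$ vanishes on $\calY^0 = \calA$ since $\calY$ is non-positively graded, so $\calA$ preserves cycles), one obtains a coherent sub-dg-module with $H(\calG_1) \twoheadrightarrow H(\calF)$.

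Next I would enforce injectivity on cohomology by descending induction on the degree, starting from $b$. At each stage, the obstruction in degree $i$ is $(Z^i(\calG) \cap B^i(\calF))/B^i(\calG)$, a coherent subsheaf; choose a coherent lift $\calP^{i-1} \subset \calF^{i-1}$ mapping onto $Z^i(\calG) \cap B^i(\calF)$ under $d_{\calF}$, and adjoin the coherent sub-dg-module it generates. The key point is that, since $\calY$ is non-positively graded, the sub-dg-module generated by $\calP^{i-1}$ has components only in degrees $\leq i-1$ except for $d_{\calF}(\calP^{i-1}) \subset \calF^i$, which by construction already lies in $\calG^i$; hence the components of $\calG$ in degrees $\geq i$ remain unchanged, and in particular the injectivity (and surjectivity) of $H^j(\calG) \to H^j(\calF)$ established for $j \geq i$ is preserved. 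The iteration terminates after at most $b-a$ steps: at the bottom degree $a$ injectivity is automatic, since $B^a(\calF) = d_{\calF}(\calF^{a-1}) = 0$.

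The main obstacle is verifying that this descending iteration is coherent with itself — that fixing injectivity in a given degree does not spoil the work done in higher degrees and leaves only a controlled new obstruction in lower ones. Both concerns dissolve because of the non-positive grading of $\calY$: the enlargement at degree $i-1$ affects $\calG$ only in degrees $\leq i-1$ (as noted above), and the new obstruction in degree $i-1$, while possibly larger than before due to new cycles created by the $\calA$-action on $\calP^{i-1}$, remains coherent and so can be lifted in the next round. Coherence of all intermediate sub-dg-modules is ensured throughout by the boundedness of $\calF$ and the coherence of $\calY$ over $\calO_X$.
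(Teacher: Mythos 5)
Your proof is correct and takes essentially the same route as the paper's, which likewise reduces to enlarging a fixed coherent sub-dg-module via a descending induction on degree, using the non-positive grading of $\calY$ to confine enlargements to lower degrees and the coherence of $\calY$ together with boundedness of $\calF$ to keep all intermediate sub-dg-modules coherent. The only difference is organizational: you perform a preliminary surjectivity pass over all degrees before a separate descending injectivity pass, whereas the paper interleaves the surjectivity and injectivity adjustments within a single descending induction over the degrees.
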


\begin{proof} Our proof is similar to that of
\cite[VI.2.11.(a)]{BDmod}. First, $\calF$ is the inductive limit of coherent
sub-dg-modules, hence it is sufficient to show that given a coherent
sub-dg-module $\calK$ of $\calF$, there exists a coherent
sub-dg-module $\calG$ of $\calF$ containing $\calK$ and
quasi-isomorphic to $\calF$ under inclusion.

This is proved by a simple (descending) induction. Let $j \in \mathbb{Z}$, and
assume that we have found a subcomplex $\calG_j$ of $\bigoplus_{i \geq
  j} \calF^i$, coherent over $\calO_X$, containing $\bigoplus_{i \geq
  j} \calK^i$, stable under $\calY$ (i.e.~if $g
\in \calG_j^i$ and $y \in \calY^k$, and if $i+k \geq j$, then $y \cdot
g \in \calG_j^{i+k}$), such that $\calG_j \to \calF$ is a quasi-isomorphism
in degrees greater than $j$ and that $\calG^j_j \cap \ker(d_{\calF}^j)
\to H^j(\calF)$ is surjective. Then we choose a sub-$\calA$-module
$\calN^{j-1}$ of $\calF^{j-1}$ containing $\calK^{j-1}$, coherent over
$\calO_X$, whose image
under $d_{\calF}^{j-1}$ is $\calG^j_j \cap
\Ima(d_{\calF}^{j-1})$. Without altering these conditions, we can add
a coherent sub-module of cocycles so that the new sub-module
$\calN^{j-1}$ contains representatives of all the elements of
$H^{j-1}(\calF)$. We can also assume that $\calN^{j-1}$ contains all the
sections of the form $y \cdot g$ for $y \in \calY^i$ and $g \in
\calG_j^k$ with $i+k=j-1$. Then we define $\calG_{j-1}$ by
$$\calG_{j-1}^k=\left\{ \begin{array}{cl} \calG_j^k & \text{if } k
    \geq j, \\ \calN^{j-1} & \text{if } k=j-1. \end{array} \right.$$
For $j$ small enough, $\calG_j$ is the desired sub-dg-module. \end{proof}

We denote by $\calC^{\qc,\fg}_Z(X, \calY)$ the category of
$\calY$-dg-modules which are $\calO_X$-quasi-coherent and locally finitely generated over $\calY$ (i.e.~$\calO_X$-coherent), and supported on $Z$. Let $\calD \bigl(
\calC^{\qc,\fg}_Z(X, \calY) \bigr)$ be the corresponding derived category.

\begin{prop}\label{prop:dgmodulessupport}

The functor $\iota:
\calD \bigl( \calC^{\qc,\fg}_Z(X, \, \calY) \bigr) \to
\calD^{\qc,\fg}_Z(X, \, \calY)$ induced by the inclusion
$\calC^{\qc,\fg}_Z(X, \, \calY) \hookrightarrow \calC(X, \, \calY)$ is an
equivalence.

\end{prop}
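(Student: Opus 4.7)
The plan is to establish essential surjectivity and full faithfulness of $\iota$ separately, with Lemmas~\ref{lem:Kinjresolutionsupport2} and~\ref{lem:dgmodulecoherent} doing the heavy lifting. The strategy in both cases is to reduce to bounded $\calO_X$-quasi-coherent dg-modules supported on $Z$, and then to invoke the coherent-approximation statement of Lemma~\ref{lem:dgmodulecoherent}, crucially in the form that appears in its proof, which allows one to enlarge any prescribed coherent sub-dg-module.

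For essential surjectivity, let $\calF \in \calD^{\qc,\fg}_Z(X,\calY)$. Using the truncation functors $\tau_{\leq n}$ and $\tau_{\geq -n}$ (well-defined since $\calY$ is non-positively graded and $H(\calF)$ is bounded), I first replace $\calF$ by a quasi-isomorphic dg-module with bounded cohomology. Applying Lemma~\ref{lem:Kinjresolutionsupport2} produces an $\calO_X$-quasi-coherent, $Z$-supported (K-injective) model, and a further truncation on both sides yields a bounded representative $\calF'$ whose components are $\calO_X$-quasi-coherent and supported on $Z$ (note that subsheaves and quotients of such are again quasi-coherent and $Z$-supported, $\calO_X$ being coherent). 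Lemma~\ref{lem:dgmodulecoherent} then exhibits $\calF'$ as the filtered union of its $\calO_X$-coherent sub-$\calY$-dg-modules that are quasi-isomorphic to $\calF'$ under inclusion; each such subobject is automatically supported on $Z$, hence lies in $\calC^{\qc,\fg}_Z(X,\calY)$, and provides the desired preimage of $\calF$ under $\iota$.

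For full faithfulness, fix $\calF,\calG \in \calC^{\qc,\fg}_Z(X,\calY)$. A morphism from $\calF$ to $\calG$ in $\calD^{\qc,\fg}_Z(X,\calY)$ is represented by a roof $\calF \xleftarrow{\qis} \calH \xrightarrow{f} \calG$ in $\calC(X,\calY)$. Applying to $\calH$ the same truncation-plus-resolution procedure as above, I may assume $\calH$ is bounded, $\calO_X$-quasi-coherent and supported on $Z$ (the support constraint inherited from $\calF$ and $\calG$ via the quasi-isomorphism to $\calF$). Then I invoke the stronger form of Lemma~\ref{lem:dgmodulecoherent} recorded in its proof---that any $\calO_X$-coherent sub-dg-module of $\calH$ can be enlarged to an $\calO_X$-coherent sub-dg-module $\calH' \subset \calH$ with $\calH' \hookrightarrow \calH$ still a quasi-isomorphism---taking as starting seed the image under $f$ of a fixed coherent sub-dg-module of $\calH$ mapping surjectively onto $\calF$ via the quasi-isomorphism. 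The resulting roof $\calF \xleftarrow{\qis} \calH' \xrightarrow{f|_{\calH'}} \calG$ lies entirely in $\calC^{\qc,\fg}_Z(X,\calY)$, proving surjectivity of $\iota$ on Hom-sets. Applying exactly the same refinement to a roof witnessing the vanishing of a morphism $f : \calF \to \calG$ of $\calC^{\qc,\fg}_Z(X,\calY)$ in the ambient derived category yields injectivity.

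The main obstacle is propagating the three conditions \emph{$\calO_X$-quasi-coherent}, \emph{$Z$-supported}, and \emph{$\calO_X$-coherent} simultaneously through the roof calculus, so that every intermediate dg-module may be chosen inside $\calC^{\qc,\fg}_Z(X,\calY)$ without disturbing the quasi-isomorphisms. This is precisely what the combination of Lemmas~\ref{lem:Kinjresolutionsupport2} (producing quasi-coherent, $Z$-supported resolutions) and~\ref{lem:dgmodulecoherent} (in its strong form, producing coherent sub-dg-modules containing any prescribed coherent piece) is engineered to deliver, so the actual work reduces to stringing these two lemmas together with the truncation functors in a coherent order.
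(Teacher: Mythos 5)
Your essential surjectivity argument matches the paper's. The gap is in full faithfulness and concerns the direction of your roof: you represent a morphism $\calF \to \calG$ in $\calD^{\qc,\fg}_Z(X,\calY)$ by a left roof $\calF \xleftarrow{\qis} \calH \xrightarrow{f} \calG$ and then propose to replace $\calH$ by a bounded, $\calO_X$-quasi-coherent, $Z$-supported model via "the same truncation-plus-resolution procedure." But Lemma~\ref{lem:Kinjresolutionsupport2} produces a quasi-isomorphism \emph{from} $\calH$ to a K-injective $\calI$, i.e.~a right resolution, and $\tau_{\geq -n}\calH$ is a \emph{quotient} of $\calH$. A left roof can only be refined by supplying a quasi-isomorphism \emph{into} $\calH$ (one then composes $\calH' \to \calH \to \calF$ and $\calH' \to \calH \to \calG$). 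Neither the K-injective resolution nor the quotient truncation gives you such a map, so the procedure does not let you "assume $\calH$ is bounded, quasi-coherent and supported on $Z$"; the step as written fails.

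The paper avoids this by representing the morphism as a coroof $\calF \xrightarrow{\alpha} \calN \xleftarrow{\beta} \calG$ with $\beta$ a quasi-isomorphism. Then one can post-compose with $\calN \to \calI$ from Lemma~\ref{lem:Kinjresolutionsupport2} and with the truncation quotient $\tau_{\geq -n}$, because refinement of a coroof happens in the codomain; this makes $\calN$ bounded, quasi-coherent, $Z$-supported. The strong form of Lemma~\ref{lem:dgmodulecoherent} is then applied inside $\calN$ with seed $\alpha(\calF) + \beta(\calG)$ (both coherent, since $\calF,\calG$ are), yielding a coherent $\calN' \subseteq \calN$ through which both $\alpha$ and $\beta$ factor with $\beta$ still a quasi-isomorphism. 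Faithfulness is handled the same way, with the homotopy $h : \calF \to \calN[-1]$ and the quasi-isomorphism $g : \calG \to \calN$ supplying the seed. Your own seed description is also off: the "image under $f$ of a coherent sub-dg-module of $\calH$" lives in $\calG$, not in $\calH$, so it cannot be enlarged inside $\calH$. If you really want to start from a left roof, you must first convert it to a coroof (e.g.~extend $f$ along the quasi-isomorphism $s$ using a K-injective resolution of $\calG$), which is a step your argument omits.
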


\begin{proof} This proof is similar to that of
\cite[VI.2.11]{BDmod}. It follows from Lemmas
\ref{lem:Kinjresolutionsupport2} and \ref{lem:dgmodulecoherent}, using
truncation functors, that $\iota$ is essentially surjective.

Now, let us prove that it is full. Let $\calF$, $\calG$ be objects of
$\calC^{\qc,\fg}_Z(X, \, \calY)$. In particular, $\calF$ and $\calG$ are
bounded. A morphism $\phi: \iota(\calF) \to \iota(\calG)$ in
$\calD^{\qc,\fg}_Z(X, \, \calY)$ is represented by a diagram
$\iota(\calF) \xrightarrow{\alpha} \calN \xleftarrow{\beta}
\iota(\calG)$, where $\beta$ is a quasi-isomorphism. Using Lemma
\ref{lem:Kinjresolutionsupport2} and truncation functors, one can assume
that $\calN$ is bounded,
quasi-coherent, and supported on $Z$. By Lemma \ref{lem:dgmodulecoherent}, there
exists a coherent sub-dg-module $\calN '$ of $\calN$ (supported on
$Z$), containing $\alpha(\calF)$ and $\beta(\calG)$, and
quasi-isomorphic to $\calN$ under the inclusion map. Then $\phi$ is
represented by $\iota(\calF) \xrightarrow{\alpha} \calN'
\xleftarrow{\beta} \iota(\calG),$ which is the image of a morphism in $\calD
\bigl( \calC^{\qc,\fg}_Z(X, \, \calY) \bigr)$. Hence $\iota$ is full.

Finally we prove that $\iota$ is faithful. If a morphism $f:\calF \to
\calG$ in $\calC^{\qc,\fg}_Z(X, \, \calY)$ is such that $\iota(f)=0$, then
there exists $\calN$ in $\calD^{\qc,\fg}_Z(X, \, \calY)$, which can again
be assumed to be bounded, quasi-coherent and supported on $Z$, and a
quasi-isomorphism of $\calY$-dg-modules $g: \calG \to \calN$ such that
$g \circ f$ is homotopic to zero. This homotopy is given by a morphism
$h: \calF \to \calN[-1]$. By Lemma \ref{lem:dgmodulecoherent}, there
exists a coherent
sub-dg-module $\calN '$ of $\calN$ containing $g(\calG)$ and
$h(\calF)[1]$, and quasi-isomorphic to $\calN$ under the
inclusion. Replacing $\calN$ by $\calN'$, this
proves that $f=0$ in $\calD \bigl( \calC^{\qc,\fg}_Z(X, \, \calY)
\bigr)$. \end{proof}

\subsection{Localization with a fixed Frobenius central character}
\label{ss:paragraphfixedFrobenius}

In \cite{BMR, BMR2} the authors give geometric counterparts
for the derived categories of $\calU \frakg$-modules with a
\emph{generalized} Frobenius central character, and a fixed or
generalized Harish-Chandra central character. The relation between the Koszul duality \eqref{eq:defkappa}
and representation theory is based on Theorem
\ref{thm:localizationfixedFr}, which gives a geometric picture for the
derived category of $\calU \frakg$-modules with a generalized
(integral, regular) Harish-Chandra central character and a \emph{fixed}
trivial Frobenius central character. More precisely, we prove: 

\begin{thm}\label{thm:localizationfixedFr}

Let $\lambda \in \bbX$ be regular. There exists an equivalence of
triangulated categories \[ \DGCoh((\wfrakg
\, \rcap_{\frakg^* \times \calB} \, \calB)^{(1)}) \ \xrightarrow{\sim} \
\calD^{b} \Mod^{\fg}_{\lambda}((\calU \frakg)_{0}).\]

\end{thm}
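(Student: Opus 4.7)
The strategy is to derive this theorem from the BMR equivalence \eqref{eq:equivBMR} by performing a derived base change that replaces the \emph{generalized} trivial Frobenius character by a \emph{strict} one on both sides; after this base change, the geometric side becomes the derived intersection via Proposition \ref{prop:DGCohkoszul2nongr}, and the algebraic side becomes $\Mod^{\fg}_{\lambda}((\calU\frakg)_0)$.

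On the representation side, $(\calU\frakg)_0^{\hat\lambda} = (\calU\frakg)^{\hat\lambda} \otimes_{\frakZ_{\mathrm{Fr}}} \bk_0$, and the Koszul complex $\Lambda(\frakg^{(1)})$ gives a free resolution of $\bk_0$ over $\frakZ_{\mathrm{Fr}} \cong S(\frakg^{(1)})$. Hence the dg-algebra
\[
\calU_{\dg}^{\hat\lambda} \ := \ (\calU\frakg)^{\hat\lambda} \otimes_{\bk} \Lambda(\frakg^{(1)})
\]
(generators of $\Lambda(\frakg^{(1)})$ in degree $-1$, Koszul differential) is quasi-isomorphic to $(\calU\frakg)_0^{\hat\lambda}$. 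By Proposition \ref{prop:qisequivalence} (in its purely algebraic, module-theoretic analogue) together with Proposition \ref{prop:dgmodulessupport} to track the finite-generation condition, the derived category of $\calU_{\dg}^{\hat\lambda}$-dg-modules with locally finitely generated cohomology identifies with $\calD^b \Mod^{\fg}_{\lambda}((\calU\frakg)_0)$. On the geometric side, Proposition \ref{prop:DGCohkoszul2nongr} already provides the analogous identification
\[
\DGCoh((\wfrakg \, \rcap_{\frakg^* \times \calB} \, \calB)^{(1)}) \ \cong \ \calD^{\qc,\fg}\bigl(\wfrakg^{(1)}, \, \calO_{\wfrakg^{(1)}} \otimes_\bk \Lambda(\frakg^{(1)})\bigr),
\]
the Koszul dg-structure being induced by the projection $\wfrakg^{(1)} \to \frakg^*{}^{(1)}$.

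The core step is to lift the BMR equivalence $\gamma_\lambda^{\calB}$ to these two dg-enhancements. Recall that $\gamma_\lambda^{\calB}$ is essentially tensoring with the splitting bundle $\calM^\lambda$ (defined on the formal neighborhood of $\calB^{(1)}$, which is sufficient since all objects considered are supported on $\calB^{(1)}$) and applying $R\Gamma$. Tensoring $\calM^\lambda$ with $\calO_{\wfrakg^{(1)}} \otimes \Lambda(\frakg^{(1)})$ gives a sheaf of dg-bimodules whose $R\Gamma$ is quasi-isomorphic to $\calU_{\dg}^{\hat\lambda}$: indeed $R\Gamma(\calM^\lambda) \cong (\calU\frakg)^{\hat\lambda}$ as a module over $\frakZ_{\mathrm{Fr}} = \Gamma(\calO_{\wfrakg^{(1)}})$ (with higher cohomologies vanishing because $\calM^\lambda$ is a splitting of an Azumaya algebra and everything is affine after projecting to $\frakg^*{}^{(1)}$), so the Koszul differentials match. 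The functor $R\Gamma(\wfrakg^{(1)}, \calM^\lambda \otimes_{\calO_{\wfrakg^{(1)}}} -)$ between the two dg-categories, constructed using the machinery of Section \ref{sec:sectiondgalg} (K-injective resolutions, derived direct images for dg-ringed spaces, Proposition \ref{prop:qisequivalence}), is then an equivalence by essentially the same argument as in BMR: it is the composition of tensoring with an Azumaya splitting (an equivalence after formal completion) with the global sections functor, which is an equivalence in the BMR setting.

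The main obstacle I expect is the careful handling of the dg-extension of the BMR argument: one must verify that the splitting-bundle construction on the formal neighborhood of $\calB^{(1)}$ commutes (up to coherent isomorphism) with the Koszul dg-structure, that the resulting functor preserves the $\qc, \fg$ conditions, and that one genuinely obtains an equivalence and not just a fully faithful functor. This should be done by first checking essential surjectivity and fully faithfulness on the level of the bounded derived categories underlying the dg-enhancements (using Proposition \ref{prop:dgmodulessupport} to reduce statements about dg-modules supported on $\calB^{(1)}$ to statements about honest coherent sheaves), and then invoking the already-established BMR equivalence in cohomological degree zero.
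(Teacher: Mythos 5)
Your overall strategy — resolve $\bk_0$ over $\frakZ_{\mathrm{Fr}}$ by the Koszul complex $\Lambda(\frakg^{(1)})$ on both the geometric and representation-theoretic sides, then lift the BMR equivalence $\gamma_\lambda^{\calB}$ to the resulting dg-enhancements by checking compatibility under the forgetful functors — is exactly the strategy the paper uses. The paper similarly decomposes the argument into: (a) Proposition \ref{prop:DGCohkoszul2nongr} on the geometric side; (b) the Morita step tensoring with $\calM^\lambda$ (the functors $F$ and $G$ in the paper's Step 1); (c) the $R\Gamma / \calL_K$ step (Step 2); and (d) the observation that the unit/counit morphisms of these adjunctions reduce, under the forgetful functors, to the corresponding morphisms already known to be isomorphisms from \cite{BMR}.

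There are, however, two genuine issues with your write-up. First, the assertion ``$R\Gamma(\calM^\lambda) \cong (\calU\frakg)^{\hat\lambda}$ as a module over $\frakZ_{\mathrm{Fr}}$'' is incorrect. The vector bundle $\calM^\lambda$ is a \emph{splitting bundle} for the Azumaya algebra $\wcalD$, meaning $\wcalD \cong \sheafEnd_{\calO}(\calM^\lambda)$ on the formal neighborhood; it is $R\Gamma(\sheafEnd(\calM^\lambda)) = R\Gamma(\wcalD)$ that recovers (the completion of) $\calU\frakg$, not $R\Gamma(\calM^\lambda)$. Indeed $\gamma^{\calB}_\lambda(\calO_{\wfrakg^{(1)}}) = R\Gamma(\calM^\lambda)$ is a specific (projective) $\calU\frakg$-module, not the regular module (see Proposition \ref{prop:projectivecover}). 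Consequently the ``so the Koszul differentials match'' step does not follow from your stated intermediate claim. The correct verification, as the paper carries out, is that the functor $R\Gamma$ between the two dg-module categories is compatible with the forgetful functors (diagram \eqref{eq:diagram-RGamma-For}), which requires a morphism of dg-algebras $\calU\frakg \otimes_\bk \Lambda(\frakg^{(1)}) \to \Gamma(\wcalD \otimes_\bk \Lambda(\frakg^{(1)}))$, not an identification of $R\Gamma(\calM^\lambda)$.

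Second, you elide the formal-neighborhood bookkeeping more than is safe. The splitting bundle $\calM^\lambda$, and indeed $\wcalD$, naturally live on $X = \wfrakg^{(1)} \times_{\frakh^*{}^{(1)}} \frakh^*$ rather than on $\wfrakg^{(1)}$; the paper needs the separate Lemma \ref{lem:lemmaX} (built on Proposition \ref{prop:dgmodulessupport}) to pass between dg-modules supported on $\calB^{(1)}$ in $\wfrakg^{(1)}$ and dg-modules supported on $\calB^{(1)} \times \{\lambda\}$ in $X$, using that the two formal neighborhoods are isomorphic. Your phrasing ``which is sufficient since all objects considered are supported on $\calB^{(1)}$'' identifies the right intuition but is not, as it stands, a justification — precisely because objects of $\DGCoh$ of a derived intersection are \emph{a priori} dg-sheaves over the whole underlying scheme and must be shown to be representable by complexes supported (in the honest, coherent-sheaf sense) on the relevant closed subscheme before one may restrict attention to the formal neighborhood.
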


The proof will occupy the whole subsection. Consider the derived intersection $(\wfrakg \,
\rcap_{\frakg^* \times \calB} \, \calB)^{(1)}.$ By
Proposition \ref{prop:DGCohkoszul2nongr}, we have an
equivalence \begin{equation} \label{eq:DGC} \DGCoh((\wfrakg
  \, \rcap_{\frakg^* \times \calB} \, \calB)^{(1)}) \ \cong \
\calD^{\qc,\fg}(\wfrakg^{(1)}, \, \calO_{\wfrakg^{(1)}} \otimes_{\bk}
\Lambda(\frakg^{(1)})).\end{equation} 

We have seen in the remarks following Theorem \ref{thm:thmBMR} that the
projection $\wfrakg^{(1)} \times_{\frakh^*
  {}^{(1)}} \frakh^* \to \wfrakg^{(1)}$ induces an isomorphism between
the formal neighborhoods of $\calB^{(1)} \times \{ \lambda \}$ and of $\calB^{(1)}$. We denote these formal
neighborhoods by $\widehat{\calB^{(1)}}$. In
this subsection, for simplicity we put $X:=\wfrakg^{(1)} \times_{\frakh^*
  {}^{(1)}} \frakh^*$.

\begin{lem}\label{lem:lemmaX}

The following natural functor is an equivalence of categories: $\calD^{\qc,\fg}(\wfrakg^{(1)}, \,
\calO_{\wfrakg^{(1)}} \otimes_{\bk}
\Lambda(\frakg^{(1)})) \ \to \ \calD^{\qc,\fg}_{\calB^{(1)} \times
  \{ \lambda \} }(X, \, \calO_{X} \otimes_{\bk}
\Lambda(\frakg^{(1)}))$.

\end{lem}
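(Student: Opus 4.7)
The plan is to reduce both sides to derived categories of coherent dg-modules on a common formal neighborhood. Set $\calY_1 := \calO_{\wfrakg^{(1)}} \otimes_{\bk} \Lambda(\frakg^{(1)})$ and $\calY_2 := \calO_X \otimes_{\bk} \Lambda(\frakg^{(1)})$, both equipped with their Koszul differentials. First, I observe that the LHS already carries an automatic support condition on $\calB^{(1)}$. Indeed, $\calY_1$ is quasi-isomorphic to $\calO_{\wfrakg^{(1)}} \lotimes_{\calO_{(\frakg^* \times \calB)^{(1)}}} j_*\calO_{\calB^{(1)}}$, whose cohomology sheaves are supported on $\wfrakg^{(1)} \cap \calB^{(1)} = \calB^{(1)}$. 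Hence every object of the LHS has cohomology supported on $\calB^{(1)}$, so the LHS coincides with $\calD^{\qc,\fg}_{\calB^{(1)}}(\wfrakg^{(1)}, \calY_1)$.

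Second, since $\calY_1$ and $\calY_2$ are coherent over $\calO$ (being of finite rank), Proposition \ref{prop:dgmodulessupport} applies and yields equivalences of the LHS and RHS with $\calD\bigl(\calC^{\qc,\fg}_{\calB^{(1)}}(\wfrakg^{(1)}, \calY_1)\bigr)$ and $\calD\bigl(\calC^{\qc,\fg}_{\calB^{(1)} \times \{\lambda\}}(X, \calY_2)\bigr)$, respectively. An object in either of these underlying abelian categories is killed by some power of the ideal of its support, hence naturally lives on the corresponding formal neighborhood of $\calB^{(1)}$ in $\wfrakg^{(1)}$, resp. of $\calB^{(1)} \times \{\lambda\}$ in $X$.

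Finally, the projection $\pi: X \to \wfrakg^{(1)}$ identifies these two formal neighborhoods, as recalled in \S\ref{ss:reviewlocalization} following \cite[1.5.3.c]{BMR2}, and carries $\calY_1$ to $\pi^*\calY_1 = \calY_2$, with matching Koszul differentials by naturality of the Koszul resolution of $j_*\calO_{\calB^{(1)}}$ under pullback. The natural functor of the lemma can then be realized concretely by restricting a dg-module supported on $\calB^{(1)}$ to the formal neighborhood, transporting it via the inverse of $\pi$ to the formal neighborhood of $\calB^{(1)} \times \{\lambda\}$ in $X$, and extending by zero. The main technical point I anticipate will be carefully matching the Koszul dg-algebra structures (including their differentials) under the identification of formal neighborhoods; this should however follow routinely from the compatibility of the Koszul resolution with base change, so the heart of the argument really lies in the three-step reduction above.
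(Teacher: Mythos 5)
Your proposal is correct and follows essentially the same three-step plan as the paper's proof: derive the support condition from the cohomology of the dg-algebra, apply Proposition \ref{prop:dgmodulessupport} to both sides to pass to derived categories of abelian categories of supported coherent dg-modules, and identify these abelian categories via the isomorphism of formal neighborhoods. The only cosmetic difference is that the paper derives the automatic support condition directly from $H^0(\calO_{\wfrakg^{(1)}} \otimes_{\bk} \Lambda(\frakg^{(1)})) = \calO_{\calB^{(1)}}$, whereas you argue via the support of the full cohomology of the Koszul dg-algebra; both are valid and amount to the same observation.
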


\begin{proof} Any object of
$\calD^{\qc,\fg}(\wfrakg^{(1)}, \, \calO_{\wfrakg^{(1)}}
\otimes_{\bk} \Lambda(\frakg^{(1)}))$ has its cohomology supported on
$\calB^{(1)}$ (because $H^0(\calO_{\wfrakg^{(1)}} \otimes_{\bk}
\Lambda(\frakg^{(1)})) = \calO_{\calB^{(1)}}$). Hence, by Proposition
  \ref{prop:dgmodulessupport}, $\calD^{\qc,\fg}(\wfrakg^{(1)},
  \, \calO_{\wfrakg^{(1)}} \otimes_{\bk} 
\Lambda(\frakg^{(1)})) \cong \calD \bigl( \calC^{\qc,\fg}_{\calB^{(1)}}(\wfrakg^{(1)}, \,
\calO_{\wfrakg^{(1)}} \otimes_{\bk} \Lambda(\frakg^{(1)})) \bigr)$. Now, as the formal neighborhoods of $\calB^{(1)}$ in $\wfrakg^{(1)}$
and of $\calB^{(1)} \times \{ \lambda \}$ in $X$ are isomorphic, the
category $\calC^{\qc,\fg}_{\calB^{(1)}}(\wfrakg^{(1)}, \,
\calO_{\wfrakg^{(1)}} \otimes_{\bk} \Lambda(\frakg^{(1)}))$ is
equivalent to $\calC^{\qc,\fg}_{\calB^{(1)} \times \{ \lambda \}}(X, \,
\calO_{X} \otimes_{\bk} \Lambda(\frakg^{(1)}))$. We conclude using Proposition \ref{prop:dgmodulessupport} again. \end{proof}

Let now $K_{\frakg}$ denote the Koszul complex $S(\frakg^{(1)})
\otimes_{\bk} \Lambda(\frakg^{(1)})$, which is quasi-isomorphic to the
trivial $S(\frakg^{(1)})$-module $\bk_0$. Here $S(\frakg^{(1)})$ is in
degree $0$, and the generators of $\Lambda(\frakg^{(1)})$ are in
degree $-1$. By Poincar{\'e}-Birkhoff-Witt theorem, the enveloping algebra $\calU \frakg$ is free (hence flat) over
$\frakZ_{{\rm Fr}} \cong S(\frakg^{(1)})$. Hence, if we consider
$\calU \frakg$ as a (trivial) sheaf of dg-algebras on ${\rm Spec}(\bk)$, there is a
quasi-isomorphism of dg-algebras $\calU \frakg
\otimes_{\frakZ_{{\rm Fr}}} K_{\frakg} \xrightarrow{\sim} \calU \frakg
\otimes_{\frakZ_{{\rm Fr}}} \bk_{0},$ and hence an equivalence
of categories (see Proposition \ref{prop:qisequivalence}):
$\calD \Mod((\calU \frakg)_0) \ \cong \
  \calD({\rm Spec}(\bk), \, \calU \frakg
\otimes_{\frakZ_{{\rm Fr}}} K_{\frakg}).$ Restricting to the
subcategories of objects with finitely generated cohomology, we obtain
an equivalence: \begin{equation} \label{eq:Ug0} \calD^b \Mod^{\fg}((\calU
  \frakg)_0) \ \cong \ \calD^{\fg}({\rm Spec}(\bk), \, \calU \frakg
  \otimes_{\frakZ_{{\rm Fr}}} K_{\frakg}). \end{equation} In the sequel, we write $\calU \frakg \otimes_{\bk} \Lambda(\frakg^{(1)})$ for the dg-algebra $\calU \frakg
  \otimes_{\frakZ_{{\rm Fr}}} K_{\frakg}$.

We can consider $\calU \frakg$ as a sheaf of algebras either on ${\rm Spec}(\bk)$, or on ${\rm Spec}(\frakZ) \cong \frakg^*{}^{(1)}
\times_{\frakh^*{}^{(1)}/W} \frakh^*/(W,\bullet)$. It follows easily from
Proposition \ref{prop:dgmodulessupport} that the category
$\calD^{\qc,\fg}(\frakg^*{}^{(1)} \times_{\frakh^*{}^{(1)}/W}
\frakh^*/(W,\bullet), \, \calU \frakg \otimes_{\bk}
\Lambda(\frakg^{(1)}))$ is equivalent to
$\calD^{\fg}({\rm Spec}(\bk), \ \calU \frakg \otimes_{\bk}
\Lambda(\frakg^{(1)}))$. We denote this category simply by
$\calD^{\fg}(\calU \frakg \otimes_{\bk} \Lambda(\frakg^{(1)}))$. We
also denote by $\calD^{\fg}_{\lambda}(\calU \frakg \otimes_{\bk}
\Lambda(\frakg^{(1)}))$ the full subcategory whose objects are the
dg-modules $M$ such that $\calU \frakg$ acts on $H(M)$ with
generalized character $(\lambda,0)$. It also follows from Proposition
\ref{prop:dgmodulessupport} that this category is equivalent to the
localization of the homotopy category of finitely generated $\calU
\frakg \otimes_{\bk} \Lambda(\frakg^{(1)})$-dg-modules on which $\calU
\frakg$ acts with generalized character $(\lambda,0)$. We use the
same notation for $\calU \frakg$ instead of $\calU \frakg
\otimes_{\bk} \Lambda(\frakg^{(1)})$.

The following result follows easily from the definitions and
\cite[1.3.7]{BMR}.

\begin{lem} \label{lem:Ug0centralcharacter}

Equivalence {\rm \eqref{eq:Ug0}} restricts to an equivalence of
categories \[ \calD^b \Mod^{\fg}_{\lambda}((\calU
  \frakg)_0) \ \cong \ \calD^{\fg}_{\lambda}(\calU \frakg
  \otimes_{\bk} \Lambda(\frakg^{(1)})). \]

\end{lem}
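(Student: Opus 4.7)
The plan is to verify that equivalence \eqref{eq:Ug0} is compatible with the action of the Harish-Chandra center, which by definition of the subcategories suffices. Recall that \eqref{eq:Ug0} is induced by the quasi-isomorphism of dg-algebras $\calU \frakg \otimes_{\frakZ_{{\rm Fr}}} K_{\frakg} \xrightarrow{\sim} (\calU \frakg)_0$, via the equivalence of Proposition \ref{prop:qisequivalence}. This quasi-isomorphism is tautologically a morphism of $\calU\frakg$-algebras (in the sense that it is $\calU\frakg$-bilinear and compatible with the natural maps of $\calU\frakg$ into source and target), hence in particular is $\frakZ_{\rm HC}$-linear. Thus the action of $\frakZ_{\rm HC}$ on objects is preserved in both directions.

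More concretely, in one direction: given $N \in \Mod^{\fg}_{\lambda}((\calU\frakg)_0)$, viewed as a dg-module concentrated in degree zero, its image in $\calD^{\fg}(\calU\frakg \otimes_\bk \Lambda(\frakg^{(1)}))$ is obtained by restriction of scalars along the quasi-isomorphism. The cohomology of this dg-module is $N$ itself, and the action of $\calU\frakg$ on this cohomology is the original action. Hence $\frakZ_{\rm HC}$ acts with generalized character $\lambda$, and $\frakZ_{\rm Fr}$ acts via $0$ (strictly, in particular with generalized character $0$), so the central character of $\calU\frakg$ on $H(N)$ is generalized $(\lambda,0)$.

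In the other direction: given $M \in \calD^{\fg}_{\lambda}(\calU\frakg \otimes_\bk \Lambda(\frakg^{(1)}))$, the cohomology $H(M)$ is a graded module over $H^0(\calU\frakg \otimes_\bk \Lambda(\frakg^{(1)})) \cong (\calU\frakg)_0$, so the action of $\calU\frakg$ on $H(M)$ automatically factors through $(\calU\frakg)_0$ (giving strict Frobenius character $0$ on each cohomology component). The assumption on the generalized central character thus reduces to saying that $\frakZ_{\rm HC}$ acts with generalized character $\lambda$ on $H(M)$. Applying the inverse functor, one obtains an object of $\calD^b \Mod^{\fg}((\calU\frakg)_0)$ whose cohomology coincides with $H(M)$ as a $(\calU\frakg)_0$-module, hence lies in $\calD^b \Mod^{\fg}_{\lambda}((\calU\frakg)_0)$.

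The only subtle point is the compatibility between the $\frakZ$-action coming from the left $\calU\frakg$-factor of $\calU\frakg \otimes_{\frakZ_{\rm Fr}} K_\frakg$ and the $\frakZ$-action on $(\calU\frakg)_0$ through the quasi-isomorphism; but this is precisely the content of \cite[1.3.7]{BMR} (the quasi-isomorphism is central, i.e.~intertwines the two natural $\frakZ_{\rm HC}$-module structures). With this in hand, the two restrictions described above are quasi-inverse, which yields the claimed equivalence.
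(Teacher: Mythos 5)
Your overall structure matches the paper's (extremely terse) proof, which just says the lemma ``follows easily from the definitions and \cite[1.3.7]{BMR}.'' Tracking cohomology together with its $\calU\frakg$-action through the equivalence is indeed the right idea, and your observation that $H(M)$ is automatically a $(\calU\frakg)_0$-module because it is a module over $H^0(\calU\frakg\otimes_\bk\Lambda(\frakg^{(1)}))\cong(\calU\frakg)_0$ is the correct way to dispose of the Frobenius part.

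However, you have misplaced what \cite[1.3.7]{BMR} is actually needed for. The compatibility of the $\frakZ_{\rm HC}$-actions under the quasi-isomorphism $\calU\frakg\otimes_{\frakZ_{\rm Fr}}K_\frakg\to(\calU\frakg)_0$ is, as you yourself observe, tautological: the map is a morphism of $\calU\frakg$-algebras, and restriction of scalars does not change cohomology as a $\calU\frakg$-module, so no external input is required there. What does require input is your final sentence: from ``cohomology of the image lies in $\Mod^{\fg}_\lambda((\calU\frakg)_0)$'' you conclude that the image lies in $\calD^b\Mod^{\fg}_\lambda((\calU\frakg)_0)$. These are a priori different things: $\calD^{\fg}_\lambda(-)$ is defined by a condition on cohomology inside an ambient derived category, while $\calD^b\Mod^{\fg}_\lambda((\calU\frakg)_0)$ is the bounded derived category of an abelian subcategory. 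Identifying the two (for the Harish-Chandra generalized character decomposition) is precisely what \cite[1.3.7]{BMR} supplies, so the citation belongs on that step, not on the centrality of the comparison map. With the citation relocated, your argument is complete and coincides with the paper's.
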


Next, let us recall some results on dg-algebras. Let $A$ be a
dg-algebra (i.e.~a
sheaf of dg-algebras on ${\rm Spec}(\bk)$). We use the same
notation as in section \ref{sec:sectiondgalg}, except that we omit
``${\rm Spec}(\bk)$''. An $A$-dg-module $M$ is \emph{K-projective} if for any acyclic $A$-dg-module $N$,
the complex $\Hom_A(M,N)$ is acyclic. By the results
of \cite[\S 10.12.2]{BLEqu}, every $A$-dg-module has a left K-projective
resolution. As in \S \ref{ss:paragraphderivedfunctors}, we deduce that any triangulated functor from $\calC(A)$ to a triangulated category
has a left derived functor, which can be
computed by means of K-projective resolutions.

\begin{proof}[Proof of Theorem \ref{thm:localizationfixedFr}] We will show that the equivalences constructed in \cite{BMR} are
``compatible with the tensor product with $K_{\frakg}$''.

\emph{First step}: Let us prove the following equivalence of
categories: \begin{equation}\label{eq:equivstep1}
  \calD^{\qc,\fg}_{\calB^{(1)} \times \{ \lambda \} }(X, \, \calO_{X}
  \otimes_{\bk} \Lambda(\frakg^{(1)})) \ \cong \
  \calD^{\qc,\fg}_{\calB^{(1)} \times \{ \lambda \}}(X, \, \wcalD
  \otimes_{\bk} \Lambda(\frakg^{(1)})). \end{equation} As in \cite{BMR} we define the functors
\[F : \left\{ \begin{array}{ccc}
    \calC^{\qc,\fg}_{\calB^{(1)} \times \{ \lambda \}}(X, \, \calO_{X}
    \otimes_{\bk} \Lambda(\frakg^{(1)})) & \to &
    \calC^{\qc,\fg}_{\calB^{(1)} \times \{ \lambda \} }(X, \, \wcalD \otimes_{\bk}
    \Lambda(\frakg^{(1)})) \\ \calF & \mapsto & \calM^{\lambda}
    \otimes_{\calO_{\widehat{\calB^{(1)}}}} \calF \\
\end{array} \right. ,\] \[G : \left\{ \begin{array}{ccc}
  \calC^{\qc,\fg}_{\calB^{(1)} \times \{ \lambda \}}(X, \, \wcalD \otimes_{\bk}
  \Lambda(\frakg^{(1)})) & \to & \calC^{\qc,\fg}_{\calB^{(1)} \times
  \{ \lambda \} }(X, \, \calO_{X} \otimes_{\bk} \Lambda(\frakg^{(1)})) \\
  \calG & \mapsto & \sheafHom_{\wcalD}(\calM^{\lambda},\calG)
\end{array} \right. .\] These functors are exact. There are natural
morphisms of functors $F \circ G \to \Id$ and
$\Id \to G \circ F$. These functors and morphisms
of functors coincide with the ones considered in \cite[5.1.1]{BMR}
under the forgetful functors \begin{align*}
  \calC^{\qc,\fg}_{\calB^{(1)} \times \{ \lambda \}}(X, \, \calO_{X}
  \otimes_{\bk} \Lambda(\frakg^{(1)})) \ & \to \
  \calC^{\qc,\fg}_{\calB^{(1)} \times \{ \lambda \}}(X, \, \calO_{X})
  \cong \calC^b \Coh_{\calB^{(1)} \times \{ \lambda \}}(X) \\
    \calC^{\qc,\fg}_{\calB^{(1)} \times \{ \lambda \} }(X, \, \wcalD
  \otimes_{\bk} \Lambda(\frakg^{(1)})) \ & \to \
  \calC^{\qc,\fg}_{\calB^{(1)} \times
   \{ \lambda \}}(X, \, \wcalD) \cong \calC^b \Mod^{{\rm
     c}}_{(\lambda,0)}(\wcalD). \end{align*} Hence, by
\cite[5.1.1]{BMR}, the morphisms $F \circ G \to \Id$ and $\Id \to G \circ F$ are isomorphisms on each object, hence $F$ and $G$ are equivalences of
categories. They induce equivalence \eqref{eq:equivstep1}, using Proposition \ref{prop:dgmodulessupport}.

Thus, combining \eqref{eq:DGC}, Lemma \ref{lem:lemmaX} and
\eqref{eq:equivstep1}, we have obtained: \begin{equation}
  \label{eq:equivstep1big} \DGCoh((\wfrakg \, \rcap_{\frakg^* \times \calB} \,
  \calB)^{(1)}) \ \cong \ \calD^{\qc,\fg}_{\calB^{(1)} \times \{
    \lambda \}}(X, \, \wcalD \otimes_{\bk}
  \Lambda(\frakg^{(1)})). \end{equation}

\emph{Second step}: Now we construct an equivalence of categories
\begin{equation} \label{eq:equivstep2}
  \calD^{\qc,\fg}_{\calB^{(1)} \times \{ \lambda \}}(X, \, \wcalD
\otimes_{\bk} \Lambda(\frakg^{(1)})) \ \cong \
\calD^{\fg}_{\lambda}(\calU \frakg \otimes_{\bk}
\Lambda(\frakg^{(1)})). \end{equation} This will conclude the proof, due to the following chain of equivalences: \begin{multline*} \DGCoh((\wfrakg \, \rcap_{\frakg^*
    \times \calB} \, \calB)^{(1)}) \
  \overset{\eqref{eq:equivstep1big}}{\cong} \
  \calD^{\qc,\fg}_{\calB^{(1)} \times \{ \lambda \}}(X, \, \wcalD
  \otimes_{\bk} \Lambda(\frakg^{(1)})) \\
  \overset{\eqref{eq:equivstep2}}{\cong} \ \calD^{\fg}_{\lambda}(\calU
  \frakg \otimes_{\bk} \Lambda(\frakg^{(1)})) \
  \overset{\ref{lem:Ug0centralcharacter}}{\cong} \ \calD^b
  \Mod^{\fg}_{\lambda}((\calU \frakg)_0). \end{multline*}

By the projection formula (\cite[II.Ex.5.1]{HARAG}) and \cite[3.4.1]{BMR}, we have \[ \Gamma(\wcalD
\otimes_{\bk} \Lambda(\frakg^{(1)})) \ \cong \ \Gamma(\wcalD)
\otimes_{\bk} \Lambda(\frakg^{(1)}) \ \cong \ \widetilde{U}
\otimes_{\bk} \Lambda(\frakg^{(1)}) \] where $\widetilde{U}:=\calU
\frakg \otimes_{\frakZ_{{\rm HC}}} S(\frakh)$. The dg-algebra $\widetilde{U}
\otimes_{\bk} \Lambda(\frakg^{(1)})$ contains $\calU \frakg \otimes_{\bk}
\Lambda(\frakg^{(1)})$ as a sub-dg-algebra. Hence (see \S \ref{ss:directinverseimage}) there exists a functor $ R\Gamma :
\calD(X, \, \wcalD \otimes_{\bk} \Lambda(\frakg^{(1)})) \to
\calD({\rm Spec}(\bk), \, \calU \frakg \otimes_{\bk}
\Lambda(\frakg^{(1)})).$ By Corollary \ref{cor:directimagediagram}, the following diagram commutes:
\begin{equation}\label{eq:diagram-RGamma-For}
\xymatrix@R=16pt{ \calD(X, \, \wcalD \otimes_{\bk} \Lambda(\frakg^{(1)}))
\ar[r]^-{R\Gamma} \ar[d]_-{\For} & \calD({\rm Spec}(\bk), \, \calU \frakg
\otimes_{\bk} \Lambda(\frakg^{(1)})) \ar[d]^-{\For} \\
\calD(X, \, \wcalD) \ar[r]^-{R\Gamma} & \calD({\rm Spec}(\bk), \, \calU \frakg). } \end{equation}

By Proposition \ref{prop:dgmodulessupport}, the
functor $\calD^b \Mod^{{\rm c}}_{(\lambda,0)}(\wcalD) \to
\calD^{\qc,\fg}_{\calB^{(1)} \times \{ \lambda \}}(X, \, \wcalD)$ is an
equivalence of categories. If $\calF$ is an object of the category
$\calD^{\qc,\fg}_{\calB^{(1)} \times \{ \lambda \}}(X, \, \wcalD
\otimes_{\bk} \Lambda(\frakg^{(1)}))$, then $\For(\calF)$ is in
$\calD^{\qc,\fg}_{\calB^{(1)} \times \{ \lambda \}}(X, \, \wcalD) \cong
\calD^b \Mod^{{\rm c}}_{(\lambda,0)}(\wcalD)$. Hence, by \cite[3.1.9]{BMR},
$R\Gamma(\For(\calF))$ is in
$\calD^{\fg}_{\lambda}(\calU \frakg)$. Using diagram
\eqref{eq:diagram-RGamma-For}, we deduce that $R\Gamma(\calF)$ is in
$\calD^{\fg}_{\lambda}(\calU \frakg \otimes_{\bk}
\Lambda(\frakg^{(1)}))$. Hence we have proved that $R\Gamma$ induces a
functor $R\Gamma : \calD^{\qc,\fg}_{\calB^{(1)} \times \{ \lambda
  \}}(X, \, \wcalD
\otimes_{\bk} \Lambda(\frakg^{(1)})) \to \calD^{\fg}_{\lambda}(\calU
\frakg \otimes_{\bk} \Lambda(\frakg^{(1)}))$, such that the following
diagram commutes: \begin{equation} \label{eq:diagramRgamma}
\xymatrix@R=16pt{ \calD^{\qc,\fg}_{\calB^{(1)} \times \{ \lambda \}}(X, \, \wcalD
  \otimes_{\bk} \Lambda(\frakg^{(1)})) \ar[r]^-{R\Gamma}
  \ar[d]_-{\For} & \calD^{\fg}_{\lambda}(\calU \frakg
  \otimes_{\bk} \Lambda(\frakg^{(1)})) \ar[d]^-{\For} \\ \calD^b
  \Mod^{c}_{(\lambda,0)}(\wcalD) \ar[r]^-{R\Gamma} & \calD^b
  \Mod^{\fg}_{(\lambda,0)}(\calU \frakg). } \end{equation}

Now we construct a left adjoint for this functor. First,
consider \[ {\rm Loc}_K : \left\{ \begin{array}{ccc}
  \calC({\rm Spec}(\bk), \, \calU \frakg \otimes_{\bk}
  \Lambda(\frakg^{(1)})) & \to & \calC(X, \, \wcalD \otimes_{\bk}
  \Lambda(\frakg^{(1)})) \\ M & \mapsto & \wcalD \otimes_{\calU
    \frakg} M \end{array} \right. .\] Using the remarks after Lemma \ref{lem:Ug0centralcharacter}, ${\rm Loc}_K$ admits a left derived functor $\calL_K$. Moreover, the following diagram is
commutative: \begin{equation}\label{eq:diagram-Loc-For} \xymatrix@R=16pt{
\calD({\rm Spec}(\bk), \, \calU \frakg \otimes_{\bk}
\Lambda(\frakg^{(1)})) \ar[r]^-{\calL_K} \ar[d]_-{\For} & \calD(X,
\, \wcalD \otimes_{\bk} \Lambda(\frakg^{(1)})) \ar[d]^-{\For} \\
\calD({\rm Spec}(\bk), \, \calU \frakg) \ar[r]^-{\wcalD \lotimes_{\calU
    \frakg} -} & \calD(X, \, \wcalD). } \end{equation} Indeed, both derived
functors can be computed using K-projective resolutions, and one easily checks, using coinduction, that every
K-projective $\calU \frakg \otimes_{\bk} \Lambda(\frakg^{(1)})$-dg-module
restricts to a K-projective complex of $\calU \frakg$-modules.

Using diagram \eqref{eq:diagram-Loc-For}, $\calL_K$ induces a functor $\calL_K : \calD^{\fg}(\calU \frakg
\otimes_{\bk} \Lambda(\frakg^{(1)})) \ \to \ \calD^{\qc,\fg}(X, \, \wcalD
\otimes_{\bk} \Lambda(\frakg^{(1)})).$ Moreover, for
any $M$ in $\calD^{\fg}_{\lambda}(\calU \frakg
\otimes_{\bk} \Lambda(\frakg^{(1)}))$ there is a canonical
decomposition $\calL_K(M) \cong \bigoplus_{\mu \in W \bullet \lambda}
\calL_K^{\lambda \to \mu}(M)$ with $\calL_K^{\lambda \to \mu}(M)$ in
$\calD^{\qc,\fg}_{\calB^{(1)} \times \{ \mu \} }(X, \wcalD \otimes_{\bk}
\Lambda(\frakg^{(1)}))$. Indeed, using Proposition
\ref{prop:dgmodulessupport}, we have such a decomposition as a
complex of $\wcalD$-modules (as in \cite[3.3.1]{BMR}). As the actions
of $\Lambda(\frakg^{(1)})$ and ${\rm S}(\frakh) \subset \wcalD$ commute, each summand
is a sub-$\wcalD \otimes_{\bk} \Lambda(\frakg^{(1)})$-dg-module. 

Now we define
$\calL^{\widehat{\lambda}}_{K}:=\calL_K^{\lambda \to \lambda}$. The following diagram commutes, where $\calL^{\widehat{\lambda}}$ is the functor defined in
\cite[3.3.1]{BMR}:
\begin{equation}\label{eq:diagram-LK-For} \xymatrix@R=16pt{
\calD^{\fg}_{\lambda}(\calU \frakg \otimes_{\bk}
\Lambda(\frakg^{(1)})) \ar[r]^-{\calL^{\widehat{\lambda}}_K}
\ar[d]_-{\For} & \calD^{\qc,\fg}_{\calB^{(1)} \times \{ \lambda \}}(X, \,
\wcalD \otimes_{\bk} \Lambda(\frakg^{(1)})) \ar[d]^-{\For}
\\ \calD^b \Mod^{\fg}_{(\lambda,0)}(\calU \frakg)
\ar[r]^-{\calL^{\widehat{\lambda}}} & \calD^{b}
\Mod^{{\rm c}}_{(\lambda,0)}(\wcalD). }
\end{equation}

As in \cite[3.3.2]{BMR}, the functors
$\calL_K^{\widehat{\lambda}}$, $R\Gamma$ are adjoint.
Hence there are morphisms $\Id \to R\Gamma \circ
\calL_K^{\widehat{\lambda}}$, $\calL_K^{\widehat{\lambda}}
\circ R\Gamma \to \Id$ which coincide, under the forgetful
functors, with the morphisms $\Id \to
R\Gamma \circ \calL^{\widehat{\lambda}}$, $\calL^{\widehat{\lambda}}
\circ R\Gamma \to \Id$ of \cite{BMR}. In \cite[3.6]{BMR} it is
proved that the latter morphisms are isomorphisms. Hence the former
morphisms also are isomorphisms, which proves \eqref{eq:equivstep2}.\end{proof}

We denote by $\widehat{\gamma}^{\calB}_{\lambda}$ the equivalence constructed in the proof of Theorem \ref{thm:localizationfixedFr}. Let $p: (\wfrakg \, \rcap_{\frakg^* \times \calB} \, \calB)^{(1)} \to
\wfrakg^{(1)}$ be the natural morphism of dg-schemes, which can be
realized as the morphism of dg-ringed spaces $(\wfrakg^{(1)}, \,
\calO_{\wfrakg^{(1)}} \otimes_{\bk} \Lambda(\frakg^{(1)})) \ \to \ (\wfrakg^{(1)}, \,
\calO_{\wfrakg^{(1)}}).$ The following proposition is clear from our
constructions (see diagrams \eqref{eq:diagramRgamma} and
\eqref{eq:diagram-LK-For}):

\begin{prop} \label{prop:loccompatibility}

The following diagram is commutative, where the functor ${\rm Incl}$
is induced by the inclusion $\Mod^{\fg}_{\lambda}((\calU \frakg)_0)
\hookrightarrow \Mod^{\fg}_{(\lambda,0)}(\calU \frakg)$: \[ \xymatrix@R=16pt{
  \DGCoh((\wfrakg \, \rcap_{\frakg^* \times
    \calB} \, \calB)^{(1)}) \ar[d]_-{Rp_*}
  \ar[rrr]_-{\sim}^-{\widehat{\gamma}^{\calB}_{\lambda} \ \mathrm{(Th. \ \ref{thm:localizationfixedFr})}} & & & \calD^b
  \Mod^{\fg}_{\lambda}((\calU \frakg)_0) \ar[d]^-{{\rm Incl}}
   \\ \calD^b
  \Coh_{\calB^{(1)}}(\wfrakg^{(1)}) \ar[rrr]^-{\gamma^{\calB}_{\lambda} \ \mathrm{(Th. \ \ref{thm:thmBMR})}}_-{\sim} & & & \calD^b
  \Mod^{\fg}_{(\lambda,0)}(\calU \frakg). \\ } \] 

\end{prop}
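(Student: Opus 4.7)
The plan is to decompose both equivalences into the compositions constructed respectively in the proof of Theorem \ref{thm:localizationfixedFr} and in the summary after Theorem \ref{thm:thmBMR}, and to verify compatibility stage by stage, the compatibility at each stage being either obvious or having been established in the course of the proof of Theorem \ref{thm:localizationfixedFr}. Explicitly, $\widehat{\gamma}^{\calB}_{\lambda}$ is the composition of (i) the equivalence \eqref{eq:DGC} coming from Proposition \ref{prop:DGCohkoszul2nongr}; (ii) Lemma \ref{lem:lemmaX}, passing to the formal neighborhood inside $X=\wfrakg^{(1)}\times_{\frakh^{*(1)}}\frakh^*$; (iii) the splitting-bundle equivalence \eqref{eq:equivstep1}; (iv) the global sections equivalence \eqref{eq:equivstep2}; and (v) Lemma \ref{lem:Ug0centralcharacter}. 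The equivalence $\gamma^{\calB}_{\lambda}$ is built via the analogues of (ii), (iii), (iv) with no $\Lambda(\frakg^{(1)})$-factor present.

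The first step is to identify the two vertical functors in terms of these decompositions. The morphism of dg-schemes $p$ is the morphism of dg-ringed spaces $(\wfrakg^{(1)}, \calO_{\wfrakg^{(1)}} \otimes_{\bk} \Lambda(\frakg^{(1)})) \to (\wfrakg^{(1)}, \calO_{\wfrakg^{(1)}})$ induced by the algebra inclusion $\calO_{\wfrakg^{(1)}} \hookrightarrow \calO_{\wfrakg^{(1)}} \otimes_{\bk} \Lambda(\frakg^{(1)})$; under \eqref{eq:DGC}, the functor $Rp_*$ is therefore identified with the ``forget $\Lambda(\frakg^{(1)})$'' functor. Dually, under the equivalence of Lemma \ref{lem:Ug0centralcharacter}, the inclusion ${\rm Incl}$ corresponds to the restriction-of-scalars functor along the inclusion $\calU\frakg \hookrightarrow \calU\frakg \otimes_{\bk} \Lambda(\frakg^{(1)})$, that is, again to a ``forget $\Lambda(\frakg^{(1)})$'' functor.

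It then suffices to check that this forgetful functor intertwines each of the five intermediate equivalences with its $\gamma^{\calB}_{\lambda}$-counterpart. For step (ii), the statement follows directly from Proposition \ref{prop:dgmodulessupport} applied to both $\calO_X$ and to $\calO_X \otimes_{\bk} \Lambda(\frakg^{(1)})$, since the construction of Lemma \ref{lem:lemmaX} uses only the isomorphism of formal neighborhoods together with the support criterion, and the $\Lambda(\frakg^{(1)})$ factor plays no role. For step (iii), the tensor product with $\calM^{\lambda}$ is taken over $\calO_{\widehat{\calB^{(1)}}}$ and obviously commutes with forgetting the action of $\Lambda(\frakg^{(1)})$. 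For step (iv), the required compatibility is precisely diagram \eqref{eq:diagramRgamma}, which was established during the proof of Theorem \ref{thm:localizationfixedFr}. Assembling these compatibilities, the outer rectangle commutes.

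I do not expect a genuine obstacle here; the argument is, as the paper itself says, clear from the construction. The only mild bookkeeping concerns step (ii), where one must verify that the equivalence of formal neighborhoods used to define Lemma \ref{lem:lemmaX} is the same as the one used in the non-dg setting to define $\gamma^{\calB}_{\lambda}$, but this is automatic because both are induced by the same projection $\wfrakg^{(1)} \times_{\frakh^{*(1)}} \frakh^* \to \wfrakg^{(1)}$.
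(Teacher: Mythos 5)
Your proposal is correct and takes essentially the same route as the paper, which simply asserts that the statement is ``clear from our constructions'' and points to diagrams \eqref{eq:diagramRgamma} and \eqref{eq:diagram-LK-For}; your write-up just makes explicit the stage-by-stage decomposition of $\widehat{\gamma}^{\calB}_{\lambda}$ and $\gamma^{\calB}_{\lambda}$ and the identification of $Rp_*$ and $\mathrm{Incl}$ with the ``forget $\Lambda(\frakg^{(1)})$'' functors that the paper leaves implicit.
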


To finish this subsection, let us remark that similar
arguments give the following more general theorem:

\begin{thm}\label{thm:localizationfixedFrparabolic}

Let $\mu, \calP$ be as in Theorem
{\rm \ref{thm:thmBMR}}$\rmii$. There exists an equivalence of triangulated categories
\[ \widehat{\gamma}^{\calP}_{\mu}: \DGCoh((\wfrakg_{\calP}
\, \rcap_{\frakg^* \times \calP} \, \calP)^{(1)}) \ \xrightarrow{\sim} \ \calD^b
\Mod^{\fg}_{\mu}((\calU \frakg)_0) \] making the following diagram
commute, where ${\rm Incl}$ is induced by the inclusion
$\Mod^{\fg}_{\mu}((\calU \frakg)_0) \hookrightarrow \Mod^{\fg}_{(\mu,0)}(\calU
\frakg)$, and $p_{\calP}: (\wfrakg_{\calP} \, \rcap_{\frakg^* \times \calP} \,
\calP)^{(1)} \to \wfrakg_{\calP}^{(1)}$ is the natural morphism of
dg-schemes: \[ \xymatrix@R=16pt{ \DGCoh((\wfrakg_{\calP} \, \rcap_{\frakg^* \times
\calP} \, \calP)^{(1)}) \ar[d]_-{R(p_{\calP})_*}
\ar[rr]_-{\sim}^-{\widehat{\gamma}^{\calP}_{\mu}} & & \calD^b
\Mod^{\fg}_{\mu}((\calU \frakg)_0)
\ar[d]^-{{\rm Incl}} \\ \calD^b
\Coh_{\calP^{(1)}}(\wfrakg_{\calP}^{(1)}) \ar[rr]^-{\gamma^{\calP}_{\mu}}_-{\sim} & & \calD^b
\Mod^{\fg}_{(\mu,0)}(\calU \frakg). \\ } \] 

\end{thm}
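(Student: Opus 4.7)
The plan is to mimic the proof of Theorem \ref{thm:localizationfixedFr} step by step, replacing $\calB$ by $\calP$, $\wfrakg$ by $\wfrakg_{\calP}$, $\wcalD$ by the twisted differential operators $\wcalD_{\calP}$ on $\calP$, the regular weight $\lambda$ by $\mu$, and the regular splitting bundle $\calM^{\lambda}$ by the parabolic splitting bundle $\calM^{\calP}_{0,\mu}$ of \cite[1.3.5]{BMR2}. All the auxiliary results used in the regular case (Proposition \ref{prop:DGCohkoszul2nongr}, Lemma \ref{lem:lemmaX}, Lemma \ref{lem:Ug0centralcharacter}) have straightforward parabolic counterparts.

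Concretely, I would first establish the parabolic analogue of Proposition \ref{prop:DGCohkoszul2nongr}: using the Koszul resolution $\calO_{(\frakg^* \times \calP)^{(1)}} \otimes_{\bk} \Lambda(\frakg^{(1)}) \xrightarrow{\qis} j_* \calO_{\calP^{(1)}}$, with $j$ the zero-section embedding $\calP^{(1)} \hookrightarrow (\frakg^* \times \calP)^{(1)}$, and Remark \ref{rk:rkderivedintersection}, one obtains
\[ \DGCoh((\wfrakg_{\calP} \, \rcap_{\frakg^* \times \calP} \, \calP)^{(1)}) \ \cong \ \calD^{\qc,\fg}(\wfrakg_{\calP}^{(1)}, \, \calO_{\wfrakg_{\calP}^{(1)}} \otimes_{\bk} \Lambda(\frakg^{(1)})). \]
Then, setting $X_{\calP}:=\wfrakg_{\calP}^{(1)} \times_{\frakh^{*(1)}/W_P} \frakh^*/(W_P,\bullet)$ and repeating the argument of Lemma \ref{lem:lemmaX} via the isomorphism between the formal neighborhood of $\calP^{(1)}$ in $\wfrakg_{\calP}^{(1)}$ and the formal neighborhood of $\calP^{(1)} \times \{ \mu \}$ in $X_{\calP}$ supplied by \cite[\S 1.5]{BMR2}, this category identifies with $\calD^{\qc,\fg}_{\calP^{(1)} \times \{ \mu \}}(X_{\calP}, \, \calO_{X_{\calP}} \otimes_{\bk} \Lambda(\frakg^{(1)}))$. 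Next, I would reproduce the ``first step'' of the proof of Theorem \ref{thm:localizationfixedFr}: the splitting of $\wcalD_{\calP}$ provided by the bundle $\calM^{\calP}_{0,\mu}$, together with the exact adjoint functors $\calM^{\calP}_{0,\mu} \otimes_{\calO_{\cdot}} (-)$ and $\sheafHom_{\wcalD_{\calP}}(\calM^{\calP}_{0,\mu},-)$, gives
\[ \calD^{\qc,\fg}_{\calP^{(1)} \times \{ \mu \}}(X_{\calP}, \, \calO_{X_{\calP}} \otimes_{\bk} \Lambda(\frakg^{(1)})) \ \cong \ \calD^{\qc,\fg}_{\calP^{(1)} \times \{ \mu \}}(X_{\calP}, \, \wcalD_{\calP} \otimes_{\bk} \Lambda(\frakg^{(1)})). \]
Finally, the ``second step'' transports verbatim: derived global sections $R\Gamma$ and the appropriate block $\calL^{\widehat{\mu}}_K$ of the derived localization form an adjoint equivalence (by Theorem \ref{thm:thmBMR}$\rmii$ applied on each side after forgetting $\Lambda(\frakg^{(1)})$, and Proposition \ref{prop:dgmodulessupport}), yielding $\calD^{\qc,\fg}_{\calP^{(1)} \times \{ \mu \}}(X_{\calP}, \, \wcalD_{\calP} \otimes_{\bk} \Lambda(\frakg^{(1)})) \cong \calD^{\fg}_{\mu}(\calU \frakg \otimes_{\bk} \Lambda(\frakg^{(1)}))$. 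Combined with the parabolic analogue of Lemma \ref{lem:Ug0centralcharacter}, which rests on the same Koszul resolution $\calU \frakg \otimes_{\bk} \Lambda(\frakg^{(1)}) \xrightarrow{\qis} (\calU \frakg)_0$, this produces $\widehat{\gamma}^{\calP}_{\mu}$.

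The commutativity of the displayed diagram then follows, exactly as in the proof of Proposition \ref{prop:loccompatibility}, by observing that at every stage of the chain of equivalences the morphism $R(p_{\calP})_*$ corresponds to forgetting the $\Lambda(\frakg^{(1)})$-action, which on the representation-theoretic side is the inclusion $\Mod^{\fg}_{\mu}((\calU \frakg)_0) \hookrightarrow \Mod^{\fg}_{(\mu,0)}(\calU \frakg)$. The main obstacle I anticipate is purely bookkeeping: checking that each ingredient from \cite{BMR,BMR2} genuinely adapts to the singular/parabolic case --- in particular, that the formal-neighborhood isomorphism used in Lemma \ref{lem:lemmaX} goes through once ${\rm Stab}_{(W_{\aff},\bullet)}(\mu) = W_P$, and that the normalization of the splitting bundle $\calM^{\calP}_{0,\mu}$ in \cite[1.3.5]{BMR2} is precisely the one needed for the global-sections functor to land in the $(\mu,0)$-block. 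Once these are in place, the rest of the proof is a direct transposition.
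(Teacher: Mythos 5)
Your proposal is correct and takes exactly the approach the paper intends: the text following Proposition~\ref{prop:loccompatibility} simply asserts that ``similar arguments'' prove Theorem~\ref{thm:localizationfixedFrparabolic}, and your write-up is a faithful and accurate unpacking of those arguments, with the right substitutions ($\calB \rightsquigarrow \calP$, $\wfrakg \rightsquigarrow \wfrakg_{\calP}$, $\wcalD \rightsquigarrow \wcalD_{\calP}$, $\calM^{\lambda} \rightsquigarrow \calM^{\calP}_{0,\mu}$) and the right supporting references in \cite{BMR2}. You have also correctly identified the only points requiring verification: the parabolic formal-neighborhood isomorphism (which is indeed supplied by \cite[1.5.3]{BMR2} under the hypothesis ${\rm Stab}_{(W_{\aff},\bullet)}(\mu)=W_P$) and the compatibility of the splitting-bundle normalization of \cite[1.3.5]{BMR2} with the block decomposition.
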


\begin{remark}\label{rk:endsection3}

$\rmi$ Theorems \ref{thm:localizationfixedFr} and \ref{thm:localizationfixedFrparabolic} easily generalize to the case where $0 \in \frakg^*$ is relaced by a nilpotent $\chi$ (with the same proof). One obtains the following result, which will not be used in this paper.

\begin{thm}

Let $\mu, \calP$ be as in Theorem
{\rm \ref{thm:thmBMR}}$\rmii$, and $\chi \in \frakg^*$ nilpotent. There is an equivalence $\DGCoh \bigl((\wfrakg_{\calP}
\, \rcap_{\frakg^* \times \calP} \, (\chi \times \calP))^{(1)}\bigr) \cong \calD^b
\Mod^{\fg}_{\mu}((\calU \frakg)_{\chi}).$ 

\end{thm}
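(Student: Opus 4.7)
The plan is to follow word for word the argument of Theorem \ref{thm:localizationfixedFr} (or rather its parabolic extension, Theorem \ref{thm:localizationfixedFrparabolic}), with the trivial Frobenius central character $0$ replaced by the nilpotent character $\chi$. The only place where the value $0$ enters essentially is in the choice of a Koszul resolution of the residue field of $\frakZ_{\mathrm{Fr}} \cong S(\frakg^{(1)})$, so the whole argument will go through once this resolution is twisted by $\chi$.

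Concretely, I would replace the Koszul dg-algebra $K_{\frakg} = S(\frakg^{(1)}) \otimes_{\bk} \Lambda(\frakg^{(1)})$ (with $\Lambda^1(\frakg^{(1)})$ in degree $-1$ and differential $X \mapsto X$) by its $\chi$-twisted analogue $K_{\frakg}^{\chi}$, with differential $X \mapsto X - \chi^{(1)}(X)$. Since the elements $X - \chi^{(1)}(X)$ still form a regular sequence in $S(\frakg^{(1)})$, one has $K_{\frakg}^{\chi} \xrightarrow{\qis} \bk_{\chi}$, and the argument around equivalence \eqref{eq:Ug0} together with Lemma \ref{lem:Ug0centralcharacter} yields
\[ \calD^b \Mod^{\fg}_{\mu}((\calU \frakg)_{\chi}) \ \cong \ \calD^{\fg}_{\mu}(\calU \frakg \otimes_{\bk} \Lambda(\frakg^{(1)})), \]
where the right-hand dg-algebra now carries the $\chi$-twisted differential. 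Dually, on the geometric side, the same $\chi$-twisted Koszul resolution of $\calO_{\chi \times \calP}$ over $\calO_{\frakg^* \times \calP}$ combined with Remark \ref{rk:rkderivedintersection} provides an identification
\[ \DGCoh \bigl((\wfrakg_{\calP} \, \rcap_{\frakg^* \times \calP} \, (\chi \times \calP))^{(1)} \bigr) \ \cong \ \calD^{\qc,\fg}(\wfrakg_{\calP}^{(1)}, \, \calO_{\wfrakg_{\calP}^{(1)}} \otimes_{\bk} \Lambda(\frakg^{(1)})), \]
with the $\chi$-twisted differential. The cohomology of any such dg-module is automatically supported on the Frobenius twist of the Springer-type fiber $\calP_{\chi} := \{ gP \mid \chi_{|g \cdot \frakp^{\mathrm{u}}} = 0 \}$, because the $H^0$ of the twisted Koszul dg-algebra is the structure sheaf of $\chi \times \calP_{\chi}$.

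With these models in place, I would run the two-step argument of Theorem \ref{thm:localizationfixedFr} verbatim. In step one, tensor product with the BMR splitting bundle of $\wcalD$ on the formal neighborhood of $\calP_{\chi}^{(1)} \times \{ \mu \}$ in $\wfrakg_{\calP}^{(1)} \times_{\frakh^*{}^{(1)}} \frakh^*$, combined with Proposition \ref{prop:dgmodulessupport} and the isomorphism of formal neighborhoods used in Lemma \ref{lem:lemmaX}, upgrades the geometric model into one written in terms of $\wcalD \otimes_{\bk} \Lambda(\frakg^{(1)})$-dg-modules with cohomology supported on $\calP_{\chi}^{(1)} \times \{ \mu \}$. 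In step two, the adjoint pair $(\calL_K^{\widehat{\mu}}, R\Gamma)$ of the proof of Theorem \ref{thm:localizationfixedFr} carries over without change: that its unit and counit are isomorphisms is reduced through the forgetful diagrams \eqref{eq:diagramRgamma} and \eqref{eq:diagram-LK-For} to the analogous BMR statement for the pair $(\mu,\chi)$, which is the content of the nilpotent case of \cite{BMR,BMR2}.

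The main obstacle is therefore not really new: the whole argument depends on having at one's disposal the splitting bundle of the Azumaya algebra $\wcalD$ on the formal neighborhood of $\calP_{\chi}^{(1)} \times \{ \mu \}$ together with the underlying equivalence $\calD^b \Coh_{\calP_{\chi}^{(1)}}(\wfrakg_{\calP}^{(1)}) \cong \calD^b \Mod^{\fg}_{(\mu,\chi)}(\calU \frakg)$. Both inputs are provided by \cite{BMR,BMR2}. The remaining work is to verify that the $\chi$-twisted Koszul differential is transported correctly through each of the equivalences in the chain, which, because the twist is by a central element and because all constructions are local, is a routine bookkeeping check.
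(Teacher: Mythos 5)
Your proposal is correct and matches the paper's own (implicit) proof exactly: the paper proves this theorem by remarking that the arguments of Theorems \ref{thm:localizationfixedFr} and \ref{thm:localizationfixedFrparabolic} carry over verbatim with $0$ replaced by a nilpotent $\chi$, and what you have written is precisely the unpacking of that remark — the $\chi$-twisted Koszul resolution, the Springer fiber $\calP_{\chi}$ replacing the zero section, and the BMR splitting bundle and localization equivalence for the central character $(\mu,\chi)$.
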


$\rmii$ One cannot use similar methods to describe categories of $(\calU \frakg)^{\lambda}_{\chi}$-modules, because $(\calU \frakg)^{\lambda}$ is not flat over $\frakZ_{\mathrm{Fr}}$.

\end{remark}

\section{Simples correspond to projective covers under $\kappa_{\calB}$} \label{sec:statement}

Recall the equivalence $\kappa_{\calB}$ of \eqref{eq:defkappa}. Our
situation is the following: \[
(*) \xymatrix@R=16pt{ & \DGCoh^{\gr}(\wcalN^{(1)})
  \ar[r]^-{\sim}_-{\kappa_{\calB}}
  \ar[d]^-{\eqref{eq:forgetDGCoh(F)}}_-{\For}
& \DGCoh^{\gr}((\wfrakg \, \rcap_{\frakg^* \times \calB} \, \calB)^{(1)})
\ar[d]^-{\For}_-{\eqref{eq:ForDGCohderivedintersection}} \\ \calD^b
\Coh_{\calB^{(1)}}(\wcalN^{(1)}) \ar@{^{(}->}[r]
\ar@{<->}[d]_-{\wr}^-{\eqref{eq:equivBMR2}} & \calD^b \Coh(\wcalN^{(1)}) &
\DGCoh((\wfrakg \, \rcap_{\frakg^* \times \calB} \, \calB)^{(1)})
\ar@{<->}[d]^-{\wr}_-{\ref{thm:localizationfixedFr}} \\ \calD^b
\Mod^{\fg}_{0}((\calU \frakg)^{\lambda}) & & \calD^b
\Mod^{\fg}_{\lambda}((\calU \frakg)_0) } \]

Hence we have constructed some ``correspondence'' between
$\calU \frakg$-modules with fixed trivial Frobenius character
and generalized Harish-Chandra character $\lambda$ (on the RHS), and $\calU \frakg$-modules with generalized trivial Frobenius
character and fixed Harish-Chandra character $\lambda$ (on the LHS). The key of our approach to Koszulity is that, if $p$ is large enough, ``indecomposable
projective modules correspond to simple modules'' under this
correspondence. In this section we state precisely this result (see Theorem \ref{thm:mainthm}).

\subsection{Restricted dominant weights}
\label{ss:paragraphrestrictedweights}

Consider $\tau_0:=t_{\rho} \cdot w_0 \in W_{\aff}'.$
Recall the formula
for the length in $W_{\aff}'$: for $w
\in W$ and $x \in \bbX$,
\begin{equation}\label{eq:formulaIM'} \ell(w
  \cdot t_x)=\sum_{\alpha \in R^+ \cap w^{-1} (R^+)} |\langle x,
  \alpha^{\vee} \rangle | + \sum_{\alpha \in
      R^+ \cap w^{-1}(R^-)}
  |1 + \langle x, \alpha^{\vee} \rangle | \end{equation} (see \cite[1.23]{IM}). In
particular, we obtain $\ell(\tau_0)=\sum_{\alpha \in R^+} (\langle
\rho, \alpha^{\vee} \rangle - 1)$.

Recall the definition of $C_0$ in \S \ref{ss:intro3}. Let us define \[ W^0:=\{ w \in W_{\aff}' \mid w \bullet C_0 \ \text{contains a restricted dominant weight} \}.\] If $\lambda \in C_0$, $W^0$ is also the set of $w \in W_{\aff}'$ such that $w \bullet \lambda$ is restricted dominant. It is a finite set, in bijection with $W$ (see \eqref{eq:restricteddominantweights} below).

\begin{prop} \label{prop:tau_0}

The map $w \mapsto \tau_0 w$ is an involution of
$W^0$. Moreover, if $w \in W^0$ we have $\ell(\tau_0 w)=\ell(\tau_0) -
\ell(w)$.

\end{prop}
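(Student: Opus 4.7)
I begin with the observation that $\tau_0^2 = (t_\rho w_0)^2 = t_{\rho+w_0\rho}\cdot w_0^2 = e$ (using $w_0\rho=-\rho$), so left multiplication by $\tau_0$ is an involution of $W_{\aff}'$. To show it preserves $W^0$, I will verify that $\tau_0\bullet\mu$ is restricted dominant whenever $\mu$ is restricted dominant and regular. A direct calculation yields $\tau_0\bullet\mu = w_0\mu + (p-2)\rho$; letting $\sigma$ denote the diagram involution such that $w_0\alpha_i=-\alpha_{\sigma(i)}$, one finds $\langle\tau_0\bullet\mu,\alpha_i^\vee\rangle = (p-2) - \langle\mu,\alpha_{\sigma(i)}^\vee\rangle$, which lies in $[0,p-2]$ since regularity rules out $\langle\mu,\alpha_{\sigma(i)}^\vee\rangle = p-1$. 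Applied to $\mu = w\bullet\lambda$ (automatically regular, as $\lambda\in C_0$ is and the $W_{\aff}'$-dot-action permutes the reflection hyperplanes), this gives $\tau_0 w\in W^0$.

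For the length identity I will parametrize $W^0$ explicitly. Every $w\in W_{\aff}'$ decomposes uniquely as $t_z u$ with $u\in W$ and $z\in\bbX$, and the condition that $w\bullet\lambda = u(\lambda+\rho)-\rho+pz$ be restricted dominant reads $p\langle z,\alpha_i^\vee\rangle + \langle\lambda+\rho,u^{-1}\alpha_i^\vee\rangle \in [1,p]$ for every simple $\alpha_i$. Since $\lambda\in C_0$ and $p>h$, $\langle\lambda+\rho,u^{-1}\alpha_i^\vee\rangle$ lies in $\{1,\ldots,p-1\}$ or $\{-(p-1),\ldots,-1\}$ according to whether $u^{-1}\alpha_i$ is positive or negative, forcing $\langle z,\alpha_i^\vee\rangle$ to be $0$ or $1$ respectively. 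Hence
\[ W^0 = \{\, t_{\eta(u)}u \mid u\in W \,\}, \qquad \eta(u) := \sum_{i \,:\, u^{-1}\alpha_i<0}\omega_i. \]
In particular $\eta(w_0)=\rho$, recovering $\tau_0$ at $u=w_0$; using $w_0\omega_i=-\omega_{\sigma(i)}$ and $\rho=\sum_i\omega_i$ one checks $\rho+w_0\eta(u)=\eta(w_0 u)$, from which $\tau_0\cdot t_{\eta(u)}u = t_{\eta(w_0 u)}(w_0 u)$, so the involution $w\mapsto\tau_0 w$ corresponds on $W$-parameters to $u\mapsto w_0 u$.

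Applying \eqref{eq:formulaIM'} to $t_{\eta(u)}u = u\cdot t_{u^{-1}\eta(u)}$ and substituting $\beta=u\alpha$ in the sum, I obtain
\[ L(u) := \ell(t_{\eta(u)}u) = \sum_{\beta\in uR^+\cap R^+}|\langle\eta(u),\beta^\vee\rangle| + \sum_{\beta\in uR^+\cap R^-}|1+\langle\eta(u),\beta^\vee\rangle|. \]
The coefficients of $\eta(u)$ on the fundamental weights being non-negative, $\langle\eta(u),\beta^\vee\rangle \ge 0$ for $\beta\in R^+$. The key technical step is the positivity lemma: $\langle\eta(u),\gamma^\vee\rangle \ge 1$ for every $\gamma\in R^+\cap uR^-$. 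Writing $\gamma^\vee=\sum_k d_k\alpha_k^\vee$, one has $d_i>0$ exactly when the simple-root coefficient $b_i$ of $\gamma$ is positive; and since $u^{-1}$ would preserve positivity on the parabolic subsystem generated by $\{\alpha_j \,:\, u^{-1}\alpha_j>0\}$, $\gamma$ cannot lie in this subsystem, so $b_i\ge 1$ and hence $d_i\ge 1$ for some $i$ with $u^{-1}\alpha_i<0$. Removing the absolute values accordingly, and using $\sum_{\beta\in R^+}\beta^\vee=2\rho^\vee$ together with $|uR^+\cap R^-|=\ell(u)$, a short manipulation gives
\[ L(u) = 2\langle\eta(u),\rho^\vee\rangle - \ell(u). \]

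To conclude, $\ell(\tau_0 w) + \ell(w) = L(w_0 u) + L(u) = 2\langle\eta(u)+\eta(w_0 u),\rho^\vee\rangle - \ell(w_0)$, using the classical $\ell(u)+\ell(w_0 u)=\ell(w_0)$. Setting $A = \{i \,:\, u^{-1}\alpha_i<0\} \cap \{i \,:\, u^{-1}\alpha_{\sigma(i)}>0\}$ and $D=\sigma(A)$, a direct computation shows $\eta(u)+\eta(w_0 u) - \rho = \sum_{i\in A}\omega_i - \sum_{i\in D}\omega_i$, while the $\sigma$-invariance $\langle\omega_i,\rho^\vee\rangle = \langle\omega_{\sigma(i)},\rho^\vee\rangle$ (from $w_0\omega_i=-\omega_{\sigma(i)}$ and $w_0\rho^\vee=-\rho^\vee$) gives $\langle\eta(u)+\eta(w_0 u),\rho^\vee\rangle = \langle\rho,\rho^\vee\rangle$. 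Hence $\ell(\tau_0 w) + \ell(w) = 2\langle\rho,\rho^\vee\rangle - \ell(w_0) = L(w_0) = \ell(\tau_0)$. The main technical obstacle is the positivity lemma; the remainder is a routine manipulation of the Iwahori--Matsumoto length formula.
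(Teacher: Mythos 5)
Your proof is correct. The overall approach is the same as the paper's: both establish $\tau_0^2 = e$, parametrize $W^0$ by $W$ via a translation part determined by the sign of $u^{-1}\alpha_i$ (your $\eta(u)$ is the paper's condition \eqref{eq:restricteddominantweights}), apply the Iwahori--Matsumoto length formula \eqref{eq:formulaIM'}, and remove absolute values using a positivity observation. What differs is the packaging of the length computation: you distil it into a closed-form $L(u) = 2\langle\eta(u),\rho^\vee\rangle - \ell(u)$ and then finish by a clean $\sigma$-symmetry argument together with $\ell(u) + \ell(w_0 u) = \ell(w_0)$, whereas the paper carries out a direct manipulation of the two absolute-value sums, at the end recognizing the remaining terms as $\ell(w)$ via a second application of \eqref{eq:formulaIM'}. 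Both routes need the same "strictness" fact (for $\gamma \in R^+$ with $u^{-1}\gamma \in R^-$ one has $\langle\eta(u),\gamma^\vee\rangle \geq 1$), which the paper asserts without proof; you sketch a justification, which is a genuine plus, though the two steps you lean on (a root and its coroot have the same support, and a positive root lying in the parabolic subsystem $R_J$ with $J = \{j : u^{-1}\alpha_j > 0\}$ is mapped by $u^{-1}$ into $R^+$) are standard but stated a bit tersely -- the second is best seen by height induction using that a sum of positive roots is never a negative root. Your closed form $L(u)$ is a pleasant by-product not made explicit in the paper.
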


\begin{proof} By definition, $(\tau_0)^2=1$. Hence for the first assertion it is
sufficient to prove that $w \in W^0$ implies $\tau_0 w \in W^0$. We have $w \in W^0$ iff $w \bullet 0$ is restricted dominant. Write $w=t_{\lambda} \cdot v$ with $\lambda \in \bbX$ and $v \in W$. Then $w \bullet 0=v(\rho) + p\lambda
- \rho$. Hence if $\alpha \in \Phi$, $\langle w \bullet 0,
\alpha^{\vee} \rangle = \langle \rho, (v^{-1} \alpha)^{\vee} \rangle +
p \langle \lambda, \alpha^{\vee} \rangle - 1$. As $p > h$, we have $|
\langle \rho, (v^{-1} \alpha)^{\vee} \rangle | < p$. Hence, $w \bullet
0$ dominant restricted implies:
\begin{equation}\label{eq:restricteddominantweights} \langle \lambda,
  \alpha^{\vee} \rangle = \left\{ \begin{array}{cl} 0 & \text{if} \
      v^{-1} \alpha \in R^+; \\ 1 & \text{if} \ v^{-1} \alpha \in
      R^-. \end{array} \right. \end{equation} In both cases, $\langle
w \bullet 0, \alpha^{\vee} \rangle \in \{0, 1, \cdots, p-2 \}$. Now $\tau_0 w \bullet 0=w_0(w \bullet 0 + \rho) + (p-1)\rho = w_0( w
\bullet 0) + (p-2)\rho$. Hence if $\alpha \in \Phi$, $\langle \tau_0 w
\bullet 0, \alpha^{\vee} \rangle = \langle w \bullet 0, (w_0
\alpha)^{\vee} \rangle + (p-2)$. We have $w_0 \alpha \in -\Phi$,
hence $\langle w \bullet 0, (w_0
\alpha)^{\vee} \rangle \in \{-p+2, \cdots, 0 \}$. Thus $\tau_0 w \in W^0$, and the first assertion of the proposition follows.

Let us compute $\ell(\tau_0 w)$. We have $\tau_0 w=w_0 v \cdot
t_{v^{-1}(\lambda - \rho)}$. Hence, by \eqref{eq:formulaIM'},
\[ \ell(\tau_0 w) \ = \ 
  \sum_{\alpha \in R^+ \cap v^{-1} R^-} |
  \langle \lambda - \rho, (v \alpha)^{\vee} \rangle | +
  \sum_{\alpha \in R^+ \cap v^{-1} R^+} |1 +
  \langle \lambda - \rho, (v \alpha)^{\vee} \rangle |. \]
By \eqref{eq:restricteddominantweights}, for
$\alpha \in \Phi$ we have $0 \leq \langle \lambda, \alpha^{\vee}
\rangle \leq \langle \rho, \alpha^{\vee} \rangle$. Hence the same is
true for $\alpha \in R^+$. Moreover, if $v^{-1} \alpha \in R^+$
then the second inequality is strict, and if $v^{-1} \alpha \in R^-$
the first one is strict. Hence $\ell(\tau_0 w)$ equals
{\small \begin{multline*}
  \sum_{\alpha \in R^+ \cap v^{-1} R^-}
  \langle \lambda - \rho, (v \alpha)^{\vee} \rangle +
  \sum_{\alpha \in R^+ \cap v^{-1} R^+} (-1 +
  \langle \rho - \lambda, (v \alpha)^{\vee} \rangle) \\ =
  \sum_{\beta \in R^+} \langle \rho, \beta^{\vee} \rangle +
  \sum_{\alpha \in R^+ \cap v^{-1} R^-}
  \langle \lambda, v \alpha^{\vee} \rangle -
  \sum_{\alpha \in R^+ \cap v^{-1} R^+}
  \langle \lambda, v \alpha^{\vee} \rangle - \# ( R^+ \cap v^{-1} R^+ ). \end{multline*}}We deduce that {\small \begin{align*} \ell(\tau_0 w) & = \ell(\tau_0) +
  \sum_{\alpha \in R^+ \cap v^{-1} R^-}
  \langle \lambda, v \alpha^{\vee} \rangle -
  \sum_{\alpha \in R^+ \cap v^{-1} R^+}
  \langle \lambda, v \alpha^{\vee} \rangle + \# (R^+ \cap v^{-1} R^-) \\ & = \ell(\tau_0) -  \sum_{\alpha \in R^+ \cap v^{-1} R^-} | 1 +
  \langle \lambda, v \alpha^{\vee} \rangle| -
  \sum_{\alpha \in R^+ \cap v^{-1} R^+} |
  \langle \lambda, v \alpha^{\vee} \rangle |. \end{align*}}Here the second equality uses the
fact that if $\alpha \in R^+$ and $v\alpha \in R^-$ then $\langle
\lambda, (v \alpha)^{\vee}
\rangle < 0$ (see above). We conclude using the equality $w=v \cdot
t_{v^{-1} \lambda}$. \end{proof}

\subsection{Coherent sheaves and dg-sheaves on $\wcalN^{(1)}$} \label{ss:paragraphsimplemodules}

As in \S\S \ref{ss:lkd} and \ref{ss:intro3}, let us consider the following
$\Gm$-dg-algebras on $\calB^{(1)}$:
\[ \begin{array}{cl} \calS:={\rm
    S}_{\calO_{\calB^{(1)}}}(\calT_{\calB^{(1)}}) & \text{with }
  \calT_{\calB^{(1)}} \text{ in bidegree } (2,-2), \\ \calR:={\rm
    S}_{\calO_{\calB^{(1)}}}(\calT_{\calB^{(1)}}) & \text{with }
  \calT_{\calB^{(1)}} \text{ in bidegree } (0,-2). \end{array} \] We
have a ``regrading'' functor $\xi :
\calD_{\Gm}(\calB^{(1)}, \, \calS) \ \xrightarrow{\sim} \
\calD_{\Gm}(\calB^{(1)}, \, \calR)$, defined by
$\xi(M)^i_j=M^{i-j}_j$. We also have an equivalence of categories (see
\eqref{eq:equivalenceCoh(E)}): $\phi :
\calD^{\qc,\fg}_{\Gm}(\calB^{(1)}, \, \calR) \ \xrightarrow{\sim} \ \calD^b
\Coh^{\Gm}(\wcalN^{(1)}).$ As in \eqref{eq:GmF} we consider the functor $\eta :
\DGCoh^{\gr}(\wcalN^{(1)}) \ \to \ \calD^b \Coh^{\Gm}(\wcalN^{(1)})$
defined as the composition \begin{multline*} \DGCoh^{\gr}(\wcalN^{(1)}):=
\calD^{+,\qc,\fg}_{\Gm}(\calB^{(1)}, \, \calS) \ \to \
\calD^{\qc,\fg}_{\Gm}(\calB^{(1)}, \, \calS) \\ \xrightarrow{\xi} \
\calD^{\qc,\fg}_{\Gm}(\calB^{(1)}, \, \calR) \ \xrightarrow{\phi} \ \calD^b
\Coh^{\Gm}(\wcalN^{(1)}). \end{multline*}

\begin{lem}\label{lem:lemmagradedmodules}

There exists a fully faithful triangulated functor \[\zeta: \calD^b \Coh^{\Gm}_{\calB^{(1)}}(\wcalN^{(1)}) \ \to \ \DGCoh^{\gr}(\wcalN^{(1)})\] such that $\eta \circ \zeta$ is the inclusion\footnote{See \cite[3.1.7]{BMR} for the fact that this functor is an inclusion.} $\calD^b \Coh^{\Gm}_{\calB^{(1)}}(\wcalN^{(1)}) \hookrightarrow \calD^b \Coh^{\Gm}(\wcalN^{(1)})$.

\end{lem}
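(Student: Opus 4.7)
The plan is to construct $\zeta$ by reversing the equivalences in the definition $\eta = \phi \circ \xi \circ \iota$, where $\iota: \calD^{+,\qc,\fg}_{\Gm}(\calB^{(1)},\calS) \to \calD^{\qc,\fg}_{\Gm}(\calB^{(1)},\calS)$ denotes the natural functor. Since $\phi$ and $\xi$ are equivalences, the question reduces to showing that $\xi^{-1} \circ \phi^{-1}$ sends $\calD^b \Coh^{\Gm}_{\calB^{(1)}}(\wcalN^{(1)})$ into the essential image of $\iota$, and that $\iota$ is fully faithful on the target side.

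The key observation is geometric: a coherent $\Gm$-equivariant sheaf on $\wcalN^{(1)}$ set-theoretically supported on the zero section $\calB^{(1)}$ corresponds under $\phi^{-1}$ to a finitely generated $\Gm$-equivariant $\calR$-module on which the augmentation ideal $\calR_{<0} \cdot \calR$ acts nilpotently; such a module is automatically coherent over $\calO_{\calB^{(1)}}$, and therefore has internal grading concentrated in a finite interval. Using a $\Gm$-equivariant version of Proposition \ref{prop:dgmodulessupport}, every object of $\calD^b \Coh^{\Gm}_{\calB^{(1)}}(\wcalN^{(1)})$ is then represented by a bounded complex of such modules with uniformly bounded internal grading --- say, cohomological degrees in $[c,d]$ and internal degrees in $[a,b]$. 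The regrading $\xi^{-1}(N)^i_j = N^{i+j}_j$ sends this to an $\calS$-dg-module concentrated in cohomological degrees in $[c-b, d-a]$, hence bounded, in particular lying in $\DGCoh^{\gr}(\wcalN^{(1)}) = \calD^{+,\qc,\fg}_{\Gm}(\calB^{(1)},\calS)$ (up to passing through $\iota$).

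The remaining ingredient is the full faithfulness of $\iota$. For this I invoke the $\Gm$-equivariant analogue of Lemma \ref{lem:lemKinjective} (available by the discussion of \S \ref{ss:gradedcase}): any bounded-below $\Gm$-equivariant $\calS$-dg-module $\calF$ admits a bounded-below K-injective resolution $\calF \to \calI$. Since a bounded-below K-injective dg-module remains K-injective when viewed in the bounded-below homotopy category, the same resolution computes Hom in both $\calD^+_{\Gm}(\calS)$ and $\calD_{\Gm}(\calS)$, giving full faithfulness of the unrestricted inclusion; full faithfulness of $\iota$ then follows because the ``qc,fg'' subcategories are full.

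Combining these points, $\iota$ identifies $\DGCoh^{\gr}(\wcalN^{(1)})$ with a full subcategory of $\calD^{\qc,\fg}_{\Gm}(\calB^{(1)},\calS)$ containing the image of $\xi^{-1} \circ \phi^{-1}$, and I define $\zeta := \iota^{-1} \circ \xi^{-1} \circ \phi^{-1}$, where $\iota^{-1}$ stands for a quasi-inverse of $\iota$ onto its essential image. By construction $\eta \circ \zeta = \phi \circ \xi \circ \iota \circ \iota^{-1} \circ \xi^{-1} \circ \phi^{-1}$ coincides with the canonical inclusion, and $\zeta$ is fully faithful as a composition of a fully faithful functor with two equivalences onto essential images. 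The hard part will be the verification in the second step that the internal grading is bounded \emph{below} as well as above: this uses in an essential way the nilpotency of the augmentation ideal coming from the support condition on $\calB^{(1)}$, and without it the regrading would not land in the bounded-below category --- this is precisely the phenomenon signaled by Remark \ref{rk:rkequivalenceS}.
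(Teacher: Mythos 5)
Your second step (that $\xi^{-1}\circ\phi^{-1}$ lands in $\DGCoh^{\gr}(\wcalN^{(1)})$ because objects supported on $\calB^{(1)}$ are bounded in both gradings) is correct and is the same observation the paper uses to define $\zeta$ at the level of complexes. The genuine gap is in your third step, where you assert full faithfulness of the natural functor $\iota:\calD^{+,\qc,\fg}_{\Gm}(\calB^{(1)},\calS)\to\calD^{\qc,\fg}_{\Gm}(\calB^{(1)},\calS)$ via the existence of \emph{bounded-below} K-injective resolutions of bounded-below $\Gm$-$\calS$-dg-modules. That claim is not available. Lemma \ref{lem:lemKinjective} depends essentially on assumption $(\dag\dag)$ (non-positive cohomological degrees): the coinduction $\sheafHom_{\calO_X}(\calA,-)$ preserves lower bounds only because $\calA$ is bounded \emph{above}. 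For $\calS=S(\calF^{\vee})$, which is unbounded in positive cohomological degree, $\sheafHom_{\calO_X}(\calS,\calJ)$ has components in arbitrarily negative cohomological degrees. The regrading device of Remark \ref{rk:rkregrading} does not repair this: if $\calM$ is a $\Gm$-$\calS$-dg-module with $\calM^i_j=0$ for $i<c$ uniformly in $j$, then $\xi(\calM)^i_j=\calM^{i-j}_j$ only vanishes for $i<c+j$, which is not a uniform bound when $j$ is unbounded above; conversely, a K-injective $\calR$-resolution bounded below in cohomological degree is generally unbounded above in internal degree (already for the residue field of a polynomial ring), so its $\xi^{-1}$-regrading is unbounded below over $\calS$. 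Concretely, taking $\calM$ to be the zero-section sheaf $j_*\calO_{\calB^{(1)}}$ in bidegree $(0,0)$, the regraded minimal injective resolution lives in cohomological degrees $\{0,-1,-2,\dots\}$. This is precisely the difficulty recorded in Remark \ref{rk:rkequivalenceS}: it concerns not only essential surjectivity of $\iota$ (which you single out at the end of your sketch) but, more immediately for your argument, its full faithfulness, which the paper explicitly does not know how to prove.

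The paper's proof sidesteps $\iota$ entirely. Faithfulness of $\zeta$ is free since $\eta\circ\zeta$ is an inclusion. For fullness, given $f:\zeta(M)\to\zeta(N)$ represented by a roof $\zeta(M)\xleftarrow{\qis}P\to\zeta(N)$, the paper first truncates $P$ in the \emph{internal} grading (the subobject in internal degrees $<a$ and its quotient by degrees $\le -a$ are honest $\calS$-sub/quotient-dg-modules because $\calS$ lowers internal degree, and these truncations are quasi-isomorphisms because $H(P)\cong H(\zeta(M))$ has bounded internal grading and the differential is internal-degree-preserving). Once $P$ is bounded internally, one passes to $\calR$ via $\xi$ --- where cohomological truncation is available under $(\dag\dag)$ --- to also bound the cohomological grading, returns via $\xi^{-1}$, and then compares roofs inside $\calD^b\Coh^{\Gm}(\wcalN^{(1)})$, using the known full faithfulness of $\calD^b\Coh^{\Gm}_{\calB^{(1)}}(\wcalN^{(1)})\hookrightarrow\calD^b\Coh^{\Gm}(\wcalN^{(1)})$. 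To repair your proposal you would need to replace the appeal to full faithfulness of $\iota$ by this kind of direct roof manipulation.
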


\begin{proof} The objects of $\calD^b
\Coh^{\Gm}_{\calB^{(1)}}(\wcalN^{(1)})$ are bounded complexes of
$\Gm$-equi\-variant coherent sheaves on $\wcalN^{(1)}$, supported on $\calB^{(1)}$. In particular, they are
bounded for both gradings (cohomological and internal). Consider the functor $\zeta: \calC^b
\Coh^{\Gm}_{\calB^{(1)}}(\wcalN^{(1)}) \ \to \ \DGCoh^{\gr}(\wcalN^{(1)})$
sending $M$ to the dg-module defined by
$\zeta(M)^i_j:=M^{i+j}_j$. This functor induces $\zeta: \calD^b
\Coh^{\Gm}_{\calB^{(1)}}(\wcalN^{(1)}) \to
\DGCoh^{\gr}(\wcalN^{(1)})$, such that $\eta \circ
\zeta$ is the inclusion of the full subcategory
$\calD^b \Coh^{\Gm}_{\calB^{(1)}}(\wcalN^{(1)}) \subset \calD^b
\Coh^{\Gm}(\wcalN^{(1)})$. Hence $\zeta$ is
faithful. 

Now we show that $\zeta$ is full. Let $M$, $N$ be in $\calD^b
\Coh_{\calB^{(1)}}^{\Gm}(\wcalN^{(1)})$. A morphism $f : \zeta(M)
\to \zeta(N)$ in $\DGCoh^{\gr}(\wcalN^{(1)})$ can be
represented by a diagram \[ \zeta(M) \xleftarrow{\qis} P
\longrightarrow \zeta(N) \] with $P$ in
$\DGCoh^{\gr}(\wcalN^{(1)})$. Fix a positive integer
$a$ such that $M_j=N_j=0$ for $|j| \geq a$. We define
the sub-dg-module $P^{[1]}$ of $P$ by $(P^{[1]})_j=P_j$ if
$j < a$, $(P^{[1]})_j=0$ if $j \geq a$. The inclusion $P^{[1]} \hookrightarrow
P$ is a quasi-isomorphism. Next, define the sub-dg-module $P^{[2]}$ of
$P^{[1]}$ by $(P^{[2]})_j=(P^{[1]})_j$ if $j \leq - a$,
$(P^{[2]})_j=0$ if $j > -a$, and denote by $P^{[3]}$ the quotient
$P^{[1]}/P^{[2]}$. Then $P^{[1]} \to
P^{[3]}$ is again a quasi-isomorphism. Moreover, we have the diagram
\begin{equation} \label{eq:diagrammorphism} \xymatrix@R=12pt{ & P \ar[ld]_-{\qis} \ar[rd] & \\
\zeta(M) & P^{[1]} \ar[l]_-{\qis} \ar[r] \ar[d]^-{\qis}
\ar[u]_-{\qis} & \zeta(N). \\ & P^{[3]} \ar[ul]^-{\qis} \ar[ur]
& \\ } \end{equation} Hence we can assume that
$P$ is bounded for the internal grading. Going to $\calC_{\Gm}(\calB^{(1)}, \, \calR)$ (via $\xi$), using a truncation functor and then going back to $\calC_{\Gm}(\calB^{(1)}, \, \calS)$, one can even assume that $P$ is bounded for both gradings. 

Consider now the
morphism $\phi^{-1} \eta(f): \phi^{-1} M \to \phi^{-1} N$ in
$\calD^{\qc,\fg}_{\Gm}(\calB^{(1)}, \, \calR)$. As $\calD^b
\Coh^{\Gm}_{\calB^{(1)}}(\wcalN^{(1)})$ is a full subcategory of
$\calD^b \Coh^{\Gm}(\wcalN^{(1)})$, there exists a diagram similar to \eqref{eq:diagrammorphism} in
$\calC_{\Gm}(\calB^{(1)}, \, \calR)$, with $\zeta(M)$ and $\zeta(N)$ replaced by $\phi^{-1} M$ and $\phi^{-1} N$, $P$ by $\phi^{-1} \eta(P)$, $P^{[1]}$ by an object $Q^{[1]}$ of
$\calD^{\qc,\fg}_{\Gm}(\calB^{(1)}, \, \calR)$, and $P^{[3]}$ by $\phi^{-1} Q^{[2]}$, where $Q^{[2]}$ is a
bounded complex of $\Gm$-equivariant coherent sheaves on
$\wcalN^{(1)}$, supported on the zero section. Now,
as above, we can assume $Q^{[1]}$
is bounded for the internal grading, and bounded below for the
cohomological one. It follows that $f$ is
the image under $\zeta$ of the morphism defined by the diagram $M
\xleftarrow{\qis} Q^{[2]} \rightarrow N$. \end{proof}

\subsection{Translation functors}\label{ss:paragraphtranslationfunctors}

The translation functors for $\calU \frakg$-modules are defined in
\cite[6.1]{BMR}. In this subsection we prove, in particular cases
sufficient for our purposes, that these
translation functors (for $\calU \frakg$-modules) coincide (on
$G$-modules) with the usual translation functors defined e.g.~in
\cite[II.7]{JANAlg}. We denote by $T_{\lambda}^{\mu}$ the
translation functors defined in \cite{BMR}, and by
$\hat{T}_{\lambda}^{\mu}$ the ones defined in \cite{JANAlg}. We
also denote by $\Mod^{\fd}_{\lambda}(G)$ the category of finite
dimensional $G$-modules in the block of $\lambda$, for $\lambda
\in \bbX$. Consider the ``closure'' $\overline{C}_0:=\{ \nu \in \bbX \mid \forall \alpha \in R^+, \ 0 \leq
\langle \nu + \rho, \alpha^{\vee} \rangle \leq p\}.$

\begin{lem}\label{lem:translationfunctors}

Let $\lambda, \mu \in \overline{C}_0$. Consider the
following diagram: \[ \xymatrix@R=16pt{\Mod^{\fd}_{\lambda}(G)
\ar@<0.5ex>[r]^-{\hat{T}_{\lambda}^{\mu}} \ar[d]_{\For} &
\Mod^{\fd}_{\mu}(G)
\ar@<0.5ex>[l]^-{\hat{T}^{\lambda}_{\mu}} \ar[d]^{\For} \\
\Mod^{\fg}_{(\lambda,0)}(\calU \frakg)
\ar@<0.5ex>[r]^-{T_{\lambda}^{\mu}} &
\Mod^{\fg}_{(\mu,0)}(\calU \frakg).
\ar@<0.5ex>[l]^-{T^{\lambda}_{\mu}} } \] If $\mu$ is in the closure of the facet of $\lambda$, then $\For \circ
\hat{T}_{\lambda}^{\mu} \cong T_{\lambda}^{\mu} \circ \For$. If
$\lambda$ is regular, and $\mu$ is on exactly one wall of $\overline{C}_0$,
then $\For \circ \hat{T}^{\lambda}_{\mu} \cong
T^{\lambda}_{\mu} \circ \For$.

\end{lem}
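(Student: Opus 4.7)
The plan is to compare the two definitions of translation functors directly. By construction, both $\hat{T}_\lambda^\mu$ and $T_\lambda^\mu$ have the form: tensor with the simple $G$-module $L(\nu_1)$, where $\nu_1$ is the dominant weight in $W(\mu-\lambda)$, and then project onto a suitable direct summand. Since $L(\nu_1)$ is a $G$-module and its $\calU\frakg$-structure comes from differentiating the $G$-action, the tensor product $(-) \otimes L(\nu_1)$ trivially commutes with $\For : \Mod^{\fd}(G) \to \Mod^{\fg}(\calU\frakg)$. Hence the content of the lemma lies in checking that the two projections agree.

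The projection used to define $\hat{T}_\lambda^\mu$ is onto the $G$-block of $\mu$, whose composition factors are of the form $L(\xi)$ with $\xi \in W_{\aff} \bullet \mu$. The projection used to define $T_\lambda^\mu$ is onto the generalized Harish-Chandra character of $\mu$, whose composition factors are the $L(\xi)$ with $\xi \in W_{\aff}' \bullet \mu$ (via the identification between $W$-orbits in $\frakt^*$ and $W_{\aff}'$-orbits in $\bbX$). The key assertion to prove is that, \emph{restricted to modules of the form $V \otimes L(\nu_1)$ with $V \in \Mod^{\fd}_\lambda(G)$}, these two block decompositions coincide; equivalently, if $L(\xi')$ appears as a composition factor and $\xi' \in W_{\aff}' \bullet \mu$, then in fact $\xi' \in W_{\aff} \bullet \mu$.

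To prove this, take such a composition factor: $\xi' = \xi + \eta$ with $\xi \in W_{\aff} \bullet \lambda$ a highest weight of a composition factor of $V$, and $\eta$ a weight of $L(\nu_1)$. From $W_{\aff}\bullet$-linkage, $\xi - \lambda \in \bbY$; from the description of weights of $L(\nu_1)$, $\eta - \nu_1 \in \bbY$; and since $w\nu_1 - \nu_1 \in \bbY$ for every $w \in W$ and $\mu - \lambda \in W\nu_1$, we get $\xi' - \mu \in \bbY$. Writing $\xi' = v \bullet \mu + p\delta$ with $v \in W$, $\delta \in \bbX$ (from $\xi' \in W_{\aff}' \bullet \mu$), combined with $\xi' - \mu \in \bbY$ and $v \bullet \mu - \mu \in \bbY$, yields $p\delta \in \bbY$. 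Under the hypothesis $p > h$, which gives $(p\bbX) \cap \bbY = p\bbY$ (recalled in \S\ref{ss:reviewlocalization}), we deduce $\delta \in \bbY$, whence $\xi' \in W_{\aff} \bullet \mu$, as required.

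The first assertion follows: the projection onto the $G$-block of $\mu$ matches the projection onto the generalized Harish-Chandra character of $\mu$ on the module $V \otimes L(\nu_1)$. For the second assertion, where $\lambda$ is regular and $\mu$ lies on exactly one wall, the same composition-factor analysis applies after swapping the roles of $\lambda$ and $\mu$ (with $\nu_1$ now the dominant weight in $W(\lambda - \mu)$); the hypothesis that $\mu$ lies on exactly one wall is exactly what is needed to ensure that $\nu_1$ is well-defined and that the projection singles out a unique block. The main obstacle in both cases is the delicate verification that the finer $W_{\aff}$-linkage coincides with the coarser $W_{\aff}'$-linkage on the modules in question; all other ingredients (compatibility of tensor product with forgetting, and the block decompositions themselves) are essentially formal.
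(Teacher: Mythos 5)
Your proof of the first isomorphism is correct, but it takes a genuinely different route from the paper's. Both arguments start from the same observation: $\hat{T}_\lambda^\mu$ and $T_\lambda^\mu$ are each a tensoring followed by a projection, and the $W_{\aff}\bullet$-block (Jantzen) is \emph{a priori} a direct summand of the generalized Harish-Chandra block (BMR), since $W_{\aff}\bullet\mu \subseteq W_{\aff}'\bullet\mu$. From here the paper takes the ``check on generators'' route: it observes that the resulting natural transformation $\For\circ\hat{T}_\lambda^\mu \to T_\lambda^\mu\circ\For$ involves exact functors, reduces to the induced modules $\Ind_B^G(w\bullet\lambda)$ (which generate the block), and then invokes the explicit formulas for both translation functors on these modules from Jantzen [II.7.11--12] and \cite[2.2.3]{BMR2}. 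You instead show directly that the two projections pick out the same summand of $V\otimes L(\nu_1)$, by showing that no composition factor $L(\xi')$ with $\xi'\in W_{\aff}'\bullet\mu$ can have $\xi'\notin W_{\aff}\bullet\mu$: your lattice computation, writing $\xi' - \mu$ as a sum of three elements of $\bbY$ and invoking $(p\bbX)\cap\bbY = p\bbY$ (which under $p>h$ is recorded elsewhere in the paper for a different purpose), is sound and does not rely on the explicit formulas. This is arguably more structural and self-contained; the paper's argument is faster for a reader who already trusts the cited computations. One small caveat: in your reduction you describe $\xi$ as a ``highest weight of a composition factor of $V$,'' but what you actually use (and what is true) is simply that $\xi$ is a \emph{weight} of $V$, all of which are congruent to $\lambda$ modulo $\bbY$.

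Your remark on the second assertion is less convincing: the dominant representative $\nu_1$ of $W(\lambda-\mu)$ is well-defined with or without the hypothesis that $\mu$ lies on exactly one wall, so that cannot be what the hypothesis is for. The paper itself declines to prove the second isomorphism (``We only give the proof of the first isomorphism''), so you are not contradicting it, but the justification you give for the role of the wall hypothesis does not hold up; if you intend to flesh this out, the hypothesis should be traced to the specific normalization in the definition of $T^\lambda_\mu$ in \cite{BMR, BMR2} rather than to the definition of $\nu_1$.
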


\begin{proof} We only give the proof of the first isomorphism. Both translation functors are constructed by
tensoring with a certain module, and then taking
a direct summand. A priori the direct summand corresponding to
$\hat{T}_{\lambda}^{\mu}$ is smaller than the one corresponding to
$T_{\lambda}^{\mu}$. Hence there exists a morphism of functors
$\For \circ \hat{T}_{\lambda}^{\mu} \to T_{\lambda}^{\mu} \circ
\For$. As these functors
are exact, and as $\Mod^{\fd}_{\lambda}(G)$ is generated
by the modules $\Ind_B^G(w \bullet \lambda)$ for $w \in W_{\aff}$ and
$w \bullet \lambda$ dominant we only have to
prove the result for these modules. But the images under our functors
of these modules are explicitly known (see \cite[II.7.11 and
II.7.12]{JANAlg} and \cite[2.2.3]{BMR2}), and they indeed coincide. \end{proof}

{F}rom now on, for simplicity we omit the functors ``$\For$''. Using this lemma, the usual rules to compute translates
of simple or induced modules (\cite[II.7]{JANAlg}) generalize. If $\mu$ is in the closure
of the facet of $\lambda$ (in $\overline{C}_0$),
$T_{\lambda}^{\mu} \Ind_B^G(w \bullet \lambda)=\Ind_B^G(w \bullet
\mu)$ for $w \in W_{\aff}'$. If $w \bullet \lambda$ is
dominant restricted, \begin{equation}
  \label{eq:translationsimples} T_{\lambda}^{\mu} L(w \bullet \lambda)
= \left\{ \begin{array}{ll} L(w \bullet \mu) & \text{if } w \bullet \mu
    \text{ is in the upper closure} \\ & \text{of the facet of } w \bullet
    \lambda; \\ 0
    & \text{otherwise}. \end{array} \right. \end{equation}

To finish this subsection, let us remark that, as the tensor product of
two restricted $\calU \frakg$-modules is again restricted, for $\lambda,\mu$
in $\bbX$ the functor $T_{\lambda}^{\mu} :
\Mod^{\fg}_{(\lambda,0)}(\calU \frakg)
\to \Mod^{\fg}_{(\mu,0)}(\calU \frakg)$ induces a functor denoted
similarly: $$T_{\lambda}^{\mu} : \Mod^{\fg}_{\lambda}((\calU
\frakg)_0) \ \to \ \Mod^{\fg}_{\mu}((\calU \frakg)_0).$$

\subsection{Objects corresponding to simple and projective modules} \label{ss:objectsLwPw}

Let $\lambda \in C_0$. By a theorem of Curtis (see \cite{CUR}), a complete system
of simple $(\calU \frakg)_0$-modules is given by the
restriction to $(\calU \frakg)_0$ of the simple $G$-modules
$L(\nu)$ for $\nu \in \bbX$ restricted and dominant. The simple objects in the category $\Mod^{\fg}_{0}((\calU \frakg)^{\lambda})$ (or similarly in $\Mod^{\fg}_{\lambda}((\calU \frakg)_0)$), i.e.~the simple $(\calU \frakg)_0$-modules with Harish-Chandra central character $\lambda$ are the $L(w \bullet \lambda)$, for $w \in W_{\aff}'$ such that $w \bullet \lambda$ is restricted dominant, i.e.~for $w \in W^0$ (see \S \ref{ss:paragraphrestrictedweights}).

Recall the equivalence $\epsilon^{\calB}_{\lambda}$ of \eqref{eq:equivBMR2}. For $w \in W^0$ we define \begin{equation} \label{eq:defLw} \calL_w:=(\epsilon^{\calB}_{\lambda})^{-1} L(w \bullet \lambda) \quad \in \calD^b \Coh_{\calB^{(1)}}(\wcalN^{(1)}) \end{equation} This object does not depend on the choice of $\lambda \in C_0$. Indeed, let $\mu \in C_0$. By \cite[6.1.2.(a)]{BMR},
for any $\calF \in \calD^b \Coh_{\calB^{(1)}}(\wfrakg^{(1)})$ we have
\[ T_{\lambda}^{\mu} \gamma^{\calB}_{\lambda}(\calF) \cong R\Gamma \bigl( \calO_{\calB}(\mu - \lambda) \otimes_{\calO_{\calB}} (\calM^{\lambda}
\otimes_{\calO_{\wfrakg^{(1)}}} \calF) \bigr) \] (here $\calM^{\lambda} \otimes \calF$ is considered as a $\wcalD$-module on $\calB$). By our choice of
splitting bundles (see \cite[1.3.5]{BMR2}), $\calM^{\mu}=\calO_{\calB}(\mu - \lambda)
\otimes_{\calO_{\calB}} \calM^{\lambda}$, hence \[ T_{\lambda}^{\mu} \circ
\gamma^{\calB}_{\lambda}(\calF) \cong \gamma^{\calB}_{\mu}(\calF). \] Similarly, for $\calF \in \calD^b \Coh_{\calB^{(1)}}(\wcalN^{(1)})$ we have $T_{\lambda}^{\mu} \circ \epsilon^{\calB}_{\lambda}(\calF) \cong \epsilon^{\calB}_{\mu}(\calF).$ Hence if $\calL_w$ is defined using $\lambda$, we have $\epsilon^{\calB}_{\mu}(\calL_w) \cong T_{\lambda}^{\mu} \circ \epsilon^{\calB}_{\lambda}(\calL_w) \cong T_{\lambda}^{\mu} L(w \bullet \lambda) \cong L(w \bullet \mu)$, which proves the claim. Here the last isomorphism follows from \eqref{eq:translationsimples}.

Consider now $\Mod^{\fg}_{\lambda}((\calU \frakg)_0)$. The
algebra $(\calU \frakg)_0$ is finite dimensional. Hence, if
$\frakZ_{{\rm HC}}^{\lambda}$ is the image in $(\calU \frakg)_0$ of the maximal ideal of
$\frakZ_{{\rm HC}} \cong {\rm S}(\frakh)^{(W,\bullet)}$ defined by $\lambda$, the following sequence of ideals of $(\calU \frakg)_0$ stabilizes:
\[ \langle \frakZ_{{\rm HC}}^{\lambda} \rangle \ \supset \ \langle
\frakZ_{{\rm HC}}^{\lambda} \rangle^2 \ \supset \ \langle \frakZ_{{\rm HC}}^{\lambda} \rangle^3 \ \supset \ \ldots\] Thus, for $n \gg 0$, $\Mod^{\fg}_{\lambda}((\calU \frakg)_0)$ is
equivalent to the category of finitely generated modules over
$(\calU
\frakg)_0^{\hat{\lambda}} := (\calU \frakg)_0 / \langle \frakZ_{{\rm HC}}^{\lambda} \rangle^n$. As seen above, the simple $(\calU \frakg)_0^{\hat{\lambda}}$-modules are the
$L(w \bullet \lambda)$ for $w \in W^0$. We denote by $P(w \bullet \lambda)$ the projective cover of $L(w \bullet \lambda)$ as a $(\calU
\frakg)_0^{\hat{\lambda}}$-module. For $w \in W^0$ we define \begin{equation} \label{eq:defPw} \calP_w := (\widehat{\gamma}^{\calB}_{\lambda})^{-1} P(w \bullet \lambda) \quad \in \DGCoh((\wfrakg \, \rcap_{\frakg^* \times \calB} \, \calB)^{(1)}).
\end{equation} As above, this object does not depend on the choice of $\lambda \in C_0$.

Our key-result states that the objects $\calL_w$, $\calP_w$ correspond under the linear Koszul duality $\kappa_{\calB}$ of \eqref{eq:defkappa}. If $\calG \in \calD^b \Coh_{\calB^{(1)}}(\wcalN^{(1)}),$ we say that $\calF \in \calD^b \Coh^{\Gm}_{\calB^{(1)}}(\wcalN^{(1)})$ is a \emph{lift} of $\calG$ if $\For(\calF) \cong \calG$, for the forgetful functor $\For: \calD^b \Coh^{\Gm}_{\calB^{(1)}}(\wcalN^{(1)}) \to \calD^b \Coh_{\calB^{(1)}}(\wcalN^{(1)})$. We use the same terminology for $\DGCoh^{\gr}((\wfrakg \, \rcap_{\frakg^* \times \calB} \, \calB)^{(1)})$ and $\DGCoh((\wfrakg \, \rcap_{\frakg^* \times \calB} \, \calB)^{(1)})$. 

\begin{thm} \label{thm:mainthm}

Assume $p>h$ is such that Lusztig's conjecture is true\footnote{See \S \ref{ss:introlusztig} for comments.}. 

There is a unique choice of lifts $\calP^{\gr}_v \in \DGCoh^{\gr}((\wfrakg \, \rcap_{\frakg^* \times \calB} \, \calB)^{(1)})$ of $\calP_v$, resp. $\calL^{\gr}_v \in \calD^b \Coh^{\Gm}_{\calB^{(1)}}(\wcalN^{(1)})$ of $\calL_v$ ($v \in W^0$), such that for all $w \in W^0$, \begin{equation*} \kappa_{\calB}^{-1} \calP^{\gr}_{\tau_0 w} \ \cong \ \zeta(\calL^{\gr}_w)
\otimes_{\calO_{\calB^{(1)}}} \calO_{\calB^{(1)}}(-\rho) \qquad \text{in }\DGCoh^{\gr}(\wcalN^{(1)}). \end{equation*}

\end{thm}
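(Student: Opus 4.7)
The plan is to follow the strategy sketched in \S\ref{ss:introlusztig} of the introduction, proceeding by induction on $\ell(w)$ with the ``geometric reflection functors'' as the main inductive tool. Let me outline the steps.

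First I would settle the \emph{uniqueness} part. Both $\calL_w$ and $\calP_w$ (for $w \in W^0$) are indecomposable in their ungraded categories: on the side of $\calL_w$ this follows from the fact that $L(w\bullet\lambda)$ is simple, and for $\calP_w$ this follows from the indecomposability of $P(w\bullet\lambda)$ together with the fact that $\widehat{\gamma}^{\calB}_{\lambda}$ is an equivalence. A standard argument (as e.g.~in \cite{BGS}) shows that an indecomposable object in an ungraded category has at most one lift to the graded category up to shifts of the internal grading $\langle n\rangle$ and of the cohomological grading $[k]$. Hence any two choices of lifts of $\calP_v$ and $\calL_v$ differ by shifts $[k_v]\langle n_v\rangle$; the compatibility condition $\kappa_{\calB}^{-1}\calP^{\gr}_{\tau_0 w} \cong \zeta(\calL^{\gr}_w)\otimes\calO_{\calB^{(1)}}(-\rho)$ together with the behavior of $\kappa_{\calB}$ on shifts (it exchanges $[1]$ and $\langle 1\rangle$ in a controlled way) rigidly determines all relative shifts once one fixed normalization is imposed.

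Next, for \emph{existence}, I would induct on $\ell(w)$. For each simple affine reflection $\delta$, the projective side carries a reflection functor $\frakR_{\delta}$ (essentially $T^{\lambda}_{\mu}T^{\mu}_{\lambda}$ where $\mu$ lies on the $\delta$-wall of $\overline{C}_0$), which in graded form $\frakR_{\delta}^{\gr}$ acts on graded lifts of projectives; these functors send $P(w\bullet\lambda)$ to an object having $P(s_{\delta}w\bullet\lambda)$ as a summand when $\ell(s_{\delta}w)>\ell(w)$. Symmetrically, there are ``semi-simple'' functors $\frakS_{\delta}^{\gr}$ on the simple side sending (graded lifts of) $L(w\bullet\lambda)$ to (graded lifts of) $L(s_{\delta}w\bullet\lambda)$ plus known summands; the fact that $\frakS_{\delta}$ really sends semisimples to semisimples (Theorem~\ref{thm:dualreflection}) is where Lusztig's conjecture is used, which explains the restriction on $p$.

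The central technical step is then to identify $\kappa_{\calB}\circ\frakS_{\delta}^{\gr}\circ\kappa_{\calB}^{-1}$ with $\frakR_{\delta}^{\gr}$, up to the explicit twist by $\calO_{\calB^{(1)}}(-\rho)$ and gradings shifts appearing in the statement. Here the results of \S\ref{ss:koszulbasechange} and \S\ref{ss:koszulinclusion} (linear Koszul duality and base change, resp.~sub-bundles) applied to the minimal parabolic $P_{\delta}$, together with the localization theorems~\ref{thm:localizationfixedFr} and \ref{thm:localizationfixedFrparabolic}, give a geometric description of both $\frakR_{\delta}^{\gr}$ (as a push-pull along $\widetilde{\pi}_{\delta}:\wfrakg\to\wfrakg_{\delta}$) and $\frakS_{\delta}^{\gr}$ (as the analogous push-pull for $\wcalN$), and the intertwining formula becomes a formal consequence of Proposition~\ref{prop:koszulbasechangeprop} combined with Proposition~\ref{prop:koszulinclusionprop}. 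This step will be the main obstacle, as one must track very carefully the twists by line bundles and the bigrading shifts introduced by Proposition~\ref{prop:koszulinclusionprop}, and check that the summand of $\frakR_{\delta}^{\gr}\calP^{\gr}_{\tau_0 w}$ carrying $\calP^{\gr}_{\tau_0 s_{\delta}w}$ matches, under $\kappa_{\calB}^{-1}$, the corresponding summand of $\zeta(\frakS_{\delta}^{\gr}\calL^{\gr}_w)\otimes\calO_{\calB^{(1)}}(-\rho)$.

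Finally, for the base case $\ell(w)=0$, the element $w_0^{(0)}$ of $W^0$ of length $0$ and its image $\tau_0 w_0^{(0)}$ of length $\ell(\tau_0)$ (by Proposition~\ref{prop:tau_0}) correspond to extremal cases that can be computed explicitly: the simple module attached to $w_0^{(0)}\bullet\lambda$ has $L(w_0^{(0)}\bullet\lambda)\cong \Ind_B^G(w_0^{(0)}\bullet\lambda)$ (a Weyl module that is also simple because $w_0^{(0)}\bullet\lambda$ lies in the lowest restricted alcove in a controlled position), so $\calL_{w_0^{(0)}}$ can be explicitly described as a sheaf on $\wcalN^{(1)}$, and on the projective side $P(\tau_0 w_0^{(0)}\bullet\lambda)$ is the ``Steinberg-type'' projective corresponding to the longest element, with an equally explicit geometric description as an object of $\DGCoh((\wfrakg\,\rcap_{\frakg^*\times\calB}\,\calB)^{(1)})$. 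One then checks directly via formula~\eqref{eq:formulakappa} that $\kappa_{\calB}$ sends one to the other, fixing simultaneously the normalization of all lifts. The induction carried by the reflection functors then propagates this base case to every $w \in W^0$.
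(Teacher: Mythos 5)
Your proposal follows essentially the same strategy as the paper: uniqueness from indecomposability plus the rigidity of grading shifts under $\kappa_{\calB}$, then existence by induction on $\ell(w)$, with base case $\ell(w)=0$ computed explicitly and the inductive step carried by the reflection functors $\frakR_{\delta}^{\gr}$ on the projective side, the semi-simple functors $\frakS_{\delta}^{\gr}$ on the simple side, and the Koszul-dual intertwining (Theorem~\ref{thm:dualreflection}) proved via the base-change and sub-bundle compatibilities of \S\ref{ss:koszulbasechange}--\ref{ss:koszulinclusion}. One small slip: Lusztig's conjecture enters via the semi-simplicity of $Q_{\delta}(w)$ (Theorem~\ref{thm:conjectureAndersen}, used in Proposition~\ref{prop:propSalphasimple}), not via Theorem~\ref{thm:dualreflection}, which is a purely geometric statement about linear Koszul duality; also, after the reflection functor produces a direct sum, you should spell out the Krull--Schmidt/fully-faithfulness argument that rules out matching a shorter-length projective with a shift.
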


The theorem will be proved in section \ref{sec:sectionmainthm}. The unicity statement is not difficult to check (see \S \ref{ss:paragraphmainthm}). The existence is much more complex. To prove it we will need several tools, which we introduce in sections \ref{sec:sectionbraidgpaction}, \ref{sec:sectionreflectionfunctors} and \ref{sec:sectionsimplemodules}.

As explained above, this statement does not depend on the choice of $\lambda \in C_0$. From now on, for simplicity we mainly restrict to the case $\lambda=0$.

\begin{remark} One could also study the linear Koszul duality where the roles of $\wcalN$ and $\wfrakg$ are exchanged. On the geometric side, results could be deduced from Theorem \ref{thm:mainthm} using \S \ref{ss:koszulinclusion}. However, the representation-theoretic interpretation of $\wcalN \, \rcap_{\frakg^* \times \calB} \, \calB$ is not very interesting: it is related to the ``derived specialization'' $(\calU \frakg)^{\lambda} \lotimes_{\frakZ_{\mathrm{HC}}} \bk_0$ (see Remark \ref{rk:endsection3}$\rmii$). In particular, it is not clear what ``projective'' means in this context.

\end{remark}

\section{Braid group actions and translation functors} \label{sec:sectionbraidgpaction}

In this section we introduce important tools for our study: the braid group actions and the geometric counterparts of the translation functors.

\subsection{Braid group actions}\label{ss:paragraphbraidgpaction}

In this subsection we recall the main result of \cite{RAct}. Recall the notation of \S \ref{ss:intro3}. Let $S:=\{s_{\alpha},
\ \alpha \in \Phi\}$ be the
Coxeter generators of $W$. Let also $S_{\aff} \subset W_{\aff}$ be
the Coxeter generators, i.e. $S_{\aff}$ contains $S$
together with one additional (affine) reflection for each component of $R$. Let $\Phi_{\aff}$ be the set which contains $\Phi$ (the \emph{finite} simple roots) and
additional symbols for each element of $S_{\aff} - S$, called
\emph{affine simple roots}. If $\alpha_0
\in \Phi_{\aff} - \Phi$, we denote by $s_{\alpha_0} \in S_{\aff} - S$ the corresponding
element.

We define the extended affine braid group as follows\footnote{See the appendix of \cite{RAct} for the equivalence with the usual definition.}. For $\alpha, \beta \in \Phi$, we denote by $n_{\alpha,\beta}$ the order of $s_{\alpha} s_{\beta}$
in $W$.

\begin{defin}\label{def:defBaff'}

Let $B_{\aff}'$ be the group defined by the presentation with
generators $\{T_{\alpha}, \
\alpha \in \Phi\} \cup \{\theta_x, \ x \in \bbX\}$ and relations:

\begin{enumerate}
    \item $T_{\alpha} T_{\beta} \cdots = T_{\beta}
      T_{\alpha} \cdots \quad$($n_{\alpha,\beta}$ elements on each side);
    \item $\theta_{x} \theta_{y} = \theta_{x+y}$;
    \item $T_{\alpha} \theta_{x} = \theta_{x}
    T_{\alpha} \quad$ if $\langle x, \alpha^{\vee} \rangle = 0$,
    i.e.~if $s_{\alpha}(x)=x$;
    \item $\theta_{x} = T_{\alpha} \theta_{x-\alpha} T_{\alpha} \quad$ if
    $\langle x, \alpha^{\vee} \rangle =1$, i.e.~if $s_{\alpha}(x)=x - \alpha$.
\end{enumerate}

\end{defin}

We denote by $C: W_{\aff}' \to B_{\aff}'$ the canonical lift (see
\cite[\S 1.1]{RAct}).

Let $X,Y$ be varieties. Let $p_X : X \times Y \to X$, $p_Y
: X \times Y \to Y$ be the projections. We
define the full subcategory $\calD^b_{\pro}\Coh(X \times Y)$ of
$\calD^b \Coh(X \times Y)$ as follows: an object of $\calD^b
\Coh(X \times Y)$ belongs to $\calD^b_{\pro} \Coh(X \times Y)$ if its
cohomology sheaves are
supported on a closed subscheme $Z \subset X \times Y$ such that $(p_X)_{|Z}$ and $(p_Y)_{|Z}$ are proper. Any
$\calF \in \calD^{b}_{\pro} \Coh(X \times Y)$ gives rise to a
convolution functor \[ F^{\calF}_{X \to Y} : \left\{
\begin{array}{ccc}
  \calD^{b}\Coh(X) & \to & \calD^{b}\Coh(Y) \\
  \calM & \mapsto & R(p_{Y})_*(\calF \; \lotimes_{X \times Y} \;
  p_{X}^*\calM) \\ \end{array} \right. .\] One defines
similarly convolution functors for equivariant
coherent sheaves.

For $\alpha \in \Phi$ we define the following subvariety of $\wfrakg
\times \wfrakg$: \[ S_{\alpha} = \left\{ (X,g B, h B) \in \frakg^*
  \times \calB \times_{\calP_{\alpha}} \calB \ \left|
\begin{array}{l}
  X_{|g \cdot \frakn + h \cdot \frakn}=0  \\
  \text{and} \ X(g \cdot h_{\alpha})=0 \ \text{if} \ g B = h B \\
\end{array} \right\} , \right. \] a vector bundle
over $\calB \times_{\calP_{\alpha}} \calB$ of rank $\dim(\frakg/\frakn)
- 1$. We also define $S_{\alpha}':=S_{\alpha} \cap (\wcalN \times
\wcalN)$, a closed subvariety of $\wcalN \times \wcalN$ with two irreducible components (see
\cite[\S 4]{RAct}). If
$p : X \to \calB$ (resp. $p : X \to \calB^2$) is a variety
over $\calB$ (resp. $\calB^2$) and $\lambda,\mu \in \bbX$, we
denote by $\calO_X(\lambda)$ (resp. $\calO_X(\lambda,\mu)$) the
line bundle $p^* \calO_{\calB}(\lambda)$ (resp. $p^*
(\calO_{\calB}(\lambda) \boxtimes \calO_{\calB}(\mu))$). If $\calF \in
\calD^b \Coh(X)$, we denote by $\calF(\lambda)$ (resp.
$\calF(\lambda,\mu)$) the tensor product of $\calF$ and
$\calO_X(\lambda)$ (resp. $\calO_X(\lambda,\mu)$).

By an action of a group on a category we mean a \emph{weak} action,
i.e.~a morphism from the group to the isomorphism classes of
auto-equivalences of the category. The following theorem was announced in \cite[2.1]{BEZICM}. It has been proved in \cite[1.4.1]{RAct} in the case $G$ has no factor of type $\mbfG_2$, and in \cite[\S II.8]{RPhD} (as a joint work with R. Bezrukavnikov) in the general case.

\begin{thm}\label{thm:actionBaff'}

There exists an
action of $B_{\aff}'$ on $\calD^{b} \Coh(\wfrakg^{(1)})$ $($respectively
$\calD^{b} \Coh(\wcalN^{(1)}))$ for which:

$\rmi$ the action of $\theta_x$ is given by the convolution with
kernel $\Delta_*(\calO_{\wfrakg^{(1)}}(x))$ $($resp.
$\Delta_*(\calO_{\wcalN^{(1)}}(x)))$  for $x \in \bbX$, where $\Delta$ is
the diagonal embedding;

$\rmii$ the action of $T_{\alpha}$ is given by the convolution
with kernel $\calO_{S_{\alpha}^{(1)}}$ $($resp.
$\calO_{S_{\alpha}'{}^{(1)}})$ for $\alpha \in \Phi$. The action of
$(T_{\alpha})^{-1}$ is the convolution with kernel
$\calO_{S_{\alpha}^{(1)}}(-\rho,\rho-\alpha)$
$($resp. $\calO_{S_{\alpha}'{}^{(1)}}(-\rho,\rho-\alpha))$.

The actions on $\calD^{b} \Coh(\wcalN^{(1)})$ and $\calD^{b}
\Coh(\wfrakg^{(1)})$ correspond under the functor $i_* :
\calD^{b} \Coh(\wcalN^{(1)}) \to \calD^{b} \Coh(\wfrakg^{(1)})$ where $i: \wcalN^{(1)} \hookrightarrow \wfrakg^{(1)}$ is
the embedding.

\end{thm}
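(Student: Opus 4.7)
The plan is to verify directly that the prescribed convolution kernels define auto-equivalences of $\calD^b\Coh(\wfrakg^{(1)})$ satisfying the four relations of Definition \ref{def:defBaff'}, and likewise for $\wcalN^{(1)}$. The basic tool is the standard composition formula for integral transforms: for kernels $\calF \in \calD^b_{\pro}\Coh(X \times Y)$ and $\calG \in \calD^b_{\pro}\Coh(Y \times Z)$ one has a functorial isomorphism $F^{\calG}_{Y \to Z} \circ F^{\calF}_{X \to Y} \cong F^{\calG \star \calF}_{X \to Z}$, where $\calG \star \calF := R(p_{13})_*(p_{12}^*\calF \lotimes p_{23}^*\calG)$ on $X \times Z$. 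Thus every relation of $B_{\aff}'$ translates into an isomorphism between convolution products of explicit kernels on $\wfrakg^{(1)} \times \wfrakg^{(1)}$, a purely geometric question about the incidence varieties $S_\alpha^{(1)}$.

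Among the four relations, (2) is immediate from the projection formula together with $\calO(x) \otimes \calO(y) \cong \calO(x+y)$, and (3) is nearly as easy: if $\langle x, \alpha^{\vee}\rangle = 0$ then $\calO_{\calB}(x)$ descends along $\calB \to G/P_\alpha$, so the pullbacks of $\calO(x)$ from the two factors of $\calB \times_{\calP_\alpha} \calB$ to $S_\alpha$ coincide, and the claim follows. The invertibility of $F^{\calO_{S_\alpha^{(1)}}}$ is established by computing $\calO_{S_\alpha^{(1)}} \star \calO_{S_\alpha^{(1)}}(-\rho, \rho - \alpha)$ on $\calB^{(1)} \times_{\calP_\alpha^{(1)}} \calB^{(1)} \times_{\calP_\alpha^{(1)}} \calB^{(1)}$ and identifying the result with $\Delta_*\calO_{\wfrakg^{(1)}}$; this is essentially an $\SL(2)$ calculation on each fiber of the $\mathbb{P}^1$-bundle $\calB \to G/P_\alpha$. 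Relation (4) is then a similar, slightly more elaborate, computation of the triple convolution $\calO_{S_\alpha^{(1)}} \star \Delta_*\calO_{\wfrakg^{(1)}}(x-\alpha) \star \calO_{S_\alpha^{(1)}}$ on the same iterated fiber product, the twist by $\calO(x-\alpha)$ on the middle factor being precisely what converts the trivial convolution into $\Delta_*\calO_{\wfrakg^{(1)}}(x)$.

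The substantive step is verifying the finite braid relations (1). Using the projection $\widetilde{\pi}_\alpha$ from \eqref{eq:defpiP}, the convolution with $\calO_{S_\alpha^{(1)}}$ factors through $R(\widetilde{\pi}_\alpha)_*$ and $L\widetilde{\pi}_\alpha^*$, so for two simple roots $\alpha, \beta \in \Phi$ the verification of $T_\alpha T_\beta \cdots = T_\beta T_\alpha \cdots$ (with $n_{\alpha,\beta}$ factors on each side) localizes on the rank-two partial flag variety attached to the subgroup of $G$ generated by $T$, $U_{\pm\alpha}$, $U_{\pm\beta}$. This reduces the problem to the four rank-two root systems $A_1 \times A_1$, $A_2$, $B_2$, and $G_2$. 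The $A_1 \times A_1$ case is trivial since the relevant supports are disjoint; the $A_2$ and $B_2$ cases can be handled by an explicit stratification of the appropriate iterated fiber product and a case-by-case identification of the convolution kernels that appear. The main obstacle, and the part requiring genuinely new input, is the six-fold braid relation in type $G_2$: here the relevant fiber product has many strata of varying dimensions, and one must track the line-bundle twists on each stratum with great care. This is precisely the case treated separately (jointly with R.~Bezrukavnikov) in \cite{RPhD}, complementing the generic argument of \cite{RAct}.

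Finally, the compatibility with $i_* : \calD^b\Coh(\wcalN^{(1)}) \to \calD^b\Coh(\wfrakg^{(1)})$ follows by base change from the identity $S_\alpha' = S_\alpha \cap (\wcalN \times \wcalN)$: one checks that $(i \times i)_* \calO_{S_\alpha'{}^{(1)}} \cong \calO_{S_\alpha^{(1)}} \lotimes_{\calO_{\wfrakg^{(1)} \times \wfrakg^{(1)}}} \calO_{\wcalN^{(1)} \times \wcalN^{(1)}}$ using the known transversality of the intersection, and the projection formula then yields a natural isomorphism $i_* \circ F^{\calO_{S_\alpha'{}^{(1)}}} \cong F^{\calO_{S_\alpha^{(1)}}} \circ i_*$, together with the analogous statement for the line-bundle kernels. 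The same four relations on $\wcalN^{(1)}$ then follow from those on $\wfrakg^{(1)}$, completing the construction.
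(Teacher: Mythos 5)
The paper does not prove this theorem: it is stated and then attributed to \cite{RAct} (in the absence of $\mbfG_2$ factors) and to \cite{RPhD} (for the $\mbfG_2$ case). So there is no in-paper proof to compare against, and your proposal should be read as a sketch of the argument in those references. As such it is broadly consistent with what is actually done there: verify the defining relations of $B_{\aff}'$ directly on the kernels using the composition formula for integral transforms, dispose of relations (2)--(4) and the invertibility claim by the projection formula and line-bundle identities, and reduce the finite braid relations to the rank-two minimal parabolics, where the $\mbfG_2$ braid relation is the genuinely new case contributed by \cite{RPhD} --- you identify this correctly.

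Two small caveats. First, the justification you give for the $A_1 \times A_1$ braid relation, ``the relevant supports are disjoint,'' is not the right explanation: the kernels $\calO_{S_{\alpha}^{(1)}}$ and $\calO_{S_{\beta}^{(1)}}$ are supported on divisor-type incidence subvarieties of $\wfrakg^{(1)} \times \wfrakg^{(1)}$, and their convolution is far from zero. What makes the $A_1 \times A_1$ case easy is rather that the rank-two Levi factors as a product of two $\SL(2)$'s, so the two $\mathbb{P}^1$-fibrations $\widetilde{\pi}_{\alpha}$ and $\widetilde{\pi}_{\beta}$ are ``independent'' and the convolutions commute by flat base change. Second, for the $i_*$-compatibility your formula $(i \times i)_* \calO_{S_{\alpha}'{}^{(1)}} \cong \calO_{S_{\alpha}^{(1)}} \lotimes \calO_{\wcalN^{(1)} \times \wcalN^{(1)}}$ is the right kind of statement, but the argument should be run through the intermediate step: using flatness of the first projection and base change, one reduces to identifying $(i \times \Id)^* \calO_{S_{\alpha}^{(1)}}$ with $\calO_{S_{\alpha}'{}^{(1)}}$, which requires both the set-theoretic identity $S_{\alpha} \cap (\wcalN \times \wfrakg) = S_{\alpha}'$ (a small check with the explicit equations) and Tor-independence of that intersection --- you allude to the latter as ``known transversality,'' which is fine, but the set-theoretic identity deserves to be stated.
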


Consider $\Theta:=W_{\aff}'
\bullet 0$. In \cite{BMR2} the authors construct an action of
an incarnation ${}^{\Theta}B_{\aff}'$ of $B_{\aff}'$ (see
\cite[2.1.2]{BMR2}) on $\calD^b
\Mod^{\fg}_{0}(\calU \frakg)$, which restricts to an action on
$\calD^b \Mod^{\fg}_{(0,0)}(\calU \frakg)$. There exist isomorphisms
$B_{\aff}' \cong {}^{\Theta} B_{\aff}'$, associated to any
element in $\Theta$. We normalize this isomorphism by choosing $0 \in \Theta$. This way we obtain an action of $B_{\aff}'$ on
$\calD^b \Mod^{\fg}_{(0,0)}(\calU \frakg)$; for $b \in B_{\aff}'$, we denote by
\[ \mathbf{I}_b : \calD^b \Mod^{\fg}_{(0,0)}(\calU \frakg) \to \calD^b
\Mod^{\fg}_{(0,0)}(\calU \frakg) \] the corresponding action. On the other hand, let us denote by \[ \mathbf{J}_b :
  \calD^{b} \Coh(\wfrakg^{(1)}) \to \calD^{b} \Coh(\wfrakg^{(1)}), \ 
\text{resp.} \ \mathbf{K}_b : \calD^{b} \Coh(\wcalN^{(1)}) \to \calD^{b} \Coh(\wcalN^{(1)}), \] the actions of $b$ given by Theorem \ref{thm:actionBaff'}. Then for $b \in B_{\aff}'$
the following diagram is commutative (see \cite[5.4.1]{RAct}): \begin{equation} \label{eq:diagramactionUg} \xymatrix@R=15pt{\calD^b
  \Coh_{\calB^{(1)}}(\wfrakg^{(1)}) \ar[rr]^-{\mathbf{J}_b}
  \ar[d]_{\gamma^{\calB}_0}^{\wr} & & \calD^b
  \Coh_{\calB^{(1)}}(\wfrakg^{(1)}) \ar[d]^{\gamma^{\calB}_0}_{\wr} \\
  \calD^b \Mod^{\fg}_{(0,0)}(\calU \frakg) \ar[rr]^-{\mathbf{I}_b} & &
  \calD^b \Mod^{\fg}_{(0,0)}(\calU \frakg). } \end{equation}

\subsection{Graded versions of the actions}
\label{ss:paragraphgradedversionsaction}

Let us define actions of $\Gm \cong \bk^{\times}$ on $\wfrakg^{(1)}$ and $\wcalN^{(1)}$, by setting for $t \in \bk^{\times}$ and $(X,gB)$ in $\wfrakg^{(1)}$, resp. $\wcalN^{(1)}$: \begin{equation} \label{eq:defactionGm} t \cdot (X,gB) = (t^{-2} \cdot X,gB),
\quad \text{resp.} \quad t \cdot (X,gB) = (t^2 \cdot X, gB). \end{equation} Note that the action on $\wcalN^{(1)}$ is \emph{not} the restriction of the action on $\wfrakg^{(1)}$, but the \emph{dual} action\footnote{Recall also that the action of $\bk$
on $\frakg^*{}^{(1)}$ is twisted: if ${\rm Fr}: \frakg^* \to
\frakg^*{}^{(1)}$ denotes the Frobenius morphism, and if $t \in \bk$,
then we have $t \cdot {\rm Fr}(X)={\rm Fr}(t^{1/p}X)$.}, which is consistent with \S \ref{ss:intro3}. As in \S \ref{ss:koszulinclusion}, we denote by $\langle 1 \rangle$ the
shift in the grading given by the tensor product with the
$\Gm$-module given by $\Id_{\Gm}$. An easy extension of Theorem \ref{thm:actionBaff'} yields:

\begin{prop}\label{prop:actionBaff'Gm}

There exists an action of
$B_{\aff}'$ on $\calD^{b} \Coh^{\Gm}(\wfrakg^{(1)})$ $($resp.
$\calD^{b} \Coh^{\Gm}(\wcalN^{(1)}))$ for which:

$\rmi$ For $x \in \bbX$, the action of $\theta_x$ is given by the convolution with kernel
$\Delta_* \calO_{\wfrakg^{(1)}}(x)$ $($resp. $\Delta_*
\calO_{\wcalN^{(1)}}(x))$, where $\Delta$ is the diagonal
embedding;

$\rmii$ For $\alpha \in
\Phi$, the action of $T_{\alpha}$ is given by the convolution
with kernel $\calO_{S_{\alpha}^{(1)}}\langle -1\rangle$ $($resp.
$\calO_{S_{\alpha}'{}^{(1)}}\langle 1\rangle)$. Moreover, the action of $(T_{\alpha})^{-1}$ is the convolution
with kernel $\calO_{S_{\alpha}^{(1)}}(-\rho,\rho-\alpha)\langle -1\rangle$
$($resp. $\calO_{S_{\alpha}'{}^{(1)}}(-\rho,\rho-\alpha)\langle
1\rangle)$.

\end{prop}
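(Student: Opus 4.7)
The plan is to build the action by explicitly specifying $\Gm$-equivariant structures on the kernels already used in Theorem~\ref{thm:actionBaff'}, and then check that the braid-group relations of Definition~\ref{def:defBaff'} continue to hold in $\calD^b \Coh^{\Gm}$ after the specified internal shifts $\langle \pm 1 \rangle$ are inserted. All the ingredients — $\Delta_*\calO_{\wfrakg^{(1)}}(x)$, $\Delta_*\calO_{\wcalN^{(1)}}(x)$, the closed subvarieties $S_\alpha^{(1)}$ and $S_\alpha'{}^{(1)}$, and the line bundles on them — are equivariant for the $\Gm$-actions on $\wfrakg^{(1)}$ and $\wcalN^{(1)}$ defined by \eqref{eq:defactionGm}, and so promote the non-equivariant convolution functors to functors between $\Gm$-equivariant derived categories. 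The shift conventions are forced by the dual nature of the two $\Gm$-actions: on $\wfrakg^{(1)}$ the fibre coordinates sit in weight $-2$ while on $\wcalN^{(1)}$ they sit in weight $+2$, whence the signs $\langle -1\rangle$ versus $\langle +1 \rangle$ that appear in front of $T_\alpha$.

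First I would check that the functors defined in $\rmi$ and $\rmii$ are equivalences. By the non-equivariant Theorem~\ref{thm:actionBaff'}, the corresponding convolution functors are auto-equivalences of $\calD^b\Coh(\wfrakg^{(1)})$ and $\calD^b\Coh(\wcalN^{(1)})$; the claimed inverse kernels for $T_\alpha$ are also equivariant (with the opposite shift) and provide $\Gm$-equivariant inverses because the computation that $\calO_{S_\alpha^{(1)}} \star \calO_{S_\alpha^{(1)}}(-\rho,\rho-\alpha)$ is (a resolution of) $\Delta_* \calO_{\wfrakg^{(1)}}$ can be carried out $\Gm$-equivariantly on the explicit $\mathbb{P}^1$-bundle $\calB \times_{\calP_\alpha} \calB$, where a direct computation shows the grading shifts $\langle -1 \rangle \cdot \langle -1 \rangle = \langle -2 \rangle$ are absorbed by the shift coming from the explicit Koszul-type resolution on that bundle. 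The same holds on $\wcalN^{(1)}$ with opposite signs.

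Next I would verify the relations. Relations (2) and (3) of Definition~\ref{def:defBaff'} involve only the $\theta_x$ — which carry no internal shift — and hence reduce immediately to the non-equivariant statement together with the equivariance of $\calO_{\wfrakg^{(1)}}(x)$ and $\calO_{\wcalN^{(1)}}(x)$. Relation (1), the braid relation $T_\alpha T_\beta \cdots = T_\beta T_\alpha \cdots$ with $n_{\alpha,\beta}$ factors on each side, accumulates the same total internal shift $\langle -n_{\alpha,\beta}\rangle$ (resp. $\langle n_{\alpha,\beta}\rangle$) on both sides, so again the $\Gm$-equivariant identity follows from Theorem~\ref{thm:actionBaff'} once one exhibits the isomorphism between the two convolved kernels as $\Gm$-equivariant (and this equivariance is automatic because the isomorphism constructed in \cite{RAct} and \cite[\S II.8]{RPhD} is between kernels built from $G$-equivariant subvarieties and line bundles in a $\Gm$-equivariant way). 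Relation (4), $\theta_x = T_\alpha \theta_{x-\alpha} T_\alpha$ when $\langle x, \alpha^\vee\rangle = 1$, likewise has the same shift $\langle -2\rangle$ (resp. $\langle 2\rangle$) on its right-hand side, which must be matched by the known non-equivariant isomorphism; again the point is that the isomorphism of kernels constructed in \cite{RAct} is naturally $\Gm$-equivariant.

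The only real obstacle is therefore the bookkeeping of internal degrees in the inverse relation and in the relation (4) — concretely, checking that the explicit Koszul resolutions of $\Delta_*\calO_{\wfrakg^{(1)}}(x)$ and $\Delta_*\calO_{\wcalN^{(1)}}(x)$ used in \cite{RAct} to prove relation (4) carry the $\Gm$-equivariant structure that is compatible with the shifts $\langle -1\rangle$ and $\langle 1\rangle$ appended to $T_\alpha$. This is a direct computation on the $\mathbb{P}^1$-bundle $\calB\times_{\calP_\alpha}\calB$, using that the tautological line subbundle sits in $\Gm$-weight $\pm 2$, and that the resulting shift by $\pm 1$ in the internal grading exactly cancels between the two factors $T_\alpha$. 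Granted this verification, the $B_{\aff}'$-action in Theorem~\ref{thm:actionBaff'} lifts uniquely to $\calD^b\Coh^{\Gm}$ with the stated kernels, and the compatibility of the two actions (on $\wfrakg^{(1)}$ and $\wcalN^{(1)}$) under $i_*$ follows as in the non-equivariant case, $i$ being $\Gm$-equivariant for the two chosen actions since both restrict to the trivial action on $\calB^{(1)}$.
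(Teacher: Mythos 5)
Your proposal follows essentially the same line as the paper: observe that the subvarieties $S_\alpha$, $S_\alpha'$ and the twisting line bundles are $\Gm$-stable, so the kernels of Theorem~\ref{thm:actionBaff'} lift to $\Gm$-equivariant sheaves, and then verify that the defining relations of $B_{\aff}'$ survive the internal shifts. The paper's actual proof is considerably more compressed — it simply asserts that all constructions of \cite{RAct, RPhD} respect the $\Gm$-structures, and isolates a single point of subtlety, namely the exact sequence from \cite[1.5.4]{RAct}, whose $\Gm$-equivariant form acquires a factor $\langle 2 \rangle$ on the first term; this is exactly the Koszul-resolution bookkeeping on the $\mathbb{P}^1$-bundle $\calB \times_{\calP_\alpha} \calB$ that you correctly identify as the crux of relation (4) and of the inverse-kernel computation. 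So your account is a sound expanded version of the paper's argument; the only thing it lacks is the precise pointer to where in \cite{RAct} the shift first appears.
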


\begin{proof} We only consider $\wfrakg^{(1)}$ (the proof for
$\wcalN^{(1)}$ is similar). It suffices to observe that
$S_{\alpha}$ is a $\Gm$-stable subvariety of $\wfrakg \times
\wfrakg$, and that all the constructions of \cite{RAct, RPhD} respect the $\Gm$-structures. The only subtlety is in \cite[1.5.4]{RAct}. In this proof, the $\Gm$-equivariant version
of the exact sequence $\calO_{V_{\alpha}^1} \hookrightarrow
\calO_{V_{\alpha}}(\rho - \alpha, -\rho, 0) \twoheadrightarrow
\calO_{V_{\alpha}^2}(\rho - \alpha, -\rho, 0)$ is
$\calO_{V_{\alpha}^1}\langle 2\rangle \hookrightarrow
\calO_{V_{\alpha}}(\rho -
\alpha, -\rho, 0) \twoheadrightarrow \calO_{V_{\alpha}^2}(\rho -
\alpha, -\rho, 0).$ The rest of the proof is similar. \end{proof}

Now we consider the dg-scheme $(\wfrakg \, \rcap_{\frakg^* \times \calB} \, \calB)^{(1)}$. Recall the notation for categories of dg-modules in section \ref{sec:sectiondgalg}. By definition (see equation \eqref{eq:DGCoh2}), 
\[ \DGCoh^{\gr}((\wfrakg \, \rcap_{\frakg^* \times \calB} \, \calB)^{(1)})
\ \cong \ \calD^{\qc,\fg}_{\Gm}(\calB^{(1)}, \, 
\Lambda_{\calO_{\calB^{(1)}}}(\calT_{\calB^{(1)}}^{\vee})). \] Using arguments similar to those for Proposition \ref{prop:DGCohkoszul2nongr}, one obtains:

\begin{lem} \label{lem:equDGCoh}

There exist equivalences of categories \begin{align*}
  \DGCoh^{\gr}((\wfrakg \, \rcap_{\frakg^* \times \calB} \, \calB)^{(1)}) \ &
  \cong \ \calD^{\qc,\fg}_{\Gm}(\calB^{(1)}, \, (\pi_*
  \calO_{\wfrakg^{(1)}}) \otimes_{\bk} \Lambda(\frakg^{(1)})), \\
  \DGCoh((\wfrakg \, \rcap_{\frakg^* \times \calB} \, \calB)^{(1)}) \ & \cong
  \ \calD^{\qc,\fg}(\calB^{(1)}, \, (\pi_* \calO_{\wfrakg^{(1)}})
  \otimes_{\bk} \Lambda(\frakg^{(1)})), \end{align*} where $(\pi_*
\calO_{\wfrakg^{(1)}}) \otimes_{\bk} \Lambda(\frakg^{(1)})$ is
considered as a dg-algebra equipped with a Koszul differential, with
$\pi_* \calO_{\wfrakg^{(1)}}$ in cohomological degree $0$ and
$\frakg^{(1)}$ in cohomological degree $-1$. In the first equivalence,
the internal grading on $\pi_* \calO_{\wfrakg^{(1)}}$ is induced by
the $\Gm$-action {\rm \eqref{eq:defactionGm}}, and $\frakg^{(1)}$ is in
bidegree $(-1,2)$.

\end{lem}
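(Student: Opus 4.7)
The plan is to build on Proposition \ref{prop:DGCohkoszul2nongr}, which already provides the intermediate description
\[ \DGCoh((\wfrakg \, \rcap_{\frakg^* \times \calB} \, \calB)^{(1)}) \cong \calD^{\qc,\fg}(\wfrakg^{(1)}, \, \calO_{\wfrakg^{(1)}} \otimes_{\bk} \Lambda(\frakg^{(1)})), \]
and to transfer this to $\calB^{(1)}$ via the projection $\pi : \wfrakg^{(1)} \to \calB^{(1)}$. Recall that Proposition \ref{prop:DGCohkoszul2nongr} is obtained by modelling the derived intersection through the Koszul resolution $\calO_{(\frakg^*\times\calB)^{(1)}} \otimes_{\bk} \Lambda(\frakg^{(1)}) \xrightarrow{\qis} j_* \calO_{\calB^{(1)}}$ (with $\frakg^{(1)}$ in cohomological degree $-1$ and equipped with the Koszul differential) and then invoking Remark \ref{rk:rkderivedintersection}.

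For the non-equivariant equivalence, I would argue as follows. The morphism $\pi$ is an affine (indeed vector-bundle) projection, since the fiber of $\wfrakg$ over $gB \in \calB$ is the subspace $\{X \in \frakg^* \mid X_{|g\cdot\frakn}=0\}$. Consequently $\pi_*$ is exact and, by the affine analogue of \cite[1.4.3]{EGA2} used in the proof of Lemma \ref{lem:lemdirectimage}, induces an equivalence between $\QCoh(\wfrakg^{(1)})$ (resp. $\Coh(\wfrakg^{(1)})$) and the category of $(\pi_*\calO_{\wfrakg^{(1)}})$-modules which are $\calO_{\calB^{(1)}}$-quasi-coherent (resp. also locally finitely generated). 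Writing $\calO_{\wfrakg^{(1)}} \otimes_{\bk} \Lambda(\frakg^{(1)})$ as $\pi^*\bigl((\pi_*\calO_{\wfrakg^{(1)}}) \otimes_{\bk} \Lambda(\frakg^{(1)})\bigr)$ (the exterior factor is pulled back trivially), this equivalence extends, by the arguments of Lemma \ref{lem:lemdirectimage} applied at the dg-level, to an equivalence of the corresponding dg-module categories. Combined with Proposition \ref{prop:DGCohkoszul2nongr}, this yields the second equivalence claimed in the lemma.

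For the $\Gm$-equivariant statement, the same strategy applies once the Koszul resolution is made $\Gm$-equivariant. The action \eqref{eq:defactionGm} on $\wfrakg^{(1)}$ extends to the ambient bundle $(\frakg^*\times\calB)^{(1)}$, and linear functions along the fiber $\frakg^*{}^{(1)}$ carry $\Gm$-weight $+2$; since the Koszul differential must be $\Gm$-equivariant and maps the exterior generators to these linear functions, the copy of $\frakg^{(1)}$ generating $\Lambda(\frakg^{(1)})$ is forced to lie in bidegree $(-1,+2)$, as stated. The graded analogue of Proposition \ref{prop:DGCohkoszul2nongr} then follows from the bigraded versions of Lemma \ref{lem:lemmaX} and Proposition \ref{prop:dgmodulessupport}, and the pushforward step goes through $R(\pi_{\Gm})_*$, using the compatibilities recorded in Corollaries \ref{cor:cordirectimageGm} and \ref{cor:cordiagramRf_*Gm} to reduce checks to the non-equivariant case just handled.

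The main technical point to verify is the compatibility of the finiteness condition with the pushforward: a $(\calO_{\wfrakg^{(1)}} \otimes_{\bk} \Lambda(\frakg^{(1)}))$-dg-module whose cohomology is quasi-coherent over $\calO_{\wfrakg^{(1)}}$ is locally finitely generated over the dg-algebra iff its cohomology is $\calO_{\wfrakg^{(1)}}$-coherent, since $\Lambda(\frakg^{(1)})$ is finite-dimensional over $\bk$; the same reduction holds on $\calB^{(1)}$, and because $\pi$ is affine of finite type, $\pi_*$ preserves coherence of $\calO$-modules. This ensures that the equivalences obtained preserve the subcategories denoted $\qc,\fg$ on both sides.
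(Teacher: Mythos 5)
Your approach matches the paper's (unstated but clearly indicated) one: start from the $\wfrakg^{(1)}$-description of Proposition \ref{prop:DGCohkoszul2nongr} and transport it along the affine morphism $\pi:\wfrakg^{(1)}\to\calB^{(1)}$, using the same pushforward equivalence that the paper later spells out in the proof of Lemma \ref{lem:FGgr}. Your computation of the internal degree of $\frakg^{(1)}$ from $\Gm$-equivariance of the Koszul differential is also correct. Two small points of calibration: the equivariant version of Proposition \ref{prop:DGCohkoszul2nongr} follows from the $\Gm$-analogues of Remark \ref{rk:rkderivedintersection} and of Proposition \ref{prop:dgmodulessupport} (together with Lemma \ref{lem:equivqcGm}); Lemma \ref{lem:lemmaX} is a different statement about formal neighborhoods and is not the right reference here.

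The last paragraph contains a genuine error in justification, although the conclusion is saved. The claim that ``because $\pi$ is affine of finite type, $\pi_*$ preserves coherence of $\calO$-modules'' is false: for instance $\pi_*\calO_{\wfrakg^{(1)}}$ is not $\calO_{\calB^{(1)}}$-coherent, and indeed $\pi_*$ of a coherent sheaf on $\wfrakg^{(1)}$ is typically only quasi-coherent on $\calB^{(1)}$. The correct (and simpler) argument is that the $\fg$ condition in the definition of $\calD^{\qc,\fg}$ is local finite generation over $H(\calA)$, not over the structure sheaf. Since $\pi_*$ is an equivalence of abelian categories between quasi-coherent $\calO_{\wfrakg^{(1)}}$-modules and quasi-coherent $\pi_*\calO_{\wfrakg^{(1)}}$-modules, and identifies $H(\calO_{\wfrakg^{(1)}}\otimes_\bk\Lambda(\frakg^{(1)}))$ with $H(\pi_*\calO_{\wfrakg^{(1)}}\otimes_\bk\Lambda(\frakg^{(1)}))$ (both quasi-isomorphic to $\Lambda_{\calO_{\calB^{(1)}}}(\calT^\vee_{\calB^{(1)}})$, finite over $\calO_{\calB^{(1)}}$), the finite-generation condition over $H(\calA)$ is automatically preserved under $\pi_*$; no claim about $\pi_*$ preserving $\calO$-coherence is needed, and you should drop it. With that repair, your argument is sound and is the intended one.
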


Recall that $p: (\wfrakg \, \rcap_{\frakg^* \times \calB} \, \calB)^{(1)} \to
\wfrakg^{(1)}$ is the natural projection.

\begin{prop}\label{prop:actionBaff'DGCoh}

There exist actions of $B_{\aff}'$ on $\DGCoh((\wfrakg \, \rcap_{\frakg^* \times \calB} \, \calB)^{(1)})$ and $\DGCoh^{\gr}((\wfrakg
\, \rcap_{\frakg^* \times \calB} \, \calB)^{(1)})$ such that the functors \[
\xymatrix@R=18pt{ \DGCoh^{\gr}((\wfrakg \, \rcap_{\frakg^* \times \calB} \,
  \calB)^{(1)}) \ar[rr]^-{\For} \ar[d]_-{R(p_{\Gm})_*} & &
  \DGCoh((\wfrakg \, \rcap_{\frakg^* \times \calB} \, \calB)^{(1)})
  \ar[d]^-{Rp_*} \\ \calD^b \Coh^{\Gm}(\wfrakg^{(1)}) \ar[rr]^-{\For}
  & & \calD^b \Coh(\wfrakg^{(1)}) } \] commute with the action of
$B_{\aff}'$.

\end{prop}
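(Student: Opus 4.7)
The plan is to lift the actions of Proposition \ref{prop:actionBaff'Gm} and Theorem \ref{thm:actionBaff'} to the dg-setting by convolution with explicit dg-kernels, and then verify the defining relations of $B_{\aff}'$ (Definition \ref{def:defBaff'}) and the compatibility with $R(p_{\Gm})_*$, $Rp_*$ by reduction to the non-dg situation via base change and the projection formula.

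First I would set up the kernels. For $\theta_x$ ($x \in \bbX$), the action is given by tensor product with $q^*\calO_{\calB^{(1)}}(x)$, where $q$ is the projection of the dg-scheme to $\calB^{(1)}$; this is evidently exact, preserves $\DGCoh$ and $\DGCoh^{\gr}$, and is compatible with $R(p_{\Gm})_*$, $Rp_*$. Since the $\theta_x$ form a commutative group and satisfy relation (2) of Definition \ref{def:defBaff'} trivially, only relations (1), (3), (4) involving $T_\alpha$ remain. For $T_\alpha$, I would use the natural morphism of dg-schemes \[\tilde p_\alpha : (\wfrakg \, \rcap_{\frakg^* \times \calB} \, \calB)^{(1)} \longrightarrow (\wfrakg_\alpha \, \rcap_{\frakg^* \times \calP_\alpha} \, \calP_\alpha)^{(1)}\] induced by $\pi_\alpha : \calB \to \calP_\alpha$, and define a functor $F_\alpha := L(\tilde p_\alpha)^* \circ R(\tilde p_\alpha)_*$ in both the graded and ungraded dg-settings. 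The action of $T_\alpha$ would then be given by the cone of a natural adjunction morphism, in analogy with the convolution description of Proposition \ref{prop:actionBaff'Gm} (where $\calO_{S_\alpha^{(1)}}\langle -1\rangle$ plays the role of a kernel for $F_\alpha[1]$ up to a shift). Equivalently, one may realize $T_\alpha$ by convolution with a dg-kernel on the fiber product of two copies of the dg-scheme, using the description of Lemma \ref{lem:equDGCoh}.

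Next I would verify the braid relations and the compatibility with $R(p_{\Gm})_*$. By Corollaries \ref{cor:directimagediagram} and \ref{cor:cordiagramRf_*Gm}, plus the base-change/projection-formula arguments in the spirit of Proposition \ref{prop:koszulbasechangeprop}, the functors $F_\alpha$ and the $\theta_x$-twists constructed above transport under $R(p_{\Gm})_*$ to the corresponding operators on $\calD^b \Coh^{\Gm}(\wfrakg^{(1)})$ described in Proposition \ref{prop:actionBaff'Gm}; similarly for the non-graded case. Since $R(p_{\Gm})_*$ (resp.~$Rp_*$) is faithful on a sufficient set of generators (e.g.~structure sheaves of subschemes containing $\calB^{(1)}$), the braid relations at the dg-level reduce to the already established relations in $\calD^b \Coh^{\Gm}(\wfrakg^{(1)})$ and $\calD^b \Coh(\wfrakg^{(1)})$. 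The commutativity of the square in the statement is then automatic from the construction.

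The main obstacle will be making the construction of the convolution (or equivalently of $F_\alpha$) rigorous at the dg-level: one must verify that the natural morphism of dg-schemes above indeed allows well-defined functors $R(\tilde p_\alpha)_*$ and $L(\tilde p_\alpha)^*$ preserving $\DGCoh$ and $\DGCoh^{\gr}$ (this requires adapting the formalism of \S \ref{ss:directinverseimage} and \S \ref{ss:koszulbasechange}, using in particular that $\pi_\alpha$ is proper and flat), and that the resulting $T_\alpha$ is actually an auto-equivalence. The rest — namely the braid relations — should follow by descent to the non-dg setting, modulo checking that $R(p_{\Gm})_*$ and $Rp_*$ are conservative enough to lift isomorphisms of functors.
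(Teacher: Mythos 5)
Your primary definition of the $T_\alpha$-action is the cone of the adjunction morphism $\Id \to F_\alpha := L(\tilde p_\alpha)^* \circ R(\tilde p_\alpha)_*$. This is where the proposal breaks: cones of natural transformations of triangulated functors are not themselves functors in general, since a cone is defined only up to non-canonical isomorphism and there is no canonical way to make the choices compatible with morphisms of objects. The paper sidesteps this entirely. Rather than taking a cone, it constructs the $T_\alpha$-action directly as a derived tensor-and-pushforward over $\calB^{(1)} \times \calB^{(1)}$: since $\pi : \wfrakg^{(1)} \to \calB^{(1)}$ is affine, everything is pushed down to $\calB^{(1)}$, where the kernel becomes the bimodule $(\pi\times\pi)_* \calO_{S_\alpha^{(1)}}$ over $q_1^*\pi_*\calO_{\wfrakg^{(1)}}$ and $q_2^*\pi_*\calO_{\wfrakg^{(1)}}$, and the functor $F_\alpha(\calG) = R(\tilde q_2)_*((\pi\times\pi)_*\calO_{S_\alpha^{(1)}} \lotimes_{q_1^*\pi_*\calO_{\wfrakg^{(1)}}} q_1^*\calG)$ is well-defined on the dg-level once the realization of Lemma \ref{lem:equDGCoh} is in place. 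You gesture at this (``Equivalently, one may realize $T_\alpha$ by convolution with a dg-kernel...'') but then defer it and explicitly flag it as the hard part, whereas it is in fact the only workable definition; you should not be treating it as an alternative route but as the main construction. The exact triangle relating $F_\alpha$ to the reflection functor then comes out as a consequence of the exact sequence of kernels \eqref{eq:kernelswfrakg}, not as the definition. (You also have the roles slightly inverted: $\calO_{S_\alpha^{(1)}}$ is the kernel of the $T_\alpha$-action itself, not of $F_\alpha$.)

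A second, smaller issue is your verification strategy for the braid relations. You propose to descend to the non-dg setting and then lift isomorphisms using conservativity of $Rp_*$. While $Rp_*$ (the forgetful functor along a morphism of dg-ringed spaces with the same underlying space) is indeed conservative, conservativity alone does not let you \emph{construct} the morphisms of functors that the braid relations assert to be isomorphisms; it only tells you that a given morphism is an isomorphism once you know its image under $Rp_*$ is one. The paper instead relies on the explicit exact sequences of kernels from \cite{RAct,RPhD} (such as \eqref{eq:kernelswfrakg}--\eqref{eq:kernelswcalN2}) which manifestly lift to bimodule exact sequences on $\calB^{(1)}\times\calB^{(1)}$, so that both the morphisms and the fact that they are isomorphisms are inherited, and the commutativity of the square with $R(p_{\Gm})_*$, $Rp_*$ is automatic from the construction. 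In short: your outline has the right landmarks, but the cone-based definition of $T_\alpha$ needs to be replaced by the direct convolution over $\calB^{(1)}$, and the descent argument needs to be replaced by a construction of the relevant natural transformations at the dg-level.
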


\begin{proof} We give the proof only for $\DGCoh((\wfrakg
\, \rcap_{\frakg^* \times \calB} \, \calB)^{(1)})$. The only difficulty is to define carefully the action of $T_{\alpha}$; for this we first derive a description of $\mathbf{J}_{T_{\alpha}}$ which only involves sheaves on $\calB^{(1)}$.

As above, let $\pi: \wfrakg^{(1)} \to \calB^{(1)}$ be the natural
morphism. Denote by $p_i: \wfrakg^{(1)} \times \wfrakg^{(1)} \to
\wfrakg^{(1)}$, $q_i: \calB^{(1)} \times \calB^{(1)} \to \calB^{(1)}$
the projections ($i=1,2$). Recall that $\pi$ is affine, hence
$\pi_*$ is an equivalence between
$\Coh(\wfrakg^{(1)})$ and $\Coh(\calB^{(1)}, \, \pi_*
\calO_{\wfrakg^{(1)}})$ (\cite[1.4.3]{EGA2}). If $\calF$ is in $\calD^b
\Coh(\wfrakg^{(1)})$, by \cite[1.5.7.1]{EGA2} we have \[ (\pi \times
\pi)_*(p_1^* \calF) \cong \bigl( (\pi \times \pi)_*
\calO_{\wfrakg^{(1)} \times \wfrakg^{(1)}} \bigr) \otimes_{q_1^* \pi_*
  \calO_{\wfrakg^{(1)}}} q_1^* \pi_* \calF.\] Using
\cite[1.4.8.1]{EGA2}, it follows that if $\alpha \in \Phi$, \[(\pi
\times \pi)_* (p_1^*
\calF \, \lotimes_{\calO_{\wfrakg^{(1)} \times \wfrakg^{(1)}}} \,
\calO_{S_{\alpha}^{(1)}}) \cong \bigl( (\pi \times \pi)_*
\calO_{S_{\alpha}^{(1)}} \bigr) \, \lotimes_{q_1^* \pi_*
  \calO_{\wfrakg^{(1)}}} \, q_1^* \pi_* \calF.\] Hence, finally,
\begin{equation} \label{eq:formulaconvolution} \pi_*
\mathbf{J}_{T_{\alpha}} (\calF)
\ \cong \ R(q_2)_* ((\pi \times \pi)_*
\calO_{S_{\alpha}^{(1)}} \lotimes_{q_1^* \pi_* \calO_{\wfrakg^{(1)}}}
q_1^* \pi_* \calF). \end{equation} These isomorphisms are functorial. In
this formula, $(\pi \times \pi)_* \calO_{S_{\alpha}^{(1)}}$ is
considered as a right $q_1^* \pi_* \calO_{\wfrakg^{(1)}}$-module, and
a left $q_2^* \pi_* \calO_{\wfrakg^{(1)}}$-module.

We define the action of $B_{\aff}'$ using the
equivalences of Lemma \ref{lem:equDGCoh}. It is enough to define the
action of the generators $\theta_x$ and $T_{\alpha}$, and to prove that
they satisfy the relations of Definition \ref{def:defBaff'}. First, the action of $\theta_x$ is defined as the
tensor product with $\calO_{\calB^{(1)}}(x)$. Let
$\alpha \in \Phi$. Consider the functor $$\left\{ \begin{array}{ccc}
    \calC(\calB^{(1)}, \, \pi_* \calO_{\wfrakg^{(1)}} \otimes_{\bk}
\Lambda(\frakg^{(1)})) & \to & \calC(\calB^{(1)} \times \calB^{(1)}, \, q_2^*
(\pi_* \calO_{\wfrakg^{(1)}} \otimes_{\bk} \Lambda(\frakg^{(1)}))) \\
\calG & \mapsto & \bigl( (\pi \times \pi)_*
\calO_{S_{\alpha}^{(1)}} \bigr) \otimes_{q_1^* \pi_*
  \calO_{\wfrakg^{(1)}}} q_1^* \calG \end{array} \right.$$ where $(\pi
\times \pi)_* \calO_{S_{\alpha}^{(1)}}$ is considered as a bimodule,
as above. This functor has a left derived functor, denoted by $\calG \mapsto
(\pi \times \pi)_* \calO_{S_{\alpha}^{(1)}}
\lotimes_{q_1^* \pi_* \calO_{\wfrakg^{(1)}}} q_1^* \calG.$ Consider the natural morphism induced by $q_2$:
\[ \widetilde{q}_2 : \calC(\calB^{(1)} \times \calB^{(1)}, \, q_2^*
(\pi_* \calO_{\wfrakg^{(1)}} \otimes_{\bk} \Lambda(\frakg^{(1)}))) \to
\calC(\calB^{(1)}, \, \pi_* \calO_{\wfrakg^{(1)}} \otimes_{\bk}
\Lambda(\frakg^{(1)})).\] Then we define the action of $T_{\alpha}$ as the functor
\[ F_{\alpha} : \calG \mapsto R(\widetilde{q}_2)_* \bigl( (\pi \times \pi)_*
\calO_{S_{\alpha}^{(1)}} \lotimes_{q_1^* \pi_* \calO_{\wfrakg^{(1)}}}
q_1^* \calG \bigr).\] Easy arguments show that this functor indeed
restricts to the subcategories of dg-modules with quasi-coherent,
locally finitely generated cohomology. Moreover, the following diagram
commutes: \[
\xymatrix@R=16pt{\DGCoh((\wfrakg \, \rcap_{\frakg^* \times \calB} \, \calB)^{(1)})
  \ar[rr]^-{F_{\alpha}} \ar[d]_-{Rp_*} & & \DGCoh((\wfrakg
  \, \rcap_{\frakg^* \times \calB} \, \calB)^{(1)}) \ar[d]^{Rp_*} \\
  \calD^b \Coh(\wfrakg^{(1)})
  \ar[rr]^-{\mathbf{J}_{T_{\alpha}}} & & \calD^b \Coh(\wfrakg^{(1)}) } \] (see \eqref{eq:formulaconvolution}, and use the fact that a K-flat $\pi_*
\calO_{\wfrakg^{(1)}} \otimes_{\bk} \Lambda(\frakg^{(1)})$-dg-module
is also K-flat over $\pi_* \calO_{\wfrakg^{(1)}}$). 

With these definitions, it follows
easily from the results of \cite{RAct, RPhD} that the actions of
the $T_{\alpha}$'s and the $\theta_x$'s
satisfy the defining relations of $B_{\aff}'$. \end{proof}

For $b \in B_{\aff}'$, we denote the action of $b$ of Propositions
\ref{prop:actionBaff'Gm}, \ref{prop:actionBaff'DGCoh} by \begin{align*} \mathbf{J}_b^{\Gm} : \calD^{b}
\Coh^{\Gm}(\wfrakg^{(1)}) \ & \to \ \calD^{b} \Coh^{\Gm}(\wfrakg^{(1)}), \\
\mathbf{K}_b^{\Gm} : \calD^{b} \Coh^{\Gm}(\wcalN^{(1)}) \ & \to \ \calD^{b} \Coh^{\Gm}(\wcalN^{(1)}), \\ \mathbf{J}_b^{\dg} :
\DGCoh((\wfrakg \, \rcap_{\frakg^* \times \calB} \, \calB)^{(1)}) \ & \to \
\DGCoh((\wfrakg \, \rcap_{\frakg^* \times \calB} \, \calB)^{(1)}), \\
\mathbf{J}_b^{\dg,\gr} :
\DGCoh^{\gr}((\wfrakg \, \rcap_{\frakg^* \times \calB} \, \calB)^{(1)}) \ & \to \
\DGCoh^{\gr}((\wfrakg \, \rcap_{\frakg^* \times \calB} \, \calB)^{(1)}).
\end{align*} 

It follows in particular from Proposition \ref{prop:actionBaff'DGCoh} that the $B_{\aff}'$-action on $\calD^b \Mod^{\fg}_{(0,0)}(\calU \frakg)$ factorizes through an action on $\calD^b \Mod^{\fg}_{0}((\calU \frakg)_0)$, which corresponds to the action on the category $\DGCoh((\wfrakg \, \rcap_{\frakg^* \times \calB} \, \calB)^{(1)})$ via the equivalence $\widehat{\gamma}^{\calB}_0$ of Theorem \ref{thm:localizationfixedFr}. We denote the action of $b \in B_{\aff}'$ by \[ \mathbf{I}_b^{{\rm res}} : \calD^b \Mod^{\fg}_{0}((\calU \frakg)_0) \to \calD^b \Mod^{\fg}_{0}((\calU \frakg)_0). \] 

\subsection{Some exact sequences}\label{ss:exactsequenceN}

Geometrically, $S_{\alpha}'$ can be described as: \[ S_{\alpha}'=\{(X,
g_1 B, g_2 B) \in \frakg^* \times \calB \times_{\calP_{\alpha}} \calB
\mid X_{|g_1 \cdot \frakb + g_2 \cdot \frakb}=0\}. \] It has two
irreducible components:
$\Delta \wcalN$, the diagonal embedding of $\wcalN$, and $Y_{\alpha}:=\{(X,
g_1 B, g_2 B) \in \frakg^* \times (\calB
\times_{\calP_{\alpha}} \calB) \mid
X_{|g_1 \cdot \frakp_{\alpha}}=0\},$ a vector bundle on
$\calB \times_{\calP_{\alpha}} \calB$ of rank $\dim(\frakg / \frakb)
-1$.

Recall the morphism $\widetilde{\pi}_{\alpha}: \wfrakg \to
\wfrakg_{\alpha}$ (see equation \eqref{eq:defpiP}). There exist exact sequences of sheaves on $\wfrakg \times \wfrakg$, resp. $\wcalN
\times \wcalN$ (see \cite[5.3.2, 6.1.1]{RAct}): \begin{eqnarray}
  \label{eq:kernelswfrakg}
  & \calO_{\Delta \wfrakg} \hookrightarrow \calO_{\wfrakg
\times_{\wfrakg_{\alpha}} \wfrakg} \twoheadrightarrow \calO_{S_{\alpha}}, &
\\ \label{eq:kernelswfrakg2} & \calO_{S_{\alpha}}(\rho - \alpha, -\rho)
\hookrightarrow \calO_{\wfrakg
\times_{\wfrakg_{\alpha}} \wfrakg} \twoheadrightarrow \calO_{\Delta
\wfrakg}, & \\ \label{eq:kernelswcalN} & \calO_{\Delta \wcalN} \hookrightarrow
\calO_{S_{\alpha}'}(\rho - \alpha, -\rho) \twoheadrightarrow
\calO_{Y_{\alpha}}(\rho - \alpha, -\rho), & \\ \label{eq:kernelswcalN2} &
\calO_{Y_{\alpha}}(\rho - \alpha, -\rho) \hookrightarrow
\calO_{S_{\alpha}'} \twoheadrightarrow \calO_{\Delta \wcalN}. \end{eqnarray}

The exact sequences \eqref{eq:kernelswfrakg2} and \eqref{eq:kernelswcalN2}
are $\Gm$-equivariant. The exact sequences
\eqref{eq:kernelswfrakg} and \eqref{eq:kernelswcalN} admit the
$\Gm$-equivariant analogues \begin{eqnarray} \label{eq:kernelswfrakggr} &
  \calO_{\Delta \wfrakg} \langle 2 \rangle \hookrightarrow \calO_{\wfrakg
    \times_{\wfrakg_{\alpha}} \wfrakg} \twoheadrightarrow
  \calO_{S_{\alpha}}, & \\ \label{eq:kernelswcalNgr} & \calO_{\Delta \wcalN}
  \langle -2 \rangle \hookrightarrow
\calO_{S_{\alpha}'}(\rho - \alpha, -\rho) \twoheadrightarrow
\calO_{Y_{\alpha}}(\rho - \alpha, -\rho). & \end{eqnarray}

Recall that we have $\calO_{\calB
  \times_{\calP_{\alpha}} \calB}(\rho - \alpha, -\rho) \cong \calO_{\calB
  \times_{\calP_{\alpha}} \calB}(-\rho, \rho - \alpha)$ (\cite[\S 1.5]{RAct}). Hence one can
exchange $-\rho$ and $\rho - \alpha$ in these exact sequences.

\subsection{Geometric counterparts of the translation
  functors}\label{ss:geometrictranslation}

Let us recall the geometric interpretation of the translation functors (see \S \ref{ss:paragraphtranslationfunctors}). Let $P$ be a parabolic subgroup of $G$
containing $B$ and let $\calP=G/P$. Recall the morphism
$\widetilde{\pi}_{\calP}$ of \eqref{eq:defpiP}. By \cite[2.2.5]{BMR2} we have:

\begin{prop}\label{prop:translationBMR}

Let $\lambda \in \bbX$ be regular, and let $\mu \in \bbX$ be in the
closure of the facet of $\lambda$. Assume that ${\rm
  Stab}_{(W_{\aff},\bullet)}(\mu) = W_P$ (with the same notation as in Theorem {\rm \ref{thm:thmBMR}}$\rmii$). There exist isomorphisms of
functors \[ T_{\lambda}^{\mu} \circ \gamma^{\calB}_{\lambda} \cong
\gamma^{\calP}_{\mu} \circ R(\widetilde{\pi}_{\calP})_*
\quad \text{and} \quad T_{\mu}^{\lambda} \circ
\gamma^{\calP}_{\mu} \cong \gamma^{\calB}_{\lambda} \circ
L(\widetilde{\pi}_{\calP})^*.\]

\end{prop}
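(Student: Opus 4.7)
The plan is to unwind both sides into explicit expressions involving the splitting bundles $\calM^{\lambda}$ (on the formal neighborhood of $\calB^{(1)}$) and $\calM^{\calP}_{0,\mu}$ (on the formal neighborhood of $\calP^{(1)}$) and then match them using the projection formula together with the compatibility between these splitting bundles built into the normalization of \cite[1.3.5]{BMR2}. The key observation, already used in \S\ref{ss:objectsLwPw} and in the proof of Theorem~\ref{thm:localizationfixedFr}, is that the equivalences $\gamma^{\calB}_{\lambda}$ and $\gamma^{\calP}_{\mu}$ are concretely of the form
\[
\gamma^{\calB}_{\lambda}(\calF)\cong R\Gamma(\calM^{\lambda}\otimes_{\calO_{\wfrakg^{(1)}}}\calF),\qquad \gamma^{\calP}_{\mu}(\calG)\cong R\Gamma(\calM^{\calP}_{0,\mu}\otimes_{\calO_{\wfrakg_{\calP}^{(1)}}}\calG),
\]
and that at the level of $\wcalD$-modules the translation functor $T_{\lambda}^{\mu}$ is computed (via Lemma~\ref{lem:translationfunctors} and the observation in \S\ref{ss:objectsLwPw}) as tensoring with $\calO_{\calB}(\mu-\lambda)$ followed by projection onto the appropriate block.

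First I would handle the direct-image statement. Using the formula just recalled, one has $T_{\lambda}^{\mu}\gamma^{\calB}_{\lambda}(\calF)\cong R\Gamma\bigl(\calO_{\calB}(\mu-\lambda)\otimes_{\calO_{\calB}}(\calM^{\lambda}\otimes_{\calO_{\wfrakg^{(1)}}}\calF)\bigr)$, this last expression being an object of $\calD^{b}\Mod^{{\rm c}}_{(\mu,0)}(\wcalD_{\calP})$ after restriction of scalars along $\widetilde{\pi}_{\calP}$. The normalization of splitting bundles in \cite[1.3.5]{BMR2} is designed precisely so that
\[
\widetilde{\pi}_{\calP}^{\,*}\,\calM^{\calP}_{0,\mu}\ \cong\ \calM^{\lambda}\otimes_{\calO_{\calB}}\calO_{\calB}(\mu-\lambda)
\]
on the relevant formal neighborhood (possibly up to a harmless character twist that cancels in the normalization). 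Combined with the projection formula $R(\widetilde{\pi}_{\calP})_{*}\bigl(\widetilde{\pi}_{\calP}^{\,*}\calM^{\calP}_{0,\mu}\otimes\calF\bigr)\cong\calM^{\calP}_{0,\mu}\otimes R(\widetilde{\pi}_{\calP})_{*}\calF$ and with $R\Gamma\circ R(\widetilde{\pi}_{\calP})_{*}\cong R\Gamma$ (Corollary~\ref{cor:directimagediagram}$\rmii$), this gives
\[
T_{\lambda}^{\mu}\gamma^{\calB}_{\lambda}(\calF)\ \cong\ R\Gamma\bigl(\calM^{\calP}_{0,\mu}\otimes R(\widetilde{\pi}_{\calP})_{*}\calF\bigr)\ \cong\ \gamma^{\calP}_{\mu}\circ R(\widetilde{\pi}_{\calP})_{*}(\calF),
\]
as desired. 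Here one has to check that the result indeed lies in the block $(\mu,0)$ and not merely with generalized central character $\mu$; this is where the hypothesis that $\mu$ lies in the closure of the facet of $\lambda$ (so that no other Harish-Chandra character arises) enters.

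The second isomorphism I would deduce by adjunction rather than by a direct computation. Under the identifications above, the pair $(L(\widetilde{\pi}_{\calP})^{*},R(\widetilde{\pi}_{\calP})_{*})$ is adjoint (\S\ref{ss:paragraphadjunction}), while $(T_{\mu}^{\lambda},T_{\lambda}^{\mu})$ is an adjoint pair of translation functors because $\mu$ lies in the closure of the facet of $\lambda$ (standard biadjointness of translation, compatible at the level of $\calU\frakg$-modules via Lemma~\ref{lem:translationfunctors}). Transporting the adjunction $(L(\widetilde{\pi}_{\calP})^{*},R(\widetilde{\pi}_{\calP})_{*})$ through the equivalences $\gamma^{\calB}_{\lambda}$ and $\gamma^{\calP}_{\mu}$, the already-established isomorphism $T_{\lambda}^{\mu}\circ\gamma^{\calB}_{\lambda}\cong\gamma^{\calP}_{\mu}\circ R(\widetilde{\pi}_{\calP})_{*}$ yields, by uniqueness of adjoints, the second isomorphism $T_{\mu}^{\lambda}\circ\gamma^{\calP}_{\mu}\cong\gamma^{\calB}_{\lambda}\circ L(\widetilde{\pi}_{\calP})^{*}$.

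The genuinely non-formal step, and the main obstacle, is establishing the compatibility $\widetilde{\pi}_{\calP}^{\,*}\calM^{\calP}_{0,\mu}\cong\calM^{\lambda}\otimes\calO_{\calB}(\mu-\lambda)$ of splitting bundles on the formal neighborhoods; everything else is formal manipulation with projection formula, adjunction, and block decomposition. This identification is precisely the content of the normalization in \cite[1.3.5]{BMR2} and ultimately rests on the uniqueness of splittings of the Azumaya algebra $\wcalD$ on the relevant formal neighborhoods together with the behaviour of $\wcalD$ under the projection $\widetilde{\pi}_{\calP}$.
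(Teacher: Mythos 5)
The paper does not actually supply a proof of this statement: the text immediately preceding the proposition reads ``By \cite[2.2.5]{BMR2} we have'', so the result is quoted verbatim from Bezrukavnikov--Mirkovi\'c--Rumynin rather than established in the present article. There is therefore no internal argument to compare against; the comparison has to be with the cited source. Your overall strategy --- unwind the globalization functors as $R\Gamma$ of a tensor with the splitting bundle, identify $T_{\lambda}^{\mu}$ at the $\wcalD$-module level with the twist by $\calO_{\calB}(\mu-\lambda)$ followed by $R(\widetilde{\pi}_{\calP})_*$, apply the projection formula, and deduce the second isomorphism from the first by uniqueness of left adjoints --- is essentially the BMR2 mechanism, and you correctly isolate the one genuinely non-formal ingredient (compatibility of splitting bundles under $\widetilde{\pi}_{\calP}$). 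The adjunction argument for the second isomorphism is clean and correct, granting biadjointness of $(T_{\mu}^{\lambda},T_{\lambda}^{\mu})$, which BMR2 establishes separately.

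Two places deserve more care than your sketch allows. First, the formula $T_{\lambda}^{\mu}\gamma^{\calB}_{\lambda}(\calF)\cong R\Gamma\bigl(\calO_{\calB}(\mu-\lambda)\otimes(\calM^{\lambda}\otimes\calF)\bigr)$ is quoted in \S\ref{ss:objectsLwPw} of this paper only for $\lambda,\mu\in C_0$, i.e.\ both regular; in the setting of the proposition $\mu$ is singular, so one must invoke the version of this identity that is actually proved in \cite[6.1.2]{BMR} for translation \emph{onto} a wall (it does hold, but the reference has to be given precisely --- the statement in \S\ref{ss:objectsLwPw} of this paper is not enough). Second, the identity $\widetilde{\pi}_{\calP}^{*}\calM^{\calP}_{0,\mu}\cong\calM^{\lambda}\otimes\calO_{\calB}(\mu-\lambda)$ is asserted as ``precisely the content of the normalization in \cite[1.3.5]{BMR2}'', but you should be aware that the two bundles a priori split different Azumaya algebras on different spaces ($\wcalD$ on the formal neighbourhood of $\calB^{(1)}$ versus its pushforward along $\widetilde{\pi}_{\calP}$ on the formal neighbourhood of $\calP^{(1)}$), so the identity of ranks and the fact that the pullback carries a module structure over the right algebra are not automatic. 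In BMR2 the compatibility is essentially built into the \emph{construction} of the parabolic splitting bundle (roughly, $\calM^{\calP}_{0,\mu}$ is defined via $R(\widetilde{\pi}_{\calP})_{*}$ of the appropriate twist of $\calM^{\lambda}$), so citing the exact form of that construction would make the argument airtight; as written, the pullback identity is stated as if it were a free-standing fact rather than a consequence of how the parabolic bundle is defined.
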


Let $\lambda$ and $\mu$ be as in Proposition \ref{prop:translationBMR}. The morphism
$\widetilde{\pi}_{\calP} : \wfrakg \to \wfrakg_{\calP}$ induces a morphism of dg-schemes
\begin{equation} \label{eq:defpiPhat} \widehat{\pi}_{\calP} : (\wfrakg
  \, \rcap_{\frakg^* \times \calB} \, \calB)^{(1)} \to (\wfrakg_{\calP}
  \, \rcap_{\frakg^* \times \calP} \, \calP)^{(1)}. \end{equation} It can be realized as the
morphism $(\wfrakg^{(1)}, \, \calO_{\wfrakg^{(1)}} \otimes_{\bk}
\Lambda(\frakg^{(1)})) \to (\wfrakg_{\calP}^{(1)}, \,
\calO_{\wfrakg_{\calP}^{(1)}} \otimes_{\bk} \Lambda(\frakg^{(1)}))$,
or as the morphism $(\calB^{(1)}, \,
\Lambda_{\calO_{\calB^{(1)}}}(\calT_{\calB^{(1)}}^{\vee})) \to
(\calP^{(1)}, \,
\Lambda_{\calO_{\calP^{(1)}}}(\calT_{\calP^{(1)}}^{\vee}))$.
Easy arguments show that $R(\widehat{\pi}_{\calP})_*$ and
$L(\widehat{\pi}_{\calP})^*$ restrict to functors between
the categories $\DGCoh((\wfrakg \, \rcap_{\frakg^* \times \calB} \,
\calB)^{(1)})$ and $\DGCoh((\wfrakg_{\calP} \, \rcap_{\frakg^*
  \times \calP} \, \calP)^{(1)})$, with usual
compatibility conditions. Recall the equivalences of Theorems
\ref{thm:localizationfixedFr} and \ref{thm:localizationfixedFrparabolic}. A
proof similar to that of \cite[2.2.5]{BMR2} gives:

\begin{prop} \label{prop:translationfixedFr}

Let $\lambda, \mu, P, \calP$ be as in Proposition {\rm
  \ref{prop:translationBMR}}. There exist isomorphisms of functors
\[ T_{\lambda}^{\mu} \circ \widehat{\gamma}^{\calB}_{\lambda} \cong
\widehat{\gamma}^{\calP}_{\mu} \circ R(\widehat{\pi}_{\calP})_*
\quad \text{and} \quad T_{\mu}^{\lambda} \circ
\widehat{\gamma}^{\calP}_{\mu} \cong \widehat{\gamma}^{\calB}_{\lambda} \circ
L(\widehat{\pi}_{\calP})^*. \]

\end{prop}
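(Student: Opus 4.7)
The plan is to adapt the proof of \cite[2.2.5]{BMR2} to the dg-setting, using the compatibility between $\widehat{\gamma}$ and $\gamma$ provided by Proposition \ref{prop:loccompatibility} and its parabolic analogue in Theorem \ref{thm:localizationfixedFrparabolic}. I focus on the first isomorphism; the second will follow by a parallel argument (or via adjunction, once direct image is handled).

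First I would construct a natural transformation $T_{\lambda}^{\mu} \circ \widehat{\gamma}^{\calB}_{\lambda} \to \widehat{\gamma}^{\calP}_{\mu} \circ R(\widehat{\pi}_{\calP})_*$ at the level of the concrete realizations used in the proof of Theorem \ref{thm:localizationfixedFr}. Recall that $\widehat{\gamma}^{\calB}_{\lambda}$ factors through the category of $\wcalD \otimes_{\bk} \Lambda(\frakg^{(1)})$-dg-modules via tensoring with the splitting bundle $\calM^{\lambda}$ and then applying $R\Gamma$. The translation functor on $\Mod^{\fg}_{\mu}((\calU\frakg)_0)$ is the restriction of $T_{\lambda}^{\mu}$ on $\Mod^{\fg}_{(\lambda,0)}(\calU\frakg)$, and under the $\wcalD$-module picture it is implemented by tensoring with the line bundle $\calO_{\calB}(\mu-\lambda)$ followed by pushforward along $\widetilde{\pi}_{\calP}$ (see Section \ref{ss:objectsLwPw} and \cite[2.2.5]{BMR2}), using $\calM^{\mu} \cong \calO_{\calB}(\mu-\lambda) \otimes_{\calO_{\calB}} \calM^{\lambda}$. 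These operations act only on the $\wcalD$-factor and thus commute with tensoring with $\Lambda(\frakg^{(1)})$; this provides a well-defined morphism of functors on dg-modules, which, via \eqref{eq:equivstep1big} and its parabolic analogue, yields the required morphism on $\DGCoh$.

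Second, to show this morphism of functors is an isomorphism, I would exploit that the inclusion $\textrm{Incl}: \Mod^{\fg}_{\mu}((\calU\frakg)_0) \hookrightarrow \Mod^{\fg}_{(\mu,0)}(\calU\frakg)$ is exact and fully faithful, hence induces a conservative functor on derived categories. By Proposition \ref{prop:loccompatibility} (for $\calB$) and the last diagram of Theorem \ref{thm:localizationfixedFrparabolic} (for $\calP$), applying $\textrm{Incl}$ translates the two functors into $\gamma^{\calB}_{\lambda} \circ Rp_*$ and $\gamma^{\calP}_{\mu} \circ R(p_{\calP})_* \circ R(\widehat{\pi}_{\calP})_*$ respectively. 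The translation functor $T_{\lambda}^{\mu}$ commutes with $\textrm{Incl}$ by construction. Moreover, by Corollary \ref{cor:directimagediagram} applied to the commutative square $p_{\calP} \circ \widehat{\pi}_{\calP} = \widetilde{\pi}_{\calP} \circ p$, we have $R(p_{\calP})_* \circ R(\widehat{\pi}_{\calP})_* \cong R(\widetilde{\pi}_{\calP})_* \circ Rp_*$. Assembling these identifications reduces the claim to the already-known isomorphism $T_{\lambda}^{\mu} \circ \gamma^{\calB}_{\lambda} \cong \gamma^{\calP}_{\mu} \circ R(\widetilde{\pi}_{\calP})_*$ of Proposition \ref{prop:translationBMR}, and one checks that the candidate morphism constructed above corresponds to the isomorphism of \cite{BMR2} under this reduction.

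The main obstacle is verifying that the natural transformation constructed at the dg-module level indeed matches, under $\textrm{Incl}$, the isomorphism of Proposition \ref{prop:translationBMR}, rather than differing from it by some auto-equivalence. This is a question of carefully tracking the normalizations of the splitting bundles, the identification $\calM^{\mu} \cong \calO_{\calB}(\mu-\lambda) \otimes \calM^{\lambda}$, and the fact that all relevant operations on the $\Lambda(\frakg^{(1)})$-factor are inert under $\widehat{\pi}_{\calP}$; once this bookkeeping is done, conservativity of $\textrm{Incl}$ upgrades the resulting morphism to the desired isomorphism in the dg-setting. The second isomorphism of the proposition is handled analogously, using $L(\widehat{\pi}_{\calP})^*$ in place of $R(\widehat{\pi}_{\calP})_*$ and the second half of Proposition \ref{prop:translationBMR}, together with the analogue of Proposition \ref{prop:loccompatibility} for $L(\widehat{\pi}_{\calP})^*$ (which follows from flatness of $\widetilde{\pi}_{\calP}$ and compatibility of $Lp^*$ with forgetting the $\Lambda(\frakg^{(1)})$-action).
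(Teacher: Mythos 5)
The paper itself supplies no proof, only the remark that ``a proof similar to that of \cite[2.2.5]{BMR2} gives'' the statement, so any correct filling-in of that reference is in the spirit of the paper. Your plan is such a filling-in and is sound: since $Rp_*$ (geometrically, restriction of scalars along $\calO_{\wfrakg^{(1)}}\hookrightarrow\calO_{\wfrakg^{(1)}}\otimes_{\bk}\Lambda(\frakg^{(1)})$) is indeed conservative, constructing a natural transformation at the dg-module level and checking that it becomes the \cite{BMR2} isomorphism after $\mathrm{Incl}$ is a legitimate and economical way to upgrade Proposition \ref{prop:translationBMR} to its dg-version, rather than re-running the Azumaya-splitting computations of \cite[2.2.5]{BMR2} with the extra $\Lambda(\frakg^{(1)})$-factor carried along. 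Two small points to flag. First, what you truly need in the second step is that $\mathrm{Incl}\circ\alpha$ is \emph{some} isomorphism, not that it equals the specific isomorphism of \cite{BMR2}; this weakens the ``bookkeeping'' burden you worry about, and it is met as long as $\alpha$ is built by mirroring, step by step, the natural transformations $\mathrm{Id}\to R\Gamma\circ\calL$ and $\calL\circ R\Gamma\to\mathrm{Id}$ used in \cite{BMR2}, tensored trivially with $\Lambda(\frakg^{(1)})$ as you describe. Second, the justification you give for the second isomorphism (``flatness of $\widetilde{\pi}_{\calP}$'') is not quite right, since $\widetilde{\pi}_{\calP}$ factors through a closed embedding followed by a smooth projection and is not flat; but this does not matter: $L(\widehat{\pi}_{\calP})^*$ is a left derived functor computed via K-flat resolutions (\S\ref{ss:directinverseimage}), its compatibility with $Lp^*$ and the forgetful functor is supplied by Proposition \ref{prop:compositioninverseimages} and Corollary \ref{cor:directimagediagram}(ii), and the rest of the argument goes through as you outline.
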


If $P=P_{\alpha}$ for $\alpha \in \Phi$,
we simplify the notation and set $\widehat{\pi}_{\alpha} :=
\widehat{\pi}_{\calP_{\alpha}}$.

\subsection{Some results from Representation Theory} \label{ss:paragraphRT}

One of our main tools will be the reflection functors, defined in the following way.

\begin{defin} \label{def:defreflection}

Let $\delta \in \Phi_{\aff}$. Choose a weight $\mu_{\delta} \in \bbX$
on the $\delta$-wall of $\overline{C}_0$, and not on
any other wall. The \emph{reflection functor} $R_{\delta}$ is
defined as \[ R_{\delta}:=T^0_{\mu_{\delta}} \circ
T_0^{\mu_{\delta}}.\] It does not depend on the choice of
$\mu_{\delta}$ by \cite[2.2.7]{BMR2}. It is an exact, auto-adjoint
endofunctor of $\Mod^{\fg}_{(0,0)}(\calU \frakg)$, which stabilizes
$\Mod^{\fg}_0((\calU \frakg)_0)$.

\end{defin}

In this subsection we recall classical results describing the
action of reflection functors on simple and projective
modules. Recall that it has been proved that Lusztig's conjecture on the
characters of simple $G$-modules (\cite{LUSPro}) is satisfied for $p \gg 0$ (see \S \ref{ss:introlusztig}). From now on we assume: \[ p
\text{ is large enough so that Lusztig's conjecture is satisfied.}
\leqno(\#) \] This restriction is needed only to apply Theorem
\ref{thm:conjectureAndersen}$\rmi$ below.

Let $\delta \in \Phi_{\aff}$. Consider a simple $(\calU
\frakg)_0$-module $L(w \bullet 0)$ ($w \in W^0$), where $w s_{\delta}
\bullet 0 > w \bullet 0$ (see \S \ref{ss:objectsLwPw}). There are natural adjunction morphisms $L(w \bullet 0) \xrightarrow{\phi_{\delta}^w} R_{\delta}
L(w \bullet 0) \xrightarrow{\psi_{\delta}^w} L(w \bullet 0).$ It is known
(\cite[II.7.20]{JANAlg}) that $\phi_{\delta}^w$ is injective, and
$\psi_{\delta}^w$ surjective. Consider \begin{equation} \label{eq:defQ} Q_{\delta}(w):={\rm
    Ker}(\psi_{\delta}^w) / {\rm Im}(\phi_{\delta}^w). \end{equation}

\begin{thm} \label{thm:conjectureAndersen}

$\rmi$ Let $\delta \in \Phi_{\aff}$, and $w \in W^0$ such that $w
\bullet 0 < w s_{\delta} \bullet 0$. Then $Q_{\delta}(w)$ is a
semi-simple $\calU \frakg$-module.

$\rmii$ If $w s_{\delta} \in W^0$, $L(w s_{\delta} \bullet
0)$ appears with multiplicity one in $Q_{\delta}(w)$, and all the other simple constituents of the form $L(x \bullet 0)$ for $x \in W^0$
satisfying $\ell(x)<\ell(ws_{\delta})$. If $w s_{\delta} \notin W^0$, all the simple constituents of $Q_{\delta}(w)$ are of the form $L(x \bullet 0)$ for $x \in W^0$
satisfying $\ell(x)<\ell(ws_{\delta})$.


\end{thm}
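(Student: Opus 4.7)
The plan is to deduce this theorem from the reformulation of Lusztig's conjecture given by Andersen in \cite{ANDInv}, which by our assumption $(\#)$ is in force. Andersen's reformulation states (in essence) that the wall-crossing functor $R_\delta$ applied to a simple module $L(w \bullet 0)$ with $w \bullet 0 < w s_\delta \bullet 0$ yields a rigid module of Loewy length three, whose head and socle are both isomorphic to $L(w \bullet 0)$ and whose middle Loewy layer is semi-simple, with composition multiplicities controlled by certain Kazhdan--Lusztig $\mu$-coefficients.

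First I would identify $Q_\delta(w)$ with this middle layer. By \eqref{eq:translationsimples} and the hypothesis $w \bullet 0 < w s_\delta \bullet 0$, the weight $w \bullet 0$ lies in the upper closure of its facet (with respect to $\mu_\delta$), so $T_0^{\mu_\delta} L(w \bullet 0) \ne 0$; hence the adjunction maps $\phi_\delta^w$ and $\psi_\delta^w$ are nonzero, and thus injective, resp. surjective, by \cite[II.7.20]{JANAlg}. Under Andersen's reformulation, $\mathrm{soc}(R_\delta L(w\bullet 0)) = \mathrm{Im}(\phi_\delta^w) \cong L(w\bullet 0)$ and $R_\delta L(w\bullet 0) / \mathrm{Ker}(\psi_\delta^w) \cong L(w\bullet 0)$, so $Q_\delta(w)$ coincides with the semi-simple middle layer, proving $\rmi$.

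Second, for $\rmii$, the composition multiplicities $[Q_\delta(w) : L(x \bullet 0)]$ translate (again via Andersen's reformulation) into Kazhdan--Lusztig $\mu$-coefficients $\mu(x, ws_\delta)$ in the affine Hecke algebra combinatorics. The general fact that $\mu(x,y) \ne 0$ implies $\ell(x) < \ell(y)$ gives the length bound. When $w s_\delta \in W^0$, the top Kazhdan--Lusztig coefficient yields $[Q_\delta(w) : L(w s_\delta \bullet 0)] = 1$, because the wall-crossing at $\delta$ exchanges the facets of $w \bullet 0$ and $w s_\delta \bullet 0$. When $w s_\delta \notin W^0$, the corresponding constituent either does not exist in the category or is accounted for by the translation principle, leaving only constituents of strictly smaller length.

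The main obstacle will be the dictionary-chasing: Andersen's statement in \cite{ANDInv} is phrased in terms of $G$-modules (or of the integral $W_{\aff}'$-orbit in $\bbX$), and one needs to verify carefully that it transports to our setting of restricted modules, that Curtis's theorem correctly identifies simple restricted modules with the $L(x \bullet 0)$ for $x \in W^0$, and that the distinction between the cases $ws_\delta \in W^0$ and $ws_\delta \notin W^0$ is correctly handled by the translation principle via \eqref{eq:translationsimples}.
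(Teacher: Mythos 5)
For part $\rmi$ your approach coincides with the paper's: both invoke Andersen's reformulation of Lusztig's conjecture (which holds by assumption $(\#)$), and your identification of $Q_\delta(w)$ with the semi-simple middle Loewy layer of $R_\delta L(w\bullet 0)$ is exactly the content of that reformulation. The extra detail you supply about the adjunction maps being nonzero and hence injective/surjective is fine.

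For part $\rmii$, however, you take a genuinely different — and in one place incomplete — route. The paper does \emph{not} use Kazhdan--Lusztig $\mu$-coefficients here at all, and $\rmii$ is proved unconditionally: it combines \cite[II.7.19--20]{JANAlg} with the strong linkage principle \cite[II.6.13]{JANAlg} to see that, as a $G$-module, $Q_\delta(w)$ has $L(ws_\delta\bullet 0)$ with multiplicity one and its other composition factors $L(x\bullet 0)$ satisfy $x\bullet 0 \uparrow ws_\delta\bullet 0$, whence $\ell(x) < \ell(ws_\delta)$ by \cite[II.6.6]{JANAlg}. Your route via $\mu(x,ws_\delta)$ would also give the length bound (once Lusztig's conjecture is assumed), so it is logically acceptable under $(\#)$, but it makes $\rmii$ depend on Lusztig's conjecture when it need not.

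The real gap is in the step you yourself flag as ``the main obstacle.'' The passage from $G$-module constituents to $\calU\frakg$-module constituents is \emph{not} handled by the translation principle or by \eqref{eq:translationsimples}. The correct tool is Steinberg's tensor product theorem \cite[II.3.17]{JANAlg}: if $x\bullet 0$ is dominant but not restricted, write $x\bullet 0 = \lambda_1 + p\lambda_2$ with $\lambda_1$ restricted dominant, $\lambda_2$ dominant; then as a $\calU\frakg$-module $L(x\bullet 0) \cong L(\lambda_1)^{\oplus \dim L(\lambda_2)}$, and one must then check that the $v\in W^0$ with $v\bullet 0=\lambda_1$ still satisfies $\ell(v)<\ell(ws_\delta)$. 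The paper closes this with the elementary observation that $\ell(t_\nu v) > \ell(v)$ whenever $v\bullet 0$ and $\nu\neq 0$ are both dominant. Without this Steinberg argument and the accompanying length estimate, your proof of $\rmii$ is not complete, especially in the case $ws_\delta\notin W^0$ where the $G$-module constituent $L(ws_\delta\bullet 0)$ itself must be re-expressed in terms of restricted simples of strictly smaller length.
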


\begin{proof} $\rmi$ follows from a
conjecture by Andersen, which is equivalent
to Lusztig's conjecture (\cite{ANDInv}, \cite[II.C]{JANAlg}). Hence it holds under assumption
$(\#)$.

$\rmii$ By \cite[II.7.19-20]{JANAlg} and the strong
linkage principle (\cite[II.6.13]{JANAlg}), the simple
factors of $Q_{\delta}(w)$ as a $G$-module are $L(w s_{\delta} \bullet
0)$ with multiplicity one, and some $L(x \bullet 0)$ with $x \in
W_{\aff} - \{w s_{\delta} \}$, such that $x \bullet 0$ is dominant and $x \bullet 0 \uparrow w
s_{\delta} \bullet 0$ (notation of \cite[II.6.4]{JANAlg}). By
\cite[II.6.6]{JANAlg}, such an $x$ satifies $\ell(x) < \ell(w
s_{\delta})$. Some of these simple $G$-modules may not be simple as $\calU
\frakg$-modules if $x \bullet 0$ is not restricted. But if $\lambda=\lambda_1 + p \lambda_2$ for
$\lambda_1 \in \bbX$ restricted dominant and $\lambda_2 \in \bbX$ dominant, then by
Steinberg's theorem (\cite[II.3.17]{JANAlg}), as $\calU \frakg$-modules we
have $L(\lambda) \cong L(\lambda_1)^{\oplus \dim(L(\lambda_2))}$. To
conclude the proof, one observes that if $v \bullet 0$ and $\nu \neq
0$ are dominant, then $\ell(t_{\nu} v) > \ell(v)$. \end{proof}

The following proposition is ``dual'' to point $\rmii$ of Theorem \ref{thm:conjectureAndersen}. Recall the modules $P(w \bullet 0)$ ($w \in W^0$) defined in \S \ref{ss:objectsLwPw}.

\begin{prop} \label{prop:reflectionprojectivesdetails}

Let $w \in W^0$, and $\delta \in \Phi_{\aff}$ such that $w s_{\delta}
\in W^0$ and $w s_{\delta} \bullet 0 < w \bullet 0$. Then $R_{\delta}
P(w \bullet 0)$ is a direct sum of $P(w s_{\delta} \bullet 0)$ and
some $P(v \bullet 0)$ with $v \in W^0$, $\ell(v) > \ell(w
s_{\delta})$.

\end{prop}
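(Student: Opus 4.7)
The plan is to compute each multiplicity $m_v := [R_\delta P(w\bullet 0) : P(v\bullet 0)]$ (for $v \in W^0$) by combining the formal properties of $R_\delta$ with the explicit description of its action on simples provided by Theorem \ref{thm:conjectureAndersen}.

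First I would observe that $R_\delta$, being exact and self-adjoint (Definition \ref{def:defreflection}), sends projectives to projectives: the isomorphism $\Hom(R_\delta P, -) \cong \Hom(P, R_\delta -)$ shows the left-hand side is exact whenever $P$ is projective. Consequently $R_\delta P(w\bullet 0) \cong \bigoplus_{v \in W^0} P(v\bullet 0)^{\oplus m_v}$, and the standard identity $\dim \Hom(P(u\bullet 0), L(v\bullet 0)) = \delta_{u,v}$ together with self-adjointness gives $m_v = \dim \Hom(R_\delta P(w\bullet 0), L(v\bullet 0)) = \dim \Hom(P(w\bullet 0), R_\delta L(v\bullet 0)) = [R_\delta L(v\bullet 0) : L(w\bullet 0)]$.

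Next I would analyse $R_\delta L(v\bullet 0)$ for $v \in W^0$. By \eqref{eq:translationsimples}, $T_0^{\mu_\delta} L(v\bullet 0)$ vanishes unless $v\bullet \mu_\delta$ lies in the upper closure of the facet of $v\bullet 0$, a condition equivalent (via the standard alcove-geometric description of the dot action) to $v s_\delta \bullet 0 > v\bullet 0$. In this non-vanishing case, the adjunction morphisms $\phi_\delta^v$, $\psi_\delta^v$ of \eqref{eq:defQ} produce a three-step filtration on $R_\delta L(v\bullet 0)$ with associated graded $L(v\bullet 0) \oplus Q_\delta(v) \oplus L(v\bullet 0)$, where $Q_\delta(v)$ is semisimple with composition factors explicitly described by Theorem \ref{thm:conjectureAndersen}(ii).

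Finally I would enumerate those $v \in W^0$ for which $L(w\bullet 0)$ appears in $R_\delta L(v\bullet 0)$, using the hypothesis $w s_\delta \bullet 0 < w\bullet 0$. The value $v=w$ is excluded, since then $v s_\delta \bullet 0 < v \bullet 0$ forces $R_\delta L(v\bullet 0) = 0$. The value $v = w s_\delta$ (which lies in $W^0$ by hypothesis) satisfies $v s_\delta = w$ with $v s_\delta \bullet 0 > v\bullet 0$, and Theorem \ref{thm:conjectureAndersen}(ii) shows that $L(w\bullet 0) = L(v s_\delta \bullet 0)$ occurs in $Q_\delta(v)$ with multiplicity exactly one, whence $m_{w s_\delta} = 1$. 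For any other $v$, the factor $L(w\bullet 0)$ must come from $Q_\delta(v)$ and cannot coincide with $L(v s_\delta \bullet 0)$, so Theorem \ref{thm:conjectureAndersen}(ii) forces $\ell(w) < \ell(v s_\delta) = \ell(v) + 1$, i.e.\ $\ell(v) \geq \ell(w) > \ell(w s_\delta)$, yielding the claimed decomposition. The only delicate step is the translation between $v s_\delta \bullet 0 > v \bullet 0$ and the upper-closure condition, which I would handle by direct appeal to standard alcove-geometric facts about the dot action (as in \cite[II.6--7]{JANAlg}); all other ingredients are formal consequences of self-adjointness and the theory of projective covers over the finite-dimensional algebra $(\calU\frakg)_0^{\hat 0}$.
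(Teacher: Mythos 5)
Your proof is correct and follows essentially the same route as the paper: both use exactness and self-adjointness of $R_\delta$ to see that $R_\delta P(w\bullet 0)$ is projective, reduce the multiplicity computation to $\dim\Hom(P(w\bullet 0), R_\delta L(v\bullet 0))$ (which you phrase equivalently as $[R_\delta L(v\bullet 0):L(w\bullet 0)]$), use the filtration of $R_\delta L(v\bullet 0)$ with subquotients $L(v\bullet 0)$, $Q_\delta(v)$, $L(v\bullet 0)$ together with $v\ne w$ to pass to $Q_\delta(v)$, and then invoke Theorem \ref{thm:conjectureAndersen}(ii). The only cosmetic difference is that you express things via composition multiplicities while the paper uses exact sequences and the $\Hom(P(w\bullet 0),-)$ functor directly.
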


\begin{proof} As $R_{\delta}$ is exact
and self-adjoint, $R_{\delta} P(w \bullet 0)$ is a projective
$(\calU \frakg)_0^{\hat{0}}$-module, hence a direct sum of $P(v \bullet 0)$ for $v \in
W^0$. The multiplicity of $P(v \bullet 0)$ is the dimension of
$\Hom_{\frakg}(R_{\delta} P(w \bullet 0), L(v
\bullet 0)) \cong \Hom_{\frakg}(P(w \bullet 0), R_{\delta} L(v \bullet
0)).$ By \eqref{eq:translationsimples}, this multiplicity is $0$ if $v s_{\delta} \bullet 0 < v \bullet 0$ (in
particular for $v=w$).

Assume now that $v s_{\delta} \bullet 0 > v \bullet 0$. The exact sequences
$ Q_{\delta}(v) \hookrightarrow (R_{\delta} L(v \bullet 0))/L(v \bullet 0) \twoheadrightarrow L(v \bullet 0), \ L(v \bullet 0) \hookrightarrow R_{\delta} L(v \bullet 0) \twoheadrightarrow (R_{\delta} L(v \bullet 0))/L(v \bullet 0)$ induce an isomorphism: \[\Hom_{\frakg}(P(w \bullet 0), R_{\delta}
L(v \bullet 0)) \cong \Hom_{\frakg}(P(w \bullet 0), Q_{\delta}(v)).\]
By Theorem \ref{thm:conjectureAndersen}, $Q_{\delta}(v)$ is
semi-simple, $L(vs_{\delta} \bullet 0)$ appears with
multiplicity $1$ in this module if $vs_{\delta} \bullet 0$ is
restricted, and the other simple components have their
highest weight of the form $x \bullet 0$ for $x \in W^0$ with $\ell(x)
< \ell(vs_{\delta})$. Hence if $\Hom_{\frakg}(P(w \bullet 0),
Q_{\delta}(v)) \neq 0$ and $v \neq ws_{\delta}$, then
$\ell(w) < \ell(vs_{\delta})=l(v)+1$, hence $\ell(v) >
\ell(ws_{\delta})$. For $v=ws_{\delta}$, $\Hom_{\frakg}(P(w
\bullet 0), Q_{\delta}(ws_{\delta}))=\bk$. \end{proof}

\subsection{Reminder on graded algebras} \label{ss:paragraphgradedrings}

Consider a $\mathbb{Z}$-graded, finite dimensional $\bk$-algebra $A$. Let $\Mod(A)$,
resp. $\Mod^{\gr}(A)$, be the category of $A$-modules,
resp. graded $A$-modules. Let also $\Mod^{\fg,\gr}(A)$,
$\Mod^{\fg}(A)$ be the categories of finitely generated modules. As
in \S \ref{ss:koszulinclusion}, we denote by $\langle j\rangle$ the shift in the grading
given by $(M \langle j \rangle)_n=M_{n-j}$. Let $\For : \Mod^{\gr}(A)
\to \Mod(A)$ be the forgetful functor. Following \cite{GGGr}, we call
\emph{gradable} the $A$-modules in the essential image of this functor. If $M \in \Mod(A)$, we denote by $\rad(M)$ the radical of $M$ (the intersection of all maximal submodules), and by $\soc(M)$ the
socle of $M$ (the sum of all simple submodules).

\begin{thm}\label{thm:thmGG}

$\rmi$ If $M \in \Mod^{\fg,\gr}(A)$, then $M$ is indecomposable in
$\Mod^{\fg,\gr}(A)$ iff $\, \For(M)$ is indecomposable in
$\Mod^{\fg}(A)$.

$\rmii$ Simple and projective modules in $\Mod^{\fg}(A)$ are
gradable.

$\rmiii$ If $M \in \Mod^{\fg,\gr}(A)$, then $\soc(\For(M))$ and
$\rad(\For(M))$ are homogeneous submodules.

$\rmiv$ If $M,N \in \Mod^{\fg,\gr}(A)$ are indecomposable and non-zero and if we have an isomorphism $\, \For(M) \cong \For(N)$, then there exists a unique $j \in \mathbb{Z}$ such
that $M \cong N\langle j\rangle$ in $\Mod^{\fg,\gr}(A)$.

$\rmv$ If $M \in \Mod^{\fg,\gr}(A)$, then $M$ is projective in
$\Mod^{\fg,\gr}(A)$ iff $\, \For(M)$ is projective in
$\Mod^{\fg}(A)$.

\end{thm}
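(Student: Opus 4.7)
My plan is to deduce everything from a single structural observation: for $M,N \in \Mod^{\fg,\gr}(A)$, the space $\Hom_A(\For(M),\For(N))$ carries a natural $\mathbb{Z}$-grading with
\[
\Hom_A(\For(M),\For(N)) \ = \ \bigoplus_{j \in \mathbb{Z}} \Hom_{A,\gr}(M, N\langle j\rangle).
\]
(Because $A$ is finite dimensional, $M$ and $N$ are finite dimensional, so any morphism has only finitely many nonzero homogeneous components.) In particular $R:=\End_A(\For(M))$ is a finite-dimensional graded $\bk$-algebra whose degree-$0$ part is $\End_{A,\gr}(M)$. The first basic input I would record is that the Jacobson radical of any finite-dimensional graded $\bk$-algebra $B$ is homogeneous (the sum of the homogeneous components of any element of $J(B)$ is again quasi-nilpotent, using homogeneity of simple graded $B$-modules after reducing modulo $J$), and that idempotents lift uniquely from $B/J(B)$; combined with the fact that a finite-dimensional graded semisimple algebra over $\bk$ algebraically closed admits a complete system of orthogonal primitive idempotents lying in degree $0$, this yields: \emph{every idempotent of $R$ is conjugate to a homogeneous degree-$0$ idempotent.}

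With this in hand I prove $\rmiii$ first: since $A$ is graded finite-dimensional, $J(A)$ is a homogeneous two-sided ideal, so $\rad(\For M) = J(A)\cdot \For(M)$ and $\soc(\For M) = \{m \in M \mid J(A)\cdot m = 0\}$ are visibly homogeneous. Next I prove $\rmi$: $\For(M)$ is indecomposable iff $R$ has no nontrivial idempotents; by the paragraph above this is equivalent to $R_0 = \End_{A,\gr}(M)$ having no nontrivial idempotents, i.e.\ to $M$ being indecomposable in $\Mod^{\fg,\gr}(A)$. For $\rmiv$, assume $M,N$ are indecomposable graded with $\For(M)\cong \For(N)$; then $R=\End_A(\For M)$ is local (Fitting), and the graded $R\text{-}R$-bimodule $H:=\Hom_A(\For M,\For N)$ satisfies $H/(J(R)\cdot H) \cong R/J(R) \cong \bk$ (concentrated in a single degree $j_0$ because the quotient is graded and one-dimensional). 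Any isomorphism $\phi:\For(M)\xrightarrow{\sim} \For(N)$ decomposes as $\phi=\sum_j \phi_j$; its class in $H/(J(R)\cdot H)$ is nonzero and homogeneous of degree $j_0$, so $\phi_{j_0}\in \Hom_{A,\gr}(M,N\langle j_0\rangle)$ represents a unit of $R$ modulo its radical, hence is invertible, producing an isomorphism $M\cong N\langle j_0\rangle$. Uniqueness of $j_0$ follows from the same grading argument applied to $\End_{A,\gr}(M)$, whose degree-$0$ part is $\bk$ while other degrees have no units.

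For $\rmii$, decompose the graded module $A$ into its graded indecomposable summands $A = \bigoplus_i P_i$; by $\rmi$ the $\For(P_i)$ form a complete set of indecomposable projective $A$-modules, so every projective in $\Mod^{\fg}(A)$ is gradable, and every simple, being a quotient of some $P_i$ by $\rad(P_i)$ (homogeneous by $\rmiii$), is also gradable. Finally for $\rmv$: if $M \in \Mod^{\fg,\gr}(A)$ is projective, it is a graded summand of a graded free module, so $\For(M)$ is projective. Conversely, suppose $\For(M)$ is projective. Using $\rmi$ together with the Krull--Schmidt theorem applied to the graded category (whose validity follows at once from $\rmi$, since endomorphism rings of graded indecomposables are local), decompose $M=\bigoplus_k M_k$ into graded indecomposables; each $\For(M_k)$ is an indecomposable summand of $\For(M)$, hence an indecomposable projective, hence by $\rmiv$ isomorphic to $\For(P_{i(k)}\langle j_k\rangle)$ for some graded indecomposable projective $P_{i(k)}$ and some $j_k$, and by $\rmiv$ applied in the graded category this forces $M_k \cong P_{i(k)}\langle j_k\rangle$, so $M$ is projective in $\Mod^{\fg,\gr}(A)$.

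The main obstacle I expect is the technical core of the argument, namely showing that in the finite-dimensional graded algebra $R$ every idempotent is conjugate to a degree-zero one; once that lemma (homogeneity of the radical plus lifting of idempotents to homogeneous ones in the semisimple quotient) is firmly established, each of $\rmi$--$\rmv$ falls out of the bimodule/grading bookkeeping described above.
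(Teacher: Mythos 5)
Your proof is correct, and it is genuinely self-contained where the paper simply outsources: the paper proves $\rmi$--$\rmiv$ by citing Gordon--Green \cite[3.2, 3.4, 3.5, 4.1]{GGGr}, and only the hom-space decomposition
$\Hom_A(\For M,\For N)\cong\bigoplus_j\Hom_{A,\bbZ}(M,N\langle j\rangle)$
is used in the paper, to dispatch $\rmv$. Your ``single structural observation'' is exactly that decomposition, and your reconstruction of $\rmi$--$\rmiv$ from the conjugacy-to-degree-zero lemma for idempotents is in the same spirit as Gordon--Green's own arguments (which also go through homogeneity of the radical and lifting of idempotents), so the substance is the same even though you rebuild it from scratch. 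Two remarks worth making. First, your argument for $\rmv$ is more roundabout than the paper's: you pass through Krull--Schmidt and $\rmiv$, whereas the paper's one-liner is more direct --- take a graded surjection $\pi\colon P\twoheadrightarrow M$ from a graded free module, split $\For(\pi)$ by a section $s\in\Hom_A(\For M,\For P)=\bigoplus_j\Hom_{A,\bbZ}(M,P\langle j\rangle)$, and observe that the degree-$0$ component $s_0$ already satisfies $\pi s_0=\mathrm{id}$ because $\mathrm{id}$ is homogeneous of degree $0$; both routes are valid. Second, the technical heart of your write-up --- homogeneity of $J(R)$ for a finite-dimensional $\bbZ$-graded $\bk$-algebra $R$ --- is sketched a little loosely (the ``sum of homogeneous components is quasi-nilpotent'' remark is not quite an argument); the cleanest justification here is that $J(R)$ is invariant under the automorphisms $\sigma_t(r_j)=t^jr_j$ for $t\in\bk^\times$, and since $\bk$ is infinite a $\sigma$-stable subspace is graded. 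Likewise, your reduction of idempotents to degree $0$ implicitly uses that a finite-dimensional $\bbZ$-graded division algebra over $\bk$ algebraically closed is concentrated in degree $0$ (so graded simple means $M_n(\bk)$ with a diagonal-shift grading, whose rank-$r$ idempotents are all conjugate to a degree-$0$ diagonal one); this is fine under the paper's standing assumption that $\bk$ is algebraically closed, but it is a point where your self-contained version leans on hypotheses that Gordon--Green's cited results do not need.
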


\begin{proof} $\rmi$ to $\rmiv$ are proved in
\cite[3.2, 3.4, 3.5, 4.1]{GGGr}. $\rmv$ follows from the
iso\-morphism $\Hom_{A}(\For(M),\For(N)) \cong \bigoplus_{i \in
  \mathbb{Z}} \Hom_{\Mod^{\gr}(A)}(M,N \langle i \rangle)$.\end{proof}

The following results can be proved exactly as in the non-graded case
(see also \cite[E.6]{AJS} for a proof in a more general context):

\begin{prop} \label{prop:KrullSchmidt}

$\rmi$ If $M \in \Mod^{\fg,\gr}(A)$, then $M$ is indecomposable in
$\Mod^{\fg,\gr}(A)$ iff the algebra $\Hom_{\Mod^{\fg,\gr}(A)}(M,M)$ is local.

$\rmii$ The Krull-Schmidt theorem holds in $\Mod^{\fg,\gr}(A)$.

\end{prop}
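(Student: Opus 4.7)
The plan is to adapt the classical Fitting lemma and Krull--Schmidt arguments to the graded setting; the key observation is that $\Hom_{\Mod^{\fg,\gr}(A)}(M,M)$ already consists of grading-preserving morphisms, so any idempotent splits $M$ as a \emph{graded} module automatically.

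First I would establish a graded Fitting lemma. Since $A$ is finite-dimensional over $\bk$ and $M \in \Mod^{\fg,\gr}(A)$, each homogeneous component $M_n$ is finite-dimensional and $M$ itself is finite-dimensional; in particular $E := \Hom_{\Mod^{\fg,\gr}(A)}(M,M)$ is a finite-dimensional $\bk$-algebra. For any $f \in E$, the descending chain $\Ima(f) \supset \Ima(f^2) \supset \cdots$ and the ascending chain $\Ker(f) \subset \Ker(f^2) \subset \cdots$ are chains of \emph{graded} submodules of $M$, which must stabilize for dimension reasons. Hence for $n$ large enough $M = \Ker(f^n) \oplus \Ima(f^n)$ as graded modules.

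Next, for part $\rmi$, the equivalence ``$M$ indecomposable $\iff$ $E$ has no non-trivial idempotents'' is standard: a graded direct sum decomposition $M = M_1 \oplus M_2$ corresponds to a pair of orthogonal idempotents in $E$ summing to the identity. By the graded Fitting lemma above, every $f \in E$ is either nilpotent (if $\Ima(f^n) = 0$) or an automorphism (if $\Ker(f^n) = 0$) when $M$ is indecomposable, because the non-trivial decomposition $M = \Ker(f^n) \oplus \Ima(f^n)$ is forbidden. In a finite-dimensional algebra the set of non-units forms a two-sided ideal exactly when every element is either nilpotent or invertible, which is the definition of being local; so indecomposability of $M$ is equivalent to $E$ being local.

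For part $\rmii$, existence of a decomposition $M = M_1 \oplus \cdots \oplus M_r$ into graded indecomposables follows by induction on $\dim_{\bk}(M)$. For uniqueness, suppose $M_1 \oplus \cdots \oplus M_r \cong N_1 \oplus \cdots \oplus N_s$ with all summands graded indecomposable. By $\rmi$, each $\End(M_i)$ is local. The classical Azumaya argument using the exchange property for modules with local endomorphism rings then applies verbatim in $\Mod^{\fg,\gr}(A)$ — the projection/inclusion morphisms used in the argument are all automatically grading-preserving — yielding $r = s$ and a permutation $\sigma$ with $M_i \cong N_{\sigma(i)}$ in $\Mod^{\fg,\gr}(A)$. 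The only point requiring any care is to verify that the proof of Azumaya's theorem uses nothing beyond the existence of local endomorphism rings and a finite decomposition, both of which we have; this is the step closest to an ``obstacle,'' but it is entirely formal. No new ideas beyond the ungraded case are needed, which is why the author simply refers to \cite[E.6]{AJS}.
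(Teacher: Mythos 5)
Your proof is correct and follows precisely the route the paper intends: the paper's proof consists of the single line ``These results can be proved exactly as in the non-graded case (see also [E.6]{AJS} for a proof in a more general context),'' and your write-up simply supplies the expected details -- the graded Fitting lemma for finite-dimensional graded modules, the local-ring characterization, and the observation that Azumaya's exchange argument transfers verbatim since all relevant morphisms are grading-preserving. No meaningful divergence from the paper's approach.
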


These results can be used to deduce information on the structure of a
graded $A$-module $M$ when we know the structure of $\For(M)$. Let $M \in \Mod^{\fg,\gr}(A)$, with decomposition $M=M^1 \oplus \cdots \oplus M^n$ as a sum of indecomposable submodules in $\Mod^{\fg,\gr}(A)$. Then
\begin{equation} \label{eq:decompositionM} \For(M)=\For(M^1) \oplus
  \cdots \oplus \For(M^n) \end{equation} in $\Mod(A)$. By
Theorem \ref{thm:thmGG}$\rmi$, $\For(M^j)$ is
indecomposable for all $j$. Hence \eqref{eq:decompositionM} is the decomposition of
$\For(M)$ as a sum of indecomposable submodules (which is unique, up
to isomorphism and permutation, by the Krull-Schmidt theorem). So the
$M^j$'s are lifts of the indecomposable direct summands of $\For(M)$. For later reference, let us spell out the following easy consequence of these remarks and
Theorem \ref{thm:thmGG}, which is implicit in \cite{GGGr}.

\begin{cor} \label{cor:corGG}

$M \in \Mod^{\gr}(A)$ is  semi-simple (resp. simple) in $\Mod^{\gr}(A)$ iff
$\For(M)$ is a semi-simple (resp. simple) $A$-module.

\end{cor}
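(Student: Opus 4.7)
\emph{Proof proposal.} The plan is to handle the simple case first, and then bootstrap to the semi-simple case using a Krull--Schmidt argument. The two ingredients feeding into everything will be Theorem \ref{thm:thmGG}$\rmi$ (indecomposability is preserved and reflected by $\For$) and Theorem \ref{thm:thmGG}$\rmiii$ (the socle and radical of $\For(M)$ are automatically homogeneous, hence graded submodules of $M$).

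\emph{Simple case.} If $\For(M)$ is a simple $A$-module, then any nonzero graded submodule $N \subset M$ gives a nonzero submodule $\For(N) \subset \For(M)$, forcing $\For(N) = \For(M)$ and thus $N = M$; so $M$ is simple in $\Mod^{\gr}(A)$. Conversely, suppose $M$ is simple in $\Mod^{\gr}(A)$. Then $M$ is indecomposable in $\Mod^{\gr}(A)$, so by Theorem \ref{thm:thmGG}$\rmi$ the module $\For(M)$ is indecomposable in $\Mod^{\fg}(A)$. Now apply Theorem \ref{thm:thmGG}$\rmiii$: the socle $\soc(\For(M))$ is a graded submodule of $M$, and it is nonzero because $\For(M)$ is a nonzero finite-dimensional $A$-module. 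Simplicity of $M$ in $\Mod^{\gr}(A)$ then forces $\soc(\For(M)) = \For(M)$, i.e.\ $\For(M)$ is semi-simple. Combined with indecomposability, this yields that $\For(M)$ is simple.

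\emph{Semi-simple case.} The ``only if'' direction is immediate: write $M$ as a direct sum of objects that are simple in $\Mod^{\gr}(A)$ and apply the simple case summand by summand. For the ``if'' direction, use Proposition \ref{prop:KrullSchmidt} to decompose $M = M^1 \oplus \cdots \oplus M^n$ into indecomposables in $\Mod^{\gr}(A)$; by Theorem \ref{thm:thmGG}$\rmi$ this descends to a decomposition $\For(M) = \For(M^1) \oplus \cdots \oplus \For(M^n)$ into indecomposable ordinary $A$-modules. Each $\For(M^j)$ is a direct summand of the semi-simple module $\For(M)$, hence itself semi-simple; being also indecomposable, $\For(M^j)$ is simple. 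Invoking the simple case already established, each $M^j$ is simple in $\Mod^{\gr}(A)$, and therefore $M$ is semi-simple in $\Mod^{\gr}(A)$.

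\emph{Obstacles.} There are essentially none: the statement is genuinely ``book-keeping'' on top of Theorem \ref{thm:thmGG}. The only point that requires a moment of care is remembering to use Theorem \ref{thm:thmGG}$\rmiii$ to promote $\soc(\For(M))$ to a graded submodule in the simple direction; without this, one would only be able to conclude that $\For(M)$ is indecomposable, which is strictly weaker than simple.
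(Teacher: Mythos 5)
Your proof is correct, and it uses exactly the ingredients the paper points to --- Theorem \ref{thm:thmGG}$\rmi$, $\rmiii$ and the Krull--Schmidt discussion immediately preceding the corollary; the paper itself gives no proof beyond calling the result an ``easy consequence of these remarks and Theorem \ref{thm:thmGG},'' and your argument is plainly the one intended. One small remark: the Krull--Schmidt step in your ``if'' direction of the semi-simple case tacitly assumes $M$ finitely generated, whereas the corollary is formally stated for all of $\Mod^{\gr}(A)$; this matches the scope of the paper's own remarks (which also take $M \in \Mod^{\fg,\gr}(A)$) and is all that is used downstream, though in the non-finitely-generated case one would instead argue that every homogeneous element of $M$ lies in a finitely generated graded submodule, apply the finitely generated case there, and conclude that $M$ is a (possibly infinite) sum of simple graded submodules.
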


\section{Projective $(\calU \frakg)_0$-modules}
\label{sec:sectionreflectionfunctors}

In this section we study the RHS of
diagram $(*)$ in the introduction of section \ref{sec:statement}. More precisely, we introduce reflection functors (and their ``graded versions''), and study their action on projective modules.

From now on, for simplicity we assume that $G$ is quasi-simple.

\subsection{Geometric reflection functors}\label{ss:paragraphreflection}

Let $\alpha \in \Phi$. Recall the reflection functors (Definition \ref{def:defreflection}), and
the morphism $\widehat{\pi}_{\alpha}$ (see \eqref{eq:defpiPhat}). Consider the functor $\frakR_{\alpha}:=L(\widehat{\pi}_{\alpha})^*
\circ R(\widehat{\pi}_{\alpha})_*$. The reason for this notation is the commutativity of the following diagram, by
Proposition \ref{prop:translationfixedFr}: \begin{equation} \label{eq:diagramRalpha}
\xymatrix@R=16pt{ \DGCoh((\wfrakg \, \rcap_{\frakg^* \times \calB} \, \calB)^{(1)})
  \ar[d]_-{\widehat{\gamma}^{\calB}_{0}}^-{\wr}
  \ar[rr]^-{\frakR_{\alpha}} & & \DGCoh((\wfrakg \, \rcap_{\frakg^*
  \times \calB} \, \calB)^{(1)})
\ar[d]^-{\widehat{\gamma}^{\calB}_{0}}_-{\wr}
\\ \calD^b \Mod_{0}((\calU \frakg)_0)
\ar[rr]^-{R_{\alpha}} & & \calD^b
  \Mod_{0}((\calU \frakg)_0). } \end{equation}

Now we want to make such a construction for the affine simple root
$\alpha_0$. For simplicity, sometimes we write $s_0$ for the corresponding simple reflection, instead of $s_{\alpha_0}$. We will use the following lemma. Recall the lift $C: W_{\aff}' \to B_{\aff}'$ of the natural projection.

\begin{lem} \label{lem:conjugationaffinesimpleref}

There exists $\beta \in \Phi$ and $b_0 \in B_{\aff}'$ such that $C(s_0) = b_0 \cdot C(s_{\beta}) \cdot (b_0)^{-1}$.

\end{lem}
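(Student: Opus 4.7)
The plan is to first exhibit $s_0$ as a $W_{\aff}'$-conjugate of some finite simple reflection, and then lift that conjugation to $B_{\aff}'$.

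First I would identify $s_0$ inside $W_{\aff}' = W \ltimes \bbX$. The affine simple reflection acts on $\bbX \otimes_{\bbZ} \mathbb{R}$ as the reflection across the ``far'' wall of the fundamental alcove $C_0$, namely the hyperplane $\langle x + \rho, \theta^\vee \rangle = p$, where $\theta$ is the highest short root of $R$. Thus, as an element of $W_{\aff} = W \ltimes \bbY \subset W_{\aff}'$, we have $s_0 = t_\theta \cdot s_\theta$.

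Second, I would construct the conjugation inside $W_{\aff}'$. Since $G$ is simply-connected, $\bbX$ is the full weight lattice of $R$, so the pairing $\langle \, \cdot \, , \theta^\vee \rangle : \bbX \to \bbZ$ is surjective; pick $\mu \in \bbX$ with $\langle \mu, \theta^\vee \rangle = -1$. A direct computation using $s_\theta(\mu) = \mu + \theta$ yields
\[ t_\mu \cdot s_0 \cdot t_{-\mu} \; = \; t_{\mu + \theta - s_\theta(\mu)} \cdot s_\theta \; = \; s_\theta. \]
Since $\theta$ is a root, it is $W$-conjugate to some simple root $\beta \in \Phi$ (chosen in the short $W$-orbit if $R$ is non-simply-laced); pick $w \in W$ with $w(\beta) = \theta$, so that $w s_\beta w^{-1} = s_\theta$. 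Setting $\sigma := t_{-\mu} \cdot w \in W_{\aff}'$, this gives $s_0 = \sigma \cdot s_\beta \cdot \sigma^{-1}$ in $W_{\aff}'$.

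Third, I would lift this identity to $B_{\aff}'$ by taking $b_0 := C(\sigma)$ and verifying that $C(s_0) = b_0 \cdot C(s_\beta) \cdot b_0^{-1}$. This splits into two pieces. For the $w$-conjugation, the standard fact about Bott--Samelson lifts in the finite braid group attached to $W$ gives $C(w) \cdot T_\beta \cdot C(w)^{-1} = C(s_\theta)$ whenever $w(\beta) = \theta$. For the conjugation by $t_{-\mu}$, one derives an identity of the form $\theta_{-\mu} \cdot C(s_\theta) \cdot \theta_\mu = C(s_0)$ from relations (3) and (4) of Definition~\ref{def:defBaff'}; the hypothesis $\langle \mu, \theta^\vee \rangle = -1$ is precisely what is needed in order to iteratively apply (4).

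The main obstacle will be this final step: one must verify that the two proposed elements of $B_{\aff}'$ really agree on the nose, rather than only after projection to $W_{\aff}'$. In practice I expect this to follow from comparing lengths and invoking the uniqueness of the canonical lift on length-$1$ elements of $W_{\aff}$, but the bookkeeping in the Bernstein-type relations (keeping track of whether the weights involved are dominant, and of the signs appearing when one writes $\theta_\mu = \theta_{\mu_1} \theta_{\mu_2}^{-1}$) is the delicate point.
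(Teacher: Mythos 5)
Your reduction inside $W_{\aff}'$ is correct: $s_0 = t_\theta s_\theta$ with $\theta$ the highest \emph{short} root (since $W_{\aff} = W \ltimes \bbY$ with $\bbY$ the root lattice), and with $\langle\mu,\theta^\vee\rangle = -1$ and $w(\beta)=\theta$, the element $\sigma = t_{-\mu}w$ satisfies $\sigma s_\beta \sigma^{-1} = s_0$. The gap is exactly where you say it is — the lift to $B_{\aff}'$ — but the two-step strategy you propose for closing it does not work as written. The intermediate claim $C(w)T_\beta C(w)^{-1} = C(s_\theta)$ is false whenever $\theta$ is not simple: already in type $\mathbf{A}_2$ with $w=s_1$, $\beta=\alpha_2$ one has $C(w)T_\beta C(w)^{-1} = T_1 T_2 T_1^{-1}$, while $C(s_\theta) = T_1 T_2 T_1$; these differ (for instance in the abelianization of the braid group). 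Moreover $C(t_{-\mu}w)$ need not equal $C(t_{-\mu})C(w)$, since lengths need not add.

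The approach is nevertheless salvageable by a single length argument applied to $\sigma$ itself rather than to its two factors separately: since $s_0\sigma = \sigma s_\beta$ is one element with two one-step factorizations, either $\ell(\sigma s_\beta) = \ell(\sigma)+1$, in which case $C(\sigma)T_\beta = C(s_0\sigma) = T_0 C(\sigma)$ and $b_0 = C(\sigma)$ works; or $\ell(\sigma s_\beta) = \ell(\sigma)-1$, in which case the same computation with $\sigma$ replaced by $\tau := \sigma s_\beta = s_0\sigma$ gives $T_0 = C(\tau)T_\beta C(\tau)^{-1}$. Done carefully this gives a uniform, type-independent proof, which is genuinely different from (and in some ways cleaner than) the one in the paper. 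The paper instead splits into cases: when $\bbX/\bbY\neq 0$ it takes $\sigma$ to be a nontrivial length-zero element $\omega$, for which the braid lift is immediate since all lengths are trivially additive and $\omega$ automatically sends $s_0$ to a finite simple reflection; in the three remaining types ($\mathbf{G}_2$, $\mathbf{F}_4$, $\mathbf{E}_8$), where $\Omega=1$ and no such $\omega$ exists, it avoids $\sigma$ altogether and uses the half-braid identity $T_0 = (T_\beta T_0)\,T_\beta\,(T_\beta T_0)^{-1}$ for the finite simple root $\beta$ adjacent to the affine node by a single bond.
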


\begin{proof} First, assume $G$ is not of type $\mbfG_2$, $\mbfF_4$ or $\mbfE_8$. Then $\bbX / \bbY \neq 0$, hence there exists $\omega \in W_{\aff}'$ with $\ell(\omega)=0$, but $\omega \neq 1$. Then $\omega \cdot s_0 \cdot \omega^{-1}$ is a simple reflection $s_{\beta}$ for some $\beta \in \Phi$. As lengths add in this relation, we have also $C(s_0) = b_0 \cdot C(s_{\beta}) \cdot (b_0)^{-1}$ for $b_0 = C(\omega)$.

Now assume\footnote{More generally, this second argument works if $G$ is not of type $\mbfC_n$, $n \geq 2$.} $G$ is of type $\mbfG_2$, $\mbfF_4$ or $\mbfE_8$. There exists a simple root $\beta$ such that the braid relation between $s_0$ and $s_{\beta}$ is of length $3$. Then we have $C(s_0) = C(s_{\beta}) C(s_0) C(s_{\beta}) C(s_0)^{-1} C(s_{\beta})^{-1}$. \end{proof}

In the rest of this paper, we fix such a $\beta$ and such a $b_0$.

\begin{cor} \label{cor:affinereflectionfunctor}

Keep the notation of Lemma {\rm \ref{lem:conjugationaffinesimpleref}}. For any object $M \in \calD^b \Mod^{\fg}_{(0,0)}(\calU \frakg)$, resp. $M \in \calD^b \Mod^{\fg}_{0}((\calU \frakg)_0)$, there exists an isomorphism\footnote{It is not clear from our proof whether or not these isomorphisms are functorial. However, this can be checked easily if $G$ is not of type $\mbfG_2$, $\mbfF_4$ or $\mbfE_8$.} \[ R_{\alpha_0}(M) \cong  \mathbf{I}_{b_0} \circ R_{\beta} \circ
\mathbf{I}_{(b_0)^{-1}}(M), \quad \text{resp.} \quad R_{\alpha_0}(M) \cong  \mathbf{I}_{b_0}^{{\rm res}} \circ R_{\beta} \circ
\mathbf{I}^{{\rm res}}_{(b_0)^{-1}}(M). \]

\end{cor}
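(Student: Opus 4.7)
The strategy is to realise both $R_{\alpha_0}(M)$ and $\mathbf{I}_{b_0}\circ R_\beta\circ \mathbf{I}_{(b_0)^{-1}}(M)$ as the middle term of a distinguished triangle whose outer vertices are $M$ and $\mathbf{I}_{C(s_0)^{-1}}(M)$, then invoke the ``cone is unique up to non-canonical isomorphism'' principle. The functoriality caveat in the footnote is exactly the failure of uniqueness to be natural.

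First I would establish, for any finite simple $\alpha\in\Phi$, a distinguished triangle of endofunctors of $\calD^{b}\Mod^{\fg}_{(0,0)}(\calU\frakg)$,
\[
\mathbf{I}_{C(s_\alpha)^{-1}} \ \to\ R_\alpha \ \to\ \mathrm{Id} \ \xrightarrow{+1}.
\]
To do this, apply the convolution formalism to the short exact sequence \eqref{eq:kernelswfrakg2} on $\wfrakg^{(1)}\times\wfrakg^{(1)}$: the middle term $\calO_{\wfrakg^{(1)}\times_{\wfrakg_\alpha^{(1)}}\wfrakg^{(1)}}$ is the kernel of the functor $\frakR_\alpha = L(\widetilde{\pi}_\alpha)^{*}\circ R(\widetilde{\pi}_\alpha)_{*}$; the right term $\calO_{\Delta\wfrakg^{(1)}}$ is the kernel of the identity; the left term $\calO_{S_\alpha^{(1)}}(\rho-\alpha,-\rho)\cong\calO_{S_\alpha^{(1)}}(-\rho,\rho-\alpha)$ is the kernel of $\mathbf{J}_{T_\alpha^{-1}}$ by Theorem \ref{thm:actionBaff'}$\rmii$. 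Then transport across the equivalences using the commutative diagrams \eqref{eq:diagramactionUg} and \eqref{eq:diagramRalpha}. Specialising to $\alpha=\beta$ and applying the auto-equivalence $\mathbf{I}_{b_0}\circ(-)\circ\mathbf{I}_{(b_0)^{-1}}$, Lemma \ref{lem:conjugationaffinesimpleref} yields
\[
\mathbf{I}_{C(s_0)^{-1}}(M) \ \to\ \mathbf{I}_{b_0}R_\beta\mathbf{I}_{(b_0)^{-1}}(M) \ \to\ M \ \xrightarrow{+1}.
\]

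Next I would construct the parallel triangle
\[
\mathbf{I}_{C(s_0)^{-1}}(M) \ \to\ R_{\alpha_0}(M) \ \to\ M \ \xrightarrow{+1},
\]
where the map on the right is the adjunction counit of $T_0^{\mu_{\alpha_0}}\dashv T_{\mu_{\alpha_0}}^0$. When $G$ is not of type $\mathbf{G}_2$, $\mathbf{F}_4$, $\mathbf{E}_8$, the element $b_0=C(\omega)$ has length zero, the element $\omega\in W_{\aff}'$ permutes affine simple roots, and one can check directly that $\mathbf{I}_{C(\omega)}$ intertwines the relevant translation functors, so that the triangle for $\alpha_0$ is transported from that for $\beta$. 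In the exceptional types one uses the braid relation $C(s_0)=C(s_\beta)C(s_0)C(s_\beta)C(s_0)^{-1}C(s_\beta)^{-1}$ together with the already established triangle for $\beta$ and a straightforward iteration. Comparing the two triangles through rotation and the standard non-functorial lifting of morphisms on cones produces the required (non-natural) isomorphism $R_{\alpha_0}(M)\cong \mathbf{I}_{b_0}R_\beta\mathbf{I}_{(b_0)^{-1}}(M)$.

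The restricted version is deduced verbatim: one replaces $\mathbf{I}_{b}$ by $\mathbf{I}_b^{{\rm res}}$, uses the exact sequence \eqref{eq:kernelswcalN2} on $\wcalN^{(1)}\times\wcalN^{(1)}$ in place of \eqref{eq:kernelswfrakg2}, and transports the resulting triangle across Theorem \ref{thm:localizationfixedFr} and Proposition \ref{prop:actionBaff'DGCoh}; the compatibility of $\widehat{\gamma}^{\calB}_{0}$ with reflection functors (diagram \eqref{eq:diagramRalpha}, restricted version) ensures the same triangle argument goes through.

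The main obstacle is the asymmetry between the finite simple roots, where \emph{all} of the argument is powered by the geometric description $\frakR_\alpha = L(\widetilde{\pi}_\alpha)^{*}R(\widetilde{\pi}_\alpha)_{*}$, and the affine simple root $\alpha_0$, for which no such $\widetilde{\pi}_{\alpha_0}$ exists. Everything therefore hinges on producing the cocone identification $\mathrm{cocone}(R_{\alpha_0}\to\mathrm{Id})\cong \mathbf{I}_{C(s_0)^{-1}}$ by purely representation-theoretic/braid-combinatorial means, and this is precisely the point at which naturality is lost in the exceptional types.
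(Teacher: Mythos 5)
Your overall strategy — put both $R_{\alpha_0}(M)$ and $\mathbf{I}_{b_0}R_\beta\mathbf{I}_{(b_0)^{-1}}(M)$ into distinguished triangles with the same outer vertices and invoke non-canonical uniqueness of the cone — is exactly the paper's. The paper uses the triangle $\mathrm{Id} \to R_\delta \to \mathbf{I}_{C(s_\delta)}$ (unit of adjunction) rather than your rotated $\mathbf{I}_{C(s_\delta)^{-1}} \to R_\delta \to \mathrm{Id}$, but that is a cosmetic difference; both are available. Deriving the basic finite-root triangle from the geometric exact sequence \eqref{eq:kernelswfrakg2} via convolution kernels and transporting across $\gamma^{\calB}_0$ is also fine, though more elaborate than needed.

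The genuine gap is in your step establishing the triangle for $\alpha_0$. This triangle is \emph{given}, uniformly for every $\delta\in\Phi_{\aff}$ including the affine simple reflection, as part of the very definition of the $B_{\aff}'$-action in \cite[\S 2.3]{BMR2}: the functor $\mathbf{I}_{C(s_\delta)}$ is by construction the cone of the adjunction unit $\Id\to R_\delta$, for all $\delta\in\Phi_{\aff}$. No case distinction on the type of $G$ is needed, and nothing needs to be ``transported'' from $\beta$. Your treatment instead tries to manufacture the $\alpha_0$-triangle: for non-exceptional types you argue that $\mathbf{I}_{C(\omega)}$ intertwines the translation functors — but if that holds, it already gives a functorial isomorphism $R_{\alpha_0}\cong\mathbf{I}_{C(\omega)}R_\beta\mathbf{I}_{C(\omega)^{-1}}$, and the whole triangle-comparison is unnecessary in that case; for the exceptional types you appeal to a ``straightforward iteration'' of the braid relation, which is not spelled out and does not obviously produce the required triangle. (It also threatens circularity: using the conjugation relation of Lemma \ref{lem:conjugationaffinesimpleref} to build the $\alpha_0$-triangle, and then comparing it against the conjugated $\beta$-triangle coming from the same relation, can collapse into a tautology unless the two sides are constructed genuinely independently.) The fix is simply to cite the triangle for $\alpha_0$ from \cite[\S 2.3]{BMR2}, as the paper does with ``by definition'', and then the rest of your argument goes through unchanged.
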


\begin{proof} We only prove the first isomorphism, the second one is similar. First, Lemma \ref{lem:conjugationaffinesimpleref} implies that $\mathbf{I}_{C(s_0)} \cong  \mathbf{I}_{b_0} \circ \mathbf{I}_{C(s_\beta)} \circ \mathbf{I}_{(b_0)^{-1}}$. By definition, for any $N \in \calD^b \Mod^{\fg}_{(0,0)}(\calU \frakg)$ there is an exact triangle $N \to R_{\beta} N \to \mathbf{I}_{C(s_{\beta})} N$. Hence, for $M \in \calD^b \Mod^{\fg}_{(0,0)}(\calU \frakg)$ there is an exact triangle \[ M \to \mathbf{I}_{b_0} \circ R_{\beta} \circ \mathbf{I}_{(b_0)^{-1}}(M) \to  \mathbf{I}_{b_0} \circ \mathbf{I}_{C(s_\beta)} \circ \mathbf{I}_{(b_0)^{-1}}(M) \cong \mathbf{I}_{C(s_0)}(M). \] On the other hand, by definition there is an exact triangle  $M \to R_{\alpha_0} M \to \mathbf{I}_{C(s_0)} M$. Identifying these triangles we deduce the isomorphism. \end{proof}

{F}or this reason we define the functor \[\frakR_{\alpha_0} : \DGCoh((\wfrakg
\, \rcap_{\frakg^* \times \calB} \, \calB)^{(1)}) \to \DGCoh((\wfrakg
\, \rcap_{\frakg^* \times \calB} \, \calB)^{(1)})\] as follows:
$\frakR_{\alpha_0} :=
\mathbf{J}^{{\rm dg}}_{b_0} \circ L(\widehat{\pi}_{\beta})^* \circ
R(\widehat{\pi}_{\beta})_* \circ \mathbf{J}^{{\rm
    dg}}_{(b_0)^{-1}}$ (see \S \ref{ss:paragraphgradedversionsaction} for notation). With this definition, by Corollary \ref{cor:affinereflectionfunctor}, the diagram
analogous to \eqref{eq:diagramRalpha} is commutative, at least on every object.

\subsection{Dg versions of the reflection functors}

Let $\alpha \in \Phi$. The dg-ringed spaces $(\calB^{(1)}, \,
\Lambda_{\calO_{\calB^{(1)}}}(\calT_{\calB^{(1)}}^{\vee}))$ and
$(\calP_{\alpha}^{(1)}, \,
\Lambda_{\calO_{\calP_{\alpha}^{(1)}}}(\calT_{\calP_{\alpha}^{(1)}}^{\vee}))$
are naturally $\Gm$-equivariant (see \S \ref{ss:gradedcase}), and
$\widehat{\pi}_{\alpha}$ is also $\Gm$-equivariant. Easy arguments
show that the functors $R(\widehat{\pi}_{\alpha,\Gm})_*$ and
$L(\widehat{\pi}_{\alpha,\Gm})^*$ restrict to functors between the
categories $\DGCoh^{\gr}((\wfrakg \, \rcap_{\frakg^* \times \calB} \,
\calB)^{(1)})$ and $\DGCoh^{\gr}((\wfrakg_{\alpha} \, \rcap_{\frakg^*
  \times \calP_{\alpha}} \, \calP_{\alpha})^{(1)})$, with usual
compatibility conditions. We
define \[ \frakR^{\gr}_{\alpha} := L(\widehat{\pi}_{\alpha,\Gm})^* \circ
R(\widehat{\pi}_{\alpha,\Gm})_*. \] This is an endofunctor of
$\DGCoh^{\gr}((\wfrakg \, \rcap_{\frakg^* \times \calB} \,
\calB)^{(1)})$. For the affine simple root $\alpha_0$ we define similarly, with the
notation of Lemma \ref{lem:conjugationaffinesimpleref},
\begin{equation}\label{eq:defreflectionaffineroot}
  \frakR^{\gr}_{\alpha_0} := \mathbf{J}_{b_0}^{\dg,\gr}
\circ L(\widehat{\pi}_{\beta,\Gm})^* \circ R(\widehat{\pi}_{\beta,\Gm})_*
\circ \mathbf{J}_{(b_0)^{-1}}^{\dg,\gr}. \end{equation} With these definitions, for any $\delta \in \Phi_{\aff}$ the following
diagram commutes: \begin{equation} \label{eq:diagramRdeltagr}
  \xymatrix@R=16pt{\DGCoh^{\gr}((\wfrakg
\, \rcap_{\frakg^* \times \calB} \, \calB)^{(1)}) \ar[d]_{\For}
\ar[r]^{\frakR_{\delta}^{\gr}} & \DGCoh^{\gr}((\wfrakg
\, \rcap_{\frakg^* \times \calB} \, \calB)^{(1)}) \ar[d]^{\For} \\ \DGCoh((\wfrakg
\, \rcap_{\frakg^* \times \calB} \, \calB)^{(1)}) \ar[r]^{\frakR_{\delta}} &
\DGCoh((\wfrakg \, \rcap_{\frakg^* \times \calB} \, \calB)^{(1)}). }
\end{equation}

To conclude this subsection, for later use we study the relation between the
functor $\frakR_{\alpha}$ ($\alpha \in
\Phi$) and the braid group action. Consider the following diagram of $\Gm$-equivariant dg-schemes: \[ \xymatrix@R=8pt{ &
\bigl( (\wfrakg \times_{\wfrakg_{\alpha}} \wfrakg) \, \rcap_{\frakg^* \times
(\calB \times \calB)} \, (\calB \times \calB) \bigr)^{(1)} \ar[ld]^{q_{1}}
\ar[rd]_{q_{2}} & \\ \bigl( \wfrakg \, \rcap_{\frakg^* \times \calB} \,
\calB \bigr)^{(1)} \ar[rd]^{\widehat{\pi}_{\alpha}} & & \bigl( \wfrakg
\, \rcap_{\frakg^*
\times \calB} \, \calB \bigr)^{(1)}. \ar[ld]_{\widehat{\pi}_{\alpha}} \\ &
\bigl( \wfrakg_{\alpha} \, \rcap_{\frakg^* \times \calP_{\alpha}} \,
\calP_{\alpha} \bigr)^{(1)} } \] Here we consider the realization of the
dg-schemes given by the first equivalence of Lemma \ref{lem:equDGCoh} (and
analogues for the other dg-schemes). We want to construct an
isomorphism of
endofunctors of $\DGCoh^{\gr}((\wfrakg \, \rcap_{\frakg^* \times \calB} \,
\calB)^{(1)})$: \begin{equation}\label{eq:isomactiongr}
  L(\widehat{\pi}_{\alpha,\Gm})^* \circ
R(\widehat{\pi}_{\alpha,\Gm})_* \cong R(q_{2,\Gm})_* \circ
L(q_{1,\Gm})^*.\end{equation} There is a natural adjunction morphism
$\Id \to R(q_{1,\Gm})_* \circ L(q_{1,\Gm})^*$. Applying
$R(\widehat{\pi}_{\alpha,\Gm})_*$ to this morphism, and using that $\widehat{\pi}_{\alpha} \circ q_{1} =
\widehat{\pi}_{\alpha} \circ q_{2}$, one obtains a morphism
$R(\widehat{\pi}_{\alpha,\Gm})_* \to R(\widehat{\pi}_{\alpha,\Gm})_* \circ
R(q_{2,\Gm})_* \circ L(q_{1,\Gm})^*$. Now, applying again
adjunction, one obtains the desired morphism
$$L(\widehat{\pi}_{\alpha,\Gm})^* \circ R(\widehat{\pi}_{\alpha,\Gm})_* \to
R(q_{2,\Gm})_* \circ L(q_{1,\Gm})^*.$$ Under the
functor $\DGCoh^{\gr}((\wfrakg \, \rcap_{\frakg^* \times \calB} \,
\calB)^{(1)}) \xrightarrow{R(p_{\Gm})_*} \calD^b
\Coh^{\Gm}(\wfrakg^{(1)}) \xrightarrow{\For} \calD^b
\Coh(\wfrakg^{(1)})$, this morphism corresponds to the isomorphism
considered in \cite[5.2.2]{RAct}. Hence it is also an isomorphism.

Recall the shift functor $\langle 1 \rangle$ defined in \S \ref{ss:koszulinclusion}. The following lemma follows
immediately from isomorphism
\eqref{eq:isomactiongr} and the exact sequence \eqref{eq:kernelswfrakggr}.

\begin{lem}\label{lem:trianglereflection-braidgroup}

For $\alpha \in \Phi$, there exists a distinguished triangle of functors
\begin{equation*} \Id\langle 1\rangle
\ \to \ \frakR^{\gr}_{\alpha}\langle -1\rangle \ \to \
\mathbf{J}^{\dg,\gr}_{T_{\alpha}}. \end{equation*}

\end{lem}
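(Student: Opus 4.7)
The plan is to convert the short exact sequence of coherent sheaves \eqref{eq:kernelswfrakggr} on $\wfrakg \times_{\wfrakg_{\alpha}} \wfrakg$ into a distinguished triangle of convolution kernels, and then identify the three resulting convolution functors with $\Id\langle 1\rangle$, $\frakR^{\gr}_{\alpha}\langle -1\rangle$, and $\mathbf{J}^{\dg,\gr}_{T_{\alpha}}$.

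First, I would recall the isomorphism \eqref{eq:isomactiongr}, which lets me realize $\frakR^{\gr}_{\alpha} = L(\widehat{\pi}_{\alpha,\Gm})^{*} \circ R(\widehat{\pi}_{\alpha,\Gm})_{*}$ as the convolution $R(q_{2,\Gm})_{*} \circ L(q_{1,\Gm})^{*}$ with kernel $\calO_{(\wfrakg \times_{\wfrakg_{\alpha}} \wfrakg)^{(1)}}$, viewed as an object of the appropriate $\DGCoh^{\gr}$ category via the fiber-product dg-scheme. Similarly, convolution with the kernel $\Delta_{*} \calO_{\wfrakg^{(1)}}$ gives the identity (the diagonal embedding realizes $\Id$), and by Proposition \ref{prop:actionBaff'Gm} convolution with the $\Gm$-equivariant kernel $\calO_{S_{\alpha}^{(1)}}$ gives $\mathbf{J}^{\dg,\gr}_{T_{\alpha}}\langle 1\rangle$ (since $T_{\alpha}$ corresponds by Proposition \ref{prop:actionBaff'Gm} to $\calO_{S_{\alpha}^{(1)}}\langle -1\rangle$).

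Next, I would push the exact sequence \eqref{eq:kernelswfrakggr},
\[ \calO_{\Delta\wfrakg}\langle 2\rangle \ \hookrightarrow \ \calO_{\wfrakg \times_{\wfrakg_{\alpha}} \wfrakg} \ \twoheadrightarrow \ \calO_{S_{\alpha}}, \]
through the convolution construction used in the proof of Proposition \ref{prop:actionBaff'DGCoh}. Taking Frobenius twists and applying the convolution functor $R(q_{2,\Gm})_{*}(- \otimes^{L} L(q_{1,\Gm})^{*}(-))$, I obtain a distinguished triangle of endofunctors of $\DGCoh^{\gr}((\wfrakg \, \rcap_{\frakg^{*} \times \calB} \, \calB)^{(1)})$:
\[ \Id\langle 2\rangle \ \to \ \frakR^{\gr}_{\alpha} \ \to \ \mathbf{J}^{\dg,\gr}_{T_{\alpha}}\langle 1\rangle. \]
Finally, applying the auto-equivalence $\langle -1\rangle$ yields the desired distinguished triangle.

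The only real point to check carefully is that the kernels $\calO_{\Delta\wfrakg}$, $\calO_{\wfrakg \times_{\wfrakg_{\alpha}} \wfrakg}$, $\calO_{S_{\alpha}}$, as well as the exact sequence between them, lift compatibly to the dg setting (with derived intersections by $\calB$ inside $\frakg^{*} \times \calB$) so that the convolution formulas do give the three advertised functors; this is essentially the content of the construction in Proposition \ref{prop:actionBaff'DGCoh} together with the identification \eqref{eq:isomactiongr}, so I do not expect a genuine obstacle, only some bookkeeping with $\Gm$-weights and the regrading functor $\xi$ of \S\ref{ss:paragraphgradedversionsaction}.
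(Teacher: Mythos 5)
Your argument is correct and is exactly the paper's: the paper's proof of this lemma is the single sentence that it follows from isomorphism \eqref{eq:isomactiongr} and the exact sequence \eqref{eq:kernelswfrakggr}, and you have simply made explicit the intermediate step (feeding \eqref{eq:kernelswfrakggr} through the convolution machinery of Proposition \ref{prop:actionBaff'DGCoh} and tracking the $\Gm$-shifts, in particular that the kernel of $\mathbf{J}^{\dg,\gr}_{T_\alpha}$ is $\calO_{S_\alpha^{(1)}}\langle -1\rangle$). The bookkeeping you flag as the one thing to check carefully is indeed the only content beyond what is already established in \S\ref{ss:paragraphgradedversionsaction}, and you have carried it out correctly.
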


\subsection{Gradings} \label{ss:paragraphgradedmodules}

As in \S \ref{ss:paragraphfixedFrobenius}, for simplicity we denote the
variety $\wfrakg^{(1)} \times_{\frakh^*{}^{(1)}} \frakh^*$ by $X$ in
this subsection. Recall the algebra $\widetilde{U}:=\calU \frakg
\otimes_{\frakZ_{{\rm HC}}} S(\frakh)$, also considered in
\S \ref{ss:paragraphfixedFrobenius}. By \cite[3.4.1]{BMR} we have
$R\Gamma(X,\wcalD) \cong \widetilde{U}$. Let
$\widetilde{U}^{\hat{0}}_{\hat{0}}$ be the completion of
$\widetilde{U}$ with respect to the maximal ideal of its center
$\frakZ \otimes_{\frakZ_{{\rm HC}}} {\rm S}(\frakh)$ generated by
  $\frakh$ and $\frakg^{(1)}$; let also
  $(\calU\frakg)^{\hat{0}}_{\hat{0}}$ be the
completion of $\calU \frakg$ with respect to the maximal ideal of
$\frakZ$ corresponding to the character $(0,0)$. Then the projection $\frakh^* \to
  \frakh^* / (W,\bullet)$ induces an isomorphism
  $\widetilde{U}^{\hat{0}}_{\hat{0}} \cong (\calU
  \frakg)^{\hat{0}}_{\hat{0}}$.

As in \S \ref{ss:paragraphfixedFrobenius}, let
$\widehat{\calB^{(1)}}$ be the formal neighborhood of
$\calB^{(1)} \times \{ 0 \}$ in $\wfrakg^{(1)} \times_{\frakh^*{}^{(1)}}
\frakh^*$. Applying \cite[4.1.5]{EGA3_1} to the proper morphism
$\wfrakg^{(1)} \times_{\frakh^*{}^{(1)}} \frakh^* \to \frakg^*{}^{(1)}
\times_{\frakh^*{}^{(1)}/W} \frakh^*$, and using the fact that
$\frakg^*{}^{(1)} \times_{\frakh^*{}^{(1)}/W} \frakh^*$ is affine, we
obtain \begin{equation} \label{eq:isomorphismUtilde}
R^i\Gamma(\widehat{\calB^{(1)}}, \wcalD_{|\widehat{\calB^{(1)}}}) \
  \cong \left\{ \begin{array}{cl} (\calU \frakg)^{\hat{0}}_{\hat{0}} &
      \text{if } i=0, \\ 0 & \text{otherwise.} \end{array}
  \right. \end{equation} Recall also the isomorphism of sheaves of
algebras on $\widehat{\calB^{(1)}}$ (see \S \ref{ss:reviewlocalization}) \begin{equation} \label{eq:isomorphismUtilde2}
  \wcalD_{|\widehat{\calB^{(1)}}} \cong
  \sheafEnd_{\calO_{\widehat{\calB^{(1)}}}}(\calM^0).\end{equation} Let $\frakZ_{\Fr}^+ \subset \frakZ_{\Fr}$ denote the maximal ideal of associated to $0$. There is a surjection $(\calU \frakg)^{\hat{0}}_{\hat{0}} \twoheadrightarrow (\calU \frakg)^{\hat{0}}_{\hat{0}} / \langle \frakZ_{\Fr}^+ \rangle \cong (\calU \frakg)^{\hat{0}}_0.$ Hence the algebra $(\calU \frakg)_0^{\hat{0}}$
is a quotient of $(\calU \frakg)^{\hat{0}}_{\hat{0}} \cong
\Gamma(\widehat{\calB^{(1)}},
\sheafEnd_{\calO_{\widehat{\calB^{(1)}}}}(\calM^0))$. \smallskip

Let $Y$ be a noetherian scheme and $Z \subset Y$ a closed subscheme,
with corresponding ideal $\calI_Z \subset \calO_Y$. Let $\widehat{Z}$
be the formal neighborhood of $Z$ in $Y$ (a formal scheme). Assume
$\widehat{Z}$ is endowed with a $\Gm$-action. If $\calF$ is a coherent
sheaf on $\widehat{Z}$, a structure of $\Gm$-equivariant coherent
sheaf on $\calF$ is the data, for any $n$, of a structure of
$\Gm$-equivariant coherent sheaf on $\calF / (\calI_Z^n \cdot \calF)$
(as a coherent sheaf on the $n$-th infinitesimal neighborhood of $Z$
in $Y$), all these structures being compatible. The
following result is due to V. Vologodsky (see the second appendix in the
preprint version of \cite{BFG}):

\begin{lem}\label{lem:lemmaVologodsky}

Let $f : Y \to Z$ be a proper morphism of $\bk$-schemes. Let $z$
be a point in $Z$, and $Y_{\hat{z}}$ be the formal neighborhood of
$f^{-1}(z)$ in $Y$. Let $\calE$ be a vector bundle on $Y_{\hat{z}}$, such
that $\Ext^1(\calE,\calE)=0$. If $Y_{\hat{z}}$ is endowed with a (arbitrary) $\Gm$-action,
then there exists a
$\Gm$-equivariant structure on $\calE$.

\end{lem}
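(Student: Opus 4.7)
This is a deformation-theoretic assertion, and the natural strategy is to build the equivariant structure by successive infinitesimal extension. Write $\calI \subset \calO_{Y_{\hat{z}}}$ for the ideal defining $Y_0 := f^{-1}(z)$, let $Y_n$ denote the $n$-th infinitesimal neighbourhood, and set $\calE_n := \calE \otimes_{\calO_{Y_{\hat{z}}}} \calO_{Y_n}$. Since the $\Gm$-action on $Y_{\hat{z}}$ preserves each $Y_n$ and since $\calE = \varprojlim \calE_n$, it suffices to produce a compatible tower of $\Gm$-equivariant structures on the $\calE_n$. Equivalently, writing $a_n, p_n : \Gm \times Y_n \to Y_n$ for the action and projection, the goal is to construct isomorphisms $\phi_n : a_n^* \calE_n \xrightarrow{\sim} p_n^* \calE_n$ restricting to the identity along $\{1\} \times Y_n$, satisfying the usual cocycle identity on $\Gm \times \Gm \times Y_n$, and compatible via reduction modulo $\calI^{n+1}$.

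The core of the argument is the inductive step $\phi_n \rightsquigarrow \phi_{n+1}$. Standard deformation theory identifies the obstruction with a class lying in an $\Ext^1$-group controlled by $\Ext^1(\calE_0, \calE_0 \otimes \calI^{n+1}/\calI^{n+2})$ on $\Gm \times Y_0$, which by a formal-function type argument is a subquotient of $\Ext^1(\calE, \calE)$ computed on $Y_{\hat{z}}$; the hypothesis therefore kills this obstruction and a lift exists. The space of lifts is a torsor under the corresponding $\Ext^0$-group (sections of $\sheafHom(\calE_0,\calE_0) \otimes \calI^{n+1}/\calI^{n+2}$ over $\Gm \times Y_0$), so one can moreover adjust $\phi_{n+1}$ to preserve the cocycle identity: the residual cocycle defect on $\Gm \times \Gm \times Y_0$ is a coboundary because $\Gm$ is affine with vanishing higher quasi-coherent cohomology, itself a consequence of linear reductivity of $\Gm$.

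The base case $n = 0$ must be treated separately, to produce an initial $\phi_0$. Here the idea is that the vanishing of $\Ext^1(\calE_0, \calE_0)$ (itself deduced from the hypothesis on $\calE$ together with the formal-function theorem applied to the proper morphism $f$) implies that the two bundles $a_0^* \calE_0$ and $p_0^* \calE_0$ on $\Gm \times Y_0$, which tautologically agree over $\{1\} \times Y_0$, actually admit a canonical isomorphism on all of $\Gm \times Y_0$; this isomorphism automatically satisfies the cocycle condition by a similar vanishing argument on $\Gm \times \Gm \times Y_0$.

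The principal obstacle will be making rigorous the control of the obstruction classes on finite truncations by the single Ext-vanishing hypothesis on the full formal scheme. The key is to interpret $\Ext^i(\calE, \calE)$ over $Y_{\hat{z}}$ as an inverse limit of its truncated counterparts on the $Y_n$ (using the coherence of $\calE$ to ensure the relevant inverse systems are well-behaved), so that the global vanishing $\Ext^1(\calE, \calE) = 0$ propagates to each stage of the induction. Once this is in place, the induction closes and the system $\{\phi_n\}$ assembles into the desired $\Gm$-equivariant structure on $\calE = \varprojlim \calE_n$.
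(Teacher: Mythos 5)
The paper does not prove this lemma; it is cited from the second appendix (due to V.~Vologodsky) of the preprint version of \cite{BFG}, so there is no in-paper argument to compare against. Your proposal must therefore stand on its own, and as written it has a genuine gap in the step you yourself flag as the ``principal obstacle.''

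The overall framework—construct the equivariant isomorphism $\phi_n$ on $\Gm \times Y_n$ by induction, measure the obstruction at each step by a cohomology class, and use linear reductivity of $\Gm$ to repair the cocycle identity—is a reasonable one, and is indeed what the paper's surrounding definition (``a $\Gm$-equivariant structure on a coherent sheaf on $\widehat{Z}$ is a compatible system of equivariant structures on the $\calF/\calI^n\calF$'') invites. The problem is the claim that the layer-wise obstruction group $H^1\bigl(Y_0, \sheafEnd(\calE_0) \otimes \calI^{n+1}/\calI^{n+2}\bigr)$ (respectively $\Ext^1(\calE_0,\calE_0)$ in the base case) ``is a subquotient of $\Ext^1(\calE,\calE)$ computed on $Y_{\hat{z}}$.'' The theorem on formal functions gives $\Ext^1(\calE,\calE) \cong \varprojlim_n \Ext^1(\calE_n,\calE_n)$; it does \emph{not} exhibit the individual terms $\Ext^1(\calE_n,\calE_n)$, still less their associated graded pieces $\Ext^1(\calE_0,\calE_0 \otimes \calI^n/\calI^{n+1})$, as subquotients of the limit, and vanishing of an inverse limit does not propagate back to the terms. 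The long exact sequence from $0 \to \calI^n\sheafEnd(\calE) \to \sheafEnd(\calE) \to \sheafEnd(\calE_{n-1}) \to 0$ shows exactly where this fails: from $H^1(Y_{\hat{z}},\sheafEnd(\calE))=0$ you only get an injection $\Ext^1(\calE_{n-1},\calE_{n-1}) \hookrightarrow H^2(Y_{\hat{z}}, \calI^n\sheafEnd(\calE))$, and the target is not controlled by the hypothesis unless the relative dimension of $f$ is $\le 1$. So the inductive step, and in particular the base case, do not go through as stated.

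The way out is to place the obstruction directly in the group the hypothesis kills, rather than in its graded pieces. The $\Gm$-action on $Y_{\hat{z}}$ has an infinitesimal generator $\xi \in H^0(Y_{\hat{z}},\calT_{Y_{\hat{z}}})$, and a lift of the $\Gm$-action to $\calE$ is produced from a ``$\xi$-connection'' $\nabla \colon \calE \to \calE$ satisfying $\nabla(fe)=\xi(f)e+f\nabla(e)$. Such a $\nabla$ is a splitting of the exact sequence of $\calO_{Y_{\hat{z}}}$-modules $0 \to \sheafEnd(\calE) \to \calD^1_{\xi}(\calE) \to \calO_{Y_{\hat{z}}} \to 0$, whose obstruction class is the $\xi$-contraction of the Atiyah class of $\calE$ and lies in $\Ext^1_{\calO_{Y_{\hat{z}}}}(\calO,\sheafEnd(\calE)) = H^1(Y_{\hat{z}},\sheafEnd(\calE)) = \Ext^1(\calE,\calE)$, i.e., precisely the group assumed to vanish. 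One then obtains $\nabla$ globally on the formal scheme at once; the compatible system of equivariant structures on the $\calE_n$ follows by restricting $\nabla$ (which preserves $\calI^n\calE$ since $\xi$ preserves $\calI$), and properness of $f$ together with linear reductivity of $\Gm$ enters when integrating the Lie-algebra action to a $\Gm$-action (finiteness of $\mathrm{End}(\calE)$ over $\hat\calO_{Z,z}$, and the ability to adjust $\nabla$ by an endomorphism to make the induced grading on $\calE_0$ integral). If you do insist on a layer-by-layer induction, you must trace a single obstruction class through the long exact sequences relating the truncations rather than assert each graded $\Ext^1$-piece is individually zero.

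As a smaller point, the remark that ``$\Gm$ is affine with vanishing higher quasi-coherent cohomology, itself a consequence of linear reductivity of $\Gm$'' conflates two different facts: vanishing of higher quasi-coherent cohomology on affine schemes is Serre's theorem and has nothing to do with reductivity; linear reductivity is what kills the \emph{group} cohomology $H^i(\Gm,-)$, which is what you actually need when correcting the cocycle defect.
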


Now we consider $\widehat{\calB^{(1)}}$ as the formal neighborhood of the zero-section in $\wfrakg^{(1)}$. We have defined a $\Gm$-action on $\wfrakg^{(1)}$ in \eqref{eq:defactionGm}. This action stabilizes the zero-section, hence induces an action on $\widehat{\calB^{(1)}}$. We can apply Lemma \ref{lem:lemmaVologodsky} to the splitting bundle $\calM^0$, the vanishing hypothesis following from \eqref{eq:isomorphismUtilde} and
\eqref{eq:isomorphismUtilde2}. Hence we obtain a
$\Gm$-equivariant structure on $\calM^0$, and a structure of a
$\Gm$-equivariant sheaf of algebras on
$\wcalD_{|\widehat{\calB^{(1)}}}$.

Applying $\Gamma(\widehat{\calB^{(1)}},-)$, we obtain a
$\Gm$-equivariant algebra structure on
$(\calU \frakg)_{\hat{0}}^{\hat{0}}$,
compatible with the $\Gm$-structure on $\frakg^*{}^{(1)}$ induced by the action on $\wfrakg^{(1)}$. Taking the quotient (by a homogeneous ideal), we obtain a
grading on the algebra $(\calU \frakg)_0^{\hat{0}}$. Let
$\Mod^{\fg,\gr}_0((\calU\frakg)_0)$ denote the
category of finitely generated graded modules over this graded
algebra. The following theorem is a
``graded version'' of Theorem \ref{thm:localizationfixedFr}:

\begin{thm}\label{thm:equivUg_0gr}

There exists a fully faithful triangulated functor
\[ \widetilde{\gamma}^{\calB}_0 : \DGCoh^{\gr}((\wfrakg \, \rcap_{\frakg^*
  \times \calB} \, \calB)^{(1)}) \ \to \ \calD^b
\Mod^{\fg,\gr}_0((\calU\frakg)_0),\] commuting with the shifts $\langle 1 \rangle$, such that the following diagram
commutes: \[ \xymatrix@R=14pt{ \DGCoh^{\gr}((\wfrakg \, \rcap_{\frakg^* \times
    \calB} \, \calB)^{(1)}) \ar[rr]^-{\widetilde{\gamma}^{\calB}_0}
  \ar[d]_-{\For} & & \calD^b \Mod^{\fg,\gr}_0((\calU\frakg)_0)
  \ar[d]^-{\For} \\ \DGCoh((\wfrakg \, \rcap_{\frakg^* \times \calB} \,
  \calB)^{(1)}) \ar[rr]^-{\widehat{\gamma}^{\calB}_0} & & \calD^b
  \Mod^{\fg}_0((\calU\frakg)_0). } \]

\end{thm}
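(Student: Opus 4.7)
The plan is to repeat the proof of Theorem \ref{thm:localizationfixedFr} in a $\Gm$-equivariant fashion, using the $\Gm$-action on $\wfrakg^{(1)}$ defined in \eqref{eq:defactionGm}. Since this action stabilizes the zero section $\calB^{(1)}$, it induces a $\Gm$-action on the formal neighborhood $\widehat{\calB^{(1)}}$. Combining Lemma \ref{lem:lemmaVologodsky} (applied to the proper morphism $X \to \frakg^{*(1)} \times_{\frakh^{*(1)}/W} \frakh^*$ and to $\calE = \calM^0$, whose self-extensions vanish by \eqref{eq:isomorphismUtilde}--\eqref{eq:isomorphismUtilde2}) with the isomorphism of algebras $\wcalD_{|\widehat{\calB^{(1)}}} \cong \sheafEnd_{\calO_{\widehat{\calB^{(1)}}}}(\calM^0)$, one obtains a $\Gm$-equivariant structure on $\wcalD_{|\widehat{\calB^{(1)}}}$, then on its algebra of global sections $(\calU\frakg)_{\hat{0}}^{\hat{0}}$, and finally, by passing to the quotient by the homogeneous ideal generated by $\frakZ_{\Fr}^+$, on $(\calU\frakg)_0^{\hat{0}}$. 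This grading is, by construction, the one referred to in the statement, so $\calD^b\Mod^{\fg,\gr}_0((\calU\frakg)_0)$ is now unambiguously defined and is equipped with the shift $\langle 1 \rangle$.

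Next, I would build a graded enhancement of each of the four steps linking $\DGCoh((\wfrakg \, \rcap_{\frakg^* \times \calB} \, \calB)^{(1)})$ to $\calD^b\Mod^{\fg}_0((\calU\frakg)_0)$ in the proof of Theorem \ref{thm:localizationfixedFr}. First, the graded analogue of \eqref{eq:DGC} is exactly the first equivalence of Lemma \ref{lem:equDGCoh}, and a straightforward $\Gm$-equivariant version of Lemma \ref{lem:lemmaX} (using a $\Gm$-equivariant refinement of Proposition \ref{prop:dgmodulessupport}) gives an equivalence with $\calD^{\qc,\fg}_{\Gm, \calB^{(1)} \times \{0\}}(X, \, \calO_X \otimes_{\bk} \Lambda(\frakg^{(1)}))$. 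Second, the $\Gm$-equivariant splitting bundle $\calM^0$ obtained above yields a graded analogue of \eqref{eq:equivstep1}, identifying this category with $\calD^{\qc,\fg}_{\Gm, \calB^{(1)} \times \{0\}}(X, \, \wcalD \otimes_{\bk} \Lambda(\frakg^{(1)}))$. Third, by Corollary \ref{cor:cordiagramRf_*Gm} (and its companion for $L(\hat{\pi}_{\Gm})^*$) the derived global sections functor $R\Gamma_{\Gm}$ and its left adjoint $\calL_{K,\Gm}^{\widehat{0}}$ exist in the $\Gm$-equivariant world, are compatible with the forgetful functors to the non-graded setting, and therefore form a quasi-inverse pair of equivalences onto $\calD^{\fg,\gr}_0(\calU\frakg \otimes_{\bk} \Lambda(\frakg^{(1)}))$, because the non-graded unit and counit are isomorphisms and the forgetful functors are conservative on hom spaces in the appropriate sense. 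Finally, the quasi-isomorphism of $\Gm$-equivariant dg-algebras $\calU\frakg \otimes_{\frakZ_{\Fr}} K_{\frakg} \xrightarrow{\sim} (\calU\frakg)_0$, together with a graded analogue of Proposition \ref{prop:qisequivalence}, provides the last identification with (a subcategory of) $\calD^b\Mod^{\fg,\gr}_0((\calU\frakg)_0)$.

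Composing these four steps defines $\widetilde{\gamma}^{\calB}_0$. The compatibility with $\langle 1 \rangle$ is automatic because each step is a functor of $\Gm$-equivariant categories, and the commutativity of the diagram with $\widehat{\gamma}^{\calB}_0$ follows step-by-step from Corollaries \ref{cor:cordirectimageGm} and \ref{cor:cordiagramRf_*Gm}, plus the fact that all intermediate equivalences (tensoring with $\calM^0$, $L(\tilde{\pi}_{\Gm})^*$, etc.) were defined precisely to commute with the forgetful functors. The main obstacle is the last step: checking that the graded analogue of Lemma \ref{lem:Ug0centralcharacter} really lands in $\calD^b\Mod^{\fg,\gr}_0((\calU\frakg)_0)$ as defined here, and that the resulting functor is fully faithful rather than just essentially injective on objects. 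For this one argues that a morphism between objects in the image can be represented, after applying a K-projective $\Gm$-equivariant resolution, by an actual $\Gm$-equivariant chain map, and conversely every graded morphism lifts to such a resolution; the non-graded equivalence $\widehat{\gamma}^{\calB}_0$ then forces the induced map on graded Hom groups to be a bijection. Full faithfulness is not strictly stronger than the non-graded equivalence once compatibility with forgetful functors is in hand, but verifying this cleanly through the four-step composition is the only delicate point.
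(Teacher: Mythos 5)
Your first paragraph correctly recovers the construction of the grading on $(\calU\frakg)_0^{\hat{0}}$, and your overall strategy — redoing the four steps of Theorem \ref{thm:localizationfixedFr} in a graded way — is indeed the guiding idea. But there is a genuine gap in the third step of your outline, and the paper explicitly flags it ("This theorem would be easy if we had a $\Gm$-structure on the whole of $\wcalD$, $\calU\frakg$. Unfortunately we only have such a structure on completions"). You invoke Corollary \ref{cor:cordiagramRf_*Gm} to produce $R\Gamma_{\Gm}$ and to get the commuting squares, but that corollary lives in the framework of \S \ref{ss:gradedcase}, where the sheaf of dg-algebras on the source space carries a global $\Gm$-equivariant structure. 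Here the relevant dg-algebra is $\wcalD \otimes_{\bk}\Lambda(\frakg^{(1)})$ on $X = \wfrakg^{(1)}\times_{\frakh^{*(1)}}\frakh^*$, and $\wcalD$ itself is not globally $\Gm$-equivariant — only its restriction to the formal neighborhood $\widehat{\calB^{(1)}}$ is, and only after invoking Lemma \ref{lem:lemmaVologodsky}. So the categories $\calD^{\qc,\fg}_{\Gm,\calB^{(1)}\times\{0\}}(X,\wcalD\otimes\Lambda(\frakg^{(1)}))$ you want to work with are simply not objects of that theory, and the derived functor formalism of \S \ref{ss:gradedcase} does not apply out of the box.

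This is precisely what \S \ref{ss:rkqcGm} is for: the paper introduces "situation~(b)", the formalism of $\Gm$-equivariant dg-modules over a coherent dg-algebra given only on a formal neighborhood, and proves Lemma \ref{lem:morphsimsGm-nonGm}, the comparison $\bigoplus_m \Hom_{\Gm}(\calF,\calG\langle m\rangle) \cong \Hom(\For\calF,\For\calG)$, by a two-step reduction to the affine, finite-level case. With this in hand, the paper constructs $R\Gamma_{\Gm}$ by hand in Corollary \ref{cor:RGammagr} (using affine $\Gm$-stable Čech-type resolutions, since K-injective resolutions are not provided by the general theory here) and deduces full faithfulness from Lemma \ref{lem:lemmorphismsGm} rather than from exhibiting a quasi-inverse. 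Incidentally, your claim that $R\Gamma_{\Gm}$ and $\calL^{\widehat0}_{K,\Gm}$ "form a quasi-inverse pair of equivalences" overshoots: Theorem \ref{thm:equivUg_0gr} asserts only full faithfulness, with essential surjectivity postponed to Remark \ref{rk:gammaequivalence} and established by the different mechanism of Proposition \ref{prop:propprojectives}. Your proposal, as written, would need to develop the "situation (b)" machinery (or something equivalent) before any of the four steps can be carried out; without it, the appeal to Corollary \ref{cor:cordiagramRf_*Gm} in step three is a non-sequitur and the argument collapses.
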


This theorem would be easy if we had a $\Gm$-structure on
the whole of $\wcalD$, $\calU \frakg$. Unfortunately we only have such a structure on
completions. As the details are not needed, we postpone the proof of the theorem to \S\S \ref{ss:rkqcGm}, \ref{ss:paragraphUg_0gr}.

\begin{remark} \label{rk:gammaequivalence} Arguing as in the proof of Proposition \ref{prop:simplesinimage} below, one can prove that the functor $\widetilde{\gamma}^{\calB}_0$ is essentially surjective, hence an equivalence of categories (see Remark \ref{rk:remarksurjective} for the ``dual'' statement).\end{remark}

\subsection{Complexes representing a projective module}
\label{ss:paragraphprojectives1}

The abelian category
$\Mod_{(0,0)}^{\fg}(\calU \frakg)$ does not have any projective object. Nevertheless, in the
category $\calD^b \Coh(\wfrakg^{(1)})$ one can define a substitute for this notion. For
$\calF,\calG \in \calD^b \Coh(\wfrakg^{(1)})$, we write simply
$\Hom_{\wfrakg^{(1)}}(\calF,\calG)$ for $\Hom_{\calD^b
  \Coh(\wfrakg^{(1)})}(\calF,\calG)$.

\begin{defin}

Let $\lambda \in \bbX$ be regular. An object $\calM$ of $\calD^b
\Coh(\wfrakg^{(1)})$ is said to
\emph{represent a projective module under} $\gamma^{\calB}_{\lambda}$ if
$\Hom_{\wfrakg^{(1)}}(\calM, (\gamma^{\calB}_{\lambda})^{-1} N[i])
=0$ for any $N \in \Mod^{\fg}_{(\lambda,0)}(\calU \frakg)$ and $i
\neq 0$.

Let $\mu \in W_{\aff}' \bullet \lambda$ be a restricted dominant weight. An object $\calM$ of $\calD^b
\Coh(\wfrakg^{(1)})$ is said to \emph{represent the projective cover of}
$L(\mu)$ \emph{under} $\gamma^{\calB}_{\lambda}$ if for
any $\nu \in W_{\aff}' \bullet \lambda$
restricted and dominant and $i \in \bbZ$, $$\Hom_{\wfrakg^{(1)}}(\calM,
(\gamma^{\calB}_{\lambda})^{-1} L(\nu)[i]) =
\left\{ \begin{array}{cl}
  \bk & \text{if} \ \nu=\mu \text{ and } i=0, \\
  0 & \text{otherwise}. \\
\end{array} \right.$$

\end{defin}

\begin{lem}\label{lem:lemmatranslationsimples}

Let $\lambda \in C_0$, and $v \in W^0$. Then $T_{\lambda}^{-\rho} L(v \bullet
\lambda) \neq 0$ iff $v=\tau_0$. Moreover, $T_{\lambda}^{-\rho}
L(\tau_0 \bullet \lambda) = L(\tau_0 \bullet (-\rho)) = L((p-1)\rho)$.

\end{lem}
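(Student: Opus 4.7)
The plan is to apply the character rule \eqref{eq:translationsimples} with $\mu = -\rho$; this is legitimate since $\langle -\rho + \rho, \alpha^\vee\rangle = 0$ for every $\alpha \in R^+$, so $-\rho \in \overline{C}_0$ lies in the closure of the facet of $\lambda$. For $v \in W^0$ written as $v = t_{\lambda_v} \cdot w$ in the notation of the proof of Proposition \ref{prop:tau_0}, the identity $w \bullet (-\rho) = -\rho$ immediately gives
\[ v \bullet (-\rho) \ = \ p\lambda_v - \rho; \]
in particular, for $v = \tau_0 = t_\rho \cdot w_0$ one has $\lambda_v = \rho$ and $v \bullet (-\rho) = (p-1)\rho$, the Steinberg weight.

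If $v \neq \tau_0$, condition \eqref{eq:restricteddominantweights} forces $\langle \lambda_v, \alpha^\vee \rangle = 0$ for some $\alpha \in \Phi$, so $\langle v \bullet (-\rho), \alpha^\vee\rangle = -1$; hence $v \bullet (-\rho)$ is not dominant, $L(v \bullet (-\rho)) = 0$, and \eqref{eq:translationsimples} forces $T_\lambda^{-\rho} L(v \bullet \lambda) = 0$. It remains to verify that for $v = \tau_0$ the weight $(p-1)\rho$ lies in the upper closure of the facet $\tau_0 \bullet C_0$. For this I would identify the walls of $\tau_0 \bullet C_0$ passing through $(p-1)\rho$ as the images under $\tau_0$ of the walls $\{\langle \cdot + \rho, \alpha^\vee \rangle = 0\}$ of $C_0$ through $-\rho$; a direct computation using the conjugation $\tau_0 \, s_\alpha \, \tau_0^{-1} = t_\rho \, s_{w_0 \alpha} \, t_{-\rho}$ shows that the transported wall is the hyperplane $\{\langle \cdot + \rho, \gamma^\vee \rangle = p\}$ for $\gamma := -w_0 \alpha \in \Phi$. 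Evaluating on the alcove yields $\langle \tau_0 \bullet x + \rho, \gamma^\vee\rangle = p - \langle x + \rho, \alpha^\vee\rangle \in (0,p)$ for every $x \in C_0$, so $\tau_0 \bullet C_0$ sits strictly on the side $< p$ of this wall, making it an \emph{upper} wall of $\tau_0 \bullet C_0$. Hence $(p-1)\rho$ lies on no lower wall of $\tau_0 \bullet C_0$, so it belongs to the upper closure, and \eqref{eq:translationsimples} gives $T_\lambda^{-\rho} L(\tau_0 \bullet \lambda) = L((p-1)\rho)$.

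The only genuine computational point is the identification of the walls through $(p-1)\rho$ and their orientation with respect to $\tau_0 \bullet C_0$; this is a short coordinate calculation. Everything else is a direct application of the translation rule together with the explicit description of $W^0$ recalled in \S\ref{ss:paragraphrestrictedweights}.
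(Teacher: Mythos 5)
Your proof is correct and follows the same route as the paper's: both start from the translation rule \eqref{eq:translationsimples}, after which the point is to check the upper-closure condition, which via \eqref{eq:restricteddominantweights} reduces to the explicit description of $W^0$ from the proof of Proposition~\ref{prop:tau_0}. Your write-up actually spells out the upper-closure verification for $v=\tau_0$ (which the paper asserts without computation) and handles $v\neq\tau_0$ by the slightly more direct observation that $v\bullet(-\rho)=p\lambda_v-\rho$ fails to be dominant, whereas the paper phrases that step via the uniqueness of the translation part $\nu$ given the Weyl-group part $w$; the two are equivalent and rest on the same formula \eqref{eq:restricteddominantweights}.
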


\begin{proof} Using \eqref{eq:translationsimples}, we only have to
prove that $v \bullet (-\rho)$ is in the upper closure of $v \bullet
C_0$ iff $v=\tau_0$. Write $v=t_{\nu} \cdot w$ with $\nu
\in \bbX$, $w \in W$. Then $v \bullet
(-\rho)$ is in the upper closure of $v \bullet C_0$ iff
$w=w_0$. The result follows since, in this situation, $\nu$ is uniquely determined
by $w$ (see equation \eqref{eq:restricteddominantweights}). \end{proof}

\begin{prop} \label{prop:projectivecover}

$\rmi$ Let $\lambda \in C_0$, and $w \in W$. The object $\calO_{\wfrakg^{(1)}}$
represents the projective cover
of $L(\tau_0 \bullet \lambda)$ under $\gamma^{\calB}_{w \bullet \lambda}$.

$\rmii$ Let $\lambda = \omega \bullet 0 \in C_0$, with $\omega=w \cdot t_{\mu}$ 
($\mu \in \bbX$, $w \in W$). Then $\calO_{\wfrakg^{(1)}}(\mu)$ represents the projective cover of $L(\tau_0 \bullet \lambda)$ under $\gamma^{\calB}_0$.

\end{prop}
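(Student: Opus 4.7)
The plan is to compute the $\Hom$-groups by translating to the Steinberg wall $\mu=-\rho$, where the relevant block is essentially one-dimensional (generated by the Steinberg module), and then to deduce (ii) formally from (i) using the shift formula recalled in \S\ref{ss:reviewlocalization}.

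For part (i), by adjunction one has
$$\Hom_{\wfrakg^{(1)}}(\calO_{\wfrakg^{(1)}}, (\gamma^{\calB}_{w\bullet\lambda})^{-1}L(\nu)[i]) \ \cong \ R^i\Gamma(\wfrakg^{(1)},\, \calF_\nu), \qquad \calF_\nu:=(\gamma^{\calB}_{w\bullet\lambda})^{-1}L(\nu).$$
The weight $-\rho$ is fixed by the dot-action of the entire finite Weyl group, so its stabilizer in $W_{\aff}$ is $W$, corresponding to $P=G$, $\calP=\{\mathrm{pt}\}$, and $\wfrakg_{\{\mathrm{pt}\}}=\frakg^*$; moreover $-\rho\in\overline{C}_0=w\bullet\overline{C}_0$ lies in the closure of the facet of $w\bullet\lambda$. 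Proposition~\ref{prop:translationBMR} therefore yields
$$T^{-\rho}_\lambda \circ \gamma^{\calB}_{w\bullet\lambda} \ \cong \ \gamma^{\{\mathrm{pt}\}}_{-\rho} \circ R\widetilde{\pi}_*,$$
where $\widetilde{\pi}:\wfrakg\to\frakg^*$ is the Grothendieck--Springer projection. Applied to $\calF_\nu$, this gives $R\widetilde{\pi}_*\calF_\nu\cong (\gamma^{\{\mathrm{pt}\}}_{-\rho})^{-1}\bigl(T^{-\rho}_\lambda L(\nu)\bigr)$.

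Since $\calF_\nu$ is supported on $\calB^{(1)}$ and $\widetilde\pi(\calB^{(1)})=\{0\}\subset\frakg^{*(1)}$, the complex $R\widetilde\pi_*\calF_\nu$ is supported at the origin of the affine variety $\frakg^{*(1)}$, so $R\Gamma(\wfrakg^{(1)},\calF_\nu)$ coincides with its underlying vector space (no higher cohomology). By Lemma~\ref{lem:lemmatranslationsimples}, $T^{-\rho}_\lambda L(\nu)$ vanishes unless $\nu=\tau_0\bullet\lambda$, in which case it equals the Steinberg module $L((p-1)\rho)$. The equivalence $\gamma^{\{\mathrm{pt}\}}_{-\rho}$ is $t$-exact (its construction involves only tensor product with a splitting bundle and $R\Gamma$ over a point), and the normalization of splitting bundles in \cite[1.3.5]{BMR2} ensures that $(\gamma^{\{\mathrm{pt}\}}_{-\rho})^{-1}L((p-1)\rho)$ is the skyscraper sheaf $\calO_{\{0\}}$ in cohomological degree zero, whose underlying vector space is one-dimensional. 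This proves (i).

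For (ii), write $\omega=w\cdot t_\mu$ and compute in $W_{\aff}'$:
$$w^{-1}\bullet\lambda \ = \ w^{-1}\bullet(\omega\bullet 0) \ = \ t_\mu\bullet 0 \ = \ p\mu.$$
Applying (i) with $w$ replaced by $w^{-1}$, the sheaf $\calO_{\wfrakg^{(1)}}$ represents the projective cover of $L(\tau_0\bullet\lambda)$ under $\gamma^{\calB}_{w^{-1}\bullet\lambda}=\gamma^{\calB}_{p\mu}$. By the shift formula $\gamma^{\calB}_{p\mu}(\calF)=\gamma^{\calB}_0(\calO_{\wfrakg^{(1)}}(\mu)\otimes\calF)$ recalled in \S\ref{ss:reviewlocalization}, we have $(\gamma^{\calB}_{p\mu})^{-1}=\calO_{\wfrakg^{(1)}}(-\mu)\otimes(\gamma^{\calB}_0)^{-1}$, so
$$\Hom_{\wfrakg^{(1)}}\bigl(\calO_{\wfrakg^{(1)}}(\mu),\, (\gamma^{\calB}_0)^{-1}L(\nu)[i]\bigr) \ = \ \Hom_{\wfrakg^{(1)}}\bigl(\calO_{\wfrakg^{(1)}},\, (\gamma^{\calB}_{p\mu})^{-1}L(\nu)[i]\bigr),$$
which by (i) is $\bk$ exactly when $(\nu,i)=(\tau_0\bullet\lambda,0)$ and zero otherwise. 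The principal obstacle is the normalization step asserting $(\gamma^{\{\mathrm{pt}\}}_{-\rho})^{-1}L((p-1)\rho)\cong\calO_{\{0\}}$ in degree zero, which requires unwinding the choice of splitting bundles in \cite[1.3.5]{BMR2} in the extreme parabolic case $\calP=\{\mathrm{pt}\}$.
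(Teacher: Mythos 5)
Your proof takes essentially the same approach as the paper: translation to the Steinberg weight $-\rho$ via Proposition~\ref{prop:translationBMR} with $\calP=\{\mathrm{pt}\}$, which identifies $R\Gamma(\wfrakg^{(1)},-)$ with $(\gamma^{\{\mathrm{pt}\}}_{-\rho})^{-1}\circ T^{-\rho}_{\lambda}\circ\gamma^{\calB}_{w\bullet\lambda}$, then Lemma~\ref{lem:lemmatranslationsimples}, and finally (ii) by the shift formula. Two small remarks. The parenthetical ``no higher cohomology'' does not follow from the complex $R\widetilde\pi_*\calF_\nu$ being supported at the origin of the affine scheme $\frakg^{*(1)}$; supported complexes can have cohomology in many degrees. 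This claim is harmless because the actual degree-concentration in your argument comes from the computation of $T^{-\rho}_\lambda L(\nu)$ combined with the normalization fact, not from the support. As for that normalization — $(\gamma^{\{\mathrm{pt}\}}_{-\rho})^{-1}L((p-1)\rho)\cong\calO_{\{0\}}$ in degree zero, which you flag as the principal obstacle — the paper notes it can be obtained without unwinding the splitting bundles: $L((p-1)\rho)$ is the \emph{only} simple object of $\Mod^{\fg}_{(-\rho,0)}(\calU\frakg)$, and the equivalence $\gamma^{\{\mathrm{pt}\}}_{-\rho}$ comes from a $t$-exact Morita-type equivalence of abelian categories, so it must match the unique simple $\calO_{\{0\}}$ of $\Coh_{\{0\}}(\frakg^{*(1)})$ with $L((p-1)\rho)$.
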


\begin{proof} $\rmi$ Consider the functor $T_{\lambda}^{-\rho}=T_{w \bullet
  \lambda}^{w \bullet (-\rho)}=T_{w \bullet \lambda}^{-\rho}$. By
Proposition \ref{prop:translationBMR} applied to the weights $w \bullet
\lambda$ and $-\rho$, with $\calP=G/G=\{{\rm pt}\}$, we
have \begin{equation} \label{eq:structuresheafproj} T_{w \bullet
    \lambda}^{-\rho} \circ \gamma^{\calB}_{w \bullet \lambda} \cong
  \gamma^{ \{ {\rm pt} \} }_{-\rho} \circ R\Gamma(\wfrakg^{(1)}, -). \end{equation}
Moreover, $\Hom_{\wfrakg^{(1)}}(\calO_{\wfrakg^{(1)}}, -) \cong H^0
(R\Gamma(\wfrakg^{(1)}, -))$. Now the result follows from \eqref{eq:structuresheafproj}
and Lemma \ref{lem:lemmatranslationsimples}, using the fact that $\gamma^{
  \{{\rm pt}\} }_{-\rho}(\bk) = L((p-1)\rho)$. (The latter fact follows from the definition of the splitting bundles, or from the fact that $L((p-1)\rho)$ is
the only simple module in $\Mod^{\fg}_{(-\rho,0)}(\calU \frakg)$.) 

$\rmii$ By hypothesis, $\lambda= w \bullet
(p\mu)$. Hence $w^{-1} \bullet \lambda=p\mu$. By $\rmi$, $\calO_{\wfrakg^{(1)}}$ represents the projective cover
of $L(\tau_0 \bullet \lambda)$ under $\gamma^{\calB}_{w^{-1} \bullet
  \lambda}=\gamma^{\calB}_{p\mu}$. But
$\gamma^{\calB}_{p\mu}(-)=\gamma^{\calB}_0(-
\otimes_{\calO_{\wfrakg^{(1)}}} \calO_{\wfrakg^{(1)}}(\mu))$ (see
\S \ref{ss:reviewlocalization}). The result follows. \end{proof}

Recall that we have defined the objects $P(w \bullet 0)$, $\calP_w$ in \S \ref{ss:objectsLwPw}. Consider the projection $p: (\wfrakg \, \rcap_{\frakg^*
  \times \calB} \, \calB)^{(1)} \to \wfrakg^{(1)}$. By adjunction, if $\calM \in \calD^b \Coh(\wfrakg^{(1)})$ represents a
projective module under $\gamma^{\calB}_0$, then $\widehat{\gamma}^{\calB}_0 (Lp^* \calM)$ is a
projective $(\calU \frakg)_0^{\hat{0}}$-module. In particular, with the notation of Proposition \ref{prop:projectivecover}$\rmii$,
\begin{equation} \label{eq:projective(p-2)rho} \calO_{(\wfrakg
    \, \rcap_{\frakg^* \times \calB} \, \calB)^{(1)}}(\mu) \cong
\calP_{\tau_0 \omega}. \end{equation}

\subsection{Graded projective $(\calU \frakg)_0$-modules} \label{ss:paragraphindecomposableprojectives}

Using Theorem \ref{thm:thmGG}$\rmii$,$\rmiv$, the projective modules $P(w \bullet 0)$ ($w \in W^0$) can be lifted to graded modules (uni\-quely, up to a shift). Fix an arbitrary choice of a lift $P^{\gr}(w \bullet 0)$ for each $P(w \bullet 0)$. Recall the fully faithful functor $\widetilde{\gamma}^{\calB}_0$ of Theorem \ref{thm:equivUg_0gr}.

\begin{prop} \label{prop:propprojectives}

The $P^{\gr}(w \bullet 0)$'s are in the essential image of $\widetilde{\gamma}^{\calB}_0$.

\end{prop}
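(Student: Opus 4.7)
The plan is to proceed by descending induction on $\ell(w)$ for $w \in W^0$, propagating graded lifts from the maximal-length element $\tau_0$ down to shorter elements via the reflection functors $\frakR^{\gr}_{\delta}$, and using Proposition \ref{prop:reflectionprojectivesdetails} to split off the desired summand at each step.

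For the base case $w = \tau_0$, equation \eqref{eq:projective(p-2)rho} applied with $\omega = e$ (so $\mu = 0$ and $\tau_0 \omega = \tau_0$) gives $\calP_{\tau_0} \cong \calO_{(\wfrakg \, \rcap_{\frakg^* \times \calB} \, \calB)^{(1)}}$. This is the image under $\For$ of the canonical structure sheaf $\calO^{\gr}$ in $\DGCoh^{\gr}$, obtained by viewing $\calO_{\calB^{(1)}}$ as a $\Gm$-equivariant $\Lambda_{\calO_{\calB^{(1)}}}(\calT^{\vee}_{\calB^{(1)}})$-dg-module with trivial internal grading. By the commutative diagram in Theorem \ref{thm:equivUg_0gr}, $\widetilde{\gamma}^{\calB}_0(\calO^{\gr})$ is a graded lift of $P(\tau_0 \bullet 0)$; by Theorem \ref{thm:thmGG}$\rmiv$ it is isomorphic to $P^{\gr}(\tau_0 \bullet 0)\langle j \rangle$ for some $j \in \mathbb{Z}$, which lies in the essential image of $\widetilde{\gamma}^{\calB}_0$ (stable under $\langle 1 \rangle$).

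For the inductive step, let $w \in W^0$ with $\ell(w) < \ell(\tau_0)$. Using Proposition \ref{prop:tau_0} and a short combinatorial argument on $\tau_0 w \neq 1$, one produces $\delta \in \Phi_{\aff}$ such that $u := w s_{\delta} \in W^0$, $\ell(u) = \ell(w) + 1$, and $w \bullet 0 < u \bullet 0$. By induction, $P^{\gr}(u \bullet 0)$ and all $P^{\gr}(v \bullet 0)$ with $v \in W^0$, $\ell(v) > \ell(w)$ lie in the essential image of $\widetilde{\gamma}^{\calB}_0$. Pick a lift $\calP_u^{\gr} \in \DGCoh^{\gr}((\wfrakg \, \rcap_{\frakg^* \times \calB} \, \calB)^{(1)})$ of $\calP_u$ with $\widetilde{\gamma}^{\calB}_0(\calP_u^{\gr}) \cong P^{\gr}(u \bullet 0) \langle j_u \rangle$. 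Combining \eqref{eq:diagramRdeltagr}, \eqref{eq:diagramRalpha} and the commutative square in Theorem \ref{thm:equivUg_0gr}, the object $\widetilde{\gamma}^{\calB}_0(\frakR^{\gr}_{\delta} \calP_u^{\gr})$ is a graded lift of $R_{\delta} P(u \bullet 0)$. By Proposition \ref{prop:reflectionprojectivesdetails}, the latter decomposes as $P(w \bullet 0) \oplus \bigoplus_v P(v \bullet 0)^{\oplus n_v}$ with $v \in W^0$, $\ell(v) > \ell(w)$. By Proposition \ref{prop:KrullSchmidt} and Theorem \ref{thm:thmGG}, the graded module $\widetilde{\gamma}^{\calB}_0(\frakR^{\gr}_{\delta} \calP_u^{\gr})$ decomposes accordingly into shifts of the graded indecomposables $P^{\gr}(w \bullet 0)$ and $P^{\gr}(v \bullet 0)$. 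By the induction hypothesis, the sum of the $v$-summands lies in the essential image of $\widetilde{\gamma}^{\calB}_0$; the idempotent on $\widetilde{\gamma}^{\calB}_0(\frakR^{\gr}_{\delta} \calP_u^{\gr})$ projecting onto the complementary summand corresponds, via full faithfulness, to an idempotent $e \in \End_{\DGCoh^{\gr}}(\frakR^{\gr}_{\delta} \calP_u^{\gr})$. Splitting $e$ in $\DGCoh^{\gr}$ produces a direct summand $\calQ$ with $\widetilde{\gamma}^{\calB}_0(\calQ) \cong P^{\gr}(w \bullet 0)\langle j \rangle$ for some $j$, completing the induction.

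The main obstacle lies in the last step of the inductive argument: verifying that $\DGCoh^{\gr}$ is idempotent-complete, so that the abstract idempotent $e$ obtained from full faithfulness actually splits. This should follow from general arguments on derived categories of bounded, coherent dg-modules (where bounded complexes of finitely generated objects form a Karoubian triangulated category), but requires some care given the $\Gm$-equivariant, quasi-coherent setup of section \ref{sec:sectiondgalg}. A secondary, purely combinatorial, subtlety is the existence of an admissible pair $(u, \delta)$ at each step of the induction, which is controlled by Proposition \ref{prop:tau_0} and the length formula \eqref{eq:formulaIM'}.
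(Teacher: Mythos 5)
Your strategy --- descending induction on length via the reflection functors $\frakR^{\gr}_{\delta}$ and Proposition \ref{prop:reflectionprojectivesdetails} --- is the paper's, but there are two gaps. The lesser one is the base case: by Proposition \ref{prop:tau_0} the elements of $W^0$ of maximal length are all the $\tau_0\omega$ with $\ell(\omega)=0$, of which there may be several when $\bbX/\bbY\neq 0$; you only treat $w=\tau_0$. The fix is immediate from \eqref{eq:projective(p-2)rho} applied to an arbitrary $\omega$ of length zero, giving $\calP_{\tau_0\omega}\cong\calO_{(\wfrakg\,\rcap_{\frakg^*\times\calB}\,\calB)^{(1)}}(\mu)$, which is naturally a $\Gm$-equivariant (graded) object.

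The more serious gap is exactly the one you flag: you want to split an idempotent on $\frakR^{\gr}_{\delta}\calP^{\gr}_u$ in $\DGCoh^{\gr}((\wfrakg\,\rcap_{\frakg^*\times\calB}\,\calB)^{(1)})$, but idempotent-completeness of that category is not established, and you cannot simply cite Remark \ref{rk:gammaequivalence} (that $\widetilde{\gamma}^{\calB}_0$ is an equivalence) without risking circularity, since that remark is justified by an argument parallel to this very proposition. The paper avoids the issue altogether with a cone construction: by full faithfulness of $\widetilde{\gamma}^{\calB}_0$, the split inclusion $\widetilde{\gamma}^{\calB}_0(\calQ^{\gr})\hookrightarrow\widetilde{\gamma}^{\calB}_0(\frakR^{\gr}_{\delta}\calP^{\gr}_u)$ lifts to a morphism $\calQ^{\gr}\to\frakR^{\gr}_{\delta}\calP^{\gr}_u$ in $\DGCoh^{\gr}$; one takes its cone $\calX^{\gr}$. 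Since $\widetilde{\gamma}^{\calB}_0$ is triangulated and sends this morphism to a split monomorphism with complementary summand $P^{\gr}(w\bullet 0)\langle i\rangle$, one gets $\widetilde{\gamma}^{\calB}_0(\calX^{\gr})\cong P^{\gr}(w\bullet 0)\langle i\rangle$, hence $\widetilde{\gamma}^{\calB}_0(\calX^{\gr}\langle -i\rangle)\cong P^{\gr}(w\bullet 0)$. No idempotent-completeness of the source is needed. Replacing your splitting step with this cone argument closes the gap.
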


\begin{proof} We prove the result by descending induction on $\ell(w)$. By Proposition \ref{prop:tau_0}, the $w \in W^0$ such that $\ell(w)$ is maximal are of the form $w=\tau_0 \omega$, for with $\ell(\omega)=0$. In this case, $\calP_{\tau_0 \omega}$ is given by \eqref{eq:projective(p-2)rho}. Clearly, this object can be considered as an object of $\DGCoh^{\gr}((\wfrakg \, \rcap_{\frakg^* \times \calB} \, \calB)^{(1)})$. By Theorems \ref{thm:thmGG}$\rmiv$ and \ref{thm:equivUg_0gr}, the image of this (graded) object under $\widetilde{\gamma}^{\calB}_0$ is isomorphic to $P^{\gr}(\tau_0 \omega \bullet 0)$, up to a shift. This proves the result when $\ell(w)=\ell(\tau_0)$.

Now let $n < \ell(\tau_0)$, and
assume the result is true for all $v \in W^0$ such that $\ell(v) >
n$. Let $w \in W^0$ such that $\ell(w)=n$. Let $\delta \in
\Phi_{\aff}$ be such that $ws_{\delta} \in W^0$ and $ws_{\delta}
\bullet 0 > w \bullet 0$, i.e.~$\ell(w s_{\delta}) > \ell(w)$. By
induction, there exists $\calP^{\gr}$ in $\DGCoh^{\gr}((\wfrakg
\, \rcap_{\frakg^* \times \calB} \, \calB)^{(1)})$ such that
$\widetilde{\gamma}^{\calB}_0(\calP^{\gr}) \cong P^{\gr}(w s_{\delta}
\bullet 0)$. Then, consider
$\widetilde{\gamma}^{\calB}_0(\frakR_{\delta}^{\gr} \calP^{\gr}).$
By diagrams \eqref{eq:diagramRalpha} and
\eqref{eq:diagramRdeltagr}, the image of this object under the
forgetful functor $$\For : \calD^b \Mod^{\fg,\gr}_0((\calU\frakg)_0)
\ \to \ \calD^b \Mod^{\fg}_0((\calU\frakg)_0)$$ is $R_{\delta} P(w
s_{\delta} \bullet 0)$. In particular
$\widetilde{\gamma}^{\calB}_0(\frakR_{\delta}^{\gr} \calP^{\gr})$ is
concentrated in degree $0$. By Proposition
\ref{prop:reflectionprojectivesdetails}, $R_{\delta} P(w s_{\delta}
\bullet 0)$ is a direct sum of $P(w \bullet 0)$ and some $P(v \bullet
0)$ with $v \in W^0$ such that $\ell(v) > \ell(w)$. Hence, using the
remarks before Corollary \ref{cor:corGG},
$\widetilde{\gamma}^{\calB}_0(\frakR_{\delta}^{\gr} \calP^{\gr}) \cong
P^{\gr}(w \bullet 0) \langle i \rangle \oplus Q^{\gr}$ for some $i \in
\bbZ$, where $Q^{\gr}$ is a direct sum of graded modules of the form
$P^{\gr}(v \bullet 0) \langle j \rangle$ with $j \in \bbZ$ and $v \in
W^0$ such that $\ell(v) > \ell(w)$. By induction, there
exists $\calQ^{\gr}$ in $\DGCoh^{\gr}((\wfrakg
\, \rcap_{\frakg^* \times \calB} \, \calB)^{(1)})$ such that $Q^{\gr} \cong
\widetilde{\gamma}^{\calB}_0 (\calQ^{\gr})$. Then we have
\[ \widetilde{\gamma}^{\calB}_0(\frakR_{\delta}^{\gr} \calP^{\gr})
\ \cong \ \widetilde{\gamma}^{\calB}_0(\calQ^{\gr}) \oplus P^{\gr}(w
\bullet 0) \langle i \rangle. \] As $\widetilde{\gamma}^{\calB}_0$ is
fully faithful, the injection
$\widetilde{\gamma}^{\calB}_0(\calQ^{\gr}) \hookrightarrow
\widetilde{\gamma}^{\calB}_0(\frakR_{\delta}^{\gr} \calP^{\gr})$ comes
from a morphism $\calQ^{\gr} \to \frakR_{\delta}^{\gr} \calP^{\gr}$ in
$\DGCoh^{\gr}((\wfrakg \, \rcap_{\frakg^* \times \calB} \,
\calB)^{(1)})$. Let $\calX^{\gr}$ be the cone of this morphism. Then,
by usual properties of triangulated categories, $\widetilde{\gamma}^{\calB}_0(\calX^{\gr} \langle -i
\rangle) \cong P^{\gr}(w \bullet 0).$ This concludes the proof. \end{proof}

\subsection{Generalities on $\Gm$-equivariant quasi-coherent
  dg-mo\-dules} \label{ss:rkqcGm}

In the next two subsections we prove Theorem \ref{thm:equivUg_0gr}.

Let us consider a noetherian scheme $A$, and a non-positively graded
$\Gm$-dg-algebra $\calA$ on $A$ (as in
\S \ref{ss:gradedcase}). Assume also that $\calA$ is locally finitely
generated as an $\calO_A$-algebra. Let $\calD^{\qc}_{\Gm}(A, \, \calA)$,
resp. $\calD^{\qc,\fg}_{\Gm}(A, \, \calA)$, be the full subcategory of
$\calD_{\Gm}(A, \, \calA)$ whose objects have their cohomology
$\calO_A$-quasi-coherent, resp. $\calO_A$-quasi-coherent and
locally finitely generated over $H(\calA)$. Let also
$\calC^{\qc}_{\Gm}(A, \, \calA)$ be the category of $\Gm$-dg-modules which are $\calO_A$-quasi-coherent, and let $\calD \bigl(
\calC^{\qc}_{\Gm}(A, \, \calA) \bigr)$ be the corresponding derived
category. Let $\calD^{\fg} \bigl(
\calC^{\qc}_{\Gm}(A, \, \calA) \bigr)$ be the full subcategory of $\calD \bigl( \calC^{\qc}_{\Gm}(A, \, \calA) \bigr)$ whose objects have their cohomology locally finitely generated over $H(\calA)$.

A proof similar to that of Lemma \ref{lem:Kinjresolutionsupport2} shows that if $\calF \in \calD^{\qc}_{\Gm}(A, \, \calA)$ and $H(\calF)$ is bounded, there exists
a $\Gm$-equivariant K-injective $\calA$-dg-module $\calI$ and a
quasi-isomorphism $\calF \to \calI$, where $\calI$ is $\calO_A$-quasi-coherent. We deduce:

\begin{lem} \label{lem:equivqcGm}

Assume $\calA$ is bounded for the cohomological grading. There exists
an equivalence of categories $\calD^{\fg} \bigl(
  \calC^{\qc}_{\Gm}(A, \, \calA) \bigr) \cong
  \calD^{\qc,\fg}_{\Gm}(A, \, \calA).$

\end{lem}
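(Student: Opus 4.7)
The plan is to mimic the proof of Proposition \ref{prop:dgmodulessupport}, replacing the ``supported on $Z$'' condition by the ``$\calO_A$-quasi-coherent'' condition, and using the $\Gm$-equivariant analogue of Lemma \ref{lem:Kinjresolutionsupport2} quoted just before the lemma. Let $\iota : \calD^{\fg}\bigl(\calC^{\qc}_{\Gm}(A,\calA)\bigr) \to \calD^{\qc,\fg}_{\Gm}(A,\calA)$ denote the functor induced by the inclusion $\calC^{\qc}_{\Gm}(A,\calA) \hookrightarrow \calC_{\Gm}(A,\calA)$.

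First I would reduce to a boundedness statement. Since $\calA$ is bounded for the cohomological grading, so is $H(\calA)$, and hence any $\Gm$-$\calA$-dg-module whose cohomology is locally finitely generated over $H(\calA)$ has cohomology bounded for the cohomological grading (the internal grading plays no role here, since it is preserved by the differential). Thus, for any $\calF \in \calD^{\qc,\fg}_{\Gm}(A,\calA)$, $H(\calF)$ is bounded, and by the quoted analogue of Lemma \ref{lem:Kinjresolutionsupport2} there exists a $\Gm$-equivariant, $\calO_A$-quasi-coherent K-injective resolution $\calF \xrightarrow{\qis} \calI$. Applying a truncation functor $\tau_{\geq n}\tau_{\leq m}$ for $m,n$ such that $[n,m]$ contains the support of $H(\calF)$, one obtains a bounded, $\calO_A$-quasi-coherent $\Gm$-$\calA$-dg-module quasi-isomorphic to $\calF$; this gives essential surjectivity of $\iota$.

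The same trick handles fullness and faithfulness. For fullness, a morphism $\varphi: \iota(\calF) \to \iota(\calG)$ in $\calD^{\qc,\fg}_{\Gm}(A,\calA)$ is represented by a roof $\iota(\calF) \xrightarrow{\alpha} \calN \xleftarrow{\qis}{\beta} \iota(\calG)$. Since $H(\calN) \cong H(\calG)$ is bounded and locally finitely generated, the analogue of Lemma \ref{lem:Kinjresolutionsupport2} lets us replace $\calN$ by a quasi-coherent $\Gm$-$\calA$-dg-module; the morphism $\varphi$ then comes from $\calD^{\fg}(\calC^{\qc}_{\Gm}(A,\calA))$. For faithfulness, if $f : \calF \to \calG$ in $\calC^{\qc}_{\Gm}(A,\calA)$ becomes zero in $\calD^{\qc,\fg}_{\Gm}(A,\calA)$, there exists a quasi-isomorphism $g : \calG \xrightarrow{\qis} \calN$ in $\calC_{\Gm}(A,\calA)$ such that $g \circ f$ is null-homotopic; again $\calN$ can be replaced by a quasi-coherent object (with bounded cohomology), so $f = 0$ already in $\calD^{\fg}(\calC^{\qc}_{\Gm}(A,\calA))$.

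The main point on which one should be careful is the equivariant version of Lemma \ref{lem:Kinjresolutionsupport2}: one needs K-injective resolutions in $\calC_{\Gm}(A,\calA)$ whose components are $\calO_A$-quasi-coherent. Since quasi-coherence is preserved by the induction/coinduction and direct limit constructions used in the proofs of Theorems \ref{thm:Kflatresolution} and \ref{thm:Kinjresolution} (and by Lemma \ref{lem:GmO_X}, quasi-coherence in the $\Gm$-equivariant setting just amounts to quasi-coherence of each internal graded piece), this is a routine adaptation and not a serious obstacle.
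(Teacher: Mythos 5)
Your argument is correct and is exactly the (implicit) route the paper takes: it invokes the $\Gm$-equivariant, quasi-coherent K-injective resolution statement quoted just before the lemma and then repeats the essential-surjectivity/fullness/faithfulness mechanism of Proposition \ref{prop:dgmodulessupport}, with the quasi-coherence condition replacing the support condition. (The truncation $\tau_{\geq n}\tau_{\leq m}$ in your surjectivity step is harmless but superfluous, since $\calD^{\fg}\bigl(\calC^{\qc}_{\Gm}(A,\calA)\bigr)$ imposes no boundedness on objects and the quasi-coherent K-injective $\calI$ already lies in $\calC^{\qc}_{\Gm}(A,\calA)$.)
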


Let $Y$ be a noetherian scheme endowed with a
$\Gm$-action (possibly non trivial). Now we consider two
situations, denoted $(a)$ and $(b)$.

Situation $(a)$ is the following. Let $\calY$ be a non-positively
graded dg-algebra on $Y$. We have not defined $\Gm$-equivariant dg-algebras and
dg-modules in this case. But assume that
$\calY$ is \emph{coherent} as an $\calO_Y$-module, and that each $\calY^p$ is
equipped with a $\Gm$-equivariant structure (as a coherent
$\calO_Y$-module), such that the multiplication and the differential
are $\Gm$-equivariant. Then we can consider the notion of an $\calO_Y$-\emph{quasi-coherent},
$\Gm$-\emph{equivariant} dg-module
over $\calY$. We denote by $\calC^{\qc}_{\Gm}(Y, \, \calY)$ the
corresponding category, and by $\calC^{\qc,\fg}_{\Gm}(Y, \, \calY)$ the
full subcategory of dg-modules locally finitely generated
over $\calY$. We denote the corresponding derived categories by $\calD
\bigl( \calC^{\qc}_{\Gm}(Y, \, \calY) \bigr)$ and $\calD \bigl(
\calC^{\qc,\fg}_{\Gm}(Y, \, \calY) \bigr)$. We also denote by $\calD^{\fg}
\bigl( \calC^{\qc}_{\Gm}(Y, \, \calY) \bigr)$ the full subcategory of
$\calD \bigl( \calC^{\qc}_{\Gm}(Y, \, \calY) \bigr)$ whose objects have
locally finitely generated cohomology.

Consider a closed $\Gm$-subscheme $Z \subset Y$. Denote by
$\calD^{\fg}_Z \bigl( \calC^{\qc}_{\Gm}(Y, \, \calY) \bigr)$ the full
subcategory of $\calD^{\fg} \bigl( \calC^{\qc}_{\Gm}(Y, \, \calY) \bigr)$
whose objects have their cohomology supported on $Z$. We also consider
the category $\calC^{\qc,\Gm}_Z(Y, \, \calY)$ of $\Gm$-equivariant,
quasi-coherent $\calY$-dg-modules supported on $Z$, its subcategory
$\calC^{\qc,\fg,\Gm}_Z(Y, \, \calY)$, the derived categories $\calD \bigl(
\calC^{\qc,\Gm}_Z(Y, \, \calY) \bigr)$, $\calD \bigl(
\calC^{\qc,\fg,\Gm}_Z(Y, \, \calY) \bigr)$, and the full subcategory
$\calD^{\fg} \bigl( \calC^{\qc,\Gm}_Z(Y, \, \calY) \bigr)$ of $\calD
\bigl( \calC^{\qc,\Gm}_Z(Y, \, \calY) \bigr)$ of objects having locally
finitely generated cohomology.

Now we consider situation $(b)$. As above, let $Z \subset Y$ be a
closed $\Gm$-subscheme. Let $\widehat{\calY}$ be a coherent sheaf of
dg-algebras on the formal neighborhood $\widehat{Z}$ of $Z$ in $Y$,
endowed with a $\Gm$-equivariant structure. Hence,
if $\calI_Z$ is the defining ideal
of $Z$ in $Y$, we have a $\Gm$-equivariant structure on
$\widehat{\calY} / (\calI_Z^n \cdot \widehat{\calY})$ for any $n > 0$,
and all these structures are compatible. Then we define the
category $\calC^{\qc,\Gm}_Z(Y, \, \widehat{\calY})$ whose objects
are quasi-coherent, $\Gm$-$\calO_Y$-dg-modules supported
on $Z$, endowed with a compatible action of $\widehat{\calY}$. (By
definition such an object is a direct limit of dg-modules over some
quotients $\widehat{\calY} / (\calI_Z^n \cdot \widehat{\calY})$ for $n
\gg 0$.) We use the same notation as above for the
categories of locally finitely generated dg-modules, and for the
derived categories.  Observe that situation $(a)$ is a particular case of situation $(b)$. (Take $\widehat{\calY}$ to be the completion of $\calY$.)

Recall the construction of resolutions by injective $\Gm$-equivariant
quasi-coherent shea\-ves on $Y$ (see \cite{BEZPer}): if $\calF$ is an
injective object of
$\QCoh(Y)$, then $\Av(\calF):=a_* p_Y^* \calF$ is injective in
$\QCoh^{\Gm}(Y)$, where $a,p_Y: \Gm \times Y \to Y$ are the
action and the projection, respectively. It follows from this
construction, using the non-equivariant case (see \cite[3.1.7]{BMR2}),
that any $\Gm$-equivariant quasi-coherent
sheaf on $Y$ which is supported on $Z$ can be embedded in an
injective $\Gm$-equivariant quasi-coherent sheaf supported on
$Z$. Then, arguments similar to those of the proof of
Proposition \ref{prop:dgmodulessupport} give:

\begin{lem} \label{lem:equivqcGmsupport}

$\rmi$ Assume we are in situation $(b)$. Then there exists
an equivalence of categories $\calD \bigl(
  \calC^{\qc,\fg,\Gm}_Z(Y, \, \widehat{\calY}) \bigr) \cong \calD^{\fg} \bigl( \calC^{\qc,\Gm}_Z(Y, \, \widehat{\calY}) \bigr)$.

$\rmii$ Assume we are in situation $(a)$. Then there exists an
equivalence of categories $\calD \bigl(
  \calC^{\qc,\fg,\Gm}_Z(Y, \, \calY) \bigr) \cong \calD^{\fg}_Z \bigl(
  \calC^{\qc}_{\Gm}(Y, \, \calY) \bigr)$.

\end{lem}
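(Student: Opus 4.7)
The plan is to follow the three-step structure (essential surjectivity, fullness, faithfulness) of the proof of Proposition \ref{prop:dgmodulessupport}, adapted to the $\Gm$-equivariant (and possibly formal) setting. In both situations $(a)$ and $(b)$, the functor to be analyzed is induced by the natural inclusion of $\Gm$-equivariant, quasi-coherent, locally finitely generated dg-modules supported on $Z$ into the larger category of $\Gm$-equivariant quasi-coherent dg-modules whose cohomology is supported on $Z$ (and locally finitely generated over $H(\widehat{\calY})$ in situation $(b)$). As in the non-equivariant case, everything reduces to establishing two $\Gm$-equivariant technical ingredients: existence of $\Gm$-equivariant K-injective resolutions which are quasi-coherent and supported on $Z$, and a coherent approximation result analogous to Lemma \ref{lem:dgmodulecoherent}.

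For the first ingredient I would prove a $\Gm$-equivariant analogue of Lemma \ref{lem:Kinjresolutionsupport2}. The key step is provided by the remark preceding the lemma: using $\Av(\calF) = a_* p_Y^* \calF$, any $\Gm$-equivariant quasi-coherent sheaf supported on $Z$ embeds into an injective object of $\QCoh^{\Gm}(Y)$ with the same support properties. Bootstrapping this term-by-term as in Lemmas \ref{lem:lemKinjective} and \ref{lem:Kinjresolutionsupport2}, combined with the truncation and inverse-limit technology of \cite{SPARes} whose $\Gm$-equivariant version was already used in \S \ref{ss:gradedcase}, yields $\Gm$-equivariant K-injective resolutions supported on $Z$ for any $\calY$- or $\widehat{\calY}$-dg-module with bounded cohomology. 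In situation $(b)$ the resolutions live naturally on $\widehat{Z}$, which is harmless because an object of $\calC^{\qc,\Gm}_Z(Y, \, \widehat{\calY})$ is by definition a direct limit of modules over the nilpotent thickenings of $Z$, each of which is $\Gm$-stable.

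For the second ingredient I would prove the $\Gm$-equivariant analogue of Lemma \ref{lem:dgmodulecoherent}: every bounded $\Gm$-equivariant quasi-coherent $\widehat{\calY}$-dg-module with locally finitely generated cohomology is the filtered colimit of its $\Gm$-stable coherent sub-dg-modules which are quasi-isomorphic to it under inclusion. This relies on the general fact that a $\Gm$-equivariant quasi-coherent sheaf on the noetherian scheme $Y$ is the union of its $\Gm$-stable coherent subsheaves (any coherent subsheaf is contained in the $\Gm$-stable coherent subsheaf generated by its $\Gm$-translates, obtained from the weight decomposition on $\Gm$-stable affine charts). The descending induction of the proof of Lemma \ref{lem:dgmodulecoherent} then goes through, with the sole modification that the intermediate sub-modules $\calN^{j-1}$ are chosen $\Gm$-stable. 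Once both ingredients are available, the three-step argument of Proposition \ref{prop:dgmodulessupport} (essential surjectivity via K-injective resolution and truncation; fullness and faithfulness by replacing the mediating object $\calN$ with a $\Gm$-stable coherent sub-dg-module containing the images of the relevant morphisms) applies verbatim. The main point to be careful about, rather than a genuine obstacle, is maintaining $\Gm$-equivariance throughout the coherent approximation, and in case $(b)$ checking that the arguments on $\widehat{Z}$ are unaffected by passing to the formal neighborhood, which amounts to the observation that every step takes place inside some $\Gm$-stable nilpotent thickening of $Z$.
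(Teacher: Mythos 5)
Your proposal is correct and follows essentially the same route as the paper: the paper's proof consists precisely of the preliminary remark that $\Av(\calF)=a_* p_Y^* \calF$ furnishes injective objects of $\QCoh^{\Gm}(Y)$ (so that, via the non-equivariant case, every $\Gm$-equivariant quasi-coherent sheaf supported on $Z$ embeds in an injective one with the same support), followed by the phrase that one then repeats the arguments of Proposition \ref{prop:dgmodulessupport}; you have correctly identified both $\Gm$-equivariant ingredients (the K-injective resolutions supported on $Z$ and the coherent approximation of Lemma \ref{lem:dgmodulecoherent}, the latter resting on the standard fact that $\Gm$-equivariant quasi-coherent sheaves are unions of $\Gm$-stable coherent subsheaves, visible from the weight decomposition on $\Gm$-stable affine charts) and the reduction to finite nilpotent thickenings in situation $(b)$.
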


As in \S \ref{ss:koszulinclusion}, we denote by $\langle 1 \rangle$ the shift in the internal grading.

\begin{lem} \label{lem:morphsimsGm-nonGm}

$\rmi$ Consider situation $(a)$. For $\calF, \calG \in \calD^{\fg} \bigl( \calC^{\qc}_{\Gm}(Y, \, \calY) \bigr)$, \begin{equation*} \bigoplus_{m \in \mathbb{Z}} \ \Hom_{\calD^{\fg} \bigl(
\calC^{\qc}_{\Gm}(Y, \, \calY) \bigr)}(\calF, \calG \langle m \rangle) \
\cong \ \Hom_{\calD^{\qc,\fg}(Y, \, \calY)}(\For \ \calF, \For \
\calG), \end{equation*} where $\For$ is the forgetful functor.

$\rmii$ Consider situation $(b)$. For $\calF, \calG \in \calD^{\fg} \bigl( \calC^{\qc,\Gm}_Z(Y, \,
\widehat{\calY}) \bigr)$,
\begin{equation*} \bigoplus_{m \in \mathbb{Z}} \ \Hom_{\calD^{\fg} \bigl(
\calC^{\qc,\Gm}_Z(Y, \, \widehat{\calY}) \bigr)}(\calF, \calG \langle m \rangle) \
\cong \ \Hom_{\calD^{\fg} \bigl( \calC^{\qc}_Z (Y, \, \widehat{\calY})
  \bigr) }(\For \ \calF, \For \ \calG), \end{equation*} where $\For$
is the forgetful functor.

\end{lem}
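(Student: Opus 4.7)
The plan is to compute both sides using $\Gm$-equivariant K-injective resolutions, and then exploit the canonical decomposition of non-equivariant Hom complexes according to the internal $\Gm$-grading.

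First I would construct, for any $\calG$ occurring in either $\rmi$ or $\rmii$, a $\Gm$-equivariant resolution $\calG \xrightarrow{\qis} \calI$ where $\calI$ is $\calO_Y$-quasi-coherent and $\Gm$-equivariant K-injective (in the appropriate sense). The existence of such resolutions follows from an adaptation of the construction of Lemma \ref{lem:Kinjresolutionsupport2} combined with the equivariant injective construction $\calF \mapsto \Av(\calF) = a_* p_Y^* \calF$ recalled in \S \ref{ss:rkqcGm}. In situation $(b)$, one builds $\calI$ as a direct limit of resolutions of the quotients $\calG / (\calI_Z^n \cdot \calG)$, using Lemma \ref{lem:equivqcGmsupport}$\rmi$ to transport between derived categories of supported and non-supported dg-modules.

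The key technical point, which I expect to be the main obstacle, is to show that $\For \calI$ remains K-injective (or at least $\Hom$-acyclic enough to compute the non-equivariant derived Hom) as a $\calY$-dg-module, respectively a $\widehat{\calY}$-dg-module. This amounts to verifying that the non-equivariant $\Hom$ complex out of any acyclic K-flat dg-module into $\For \calI$ is acyclic. It suffices to check this for the building blocks of the resolution, i.e.\ the equivariant injectives of the form $\Av(\calF)$, which are non-equivariantly injective essentially by construction of $a_* p_Y^*$; the dg-module extension then follows by the inverse-limit argument of Lemma \ref{lem:lemKinjective}, together with the analogue of Lemma \ref{lem:directimageweaklyK-injective} applied to the natural morphism of $\Gm$-dg-ringed spaces forgetting the $\Gm$-action.

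Once this is established, the computation is formal. As complexes of abelian groups one has the canonical decomposition
\[
\Hom_{\calY}(\For \calF, \For \calI) \ \cong \ \bigoplus_{m \in \mathbb{Z}} \Hom_{\calY, \Gm}(\calF, \calI \langle m \rangle),
\]
since any morphism of $\calO_Y$-quasi-coherent graded $\calY$-modules splits uniquely into its components of given internal degree. Taking $H^0$ commutes with this discrete direct sum. On the right-hand side, the fact that $\calI \langle m \rangle$ is an equivariant K-injective resolution of $\calG \langle m \rangle$ identifies $H^0 \Hom_{\calY, \Gm}(\calF, \calI \langle m \rangle)$ with $\Hom_{\calD^{\fg}(\calC^{\qc}_{\Gm}(Y, \calY))}(\calF, \calG \langle m \rangle)$ in situation $(a)$, and analogously in situation $(b)$ (using Lemma \ref{lem:equivqcGm} and Lemma \ref{lem:equivqcGmsupport} to pass between the categories of quasi-coherent and locally finitely generated dg-modules). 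The two statements of the lemma follow.
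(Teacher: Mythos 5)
The approach you propose — resolving $\calG$ on the target side by a $\Gm$-equivariant K-injective and then decomposing the resulting Hom complex — is genuinely different from the paper's, and it has two real gaps.

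The first gap is the direct sum decomposition itself. A morphism of bigraded $\calY$-modules $\phi \colon \calF \to \calI$ does decompose uniquely into weight components $\phi_m$, but as a \emph{product}, not a direct sum: infinitely many $\phi_m$ may be nonzero once $\calF$ has unboundedly many internal weights, which happens for a general representing complex of an object of $\calD^{\fg}\bigl(\calC^{\qc}_{\Gm}(Y,\calY)\bigr)$. To make $\bigoplus_m\Hom_{\calY,\Gm}(\calF,\calI\langle m\rangle)$ exhaust $\Hom_{\calY}(\For\calF,\For\calI)$ you need a representative of $\calF$ that is uniformly bounded in internal degree, and you would have to say how to produce one and why you may compute derived Hom's with it. The paper handles exactly this by working on the \emph{source} side: $\calF$ is replaced by a bounded-above $\Gm$-K-projective resolution with components finitely generated over $\calY^0$ (hence each $\calF^i$ has only finitely many weight spaces), while $\calG$ is replaced by a finitely generated — hence bounded — dg-module via Lemma~\ref{lem:equivqcGmsupport}. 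This forces the Hom complex to live in finitely many weights in each degree, and K-projectivity on the source is insensitive to forgetting the $\Gm$-grading, so all derived Hom's reduce to homotopy Hom's with no further acyclicity input needed.

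The second gap is the K-injectivity of $\For\calI$, which you correctly flag as ``the main obstacle'' but resolve too optimistically. The $\Gm$-analogue of Lemma~\ref{lem:GmO_X} only says each internal component $\calI_m$ is K-injective; it does not imply the total module $\For\calI=\bigoplus_m\calI_m$ is K-injective, since infinite direct sums of K-injectives are not K-injective in general. Your observation that the building blocks $\Av(\calF)$ are non-equivariantly injective (a direct sum of copies of an injective on a noetherian space) is correct, but this does not straightforwardly propagate through the inverse-limit construction to the full resolution, and the weaker property you offer as a fallback (``$\Hom$ from any acyclic K-flat into $\For\calI$ is acyclic'') is weak K-injectivity, which only computes derived Hom's out of K-flat sources — not the situation here. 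Finally, for situation $(b)$ your proposal to build $\calI$ as a \emph{direct} limit of resolutions of the quotients $\calG/(\calI_Z^n\cdot\calG)$ cannot work as stated: K-injectivity is preserved only under special \emph{inverse} limits, and direct limits destroy it. The paper sidesteps the whole issue in $(b)$ by not constructing a K-injective resolution at all; it instead observes that any representative lives on some finite infinitesimal neighborhood $Z^{[i]}$, applies part $(\rmi)$ to the scheme $Z^{[i]}$, and then proves injectivity and surjectivity of the comparison map directly, using the $\Gm$-weight grading on the non-equivariant Hom to isolate the image of each $\Hom(\calF,\calG\langle m\rangle)$.
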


\begin{proof} $\rmi$ Using an open affine covering, we can assume $Y$ is affine, hence consider categories
of modules over a dg-algebra (see Proposition \ref{prop:dgmodulessupport}). By Lemma
\ref{lem:equivqcGmsupport}$\rmii$, we can assume $\calG$ is finitely
generated. Using a truncation functor, we can assume $\calF$ is
bounded above. Using the remarks after Lemma \ref{lem:Ug0centralcharacter} and the construction
of K-projective resolutions as in \cite[10.12]{BLEqu}, we can even assume that
$\calF^p$ is finitely generated over $\calY^0$ for any $p$, that
for all $m \in \mathbb{Z}$ we have $\Hom_{\calD^{\fg} \bigl(
\calC^{\qc}_{\Gm}(Y, \, \calY) \bigr)}(\calF, \calG \langle m \rangle)
\ \cong \ \Hom_{\calH^{\fg} \bigl(
\calC^{\qc}_{\Gm}(Y, \, \calY) \bigr)}(\calF, \calG \langle m \rangle)$
(where $\calH$ denotes the homotopy category),
and finally that $\Hom_{\calD^{\qc,\fg}(Y, \, \calY)}(\For \ \calF, \For \
\calG) \ \cong \ \Hom_{\calH^{\qc,\fg}(Y, \, \calY)}(\For \ \calF, \For \
\calG).$ The result follows, since it is clear that
\[ \Hom_{\calH^{\qc,\fg}(Y, \, \calY)}(\For \ \calF, \For \ \calG) \ \cong \
\bigoplus_m \Hom_{\calH^{\fg} \bigl(
\calC^{\qc}_{\Gm}(Y, \, \calY) \bigr)}(\calF, \calG \langle m \rangle).\] 

Now we deduce $\rmii$ from $\rmi$. By Lemma
\ref{lem:equivqcGmsupport}$\rmi$ we can assume $\calF$ and $\calG$ are
locally finitely generated. Let us prove that for any $m \in
\mathbb{Z}$ the morphism \begin{equation} \label{eq:morphismmorphisms}
  \Hom_{\calD^{\fg} \bigl( \calC^{\qc,\Gm}_Z(Y, \, \widehat{\calY})
    \bigr)}(\calF, \calG \langle m \rangle) \ \to \ \Hom_{\calD^{\fg}
    \bigl( \calC^{\qc}_Z(Y, \, \widehat{\calY}) \bigl)}(\For
  \ \calF, \For \ \calG)\end{equation} is injective. It is sufficient
to prove that if $f : \calF \to \calG$ is a morphism of
$\Gm$-dg-modules such that $\For(f)=0$, then
$f=0$ in $\calD^{\fg} \bigl(
\calC^{\qc,\Gm}_Z(Y, \, \widehat{\calY}) \bigr)$. Using a non-equivariant
analogue of Lemma \ref{lem:equivqcGmsupport}$\rmi$, there exists
$\calP$ in $\calC^{\qc,\fg}_Z(Y, \, \widehat{\calY})$ and a
quasi-isomorphism $\calG
\xrightarrow{\qis} \calP$ whose composition with $f$ is homotopic to
$0$. The dg-modules $\calF$, $\calG$ and $\calP$ live on a certain
infinitesimal neighborhood $Z^{[i]}$ of $Z$ in $Y$. Applying the
injectivity statement in $\rmi$ to the scheme $Z^{[i]}$,
we obtain that we can choose $\calP$ and the quasi-isomorphism $\calG
\to \calP$ to be $\Gm$-equivariant. This proves the injectivity of
\eqref{eq:morphismmorphisms}.

The injectivity of the morphism in the lemma follows
from the injectivity of \eqref{eq:morphismmorphisms}, using the fact
that $\Gm$ acts on 
$\Hom_{\calD^{\fg} \bigl( \calC^{\qc}_Z(Y, \widehat{\calY})
  \bigr)}(\For \calF, \For \calG)$, and
that the image of $\Hom_{\calD^{\fg} \bigl(
  \calC^{\qc,\Gm}_Z(Y, \widehat{\calY}) \bigr)}(\calF, \calG \langle m
\rangle)$ has weight $m$. The surjectivity can be proved similarly. \end{proof}

\subsection{Proof of Theorem \ref{thm:equivUg_0gr}}
\label{ss:paragraphUg_0gr}

By Lemma \ref{lem:equDGCoh}, we have \begin{equation} \label{eq:equivDGCohgr}
  \DGCoh^{\gr}((\wfrakg \, \rcap_{\frakg^* \times \calB} \, \calB)^{(1)})
  \ \cong \ \calD^{\qc,\fg}_{\Gm}(\calB^{(1)}, \, \pi_* \calO_{\wfrakg^{(1)}}
  \otimes_{\bk} \Lambda(\frakg^{(1)})). \end{equation} In this section we consider $\widehat{\calB^{(1)}}$ as the formal
neighborhood of $\calB^{(1)}$ in $\wfrakg^{(1)}$. We have seen in
\S \ref{ss:paragraphgradedmodules} that
$\wcalD_{|\widehat{\calB^{(1)}}}$, considered as a coherent sheaf of
rings on $\widehat{\calB^{(1)}} \subset \wfrakg^{(1)}$, is endowed
with a $\Gm$-structure. Hence we can
consider the category
$\calD^{\fg} \bigl( \calC^{\qc,\Gm}_{\calB^{(1)}}(\wfrakg^{(1)}, \,
\wcalD_{|\widehat{\calB^{(1)}}} \otimes_{\bk} \Lambda(\frakg^{(1)}))
\bigr)$ as in \S \ref{ss:rkqcGm}, situation $(b)$.

\begin{lem} \label{lem:FGgr}

There exists an equivalence of categories
\[ \calD^{\qc,\fg}_{\Gm}(\calB^{(1)}, \, \pi_* \calO_{\wfrakg^{(1)}}
\otimes_{\bk} \Lambda(\frakg^{(1)})) \ \cong \ \calD^{\fg} \bigl(
\calC^{\qc,\Gm}_{\calB^{(1)}}(\wfrakg^{(1)}, \,
\wcalD_{|\widehat{\calB^{(1)}}} \otimes_{\bk} \Lambda(\frakg^{(1)}))
\bigr). \]

\end{lem}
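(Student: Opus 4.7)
The plan is to show that the lemma is a $\Gm$-equivariant refinement of the chain of equivalences \eqref{eq:DGC}--\eqref{eq:equivstep1big} obtained in the proof of Theorem \ref{thm:localizationfixedFr}. I would construct the equivalence as a composition of four steps, each an analogue of a step in that earlier proof.

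\emph{Step 1 (affineness of $\pi$).} The projection $\pi : \wfrakg^{(1)} \to \calB^{(1)}$ is $\Gm$-equivariant and affine, so pushforward gives an equivalence between $\Gm$-equivariant quasi-coherent $\calO_{\wfrakg^{(1)}}\otimes_{\bk}\Lambda(\frakg^{(1)})$-dg-modules and $\Gm$-equivariant quasi-coherent $\pi_*\calO_{\wfrakg^{(1)}}\otimes_{\bk}\Lambda(\frakg^{(1)})$-dg-modules; local finite generation is preserved on both sides. This identifies the left-hand side of the lemma with $\calD^{\qc,\fg}_{\Gm}(\wfrakg^{(1)}, \, \calO_{\wfrakg^{(1)}}\otimes_{\bk}\Lambda(\frakg^{(1)}))$.

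\emph{Step 2 (support and Lemma \ref{lem:equivqcGmsupport}).} Because $H^0(\calO_{\wfrakg^{(1)}}\otimes_{\bk}\Lambda(\frakg^{(1)}))\cong\calO_{\calB^{(1)}}$, any object of this category automatically has cohomology supported on the zero section $\calB^{(1)}$. Applying Lemma \ref{lem:equivqcGmsupport}$\rmii$ in situation $(a)$ (with $Y=\wfrakg^{(1)}$ and $Z=\calB^{(1)}$) I obtain an equivalence with $\calD\bigl(\calC^{\qc,\fg,\Gm}_{\calB^{(1)}}(\wfrakg^{(1)}, \, \calO_{\wfrakg^{(1)}}\otimes_{\bk}\Lambda(\frakg^{(1)}))\bigr)$. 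Then, since an $\calO_{\wfrakg^{(1)}}$-module supported on $\calB^{(1)}$ is a module over the completion, and this identification is compatible with the $\Gm$-structure (because $\calB^{(1)}$ is $\Gm$-stable), I can replace $\calO_{\wfrakg^{(1)}}$ by $\calO_{\widehat{\calB^{(1)}}}$ without changing the derived category.

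\emph{Step 3 ($\Gm$-equivariant Morita via the splitting bundle).} By \S \ref{ss:paragraphgradedmodules} (using Vologodsky's Lemma \ref{lem:lemmaVologodsky}), the splitting bundle $\calM^0$ on $\widehat{\calB^{(1)}}$ admits a $\Gm$-equivariant structure giving the isomorphism $\wcalD_{|\widehat{\calB^{(1)}}}\cong\sheafEnd_{\calO_{\widehat{\calB^{(1)}}}}(\calM^0)$ of $\Gm$-equivariant sheaves of algebras. As in the first step of the proof of Theorem \ref{thm:localizationfixedFr}, the exact adjoint functors $\calF\mapsto\calM^0\otimes_{\calO_{\widehat{\calB^{(1)}}}}\calF$ and $\calG\mapsto\sheafHom_{\wcalD}(\calM^0,\calG)$ now carry a $\Gm$-equivariant structure, induce mutually inverse equivalences of abelian categories between $\Gm$-equivariant $\calO_{\widehat{\calB^{(1)}}}\otimes_{\bk}\Lambda(\frakg^{(1)})$-dg-modules and $\Gm$-equivariant $\wcalD_{|\widehat{\calB^{(1)}}}\otimes_{\bk}\Lambda(\frakg^{(1)})$-dg-modules (supported on $\calB^{(1)}$, quasi-coherent), and preserve locally finite generation. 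Passing to derived categories and invoking Lemma \ref{lem:equivqcGmsupport}$\rmi$ in situation $(b)$ identifies the result with the right-hand side of the lemma.

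The routine parts (steps 1, 2, 4) are transparent translations of the non-equivariant arguments already recorded in the text. \emph{The main point} where the $\Gm$-equivariant setting requires genuine extra input is Step 3: one must know that the Morita equivalence given by $\calM^0$ can be implemented $\Gm$-equivariantly. This is precisely the content of Vologodsky's lemma applied to $\calM^0$, and it is the only place where the formal-neighborhood formalism of situation $(b)$ in \S \ref{ss:rkqcGm} is essential (since a $\Gm$-structure on $\wcalD$ is available only on $\widehat{\calB^{(1)}}$, not on all of $\wfrakg^{(1)}$).
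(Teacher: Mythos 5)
Your proposal is correct and follows essentially the same route as the paper's proof: affineness of $\pi$ to pass to dg-modules on $\wfrakg^{(1)}$, the automatic support on $\calB^{(1)}$ together with Lemma \ref{lem:equivqcGmsupport}, and the $\Gm$-equivariant Morita equivalence from the splitting bundle (made possible by Vologodsky's Lemma \ref{lem:lemmaVologodsky}), followed by a final application of Lemma \ref{lem:equivqcGmsupport}. You correctly flag the $\Gm$-equivariant structure on $\calM^0$ as the genuinely non-routine ingredient; the only slight difference is that the paper records an explicit intermediate invocation of Lemma \ref{lem:equivqcGm} to pass between $\calD^{\qc,\fg}_{\Gm}$ and $\calD^{\fg}(\calC^{\qc}_{\Gm})$, which your Step 1 subsumes implicitly.
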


\begin{proof} By Lemma \ref{lem:equivqcGm}, there exists an equivalence $\calD^{\qc,\fg}_{\Gm}(\calB^{(1)}, \, \pi_*
  \calO_{\wfrakg^{(1)}} \otimes_{\bk} \Lambda(\frakg^{(1)})) \cong
  \calD^{\fg} \bigl( \calC^{\qc}_{\Gm}(\calB^{(1)}, \, \pi_*
  \calO_{\wfrakg^{(1)}} \otimes_{\bk} \Lambda(\frakg^{(1)}))
  \bigr)$. Now $\pi_*$
induces an equivalence $\calC^{\qc}_{\Gm}(\wfrakg^{(1)}, \, \calO_{\wfrakg^{(1)}} \otimes_{\bk}
  \Lambda(\frakg^{(1)})) \ \xrightarrow{\sim} \ \calC^{\qc}_{\Gm}(\calB^{(1)}, \, \pi_*
  \calO_{\wfrakg^{(1)}} \otimes_{\bk} \Lambda(\frakg^{(1)}))$. Hence \begin{equation*}
  \calD^{\qc,\fg}_{\Gm}(\calB^{(1)}, \, \pi_*
  \calO_{\wfrakg^{(1)}} \otimes_{\bk} \Lambda(\frakg^{(1)})) \ \cong \
  \calD^{\fg} \bigl( \calC^{\qc}_{\Gm}(\wfrakg^{(1)}, \,
  \calO_{\wfrakg^{(1)}} \otimes_{\bk} \Lambda(\frakg^{(1)})) \bigr)
\end{equation*}

Now, using the fact that any object of
$\calC^{\qc}_{\Gm}(\wfrakg^{(1)}, \, \calO_{\wfrakg^{(1)}}
\otimes_{\bk} \Lambda(\frakg^{(1)}))$ has its cohomology supported on
$\calB^{(1)}$, we obtain by Lemma \ref{lem:equivqcGmsupport}$\rmii$ an
equivalence $\calD^{\fg} \bigl( \calC^{\qc}_{\Gm}(\wfrakg^{(1)}, \,
  \calO_{\wfrakg^{(1)}} \otimes_{\bk} \Lambda(\frakg^{(1)})) \bigr)
  \ \cong \ \calD \bigl( \calC^{\qc,\fg,\Gm}_{\calB^{(1)}}(\wfrakg^{(1)}, \,
  \calO_{\wfrakg^{(1)}} \otimes_{\bk} \Lambda(\frakg^{(1)}))
  \bigr)$.  Then, using analogues of
the functors $F, G$ of the proof of Theorem
\ref{thm:localizationfixedFr}, we have
$\calC^{\qc, \fg, \Gm}_{\calB^{(1)}}(\wfrakg^{(1)}, \,
  \calO_{\wfrakg^{(1)}} \otimes_{\bk} \Lambda(\frakg^{(1)}))
  \ \cong \ \calC^{\qc,\fg,\Gm}_{\calB^{(1)}}(\wfrakg^{(1)}, \,
  \wcalD_{|\widehat{\calB^{(1)}}} \otimes_{\bk}
  \Lambda(\frakg^{(1)})).$ The lemma follows, using Lemma \ref{lem:equivqcGmsupport}$\rmi$. \end{proof}

We have seen in \S \ref{ss:paragraphgradedmodules} that $(\calU
\frakg)^{\hat{0}}_{\hat{0}}$, considered as a sheaf of algebras on the
formal neighborhood\footnote{This formal neighborhood is also
  isomorphic to the formal neiborhood of $\{(0,0)\}$ in
  ${\rm Spec}(\frakZ) \cong \frakg^* {}^{(1)} \times_{\frakh^* {}^{(1)} / W}
  \frakh^* / (W,\bullet)$. We do not distinguish these formal
  neighborhoods.} $\widehat{\{0\}}$ of $\{0\}$ in $\frakg^*
{}^{(1)}$, is endowed with a $\Gm$-structure. Hence we are again in situation
$(b)$ of \S \ref{ss:rkqcGm}. We simplify the notation for the
categories of $\calU \frakg$-modules, and denote e.g.~by
$\calC^{\fg,\Gm}_{(0,0)}(\calU \frakg \otimes_{\bk}
\Lambda(\frakg^{(1)}))$ the category $\calC^{\qc,\fg,\Gm}_{\{0\}}(
\frakg^* {}^{(1)}, \, \calU \frakg_{|\widehat{\{0\}}} \otimes_{\bk}
\Lambda(\frakg^{(1)}))$. By Lemma \ref{lem:equivqcGmsupport}$\rmi$ we
have \begin{equation} \label{eq:equiv1}
  \calD \bigl(
  \calC^{\fg,\Gm}_{(0,0)}(\calU \frakg \otimes_{\bk}
\Lambda(\frakg^{(1)})) \bigr) \ \cong \ \calD^{\fg} \bigl(
\calC^{\Gm}_{(0,0)}(\calU \frakg \otimes_{\bk}
\Lambda(\frakg^{(1)})) \bigr).\end{equation}

Recall the remarks before Lemma \ref{lem:Ug0centralcharacter}. Let us
consider the following forgetful functors (of the internal grading):
{\small \begin{align*} \For: \calD^{\fg} \bigl( \calC^{\Gm}_{(0,0)}(\calU
  \frakg \otimes_{\bk}
\Lambda(\frakg^{(1)})) \bigr) \ & \to \ \calD^{\fg}_{0}(\calU \frakg
\otimes_{\bk} \Lambda(\frakg^{(1)})), \\ \For: \calD^{\fg} \bigl(
\calC^{\qc,\Gm}_{\calB^{(1)}}(\wfrakg^{(1)}, \,
\wcalD_{|\widehat{\calB^{(1)}}} \otimes_{\bk} \Lambda(\frakg^{(1)}))
\bigr) \ & \to \
\calD^{\fg} \bigl( \calC^{\qc}_{\calB^{(1)}} (\wfrakg^{(1)}, \,
\wcalD_{|\widehat{\calB^{(1)}}} \otimes_{\bk}
\Lambda(\frakg^{(1)})) \bigr). \end{align*}}Clearly, the category
$\calD^{\fg} \bigl( \calC^{\qc}_{\calB^{(1)}} (\wfrakg^{(1)}, \,
\wcalD_{|\widehat{\calB^{(1)}}} \otimes_{\bk}
\Lambda(\frakg^{(1)})) \bigr)$ is equivalent to the category
$\calD^{\qc,\fg}_{\calB^{(1)} \times \{0\}}(X, \, \wcalD \otimes_{\bk}
\Lambda(\frakg^{(1)}))$ (see Proposition
\ref{prop:dgmodulessupport}). Here $X=\wfrakg^{(1)}
\times_{\frakh^* {}^{(1)}} \frakh^*$, as in \S \ref{ss:paragraphfixedFrobenius}. By Lemma \ref{lem:morphsimsGm-nonGm}$\rmii$ we have:

\begin{lem} \label{lem:lemmorphismsGm}

$\rmi$ For $M,N \in \calD^{\fg} \bigl( \calC^{\Gm}_{(0,0)}(\calU \frakg
\otimes_{\bk} \Lambda(\frakg^{(1)})) \bigr)$, \begin{multline*} \bigoplus_{m \in \mathbb{Z}} \Hom_{\calD^{\fg}
    \bigl( \calC^{\Gm}_{(0,0)}(\calU \frakg \otimes_{\bk}
\Lambda(\frakg^{(1)})) \bigr)}(M,N\langle m \rangle) \\ \cong \
\Hom_{\calD^{\fg}_{0}(\calU \frakg \otimes_{\bk}
  \Lambda(\frakg^{(1)}))}(\For \ M, \For \ N). \end{multline*}

$\rmii$ For $\calF,\calG \in \calD^{\fg} \bigl(
\calC^{\qc,\Gm}_{\calB^{(1)}}(\wfrakg^{(1)}, \, \wcalD_{|\widehat{\calB^{(1)}}}
\otimes_{\bk} \Lambda(\frakg^{(1)})) \bigr)$, \begin{multline*} \bigoplus_{m \in \mathbb{Z}} \Hom_{\calD^{\fg}
    \bigl(
\calC^{\qc,\Gm}_{\calB^{(1)}} (\wfrakg^{(1)}, \,
\wcalD_{|\widehat{\calB^{(1)}}} \otimes_{\bk} \Lambda(\frakg^{(1)}))
\bigr)}(\calF, \calG \langle m
\rangle) \\ \cong \ \Hom_{\calD^{\qc,\fg}_{\calB^{(1)} \times
    \{0\}}(X, \, \wcalD \otimes_{\bk} \Lambda(\frakg^{(1)}))}(\For \
\calF,\For \ \calG). \end{multline*}

\end{lem}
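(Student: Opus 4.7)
The plan is to deduce both parts directly from Lemma \ref{lem:morphsimsGm-nonGm}$\rmii$ by setting up the data of situation $(b)$ of \S \ref{ss:rkqcGm} in each case, and then to massage the non-equivariant side so that it matches the one stated.

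For part $\rmi$, I will apply Lemma \ref{lem:morphsimsGm-nonGm}$\rmii$ with $Y = \frakg^{*}{}^{(1)}$ equipped with its natural $\Gm$-action, $Z = \{0\}$, and $\widehat{\calY}$ the coherent sheaf of dg-algebras $\calU\frakg_{|\widehat{\{0\}}} \otimes_{\bk} \Lambda(\frakg^{(1)})$ on the formal neighborhood $\widehat{\{0\}}$; the $\Gm$-structure is the one built in \S \ref{ss:paragraphgradedmodules} on $(\calU\frakg)^{\hat{0}}_{\hat{0}}$, together with the natural $\Gm$-structure on $\Lambda(\frakg^{(1)})$ (with $\frakg^{(1)}$ in bidegree $(-1,2)$). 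Unwinding the definitions, the category $\calD^{\fg} \bigl( \calC^{\qc,\Gm}_{Z}(Y,\widehat{\calY}) \bigr)$ is precisely the $\calD^{\fg} \bigl( \calC^{\Gm}_{(0,0)}(\calU\frakg \otimes_{\bk} \Lambda(\frakg^{(1)})) \bigr)$ of the statement. The non-equivariant side produced by Lemma \ref{lem:morphsimsGm-nonGm}$\rmii$ is then identified with $\Hom$ in $\calD^{\fg}_{0}(\calU\frakg \otimes_{\bk} \Lambda(\frakg^{(1)}))$ by an immediate analogue of Proposition \ref{prop:dgmodulessupport}, exactly as used just before Lemma \ref{lem:Ug0centralcharacter}.

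For part $\rmii$, I will apply Lemma \ref{lem:morphsimsGm-nonGm}$\rmii$ with $Y = \wfrakg^{(1)}$ endowed with the $\Gm$-action \eqref{eq:defactionGm}, $Z = \calB^{(1)}$, and $\widehat{\calY} = \wcalD_{|\widehat{\calB^{(1)}}} \otimes_{\bk} \Lambda(\frakg^{(1)})$. The $\Gm$-structure on $\wcalD_{|\widehat{\calB^{(1)}}}$ is the one produced in \S \ref{ss:paragraphgradedmodules} via Lemma \ref{lem:lemmaVologodsky}, and it is tensored with the standard $\Gm$-structure on $\Lambda(\frakg^{(1)})$. This is again literally situation $(b)$, so Lemma \ref{lem:morphsimsGm-nonGm}$\rmii$ applies and identifies the left-hand side with $\Hom$ in $\calD^{\fg} \bigl( \calC^{\qc}_{\calB^{(1)}}(\wfrakg^{(1)}, \wcalD_{|\widehat{\calB^{(1)}}} \otimes_{\bk} \Lambda(\frakg^{(1)})) \bigr)$. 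The final identification with $\Hom$ in $\calD^{\qc,\fg}_{\calB^{(1)} \times \{0\}}(X, \wcalD \otimes_{\bk} \Lambda(\frakg^{(1)}))$ is the one already recorded in the paragraph preceding the lemma: it uses Proposition \ref{prop:dgmodulessupport} together with the isomorphism of formal neighborhoods (of $\calB^{(1)}$ in $\wfrakg^{(1)}$ and of $\calB^{(1)} \times \{0\}$ in $X$) recalled in \S \ref{ss:paragraphfixedFrobenius}.

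The proof is almost purely bookkeeping; there is no genuine obstacle. The only point requiring a moment of care is to check that the $\Gm$-equivariant structures produced in \S \ref{ss:paragraphgradedmodules} (on the completions $(\calU\frakg)^{\hat{0}}_{\hat{0}}$ and on $\wcalD_{|\widehat{\calB^{(1)}}}$) are indeed of the form demanded in situation $(b)$, namely compatible $\Gm$-equivariant structures on all finite infinitesimal truncations; this is automatic from the construction, since those structures are defined exactly by pro-systems of equivariant structures on the quotients by powers of the defining ideal. Once this is checked, and the exterior factor $\Lambda(\frakg^{(1)})$ is given its obvious $\Gm$-grading, Lemma \ref{lem:morphsimsGm-nonGm}$\rmii$ applies verbatim and yields both $\rmi$ and $\rmii$.
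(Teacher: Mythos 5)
Your proof is correct and takes essentially the same approach as the paper: the paper's entire proof of this lemma is the one-line citation ``By Lemma \ref{lem:morphsimsGm-nonGm}$\rmii$,'' and your write-up simply unfolds that citation by specifying which data of situation $(b)$ of \S\ref{ss:rkqcGm} to feed in for each part and confirming the resulting non-equivariant categories match the ones in the statement (via Proposition \ref{prop:dgmodulessupport} and the identification of formal neighborhoods). Your remark at the end — that the $\Gm$-structures produced via Lemma \ref{lem:lemmaVologodsky} on the completions are by construction compatible pro-systems of equivariant structures on the infinitesimal truncations, hence of exactly the form situation $(b)$ demands — is the right (small) point to flag and is handled correctly.
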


\begin{cor} \label{cor:RGammagr}

There exists a fully faithful functor \[ R\Gamma_{\Gm} : \calD^{\fg} \bigl(
\calC^{\qc,\Gm}_{\calB^{(1)}}(\wfrakg^{(1)}, \, \wcalD_{|\widehat{\calB^{(1)}}}
\otimes_{\bk} \Lambda(\frakg^{(1)})) \bigr) \ \to \ \calD^{\fg} \bigl(
\calC^{\Gm}_{(0,0)}(\calU \frakg \otimes_{\bk}
\Lambda(\frakg^{(1)})) \bigr). \]

\end{cor}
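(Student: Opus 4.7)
The plan is to construct $R\Gamma_{\Gm}$ by deriving the global sections functor in the $\Gm$-equivariant setting, then to deduce full faithfulness from the corresponding equivalence in the non-equivariant case (obtained in the proof of Theorem \ref{thm:localizationfixedFr}) via the Hom-space computation of Lemma \ref{lem:lemmorphismsGm}.

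First, I would observe that by construction in \S \ref{ss:paragraphgradedmodules}, the isomorphism $R\Gamma(\widehat{\calB^{(1)}}, \wcalD_{|\widehat{\calB^{(1)}}}) \cong (\calU\frakg)^{\hat{0}}_{\hat{0}}$ of \eqref{eq:isomorphismUtilde} is $\Gm$-equivariant. Consequently, $\Gamma$ defines a functor from $\calC^{\qc,\Gm}_{\calB^{(1)}}(\wfrakg^{(1)}, \wcalD_{|\widehat{\calB^{(1)}}} \otimes_{\bk} \Lambda(\frakg^{(1)}))$ to $\calC^{\Gm}_{(0,0)}(\calU\frakg \otimes_{\bk} \Lambda(\frakg^{(1)}))$. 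To obtain its right derived functor, I use that the averaging construction $\Av = a_* p_Y^*$ recalled in \S \ref{ss:rkqcGm} produces, from any quasi-coherent injective, a $\Gm$-equivariant quasi-coherent injective supported on the same subscheme; this allows one to build $\Gm$-equivariant K-injective resolutions (arguing exactly as in the proof of Lemma \ref{lem:lemKinjective}, together with a truncation/inverse-limit argument as in Theorem \ref{thm:Kinjresolution}). One thus obtains a triangulated functor $R\Gamma_{\Gm}$ between the derived categories of all quasi-coherent $\Gm$-equivariant dg-modules (with support conditions on the source).

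Next, I would verify that $R\Gamma_{\Gm}$ restricts to the stated subcategories. By construction every $\Gm$-equivariant K-injective resolution is, in particular, a K-injective resolution after forgetting the equivariant structure; combining this with the non-equivariant compatibility statement (Corollary \ref{cor:directimagediagram}$\rmi$, extended as in Corollary \ref{cor:cordiagramRf_*Gm}) gives a commutative diagram
\[
\xymatrix@R=14pt{
\calD^{\fg}\bigl(\calC^{\qc,\Gm}_{\calB^{(1)}}(\wfrakg^{(1)},\wcalD_{|\widehat{\calB^{(1)}}}\otimes_{\bk}\Lambda(\frakg^{(1)}))\bigr) \ar[r]^-{R\Gamma_{\Gm}} \ar[d]_-{\For} & \calD\bigl(\calC^{\Gm}_{(0,0)}(\calU\frakg\otimes_{\bk}\Lambda(\frakg^{(1)}))\bigr) \ar[d]^-{\For} \\
\calD^{\qc,\fg}_{\calB^{(1)}\times\{0\}}(X,\wcalD\otimes_{\bk}\Lambda(\frakg^{(1)})) \ar[r]^-{R\Gamma} & \calD^{\fg}_{0}(\calU\frakg\otimes_{\bk}\Lambda(\frakg^{(1)})).
}
\]
The bottom row is the equivalence \eqref{eq:equivstep2} established in the proof of Theorem \ref{thm:localizationfixedFr}. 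In particular it sends objects with finitely generated cohomology to objects with finitely generated cohomology (and the analogous statement for central characters); since the forgetful functors detect these conditions, $R\Gamma_{\Gm}$ lands in $\calD^{\fg}\bigl(\calC^{\Gm}_{(0,0)}(\calU\frakg\otimes_{\bk}\Lambda(\frakg^{(1)}))\bigr)$, giving the claimed functor.

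Finally, full faithfulness follows by combining the two parts of Lemma \ref{lem:lemmorphismsGm} with the commutative diagram above. For $\calF,\calG$ in the source, the functor $R\Gamma_{\Gm}$ induces a map
\[
\bigoplus_{m\in\mathbb{Z}} \Hom(\calF,\calG\langle m\rangle) \ \longrightarrow \ \bigoplus_{m\in\mathbb{Z}} \Hom(R\Gamma_{\Gm}\calF, R\Gamma_{\Gm}\calG\langle m\rangle)
\]
which, under the isomorphisms of Lemma \ref{lem:lemmorphismsGm}, is identified with the map $\Hom(\For\calF,\For\calG) \to \Hom(\For\circ R\Gamma_{\Gm}\calF, \For\circ R\Gamma_{\Gm}\calG)$ induced by the non-equivariant $R\Gamma$; the latter is bijective because this functor is an equivalence. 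Since $R\Gamma_{\Gm}$ commutes with $\langle 1\rangle$, this direct-sum bijection respects the decomposition into weight components, so each summand is bijective; taking $m=0$ yields full faithfulness. The main delicate point is the construction and compatibility of $R\Gamma_{\Gm}$ with its non-equivariant counterpart, i.e.\ the commutativity of the square above, since this is what allows one to transport the hard content of Theorem \ref{thm:localizationfixedFr} to the graded setting; once this compatibility is in place, Lemma \ref{lem:lemmorphismsGm} does the rest automatically.
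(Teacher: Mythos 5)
Your overall strategy — construct a $\Gm$-equivariant derived global-sections functor, establish a compatibility square with the non-equivariant $R\Gamma$ from the proof of Theorem \ref{thm:localizationfixedFr}, and transport full faithfulness through Lemma \ref{lem:lemmorphismsGm} — is the same as the paper's, and your final step (the weight-component argument giving full faithfulness from the direct-sum isomorphism) is fine. However, the construction of $R\Gamma_{\Gm}$ and the compatibility square is where there is a genuine gap.

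Your key claim, ``every $\Gm$-equivariant K-injective resolution is, in particular, a K-injective resolution after forgetting the equivariant structure,'' is false. By Lemma \ref{lem:GmO_X}, a $\Gm$-equivariant K-injective $\calO_X$-dg-module is a module $\calG=\bigoplus_m\calG_m$ with each weight component $\calG_m$ K-injective; but the underlying (infinite) direct sum $\bigoplus_m\calG_m$ is in general not K-injective. This is exactly why the paper introduces K-limp objects and proves Lemma \ref{lem:directsumK-limp} and Corollary \ref{cor:cordirectimageGm} rather than appealing to the naive statement. Moreover, the relevant dg-algebra $\wcalD_{|\widehat{\calB^{(1)}}}\otimes_{\bk}\Lambda(\frakg^{(1)})$ lives on a formal neighborhood (``situation $(b)$'' of \S\ref{ss:rkqcGm}), which is outside the setting where the general machinery of Theorem \ref{thm:Kinjresolution} / Lemma \ref{lem:lemKinjective} / Corollary \ref{cor:cordiagramRf_*Gm} was developed; those results cannot simply be cited here. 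The paper sidesteps both difficulties by first truncating to bounded-below dg-modules (which loses nothing on derived categories) and then constructing the derived functor with a concrete \v{C}ech-type resolution $\calF\hookrightarrow\bigoplus_\alpha(j_\alpha)_*(j_\alpha)^*\calF$ along a $\Gm$-stable affine open cover: this resolution has $\Gamma$-acyclic components, is manifestly the same whether or not one remembers the $\Gm$-structure, and so gives the commutative square directly, without any claim about equivariant versus non-equivariant K-injectivity. You should replace the averaging/K-injective step with such a bounded-below \v{C}ech construction (or at least argue via K-limp/weakly K-injective objects and re-derive the compatibility in situation $(b)$); once the square is established this way, the rest of your argument goes through unchanged.
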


\begin{proof} Let us denote by $\calC^{+,\qc,\Gm}_{\calB^{(1)}}(\wfrakg^{(1)},
\, \wcalD_{|\widehat{\calB^{(1)}}} \otimes_{\bk} \Lambda(\frakg^{(1)}))$
the full subcategory
of $\calC^{\qc,\Gm}_{\calB^{(1)}}(\wfrakg^{(1)}, 
\wcalD_{|\widehat{\calB^{(1)}}} \otimes_{\bk} \Lambda(\frakg^{(1)}))$
consisting of bounded below
objects. Using truncation functors, with obvious notation, we have an equivalence
\[\calD^{\fg} \bigl( \calC^{+,\qc,\Gm}_{\calB^{(1)}}(\wfrakg^{(1)}, \, 
\wcalD_{|\widehat{\calB^{(1)}}} \otimes_{\bk} \Lambda(\frakg^{(1)}))
\bigr) \ \cong \ \calD^{\fg}
\bigl( \calC^{\qc,\Gm}_{\calB^{(1)}}(\wfrakg^{(1)}, \,
\wcalD_{|\widehat{\calB^{(1)}}}
\otimes_{\bk} \Lambda(\frakg^{(1)})) \bigr).\]

Consider the functor induced by
$\Gamma(\wfrakg^{(1)},-)$: \[ \Gamma^+ : \calC^{+,\qc,\Gm}_{\calB^{(1)}}
(\wfrakg^{(1)}, \, \wcalD _{|\widehat{\calB^{(1)}}}\otimes_{\bk}
\Lambda(\frakg^{(1)})) \ \to \ \calC^{\Gm}_{(0,0)}(\calU \frakg \otimes_{\bk}
\Lambda(\frakg^{(1)})). \] Let us first show that the derived functor
$R\Gamma^+$ is defined everywhere, i.e.~that
every object of $\calC^{+,\qc,\Gm}_{\calB^{(1)}}(\wfrakg^{(1)}, \,
\wcalD_{|\widehat{\calB^{(1)}}} \otimes_{\bk} \Lambda(\frakg^{(1)}))$
has a right resolution
which is split on the right. We claim that every object $\calF \in \calC^{+,\qc,\Gm}_{\calB^{(1)}}
(\wfrakg^{(1)}, \, \wcalD_{|\widehat{\calB^{(1)}}} \otimes_{\bk}
\Lambda(\frakg^{(1)}))$ has a resolution $\calF \xrightarrow{\qis}
\calI$ where $\calI \in \calC^{+,\qc,\Gm}_{\calB^{(1)}}(\wfrakg^{(1)}, \,
\wcalD_{|\widehat{\calB^{(1)}}} \otimes_{\bk} \Lambda(\frakg^{(1)}))$,
and each $\calI^p$ is acyclic for $\Gamma(\wfrakg^{(1)},-) : \QCoh(\wfrakg^{(1)}) \to {\rm
  Vect}(\bk)$. Indeed, let $\wfrakg^{(1)}=\bigcup_{\alpha=1}^n
X_{\alpha}$ be an affine $\Gm$-stable open covering (e.g.~the inverse image of an affine open covering of $\calB^{(1)}$). Let
$j_{\alpha}: X_{\alpha} \hookrightarrow X$ be the inclusion. Then
there is an inclusion $\calF \hookrightarrow \bigoplus_{\alpha=1}^n
\ (j_{\alpha})_* (j_{\alpha})^* \calF.$ Doing the same construction for
the cokernel, repeating, and taking a total
complex, one
obtains the resolution $\calI$. Such a resolution is clearly split on
the right.

By this construction, it is clear that the following diagram is
commutative, where the vertical arrows are the natural forgetful
functors, and the bottom horizontal arrow is the functor considered in
\eqref{eq:diagram-RGamma-For}: {\small \[ \xymatrix@R=14pt{ \calD \bigl(
  \calC^{+,\qc,\Gm}_{\calB^{(1)}}(\wfrakg^{(1)}, \,
  \wcalD_{|\widehat{\calB^{(1)}}}
  \otimes_{\bk} \Lambda(\frakg^{(1)})) \bigr) \ar[rr]^-{R\Gamma^+}
  \ar[d]_-{\For} & & \calD \bigl( \calC^{\Gm}_{(0,0)}(\calU \frakg
  \otimes_{\bk}
\Lambda(\frakg^{(1)})) \bigr) \ar[d]^-{\For} \\ \calD(X, \, \wcalD
\otimes_{\bk} \Lambda(\frakg^{(1)})) \ar[rr]^{R\Gamma} & & \calD({\rm
  Spec}(\bk), \, \calU \frakg \otimes_{\bk} \Lambda(\frakg^{(1)})). } \] }Using the results just below
\eqref{eq:diagram-RGamma-For}, the functor $R\Gamma^+$ restricts
to \[ R\Gamma_{\Gm}: \calD^{\fg} \bigl(
\calC^{+,\qc,\Gm}_{\calB^{(1)}}(\wfrakg^{(1)}, \,
\wcalD_{|\widehat{\calB^{(1)}}}
\otimes_{\bk} \Lambda(\frakg^{(1)})) \bigr) \ \to \ \calD^{\fg} \bigl(
\calC^{\Gm}_{(0,0)}(\calU \frakg \otimes_{\bk}
\Lambda(\frakg^{(1)})) \bigr),\] which corresponds to $R\Gamma: \calD^{\qc,\fg}_{\calB^{(1)} \times \{0\}}(X, \, \wcalD
\otimes_{\bk} \Lambda(\frakg^{(1)})) \ \to \ \calD^{\fg}_0(\calU \frakg
\otimes_{\bk} \Lambda(\frakg^{(1)}))$ of \eqref{eq:diagramRgamma} under
the natural forgetful functors. The latter functor is
fully faithful. Hence, using Lemma \ref{lem:lemmorphismsGm},
$R\Gamma_{\Gm}$ is also fully faithful. \end{proof}

Using equivalence \eqref{eq:equivDGCohgr}, Lemma \ref{lem:FGgr},
Corollary \ref{cor:RGammagr} and equivalence \eqref{eq:equiv1}, we
obtain a fully faithful functor \[\DGCoh^{\gr}((\wfrakg
\, \rcap_{\frakg^* \times \calB} \, \calB)^{(1)}) \ \to \ \calD \bigl(
\calC^{\fg,\Gm}_{(0,0)}(\calU \frakg \otimes_{\bk}
\Lambda(\frakg^{(1)})) \bigr).\] To finish the proof of Theorem
\ref{thm:equivUg_0gr} we only have to prove:

\begin{lem}\label{lem:lemrestrictedgraded}

There exists an equivalence $\calD \bigl( \calC^{\fg,\Gm}_{(0,0)}(\calU \frakg \otimes_{\bk}
\Lambda(\frakg^{(1)})) \bigr) \cong \calD^b \Mod^{\fg,\gr}_0((\calU
\frakg)_0).$

\end{lem}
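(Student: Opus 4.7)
The plan is to reduce to a statement about graded modules over a finite-dimensional graded algebra, by means of a $\Gm$-equivariant Koszul quasi-isomorphism, in perfect parallel with Lemma \ref{lem:Ug0centralcharacter} in the non-graded setting.

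First, by Lemma \ref{lem:equivqcGmsupport}$\rmi$ applied in situation $(b)$ of \S \ref{ss:rkqcGm}, the category on the left is nothing but the derived category of $\Gm$-equivariant, finitely generated dg-modules over $(\calU \frakg)^{\hat{0}}_{\hat{0}} \otimes_{\bk} \Lambda(\frakg^{(1)})$, viewed as a sheaf of $\Gm$-dg-algebras on the formal neighborhood of $\{0\}$ in $\frakg^{*(1)}$. Here we use the $\Gm$-structure on $(\calU \frakg)^{\hat{0}}_{\hat{0}}$ produced in \S \ref{ss:paragraphgradedmodules} (via Lemma \ref{lem:lemmaVologodsky}), and the bidegree $(-1,2)$ placement of $\frakg^{(1)}$ in $\Lambda(\frakg^{(1)})$.

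The next step is to observe that the natural surjection $(\calU \frakg)^{\hat{0}}_{\hat{0}} \otimes_{\bk} \Lambda(\frakg^{(1)}) \twoheadrightarrow (\calU \frakg)^{\hat{0}}_0$ is a $\Gm$-equivariant quasi-isomorphism of dg-algebras. Indeed, $(\calU \frakg)^{\hat{0}}_{\hat{0}}$ is flat over $\frakZ_{{\rm Fr}}^{\hat{0}} \cong S(\frakg^{(1)})^{\hat{0}}$ (by PBW), so this is obtained by base change from the classical Koszul quasi-isomorphism $S(\frakg^{(1)}) \otimes_{\bk} \Lambda(\frakg^{(1)}) \xrightarrow{\qis} \bk_0$. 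The $\Gm$-equivariance holds by construction: the generators $\frakg^{(1)} \subset \Lambda(\frakg^{(1)})$ sit in internal degree $2$, matching the internal weight of $\frakg^{(1)} \subset \frakZ_{{\rm Fr}} \subset (\calU \frakg)^{\hat{0}}_{\hat{0}}$ induced by the action \eqref{eq:defactionGm} on $\wfrakg^{(1)}$ (functions linear in the fiber have weight $+2$).

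Applying the $\Gm$-equivariant analogue of Proposition \ref{prop:qisequivalence} (which goes through with the same proof in the $\Gm$-equivariant setting, since $\Gm$-equivariant K-injective and K-flat resolutions exist by \S \ref{ss:gradedcase}), this quasi-isomorphism produces an equivalence between the category above and the derived category of $\Gm$-equivariant, finitely generated dg-modules over $(\calU \frakg)^{\hat{0}}_0$, which is a graded algebra concentrated in cohomological degree $0$. Since $(\calU \frakg)^{\hat{0}}_0$ is finite dimensional, the image of $\frakZ_{{\rm HC}}^0$ is nilpotent, so for $n \gg 0$ we have $(\calU \frakg)^{\hat{0}}_0 = (\calU \frakg)_0^{\hat{0}}$, which is finite-dimensional. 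For a finite-dimensional graded algebra $A$, the category of $\Gm$-equivariant, finitely generated dg-modules whose cohomology is locally finitely generated over $A$ is equivalent (via the cohomology functor, or equivalently by a truncation argument as in Lemma \ref{lem:equivalenceT}) to $\calD^b \Mod^{\fg,\gr}(A)$; this gives the desired equivalence with $\calD^b \Mod^{\fg,\gr}_0((\calU \frakg)_0)$.

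The main obstacle is bookkeeping: one must verify carefully that each step of the identification is compatible with the $\Gm$-structure constructed via Vologodsky's lemma (which is defined only on the completion), and that the Koszul differential is homogeneous for the internal grading with the conventions of Lemma \ref{lem:equDGCoh}. Once the $\Gm$-equivariant Koszul quasi-isomorphism is in place, the rest is a routine transfer of the proof of Lemma \ref{lem:Ug0centralcharacter} to the graded setting, using the tools developed in \S \ref{ss:gradedcase} and \S \ref{ss:rkqcGm}.
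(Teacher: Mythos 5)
Your proposal shares the same key ingredient as the paper's proof — the $\Gm$-equivariant Koszul quasi-isomorphism from $(\calU\frakg)^{\hat{0}}_{\hat{0}} \otimes_{\bk} \Lambda(\frakg^{(1)})$ onto $(\calU\frakg)_0^{\hat{0}}$, and the compatibility of the Vologodsky $\Gm$-structure with the natural grading on $\frakZ_{\mathrm{Fr}}$ (which you correctly identify and justify) — but the execution is genuinely different. You invoke a $\Gm$-equivariant, formal-neighborhood, support-constrained analogue of Proposition \ref{prop:qisequivalence} to produce the equivalence in one stroke, and then truncate. The paper instead takes a more hands-on route: it constructs the restriction-of-scalars functor $\Psi$ going from $\calD^b \Mod^{\fg,\gr}_0((\calU\frakg)_0)$ \emph{into} the dg-module category, observes that $\Psi$ specializes under the forgetful functors to the already-established non-graded equivalence \eqref{eq:Ug0}, and then uses Lemma \ref{lem:morphsimsGm-nonGm} (the weight-decomposition of Hom spaces) to read off full faithfulness, followed by an essential-surjectivity argument by induction on cohomological amplitude as in Lemma \ref{lem:Kinjresolutionsupport2}.

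The advantage of the paper's approach is economy: it re-uses the non-graded equivalence and the machinery of \S \ref{ss:rkqcGm} that was already developed precisely to avoid proving a general quasi-isomorphism-invariance theorem in the $\Gm$-equivariant, formal, and support-constrained setting. Your proposal would require establishing that version of Proposition \ref{prop:qisequivalence} from scratch — plausible (the graded K-injective and K-flat resolutions from \S \ref{ss:gradedcase} give the right raw material), but it is not a statement that exists in the paper, it lives in situation $(b)$ rather than over an ordinary ringed space, and one would still have to verify that the inverse functor preserves the $\fg$ and support conditions. So your route is valid in spirit but carries a heavier burden of verification, whereas the paper's argument converts the $\Gm$-statement into a corollary of the non-graded one via Lemma \ref{lem:morphsimsGm-nonGm}; that maneuver is the real content of the paper's proof, and it is the part your proposal replaces with an appeal to an unproved general principle.
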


\begin{proof} The natural morphism $\calU \frakg \otimes_{\bk}
\Lambda(\frakg^{(1)}) \twoheadrightarrow (\calU \frakg)_0
\twoheadrightarrow (\calU \frakg)_0^{\hat{0}}$ induces: \[\Psi: \calD^b \Mod^{\fg,\gr}_0((\calU
\frakg)_0) \ \to \ \calD \bigl( \calC^{\fg,\Gm}_{(0,0)}(\calU \frakg
\otimes_{\bk}
\Lambda(\frakg^{(1)})) \bigr).\] This functor corresponds to equivalence \eqref{eq:Ug0} under the forgetful
functors. We deduce, as in Corollary \ref{cor:RGammagr}, that
$\Psi$ is fully faithful. Then one checks that $\Psi$ is essentially surjective by induction on the amplitude of the cohomology of dg-modules, using arguments similar to those of Lemma \ref{lem:Kinjresolutionsupport2}. \end{proof}

\section{Simple restricted $(\calU \frakg)^0$-modules} \label{sec:sectionsimplemodules}

In this section we study the LHS of diagram $(*)$ of section \ref{sec:statement}. More precisely, we introduce functors $\frakS_{\delta}$, and study their action on simple modules.

\subsection{The ``semi-simple'' functors $\frakS_{\delta}$}

In this subsection, for each simple root $\delta$ we construct a functor $\frakS_{\delta}$ which ``morally'' represents, on the representation-theoretic side, the complex of functors $\Id \to R_{\delta} \to \Id$. This functor has a ``semi-simplicity'' property (see Proposition \ref{prop:propSalphasimple}).

Let $\alpha \in \Phi$. Recall the subvariety $Y_{\alpha} \subset \wcalN \times \wcalN$ (\S \ref{ss:exactsequenceN}). We denote by $\frakS_{\alpha}$ the convolution functor \[ F_{\wcalN^{(1)} \to
  \wcalN^{(1)}}^{\calO_{Y_{\alpha}^{(1)}}(-\rho,\rho-\alpha)}: \calD^b
\Coh(\wcalN^{(1)}) \ \to \ \calD^{b} \Coh(\wcalN^{(1)}).\]  Now let $\alpha_0 \in \Phi_{\aff} - \Phi$. Recall the notation $\beta, b_0$ of Lemma \ref{lem:conjugationaffinesimpleref}. We define \[ \frakS_{\alpha_0}:= \mathbf{K}_{b_0} \circ \frakS_{\beta} \circ \mathbf{K}_{(b_0)^{-1}}.\] These functors stabilize the subcategory $\calD^{b} \Coh_{\calB^{(1)}}(\wcalN^{(1)})$. They will be related in \S \ref{ss:paragraphdualreflection} to the reflection functors of \S \ref{ss:paragraphreflection}.

For all $\delta \in \Phi_{\aff}$ we have an exact triangle of
endofunctors of $\calD^b \Coh(\wcalN^{(1)})$:
\begin{equation}\label{eq:exacttrianglefunctorsN} \frakS_{\delta} \to
  \mathbf{K}_{C(s_{\delta})} \to \Id. \end{equation} For $\delta \in \Phi$, this follows from \eqref{eq:kernelswcalN2}; for $\delta=\alpha_0$, this is the conjugate of the corresponding triangle for $\beta$.

Now we give a representation-theoretic interpretation of these functors. Recall the equivalence $\epsilon^{\calB}_0$ of \eqref{eq:equivBMR2}, and the module $Q_{\delta}(w)$ defined in \eqref{eq:defQ}.

\begin{prop} \label{prop:propSalphasimple}

Let $w \in W^0$, $\delta \in \Phi_{\aff}$ such that $w s_{\delta} \bullet 0 > w \bullet 0$. Then \[\frakS_{\delta} \calL_w \ \cong \ (\epsilon^{\calB}_0)^{-1}(Q_{\delta}(w)).\]

\end{prop}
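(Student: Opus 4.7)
The plan is to apply the distinguished triangle of endofunctors \eqref{eq:exacttrianglefunctorsN}, $\frakS_{\delta} \to \mathbf{K}_{C(s_{\delta})} \to \Id$, to the object $\calL_w$ and to match the resulting triangle, via $\epsilon^{\calB}_{0}$, with the triangle coming from the short exact sequence
\[ 0 \to Q_{\delta}(w) \to R_{\delta} L(w \bullet 0)/L(w \bullet 0) \to L(w \bullet 0) \to 0, \]
in which the right-hand surjection is the descent $\bar\psi^w_{\delta}$ of the counit $\psi^w_{\delta}$ (well-defined since $\psi^w_{\delta} \circ \phi^w_{\delta}=0$, because $w \bullet 0$ does not lie on the $\delta$-wall by the hypothesis $w s_{\delta} \bullet 0 > w \bullet 0$), and the middle term is identified with $\mathbf{I}_{C(s_{\delta})} L(w \bullet 0)$ via the defining triangle $L(w \bullet 0) \to R_{\delta} L(w \bullet 0) \to \mathbf{I}_{C(s_{\delta})} L(w \bullet 0)$ together with the injectivity of $\phi^w_{\delta}$.

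First I would identify the middle term: under $\epsilon^{\calB}_{0}$, the object $\mathbf{K}_{C(s_{\delta})} \calL_w$ corresponds to $\mathbf{I}_{C(s_{\delta})} L(w \bullet 0)$. For $\delta \in \Phi$ this follows by combining the compatibility \eqref{eq:diagramactionUg} of $\mathbf{J}_b$ with $\gamma^{\calB}_{0}$ and the final assertion of Theorem~\ref{thm:actionBaff'}, which identifies the $B_{\aff}'$-actions on $\wcalN^{(1)}$ and $\wfrakg^{(1)}$ via $i_*$, corresponding on the representation side to the inclusion $\Mod^{\fg}_{0}((\calU\frakg)^{0}) \hookrightarrow \Mod^{\fg}_{(0,0)}(\calU\frakg)$. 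For $\delta=\alpha_0$ one uses the definition $\frakS_{\alpha_0} = \mathbf{K}_{b_0} \circ \frakS_{\beta} \circ \mathbf{K}_{(b_0)^{-1}}$ together with Corollary~\ref{cor:affinereflectionfunctor}, which gives the analogous triangle.

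The main obstacle is to verify that the connecting morphism $\mathbf{K}_{C(s_{\delta})} \calL_w \to \calL_w$ provided by \eqref{eq:exacttrianglefunctorsN} corresponds, under $\epsilon^{\calB}_{0}$, to a nonzero scalar multiple of $\bar\psi^w_{\delta}$. The space $\Hom_{\calU\frakg}(\mathbf{I}_{C(s_{\delta})} L(w \bullet 0), L(w \bullet 0))$ is one-dimensional: applying $\Hom(-,L(w \bullet 0))$ to the short exact sequence $L(w \bullet 0) \hookrightarrow R_{\delta} L(w \bullet 0) \twoheadrightarrow \mathbf{I}_{C(s_{\delta})} L(w \bullet 0)$, using self-adjointness of $R_{\delta}$ to compute $\Hom(R_{\delta} L(w \bullet 0), L(w \bullet 0)) = \bk \cdot \psi^w_{\delta}$, and noting that the restriction to $\Hom(L(w \bullet 0), L(w \bullet 0))$ vanishes (again by $\psi^w_{\delta} \circ \phi^w_{\delta}=0$) yields this one-dimensional space, spanned by $\bar\psi^w_{\delta}$. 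It thus suffices to prove that our candidate morphism is nonzero; were it zero, the triangle would split, giving $\mathbf{K}_{C(s_{\delta})} \calL_w \cong \frakS_{\delta} \calL_w \oplus \calL_w$. Ruling this out should follow by unwinding the explicit natural transformation $\calO_{S'_{\alpha}} \twoheadrightarrow \calO_{\Delta \wcalN}$ in \eqref{eq:kernelswcalN2} (or its $b_0$-conjugate when $\delta = \alpha_0$), for instance by transporting the question to $\wfrakg^{(1)}$ via $i_*$ and using the analogous triangle $\Id \to \frakR_{\alpha} \to \mathbf{J}^{\dg}_{T_{\alpha}}$ coming from \eqref{eq:kernelswfrakg}, where the corresponding representation-theoretic map is the genuinely nonzero unit $L(w \bullet 0) \hookrightarrow R_{\delta} L(w \bullet 0)$.

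Once the connecting morphisms match, the two distinguished triangles are isomorphic, and comparing kernels yields $\epsilon^{\calB}_{0}(\frakS_{\delta} \calL_w) \cong \Ker(\bar\psi^w_{\delta}) = Q_{\delta}(w)$, which gives the desired isomorphism $\frakS_{\delta} \calL_w \cong (\epsilon^{\calB}_{0})^{-1}(Q_{\delta}(w))$.
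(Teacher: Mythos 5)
Your overall strategy coincides with the paper's: apply the triangle $\frakS_{\delta} \to \mathbf{K}_{C(s_{\delta})} \to \Id$ of \eqref{eq:exacttrianglefunctorsN} to $\calL_w$, use the compatibilities of Theorem~\ref{thm:actionBaff'} and diagram \eqref{eq:diagramactionUg} to identify the middle term with $\mathbf{I}_{C(s_{\delta})}L(w \bullet 0) \cong \mathrm{Coker}(\phi^w_{\delta})$, recognize the second map as (induced by) $\psi^w_{\delta}$, and read off the cocone. The paper simply asserts the identification of the second morphism with $\psi^w_{\delta}$, referring to the definitions in \cite[2.3]{BMR2}; you instead try to derive it from the one-dimensionality of $\Hom(\mathbf{I}_{C(s_{\delta})}L(w\bullet 0), L(w\bullet 0))$ plus a nonvanishing argument. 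Carrying out this identification explicitly rather than citing it is a reasonable way to make the argument more self-contained.

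However, there is a genuine error in your nonvanishing step. You claim that if the second morphism $g: \mathbf{K}_{C(s_{\delta})}\calL_w \to \calL_w$ vanishes then the triangle splits as $\mathbf{K}_{C(s_{\delta})}\calL_w \cong \frakS_{\delta}\calL_w \oplus \calL_w$. This is the wrong conclusion: in a distinguished triangle $A \xrightarrow{f} B \xrightarrow{g} C \xrightarrow{h} A[1]$, the vanishing of the \emph{connecting} morphism $h$ gives $B \cong A \oplus C$, whereas the vanishing of $g$ gives $A \cong B \oplus C[-1]$ (the cone of the zero map $B \to C$ is $C \oplus B[1]$, and this must be $A[1]$). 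So $g=0$ would yield $\frakS_{\delta}\calL_w \cong \mathbf{K}_{C(s_{\delta})}\calL_w \oplus \calL_w[-1]$, and ruling \emph{that} out is not obviously easier — you would essentially have to know a priori that $\frakS_{\delta}\calL_w$ has cohomology only in degree $0$, which is close to what you are trying to prove. Moreover, your sketch of ``transporting to $\wfrakg^{(1)}$ via $i_*$'' and invoking the triangle $\Id \to \frakR_{\alpha} \to \mathbf{J}^{\dg}_{T_{\alpha}}$ from \eqref{eq:kernelswfrakg} does not connect cleanly to \eqref{eq:kernelswcalN2}: $i_*$ of the sequence of kernels on $\wcalN$ is not the sequence \eqref{eq:kernelswfrakg} or \eqref{eq:kernelswfrakg2} on $\wfrakg$ (the kernels $\calO_{S'_{\alpha}}$ and $\calO_{S_{\alpha}}$ live on different spaces, and the triangles go in opposite ``directions''), so the representation-theoretic map you land on is not the one controlling $g$. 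Finally, even once the isomorphism ${\rm Incl}\circ\epsilon^{\calB}_0\circ\frakS_{\delta}(\calL_w) \cong Q_{\delta}(w)$ is established in $\calD^b\Mod^{\fg}_{(0,0)}(\calU\frakg)$, you still need the paper's concluding observation that $\epsilon^{\calB}_0 \circ \frakS_{\delta}(\calL_w)$ is concentrated in degree $0$ and that ${\rm Incl}$ is fully faithful on such objects, in order to descend the isomorphism to $\calD^b\Mod^{\fg}_0((\calU\frakg)^0)$; this step is missing from your write-up.
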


\begin{proof} The exact triangle \eqref{eq:exacttrianglefunctorsN} induces an exact triangle \begin{equation} \label{eq:triangle1} \frakS_{\delta}(\calL_w) \to \mathbf{K}_{C(s_{\delta})}(\calL_w) \to \calL_w \end{equation} in $\calD^b \Coh_{\calB^{(1)}}(\wcalN^{(1)})$. Let $i: \wcalN \hookrightarrow \wfrakg$ be the inclusion. Then $i_* \circ  \mathbf{K}_{C(s_{\delta})} \cong  \mathbf{J}_{C(s_{\delta})} \circ i_*$ (Theorem \ref{thm:actionBaff'}). Hence triangle \eqref{eq:triangle1} induces an exact triangle \begin{equation} \label{eq:triangle2} \gamma^{\calB}_0 \circ i_* \circ \frakS_{\delta}(\calL_w) \to \gamma^{\calB}_0 \circ \mathbf{J}_{C(s_{\delta})} \circ i_* (\calL_w) \to \gamma^{\calB}_0 \circ i_* (\calL_w). \end{equation} By construction we have an isomorphism of functors $\gamma^{\calB}_0 \circ i_* \cong {\rm Incl} \circ \epsilon^{\calB}_0$, where ${\rm Incl}$ is induced by the inclusion $\Mod^{\fg}_{0}((\calU \frakg)^0) \hookrightarrow \Mod^{\fg}_{(0,0)}(\calU \frakg)$. In particular, $L(w \bullet 0) \cong \gamma^{\calB}_0 \circ i_* (\calL_w)$. Using diagram \eqref{eq:diagramactionUg}, we deduce $\gamma^{\calB}_0 \circ \mathbf{J}_{C(s_{\delta})} \circ i_* (\calL_w) \cong \mathbf{I}_{C(s_{\delta})}(L(w \bullet 0)).$ Hence triangle \eqref{eq:triangle2} induces a triangle \begin{equation} \label{eq:triangle3} {\rm Incl} \circ \epsilon^{\calB}_0 \circ \frakS_{\delta}(\calL_w) \to \mathbf{I}_{C(s_{\delta})}(L(w \bullet 0)) \to L(w \bullet 0).
\end{equation}

Now by definition (see \cite[2.3]{BMR2}), $\mathbf{I}_{C(s_{\delta})}(L(w \bullet 0))$ is the cone of the natural morphism $L(w \bullet 0) \to R_{\delta} L(w \bullet 0)$. This morphism is the morphism $\phi_{\delta}^w$ of \S \ref{ss:paragraphRT}, hence $\mathbf{I}_{C(s_{\delta})}(L(w \bullet 0)) \cong {\rm Coker}(\phi_{\delta}^w)$. Moreover, under this identification, the second morphism in \eqref{eq:triangle3} is induced by $\psi_{\delta}^w$. Hence triangle \eqref{eq:triangle3} induces an isomorphism ${\rm Incl} \circ \epsilon^{\calB}_0 \circ \frakS_{\delta}(\calL_w) \cong Q_{\delta}(w)$. In particular, $\epsilon^{\calB}_0 \circ \frakS_{\delta}(\calL_w)$ has cohomology only in degree $0$; as the restriction of ${\rm Incl}$ to such objects is fully faithful, the result follows. \end{proof}

To finish this subsection, remark that for $\delta \in \Phi_{\aff}$ there is a functor $\frakS_{\delta}^{\Gm}$ such that the following diagram commutes: \begin{equation} \label{eq:diagramSalphaGm} \xymatrix@R=14pt{\calD^b \Coh^{\Gm}(\wcalN^{(1)}) \ar[rr]^-{\frakS_{\delta}^{\Gm}} \ar[d]^-{\For} & & \calD^b \Coh^{\Gm}(\wcalN^{(1)}) \ar[d]^-{\For} \\ \calD^b \Coh(\wcalN^{(1)}) \ar[rr]^-{\frakS_{\delta}} & & \calD^b \Coh(\wcalN^{(1)}), } \end{equation} namely the graded convolution with kernel $\calO_{Y_{\delta}^{(1)}}(-\rho,\rho-\delta)$ (with its natural $\Gm$-structure) if $\delta \in \Phi$, or the conjugate of the convolution with kernel $\calO_{Y_{\beta}^{(1)}}(-\rho,\rho-\beta)$ by $\mathbf{K}^{\gr}_{b_0}$ if $\delta=\alpha_0$.

\subsection{Graded $(\calU \frakg)^0$-modules} \label{ss:paragraphgradedUg^0modules}

As in \S \ref{ss:paragraphgradedmodules}, we have (see \cite[3.4.1]{BMR2}): \[ (\calU
\frakg)^0 \ \cong \ R\Gamma(\wcalN^{(1)}, \wcalD_{|\wcalN^{(1)} \times
  \{0\}}).\] Recall the $\Gm$-action on $\wcalN^{(1)}$ (\S \ref{ss:paragraphgradedmodules}). The same arguments as in \S \ref{ss:paragraphgradedmodules} show that there exists a $\Gm$-equivariant structure on the algebra $(\calU
\frakg)^0_{\hat{0}}$ (the completion of $(\calU \frakg)^0$ with
respect to the image of the maximal ideal of $\frakZ_{{\rm Fr}}$
associated to $0 \in \frakg^*{}^{(1)}$). We denote by $\Mod^{\fg,\gr}_0((\calU\frakg)^0)$ the
category of graded $(\calU \frakg)^0$-modules with trivial generalized
Frobenius central character\footnote{These modules are modules over the
quotient of $(\calU \frakg)^0$ by a power of the ideal generated by
$\frakg^{(1)}$; this quotient is a graded algebra, hence we can speak
of \emph{graded} modules.}. Arguments similar to those of \S \ref{ss:paragraphUg_0gr} prove the following theorem, which gives a ``graded version'' of equivalence \eqref{eq:equivBMR2}:

\begin{thm}\label{thm:equivUg^0gr}

There exists a fully faithful functor
\[ \widetilde{\epsilon}^{\calB}_0 : \calD^b
\Coh^{\Gm}_{\calB^{(1)}}(\wcalN^{(1)}) \ \to \ \calD^b
\Mod^{\fg,\gr}_0((\calU\frakg)^0),\] commuting with the shifts $\langle 1 \rangle$, such that the following diagram
commutes: \[ \xymatrix@R=12pt{ \calD^b \Coh^{\Gm}_{\calB^{(1)}}(\wcalN^{(1)})
  \ar[rr]^-{\widetilde{\epsilon}^{\calB}_0} \ar[d]^-{\For} & & \calD^b
  \Mod^{\fg,\gr}_0((\calU\frakg)^0) \ar[d]^-{\For} \\ \calD^b
  \Coh_{\calB^{(1)}}(\wcalN^{(1)}) \ar[rr]^-{\epsilon^{\calB}_0} & &
  \calD^b \Mod^{\fg}_0((\calU\frakg)^0). } \]

\end{thm}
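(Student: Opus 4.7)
The strategy is to run the same argument as for Theorem \ref{thm:equivUg_0gr} (see \S \ref{ss:paragraphUg_0gr}), adapted to $\wcalN^{(1)}$ and to $(\calU\frakg)^0$ in place of $\wfrakg^{(1)}$ and $(\calU\frakg)_0$. The key simplification is that we are dealing with honest coherent sheaves on $\wcalN^{(1)}$ rather than with dg-sheaves on a derived intersection, so the intermediate Koszul resolution $\otimes_{\bk} \Lambda(\frakg^{(1)})$ that appeared in \S \ref{ss:paragraphUg_0gr} is no longer needed.

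First I would install a $\Gm$-equivariant structure. The $\Gm$-action on $\wcalN^{(1)}$ defined in \eqref{eq:defactionGm} stabilizes $\calB^{(1)}$, hence induces a $\Gm$-action on the formal neighborhood $\widehat{\calB^{(1)}}$ of $\calB^{(1)}$ in $\wcalN^{(1)}$. The splitting bundle $\calM^0$ on this formal neighborhood satisfies $\Ext^1(\calM^0,\calM^0)=0$ (this is the analogue of the identity $R^i\Gamma(\widehat{\calB^{(1)}}, \wcalD_{|\widehat{\calB^{(1)}}}) \cong (\calU\frakg)^0_{\hat{0}}$ for $i=0$, and zero otherwise, established using the affineness of $\frakg^*{}^{(1)}$ and \cite[4.1.5]{EGA3_1}). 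Lemma \ref{lem:lemmaVologodsky} then produces a $\Gm$-equivariant structure on $\calM^0$, and hence on $\wcalD_{|\widehat{\calB^{(1)}}} \cong \sheafEnd_{\calO_{\widehat{\calB^{(1)}}}}(\calM^0)$ as a sheaf of algebras. Taking global sections gives a $\Gm$-structure on $(\calU\frakg)^0_{\hat{0}}$, and quotienting by a sufficiently high power of the image of the maximal ideal of $\frakZ_{\mathrm{Fr}}$ endows the truncation with a grading; these are by construction the graded objects implicit in the definition of $\Mod^{\fg,\gr}_0((\calU\frakg)^0)$.

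Next I would construct $\widetilde{\epsilon}^{\calB}_0$ as a composition of three functors. The $\Gm$-splitting of $\wcalD_{|\widehat{\calB^{(1)}}}$ together with Lemma \ref{lem:equivqcGmsupport}$\rmi$ and the equivariant analogues of the functors $F,G$ used in the proof of Theorem \ref{thm:localizationfixedFr} yield an equivalence
\[
\calD^b \Coh^{\Gm}_{\calB^{(1)}}(\wcalN^{(1)}) \ \cong \ \calD\bigl(\calC^{\qc,\fg,\Gm}_{\calB^{(1)}}(\wcalN^{(1)}, \, \wcalD_{|\widehat{\calB^{(1)}}})\bigr).
\]
Mimicking Corollary \ref{cor:RGammagr}, a $\Gm$-equivariant version of $R\Gamma$ (defined via flabby or \v{C}ech-type resolutions compatible with the $\Gm$-action) produces a triangulated functor into $\calD\bigl(\calC^{\fg,\Gm}_0(\calU\frakg^0)\bigr)$, where the latter denotes the derived category of $\Gm$-equivariant finitely generated modules supported on $0 \in \frakg^*{}^{(1)}$. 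Finally, the analogue of Lemma \ref{lem:lemrestrictedgraded}, obtained by pushing through the quotient $(\calU\frakg)^0 \twoheadrightarrow (\calU\frakg)^0/\langle\frakZ_{\mathrm{Fr}}^+\rangle^n$, identifies this derived category with $\calD^b \Mod^{\fg,\gr}_0((\calU\frakg)^0)$. By construction all three equivalences/functors commute with $\langle 1 \rangle$ and are compatible under $\For$ with the non-equivariant avatars appearing in the proof of \eqref{eq:equivBMR2}, yielding the required commutative diagram.

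It remains to prove full faithfulness, which is the only genuinely non-formal point. Since the corresponding non-equivariant functor is the composition of the equivalence $\epsilon^{\calB}_0$ and the inclusion $\Mod^{\fg}_0((\calU\frakg)^0) \hookrightarrow \Mod^{\fg}_{(0,0)}(\calU\frakg)$, its full faithfulness is known from \cite{BMR,BMR2}. One then invokes Lemma \ref{lem:morphsimsGm-nonGm} (both parts, applied respectively to the geometric and representation-theoretic sides) to decompose Hom-spaces in the graded categories as direct sums over internal shifts of the non-equivariant Hom-spaces; comparing the two decompositions degree by degree, full faithfulness of $\widetilde{\epsilon}^{\calB}_0$ follows from that of $\epsilon^{\calB}_0$. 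The main obstacle, as in \S \ref{ss:paragraphUg_0gr}, is purely technical: one must carefully justify the construction of the equivariant $R\Gamma$ (using equivariant flabby/affine covering resolutions as in Corollary \ref{cor:RGammagr}) and verify that the forgetful diagrams commute at the level of resolutions, but no new idea is needed beyond what has already been deployed in \S\S \ref{ss:rkqcGm}--\ref{ss:paragraphUg_0gr}.
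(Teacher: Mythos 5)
Your proposal is correct and matches the paper's indicated approach, which is simply to rerun the machinery of \S\ref{ss:paragraphUg_0gr} with $\wcalN^{(1)}$ in place of $\wfrakg^{(1)}$ and $(\calU\frakg)^0$ in place of $(\calU\frakg)_0$, dropping the Koszul factor $\Lambda(\frakg^{(1)})$ since there is no derived intersection on the $\wcalN^{(1)}$ side. One small slip in the full-faithfulness paragraph: the non-equivariant shadow of $\widetilde{\epsilon}^{\calB}_0$ in the required commutative diagram is $\epsilon^{\calB}_0$ itself, not its composition with the inclusion $\Mod^{\fg}_0((\calU\frakg)^0) \hookrightarrow \Mod^{\fg}_{(0,0)}(\calU\frakg)$, and its full faithfulness follows directly from $\epsilon^{\calB}_0$ being an equivalence by \eqref{eq:equivBMR2} --- after which the reduction via Lemma \ref{lem:morphsimsGm-nonGm} goes through exactly as you describe.
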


Now, consider the category $\Mod^{\fg}_0((\calU \frakg)^0)$. By Theorem \ref{thm:thmGG}, each simple module $L(w \bullet 0)$ ($w \in W^0$) can be lifted to a graded module $L^{\gr}(w \bullet 0)$ in $\Mod^{\fg,\gr}_0((\calU \frakg)^0)$ (uniquely, up to isomorphism and shift). Here our algebra is not finite dimensional, but it acts on every module we consider through a finite dimensional quotient, hence we can apply Theorem \ref{thm:thmGG}. We fix an arbitrary choice for these lifts.

Let $j : \calB^{(1)} \hookrightarrow \wcalN^{(1)}$, $k : \calB^{(1)} \hookrightarrow \wfrakg^{(1)}$ be the inclusions. Let also $\Fr : \calB \to \calB^{(1)}$ be the Frobenius morphism. Note that if $\calG \in \Coh(\calB^{(1)})$, then $\Fr^* \calG \in \Coh(\calB)$ has a structure of $\calD^0$-module, induced by the action on $\calO_{\calB}$.

\begin{lem} \label{lem:zero-section}

For $\calF \in \Coh(\calB^{(1)})$ we have isomorphisms \[ \epsilon_{0}^{\calB}(j_* \calF) \cong R\Gamma \bigl( \calB, \Fr_{\calB}^* (\calF (\rho)) \bigr), \qquad \gamma_{0}^{\calB}(k_* \calF) \cong R\Gamma \bigl( \calB, \Fr_{\calB}^* (\calF (\rho)) \bigr). \]

\end{lem}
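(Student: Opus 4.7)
The plan is to reduce both isomorphisms in the lemma to a single projection-formula computation, together with the explicit description of the BMR splitting bundle on the zero section. First I would recall from \S\ref{ss:reviewlocalization} that both $\epsilon_0^{\calB}$ and $\gamma_0^{\calB}$ are constructed as the composition of (i) tensoring with the appropriate splitting bundle $\calM^0$ for the Azumaya algebra $\wcalD$ on the formal neighborhood of $\calB^{(1)}$ in $\wcalN^{(1)}$ (resp.~$\wfrakg^{(1)}$), viewing the result as a $\wcalD$-module, and (ii) taking derived global sections. Since $j_*\calF$ and $k_*\calF$ are supported on $\calB^{(1)}$, the formal-neighborhood subtleties are harmless: tensoring with $\calM^0$ only depends on the restriction $j^*\calM^0$ (resp.~$k^*\calM^0$) to $\calB^{(1)}$.

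Next I would apply the projection formula for the closed immersions $j$ and $k$:
\[
\calM^0 \otimes_{\calO_{\wcalN^{(1)}}} j_*\calF \;\cong\; j_*\bigl(j^*\calM^0 \otimes_{\calO_{\calB^{(1)}}} \calF\bigr),
\]
and analogously for $k$, which yields
\[
\epsilon_0^{\calB}(j_*\calF) \,\cong\, R\Gamma\bigl(\calB^{(1)},\, j^*\calM^0 \otimes_{\calO_{\calB^{(1)}}} \calF\bigr), \quad \gamma_0^{\calB}(k_*\calF) \,\cong\, R\Gamma\bigl(\calB^{(1)},\, k^*\calM^0 \otimes_{\calO_{\calB^{(1)}}} \calF\bigr).
\]
The key geometric input is that under the normalization of \cite[1.3.5]{BMR2}, the two splitting bundles have the \emph{same} restriction to the zero section $\calB^{(1)}$, and this common restriction is isomorphic to $\Fr_{\calB,*}\calO_{\calB}(\rho)$ as an $\calO_{\calB^{(1)}}$-module. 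This single identification immediately accounts for the fact that both expressions in the lemma coincide.

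Finally I would apply the projection formula for the finite flat Frobenius morphism $\Fr_{\calB}:\calB \to \calB^{(1)}$ to rewrite
\[
\Fr_{\calB,*}\calO_{\calB}(\rho) \otimes_{\calO_{\calB^{(1)}}} \calF \;\cong\; \Fr_{\calB,*}\bigl(\calO_{\calB}(\rho) \otimes_{\calO_{\calB}} \Fr_{\calB}^*\calF\bigr) \;\cong\; \Fr_{\calB,*}\Fr_{\calB}^*(\calF(\rho)),
\]
the last step being the identification $\calO_\calB(\rho) \otimes \Fr_\calB^*\calF \cong \Fr_\calB^*(\calF(\rho))$ used throughout the paper. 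Since $\Fr_{\calB}$ is affine, one has $R\Gamma(\calB^{(1)},\Fr_{\calB,*}(-)) \cong R\Gamma(\calB,-)$, and the lemma follows.

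The main (and really only) obstacle is the concrete identification of the restriction $j^*\calM^0 \cong k^*\calM^0 \cong \Fr_{\calB,*}\calO_{\calB}(\rho)$: this is where the specific BMR2 normalization of splitting bundles enters, and it must be checked carefully to ensure the $\rho$-shift on the right-hand side of the lemma matches the paper's conventions. Once that identification is granted, the rest of the argument is a standard double application of the projection formula.
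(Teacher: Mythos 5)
Your overall strategy matches the paper's: both splitting bundles restrict to the same sheaf on the zero section, so both claims reduce to one computation of this common restriction followed by two applications of the projection formula (for the closed embedding $j$ or $k$, then for the Frobenius $\Fr_{\calB}$). There are, however, two issues. First, the identification you state, $j^*\calM^0\cong\Fr_{\calB,*}\calO_{\calB}(\rho)$, is off by a factor of $p$ in the twist: what the paper obtains in \eqref{eq:splittingbundle2} is $k^*\calM^0\cong\Fr_*\calO_{\calB}\otimes_{\calO_{\calB^{(1)}}}\calO_{\calB^{(1)}}(\rho)$, which equals $\Fr_*\calO_{\calB}(p\rho)$, not $\Fr_*\calO_{\calB}(\rho)$, because $\Fr_{\calB}^*\calO_{\calB^{(1)}}(\rho)\cong\calO_{\calB}(p\rho)$. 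Your claimed identity $\calO_{\calB}(\rho)\otimes\Fr_{\calB}^*\calF\cong\Fr_{\calB}^*(\calF(\rho))$ is wrong by the same factor. These two errors cancel, so your final formula is correct, but neither intermediate claim holds under the paper's conventions.

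Second, you yourself identify the computation of $j^*\calM^0$ as the only real obstacle and defer it to the BMR2 normalization; that deferral is the substantive gap, because this computation is exactly where the paper's proof does its work. The paper derives it by combining $(\calU\frakg)_0^{-\rho}\cong\mathrm{End}_{\bk}\bigl(L((p-1)\rho)\bigr)$ --- which, together with the BMR2 normalization, gives $k^*\calM^0\cong\Fr_*(\calO_{\calB}(\rho))\otimes_{\Fr_*\calO_{\calB}}\bigl(L((p-1)\rho)\otimes_{\bk}\calO_{\calB^{(1)}}\bigr)$ --- with the Andersen--Haboush isomorphism $\Fr_*(\calO_{\calB}(-\rho))\otimes_{\calO_{\calB^{(1)}}}\calO_{\calB^{(1)}}(\rho)\cong L((p-1)\rho)\otimes_{\bk}\calO_{\calB^{(1)}}$, which collapses the expression to $\Fr_*\calO_{\calB}\otimes_{\calO_{\calB^{(1)}}}\calO_{\calB^{(1)}}(\rho)$. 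Without this step your argument is an outline rather than a complete proof.
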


\begin{proof} It is well-known that $(\calU \frakg)_0^{-\rho} \cong {\rm End}_{\bk} \bigl( L((p-1)\rho) \bigr)$. It follows, by the choice of the splitting bundles, that \begin{equation} \label{eq:splittingbundle} k^* \calM^0 \ \cong \ \Fr_* (\calO_{\calB}(\rho)) \otimes_{\Fr_* \calO_{\calB}}  \bigl( L((p-1)\rho) \otimes_{\bk} \calO_{\calB^{(1)}} \bigr). \end{equation} Here the structure of $(\calU \frakg)^{-\rho}_0$-module on $L((p-1)\rho)$ gives an action of $\calD^{-\rho}$ on $L((p-1)\rho) \otimes_{\bk} \calO_{\calB^{(1)}}$, hence an action of $\calD^0$ on $\Fr_* (\calO_{\calB}(\rho)) \otimes_{\Fr_* \calO_{\calB}}  \bigl( L((p-1)\rho) \otimes_{\bk} \calO_{\calB^{(1)}} \bigr)$. By Andersen (\cite{ANDFro}) or Haboush (\cite{HABSho}) we have \begin{equation} \label{eq:Haboush-And} \bigl( \Fr_* (\calO_{\calB}(-\rho)) \bigr) \otimes_{\calO_{\calB^{(1)}}} \calO_{\calB^{(1)}} (\rho) \cong L((p-1)\rho) \otimes_{\bk} \calO_{\calB^{(1)}}. \end{equation} Here the LHS has a natural action of $\calD^{-\rho}$, and the isomorphism is $\calD^{-\rho}$-equivariant. From \eqref{eq:splittingbundle} and \eqref{eq:Haboush-And} we deduce \begin{equation} \label{eq:splittingbundle2} (k^* \calM^0) \otimes_{\calO_{\calB^{(1)}}} \calO_{\calB^{(1)}}(-\rho) \cong \Fr_* \calO_{\calB}, \end{equation} where the structure of $\calD^0$-module on the RHS comes from the action on $\calO_{\calB}$.

Using \eqref{eq:splittingbundle2} and the projection formula, we deduce \[ \gamma^{\calB}_{0}(k_* \calF) \cong R\Gamma \bigl( \wfrakg^{(1)}, \calM^0
\otimes_{\calO_{\wfrakg^{(1)}}} k_* \calF \bigr) \cong R\Gamma \bigl( \calB^{(1)}, ({\rm Fr}_* \calO_{\calB}) \otimes_{\calO_{\calB^{(1)}}} (\calF(\rho)) \bigr). \] We deduce the second isomorphism. The first one is similar. \end{proof}

We deduce the following corollary, which generalizes some of the computations of the appendix to \cite{BMR}.

\begin{cor} \label{cor:descriptionsimples}

Let $\omega \in W_{\aff}'$ such that $\ell(\omega)=0$. Write $\omega=w \cdot t_{\mu}$  ($\mu \in \bbX$, $w \in W$). Then we have $\calL_{\omega} \cong j_*
\calO_{\calB^{(1)}}(-\rho + \mu)[\ell(w)]$.

\end{cor}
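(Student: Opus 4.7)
The plan is to compute $\epsilon_{0}^{\calB}(j_*\calO_{\calB^{(1)}}(-\rho+\mu))$ directly via Lemma~\ref{lem:zero-section} and identify the resulting complex of $G$-modules with a shift of $L(\omega\bullet 0)$. By the definition of $\calL_\omega$ and the fact that $\epsilon^{\calB}_0$ is an equivalence, this will yield the claim.

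First, I would apply Lemma~\ref{lem:zero-section} with $\calF=\calO_{\calB^{(1)}}(-\rho+\mu)$. This gives
\[
\epsilon_{0}^{\calB}\bigl(j_*\calO_{\calB^{(1)}}(-\rho+\mu)\bigr)\ \cong\ R\Gamma\bigl(\calB,\ \Fr^*\calO_{\calB^{(1)}}(\mu)\bigr)\ \cong\ R\Gamma\bigl(\calB,\ \calO_\calB(p\mu)\bigr),
\]
reducing the problem to a computation of the cohomology of a line bundle on $\calB$.

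Next, I would analyze the weight $p\mu$. Since $\omega=w\cdot t_\mu$ has length zero, $\omega$ stabilizes $C_0$ under the dot-action, so $\omega\bullet 0\in C_0$. From $\omega\bullet 0=w(p\mu+\rho)-\rho$ we get
\[
w(p\mu+\rho)\ =\ (\omega\bullet 0)+\rho,
\]
which lies in the interior of the dominant chamber and satisfies $0<\langle\cdot,\alpha^\vee\rangle<p$ for every $\alpha\in R^+$. Hence $p\mu+\rho$ is regular (and $p$-regular, since $\langle p\mu+\rho,\alpha^\vee\rangle\equiv\langle\rho,\alpha^\vee\rangle\not\equiv 0\pmod p$ as $p>h$), and $w$ is the unique element of $W$ sending $p\mu+\rho$ into the dominant chamber. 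This places $p\mu$ in the range of weights for which the characteristic-$p$ Borel--Weil--Bott theorem (in the form of Andersen's vanishing theorems, see e.g.\ \cite[II.5]{JANAlg}) applies: we obtain
\[
R^i\Gamma(\calB,\calO_\calB(p\mu))=0\ \text{ for }\ i\neq\ell(w),\qquad R^{\ell(w)}\Gamma(\calB,\calO_\calB(p\mu))\ \cong\ H^0(\omega\bullet 0).
\]
Because $\omega\bullet 0\in C_0$ lies in the interior of the fundamental alcove, the strong linkage principle forces $H^0(\omega\bullet 0)=L(\omega\bullet 0)$.

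Putting these steps together,
\[
\epsilon_{0}^{\calB}\bigl(j_*\calO_{\calB^{(1)}}(-\rho+\mu)[\ell(w)]\bigr)\ \cong\ L(\omega\bullet 0)\ \cong\ \epsilon_{0}^{\calB}(\calL_\omega),
\]
and since $\epsilon_{0}^{\calB}$ is an equivalence (Theorem~\ref{thm:thmBMR}), the isomorphism $\calL_\omega\cong j_*\calO_{\calB^{(1)}}(-\rho+\mu)[\ell(w)]$ follows. The main (really only) technical point is Step~3: one must invoke a suitable characteristic-$p$ Borel--Weil--Bott statement valid for weights in the first $p$-alcove; this is the place where the hypothesis $p>h$ ensures both that $p\mu+\rho$ is $p$-regular and that $\omega\bullet 0\in C_0$ is a linkage-minimal weight so that $H^0(\omega\bullet 0)$ is simple.
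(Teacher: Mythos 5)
Your proof is correct and follows essentially the same route as the paper's: apply Lemma~\ref{lem:zero-section} to reduce to the cohomology of the line bundle $\calO_\calB(p\mu)$, observe that $p\mu = w^{-1}\bullet(\omega\bullet 0)$, then invoke the characteristic-$p$ Borel--Weil--Bott theorem (the paper cites \cite[II.5.5-6]{JANAlg}) and the fact that $\Ind_B^G(\omega\bullet 0)\cong L(\omega\bullet 0)$ because $\omega\bullet 0\in C_0$. Your Step~3 simply spells out the $p$-regularity verification that the paper leaves implicit in its citation.
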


\begin{proof} By Lemma \ref{lem:zero-section}, $\epsilon^{\calB}_{0}(j_* \calO_{\calB^{(1)}}(-\rho + \mu)) \cong R\Gamma(\calB, \calO_{\calB}(p \mu))$. By hypothesis, $\omega \bullet 0 = w \bullet
(p\mu)$. Hence $w^{-1} \bullet (\omega \bullet 0)=p\mu$. Using Borel-Weil-Bott theorem (\cite[II.5.5-6]{JANAlg}), we deduce \[ \epsilon^{\calB}_0(j_*
\calO_{\calB^{(1)}}(-\rho + \mu)[\ell(w)]) \ \cong \ \Ind_B^G(\omega \bullet 0) \ \cong \ L(\omega \bullet 0).\] This concludes the proof. \end{proof}

\begin{prop} \label{prop:simplesinimage}

For $w \in W^0$, $L^{\gr}(w \bullet 0)$ is in the essential image of $\widetilde{\epsilon}^{\calB}_0$.

\end{prop}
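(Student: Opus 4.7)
The plan is to prove the proposition by induction on $\ell(w)$, mirroring the structure of the proof of Proposition \ref{prop:propprojectives} but with the roles of projectives/$\frakR_\delta$ replaced by simples/$\frakS_\delta$, and with induction going upward from length $0$ rather than downward from the maximal length $\ell(\tau_0)$. The key inputs are: the commutative diagram in Theorem \ref{thm:equivUg^0gr} relating $\widetilde{\epsilon}^{\calB}_0$ and $\epsilon^{\calB}_0$; the existence of the graded lifted functors $\frakS^{\Gm}_\delta$ satisfying \eqref{eq:diagramSalphaGm}; Proposition \ref{prop:propSalphasimple} identifying $\frakS_\delta \calL_w$ with $(\epsilon^{\calB}_0)^{-1} Q_\delta(w)$; Theorem \ref{thm:conjectureAndersen}, which tells us $Q_\delta(w)$ is semi-simple and describes its composition factors; and the ``gradability'' results of \S\ref{ss:paragraphgradedrings}, especially Corollary \ref{cor:corGG} and Theorem \ref{thm:thmGG}$\rmiv$.

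For the base case $\ell(w)=0$, I would invoke Corollary \ref{cor:descriptionsimples}: $\calL_w$ is the direct image $j_* \calO_{\calB^{(1)}}(-\rho+\mu)$ for suitable $\mu$, with no cohomological shift. This object has an evident $\Gm$-equivariant lift (equip $\calO_{\calB^{(1)}}(-\rho+\mu)$ with the trivial $\Gm$-action and push forward along the $\Gm$-equivariant inclusion $j$), giving $\calL^{\gr}_w \in \calD^b\Coh^{\Gm}_{\calB^{(1)}}(\wcalN^{(1)})$. By the commutative diagram of Theorem \ref{thm:equivUg^0gr}, $\For(\widetilde{\epsilon}^{\calB}_0(\calL^{\gr}_w)) \cong L(w\bullet 0)$, so by Theorem \ref{thm:thmGG}$\rmiv$ we have $\widetilde{\epsilon}^{\calB}_0(\calL^{\gr}_w) \cong L^{\gr}(w\bullet 0)\langle i\rangle$ for some $i\in\bbZ$; a shift of $\calL^{\gr}_w$ then realizes $L^{\gr}(w\bullet 0)$ in the essential image.

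For the inductive step, suppose the result holds for all $v\in W^0$ with $\ell(v)<n$, and take $w\in W^0$ with $\ell(w)=n>0$. Choose $\delta\in\Phi_{\aff}$ such that $v:=ws_\delta \in W^0$ and $\ell(v)<\ell(w)$ (the existence of such $\delta$ follows from the bijection between $W^0$ and $W$ together with standard facts about the length function on $W_{\aff}'$; if convenient, one may cite this as a combinatorial lemma). Then $vs_\delta\bullet 0>v\bullet 0$, so Proposition \ref{prop:propSalphasimple} applies. By the inductive hypothesis, choose $\calL^{\gr}_v$ with $\widetilde{\epsilon}^{\calB}_0(\calL^{\gr}_v)\cong L^{\gr}(v\bullet 0)$, and consider $\frakS^{\Gm}_\delta(\calL^{\gr}_v)$. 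The support condition on $\calB^{(1)}$ is preserved because the convolution kernel $\calO_{Y_\delta^{(1)}}(-\rho,\rho-\delta)$ (resp.\ its conjugate by $\mathbf{K}^{\gr}_{b_0}$ when $\delta=\alpha_0$) restricts the support to $\calB^{(1)}$, hence $\frakS^{\Gm}_\delta(\calL^{\gr}_v)\in\calD^b\Coh^{\Gm}_{\calB^{(1)}}(\wcalN^{(1)})$. Using diagram \eqref{eq:diagramSalphaGm} and the commutative diagram of Theorem \ref{thm:equivUg^0gr}, the image under $\For\circ\widetilde{\epsilon}^{\calB}_0$ is $Q_\delta(v)$, which is semi-simple by Theorem \ref{thm:conjectureAndersen}$\rmi$ with $L(w\bullet 0)$ appearing with multiplicity one and all other factors of the form $L(x\bullet 0)$ for $x\in W^0$, $\ell(x)<\ell(w)$, by Theorem \ref{thm:conjectureAndersen}$\rmii$.

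To finish, I would apply the Krull--Schmidt theorem in $\Mod^{\fg,\gr}_0((\calU\frakg)^0)$ (Proposition \ref{prop:KrullSchmidt}) and Corollary \ref{cor:corGG} to decompose $\widetilde{\epsilon}^{\calB}_0(\frakS^{\Gm}_\delta(\calL^{\gr}_v))$ as a direct sum of shifts of the graded simples $L^{\gr}(x\bullet 0)$. Exactly one summand is of the form $L^{\gr}(w\bullet 0)\langle i\rangle$; the remaining summand $Q^{\gr}$ involves only $L^{\gr}(x\bullet 0)\langle j\rangle$ with $\ell(x)<\ell(w)$, and by induction lies in the essential image of $\widetilde{\epsilon}^{\calB}_0$, say $Q^{\gr}\cong\widetilde{\epsilon}^{\calB}_0(\calQ^{\gr})$. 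Full faithfulness of $\widetilde{\epsilon}^{\calB}_0$ lifts the canonical injection $\widetilde{\epsilon}^{\calB}_0(\calQ^{\gr})\hookrightarrow\widetilde{\epsilon}^{\calB}_0(\frakS^{\Gm}_\delta(\calL^{\gr}_v))$ to a morphism in $\calD^b\Coh^{\Gm}_{\calB^{(1)}}(\wcalN^{(1)})$, whose cone $\calX^{\gr}$ (shifted by $\langle -i\rangle$) then satisfies $\widetilde{\epsilon}^{\calB}_0(\calX^{\gr}\langle -i\rangle)\cong L^{\gr}(w\bullet 0)$, completing the induction. The main obstacle I anticipate is the combinatorial existence of a descent step $ws_\delta\in W^0$ with $\ell(ws_\delta)<\ell(w)$; verifying that $\frakS^{\Gm}_\delta$ preserves support on $\calB^{(1)}$ (uniformly in the affine case via the conjugation $\mathbf{K}^{\gr}_{b_0}\circ\frakS^{\Gm}_\beta\circ\mathbf{K}^{\gr}_{(b_0)^{-1}}$) is routine but deserves a brief remark.
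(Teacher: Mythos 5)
Your argument follows essentially the same strategy as the paper's proof: induction on $\ell(w)$, base case via Corollary \ref{cor:descriptionsimples}, inductive step applying $\frakS^{\Gm}_\delta$ to the graded lift at $v=ws_\delta$, semi-simplicity via Theorem \ref{thm:conjectureAndersen}, and Krull--Schmidt plus full faithfulness to extract $L^{\gr}(w\bullet 0)$. One small slip: in the base case $\calL_w \cong j_*\calO_{\calB^{(1)}}(-\rho+\mu)[\ell(v)]$ does carry the cohomological shift $[\ell(v)]$ when the finite Weyl group part $v$ of $w$ is nontrivial (only the \emph{affine} length is zero), but this does not affect the lifting argument since the shift is $\Gm$-equivariant.
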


\begin{proof} This proof is similar to that of Proposition \ref{prop:propprojectives}. We use induction on $\ell(w)$. For $\ell(w)=0$, by Corollary \ref{cor:descriptionsimples} we have $\calL_w \cong j_* \calO_{\calB^{(1)}}(-\rho + \mu)[\ell(v)]$. Clearly, $j_* \calO_{\calB^{(1)}}(-\rho + \mu)$ can be lifted to $\calD^b \Coh^{\Gm}_{\calB^{(1)}}(\wcalN^{(1)})$. The result follows when $\ell(w)=0$.

Now assume the result is true when $\ell(w)<n$, and let $w \in W^0$ such that $\ell(w)=n$. Let $\delta \in \Phi_{\aff}$ such that $w s_{\delta} \in W^0$ and $\ell(w s_{\delta}) < \ell(w)$. By induction there exists $\calL^{\gr} \in \calD^b \Coh^{\Gm}_{\calB^{(1)}}(\wcalN^{(1)})$ such that $\widetilde{\epsilon}^{\calB}_0(\calL^{\gr}) \cong L^{\gr}(ws_{\delta} \bullet 0).$ By diagram \eqref{eq:diagramSalphaGm} and Proposition \ref{prop:propSalphasimple}, the image under the forgetful functor (of the grading) of $\widetilde{\epsilon}^{\calB}_0( \frakS_{\delta}^{\Gm} \calL^{\gr})$ is $Q_{\delta}(w s_{\delta})$. By Theorem \ref{thm:conjectureAndersen}, $Q_{\delta}(w s_{\delta}) \cong L(w \bullet 0) \oplus N$ where $N$ is a sum of modules of the form $L(v \bullet 0)$ with $\ell(v) < \ell(w)$. Hence, by the remarks before Corollary \ref{cor:corGG}, we have $\widetilde{\epsilon}^{\calB}_0( \frakS_{\delta}^{\Gm} \calL^{\gr}) \cong L^{\gr}(w \bullet 0) \langle i \rangle \oplus N^{\gr}$ for some $i \in \bbZ$, where $N^{\gr}$ is a sum of modules of the form $L(v \bullet 0) \langle j \rangle$ with $\ell(v) < \ell(w)$. By induction, $N^{\gr}$ is in the essential image of $\widetilde{\epsilon}^{\calB}_0$. We deduce that $L^{\gr}(w \bullet 0)$ is in this image. \end{proof}

\begin{remark} \label{rk:remarksurjective} It follows easily from Proposition \ref{prop:simplesinimage} that the functor $\widetilde{\epsilon}^{\calB}_0$ is essentially surjective. Hence it is an equivalence of categories. \end{remark}

\subsection{Dg versions of the functors $\frakS_{\delta}$} \label{ss:paragraphSalphagr}

Let $\alpha \in \Phi$, $P_{\alpha}$ the parabolic subgroup of $G$ containing $B$
associated to $\{\alpha\}$, $\frakp_{\alpha}$ its Lie algebra, and
$\calP_{\alpha}=G/P_{\alpha}$ the partial flag
variety. We define the variety \begin{equation} \label{eq:defNalpha} \wcalN_{\alpha}:=T^* \calP_{\alpha} =
\{(X,gP_{\alpha}) \in \frakg^* \times \calP_{\alpha} \mid X_{|g \cdot
  \frakp_{\alpha}}=0\}.\end{equation} There exists a natural injection $j_{\alpha} : (\wcalN_{\alpha} \times_{\calP_{\alpha}} \calB)^{(1)}
\hookrightarrow \wcalN^{(1)}.$ We also denote by $\rho_{\alpha} : (\wcalN_{\alpha} \times_{\calP_{\alpha}} \calB)^{(1)} \to \wcalN_{\alpha}^{(1)}$ the morphism defined by base change. Consider the following diagram: {\footnotesize \[ \xymatrix@R=14pt{ & & (\wcalN_{\alpha}
    \times_{\calP_{\alpha}} \calB)
\times_{\wcalN_{\alpha}} (\wcalN_{\alpha} \times_{\calP_{\alpha}}
\calB) \ar[ld]^-{p_1} \ar[rd]_-{p_2} & & \\ & \wcalN_{\alpha}
\times_{\calP_{\alpha}} \calB \ar[rd]^-{\rho_{\alpha}}
\ar@{^{(}->}[ld]_-{j_{\alpha}} & &
\wcalN_{\alpha} \times_{\calP_{\alpha}} \calB
\ar[ld]_{\rho_{\alpha}} \ar@{_{(}->}[rd]^-{j_{\alpha}} \\ \wcalN & &
\wcalN_{\alpha} & & \wcalN. \\ } \] }

\noindent Here to save space we have omitted Frobenius twists. Now $(\wcalN_{\alpha}
\times_{\calP_{\alpha}} \calB) \times_{\wcalN_{\alpha}} (\wcalN_{\alpha}
\times_{\calP_{\alpha}} \calB)$ is isomorphic to the variety
$Y_{\alpha}$. For $\lambda \in \bbX$, we denote by ${\rm Shift}_{\lambda}$ the tensor product with $\calO_{\wcalN^{(1)}}(\lambda)$. Then we have \begin{align*} \Shift_{-\rho} \circ \frakS_{\alpha} \circ \Shift_{\rho} \ & \cong \ \Shift_{-\alpha} \circ F_{\wcalN^{(1)} \to \wcalN^{(1)}}^{\calO_{Y_{\alpha}^{(1)}}} \\ & \cong \ \Shift_{-\alpha} \circ (R(j_{\alpha})_* \circ R(p_2)_* \circ L(p_1)^* \circ L(j_{\alpha})^*) \\ & \cong \ \Shift_{-\alpha} \circ (R(j_{\alpha})_* \circ L(\rho_{\alpha})^* \circ R(\rho_{\alpha})_* \circ L(j_{\alpha})^*). \end{align*} Here the third isomorphism is given by the flat base
change theorem (\cite[II.5.12]{HARAG}). In Corollary \ref{cor:functors+2} we have constructed functors associated to $j_{\alpha}$: \[ \xymatrix{ \DGCoh^{\gr}((\wcalN_{\alpha} \times_{\calP_{\alpha}} \calB)^{(1)}) \ar@<0.5ex>[rr]^-{R(\widetilde{j_{\alpha}}_{\Gm})_*} & & \DGCoh^{\gr}(\wcalN^{(1)}) \ar@<0.5ex>[ll]^-{L(\widetilde{j_{\alpha}}_{\Gm})^*}. } \] Similarly, in Corollary \ref{cor:functors+} we have constructed functors associated to $\rho_{\alpha}$: \[ \xymatrix{ \DGCoh^{\gr}((\wcalN_{\alpha} \times_{\calP_{\alpha}} \calB)^{(1)}) \ar@<0.5ex>[rr]^-{R(\widetilde{\rho_{\alpha}}_{\Gm})_*} & & \DGCoh^{\gr}(\wcalN_{\alpha}^{(1)}) \ar@<0.5ex>[ll]^-{L(\widetilde{\rho_{\alpha}}_{\Gm})^*}. } \] We define the functor $\frakS_{\alpha}^{\gr} : \DGCoh^{\gr}(\wcalN^{(1)}) \ \to \
\DGCoh^{\gr}(\wcalN^{(1)}),$ which sends the object $\calM$
to \[ \calO_{\calB^{(1)}}(\rho - \alpha)
\otimes \bigl( R(\widetilde{j_{\alpha}}_{\Gm})_*
L(\widetilde{\rho_{\alpha}}_{\Gm})^* 
R(\widetilde{\rho_{\alpha}}_{\Gm})_* 
L(\widetilde{j_{\alpha}}_{\Gm})^*( \calM \otimes
\calO_{\calB^{(1)}}(-\rho)) \bigr). \] Using the isomorphism above, the following diagram commutes: \begin{equation*} \xymatrix@R=14pt{ \DGCoh^{\gr}(\wcalN^{(1)}) \ar[d]_-{\For} \ar[rr]^-{\frakS_{\alpha}^{\gr}} & & \DGCoh^{\gr}(\wcalN^{(1)}) \ar[d]^-{\For} \\ \calD^{b} \Coh(\wcalN^{(1)}) \ar[rr]^-{\frakS_{\alpha}} & & \calD^{b} \Coh(\wcalN^{(1)}). } \end{equation*} The following diagram also commutes, where $\eta$ and $\zeta$ were defined in \S \ref{ss:paragraphsimplemodules}: \begin{equation} \label{eq:diagramSalphagr} \xymatrix@R=16pt{ \calD^{b} \Coh^{\Gm}_{\calB^{(1)}}(\wcalN^{(1)}) \ar[d]_-{\frakS_{\alpha}^{\Gm}} \ar[r]^-{\zeta} & \DGCoh^{\gr}(\wcalN^{(1)}) \ar[r]^-{\eta} \ar[d]^-{\frakS_{\alpha}^{\gr}} & \calD^{b} \Coh^{\Gm}(\wcalN^{(1)}) \ar[d]^-{\frakS_{\alpha}^{\Gm}} \\ \calD^{b} \Coh^{\Gm}_{\calB^{(1)}}(\wcalN^{(1)}) \ar[r]^-{\zeta} & \DGCoh^{\gr}(\wcalN^{(1)}) \ar[r]^-{\eta} & \calD^{b} \Coh^{\Gm}(\wcalN^{(1)}) } \end{equation} Indeed, the commutation of the diagram on the right is a consequence of Lemmas \ref{lem:lemmafunctors+1}, \ref{lem:lemmafunctors+}; the commutation of the one on the left follows.

Now we construct a $B_{\aff}'$-action on $\DGCoh^{\gr}(\wcalN^{(1)})$: for $b \in B_{\aff}'$ we define \[ \mathbf{K}_b^{\gr} :
\DGCoh^{\gr}(\wcalN^{(1)}) \ \to \ \DGCoh^{\gr}(\wcalN^{(1)}) \] by the
formula $\mathbf{K}^{\gr}_b := \Shift_{\rho} \circ \kappa_{\calB}^{-1} \circ
\mathbf{J}_b^{\gr} \circ \kappa_{\calB} \circ \Shift_{-\rho}$ (see \eqref{eq:defkappa} for the Koszul duality $\kappa_{\calB}$). Here, $\Shift_{\lambda}$ is the shift by $\calO_{\calB^{(1)}}(\lambda)$. 

Consider the affine simple root $\alpha_0 \in \Phi_{\aff} - \Phi$. Recall the notation $b_0$, $\beta$ from Lemma \ref{lem:conjugationaffinesimpleref}. Then we define the functor \begin{equation} \label{eq:defSalpha0} \frakS_{\alpha_0}^{\gr}:=\mathbf{K}_{b_0}^{\gr} \circ
\frakS_{\beta}^{\gr} \circ \mathbf{K}_{(b_0)^{-1}}^{\gr}.\end{equation} It is not clear from this definition that the diagram analogous to \eqref{eq:diagramSalphagr} is commutative. We will consider this issue in \S \ref{ss:paragraphactionKgr}.

\section{Proof of Theorem \ref{thm:mainthm}} \label{sec:sectionmainthm}

In this section we prove the key-result of our reasoning, Theorem \ref{thm:mainthm}. The main step is to relate the functors $\frakR_{\delta}$ and $\frakS_{\delta}$ via linear Koszul duality.

\subsection{Alternative statement of the theorem} \label{ss:paragraphmainthm}

First, let us state a version of Theorem \ref{thm:mainthm} in representation-theoretic terms, i.e.~involving lifts of $\calU \frakg$-modules instead of coherent sheaves. Recall the Koszul duality $\kappa_{\calB}$ of \eqref{eq:defkappa}. Recall also that the functor $\widetilde{\gamma}^{\calB}_0$ is fully faithful (Theorem \ref{thm:equivUg_0gr}), and that its essential image contains the lifts of projectives (Proposition \ref{prop:propprojectives}). Hence, if $v \in W^0$, for any choice of a lift $P^{\gr}(v \bullet 0)$ of $P(v \bullet 0)$ as a graded $(\calU \frakg)_0^{\hat{0}}$-module, there exists an object\footnote{As observed in \S \ref{ss:objectsLwPw}, this object does not depend on the choice $\lambda=0$.} $\calP^{\gr}_v$ of $\DGCoh^{\gr}((\wfrakg \, \rcap_{\frakg^* \times \calB} \, \calB)^{(1)})$, unique up to isomorphism, such that $P^{\gr}(v \bullet 0) \cong \widetilde{\gamma}^{\calB}_0(\calP^{\gr}_v).$ The same applies to the functor $\widetilde{\epsilon}^{\calB}_0$ of Theorem \ref{thm:equivUg^0gr}, replacing projective by simple.

Theorem \ref{thm:mainthm} is clearly equivalent to the following statement, which we will refer to as statement $(\ddag)$. It will be proved in \S \ref{ss:paragraphproof}. \medskip

\emph{Assume} $p>h$ \emph{is large enough so that Lusztig's conjecture is true.}

\emph{There is a unique choice of the lifts} $P^{\gr}(v \bullet 0)$, $L^{\gr}(v \bullet 0)$ ($v \in W^0$) \emph{such that, if} $\calP^{\gr}_v$, \emph{resp.} $\calL^{\gr}_v$ \emph{is the object of} $\DGCoh^{\gr}((\wfrakg \, \rcap_{\frakg^* \times \calB} \, \calB)^{(1)})$, \emph{resp.} $\calD^b \Coh^{\Gm}_{\calB^{(1)}}(\wcalN^{(1)})$, \emph{such that} $P^{\gr}(v \bullet 0) \cong \widetilde{\gamma}^{\calB}_0(\calP^{\gr}_v)$, \emph{resp.} $L^{\gr}(v \bullet 0) \cong \widetilde{\epsilon}^{\calB}_0(\calL^{\gr}_v)$, \emph{for all} $w \in W^0$ \emph{we have in the category} $\DGCoh^{\gr}(\wcalN^{(1)})$: \begin{equation} \label{eq:isommainthm} \kappa_{\calB}^{-1} \calP^{\gr}_{\tau_0 w} \ \cong \ \zeta(\calL^{\gr}_w)
\otimes_{\calO_{\calB^{(1)}}} \calO_{\calB^{(1)}}(-\rho). \end{equation}

Let us remark that the functors $\widetilde{\gamma}^{\calB}_0$, $\widetilde{\epsilon}^{\calB}_0$ and $\kappa_{\calB}$ commute with the shifts in both the cohomological and the internal grading. The functor $\zeta$ (of Lemma \ref{lem:lemmagradedmodules}) commutes with the cohomological shift, but not with the internal one. More precisely, for $\calF \in \calD^b \Coh^{\Gm}_{\calB^{(1)}}(\wcalN^{(1)})$ one has $\zeta(\calF \langle j \rangle)=\zeta(\calF)[j] \langle j \rangle$. The unicity in Theorem \ref{thm:mainthm} follows easily from these remarks, using the fact that each lift $P^{\gr}(v \bullet 0)$, $L^{\gr}(v \bullet 0)$ ($v \in W^0$) is unique up to a shift $\langle j \rangle$.

The proof of the existence statement occupies the rest of this section. 

\subsection{Koszul dual of the reflection functors} \label{ss:paragraphdualreflection}

Our proof of statement $(\ddag)$ is based on the following result, which shows that the reflection functor $\frakR_{\delta}^{\gr}$ is (almost) conjugate to the semi-simple functor $\frakS^{\gr}_{\delta}$ under Koszul duality.

\begin{thm} \label{thm:dualreflection}

For $\delta \in \Phi_{\aff}$ there is an isomorphism of functors: \[ (\kappa_{\calB})^{-1} \circ \frakR_{\delta}^{\gr} \circ
\kappa_{\calB} \ \cong \ \Shift_{-\rho} \circ \frakS_{\delta}^{\gr} \circ \Shift_{\rho} \ [1] \langle 2 \rangle. \]

\end{thm}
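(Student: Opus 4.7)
The plan is to handle the case $\delta = \alpha \in \Phi$ first, and then deduce the affine case $\delta = \alpha_0$ by conjugation. For $\alpha \in \Phi$, the key observation is that the morphism $\widehat{\pi}_\alpha: (\wfrakg \, \rcap_{\frakg^* \times \calB} \, \calB)^{(1)} \to (\wfrakg_\alpha \, \rcap_{\frakg^* \times \calP_\alpha} \, \calP_\alpha)^{(1)}$ factors naturally as $\widehat{\pi}_\alpha = \pi'' \circ g$ through the intermediate dg-scheme $((\wfrakg_\alpha \times_{\calP_\alpha} \calB) \, \rcap_{\frakg^* \times \calB} \, \calB)^{(1)}$, where $g$ corresponds to the sub-bundle inclusion $\wfrakg \subset \wfrakg_\alpha \times_{\calP_\alpha} \calB$ inside $\frakg^* \times \calB$, and $\pi''$ is induced by the flat proper base change $\pi_\alpha: \calB \to \calP_\alpha$ applied to the ambient bundle $(\frakg^* \times \calP_\alpha)^{(1)}$ with sub-bundle $\wcalN_\alpha^{(1)}$. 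Hence $\frakR_\alpha^{\gr}$ decomposes as $L(g_{\Gm})^* \circ L(\pi''_{\Gm})^* \circ R(\pi''_{\Gm})_* \circ R(g_{\Gm})_*$.

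I would then transport this composition across $\kappa_\calB$ by alternately applying Propositions \ref{prop:koszulinclusionprop} and \ref{prop:koszulbasechangeprop}. Set $F_1 = \wcalN_\alpha \times_{\calP_\alpha} \calB \subset F_2 = \wcalN$ inside $\frakg^* \times \calB$, whose orthogonals are $F_1^\bot = \wfrakg_\alpha \times_{\calP_\alpha} \calB$ and $F_2^\bot = \wfrakg$, and write $\kappa_1$, $\kappa_2 = \kappa_\calB$, $\kappa_{\calP_\alpha}$ for the associated Koszul dualities. The four relevant intertwining identities are $R(g_{\Gm})_* \circ \kappa_2 \cong \kappa_1 \circ L(\tilde{f}_{\Gm})^*$ (where $\tilde{f}$ corresponds to the inclusion $j_\alpha$), the base-change identities $R(\pi''_{\Gm})_* \circ \kappa_1 \cong \kappa_{\calP_\alpha} \circ R(\pi'_{\Gm})_*$ and $L(\pi''_{\Gm})^* \circ \kappa_{\calP_\alpha} \cong \kappa_1 \circ L(\pi'_{\Gm})^*$ (with $\pi' = \rho_\alpha$), and finally $\kappa_2^{-1} \circ L(g_{\Gm})^* \circ \kappa_1 \cong R(\tilde{f}_{\Gm})_* \otimes \calL_2 \otimes \calL_1^{-1}[1]\langle 2\rangle$, obtained from the second isomorphism of Proposition \ref{prop:koszulinclusionprop} using $n_2 - n_1 = 1$. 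Chaining these yields
\begin{equation*}
\kappa_\calB^{-1} \circ \frakR_\alpha^{\gr} \circ \kappa_\calB \ \cong \ \bigl( R(\widetilde{j_\alpha}_{\Gm})_* \circ L(\widetilde{\rho_\alpha}_{\Gm})^* \circ R(\widetilde{\rho_\alpha}_{\Gm})_* \circ L(\widetilde{j_\alpha}_{\Gm})^* \bigr) \otimes \omega_{\calB/\calP_\alpha} [1]\langle 2\rangle,
\end{equation*}
since $\calL_2 \otimes \calL_1^{-1} \cong \omega_{\calB/\calP_\alpha}$. The standard identification $\omega_{\calB/\calP_\alpha} \cong \calO_\calB(-\alpha)$ for the relative canonical of the $\mathbb{P}^1$-fibration $\pi_\alpha$ then matches this with $\Shift_{-\rho} \circ \frakS_\alpha^{\gr} \circ \Shift_\rho [1]\langle 2\rangle$, as read off from the definition of $\frakS_\alpha^{\gr}$ in \S \ref{ss:paragraphSalphagr}.

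For the affine case $\delta = \alpha_0$, I would combine the just-established identity for $\beta$ with the defining relations $\frakR_{\alpha_0}^{\gr} = \mathbf{J}_{b_0}^{\dg,\gr} \circ (\text{the $\beta$-operator}) \circ \mathbf{J}_{(b_0)^{-1}}^{\dg,\gr}$ and $\frakS_{\alpha_0}^{\gr} = \mathbf{K}_{b_0}^{\gr} \circ \frakS_\beta^{\gr} \circ \mathbf{K}_{(b_0)^{-1}}^{\gr}$, together with the built-in relation $\kappa_\calB^{-1} \circ \mathbf{J}_b^{\dg,\gr} \circ \kappa_\calB \cong \Shift_{-\rho} \circ \mathbf{K}_b^{\gr} \circ \Shift_\rho$ from the definition of $\mathbf{K}_b^{\gr}$ in \S \ref{ss:paragraphSalphagr}. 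Substituting for $b = b_0^{\pm 1}$, the intermediate factors $\Shift_\rho \circ \Shift_{-\rho}$ telescope trivially, and the case $\alpha_0$ reduces to the case $\beta$. The main obstacle will be the careful bookkeeping of line bundle twists and grading shifts across the successive Koszul dualities, in particular verifying the compatibility of $\kappa_\calB$ with line bundle twists pulled back from $\calB^{(1)}$ (which follows from the explicit formula for $\scrb$ in \S \ref{ss:koszulduality}) and the computation $\omega_{\calB/\calP_\alpha} \cong \calO_\calB(-\alpha)$.
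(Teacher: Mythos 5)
Your proposal is correct and follows essentially the same route as the paper's own proof: you factor $\widehat{\pi}_\alpha$ through the intermediate dg-scheme exactly as the paper does (its $\widehat{\pi}_{\alpha,1}$, $\widehat{\pi}_{\alpha,2}$ are your $g$, $\pi''$), transport each factor across the Koszul dualities via Propositions \ref{prop:koszulinclusionprop} and \ref{prop:koszulbasechangeprop}, and reduce the affine case to the finite case by conjugation using the definitions of $\frakR_{\alpha_0}^{\gr}$, $\frakS_{\alpha_0}^{\gr}$ and $\mathbf{K}_b^{\gr}$. The bookkeeping of line bundle twists you identify as the main issue is handled in the paper by the explicit formulas $\calL_1 = \calO_{\calB^{(1)}}(-2\rho+\alpha)$, $\calL_2 = \calO_{\calB^{(1)}}(-2\rho)$, which agree with your identification $\calL_2 \otimes \calL_1^{-1} \cong \omega_{\calB/\calP_\alpha} \cong \calO_{\calB}(-\alpha)$.
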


\begin{proof} By definition of $\frakR_{\alpha_0}^{\gr}$ (equation \eqref{eq:defreflectionaffineroot}) and $\frakS^{\gr}_{\alpha_0}$ (equation \eqref{eq:defSalpha0}), it is enough to prove the isomorphism for $\delta \in \Phi$. From now on we write $\alpha$ instead of $\delta$. We derive the theorem from the general results of \S\S \ref{ss:koszulbasechange}, \ref{ss:koszulinclusion}.

First, consider the inclusion of vector bundles
$j_{\alpha} : (\wcalN_{\alpha} \times_{\calP_{\alpha}} \calB)^{(1)}
\hookrightarrow \wcalN^{(1)}.$ We apply to this inclusion the
constructions of \S \ref{ss:koszulinclusion}, with $X=\calB^{(1)}$,
$E=(\frakg^* \times \calB)^{(1)} \cong E^*$, $F_1=(\wcalN_{\alpha}
\times_{\calP_{\alpha}} \calB)^{(1)}$, $F_2=\wcalN^{(1)}$. Then we
have \begin{align*} F_1^{\bot} = (\wfrakg_{\alpha}
  \times_{\calP_{\alpha}} \calB)^{(1)}, \quad F_2^{\bot} =
  \wfrakg^{(1)}, & \quad n_1={\rm dim}(\frakg/\frakb)-1, \quad
n_2={\rm dim}(\frakg/\frakb), \\
\calL_1=\Lambda^{n_1}(\calF_1)=\calO_{\calB^{(1)}}(-2\rho + \alpha), & \quad
\calL_2=\Lambda^{n_2}(\calF_2)=\calO_{\calB^{(1)}}(-2\rho). \end{align*}

We denote by $\widehat{\pi_{\alpha,1}} : \bigl( \wfrakg
\, \rcap_{\frakg^* \times \calB} \, \calB \bigr)^{(1)} \to \bigl( (\wfrakg_{\alpha}
\times_{\calP_{\alpha}} \calB) \, \rcap_{\frakg^* \times \calB} \,
\calB \bigr)^{(1)}$ the morphism of dg-schemes induced by the
inclusion $\wfrakg^{(1)} \hookrightarrow
(\wfrakg_{\alpha} \times_{\calP_{\alpha}} \calB)^{(1)}$, and by $\kappa^{\alpha}$ the ``new''
Koszul duality, as shown in the diagram: \[ \xymatrix@R=18pt{\DGCoh^{\gr}(\wcalN^{(1)})
\ar[d]_-{\kappa_{\calB}}^-{\wr}
\ar@<0.5ex>[rr]^-{L(\widetilde{j_{\alpha}}_{\Gm})^*}
& & \DGCoh^{\gr}((\wcalN_{\alpha} \times_{\calP_{\alpha}}
\calB)^{(1)})
\ar[d]^-{\kappa^{\alpha}}_-{\wr}
\ar@<0.5ex>[ll]^-{R(\widetilde{j_{\alpha}}_{\Gm})_*} \\
\DGCoh^{\gr}((\wfrakg \, \rcap_{\frakg^* \times \calB} \, \calB)^{(1)})
\ar@<0.5ex>[rr]^-{R(\widehat{\pi_{\alpha,1}}_{\Gm})_*} & &
\DGCoh^{\gr}(((\wfrakg_{\alpha} \times_{\calP_{\alpha}} \calB)
\, \rcap_{\frakg^* \times \calB} \, \calB)^{(1)})
\ar@<0.5ex>[ll]^-{L(\widehat{\pi_{\alpha,1}}_{\Gm})^*},}
\] where the functors are defined as in
\S \ref{ss:koszulinclusion}. Applying Proposition \ref{prop:koszulinclusionprop},
one obtains isomorphisms of functors
{\small \begin{equation}\label{eq:dualreflection1} \left\{ \begin{array}{ccl}
\kappa^{\alpha} \circ L(\widetilde{j_{\alpha}}_{\Gm})^* & \cong &
R(\widehat{\pi_{\alpha,1}}_{\Gm})_* \circ \kappa_{\calB}, \\[2pt]
\kappa_{\calB} \circ R(\widetilde{j_{\alpha}}_{\Gm})_* & \cong &
\bigl(L(\widehat{\pi_{\alpha,1}}_{\Gm})^* \circ \kappa^{\alpha} \bigr)
\otimes_{\calO_{\calB^{(1)}}} \calO_{\calB^{(1)}}(\alpha) [-1]
\langle -2\rangle. \end{array} \right. \end{equation}}

Now, consider the base change $\rho_{\alpha} : (\wcalN_{\alpha}
\times_{\calP_{\alpha}} \calB)^{(1)} \to \wcalN_{\alpha}^{(1)}.$ We
apply the constructions of \S \ref{ss:koszulbasechange} to this base
change, with $X=\calB^{(1)}$, $Y=(\calP_{\alpha})^{(1)}$, $E=(\frakg^*
\times \calP_{\alpha})^{(1)}$, $F=\wcalN_{\alpha}^{(1)}$. We
denote by $\widehat{\pi_{\alpha,2}} : \bigl( (\wfrakg_{\alpha}
\times_{\calP_{\alpha}} \calB) \, \rcap_{\frakg^* \times \calB} \, \calB \bigr)^{(1)} \to \bigl( \wfrakg_{\alpha} \, \rcap_{\frakg^* \times \calP_{\alpha}} \, \calP_{\alpha} \bigr)^{(1)}$ the morphism of dg-schemes induced by the base change $\wfrakg_{\alpha} \times_{\calP_{\alpha}} \calB \to
\wfrakg_{\alpha}$. Let $\kappa_{\alpha}$ be the ``new'' Koszul duality equivalence, as shown in the
diagram {\small \[ \xymatrix@R=18pt{\DGCoh^{\gr}((\wcalN_{\alpha}
  \times_{\calP_{\alpha}}
\calB)^{(1)}) \ar[d]^-{\wr}_-{\kappa^{\alpha}}
\ar@<0.5ex>[rr]^-{R(\widetilde{\rho_{\alpha}}_{\Gm})_*} & &
\DGCoh^{\gr}(\wcalN_{\alpha}^{(1)})
\ar[d]_-{\wr}^-{\kappa_{\alpha}}
\ar@<0.5ex>[ll]^-{L(\widetilde{\rho_{\alpha}}_{\Gm})^*} \\
\DGCoh^{\gr}(((\wfrakg_{\alpha} \times_{\calP_{\alpha}} \calB)
\, \rcap_{\frakg^* \times \calB} \, \calB)^{(1)})
\ar@<0.5ex>[rr]^-{R(\widehat{\pi_{\alpha,2}}_{\Gm})_*} & &
\DGCoh^{\gr}((\wfrakg_{\alpha} \, \rcap_{\frakg^* \times
\calP_{\alpha}} \, \calP_{\alpha})^{(1)}),
\ar@<0.5ex>[ll]^-{L(\widehat{\pi_{\alpha,2}}_{\Gm})^*} }
\]}where the functors are defined as in
\S \ref{ss:koszulbasechange}. Applying Proposition
\ref{prop:koszulbasechangeprop}, one obtains
isomorphisms of functors \begin{equation}\label{eq:dualreflection2}
  \left\{ \begin{array}{ccc}
R(\widehat{\pi_{\alpha,2}}_{\Gm})_* \circ \kappa^{\alpha} & \cong &
\kappa_{\alpha} \circ R(\widetilde{\rho_{\alpha}}_{\Gm})_*, \\[3pt]
\kappa^{\alpha} \circ
L(\widetilde{\rho_{\alpha}}_{\Gm})^* & \cong &
L(\widehat{\pi_{\alpha,2}}_{\Gm})^* \circ
\kappa_{\alpha}. \end{array} \right. \end{equation}

Consider the morphism $\widehat{\pi}_{\alpha}$. The composition $\wfrakg \hookrightarrow \wfrakg_{\alpha} \times_{\calP_{\alpha}} \calB \twoheadrightarrow \wfrakg_{\alpha}$ coincides with $\widetilde{\pi}_{\alpha}$. Hence 
$\widehat{\pi}_{\alpha}=\widehat{\pi_{\alpha,2}} \circ
\widehat{\pi_{\alpha,1}}$. It follows that $R(\widehat{\pi}_{\alpha,\Gm})_* \cong R(\widehat{\pi_{\alpha,2}}_{\Gm})_* \circ R(\widehat{\pi_{\alpha,1}}_{\Gm})_*$
and $L(\widehat{\pi}_{\alpha,\Gm})^* \cong L(\widehat{\pi_{\alpha,1}}_{\Gm})^*
\circ L(\widehat{\pi_{\alpha,2}}_{\Gm})^*$ (see \eqref{eq:compositiondirectimageGm}). Hence \eqref{eq:dualreflection1} and \eqref{eq:dualreflection2} allow to
compute $(\kappa_{\calB})^{-1} \circ \frakR^{\gr}_{\alpha} \circ \kappa_{\calB} = (\kappa_{\calB})^{-1} \circ
L(\widehat{\pi}_{\alpha,\Gm})^* \circ R(\widehat{\pi}_{\alpha,\Gm})_*
\circ \kappa_{\calB}$. Namely, we obtain isomorphisms
\begin{align*} R(\widehat{\pi}_{\alpha,\Gm})_* \circ \kappa_{\calB} \ & \cong \
\kappa_{\alpha} \circ R(\widetilde{\rho_{\alpha}}_{\Gm})_* \circ
L(\widetilde{j_{\alpha}}_{\Gm})^* \\
  (\kappa_{\calB})^{-1} \circ L(\widehat{\pi}_{\alpha,\Gm})^* \ & \cong \ \bigl( R(\widetilde{j_{\alpha}}_{\Gm})_* \circ
L(\widetilde{\rho_{\alpha}}_{\Gm})^* \circ
(\kappa_{\alpha})^{-1} \bigr) \otimes
\calO_{\calB^{(1)}}(-\alpha)[1]\langle 2\rangle.\end{align*}
Comparing this with the definition of $\frakS_{\alpha}^{\gr}$ in \S \ref{ss:paragraphSalphagr}, one obtains the isomorphism of the theorem. \end{proof}

\subsection{Action of the braid group on $\DGCoh^{\gr}(\wcalN^{(1)})$} \label{ss:paragraphactionKgr}

Recall that we have defined in \S\S \ref{ss:paragraphgradedversionsaction}, \ref{ss:paragraphSalphagr}, actions of $B_{\aff}'$ on $\calD^b \Coh^{\Gm}(\wcalN^{(1)})$ and $\DGCoh^{\gr}(\wcalN^{(1)})$. Consider the following diagram, where $\eta$ is the
functor of \S \ref{ss:paragraphsimplemodules}: \[ \xymatrix@R=14pt{\DGCoh^{\gr}(\wcalN^{(1)})
  \ar[d]_-{\eta} \ar[rr]^-{\mathbf{K}_b^{\gr}} & &
  \DGCoh^{\gr}(\wcalN^{(1)}) \ar[d]^-{\eta} \\ \calD^b \Coh^{\Gm}(\wcalN^{(1)}) \ar[rr]^-{\mathbf{K}_b^{\Gm}} & &
  \calD^b \Coh^{\Gm}(\wcalN^{(1)}). } \]

\begin{lem} \label{lem:lemcompatibilityactionsK}

For any $\calM \in \DGCoh^{\gr}(\wcalN^{(1)})$, there exists an
isomorphism\footnote{It is not clear from our proof
  whether or not these isomorphisms yield an isomorphism of
  \emph{functors}. This is not important for our arguments, hence we will not consider this issue.} $\eta
\circ \mathbf{K}^{\gr}_b(\calM) \cong
\mathbf{K}^{\Gm}_b \circ \eta(\calM).$

\end{lem}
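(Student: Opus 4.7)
First I would reduce to a generating set of $B_{\aff}'$. Since $\mathbf{K}^{\gr}_\bullet$ and $\mathbf{K}^{\Gm}_\bullet$ are both weak actions of $B_{\aff}'$ and $\eta$ is triangulated, an iterative argument (based on the existence of isomorphisms for each fixed object) reduces the claim to producing such isomorphisms $\eta \circ \mathbf{K}^{\gr}_b(\calM) \cong \mathbf{K}^{\Gm}_b(\eta\calM)$ only when $b$ belongs to the generating set $\{\theta_x : x \in \bbX\} \cup \{T_\alpha : \alpha \in \Phi\}$ of Definition \ref{def:defBaff'}. The case $b=\theta_x$ is immediate: both $\mathbf{J}^{\dg,\gr}_{\theta_x}$ (by Proposition \ref{prop:actionBaff'DGCoh}) and $\mathbf{K}^{\Gm}_{\theta_x}$ (by Proposition \ref{prop:actionBaff'Gm}) act by tensoring with the pullback of $\calO_{\calB^{(1)}}(x)$, and the explicit formula \eqref{eq:formulakappa} for $\kappa_\calB$ shows it commutes with tensoring by line bundles pulled back from $\calB^{(1)}$, so the two $\Shift_{\pm\rho}$ factors in the definition of $\mathbf{K}^{\gr}_{\theta_x}$ cancel and that functor also becomes tensor product by $\calO(x)$; the compatibility with $\eta$ is then built into the construction of $\eta$ in \S\ref{ss:paragraphsimplemodules}.

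The main case is $b=T_\alpha$ for $\alpha \in \Phi$. I would apply the conjugation $\Shift_\rho \circ \kappa_\calB^{-1} \circ (-) \circ \kappa_\calB \circ \Shift_{-\rho}$ to the distinguished triangle of Lemma \ref{lem:trianglereflection-braidgroup} and substitute the isomorphism of Theorem \ref{thm:dualreflection}; the $\Shift_{\pm\rho}$ factors cancel those produced by the theorem, yielding a distinguished triangle in $\DGCoh^{\gr}(\wcalN^{(1)})$ whose terms are (shifts of) $\Id$, $\frakS^{\gr}_\alpha$ and $\mathbf{K}^{\gr}_{T_\alpha}$. Evaluating at $\calM$, applying $\eta$, and using diagram \eqref{eq:diagramSalphagr} to push $\eta$ through $\frakS^{\gr}_\alpha$, one realises $\eta\mathbf{K}^{\gr}_{T_\alpha}(\calM)$ as the third vertex of a triangle whose other two terms are shifts of $\eta(\calM)$ and $\frakS^{\Gm}_\alpha(\eta\calM)$. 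Independently, convolving the $\Gm$-equivariant short exact sequence \eqref{eq:kernelswcalN2} with $\eta(\calM)$ and inserting the $\langle 1\rangle$-shift prescribed by Proposition \ref{prop:actionBaff'Gm}$\rmii$ produces a second distinguished triangle relating $\frakS^{\Gm}_\alpha(\eta\calM)$, $\mathbf{K}^{\Gm}_{T_\alpha}(\eta\calM)$ and $\eta(\calM)$; after a suitable rotation it takes the same shape as the first. A standard cone argument in the triangulated category $\calD^b \Coh^{\Gm}(\wcalN^{(1)})$ then produces the desired isomorphism between the two third vertices.

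The hard part will be matching the connecting morphisms of these two triangles: one must check that the adjunction unit $\Id\langle 1\rangle \to \frakR^{\gr}_\alpha\langle -1\rangle$ entering Lemma \ref{lem:trianglereflection-braidgroup}, once transported through Koszul duality and $\eta$, agrees up to automorphisms of source and target with the connecting morphism coming from \eqref{eq:kernelswcalN2}. Both morphisms ultimately descend from the adjunction unit attached to the base change along $\widetilde{\pi}_\alpha$, so they agree; but because completions of distinguished triangles are only determined up to non-canonical isomorphism, this argument only yields an object-wise isomorphism, not an isomorphism of functors, in agreement with the footnote to the statement.
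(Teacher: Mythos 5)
Your proof follows the same approach as the paper's: reduce to the generators $\theta_x$ and $T_\alpha$, settle $\theta_x$ via compatibility of $\kappa_{\calB}$ with line-bundle twists, and for $T_\alpha$ conjugate the triangle of Lemma~\ref{lem:trianglereflection-braidgroup} through Koszul duality using Theorem~\ref{thm:dualreflection}, apply $\eta$, and compare with the triangle produced by \eqref{eq:kernelswcalN2}. Your final paragraph makes explicit a point the paper's proof passes over: the paper simply says ``identifying'' the two triangles, which tacitly presupposes that the first morphisms agree up to a commuting square of isomorphisms; you correctly flag that this needs checking, and that the resulting cone isomorphism is non-canonical --- which is exactly why the footnote to the lemma disclaims any isomorphism of functors.
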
 

\begin{proof} It is
sufficient to prove the isomorphism on a set of generators of
$B_{\aff}'$. For $b=\theta_x$ ($x \in \bbX$), it is obvious. Hence we only have to prove it for $b=T_{\alpha}$. Fix $\alpha \in \Phi$. By Lemma \ref{lem:trianglereflection-braidgroup}, there is a distinguished triangle \[\Id\langle
1\rangle \to \Shift_{\rho} \circ
(\kappa_{\calB})^{-1} \circ \frakR_{\alpha}^{\gr} \circ \kappa_{\calB}
\circ \Shift_{-\rho} \langle -1\rangle \to \mathbf{K}^{\gr}_{T_{\alpha}}.\]
Using Theorem \ref{thm:dualreflection}, for $\calM$ in $\DGCoh^{\gr}(\wcalN^{(1)})$ we obtain a
distinguished \begin{equation} \label{eq:triangleS}
  \eta(\calM)[-1]\langle 1\rangle \to \eta \circ
\frakS_{\alpha}^{\gr}(\calM)\langle 1\rangle \to \eta \circ
\mathbf{K}^{\gr}_{T_{\alpha}}(\calM)\end{equation} (observe that $\eta(\calF \langle j \rangle)=\eta(\calF)[-j] \langle j \rangle$). By \eqref{eq:diagramSalphagr} we have $\eta \circ
\frakS_{\alpha}^{\gr} = \frakS_{\alpha}^{\Gm} \circ \eta$.  Now the exact sequence \eqref{eq:kernelswcalN2} induces a distinguished triangle of functors \begin{equation} \label{eq:triangleS2} \frakS_{\alpha}^{\Gm}\langle 1 \rangle \to
\mathbf{K}^{\Gm}_{T_{\alpha}} \to \Id \langle
1\rangle. \end{equation} Identifying triangle \eqref{eq:triangleS} with
triangle \eqref{eq:triangleS2} applied to $\eta(\calM)$, one obtains the
isomorphisms for $b=T_{\alpha}$. \end{proof}

\begin{remark} \label{rk:rkSalphagr} It follows in particular that diagram \eqref{eq:diagramSalphagr}, with $\alpha$ replaced by $\alpha_0$, is commutative on objects. In particular, for any $\calM \in \DGCoh^{\gr}(\wcalN^{(1)})$ there is an isomorphism $\eta \circ \frakS_{\alpha_0}^{\gr}(\calM) \cong \frakS_{\alpha_0}^{\Gm} \circ \eta(\calM)$. \end{remark}

\subsection{End of the proof of Theorem \ref{thm:mainthm}} \label{ss:paragraphproof}

In this subsection we finally give a proof of the existence statement in $(\ddag)$, by induction on $\ell(w)$.

To begin induction, let us consider some $w \in W^0$ with $\ell(w)=0$. Write $w=v \cdot t_{\mu}$. By Corollary \ref{cor:descriptionsimples}, $\calL_w \cong j_* \calO_{\calB^{(1)}}(-\rho + \mu)[\ell(v)]$. Let us set \[ \calL^{\gr}_w := j_* \calO_{\calB^{(1)}}(-\rho + \mu)[\ell(v)] \langle N - \ell(v) \rangle,\] where $N=\# R^+$, and $j_* \calO_{\calB^{(1)}}$ is endowed with its natural (trivial) $\Gm$-equivariant structure. Then $L^{\gr}(w \bullet 0) := \widetilde{\epsilon}^{\calB}_0(\calL^{\gr}_w)$ is a lift of $L(w \bullet 0)$ as a graded module. By definition of Koszul duality (see \eqref{eq:formulakappa}) we have \[ \kappa_{\calB} \bigl( \zeta(\calL^{\gr}_w) \otimes \calO_{\calB^{(1)}}(-\rho) \bigr) \ \cong \ \Lambda(\calT_{\calB^{(1)}}^{\vee}) \otimes_{\calO_{\calB^{(1)}}} \calO_{\calB^{(1)}}(\mu) \langle - N - \ell(v) \rangle. \] We set \[ \calP^{\gr}_{\tau_0 w} := \Lambda(\calT_{\calB^{(1)}}^{\vee}) \otimes_{\calO_{\calB^{(1)}}} \calO_{\calB^{(1)}}(\mu) \langle - N - \ell(v) \rangle.\] By \eqref{eq:projective(p-2)rho}, $P^{\gr}(\tau_0 w \bullet 0) := \widetilde{\gamma}^{\calB}_0(\calP^{\gr}_{\tau_0 w})$ is a lift of $P(\tau_0 w \bullet 0)$ as a graded module. Moreover, \eqref{eq:isommainthm} holds. This concludes the proof if $\ell(w)=0$.

Now, consider some $w \in W^0$, and assume the result is known for all
$v \in W^0$ with $\ell(v) < \ell(w)$. For all such $v$, we fix the objects $L^{\gr}(v \bullet 0)$, $P^{\gr}(\tau_0 v \bullet 0)$, $\calL^{\gr}_v$, $\calP^{\gr}_{\tau_0 v}$ such that \eqref{eq:isommainthm} is satisfied. Choose some $\delta \in \Phi_{\aff}$ such that, for $s=s_{\delta}$, one has $ws \in W^0$ and $ws \bullet 0 < w \bullet 0$, i.e.~$\ell(ws) < \ell(w)$. In particular we have $\kappa_{\calB} \bigl( \zeta (\calL^{\gr}_{w s}) \otimes \calO_{\calB^{(1)}}(-\rho) \bigr) \cong \calP^{\gr}_{\tau_0 w s}.$  Applying $\frakR^{\gr}_{\delta}$ and using Theorem \ref{thm:dualreflection}, it follows that
\begin{equation} \label{eq:lab} \kappa_{\calB}(\frakS_{\delta}^{\gr} \circ \zeta (\calL^{\gr}_{w s}) \otimes \calO_{\calB^{(1)}}(-\rho))[1] \langle 1 \rangle \ \cong \ \frakR^{\gr}_{\delta} \calP^{\gr}_{\tau_0 w s} \langle -1 \rangle. \end{equation} 

As in the proof of Proposition \ref{prop:propprojectives}, the image under the forgetful functor of $\widetilde{\gamma}^{\calB}_0 (\frakR^{\gr}_{\delta} \calP^{\gr}_{\tau_0 w s})$ is $R_{\delta} P(\tau_0 w s \bullet 0)$; hence there exists a lift $P^{\gr}(\tau_0 w \bullet 0)$ of $P(\tau_0 w \bullet 0)$, and graded finite dimensional vector spaces $V_{\tau_0 v}$ such that \begin{equation} \label{eq:decomp} \widetilde{\gamma}^{\calB}_0 (\frakR^{\gr}_{\delta} \calP^{\gr}_{\tau_0 w s}) \langle -1 \rangle \ \cong \ P^{\gr}(\tau_0 w \bullet 0) \oplus \bigl( \bigoplus_{\genfrac{}{}{0pt}{}{v \in W^0}{\ell(v) < \ell(w)}} P^{\gr}(\tau_0 v \bullet 0) \otimes_{\bk} V_{\tau_0 v} \bigr). \end{equation}

Now let us consider the LHS of equation \eqref{eq:lab}. By \eqref{eq:diagramSalphagr} and Remark \ref{rk:rkSalphagr} we have $\frakS_{\delta}^{\gr} \circ \zeta (\calL^{\gr}_{w s}) \cong \zeta \circ \frakS_{\delta}^{\Gm}  (\calL^{\gr}_{w s})$. As in the proof of Proposition \ref{prop:simplesinimage}, the image of $\widetilde{\epsilon}^{\calB}_0(\frakS_{\delta}^{\Gm} \calL^{\gr}_{w s})$ under the forgetful functor is $Q_{\delta}(ws)$. Hence there is a lift $L^{\gr}(w \bullet 0)$ of $L(w \bullet 0)$ as a graded module, an object $\calQ^{\gr}$ of $\calD^b \Coh^{\Gm}_{\calB^{(1)}}(\wcalN^{(1)})$, and an isomorphism \[ \widetilde{\epsilon}^{\calB}_0(\frakS_{\delta}^{\Gm} \calL^{\gr}_{w s}) \ \cong \ L^{\gr}(w \bullet 0) \langle -1 \rangle \oplus \widetilde{\epsilon}^{\calB}_0(\calQ^{\gr}). \] Let $\calL^{\gr}_w$ be the object of $\calD^b \Coh_{\calB^{(1)}}^{\Gm}(\wcalN^{(1)})$ such that $\widetilde{\epsilon}^{\calB}_0(\calL^{\gr}_w)= L^{\gr}(w \bullet 0)$. Then $\calL^{\gr}_w$ is a direct summand of $\frakS_{\delta}^{\Gm} \calL^{\gr}_{w s} \langle 1 \rangle$, hence $\kappa_{\calB}(\zeta(\calL^{\gr}_w) \otimes_{\calO_{\calB^{(1)}}}
\calO_{\calB^{(1)}}(-\rho))$ is a direct summand of the LHS of \eqref{eq:lab}, thus also of its RHS.

Let us define $\calP^{\gr}_{\tau_0 w} := \kappa_{\calB}(\zeta(\calL^{\gr}_w) \otimes_{\calO_{\calB^{(1)}}} \calO_{\calB^{(1)}}(-\rho)).$ To conclude the induction step, it is enough to prove: \begin{equation} \label{eq:defP2} \widetilde{\gamma}^{\calB}_0(\calP^{\gr}_{\tau_0 w}) \ \cong \ P^{\gr}(\tau_0 w \bullet 0). \end{equation} By definition, $\calP^{\gr}_{\tau_0 w}$ is a direct summand of $\frakR^{\gr}_{\delta} \calP^{\gr}_{\tau_0 w s} \langle -1 \rangle$. Hence $\widetilde{\gamma}^{\calB}_0(\calP^{\gr}_{\tau_0 w})$ is a direct summand of \eqref{eq:decomp}. In particular, it is a graded $(\calU \frakg)_0^{\hat{0}}$-module. Let us show that it is indecomposable. By Proposition \ref{prop:KrullSchmidt}$\rmi$, it is enough to show that its endomorphism algebra is local. This algebra is \begin{multline*} {\rm End}_{\calD^b \Mod^{\fg,\gr}_0((\calU \frakg)_0)} (\widetilde{\gamma}^{\calB}_0(\calP^{\gr}_{\tau_0 w})) \ \cong \ {\rm End}_{\DGCoh^{\gr}((\wfrakg \, \rcap_{\frakg^* \times \calB} \, \calB)^{(1)})}(\calP^{\gr}_{\tau_0 w}) \\ \cong \ {\rm End}_{\calD^b \Coh^{\Gm}_{\calB^{(1)}}(\wcalN^{(1)})}(\calL^{\gr}_w) \ \cong \ {\rm End}_{\calD^b \Mod^{\fg,\gr}_0((\calU \frakg)^0)}(L^{\gr}(w \bullet 0)) \ \cong \ \bk. \end{multline*} Here the first isomorphism follows from the fact that $\widetilde{\gamma}^{\calB}_0$ is fully faithful; the second one from the fact that $\kappa_{\calB}$ and $\zeta$ are fully faithful; the third one from the fact that $\widetilde{\epsilon}^{\calB}_0$ is fully faithful. Hence $\widetilde{\gamma}^{\calB}_0(\calP^{\gr}_{\tau_0 w})$ is indecomposable.

By the Krull-Schmidt theorem (see Proposition \ref{prop:KrullSchmidt}$\rmii$), $\widetilde{\gamma}^{\calB}_0(\calP^{\gr}_{\tau_0 w})$ is one of the indecomposable summands appearing in the RHS of \eqref{eq:decomp}. Hence, to conclude the proof of \eqref{eq:defP2} it is enough to prove that there cannot exist $i \in \bbZ$ and $v \in W^0$ with $\ell(v) < \ell(w)$ such that $\widetilde{\gamma}^{\calB}_0(\calP^{\gr}_{\tau_0 w}) \cong P^{\gr}(\tau_0 v \bullet 0) \langle i \rangle.$

Let us assume that there exist such an $i$ and such a $v$. By induction we have $P^{\gr}(\tau_0 v \bullet 0) \langle i \rangle \cong \widetilde{\gamma}^{\calB}_0 (\calP^{\gr}_{\tau_0 v} \langle i \rangle)$, and $\calP^{\gr}_{\tau_0 v} \langle i \rangle \cong \kappa_{\calB}(\zeta(\calL^{\gr}_v) \otimes \calO_{\calB^{(1)}}(-\rho)) \langle i \rangle.$ Hence, as $\widetilde{\gamma}^{\calB}_0$, $\kappa_{\calB}$ and $\zeta$ are fully faithful, we have $\calL^{\gr}_w \cong \calL^{\gr}_v[-i] \langle i \rangle.$ Applying $\widetilde{\epsilon}^{\calB}_0$ one obtains $L^{\gr}(w \bullet 0) \ \cong \ L^{\gr}(v \bullet 0)[-i] \langle i \rangle,$ which is a contradiction as $v \neq w$.  This concludes the proof of $(\ddag)$, hence also of Theorem \ref{thm:mainthm}.

\subsection{Remark on other alcoves} \label{ss:otheralcoves}

In Theorem \ref{thm:mainthm}, the objects $\calL_w$ and $\calP_w$ correspond to simple and projective modules for any choice of $\lambda \in C_0$, i.e.~they are the simple, resp. projective, objects for the $t$-structure on $\calD^b \Coh_{\calB^{(1)}}(\wcalN^{(1)})$, resp. $\DGCoh((\wfrakg \, \rcap_{\frakg^* \times \calB} \, \calB)^{(1)})$, assigned to the fundamental alcove (see \cite[2.1.5]{BEZICM} for this point of view). We could also consider the simples and projectives for the $t$-structure assigned to another alcove $C_1$, i.e.~the objects which are sent by the equivalence $\epsilon^{\calB}_{\lambda}$, resp. $\widehat{\gamma}^{\calB}_{\lambda}$, to the simple, resp. projective, modules, for any $\lambda \in C_1 \cap \bbX$. The different $t$-structures are related by the braid group action, which commutes with $\kappa_{\calB}$ (see Lemma \ref{lem:lemcompatibilityactionsK}). Hence a statement similar to Theorem \ref{thm:mainthm} is true for any alcove. We will need this extension later to prove Koszulity of singular blocks, but it is not needed in section \ref{sec:sectionapplications}.

More precisely, let $C$ be the intersection of an alcove with $\bbX$. Let $y \in W_{\aff}$ be the unique element such that $C = y \bullet C_0$. Then there exist unique objects $\calL_w^y \in \calD^b \Coh_{\calB^{(1)}}(\wcalN^{(1)})$, $\calP_w^y \in \DGCoh((\wfrakg \, \rcap_{\frakg^* \times \calB} \, \calB)^{(1)})$ ($w \in W^0$) such that for any $\lambda \in C$ and $w \in W^0$ we have \begin{equation}\label{eq:objectsy} \epsilon^{\calB}_{\lambda}(\calL_w^y) \ \cong \ L(w \bullet (y^{-1} \bullet \lambda)), \qquad \widehat{\gamma}^{\calB}_{\lambda}(\calP^y_w) \ \cong \ P(w \bullet (y^{-1} \bullet \lambda)). \end{equation} (In this formula, $y^{-1} \bullet \lambda \in C_0$.) Indeed, there is an element $\overline{y} \in B_{\aff}'$ such that $\gamma^{\calB}_{\lambda} \cong \gamma^{\calB}_{y^{-1} \bullet \lambda} \circ \mathbf{J}_{\overline{y}}$ for any $\lambda \in C$ (see \cite{BEZICM} and \cite[section 2]{BMR2}). Here $\overline{y}$ is not unique, but the functor $\mathbf{J}_{\overline{y}}$ is clearly unique. Then, if we set $\calL_w^y:=\mathbf{K}_{\overline{y}}^{-1}(\calL_w)$ and $\calP_w^y:=(\mathbf{J}_{\overline{y}}^{\dg})^{-1}(\calP_w)$, isomorphisms \eqref{eq:objectsy} are satisfied. Then the following theorem holds true:

\begin{thm} \label{thm:mainthmalcove}

Assume $p>h$ is such that Lusztig's conjecture is true. There is a unique choice of lifts $\calP^{y,\gr}_v$ of $\calP_v^y$, $\calL^{y,\gr}_v$ of $\calL_v^y$ ($v \in W^0$), such that for $w \in W^0$, $\kappa_{\calB}^{-1} \calP^{y,\gr}_{\tau_0 w} \cong \zeta(\calL^{y,\gr}_w)
\otimes_{\calO_{\calB^{(1)}}} \calO_{\calB^{(1)}}(-\rho)$ in $\DGCoh^{\gr}(\wcalN^{(1)})$.

\end{thm}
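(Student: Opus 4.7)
The plan is to deduce this from Theorem \ref{thm:mainthm} by transport along the braid group action. Fix a lift $\overline{y} \in B_{\aff}'$ of $y$ so that, as recalled above the theorem statement, $\mathbf{J}_{\overline{y}}$ intertwines $\gamma^{\calB}_{\lambda}$ with $\gamma^{\calB}_{y^{-1} \bullet \lambda}$ for $\lambda \in C$. The same commutativity then holds for $\widehat{\gamma}^{\calB}_{\lambda}$ and $\epsilon^{\calB}_{\lambda}$, via $\mathbf{J}^{\dg}_{\overline{y}}$ and $\mathbf{K}_{\overline{y}}$ respectively, so that by the very definition $\calL^y_w \cong (\mathbf{K}_{\overline{y}})^{-1}(\calL_w)$ and $\calP^y_w \cong (\mathbf{J}^{\dg}_{\overline{y}})^{-1}(\calP_w)$. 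The uniqueness claim for the graded lifts is then proved exactly as in \S \ref{ss:paragraphmainthm}: the lifts of the indecomposable objects $\calL^y_w$ and $\calP^y_w$ are unique up to internal shift (apply Theorem \ref{thm:thmGG}$\rmiv$ after transferring to graded modules via the functors $\widetilde{\epsilon}^{\calB}_0$ and $\widetilde{\gamma}^{\calB}_0$), and the required isomorphism \eqref{eq:isommainthm} pins down the shifts once a compatible pair is chosen.

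For existence, I would simply set
\[
\calL^{y,\gr}_w \ := \ (\mathbf{K}^{\Gm}_{\overline{y}})^{-1}(\calL^{\gr}_w), \qquad \calP^{y,\gr}_w \ := \ (\mathbf{J}^{\dg,\gr}_{\overline{y}})^{-1}(\calP^{\gr}_w),
\]
where $\calL^{\gr}_w$ and $\calP^{\gr}_w$ are the canonical lifts provided by Theorem \ref{thm:mainthm}. These are indeed lifts of $\calL^y_w$ and $\calP^y_w$, because the forgetful functors from the graded to the ungraded categories intertwine $\mathbf{K}^{\Gm}_{\overline{y}}$ with $\mathbf{K}_{\overline{y}}$ and $\mathbf{J}^{\dg,\gr}_{\overline{y}}$ with $\mathbf{J}^{\dg}_{\overline{y}}$, as is clear from the way the graded actions were constructed in Propositions \ref{prop:actionBaff'Gm} and \ref{prop:actionBaff'DGCoh}. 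Moreover, the $\Gm$-equivariant braid group action preserves the subcategory $\calD^b \Coh^{\Gm}_{\calB^{(1)}}(\wcalN^{(1)})$, which is needed for $\calL^{y,\gr}_w$ to live in the right place.

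To verify the key isomorphism, I would combine three facts. First, by Theorem \ref{thm:mainthm}, $\kappa_{\calB}^{-1}\calP^{\gr}_{\tau_0 w} \cong \zeta(\calL^{\gr}_w) \otimes \calO_{\calB^{(1)}}(-\rho)$. Second, the definition $\mathbf{K}^{\gr}_b = \Shift_{\rho} \circ \kappa_{\calB}^{-1} \circ \mathbf{J}^{\dg,\gr}_b \circ \kappa_{\calB} \circ \Shift_{-\rho}$ yields the intertwining identity
\[
\kappa_{\calB}^{-1} \circ (\mathbf{J}^{\dg,\gr}_{\overline{y}})^{-1} \ \cong \ \Shift_{-\rho} \circ (\mathbf{K}^{\gr}_{\overline{y}})^{-1} \circ \Shift_{\rho} \circ \kappa_{\calB}^{-1}.
\]
Third, one needs the compatibility $\mathbf{K}^{\gr}_{\overline{y}} \circ \zeta \cong \zeta \circ \mathbf{K}^{\Gm}_{\overline{y}}$ on $\calD^b \Coh^{\Gm}_{\calB^{(1)}}(\wcalN^{(1)})$. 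This will follow from Lemma \ref{lem:lemcompatibilityactionsK}: applying $\eta$ to the left side gives $\mathbf{K}^{\Gm}_{\overline{y}} \circ (\eta \circ \zeta)$, which equals $\mathbf{K}^{\Gm}_{\overline{y}}$ followed by the inclusion into $\calD^b \Coh^{\Gm}(\wcalN^{(1)})$; on the other hand, $\mathbf{K}^{\Gm}_{\overline{y}}$ preserves support on $\calB^{(1)}$, so its image again lies in the essential image of $\zeta$, and one recovers the desired identification by noting that $\zeta$ is fully faithful. Stringing these three facts together gives
\[
\kappa_{\calB}^{-1}\calP^{y,\gr}_{\tau_0 w} \ \cong \ \Shift_{-\rho} (\mathbf{K}^{\gr}_{\overline{y}})^{-1} \zeta(\calL^{\gr}_w) \ \cong \ \zeta\bigl((\mathbf{K}^{\Gm}_{\overline{y}})^{-1} \calL^{\gr}_w\bigr) \otimes \calO_{\calB^{(1)}}(-\rho),
\]
which is exactly what we want.

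The main obstacle is the third compatibility, namely showing that $\mathbf{K}^{\gr}_{\overline{y}} \circ \zeta \cong \zeta \circ \mathbf{K}^{\Gm}_{\overline{y}}$ on objects of $\calD^b \Coh^{\Gm}_{\calB^{(1)}}(\wcalN^{(1)})$. The subtlety is that $\zeta$ is only fully faithful, not essentially surjective, so one must check that $\mathbf{K}^{\gr}_{\overline{y}}$ applied to an object in the image of $\zeta$ remains in that image (boundedness in the internal grading is the issue), and then identify it with $\zeta$ of the corresponding object via Lemma \ref{lem:lemcompatibilityactionsK}. It suffices to treat the generators $T_{\alpha}$ and $\theta_x$ of $B_{\aff}'$ separately, and for each one an explicit computation with the relevant Koszul kernel and the exact triangle \eqref{eq:kernelswcalN2} (as in Lemma \ref{lem:lemcompatibilityactionsK}) confirms that boundedness is preserved; the remaining identification is then forced by full faithfulness.
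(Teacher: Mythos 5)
Your proof is correct and takes essentially the same route as the paper's: define $\calL^{y,\gr}_w := (\mathbf{K}^{\Gm}_{\overline{y}})^{-1}(\calL^{\gr}_w)$ and $\calP^{y,\gr}_w := (\mathbf{J}^{\dg,\gr}_{\overline{y}})^{-1}(\calP^{\gr}_w)$, then deduce \eqref{eq:isomparabolic}-type isomorphism from \eqref{eq:isommainthm} using that $\kappa_{\calB}$ and $\zeta$ commute with the braid group actions. You have also correctly flagged the one point the paper passes over without comment, namely that $\mathbf{K}^{\gr}_{\overline{y}}$ preserves the essential image of $\zeta$ (boundedness in the internal grading), which must be checked on the generators $\theta_x$ and $T_{\alpha}$ before the identification via Lemma \ref{lem:lemcompatibilityactionsK} and full faithfulness of $\zeta$ can be invoked.
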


Indeed, put $\calL_w^{y,\gr}:=(\mathbf{K}^{\Gm}_{\overline{y}})^{-1}(\calL_w^{\gr})$ and $\calP_w^{y,\gr}:=(\mathbf{J}_{\overline{y}}^{\dg,\gr})^{-1}(\calP_w^{\gr})$. Then the isomorphism of the theorem follows from the fact that $\kappa_{\calB}$ and $\zeta$ commute with the braid group action. 

Similarly, for $\lambda \in C$ there are ``graded versions'' of $\epsilon^{\calB}_{\lambda}$, $\widehat{\gamma}^{\calB}_{\lambda}$, with properties similar to those of $\widetilde{\epsilon}^{\calB}_0$, $\widetilde{\gamma}^{\calB}_0$.

\section{Application to Koszulity of the regular blocks of $(\calU \frakg)_0$} \label{sec:sectionapplications}

In this section we derive from Theorem
\ref{thm:mainthm} (or rather from the equivalent statement $(\ddag)$
of \S \ref{ss:paragraphmainthm}) that, for $\lambda \in
C_0$, the category $\Mod^{\fg}_0((\calU \frakg)^{\lambda})$ is
``controlled'' by a Koszul ring, whose Koszul dual controls the
category $\Mod^{\fg}_{\lambda}((\calU \frakg)_0)$. These results are counterparts in positive characteristic of the
results in \cite{SOEKat, BGS}; they also extend some results
of \cite[\S 18]{AJS}. We deduce this property from a general criterion for a graded ring to
be Morita equivalent to a Koszul ring, proved in
\S \ref{ss:paragraphkoszulitycriterion}.

\subsection{More on graded algebras}\label{ss:paragraphA}

Let $A$ be a $\bbZ$-graded ring. Recall the notation of \S \ref{ss:paragraphgradedrings}. Following \cite{NOGra}, if $M \in \Mod^{\gr}(A)$, the \emph{graded radical}
$\rad^{\gr}(M)$ of $M$ is the intersection of all maximal \emph{graded}
submodules of $M$. With this definition, $\rad^{\gr}$ has all the
usual properties of the radical (see \cite[A.I.7.4]{NOGra}). In
particular, if $A$ is considered as an $A$-module via left
multiplication, $\rad^{\gr}(A)$ is a graded two-sided ideal of $A$,
and \begin{equation} \label{eq:radicalgr} \rad^{\gr}(A)=\bigcap_{X \in \Mod^{\gr}(A) \ \text{simple}} {\rm Ann}(X). \end{equation}

{F}rom now on in this section we restrict to the following situation. Let $V$ be a graded finite
dimensional $\bk$-vector space, concentrated in positive degrees. Let
${\rm S}(V)$ be the symmetric algebra of $V$; it is naturally a graded
$\bk$-algebra, concentrated in non-negative degrees. We assume that
$A$ is a graded ${\rm S}(V)$-algebra, which is finitely generated as an
${\rm S}(V)$-module. Note in particular that the grading of $A$ is bounded below.

Consider the finite dimensional graded $\bk$-algebra
$\overline{A}:=A/(V \cdot A)$. By Theorem \ref{thm:thmGG}$\rmii$ and Corollary \ref{cor:corGG}, the simple $\overline{A}$-modules are
exactly the images of the simple \emph{graded} $\overline{A}$-modules under
the forgetful functor. Comparing \eqref{eq:radicalgr} with
\cite[5.5]{CRMet}, we deduce that \begin{equation}
  \label{eq:equalityradical} \rad(\overline{A}) =
  \rad^{\gr}(\overline{A}). \end{equation} A proof similar to
that of \cite[5.22]{CRMet} yields the following result.

\begin{prop} \label{prop:proptopA}

$\rmi$ The morphism $A \to \overline{A}$ induces an isomorphism of
graded rings $A/\rad^{\gr}(A) \cong
\overline{A}/\rad^{\gr}(\overline{A})$.

$\rmii$ For $k \gg 0$, $(\rad^{\gr}(A))^k \subseteq V \cdot A$.

\end{prop}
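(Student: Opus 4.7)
The plan is to establish (i) first via a graded Nakayama argument, and then deduce (ii) from the nilpotence of the radical of the finite-dimensional quotient $\overline{A}$.

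For (i), the first step is to show the inclusion $V \cdot A \subseteq \rad^{\gr}(A)$. By \eqref{eq:radicalgr} it suffices to check that $V \cdot A$ annihilates every simple graded $A$-module $M$. Since $A$ is finitely generated over $S(V)$, so is $M$; in particular its grading is bounded below. If $V \cdot M = M$, then looking at the lowest nonzero graded component and using that $V$ is strictly positively graded forces $M = 0$ (graded Nakayama). Hence $V \cdot M$ is a proper graded submodule of $M$, and by simplicity $V \cdot M = 0$, as required.

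Consequently, every simple graded $A$-module descends to a simple graded $\overline{A}$-module, and conversely every simple graded $\overline{A}$-module pulls back to a simple graded $A$-module (since $A \to \overline{A}$ is surjective). The correspondence is a bijection, so applying \eqref{eq:radicalgr} on both sides shows that the image of $\rad^{\gr}(A)$ in $\overline{A}$ is exactly $\rad^{\gr}(\overline{A})$. Combined with the surjectivity of $A \to \overline{A}$ and the inclusion $V \cdot A \subseteq \rad^{\gr}(A)$, this gives the isomorphism of graded rings
\[ A/\rad^{\gr}(A) \ \cong \ \overline{A}/\rad^{\gr}(\overline{A}), \]
proving (i).

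For (ii), the point is that $\overline{A} = A/(V \cdot A)$ is finite-dimensional over $\bk$: since $A$ is finitely generated as an $S(V)$-module and $S(V)/V \cdot S(V) = \bk$, $\overline{A}$ is a finitely generated $\bk$-module. By \eqref{eq:equalityradical}, $\rad^{\gr}(\overline{A}) = \rad(\overline{A})$, which is nilpotent by classical theory of finite-dimensional algebras. Thus $(\rad^{\gr}(\overline{A}))^k = 0$ for some $k \gg 0$, which by (i) means exactly that $(\rad^{\gr}(A))^k$ lies in the kernel of $A \to \overline{A}$, i.e.~in $V \cdot A$.

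The only mildly delicate point is the graded Nakayama step, which is straightforward here because $V$ is concentrated in strictly positive degrees and simple graded $A$-modules are automatically bounded below (being finitely generated over $S(V)$). All the rest is formal manipulation of radicals and the identification \eqref{eq:equalityradical} of graded and ungraded radicals for $\overline{A}$.
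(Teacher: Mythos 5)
Your proof is correct and takes exactly the approach the paper has in mind: the paper does not spell out a proof but only says ``a proof similar to that of \cite[5.22]{CRMet} yields the following result,'' and your argument (graded Nakayama to show $V \cdot A$ annihilates all simple graded modules, hence $V \cdot A \subseteq \rad^{\gr}(A)$ and the simple graded $A$- and $\overline{A}$-modules coincide, then nilpotence of the radical of the finite-dimensional quotient $\overline{A}$ together with \eqref{eq:equalityradical} for part (ii)) is precisely that standard argument adapted to the graded setting.

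One small streamlining worth noting: you first prove $V \cdot A \subseteq \rad^{\gr}(A)$ directly, then separately argue that $\rad^{\gr}(A)$ maps onto $\rad^{\gr}(\overline{A})$. In fact, once you know every simple graded $A$-module is killed by $V \cdot A$, each $\mathrm{Ann}_A(X)$ is automatically the full preimage of $\mathrm{Ann}_{\overline{A}}(X)$, and since preimages commute with intersections, $\rad^{\gr}(A)$ is the preimage of $\rad^{\gr}(\overline{A})$ in one step; this subsumes the inclusion $V \cdot A \subseteq \rad^{\gr}(A)$ and gives the isomorphism in (i) immediately. This is only a cosmetic simplification — the substance of what you wrote is right, and all the facts invoked (graded Nakayama for positively graded $V$ acting on a bounded-below module, finite-dimensionality of $\overline{A}$ over $\bk$, nilpotence of the Jacobson radical of a finite-dimensional algebra, and the identification \eqref{eq:equalityradical}) are correctly deployed.
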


We denote by $\Hom_{A,\bbZ}(M,N)$ the morphisms in
$\Mod^{\gr}(A)$, and the corresponding
extension groups by $\Ext^i_{A,\bbZ}(M,N)$. By \cite[E.6]{AJS} we have:

\begin{lem} \label{lem:lemkrullschmidt}

$\rmi$ Let $M \in \Mod^{\fg,\gr}(A)$. If $M$ is indecomposable in the category $\Mod^{\fg,\gr}(A)$, then ${\rm End}_{A,\bbZ}(M)$ is a local algebra.

$\rmii$ The Krull-Schmidt theorem holds in $\Mod^{\fg,\gr}(A)$.

\end{lem}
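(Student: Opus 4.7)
The plan is to reduce both assertions to the classical Krull--Schmidt formalism, the essential new input being that for $M\in\Mod^{\fg,\gr}(A)$ the graded endomorphism ring $\End_{A,\bbZ}(M)$ is a finite dimensional $\bk$-algebra. Once this is established, part $\rmi$ is a standard fact about finite dimensional algebras, and part $\rmii$ follows from Azumaya's theorem.

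First I would verify the finiteness statement. Since $A$ is finitely generated as an $\mathrm{S}(V)$-module and $\mathrm{S}(V)$ is concentrated in non-negative degrees with $\mathrm{S}(V)_0=\bk$, every $M\in\Mod^{\fg,\gr}(A)$ is a finitely generated graded $\mathrm{S}(V)$-module. Choosing homogeneous generators $m_1,\dots,m_r$ of $M$ of degrees $f_1,\dots,f_r$, one has $M_k=\sum_i \mathrm{S}(V)_{k-f_i}\cdot m_i$, so each $M_k$ is finite dimensional over $\bk$, and $M_k=0$ for $k$ sufficiently negative. A degree-zero endomorphism $\phi\in\End_{A,\bbZ}(M)$ is determined by the values $\phi(m_i)\in M_{f_i}$, hence $\End_{A,\bbZ}(M)$ embeds into the finite dimensional space $\bigoplus_i M_{f_i}$.

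For $\rmi$, the graded module $M$ admits no non-trivial decomposition in $\Mod^{\fg,\gr}(A)$ if and only if $\End_{A,\bbZ}(M)$ contains no idempotent distinct from $0$ and $1$: a decomposition $M=M_1\oplus M_2$ in $\Mod^{\fg,\gr}(A)$ gives an idempotent, and conversely any graded idempotent $e$ yields a decomposition $M=eM\oplus(1-e)M$ with both summands finitely generated (as direct summands of $M$). Since $\End_{A,\bbZ}(M)$ is finite dimensional over $\bk$, the absence of non-trivial idempotents forces $\End_{A,\bbZ}(M)/\rad$ to be a finite dimensional $\bk$-algebra with no non-trivial idempotents, hence a division ring, hence $\End_{A,\bbZ}(M)$ is local.

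For $\rmii$, existence of a decomposition of any $M\in\Mod^{\fg,\gr}(A)$ as a finite direct sum of indecomposables follows by induction: at each stage the number of orthogonal non-zero idempotents in $\End_{A,\bbZ}(M)$ is bounded by its $\bk$-dimension, so the process terminates. Uniqueness (up to isomorphism and permutation of summands) is then Azumaya's theorem applied to the local endomorphism rings furnished by $\rmi$. The main (and only slightly technical) obstacle is the finite dimensionality of $\End_{A,\bbZ}(M)$; everything else is formal. Note that the ungraded version of this argument does not apply directly, which is why one must work in $\Mod^{\fg,\gr}(A)$: it is the degree-zero restriction that cuts down the endomorphism space to a finite dimensional object, cf. the discussion in \cite[E.6]{AJS}.
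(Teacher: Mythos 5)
Your proof is correct and coincides with the argument the paper is invoking by citing \cite[E.6]{AJS}: the key point, that $\End_{A,\bbZ}(M)$ is finite dimensional over $\bk$ because $M$ is a finitely generated graded module over the positively-generated algebra $\mathrm{S}(V)$, is exactly what reduces the problem to the standard no-nontrivial-idempotents $\Rightarrow$ local argument plus Azumaya. Your write-up supplies the details the paper leaves to the reference.
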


If $L$ is a simple graded $A$-module, then $V \cdot L = 0$ (because $L$ is bounded below). Hence the simple graded $A$-modules are the simple graded $\overline{A}$-modules. Let $L_1, \ldots, L_r$ be representatives of
the simple non-graded $\overline{A}$-modules, and, for $i=1 \ldots r$,
let $L_i^{\gr}$ be a lift of $L_i$ as a graded
$\overline{A}$-module (which exists by Theorem \ref{thm:thmGG}$\rmii$). Using Corollary \ref{cor:corGG} and Theorem \ref{thm:thmGG}$\rmiv$, the $L_i \langle j \rangle$ are representatives of the simple graded
$\overline{A}$-modules, hence also of the simple graded
$A$-modules. As the ring $\overline{A}/\rad(\overline{A})$ is semi-simple
(\cite[5.19]{CRMet}), using \eqref{eq:equalityradical}, Proposition \ref{prop:proptopA}$\rmi$ and Corollary \ref{cor:corGG}, every graded $A/\rad^{\gr}(A)$-module is semi-simple in $\Mod^{\fg,\gr}(A/\rad^{\gr}(A))$. Using also Lemma \ref{lem:lemkrullschmidt}, every object of $\Mod^{\fg,\gr}(A)$ has a projective cover. For $i=1 \ldots r$, let $P_i^{\gr}$ be a projective cover of $L_i^{\gr}$. We
have \begin{equation} \label{eq:simplestop} L_i^{\gr} = P_i^{\gr} / \rad^{\gr}(P_i^{\gr}). \end{equation}

For $M$ in $\Mod^{\gr}(A)$
and $i \geq 0$, we define $\rad^{\gr,i}(M)$ by induction, setting
$\rad^{\gr,0}(M)=M$, and $\rad^{\gr,i}(M)=\rad^{\gr}(\rad^{\gr,i-1}(M))$ if $i \geq 1$.

\begin{lem} \label{lem:lemradicalfiltration}

Let $M$ be an object of $\Mod^{\fg,\gr}(A)$.

$\rmi$ $\rad^{\gr}(M)=\rad^{\gr}(A) \cdot M$.

$\rmii$ $\bigcap_{i \geq 0} \rad^{\gr,i}(M)=\{0\}$.

\end{lem}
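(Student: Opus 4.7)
The plan is to first prove (i) by the standard two-sided inclusion argument, then to deduce (ii) by iterating (i) and comparing $(\rad^{\gr}(A))^k$ with powers of $V \cdot A$, using that $M$ is bounded below in degree.

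For (i), the inclusion $\rad^{\gr}(A) \cdot M \subseteq \rad^{\gr}(M)$ is immediate from \eqref{eq:radicalgr}: if $N \subsetneq M$ is a maximal graded submodule then $M/N$ is a simple graded $A$-module, hence annihilated by $\rad^{\gr}(A)$, so $\rad^{\gr}(A) \cdot M \subseteq N$. Intersecting over all such $N$ gives the desired containment. For the reverse inclusion, consider the graded quotient $\overline{M}:=M/(\rad^{\gr}(A) \cdot M)$, which is finitely generated over $A/\rad^{\gr}(A)$. The excerpt (just before \eqref{eq:simplestop}) states that every object of $\Mod^{\fg,\gr}(A/\rad^{\gr}(A))$ is semisimple; so $\overline{M}$ is a direct sum of simple graded modules, hence $\rad^{\gr}(\overline{M}) = 0$, which means $\rad^{\gr}(M) \subseteq \rad^{\gr}(A) \cdot M$.

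For (ii), note first that since $A$ is finitely generated as a module over the commutative Noetherian graded ring ${\rm S}(V)$, it is two-sided Noetherian, so every $\rad^{\gr,i}(M)$ is again finitely generated and graded. Applying (i) inductively we obtain
\[ \rad^{\gr,i}(M) = (\rad^{\gr}(A))^i \cdot M \qquad \text{for all } i \geq 0. \]
By Proposition~\ref{prop:proptopA}(ii) there exists $k \geq 1$ with $(\rad^{\gr}(A))^k \subseteq V \cdot A$. Since $V$ lies in the center of $A$ (as $A$ is an ${\rm S}(V)$-algebra), iterating gives $(\rad^{\gr}(A))^{kn} \subseteq V^n \cdot A$ for every $n \geq 1$, and therefore $\rad^{\gr,kn}(M) \subseteq V^n \cdot M$. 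Because $M$ is finitely generated and graded over an $A$ whose grading is bounded below, $M$ itself is bounded below, say $M_d = 0$ for $d < d_0$; as $V$ is concentrated in strictly positive degrees, we have $(V^n \cdot M)_d = 0$ for $d < d_0 + n$. Hence
\[ \bigcap_{i \geq 0} \rad^{\gr,i}(M) \ \subseteq \ \bigcap_{n \geq 0} V^n \cdot M \ = \ 0, \]
proving (ii).

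This argument is essentially routine once the preparation of \S \ref{ss:paragraphA} is in place; the only conceptual point is the use of the semisimplicity of $\Mod^{\fg,\gr}(A/\rad^{\gr}(A))$ for (i) and the centrality of $V$ (so that $V \cdot A$ is a two-sided ideal whose powers can be iterated) together with boundedness below of $M$ for (ii). No serious obstacle is anticipated.
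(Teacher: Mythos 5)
Your proof is correct and follows essentially the same route as the paper. For (i) the paper just cites the classical argument in Curtis--Reiner (\cite[5.29]{CRMet}); you have written out exactly that argument in the graded setting: the inclusion $\rad^{\gr}(A)\cdot M\subseteq\rad^{\gr}(M)$ from \eqref{eq:radicalgr}, then the reverse inclusion by observing that $M/(\rad^{\gr}(A)\cdot M)$ is semisimple (using the remark before \eqref{eq:simplestop}), so its graded radical vanishes and hence $\rad^{\gr}(M)/(\rad^{\gr}(A)M)=\rad^{\gr}\bigl(M/(\rad^{\gr}(A)M)\bigr)=0$. For (ii), both you and the paper iterate (i) to get $\rad^{\gr,i}(M)=(\rad^{\gr}(A))^iM$ (noetherianness of $A$ being what makes the iterates remain in $\Mod^{\fg,\gr}(A)$), then invoke Proposition~\ref{prop:proptopA}(ii) and boundedness below; your extra step $(\rad^{\gr}(A))^{kn}\subseteq V^n\cdot A$, valid because $V$ is central, makes precise the deduction the paper leaves implicit.
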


\begin{proof} The proof of $\rmi$ is similar to that of
\cite[5.29]{CRMet}. As $A$ is noetherian we deduce that $\rad^{\gr,i}(M) = (\rad^{\gr}(A))^i \cdot M$ for $i \geq 0$. Then $\rmii$ follows from Proposition \ref{prop:proptopA}$\rmii$ and the fact that $M$ is bounded below.\end{proof}

\subsection{A Koszulity criterion} \label{ss:paragraphkoszulitycriterion}

Recall that a \emph{Koszul ring} $A=\bigoplus_{n \geq 0} A_n$ is a
non-negatively graded ring such that $A_0$ is a semi-simple ring and
the graded left $A$-module $A_0 \cong A/A_{>0}$ admits a graded projective
resolution \[ \cdots \to P^2 \to P^1 \to P^0 \to A_0 \to 0 \] such that
$P^i$ is generated by its degree $i$ part, for all $i$ (see
\cite{BGS}). If $A$ is a Koszul ring,
then its \emph{dual Koszul ring} is the graded ring\footnote{A Koszul
  ring is in particular a quadratic ring, and the dual Koszul ring is
  also the dual quadratic ring. The definition chosen here in easier
  to state, though less concrete.} $$A^! := \bigl(
\oplus_{n \geq 0} \ \Ext^n_A(A_0, A_0) \bigr)^{\op}$$ (here the
$\Ext$-groups are taken in the category of non-graded $A$-modules). If $A_1$ is an $A_0$-module of finite type, then $A^!$ is also a Koszul ring. If $A$ is a ring, one says that $A$ \emph{admits a Koszul
  grading} if it can be endowed with a grading which makes it a
Koszul ring. If $A$ is artinian, this grading is unique up to
automorphism if it exists (\cite[2.5.2]{BGS}).

The main result of this subsection is the following.

\begin{thm} \label{thm:thmkoszulity}

Let $A$, $L_i$, $L_i^{\gr}$ be as in \S {\rm \ref{ss:paragraphA}}. Assume
one can choose the lifts $L_i^{\gr}$ such that for $i,j=1,
\ldots, r$, \begin{equation} \label{eq:extvanishing}
  \Ext^n_{A,\bbZ}(L_i^{\gr}, L_j^{\gr} \langle m \rangle) = 0 \quad
  \text{unless } n=m.\end{equation} Then there exists a Koszul ring
$B$ which is (graded) Morita equivalent to $A$. If $L=\bigoplus_{i=1}^n L_i$, then $B^!$ is isomorphic to $
\bigl( \bigoplus_{n \geq 0} \ \Ext_A^n(L,L) \bigr)^{\op}$.

\end{thm}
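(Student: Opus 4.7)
The plan is to construct $B$ explicitly as a graded endomorphism ring of the sum of the projective covers $P_i^{\gr}$, and to deduce Koszulity by analysing the radical filtration of each $P_i^{\gr}$.

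More precisely, set $P := \bigoplus_{i=1}^r P_i^{\gr}$ in $\Mod^{\fg,\gr}(A)$, and define
\[
B \ := \ \Bigl( \bigoplus_{n \in \mathbb{Z}} \Hom_{A,\mathbb{Z}}(P, P \langle n \rangle) \Bigr)^{\op},
\]
a graded ring with the shift $\langle n \rangle$ providing the $\mathbb{Z}$-grading. Since each $P_i^{\gr}$ is a projective object of $\Mod^{\fg,\gr}(A)$ with simple top $L_i^{\gr}$, the functor $\Hom_{A,\mathbb{Z}}(P, -)$ is exact and faithful on $\Mod^{\fg,\gr}(A)$, and standard graded Morita theory (see e.g.~\cite[A.I]{NOGra}) shows that it induces an equivalence of graded categories between $A$ and $B$.

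Next I would establish the key structural fact: the \emph{linearity of the radical filtration} of each $P_i^{\gr}$, namely that $\rad^{\gr,k}(P_i^{\gr}) / \rad^{\gr,k+1}(P_i^{\gr})$ is a semi-simple graded $A$-module concentrated in internal degree $k$. This is proved by induction on $k$: at the step $k=0$ the statement holds by \eqref{eq:simplestop}, and the inductive step is carried out by building a minimal graded projective resolution of $L_i^{\gr}$, observing that the generators of the first syzygy $\Omega^1 = \rad^{\gr}(P_i^{\gr})$ in degree $m$ account for the extension group $\Ext^1_{A,\mathbb{Z}}(L_i^{\gr}, L_j^{\gr} \langle m \rangle)$, which by hypothesis \eqref{eq:extvanishing} vanishes unless $m=1$. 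Iterating, the syzygies $\Omega^k$ have their top concentrated in degree $k$, and the intersection of the radicals is zero by Lemma \ref{lem:lemradicalfiltration}$\rmii$.

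With linearity in hand, the remaining steps are formal. First, each $P_i^{\gr}$ is generated in degree $0$, so $B$ is concentrated in non-negative degrees. The degree zero part $B_0 = \bigoplus_{i,j} \Hom_{A,\mathbb{Z}}(P_i^{\gr}, P_j^{\gr}) \cong \bigoplus_{i,j} \Hom_{A,\mathbb{Z}}(L_i^{\gr}, L_j^{\gr}) = \bigoplus_i \End_A(L_i)$ (the Hom vanishing for $i \neq j$ coming from $L_i^{\gr}$ being concentrated in degree $0$ combined with linearity); hence $B_0$ is a product of skew-fields, in particular semi-simple. Applying $\Hom_{A,\mathbb{Z}}(P,-)$ to a minimal graded projective resolution of $\bigoplus L_i^{\gr}$, whose $k$-th term is generated in internal degree $k$ by the above, produces a graded projective resolution of $B_0$ as a left $B$-module with $k$-th term generated in degree $k$. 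This is exactly the Koszulity of $B$. Finally, the Morita equivalence together with the fact that $\bigoplus L_i^{\gr}$ corresponds under it to $B_0$ yields
\[
B^! \ = \ \Bigl( \bigoplus_{n \geq 0} \Ext^n_B(B_0, B_0) \Bigr)^{\op} \ \cong \ \Bigl( \bigoplus_{n \geq 0} \Ext^n_A(L, L) \Bigr)^{\op},
\]
as asserted.

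The delicate step is the linearity of the radical filtration: one must check that the Ext vanishing hypothesis, phrased purely in terms of graded $\Hom$'s between simples, actually controls the internal degrees of the generators of each syzygy, and that the minimality of the graded projective resolution is genuinely available in our setting (projective covers exist by the discussion in \S \ref{ss:paragraphA}, and Nakayama-type arguments are provided by Lemma \ref{lem:lemradicalfiltration}). This is the heart of the argument; once it is in place, Koszulity and the identification of $B^!$ follow by the familiar formal reasoning of \cite[\S 2]{BGS}.
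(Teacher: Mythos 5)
Your overall architecture matches the paper's: define $B$ as the graded endomorphism ring of $P^{\gr}=\bigoplus_i P_i^{\gr}$, establish linearity of the radical filtration of each $P_i^{\gr}$, deduce that $B$ is non-negatively graded with semi-simple degree-zero part, and conclude Koszulity. But your argument for the key structural fact (the paper's Lemma \ref{lem:lemradPgr}) has a genuine gap. The induction you describe runs over the syzygies $\Omega^k$ in a minimal graded resolution of $L_i^{\gr}$, and the content of ``the top of $\Omega^k$ is concentrated in degree $k$'' is nothing more than a restatement of \eqref{eq:extvanishing}, since by minimality $\Hom_{A,\bbZ}(\Omega^k,L_j^{\gr}\langle m\rangle)\cong\Ext^k_{A,\bbZ}(L_i^{\gr},L_j^{\gr}\langle m\rangle)$. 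For $k\geq 2$, however, $\Omega^k$ is a submodule of $\rad^{\gr}(P^{k-1})$ where $P^{k-1}$ is the projective cover of $\Omega^{k-1}$ --- it is \emph{not} $\rad^{\gr,k}(P_i^{\gr})$ --- so the syzygy filtration and the radical filtration of $P_i^{\gr}$ are different objects, and your iteration does not touch the claim you actually want. The paper instead inducts directly on the radical layers of $P_i^{\gr}$: the multiplicity of $L_j^{\gr}\langle m\rangle$ in $\rad^{\gr,n}(P_i^{\gr})/\rad^{\gr,n+1}(P_i^{\gr})$ is $\dim\Hom_{A,\bbZ}(\rad^{\gr,n}(P_i^{\gr}),L_j^{\gr}\langle m\rangle)$, which, via the short exact sequence $\rad^{\gr,n}\hookrightarrow\rad^{\gr,n-1}\twoheadrightarrow\rad^{\gr,n-1}/\rad^{\gr,n}$ and its Hom--Ext long exact sequence, injects into $\Ext^1_{A,\bbZ}(\rad^{\gr,n-1}(P_i^{\gr})/\rad^{\gr,n}(P_i^{\gr}),L_j^{\gr}\langle m\rangle)$; by the inductive hypothesis this collapses to a sum of $\Ext^1_{A,\bbZ}(L_l^{\gr},L_j^{\gr}\langle m-n+1\rangle)$, and \eqref{eq:extvanishing} forces $m=n$. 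This is the step your sketch is missing.

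A secondary slip: ``each $P_i^{\gr}$ is generated in degree $0$, so $B$ is concentrated in non-negative degrees'' is too fast, because $A$ is only assumed bounded below, not non-negatively graded, so $P_j^{\gr}$ could a priori have components in negative internal degree. One first needs $P_j^{\gr}$ to live in degrees $\geq 0$, which is exactly what Corollary \ref{cor:corBpositive} extracts from the linear radical filtration together with Lemma \ref{lem:lemradicalfiltration}$\rmii$ (the vanishing of $\bigcap_k\rad^{\gr,k}(P_j^{\gr})$); in your writeup the invocation of that lemma is misplaced inside the syzygy discussion. The remaining steps --- computing $B_0$ directly as a product of division rings, and transporting the linear minimal resolution of $\bigoplus_i L_i^{\gr}$ along the Morita equivalence to produce a linear resolution of $B_0$ --- are correct and a bit more explicit than the paper, which proves semi-simplicity of $B_0$ by contradiction in Lemma \ref{lem:lemB_0semisimple} and cites \cite[2.1.3]{BGS} for Koszulity in Proposition \ref{prop:propBkoszul}.
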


The proof will occupy the rest of this subsection. Assume that \eqref{eq:extvanishing} is
satisfied, and let $P_i^{\gr}$ be the projective cover of
$L_i^{\gr}$.

\begin{lem} \label{lem:lemradPgr}

For $n \geq 0$ and $i=1 \ldots r$, $\rad^{\gr,n}(P_i^{\gr}) /
\rad^{\gr,n+1}(P_i^{\gr})$ is a direct sum of simple modules of the
form $L_j^{\gr} \langle n \rangle$ $(j \in \{1, \ldots, r\})$.

\end{lem}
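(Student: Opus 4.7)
The plan is to prove the lemma by induction on $n$, with the inductive step relying on a pushout construction applied to a key short exact sequence extracted from the radical filtration, combined with the Ext-vanishing hypothesis \eqref{eq:extvanishing}. To lighten notation I write $R_k := \rad^{\gr,k}(P_i^{\gr})$ and $Q_k := R_k/R_{k+1}$; note that each $Q_k$ is semi-simple, since $R_{k+1} = \rad^{\gr}(R_k)$ by Lemma \ref{lem:lemradicalfiltration}(i), so $Q_k = R_k/\rad^{\gr}(R_k)$ has zero graded radical. The base case $n=0$ is immediate from \eqref{eq:simplestop}. For the inductive step I would assume the conclusion for all integers $<n$ (and all $i$), let $L_j^{\gr}\langle m\rangle$ be a simple summand of $Q_n$, and aim to show $m=n$.

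The heart of the argument is the short exact sequence sitting inside $P_i^{\gr}/R_{n+1}$,
\[
0 \to Q_n \to R_{n-1}/R_{n+1} \to Q_{n-1} \to 0.
\]
Form the pushout of this sequence along the split surjection $\pi : Q_n \twoheadrightarrow L_j^{\gr}\langle m\rangle$ to obtain an extension
\[
0 \to L_j^{\gr}\langle m\rangle \to E \to Q_{n-1} \to 0,
\]
which represents a class in $\Ext^1_{A,\bbZ}(Q_{n-1},L_j^{\gr}\langle m\rangle)$. By the inductive hypothesis $Q_{n-1}$ is a direct sum of shifted simples $L_{i_\alpha}^{\gr}\langle n-1\rangle$, so this Ext group decomposes as a sum of $\Ext^1_{A,\bbZ}(L_{i_\alpha}^{\gr},L_j^{\gr}\langle m-n+1\rangle)$, which by \eqref{eq:extvanishing} vanishes unless $m=n$.

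If $m\neq n$, the extension then splits, producing a retraction $E \to L_j^{\gr}\langle m\rangle$; composing with the structural map $R_{n-1}/R_{n+1} \to E$ gives a morphism $\phi : R_{n-1}/R_{n+1} \to L_j^{\gr}\langle m\rangle$ whose restriction to $Q_n$ equals $\pi$. Since $L_j^{\gr}\langle m\rangle$ is simple, $\phi$ must factor through the top $(R_{n-1}/R_{n+1})/\rad^{\gr}(R_{n-1}/R_{n+1})$, and hence vanish on the graded radical, contradicting $\pi \neq 0$. The only genuine point to verify is the identification $\rad^{\gr}(R_{n-1}/R_{n+1}) = Q_n$, which I regard as the main (though mild) obstacle; it follows from the general fact that $\rad^{\gr}(M/N) = \rad^{\gr}(M)/N$ whenever $N\subseteq \rad^{\gr}(M)$, applied to $M = R_{n-1}$ and $N = R_{n+1}\subseteq R_n = \rad^{\gr}(R_{n-1})$. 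Once this is in place the rest of the argument is formal, and the induction goes through.
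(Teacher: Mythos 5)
Your proof is correct, and it is essentially a dual formulation of the paper's argument rather than a different discovery: both hinge on reducing the problem to the vanishing of $\Ext^1_{A,\bbZ}(Q_{n-1},L_j^{\gr}\langle m\rangle)$, which by the inductive hypothesis and \eqref{eq:extvanishing} forces $m=n$. The paper takes the short exact sequence $R_n \hookrightarrow R_{n-1} \twoheadrightarrow Q_{n-1}$, observes that a map from $R_{n-1}$ to a simple module always factors through $Q_{n-1}=R_{n-1}/\rad^{\gr}(R_{n-1})$ (so the first Hom-map in the long exact sequence is an isomorphism), and concludes that the connecting map $\Hom_{A,\bbZ}(R_n,L_j^{\gr}\langle m\rangle)\hookrightarrow\Ext^1_{A,\bbZ}(Q_{n-1},L_j^{\gr}\langle m\rangle)$ is injective; the multiplicity of $L_j^{\gr}\langle m\rangle$ in $Q_n$ equals this Hom-dimension, so the vanishing of the target finishes. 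You instead use the two-step quotient $R_{n-1}/R_{n+1}$, push out the sequence $0\to Q_n\to R_{n-1}/R_{n+1}\to Q_{n-1}\to 0$ along the projection onto a chosen simple summand, and argue by contradiction that if $m\neq n$ the resulting extension splits and produces a map $R_{n-1}/R_{n+1}\to L_j^{\gr}\langle m\rangle$ that is nonzero on the graded radical $Q_n$, which is impossible. Both routes are correct; the paper's version is slightly more economical (no pushout, directly reads the multiplicity), while yours makes the split/non-split dichotomy concrete. The one point worth tightening in your write-up is the justification that $Q_k$ is semi-simple: "has zero graded radical" is not sufficient by itself for general modules, but the paper's setup gives it immediately, since $Q_k$ is killed by $\rad^{\gr}(A)$, hence is a finitely generated graded module over the finite-dimensional algebra $A/\rad^{\gr}(A)\cong\overline{A}/\rad^{\gr}(\overline{A})$, over which every such module is semi-simple (as established just before \eqref{eq:simplestop}). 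Your identification $\rad^{\gr}(R_{n-1}/R_{n+1})=Q_n$ is correct and the justification $\rad^{\gr}(M/N)=\rad^{\gr}(M)/N$ for $N\subseteq\rad^{\gr}(M)$ is the right reduction.
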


\begin{proof} We prove the result by induction on $n \geq 0$. It is
clear for $n=0$, by \eqref{eq:simplestop}. Let $n \geq 1$, and assume it is true
for $n-1$. The graded $A$-module $\rad^{\gr,n}(P_i^{\gr}) / \rad^{\gr,n+1}(P_i^{\gr})$ factorizes through an $A/\rad^{\gr}(A)$-module. Hen\-ce it is semi-simple,
hence a direct sum of modules $L_j^{\gr} \langle m \rangle$ ($j \in
\{1, \ldots, r\}$, $m \in \bbZ$). The multiplicity of $L_j^{\gr}
\langle m \rangle$ is the dimension of the vector space {\small \[ \Hom_{A,\bbZ}(\rad^{\gr,n}(P_i^{\gr}) /
  \rad^{\gr,n+1}(P_i^{\gr}),L_j^{\gr} \langle m \rangle) \ \cong \
  \Hom_{A,\bbZ}(\rad^{\gr,n}(P_i^{\gr}), L_j^{\gr} \langle m
  \rangle). \]}Hence we have to prove that $\Hom_{A,\bbZ}(\rad^{\gr,n}(P_i^{\gr}),L_j^{\gr} \langle m \rangle)=0$
for $m \neq n$.

Consider the exact sequence \[ \rad^{\gr,n}(P_i^{\gr}) \hookrightarrow
\rad^{\gr,n-1}(P_i^{\gr}) \twoheadrightarrow \rad^{\gr,n-1}(P_i^{\gr}) /
\rad^{\gr,n}(P_i^{\gr}). \] For $j \in \{1, \ldots, r\}$ and $m
\in \bbZ$, it induces an exact sequence \begin{multline*} 0 \to
  \Hom_{A,\bbZ}(\rad^{\gr,n-1}(P_i^{\gr}) / \rad^{\gr,n}(P_i^{\gr}),
  L_j^{\gr} \langle m \rangle) \\ \xrightarrow{f}
  \Hom_{A,\bbZ}(\rad^{\gr,n-1}(P_i^{\gr}), L_j^{\gr} \langle m
  \rangle) \to \Hom_{A,\bbZ}(\rad^{\gr,n}(P_i^{\gr}), L_j^{\gr}
  \langle m \rangle) \\ \xrightarrow{g}
  \Ext^1_{A,\bbZ}(\rad^{\gr,n-1}(P_i^{\gr}) / \rad^{\gr,n}(P_i^{\gr}),
  L_j^{\gr} \langle m \rangle). \end{multline*} By usual properties of
$\rad^{\gr}$, $f$ is an isomorphism. Hence $g$ is
injective. Moreover, using induction and
\eqref{eq:extvanishing}, the last term is $0$ unless $m=n$. \end{proof}

We define $P^{\gr}:=\bigoplus_{i=1}^r P_i^{\gr}$, and $B:=\Hom_A(P^{\gr}, P^{\gr})^{\op}.$ As $P^{\gr}$ is
finitely generated, $B$ is naturally graded, with $n$-th
component \[ B_n:=\Hom_{A,\bbZ}(P^{\gr} \langle n \rangle, P^{\gr})
\ \cong \ \Hom_{A,\bbZ}(P^{\gr}, P^{\gr} \langle -n \rangle). \] Now we
prove, as a corollary of Lemma \ref{lem:lemradPgr}:

\begin{cor} \label{cor:corBpositive}

The algebra $B$ is non-negatively graded.

\end{cor}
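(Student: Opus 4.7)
The plan is to show that $\Hom_{A,\bbZ}(P_i^{\gr}, P_j^{\gr}\langle m \rangle) = 0$ for all $i,j \in \{1,\ldots,r\}$ and all $m > 0$, which is exactly the required vanishing $B_n = 0$ for $n<0$ (writing $n = -m$ in the second formula for $B_n$). The argument exploits the radical filtration described in Lemma \ref{lem:lemradPgr} together with projectivity of the $P_i^{\gr}$.

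First, I would record the elementary fact that for $i,k \in \{1,\ldots,r\}$ and $t \in \bbZ$ one has
\[
\Hom_{A,\bbZ}(P_i^{\gr}, L_k^{\gr}\langle t\rangle) \ \cong \ \Hom_{A,\bbZ}(L_i^{\gr}, L_k^{\gr}\langle t\rangle),
\]
by projectivity together with \eqref{eq:simplestop} and Lemma \ref{lem:lemradicalfiltration}$\rmi$ (any map into a simple quotient kills $\rad^{\gr}(P_i^{\gr})$). By Schur's lemma applied to graded simples, this $\Hom$-space is $\bk$ when $i=k$ and $t=0$, and vanishes otherwise.

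Now fix $m > 0$ and a morphism $f \colon P_i^{\gr} \to P_j^{\gr}\langle m\rangle$. Shifting the conclusion of Lemma \ref{lem:lemradPgr} by $\langle m\rangle$, the subquotient $\rad^{\gr,n}(P_j^{\gr}\langle m\rangle)/\rad^{\gr,n+1}(P_j^{\gr}\langle m\rangle)$ is a direct sum of modules $L_k^{\gr}\langle n+m\rangle$ for various $k$. Since $n+m > 0$ for every $n \geq 0$, the first step shows that $\Hom_{A,\bbZ}(P_i^{\gr},L_k^{\gr}\langle n+m\rangle)=0$ for all such $n$ and $k$. I would then run the standard inductive factorization: setting $f_0 := f$, assume inductively that $f$ factors as $f = \iota_n \circ f_n$ with $f_n \colon P_i^{\gr} \to \rad^{\gr,n}(P_j^{\gr}\langle m\rangle)$ and $\iota_n$ the inclusion; the composition of $f_n$ with the projection onto $\rad^{\gr,n}/\rad^{\gr,n+1}$ lands in a direct sum of $L_k^{\gr}\langle n+m\rangle$'s, so vanishes by the previous sentence, which forces $f_n$ to lift to $f_{n+1} \colon P_i^{\gr} \to \rad^{\gr,n+1}(P_j^{\gr}\langle m\rangle)$.

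Consequently $\Ima(f) \subseteq \rad^{\gr,n}(P_j^{\gr}\langle m\rangle)$ for every $n \geq 0$, so $\Ima(f) \subseteq \bigcap_{n \geq 0} \rad^{\gr,n}(P_j^{\gr}\langle m\rangle) = \{0\}$ by Lemma \ref{lem:lemradicalfiltration}$\rmii$, hence $f=0$. There is no serious obstacle here: the statement is a formal consequence of the two preceding lemmas once one notes that all composition factors of $P_j^{\gr}\langle m\rangle$ live in strictly positive internal degrees relative to the top of $P_i^{\gr}$. The only point worth a line of care is the justification of the inductive lift, which is immediate from the definition of $\rad^{\gr,n+1}$ as $\rad^{\gr}$ of $\rad^{\gr,n}$ together with Lemma \ref{lem:lemradicalfiltration}$\rmi$.
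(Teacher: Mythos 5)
Your proposal is correct and uses essentially the same ingredients as the paper's proof: Lemma \ref{lem:lemradPgr} to identify the internal degrees of the radical layers, and Lemma \ref{lem:lemradicalfiltration}$\rmii$ to conclude. The only cosmetic difference is organizational: the paper picks the maximal $i$ with $\Ima(f) \subseteq \rad^{\gr,i}$ (using Lemma \ref{lem:lemradicalfiltration}$\rmii$ at the outset to guarantee such a maximum exists) and reads off $n=-i \leq 0$ from the nonzero induced map on that single layer, whereas you run the induction upward through all layers and invoke Lemma \ref{lem:lemradicalfiltration}$\rmii$ at the end; these are the same argument stated in contrapositive forms.
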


\begin{proof} We have to prove that $\Hom_{A,\bbZ}(P^{\gr}, P^{\gr}
\langle n \rangle) =0$ for $n > 0$. Let $n \in \bbZ$, and $f: P^{\gr} \to P^{\gr} \langle n \rangle$ a non-zero
morphism. By Lemma \ref{lem:lemradicalfiltration}$\rmii$, the set $I=\{ i \geq
0 \mid f(P^{\gr}) \subseteq \rad^{\gr,i}(P^{\gr} \langle n \rangle)
\}$ is bounded above; let $i=\max(I)$. Then $f$ induces a non-zero
morphism $g: P^{\gr} \to \bigl(\rad^{\gr,i}(P^{\gr}) /
\rad^{\gr,i+1}(P^{\gr}) \bigr) \langle n \rangle.$ By Lemma
\ref{lem:lemradPgr}, we must have $n=-i$. \end{proof}

The algebra $B$ is finitely generated as an ${\rm S}(V)$-module, hence
noetherian (even as a non-graded ring). If $M \in
\Mod^{\fg,\gr}(A)$, then $\Hom_{A}(P^{\gr},M)$ is
naturally a graded $B$-module (see
\cite[E.3]{AJS}). By \cite[E.4]{AJS} we have:

\begin{prop} \label{prop:propequivAB}

There is an equivalence of abelian categories: \[\left\{ \begin{array}{ccc} \Mod^{\fg,\gr}(A) & \to &
    \Mod^{\fg,\gr}(B) \\ M & \mapsto & \Hom_A(P^{\gr}, M) \end{array}
\right. .\]

\end{prop}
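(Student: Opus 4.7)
The plan is to establish Proposition \ref{prop:propequivAB} as the graded analogue of the classical Morita equivalence associated to a progenerator. The two key inputs will be that $P^{\gr}$ is a projective object of $\Mod^{\fg,\gr}(A)$ (by Theorem \ref{thm:thmGG}$\rmv$) and that the family $\{P^{\gr}\langle j\rangle\}_{j\in\bbZ}$ is a set of generators of this category.

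First I would check this second point. Let $M\in \Mod^{\fg,\gr}(A)$ be nonzero. By Lemma \ref{lem:lemradicalfiltration}$\rmii$ the graded quotient $M/\rad^{\gr}(M)$ is nonzero, and, factoring through $A/\rad^{\gr}(A)$ and using Proposition \ref{prop:proptopA}$\rmi$ together with \eqref{eq:equalityradical}, this quotient is a semisimple graded module. Hence it admits some $L_i^{\gr}\langle j\rangle$ as a graded quotient; since $L_i^{\gr}\langle j\rangle = P_i^{\gr}\langle j\rangle / \rad^{\gr}(P_i^{\gr}\langle j\rangle)$ by \eqref{eq:simplestop}, projectivity of $P_i^{\gr}$ gives a nonzero morphism $P_i^{\gr}\langle j\rangle \to M$. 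In particular any finitely generated object of $\Mod^{\fg,\gr}(A)$ admits a surjection from a finite direct sum $\bigoplus_{k} P^{\gr}\langle j_k\rangle$.

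Next, the functor $F:=\Hom_A(P^{\gr},-)$, graded so that its $n$-th component is $\Hom_{A,\bbZ}(P^{\gr}, (-)\langle n\rangle)$, is an exact functor to $\Mod^{\gr}(B)$ commuting with the shift $\langle 1\rangle$. Since $A$ is a finitely generated $S(V)$-module and hence noetherian, and $B$ is likewise finitely generated over $S(V)$ (because $P^{\gr}$ is, and $\Hom_{A,\bbZ}(P^{\gr},P^{\gr}\langle -n\rangle)$ vanishes for $n<0$ by Corollary \ref{cor:corBpositive}), applying $F$ to a presentation $\bigoplus_k P^{\gr}\langle j_k\rangle \twoheadrightarrow M$ shows that $F$ lands in $\Mod^{\fg,\gr}(B)$. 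The quasi-inverse is $G:=P^{\gr}\otimes_B (-)$, which is right exact and also commutes with $\langle 1\rangle$. On the ``basic'' objects $F(P^{\gr}\langle j\rangle)=B\langle j\rangle$ and $G(B\langle j\rangle)=P^{\gr}\langle j\rangle$, with the natural unit and counit maps being isomorphisms; by the five-lemma applied to two-step graded free presentations (available thanks to the generating property proved above and the noetherianity of $A$ and $B$), the adjunction morphisms $\Id\to F\circ G$ and $G\circ F\to \Id$ are isomorphisms on all of $\Mod^{\fg,\gr}(B)$ and $\Mod^{\fg,\gr}(A)$ respectively.

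The main obstacle is purely a matter of bookkeeping in the graded setting: one must verify that the shift $\langle 1\rangle$ is handled consistently on both sides (so that $F$ and $G$ commute with it), and that finite generation is preserved — both hinge on the noetherianity of $A$ and $B$ over $S(V)$ and on the fact that the indecomposable graded projectives $P_i^{\gr}$ are themselves finitely generated. Once these points are in place, the Morita argument proceeds exactly as in the ungraded case; condition \eqref{eq:extvanishing} is not needed here, and indeed Proposition \ref{prop:propequivAB} will serve below as the algebraic input that lets us transfer the Koszulity of $B$ back to a statement about $A$.
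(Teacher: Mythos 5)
Your proof is correct, and it is worth noting that it differs from the paper in a structural sense: the paper does not actually prove this proposition, but simply invokes \cite[E.4]{AJS} (having just cited \cite[E.3]{AJS} for the graded $B$-module structure on $\Hom_A(P^{\gr},M)$). What you supply is a self-contained graded Morita argument, and it is the right one. The two inputs you isolate are exactly what is needed: $P^{\gr}$ is projective in $\Mod^{\fg,\gr}(A)$, and the family $\{P^{\gr}\langle j\rangle\}_j$ generates. For the latter you argue via Lemma \ref{lem:lemradicalfiltration}$\rmii$, Proposition \ref{prop:proptopA}$\rmi$ and \eqref{eq:equalityradical} that $M/\rad^{\gr}(M)$ is a nonzero semisimple graded module, hence receives a nonzero map from some $P_i^{\gr}\langle j\rangle$ by \eqref{eq:simplestop}; you then pass somewhat briskly from ``nonzero map'' to ``surjection from a finite sum.'' That step deserves one extra sentence (either the standard argument that the sum of all such images must be $M$, plus noetherianity, or the cleaner observation that $A$ itself is a finite direct sum of shifted $P_i^{\gr}$'s since $A \to A/\rad^{\gr}(A)$ is a projective cover). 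There is also a minor sign slip: to have $F(P^{\gr})=B$ with the paper's convention $B_n=\Hom_{A,\bbZ}(P^{\gr}\langle n\rangle,P^{\gr})$ you should take $F(M)_n=\Hom_{A,\bbZ}(P^{\gr}\langle n\rangle,M)=\Hom_{A,\bbZ}(P^{\gr},M\langle -n\rangle)$, not $M\langle n\rangle$; with this choice $F$ does commute with $\langle 1\rangle$. Your observation that \eqref{eq:extvanishing} is not used here is also correct: the Morita equivalence only requires noetherianity of $A$ and $B$ over $S(V)$ and the progenerator property of $P^{\gr}$, so the proposition is available independently of the Koszulity hypothesis and can then be used, as you say, to transport Koszulity of $B$ back to a statement about $A$.
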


We denote by $S_i^{\gr}$ the image of $L_i^{\gr}$ under this
equivalence; it is a simple graded $B$-module, one-dimensional, concentrated in
degree $0$. By
\eqref{eq:extvanishing}, \begin{equation}
  \label{eq:extvanishingB} \Ext^n_{B,\bbZ}(S_i^{\gr}, S_j^{\gr} \langle m
  \rangle) = 0 \quad \text{unless } n=m. \end{equation}

\begin{lem} \label{lem:lemB_0semisimple}

The (non-graded) ring $B_0$ is semi-simple.

\end{lem}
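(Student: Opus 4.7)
The plan is to transport the problem to $B$ via the equivalence of Proposition \ref{prop:propequivAB} and then apply the radical-filtration analysis of Lemma \ref{lem:lemradPgr} directly to $B$.

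First I would observe that the equivalence sends each $P_i^{\gr}$ to the indecomposable projective graded $B$-module $Be_i$, where $e_i \in B_0$ is the idempotent projecting $P^{\gr}$ onto the summand $P_i^{\gr}$, and sends the simple top $L_i^{\gr}$ to the simple $B$-module $S_i^{\gr}$, which is one-dimensional and concentrated in degree $0$. Since the equivalence preserves $\Ext$ groups in $\Mod^{\fg,\gr}$, the hypothesis \eqref{eq:extvanishingB} holds for the $S_i^{\gr}$. The ring $B$ itself is a graded ${\rm S}(V)$-algebra, and it is finitely generated as an ${\rm S}(V)$-module, because $P^{\gr}$ is a finitely generated ${\rm S}(V)$-module and $B$ embeds into ${\rm End}_{{\rm S}(V)}(P^{\gr})$, which is finitely generated over the noetherian ring ${\rm S}(V)$. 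Hence the whole formalism of \S \ref{ss:paragraphA}, including Lemma \ref{lem:lemradicalfiltration} and Lemma \ref{lem:lemradPgr}, applies to $B$.

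Next I would apply the proof of Lemma \ref{lem:lemradPgr} verbatim, with $B$ in place of $A$ and the projective covers $Be_i$ of the simples $S_i^{\gr}$, using \eqref{eq:extvanishingB}. The conclusion is that $\rad^{\gr,n}(Be_i)/\rad^{\gr,n+1}(Be_i)$ is a direct sum of modules of the form $S_j^{\gr}\langle n\rangle$. Since each $S_j^{\gr}$ is concentrated in degree $0$, this quotient is concentrated in degree $n$; in particular $\rad^{\gr}(Be_i)$ sits in strictly positive degrees, and $(Be_i)_0 \cong Be_i/\rad^{\gr}(Be_i) = S_i^{\gr}$ as a left $B_0$-module.

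Finally, taking degree $0$ parts in the decomposition $B = \bigoplus_i Be_i$ of $B$ as a left graded $B$-module yields
\[ B_0 \ = \ \bigoplus_i (Be_i)_0 \ \cong \ \bigoplus_i S_i^{\gr} \]
as a left $B_0$-module, which is a sum of simple $B_0$-modules. Hence $B_0$ is semi-simple. I do not expect any serious obstacle: the only point requiring care is that the hypotheses of \S \ref{ss:paragraphA} transfer from $A$ to $B$, which follows from the finite generation of $B$ over ${\rm S}(V)$ and from the fact that the equivalence of Proposition \ref{prop:propequivAB} preserves projective covers and $\Ext$ groups.
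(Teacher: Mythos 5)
Your proof is correct, but it takes a genuinely different route from the paper's. The paper argues in one step: by Corollary~\ref{cor:corBpositive} the simple $B_0$-modules are exactly the $S_i := \For(S_i^{\gr})$, and any non-split $B_0$-extension $S_j \hookrightarrow M \twoheadrightarrow S_i$, viewed as a complex of graded $B$-modules concentrated in degree $0$ (with $B$ acting through $B/B_{>0} \cong B_0$), yields a non-split class in $\Ext^1_{B,\bbZ}(S_i^{\gr},S_j^{\gr})$, contradicting \eqref{eq:extvanishingB} with $(n,m)=(1,0)$; so $\Ext^1_{B_0}$ between simples vanishes and $B_0$ is semi-simple. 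You instead transport the full \S\ref{ss:paragraphA} formalism to $B$ (checking that ${\rm S}(V)$ maps centrally into $B$ in positive degrees and that $B$ is module-finite over ${\rm S}(V)$), re-run Lemma~\ref{lem:lemradPgr} for $B$ using \eqref{eq:extvanishingB}, deduce from the separated radical filtration that $\rad^{\gr}(Be_i)$ sits in degrees $\geq 1$, and read off $B_0 = \bigoplus_i (Be_i)_0 \cong \bigoplus_i S_i^{\gr}$. This is heavier machinery, but it buys an explicit description of $B_0$ as a direct sum of the one-dimensional simples, and in fact it re-derives Corollary~\ref{cor:corBpositive} along the way (the same radical-filtration argument shows $(Be_i)_m = 0$ for $m<0$), so it is logically independent of that corollary rather than relying on it. The one step worth spelling out a bit more carefully is the identification $(Be_i)_0 \cong (Be_i/\rad^{\gr}(Be_i))_0 = S_i^{\gr}$, which uses both that $S_i^{\gr}$ is concentrated in degree $0$ and that $(\rad^{\gr}(Be_i))_0 = 0$; the latter requires the separation statement $\bigcap_n \rad^{\gr,n}(Be_i)=0$ from Lemma~\ref{lem:lemradicalfiltration}$\rmii$, which you correctly invoke.
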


\begin{proof} Let $S_i$ be the image of $S_i^{\gr}$ under $\For:
  \Mod^{\gr}(B) \to \Mod(B)$. By Corollary \ref{cor:corBpositive},
  the $S_i$ are representatives of the simple $B_0$-modules. Now if $S_j \hookrightarrow M \twoheadrightarrow S_i$ is a non-split $B_0$-extension, we can consider
$M$ as a graded $B$-module concentrated in degree $0$, where $B$ acts
via the quotient $B/B_{>0} \cong B_0$. Then the exact sequence yields a
non-split graded $B$-extension of $S_i^{\gr}$ by $S_j^{\gr}$, contradicting
\eqref{eq:extvanishingB}. \end{proof}

\begin{prop} \label{prop:propBkoszul}

$B$ is a Koszul ring.

\end{prop}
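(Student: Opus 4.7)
The plan is to reduce Koszulity of $B$ to a direct application of the Ext-purity criterion for Koszul rings from \cite[Proposition 2.1.3]{BGS}. All the required ingredients have already been assembled: by Corollary \ref{cor:corBpositive} the algebra $B$ is non-negatively graded, and by Lemma \ref{lem:lemB_0semisimple} the ring $B_0$ is semi-simple. What remains is to upgrade the Ext-vanishing \eqref{eq:extvanishingB} for the individual simples $S_i^{\gr}$ to the corresponding statement for $B_0$, viewed as a graded left $B$-module via the quotient $B \twoheadrightarrow B/B_{>0} \cong B_0$.

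First I would identify $B_0$ with a sum of copies of the $S_i^{\gr}$. Each $S_i^{\gr}$ is a one-dimensional $\bk$-vector space concentrated in degree $0$, and (as noted in the proof of Lemma \ref{lem:lemB_0semisimple}) these are representatives of the simple $B_0$-modules. Wedderburn's theorem, together with the one-dimensionality of each $S_i^{\gr}$ over the algebraically closed field $\bk$, then yields a decomposition
\[ B_0 \ \cong \ \bigoplus_{i=1}^r S_i^{\gr} \]
as a graded left $B$-module. Using additivity of graded $\Ext$ in both variables and \eqref{eq:extvanishingB}, this gives
\[ \Ext^n_{B,\bbZ}(B_0, B_0 \langle m \rangle) \ \cong \ \bigoplus_{i,j=1}^r \Ext^n_{B,\bbZ}(S_i^{\gr}, S_j^{\gr} \langle m \rangle) \ = \ 0 \qquad \text{unless } n = m. \]

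Finally I would invoke \cite[Proposition 2.1.3]{BGS}: for a non-negatively graded ring with semi-simple degree-zero part, this Ext-purity condition is equivalent to the existence of a graded projective resolution $\cdots \to P^2 \to P^1 \to P^0 \to B_0 \to 0$ in which $P^i$ is generated in degree $i$, i.e., to Koszulity of $B$. There is no real obstacle here: everything has been arranged by the preceding results so that the BGS criterion applies directly. The only point requiring mild care is the verification that the graded $\Ext$-functor used in \cite{BGS} coincides with $\Ext_{B,\bbZ}$ as used here, which is routine.
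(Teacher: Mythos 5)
Your proposal is correct and takes essentially the same route as the paper: the paper's proof is simply ``Apply \cite[2.1.3]{BGS}, using Corollary \ref{cor:corBpositive}, Lemma \ref{lem:lemB_0semisimple}, \eqref{eq:extvanishingB}.'' You have merely spelled out the intermediate step (the decomposition $B_0 \cong \bigoplus_i S_i^{\gr}$ and additivity of graded $\Ext$), which the paper itself uses immediately afterwards in the computation of $B^!$.
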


\begin{proof} Apply \cite[2.1.3]{BGS}, using Corollary
\ref{cor:corBpositive}, Lemma \ref{lem:lemB_0semisimple},
\eqref{eq:extvanishingB}. \end{proof}

To conclude the proof of Theorem \ref{thm:thmkoszulity}, we only have
to compute $B^!$. The graded $B$-module $B_0$ is semi-simple, and $S_i^{\gr}$ occurs in this module with multiplicity
$\dim_{\bk}(\Hom_{B,\mathbb{Z}}(B_0,S_i^{\gr})) =
\dim_{\bk}(S_i^{\gr}) = 1$. Hence \[(B^!)^{\op} \ = \ \bigoplus_n
\Ext_B^n(B_0, B_0) \ \cong \ \bigoplus_{n,m}
\Ext^n_{B,\mathbb{Z}}(\bigoplus_i S_i^{\gr}, \bigoplus_i S_i^{\gr} \langle m
\rangle ). \] Then the result follows from Proposition
\ref{prop:propequivAB}.

\subsection{First consequences of Theorem \ref{thm:mainthm}}

We return to the setting of statement $(\ddag)$ (see \S \ref{ss:paragraphmainthm}), and choose the lifts
$\calP^{\gr}_w$, $P^{\gr}(w \bullet 0)$ and $\calL^{\gr}_v$, $L^{\gr}(v \bullet 0)$ as in the statement. Let $v,w \in
W^0$, and $i,j \in
\mathbb{Z}$. As the functors $\widetilde{\epsilon}^{\calB}_0$, $\zeta$, $\kappa_{\calB}$ and $\widetilde{\gamma}^{\calB}_0$ are fully faithful, and using \eqref{eq:isommainthm}, we have: \[ \begin{array}{cl} &
  \Hom_{\calD^b \Mod^{\fg,\gr}_0((\calU \frakg)^0)} \bigl( L^{\gr}(v \bullet 0), L^{\gr}(w \bullet 0)[i] \langle j \rangle \bigr) \\[4pt] \cong &
  \Hom_{\DGCoh^{\gr}((\wfrakg \, \rcap \, \calB)^{(1)})} \bigl( \kappa_{\calB}( \zeta(\calL^{\gr}_v) \otimes \calO_{\calB^{(1)}}(-\rho)), \\[3pt] & \qquad \qquad \qquad \qquad
  \qquad \qquad \kappa_{\calB}( \zeta(\calL^{\gr}_w) \otimes
  \calO_{\calB^{(1)}}(-\rho)) [i+j] \langle j \rangle \bigr) \\[4pt] \cong & \Hom_{\calD^b
    \Mod^{\fg,\gr}_0((\calU \frakg)_0)} \bigl( P^{\gr}(\tau_0 v \bullet 0),
  P^{\gr} (\tau_0 w \bullet 0)[i+j] \langle j \rangle \bigr). \end{array} \] As the objects $P^{\gr}(-)$ are projective, from these isomorphisms we
deduce: 

\begin{prop} \label{prop:propextvanishing}

Keep the assumptions of Theorem {\rm \ref{thm:mainthm}}. Let $v,w \in W^0$, and $i,j \in \mathbb{Z}$. We have $$\Hom_{\calD^b
    \Mod^{\fg,\gr}_0((\calU \frakg)^0)}(L^{\gr}(v \bullet 0), L^{\gr}
  (w \bullet 0)[i] \langle j \rangle) =0 \quad \text{unless } i=-j.$$

\end{prop}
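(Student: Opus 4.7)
The plan is to exploit the identification of lifts supplied by Theorem~\ref{thm:mainthm} (statement $(\ddag)$) to convert the question about simple graded $(\calU\frakg)^0$-modules on the LHS of diagram $(*)$ into a question about projective graded $(\calU\frakg)_0$-modules on the RHS, where the desired vanishing is immediate from projectivity. This is really just careful bookkeeping: all the work is in Theorem~\ref{thm:mainthm} and the fully faithfulness of $\widetilde{\epsilon}^{\calB}_0$, $\zeta$, $\kappa_{\calB}$, $\widetilde{\gamma}^{\calB}_0$; the role of this proposition is to extract the $\mathrm{Ext}$-vanishing which will feed into the Koszulity criterion (Theorem~\ref{thm:thmkoszulity}).

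Concretely, I would proceed by chasing $\Hom(L^{\gr}(v\bullet 0), L^{\gr}(w\bullet 0)[i]\langle j\rangle)$ through the four fully faithful functors in sequence. Applying $(\widetilde{\epsilon}^{\calB}_0)^{-1}$ (which commutes with both $[1]$ and $\langle 1\rangle$) identifies this group with $\Hom(\calL^{\gr}_v, \calL^{\gr}_w[i]\langle j\rangle)$ in $\calD^b\Coh^{\Gm}_{\calB^{(1)}}(\wcalN^{(1)})$. Now apply $\zeta$ (followed by tensor product with $\calO_{\calB^{(1)}}(-\rho)$, which commutes with all shifts); recall from \S\ref{ss:paragraphmainthm} that $\zeta(\calF\langle j\rangle)\cong\zeta(\calF)[j]\langle j\rangle$, so that the shift $[i]\langle j\rangle$ on the simple side becomes the shift $[i+j]\langle j\rangle$ on the target side. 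Next apply $\kappa_{\calB}$ (which commutes with both shifts), and invoke \eqref{eq:isommainthm}: $\kappa_{\calB}(\zeta(\calL^{\gr}_u)\otimes\calO(-\rho))\cong \calP^{\gr}_{\tau_0 u}$ for $u\in W^0$. Finally apply $\widetilde{\gamma}^{\calB}_0$. Putting it together, we obtain a functorial isomorphism
\[
\Hom\bigl(L^{\gr}(v\bullet 0), L^{\gr}(w\bullet 0)[i]\langle j\rangle\bigr) \;\cong\; \Hom\bigl(P^{\gr}(\tau_0 v\bullet 0), P^{\gr}(\tau_0 w\bullet 0)[i+j]\langle j\rangle\bigr),
\]
which is exactly the chain of isomorphisms written out in the excerpt immediately before the statement.

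To conclude, observe that $P(\tau_0 v\bullet 0)$ is projective in $\Mod^{\fg}_0((\calU\frakg)_0)$, hence by Theorem~\ref{thm:thmGG}$\rmv$ its lift $P^{\gr}(\tau_0 v\bullet 0)$ is projective in $\Mod^{\fg,\gr}_0((\calU\frakg)_0)$. Therefore $\Hom_{\calD^b\Mod^{\fg,\gr}_0((\calU\frakg)_0)}$ out of it vanishes for any nonzero cohomological shift. In our case the cohomological shift on the target is $[i+j]$, so the group vanishes unless $i+j=0$, i.e.\ unless $i=-j$, which is the claim.

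There is essentially no obstacle here beyond correctly tracking how $\zeta$ interacts with the two gradings: the substantive input, namely that the simple-on-one-side correspond to projective-on-the-other-side under Koszul duality, is exactly what Theorem~\ref{thm:mainthm} provides. In particular, the functoriality of the identification in $v$ and $w$ holds because each of the four intermediate functors is additive and each isomorphism is natural.
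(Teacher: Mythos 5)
Your argument is exactly the one in the paper: the chain of four fully faithful functors $(\widetilde{\epsilon}^{\calB}_0)^{-1}$, $\zeta(-)\otimes\calO_{\calB^{(1)}}(-\rho)$, $\kappa_{\calB}$, $\widetilde{\gamma}^{\calB}_0$ together with \eqref{eq:isommainthm} yields the isomorphism with $\Hom(P^{\gr}(\tau_0 v\bullet 0), P^{\gr}(\tau_0 w\bullet 0)[i+j]\langle j\rangle)$, the shift $[i]\langle j\rangle\mapsto[i+j]\langle j\rangle$ coming from the relation $\zeta(\calF\langle j\rangle)\cong\zeta(\calF)[j]\langle j\rangle$, and projectivity of $P^{\gr}(\tau_0 v\bullet 0)$ (Theorem~\ref{thm:thmGG}$\rmv$) kills everything with $i+j\neq 0$. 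This matches the paper's derivation verbatim, including the bookkeeping of the two gradings.
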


Using the isomorphisms \[ \begin{array}{l} \bigoplus_{i \geq 0} \,
  \Ext^i_{(\calU \frakg)^0}(L(v
  \bullet 0), L(w \bullet 0)) \ \cong \\[4pt] \qquad \qquad \qquad \bigoplus_{i,j \in \mathbb{Z}} \,
  \Hom_{\calD^b \Mod^{\fg,\gr}_0((\calU \frakg)^0)}(L^{\gr}(v \bullet 0), L^{\gr}(w \bullet 0)[i]\langle j \rangle), \\[6pt] \Hom_{(\calU \frakg)_0}(P(v \bullet 0), P(w
  \bullet 0)) \ \cong \\[4pt] \qquad \qquad \qquad \bigoplus_{j \in \mathbb{Z}} \,
  \Hom_{\calD^b \Mod^{\fg,\gr}_0((\calU \frakg)_0}(P^{\gr}(v
  \bullet 0), P^{\gr} (w \bullet 0) \langle j \rangle) \end{array} \] (where we use \cite[3.1.7]{BMR} to identify the Ext groups in $\Mod^{\fg}((\calU \frakg)^0)$ and in $\Mod^{\fg}_0((\calU \frakg)^0)$),
we also deduce the following:

\begin{prop} \label{prop:propExtsimples}

Keep the assumptions of Theorem {\rm \ref{thm:mainthm}}.

$\rmi$ Let $v,w \in W^0$. There exists an isomorphism
\[ \bigoplus_{i \geq 0} \, \Ext^i_{(\calU \frakg)^0}(L(v
  \bullet 0), L(w \bullet 0)) \ \cong \ \Hom_{(\calU \frakg)_0}(P(\tau_0 v
  \bullet 0), P(\tau_0 w \bullet 0)). \]

$\rmii$ Let $L:=\bigoplus_{w \in W^0} L(w \bullet 0)$ and $P:=\bigoplus_{w \in
  W^0} P(w \bullet 0)$. There exists an algebra isomorphism
\[ \bigoplus_{i \geq 0} \ \Ext^i_{(\calU \frakg)^0}
  (L,L) \cong {\rm End}_{(\calU \frakg)_0}(P). \]

\end{prop}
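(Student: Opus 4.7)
For part $\rmi$, the strategy is essentially to string together the two displayed isomorphisms stated immediately before the proposition with the key isomorphism
\[ \Hom_{\calD^b \Mod^{\fg,\gr}_0((\calU \frakg)^0)} \bigl( L^{\gr}(v \bullet 0), L^{\gr}(w \bullet 0)[i] \langle j \rangle \bigr) \cong \Hom_{\calD^b \Mod^{\fg,\gr}_0((\calU \frakg)_0)} \bigl( P^{\gr}(\tau_0 v \bullet 0), P^{\gr}(\tau_0 w \bullet 0)[i+j] \langle j \rangle \bigr) \] obtained just above Proposition \ref{prop:propextvanishing} by applying $\kappa_{\calB} \circ (\zeta(-) \otimes \calO_{\calB^{(1)}}(-\rho))$ and using the isomorphisms $\widetilde{\epsilon}^{\calB}_0(\calL^{\gr}_v) \cong L^{\gr}(v \bullet 0)$ and $\widetilde{\gamma}^{\calB}_0(\calP^{\gr}_{\tau_0 v}) \cong P^{\gr}(\tau_0 v \bullet 0)$ from statement $(\ddag)$. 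Explicitly, the LHS of $\rmi$ identifies (via the first displayed isomorphism before the proposition) with a double sum over $(i,j)$, which by Proposition \ref{prop:propextvanishing} collapses to the single sum over $i = -j$; the above isomorphism then carries this sum to $\bigoplus_j \Hom(P^{\gr}(\tau_0 v \bullet 0), P^{\gr}(\tau_0 w \bullet 0) \langle j \rangle)$, which is the RHS of $\rmi$ by the second displayed isomorphism before the proposition (note that all shift indices $[i+j]$ vanish since $i=-j$).

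For part $\rmii$, I would take $L$ and $P$ as in the statement and observe that the isomorphism constructed in $\rmi$, when summed over all pairs $(v,w) \in W^0 \times W^0$, yields an isomorphism of graded vector spaces between $\bigoplus_{i \geq 0} \Ext^i_{(\calU \frakg)^0}(L,L)$ and $\bigoplus_{v,w} \Hom_{(\calU \frakg)_0}(P(\tau_0 v \bullet 0), P(\tau_0 w \bullet 0))$. Since $w \mapsto \tau_0 w$ is a bijection of $W^0$ (Proposition \ref{prop:tau_0}), the latter is just $\End_{(\calU \frakg)_0}(P)$. It remains to check that the isomorphism respects the product structure — Yoneda product on the left, composition on the right.

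The main (and really only) technical point is this compatibility with the multiplicative structure. The idea is that composition in each relevant category is carried to composition by every functor in sight: the fully faithful functors $\widetilde{\epsilon}^{\calB}_0$ and $\widetilde{\gamma}^{\calB}_0$, the inclusion $\zeta$, the equivalence $\kappa_{\calB}$, and the line-bundle twist $(-) \otimes \calO_{\calB^{(1)}}(-\rho)$; the Yoneda product on Ext agrees with composition of morphisms in the derived category $\calD^b \Mod^{\fg,\gr}_0((\calU \frakg)^0)$ after summing over all shifts and using the identification of graded Hom with ungraded Hom coming from the forgetful functor (Lemma \ref{lem:morphsimsGm-nonGm}, applied on both sides). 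Thus the isomorphism is induced by a chain of ring homomorphisms, which gives the claimed algebra isomorphism. I expect no further obstacle: all vanishing of cross terms under $i+j \neq 0$ that might be needed for the multiplication to land in $\Hom$ (degree $0$ Ext) rather than higher Ext is already controlled by Proposition \ref{prop:propextvanishing}.
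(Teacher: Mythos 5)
Your argument is correct and spells out precisely what the paper leaves implicit: the paper states the proposition as a direct consequence of the two displayed isomorphisms immediately preceding it together with Proposition \ref{prop:propextvanishing}, and your write-up supplies exactly the intended details (collapse the bigraded sum to the diagonal $i=-j$, pass through the fully faithful functors $\widetilde{\epsilon}^{\calB}_0$, $\zeta$, $\kappa_{\calB}$, $\widetilde{\gamma}^{\calB}_0$, and regroup; compatibility with products follows from functoriality). A small clarification on your closing sentence: the vanishing of higher $\Ext$ on the projective side is automatic from projectivity of $P^{\gr}(w\bullet 0)$ as a graded module, not from Proposition \ref{prop:propextvanishing} --- that proposition is needed only to restrict the LHS bigraded Hom to the diagonal $i=-j$, while the product (Yoneda on the left, composition on the right) respects the grading for formal reasons.
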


Note that $\rmii$ is a modular counterpart of \cite[Theorem 18]{SOEKat}.

\subsection{The ring $A_{\wcalN}$} \label{ss:ring}

The arguments for this subsection are taken from \cite[\S 5.3.1]{BM}. Recall the vector bundle $\calM^0$ on the formal neighborhood of
$\calB^{(1)}$ in $\wfrakg^{(1)}$ defined in \S \ref{ss:reviewlocalization}. Let $\calM^0_0$ be the
restriction of $\calM^0$ to the formal neighborhood of $\calB^{(1)}$
in $\wcalN^{(1)}$. (This is the splitting bundle involved
in the definition of $\epsilon^{\calB}_0$.) In \S \ref{ss:paragraphgradedmodules} we have endowed $\calM^0$, hence also $\calM^0_0$, with a $\Gm$-equivariant structure. As the action of $\Gm$ contracts $\wcalN^{(1)}$ to the projective variety $\calB^{(1)}$, there exists a unique $\Gm$-equivariant vector bundle $\calM_{\wcalN}$ on $\wcalN^{(1)}$, whose
restriction to the formal neighborhood of $\calB^{(1)}$ is $\calM^0_0$.

Consider the algebra $A_{\wcalN} := \Gamma(\wcalN^{(1)},
\sheafEnd_{\calO_{\wcalN^{(1)}}}(\calM_{\wcalN})).$ This is a ${\rm
  S}(\frakg^{(1)})$-algebra, and it is finitely generated as a ${\rm
  S}(\frakg^{(1)})$-module (because the morphism $\wcalN^{(1)} \to
\frakg^*{}^{(1)}$ is proper). For any ${\rm S}(\frakg^{(1)})$-algebra
$A$, we denote by $\Mod^{\fg}_0(A)$ the
category of finitely generated $A$-modules, on which the
image of $\frakg^{(1)}$ acts nilpotently. As the algebras $A_{\wcalN}$ and $(\calU \frakg)^0$ have the same completion at the central character $0 \in \frakg^*{}^{(1)}$, we have
an equivalence \begin{equation} \label{eq:equivdef1}
  \Mod^{\fg}_0(A_{\wcalN}) \ \cong \ \Mod^{\fg}_0((\calU
  \frakg)^0). \end{equation} Observe also that $A_{\wcalN}$ has a natural grading, induced by the $\Gm$-equivariant structure on $\calM_{\wcalN}$.

\subsection{Koszulity of regular blocks of $(\calU \frakg)_0$}

One of our main results is the following. It is a modular counterpart of \cite[Theorem 3.9.1]{BGS}.

\begin{thm} \label{thm:thmkoszul}

Assume $p>h$ is large enough so that Lusztig's conjecture is true,
and let $\lambda \in \bbX$ be regular.

There exists a Koszul ring $B_{\calB}$, which is a ${\rm
  S}(\frakg^{(1)})$-algebra, and equivalences
\[ \Mod^{\fg}_0(B_{\calB}) \ \cong \
  \Mod^{\fg}_0((\calU \frakg)^{\lambda}), \qquad
  \Mod^{\fg}((B_{\calB})^!) \ \cong \ \Mod^{\fg}_{\lambda}((\calU
  \frakg)_0). \] In particular, the ring $(\calU \frakg)_0^{\hat{\lambda}}$ can be endowed
with a Koszul grading.

\end{thm}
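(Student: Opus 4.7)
The plan is to apply the general Koszulity criterion (Theorem~\ref{thm:thmkoszulity}) to the graded ring $A_{\wcalN}$ from \S\ref{ss:ring}, using the Ext-vanishing furnished by Proposition~\ref{prop:propextvanishing}, and then unwind what this says about $(\calU\frakg)^\lambda$ and $(\calU\frakg)_0^{\hat\lambda}$. First, I would reduce to one fixed choice of regular weight, say $\lambda=0\in C_0$: for any two regular weights $\lambda,\mu\in\bbX$, translation functors provide equivalences $\Mod^{\fg}_0((\calU\frakg)^\lambda)\cong\Mod^{\fg}_0((\calU\frakg)^\mu)$ and $\Mod^{\fg}_\lambda((\calU\frakg)_0)\cong\Mod^{\fg}_\mu((\calU\frakg)_0)$, so it suffices to treat $\lambda=0$.

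Now verify the hypotheses of Theorem~\ref{thm:thmkoszulity} for $A=A_{\wcalN}$ with $V=\frakg^{(1)}$ in suitable positive degree. The ring $A_{\wcalN}$ is a non-negatively graded ${\rm S}(\frakg^{(1)})$-algebra, finitely generated over ${\rm S}(\frakg^{(1)})$ (since $\wcalN^{(1)}\to\frakg^{*(1)}$ is proper), so it falls in the setting of \S\ref{ss:paragraphA}. Via \eqref{eq:equivdef1} and Curtis's theorem, the simple objects of $\Mod^{\fg}_0(A_{\wcalN})\cong\Mod^{\fg}_0((\calU\frakg)^0)$ are the $L(w\bullet 0)$ with $w\in W^0$, and the graded lifts fixed by statement $(\ddag)$ (i.e.~by Theorem~\ref{thm:mainthm}) provide a coherent choice of $L^{\gr}(w\bullet 0)\in\Mod^{\fg,\gr}_0((\calU\frakg)^0)$. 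By Proposition~\ref{prop:propextvanishing}, after renormalizing internal shifts so that the cohomological and internal degrees are correlated as required by Theorem~\ref{thm:thmkoszulity} (concretely, replacing $L^{\gr}(w\bullet 0)$ by a uniform shift), one obtains $\Ext^n_{A_{\wcalN},\bbZ}(L^{\gr}(v\bullet 0),L^{\gr}(w\bullet 0)\langle m\rangle)=0$ unless $n=m$.

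Theorem~\ref{thm:thmkoszulity} then produces a Koszul ring $B_{\calB}$, graded Morita equivalent to $A_{\wcalN}$, with $(B_{\calB})^!\cong \bigl(\bigoplus_{n\geq 0}\Ext^n_{A_{\wcalN}}(L,L)\bigr)^{\op}$ where $L=\bigoplus_{w\in W^0}L(w\bullet 0)$. Since $B_{\calB}$ inherits from $A_{\wcalN}$ the structure of ${\rm S}(\frakg^{(1)})$-algebra, the Morita equivalence combined with \eqref{eq:equivdef1} and the reduction to $\lambda=0$ yields the first equivalence $\Mod^{\fg}_0(B_{\calB})\cong\Mod^{\fg}_0((\calU\frakg)^\lambda)$. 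For the second equivalence, Proposition~\ref{prop:propExtsimples}$\rmii$ identifies $(B_{\calB})^!$ with ${\rm End}_{(\calU\frakg)_0}(P)^{\op}$ where $P=\bigoplus_{w\in W^0}P(w\bullet 0)$; this is a projective generator of the block $\Mod^{\fg}_\lambda((\calU\frakg)_0)\cong\Mod^{\fg}((\calU\frakg)_0^{\hat\lambda})$, hence $\Mod^{\fg}((B_{\calB})^!)\cong\Mod^{\fg}_\lambda((\calU\frakg)_0)$.

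For the final statement, observe that $(\calU\frakg)_0^{\hat\lambda}\cong {\rm End}_{(B_{\calB})^!}(Q)^{\op}$ for some progenerator $Q$ of $\Mod^{\fg}((B_{\calB})^!)$. Since $(B_{\calB})^!$ is Koszul and in particular graded, its finitely generated projectives admit graded lifts (Theorem~\ref{thm:thmGG}$\rmv$), so $Q$ can be chosen graded and ${\rm End}_{(B_{\calB})^!}(Q)^{\op}$ inherits a grading; this grading is Koszul because graded Morita equivalence preserves Koszulity, and its uniqueness (up to isomorphism of graded algebras) follows from \cite[2.5.2]{BGS}. The main technical obstacle, beyond a careful reduction to $\lambda=0$, is the bookkeeping needed to bring Proposition~\ref{prop:propextvanishing} (where the vanishing is stated in the form $n=-j$ with $\zeta$ mixing cohomological and internal degree) into the exact shape $n=m$ required by Theorem~\ref{thm:thmkoszulity}; this is the place where one has to fix once and for all the normalization of the lifts $L^{\gr}(w\bullet 0)$, and where the compatibility between $\DGCoh^{\gr}$-conventions and the standard Koszul grading convention must be checked.
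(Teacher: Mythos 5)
Your overall strategy is the same as the paper's: reduce to $\lambda=0$, apply the general criterion Theorem~\ref{thm:thmkoszulity} to $A_{\wcalN}$ using Proposition~\ref{prop:propextvanishing}, identify $(B_{\calB})^!$ via Proposition~\ref{prop:propExtsimples}$\rmii$, and transfer Koszulity to $(\calU\frakg)_0^{\hat\lambda}$ through Morita equivalence. Most of the steps are correctly placed.

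However, the parenthetical suggestion for fixing the sign mismatch is wrong and masks a genuine point. Proposition~\ref{prop:propextvanishing} gives $\Hom(L^{\gr}(v\bullet 0), L^{\gr}(w\bullet 0)[i]\langle j\rangle)=0$ unless $i=-j$, whereas Theorem~\ref{thm:thmkoszulity} requires the vanishing to hold unless $n=m$, i.e.\ unless $i=j$. Replacing each $L^{\gr}(w\bullet 0)$ by a \emph{uniform} shift $L^{\gr}(w\bullet 0)\langle c\rangle$ leaves the graded Ext groups unchanged (the shift on source and target cancels), and even a per-$w$ shift only translates $i=-j$ into $i=-j+(\text{const})$, never into $i=j$. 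What is actually needed is to \emph{invert} the internal grading: the natural $\Gm$-grading on $A_{\wcalN}$ (call it $A_{\wcalN}^+$) puts $\frakg^{(1)}$ in degree $-2$, so $A_{\wcalN}^+$ does not fall under the hypotheses of Theorem~\ref{thm:thmkoszulity}. The paper therefore works with $A_{\wcalN}^-$, the same algebra with the opposite grading; the grading-reversal equivalence $\Mod^{\gr}(A_{\wcalN}^+)\cong\Mod^{\gr}(A_{\wcalN}^-)$ converts the condition $i=-j$ into $i=j$ and simultaneously puts $\frakg^{(1)}$ in positive degree, exactly the setting of \S\ref{ss:paragraphA}. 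You correctly flagged that some renormalization is needed; you just proposed the wrong operation (shift instead of reversal). One more small divergence: for the final assertion (that $(\calU\frakg)_0^{\hat\lambda}$ itself admits a Koszul grading) the paper invokes \cite[F.3]{AJS}, which packages the transfer of Koszul gradings through Morita equivalence; your argument via a graded progenerator and \cite[2.5.2]{BGS} reaches the same conclusion and is acceptable, though \cite[2.5.2]{BGS} per se gives uniqueness rather than existence.
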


\begin{remark} The fact that the category $\Mod^{\fg}_{\lambda}((\calU
  \frakg)_0)$ is equivalent to the category of modules
  over a Koszul ring was proved in \cite[18.21]{AJS}. Their proof
  relies on the computation of the Poincar{\'e} polynomial of
  $(\calU \frakg)_0^{\hat{\lambda}}$. The fact that
  the dual Koszul ring ``controls'' the category $\Mod^{\fg}_0((\calU
  \frakg)^{\lambda})$ is new. \end{remark}

\begin{proof}[Proof of Theorem \ref{thm:thmkoszul}] As $C_0$ is
  a fundamental domain for the action of $W_{\aff}$ on
  regular integral weights, we can assume $\lambda \in C_0$. Then, as
  the categories $\Mod^{\fg}_{\lambda}((\calU \frakg)_0)$ and $\Mod^{\fg}_0((\calU \frakg)^{\lambda})$) do not
  depend, up to equivalence, on the choice of $\lambda \in C_0$ (use
  translation functors), we can assume $\lambda=0$.

By definition (see \S \ref{ss:ring}), the algebra $A_{\wcalN}$ can be endowed with a
  grading; let $A_{\wcalN}^+$ be $A_{\wcalN}$ with this grading. We define the category
  $\Mod^{\fg,\gr}_0(A_{\wcalN}^+)$ as above. Then, as in \eqref{eq:equivdef1}, we have an equivalence
  \begin{equation} \label{eq:equivdef2} \Mod^{\fg,\gr}_0(A_{\wcalN}^+)
   \ \cong \ \Mod^{\fg,\gr}_0((\calU \frakg)^0). \end{equation}

Now, let $A_{\wcalN}^-$ be $A_{\wcalN}$ with the opposite grading. This is a
finite ${\rm S}(\frakg^{(1)})$-algebra, with $\frakg^{(1)}$ in
degree $2$. There is an equivalence $\Mod^{\gr}(A_{\wcalN}^+) \cong \Mod^{\gr}(A_{\wcalN}^-)$ inverting the
grading. Hence, using equivalence \eqref{eq:equivdef2} and
Proposition \ref{prop:propextvanishing}, the assumptions of Theorem
\ref{thm:thmkoszulity} are satisfied by
$A_{\wcalN}^-$. It follows that there exists a Koszul ring
$B_{\calB}$, Morita equivalent to $A_{\wcalN}^-$. By \eqref{eq:equivdef1}, the first equivalence of the
theorem is satisfied.

Again by Theorem \ref{thm:thmkoszulity} and equivalence
\eqref{eq:equivdef1}, with the notation of Proposition
\ref{prop:propExtsimples}, we have $(B_{\calB})^! \cong 
\bigl( \bigoplus_n \ \Ext^n_{(\calU \frakg)^0}(L,L) \bigr)^{\op}.$ By Proposition
\ref{prop:propExtsimples}$\rmii$, this ring is isomorphic to
$({\rm End}_{(\calU \frakg)_0}(P))^{\op}$, which is Morita equivalent to
$(\calU \frakg)_0^{\hat{0}}$ (\cite{Ba}). This gives the second equivalence.

Finally, the second assertion of the theorem follows from the second
equivalence (and the fact that $B_{\calB}^!$ is Koszul), using \cite[F.3]{AJS}. \end{proof}

\section[Parabolic analogues: Koszulity of singular blocks]{Parabolic analogues: Koszulity of singular blocks of $(\calU \frakg)_0$} \label{sec:sectionparabolicanalogues}

In this section we extend the results of sections \ref{sec:sectionmainthm}, \ref{sec:sectionapplications} to singular weights.

\subsection{Review of some results of \cite{BMR2}} \label{ss:paragraphreviewBMR2}

Let $P \subset G$ be a standard parabolic subgroup, $\calP:=G/P$
the associated flag variety, $\frakp$ be the Lie algebra of $P$, $\rho_{P}$
the half sum of the positive roots of the Levi of $P$, and $N_{\calP}:=\dim(\calP)$. Recall the variety $\wfrakg_{\calP}$ introduced in \S \ref{ss:reviewlocalization}. Let us also
consider \[ \wcalN_{\calP} \, := \, T^*
  \calP \, = \, \{(X,gP) \in \frakg^* \times \calP \mid X_{|g \cdot
    \frakp}=0\}. \] We have already considered this variety in \eqref{eq:defNalpha} in the special case $P=P_{\alpha}$. Under the isomorphism $\frakg \cong \frakg^*$, $\wfrakg_{\calP}$
identifies with the orthogonal of $\wcalN_{\calP}$ in $\frakg^* \times
\calP$. Hence we have a Koszul duality (see Theorem \ref{thm:thmlkd}): \[\kappa_{\calP} : \DGCoh^{\gr}(\wcalN_{\calP}^{(1)}) \ \xrightarrow{\sim} \
\DGCoh^{\gr}((\wfrakg_{\calP} \, \rcap_{\frakg^* \times \calP} \,
\calP)^{(1)}).\]

Choose $\mu \in \bbX$, on the reflection hyperplanes corresponding to the parabolic $P$, and not on any other reflection hyperlane (for $W_{\aff}$). By Theorem \ref{thm:localizationfixedFrparabolic} we have an equivalence of categories \[ \widehat{\gamma}^{\calP}_{\mu}: \DGCoh((\wfrakg_{\calP} \, \rcap_{\frakg^* \times \calP} \,
\calP)^{(1)}) \ \xrightarrow{\sim} \ \calD^b \Mod^{\fg}_{\mu}((\calU \frakg)_0). \]

Now let $\bbX_{\calP}$ be the sublattice of $\bbX$
consisting of the $\lambda \in \bbX$ such that $\langle
\lambda, \alpha^{\vee} \rangle = 0$ for any root $\alpha$ of the Levi
of $P$. For $\lambda \in \bbX_{\calP}$, let
$\mathfrak{D}^{\lambda}_{\calP}:=\calO_{\calP}(\lambda)
\otimes_{\calO_{\calP}} \calD_{\calP} \otimes_{\calO_{\calP}}
\calO_{\calP}(-\lambda)$ be the sheaf of twisted differential
operators on $\calP$ (as in \cite[1.10]{BMR2}). Let $\lambda \in \bbX_{\calP}$ be \emph{regular}. We
will assume\footnote{This condition is satisfied in particular if ${\rm
    char}(\bk)$ is greater than an explicitly computable
  bound depending on $G$ and $\lambda$ (see
  \cite[1.10.9$\rmii$]{BMR2}).} that \begin{equation}\label{eq:highervanishing} R^i \Gamma(\mathfrak{D}_{\calP}^{\lambda})=0 \quad \text{for} \ i>0. \end{equation} Then we define
$U^{\lambda}_{\calP}:=\Gamma(\mathfrak{D}^{\lambda}_{\calP}).$ 

We denote by $\Mod^{\fg}_0(U^{\lambda}_{\calP})$ the category of finitely generated $U^{\lambda}_{\calP}$-modules on which the central subalgebra $\Gamma(\wcalN_{\calP}^{(1)},\calO_{\wcalN_{\calP}^{(1)}})$ (the image of the center of $\mathfrak{D}^{\lambda}_{\calP}$) acts with trivial generalized character. By \cite[1.10.4]{BMR2} we have:

\begin{thm} \label{thm:thmBMR2}

Assume {\rm \eqref{eq:highervanishing}} is satisfied. There exists an equivalence \[\calD^b
\Coh_{\calP^{(1)}}(\wcalN_{\calP}^{(1)}) \ \xrightarrow{\sim} \ \calD^b
\Mod^{\fg}_0(U^{\lambda}_{\calP}).\]

\end{thm}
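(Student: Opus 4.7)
The plan is to mimic the strategy used by Bezrukavnikov-Mirkovi\'c-Rumynin for the full flag variety (Theorem \ref{thm:thmBMR}$\rmi$-$\rmii$), adapted to the parabolic situation where the Harish-Chandra parameter $\lambda$ is no longer necessarily regular for $W_{\aff}$ but only regular inside $\bbX_{\calP}$. The key geometric input is that $\mathfrak{D}_{\calP}^{\lambda}$, viewed as a sheaf of rings on $\calP$, has a large central subalgebra coming from $p$-curvature, isomorphic to $\Fr_* \calO_{T^*\calP}$; using the projection $\calP \to \calP^{(1)}$ this lets us regard $\mathfrak{D}_{\calP}^{\lambda}$ as a quasi-coherent sheaf of algebras on $\wcalN_{\calP}^{(1)}=(T^*\calP)^{(1)}$. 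The first step is to check that, since $\lambda \in \bbX_{\calP}$ is regular, this sheaf is an Azumaya algebra on $\wcalN_{\calP}^{(1)}$. This is a local statement on $\calP^{(1)}$, and can be checked on fibers using the standard description of $U^{\lambda}_{\calP,\chi}$ for a regular linear form $\chi$, exactly as in \cite[2.2.3]{BMR} (the point being that regularity inside $\bbX_{\calP}$ is enough because only the Levi subalgebra of $P$ sees $\lambda$).

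Once Azumaya-ness is established, I would introduce the localization and global sections functors
\[
\calL_{\calP}^{\lambda} : \calD^b \Mod^{\fg}(U_{\calP}^{\lambda}) \rightleftarrows \calD^b \Mod^{c}(\mathfrak{D}_{\calP}^{\lambda}) : R\Gamma,
\]
with $\calL_{\calP}^{\lambda}(M) := \mathfrak{D}_{\calP}^{\lambda} \lotimes_{U_{\calP}^{\lambda}} M$. The hypothesis \eqref{eq:highervanishing} says precisely that the adjunction unit $\Id \to R\Gamma \circ \calL_{\calP}^{\lambda}$ is an isomorphism on the free rank-one module, hence (by a standard d\'evissage, as in \cite[3.2]{BMR}) these functors are quasi-inverse equivalences. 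Restricting to the full subcategory of $\mathfrak{D}_{\calP}^{\lambda}$-modules (viewed as sheaves on $\wcalN_{\calP}^{(1)}$) supported on the zero section $\calP^{(1)}$, and correspondingly to the full subcategory of $U_{\calP}^{\lambda}$-modules on which the Frobenius center acts with generalized character $0$, one obtains an equivalence
\[
\calD^b \Mod^c_{\calP^{(1)}}(\mathfrak{D}_{\calP}^{\lambda}) \ \cong \ \calD^b \Mod^{\fg}_0(U_{\calP}^{\lambda}).
\]

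The remaining step is to trade the Azumaya algebra for its split form. By a version of Lemma \ref{lem:lemmaVologodsky} applied to the contracting $\Gm$-action on $\wcalN_{\calP}^{(1)}$, the restriction of $\mathfrak{D}_{\calP}^{\lambda}$ to the formal neighborhood $\widehat{\calP^{(1)}}$ of $\calP^{(1)}$ in $\wcalN_{\calP}^{(1)}$ splits; one fixes a splitting bundle $\calM_{\calP}^{\lambda}$ normalized as in \cite[1.3.5]{BMR2}. The usual Morita equivalence $\calF \mapsto \calM_{\calP}^{\lambda} \otimes_{\calO_{\widehat{\calP^{(1)}}}} \calF$ then gives an equivalence between $\Coh_{\calP^{(1)}}(\wcalN_{\calP}^{(1)})$ and $\Mod^c_{\calP^{(1)}}(\mathfrak{D}_{\calP}^{\lambda})$, which upgrades to the level of bounded derived categories by the argument of \cite[Proposition 5.1.1]{BMR} (or Proposition \ref{prop:dgmodulessupport} in the present paper). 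Composing with the previous step yields the desired equivalence.

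The main obstacle, in my view, is step one: verifying that $\mathfrak{D}_{\calP}^{\lambda}$ really is Azumaya on all of $\wcalN_{\calP}^{(1)}$ under the hypothesis of $\bbX_{\calP}$-regularity alone. For the full flag variety (the case $P=B$) this is classical and requires only that $\lambda$ avoid the reflection hyperplanes of $W$; here one must use that the stabilizer of $\lambda$ in $W$ is trivial to identify the fiber of $\mathfrak{D}_{\calP}^{\lambda}$ at a regular nilpotent in $\wcalN_{\calP}^{(1)}$ as a matrix algebra, and handle the general fiber by a dimension/semicontinuity argument. Everything else (construction of $\calL_{\calP}^{\lambda}$, the $R\Gamma$-adjunction, the splitting and passage to derived categories) is a routine adaptation of \cite{BMR}, using \eqref{eq:highervanishing} exactly where \cite{BMR} uses Kempf vanishing.
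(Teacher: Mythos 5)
The paper offers no proof of this theorem: it is cited directly from \cite[1.10.4]{BMR2} (``By \cite[1.10.4]{BMR2} we have:''), so there is no in-paper argument to compare your proposal against. What you have produced is a reasonable outline of the argument in \cite{BMR2}, following the [BMR] template (Azumaya property via $p$-curvature, localization/global-sections adjunction, restriction to the zero section, Morita equivalence via a splitting bundle on the formal neighborhood).

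That said, you have misread the hypotheses. In \S\ref{ss:paragraphreviewBMR2} the weight $\lambda \in \bbX_{\calP}$ is taken to be \emph{regular} in the sense of \S\ref{ss:reviewlocalization}, i.e.~it avoids every reflection hyperplane of $W_{\aff}$---not merely ``regular inside $\bbX_{\calP}$''; it is the companion weight $\mu$ that is singular. This matters because the step you flag as ``the main obstacle'' is in fact the unproblematic one: for integral $\lambda \in \bbX_{\calP}$, the sheaf $\mathfrak{D}_{\calP}^{\lambda}$ is conjugate to $\calD_{\calP}$ by the line bundle $\calO_{\calP}(\lambda)$ and hence is Azumaya over $\wcalN_{\calP}^{(1)}$ by the standard $p$-curvature argument for an arbitrary smooth variety, with no regularity input whatsoever. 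Where regularity of $\lambda$, together with \eqref{eq:highervanishing}, is actually used is in the d\'evissage establishing that $R\Gamma$ and the localization functor are quasi-inverse (the parabolic analogue of \cite[3.2--3.3]{BMR}), and in arranging the splitting bundle on $\widehat{\calP^{(1)}}$ to be normalized compatibly with $\gamma^{\calP}_{\mu}$ of Theorem \ref{thm:thmBMR}$\rmii$. Apart from this misattribution of where the real work lies, your outline is sound and matches \cite{BMR2}.
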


This theorem gives a representation-theoretic interpretation for the category
$\DGCoh^{\gr}(\wcalN_{\calP}^{(1)})$. As in Theorem \ref{thm:thmBMR}, the equivalence of Theorem \ref{thm:thmBMR2} depends on the choice of a splitting bundle. We choose it as in \cite[1.10.3]{BMR2}, and denote by $\Upsilon^{\calP}_{\lambda}$ the equivalence associated to $\lambda$. Let us remark that for $\calP=\calB$ we have $U^{\lambda}_{\calB} = (\calU \frakg)^{\lambda}$, but $\Upsilon^{\calB}_{\lambda}=\epsilon^{\calB}_{\lambda - p\rho}$ (see \cite[1.10.5]{BMR2}, and compare with the proof of Lemma \ref{lem:zero-section}). We deduce (see \S \ref{ss:reviewlocalization}):
\begin{equation} \label{eq:Upsilonepsilon} \Upsilon^{\calB}_{\lambda}(\calF)=\epsilon^{\calB}_{\lambda}(\calF
\otimes_{\calO_{\wfrakg^{(1)}}} \calO_{\wfrakg^{(1)}}(-\rho)). \end{equation} 

There is a natural morphism of algebras $\phi^{\lambda}_{\calP} : (\calU
\frakg)^{\lambda} \to U^{\lambda}_{\calP}$, induced by the action of $G$ on $\calP$ (see \cite[1.10.7]{BMR2}). We denote by $(\phi^{\lambda}_{\calP})^* : \calD^b \Mod^{\fg}_0(U^{\lambda}_{\calP}) \to \calD^b
\Mod^{\fg}_0((\calU \frakg)^{\lambda})$ the corresponding ``restriction'' functor. Consider
the diagram \[ \xymatrix{ \wcalN & \wcalN_{\calP} \times_{\calP} \calB \ar@{_{(}->}[l]_-{j_{\calP}} \ar@{->>}[r]^-{\rho_{\calP}} &
\wcalN_{\calP},} \] where $j_{\calP}$ is the natural embedding, and $\rho_{\calP}$ is induced by the projection $\pi_{\calP}: \calB \to \calP$. Then by \cite[1.10.7]{BMR2} the following holds:

\begin{prop} \label{prop:propBMR2}

The following diagram is commutative: \[ \xymatrix@R=15pt{ \calD^b
\Coh_{\calP^{(1)}}(\wcalN_{\calP}^{(1)}) \ar[rr]^-{\Upsilon^{\calP}_{\lambda}}_-{\sim} \ar[d]_-{(j_{\calP})_* (\rho_{\calP})^*} & & \calD^b
\Mod^{\fg}_0(U^{\lambda}_{\calP}) \ar[d]^-{(\phi^{\lambda}_{\calP})^*} \\ \calD^b
\Coh_{\calB^{(1)}}(\wcalN^{(1)}) \ar[rr]^-{\Upsilon^{\calB}_{\lambda}}_-{\sim} & & \calD^b
\Mod^{\fg}_0((\calU \frakg)^{\lambda}). } \]

\end{prop}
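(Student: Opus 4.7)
The plan is to unwind both compositions in the diagram in terms of the splitting bundles entering the definitions of $\Upsilon^{\calP}_{\lambda}$ and $\Upsilon^{\calB}_{\lambda}$, and then identify them via projection formula. Let $\calM^{\calP}_{\lambda}$ and $\calM^{\calB}_{\lambda}$ denote the splitting bundles on the formal neighborhood of $\calP^{(1)}$ in $\wcalN_{\calP}^{(1)}$, resp. of $\calB^{(1)}$ in $\wcalN^{(1)}$, normalized as in \cite[1.10.3]{BMR2}. Then for $\calF \in \calD^b\Coh_{\calP^{(1)}}(\wcalN_{\calP}^{(1)})$ one has
\[
\Upsilon^{\calP}_{\lambda}(\calF) \cong R\Gamma(\wcalN_{\calP}^{(1)},\, \calM^{\calP}_{\lambda} \otimes \calF),
\]
and $(\phi^{\lambda}_{\calP})^*$ simply restricts this from $U^{\lambda}_{\calP}$ to $(\calU \frakg)^{\lambda}$ via the algebra map $\phi^{\lambda}_{\calP}$. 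Similarly $\Upsilon^{\calB}_{\lambda}((j_{\calP})_*(\rho_{\calP})^* \calF) \cong R\Gamma(\wcalN^{(1)},\, \calM^{\calB}_{\lambda} \otimes (j_{\calP})_*(\rho_{\calP})^*\calF)$.

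First I would apply the projection formula for the closed immersion $j_{\calP}$, which gives
\[
\calM^{\calB}_{\lambda} \otimes (j_{\calP})_*(\rho_{\calP})^* \calF \ \cong \ (j_{\calP})_*\bigl((j_{\calP})^* \calM^{\calB}_{\lambda} \otimes (\rho_{\calP})^* \calF\bigr).
\]
Then, since $\rho_{\calP}$ is proper, using the projection formula for $\rho_{\calP}$ one rewrites the derived global sections as
\[
R\Gamma\bigl(\wcalN_{\calP}^{(1)},\, R(\rho_{\calP})_*(j_{\calP})^* \calM^{\calB}_{\lambda} \otimes \calF\bigr).
\]
Thus the commutativity reduces to a canonical isomorphism
\[
R(\rho_{\calP})_*(j_{\calP})^* \calM^{\calB}_{\lambda} \ \cong \ \calM^{\calP}_{\lambda}
\]
of $(U^{\lambda}_{\calP} \otimes \calO_{\wcalN_{\calP}^{(1)}}^{\op})$-modules, where the $(\calU\frakg)^{\lambda}$-action on the left-hand side (coming from $\calM^{\calB}_{\lambda}$) extends to a $U^{\lambda}_{\calP}$-action and agrees with the natural one on $\calM^{\calP}_{\lambda}$ via $\phi^{\lambda}_{\calP}$. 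This identification of splitting bundles is forced by the normalizations of \cite[1.3.5, 1.10.3]{BMR2}, and is essentially the content of \cite[1.10.7]{BMR2}.

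The main obstacle is verifying this last identification not merely as coherent sheaves but as bimodules, so that the resulting isomorphism of complexes of abelian groups upgrades to an isomorphism of $(\calU\frakg)^{\lambda}$-modules (equivalently, so that the two squares of module structures commute). This requires checking that the $G$-equivariant structures on the splitting bundles, together with the hypothesis \eqref{eq:highervanishing} ensuring that $R(\rho_{\calP})_*(j_{\calP})^*\calM^{\calB}_{\lambda}$ is concentrated in degree $0$ with the expected endomorphisms, produce compatible Azumaya splittings. Once this bimodule compatibility is established, the projection formula computation above gives the desired natural isomorphism of functors.
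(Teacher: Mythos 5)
The paper does not prove this proposition at all: the sentence immediately preceding it reads ``Then by \cite[1.10.7]{BMR2} the following holds,'' so Proposition \ref{prop:propBMR2} is simply imported from [BMR2]. Your sketch therefore cannot be compared against a ``proof in the paper''; what you have written is an outline of how the result could be established from the definitions, and it is helpful in that it makes explicit what [BMR2, 1.10.7] must supply. The projection-formula manipulations reducing the statement to a comparison of $R(\rho_{\calP})_*(j_{\calP})^*\calM^{\calB}_{\lambda}$ with $\calM^{\calP}_{\lambda}$ are formally correct.

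However, the reduction as stated hides a real issue that you should not wave away by calling it ``essentially the content of [BMR2, 1.10.7].'' The splitting bundle $\calM^{\calB}_{\lambda}$ has rank $p^{N}$ (with $N=\dim\calB$) near the zero section, while $\calM^{\calP}_{\lambda}$ has rank $p^{N_{\calP}}$, and $\rho_{\calP}$ has fibres isomorphic to the flag variety $\calB_L$ of the Levi, of dimension $N-N_{\calP}$. Thus the desired isomorphism $R(\rho_{\calP})_*(j_{\calP})^*\calM^{\calB}_{\lambda}\cong\calM^{\calP}_{\lambda}$ is not a rank-preserving statement about vector bundles: one must show that $(j_{\calP})^*\calM^{\calB}_{\lambda}$ factors as $(\rho_{\calP})^*\calM^{\calP}_{\lambda}$ tensored by a bundle on the fibres whose derived pushforward along $\calB_L$ is $\calO$ concentrated in degree zero, and moreover that this isomorphism is compatible with the left $\wcalD$-action (so that the resulting $(\calU\frakg)^{\lambda}$-structure on $R\Gamma$ is restriction along $\phi^{\lambda}_{\calP}$ of the $U^{\lambda}_{\calP}$-structure). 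This is precisely the nontrivial cohomological and Azumaya-compatibility verification done in [BMR2, 1.10.7], and it is where the hypothesis \eqref{eq:highervanishing} and the normalizations \cite[1.3.5, 1.10.3]{BMR2} are actually used. In short, your outline correctly locates the reduction, but the last step is the entirety of the cited lemma, not a formality; either cite it cleanly as the paper does, or carry out the fibre-cohomology and bimodule check.
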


\subsection{Koszul duality for singular blocks} \label{ss:Kdualitysingular}

Choose $\lambda$ and $\mu$ as in \S \ref{ss:paragraphreviewBMR2}, and assume moreover that $\mu$ is in the closure of the alcove of $\lambda$. Let $y \in W_{\aff}$ be such that $\lambda_0:=y^{-1} \bullet \lambda \in C_0$. Then $\mu_0:=y^{-1} \bullet \mu \in \overline{C_0}.$

In what follows we make the following
assumption\footnote{By \cite[1.10.9]{BMR2}, this
  assumption is satisfied if ${\rm char}(\bk)$ is greater than an
  explicit bound depending on $G$ and $\lambda$ and, moreover, a sufficient
  condition is given for this to be satisfied in arbitrary
  characteristic. The latter condition is satisfied if $G={\rm
    SL}(n,\bk)$ (\cite[5.5]{HUMConj} and \cite{DONNor} or
  \cite{MKFro}) or if $P=P_{\alpha}$ for a short simple root
  $\alpha$ (\cite[5.3]{BKFro}).}: \begin{equation}
  \label{eq:phisurjective} \phi^{\lambda}_{\calP} \ \text{is
    surjective.} \end{equation} It follows that if $L$
is a simple $U^{\lambda}_{\calP}$-module then
$(\phi^{\lambda}_{\calP})^* L$ is a simple $(\calU
\frakg)^{\lambda}$-module. If $L$ has trivial central
character, then $(\phi^{\lambda}_{\calP})^* L \cong L(w \bullet
\lambda_0)$ for a unique $w \in W^0$ (see \S \ref{ss:objectsLwPw}). In this case, by definition we set $L=L_{\calP}(w \bullet \lambda_0)$. We denote
by $I_{\lambda}$ the set of $w \in W^0$ such that $L_{\calP}(w \bullet
\lambda_0)$ is defined.

Let $W^0_{\mu} \subset W^0$ by the subset of elements $w$ such that $w \bullet \mu_0$ is in the upper closure of $w \bullet C_0$. As in \S \ref{ss:objectsLwPw}, $\Mod^{\fg}_{\mu}((\calU
\frakg)_0)$ is the category of finitely generated modules over the
algebra $(\calU \frakg)_0^{\hat{\mu}}$ (the block of $(\calU
\frakg)_0$ associated to $\mu$). The simple objects
are the image of the simple $G$-modules $L(w \bullet \mu_0)$ for $w \in
W^0_{\mu}$. We denote by $P(w \bullet \mu_0)$ the projective cover of
$L(w \bullet \mu_0)$.

It is not clear a priori how to determine $I_{\lambda}$ in general; this will be part of Theorem \ref{thm:mainthmprojective} below. However, let us remark already that \begin{equation} \label{eq:cardinalIW} \# I_{\lambda} \ = \ \# W^0_{\mu}. \end{equation} Indeed, the left hand side of this equation is the rank of the Grothendieck group $K^0(\Mod^{\fg}_0(U^{\lambda}_{\calP}))$, which is isomorphic, by Theorem \ref{thm:thmBMR2}, to the Grothendieck group $K^0(\Coh_{\calP^{(1)}}(\wcalN_{\calP}^{(1)})) \cong K(\calP)$, while the right hand side is the rank of $K^0(\Mod^{\fg}_{\mu}((\calU
\frakg)_0))$, which is isomorphic to $K^0(\Mod^{\fg}_{(0,\mu)}(\calU
\frakg))$, hence, by Theorem \ref{thm:thmBMR}, to $K^0(\Coh_{\calP^{(1)}}(\wfrakg^{(1)}_{\calP})) \cong K(\calP)$.

As in \S \ref{ss:paragraphgradedmodules}, the algebra $(\calU
\frakg)_0^{\hat{\mu}}$ can be endowed with a grading, and there exists
a fully faithful triangulated functor $\widetilde{\gamma}^{\calP}_{\mu}$ commuting with internal shifts such that the following diagram
commutes: \[ \xymatrix@R=14pt{ \DGCoh^{\gr}((\wfrakg_{\calP} \, \rcap_{\frakg^* \times
    \calP} \, \calP)^{(1)}) \ar[rr]^-{\widetilde{\gamma}^{\calP}_{\mu}}
  \ar[d]_-{\For} & & \calD^b \Mod^{\fg,\gr}_{\mu}((\calU\frakg)_0)
  \ar[d]^-{\For} \\ \DGCoh((\wfrakg_{\calP} \, \rcap_{\frakg^* \times \calP} \, \calP)^{(1)}) \ar[rr]^-{\widehat{\gamma}^{\calP}_{\mu}}_-{\sim} & & \calD^b
  \Mod^{\fg}_{\mu}((\calU\frakg)_0). } \] One can lift the projective
modules $P(w \bullet \mu_0)$ to graded $(\calU
\frakg)_0^{\hat{\mu}}$-modules (uniquely, up to a shift; see Theorem
\ref{thm:thmGG}). Moreover, we have:

\begin{lem} \label{lem:projectivesparabolic}

The functor $\widetilde{\gamma}^{\calP}_{\mu}$ is an equivalence. In particular, the lifts of the projective modules $P(w \bullet \mu_0)$ are in the essential image of
$\widetilde{\gamma}^{\calP}_{\mu}$.

\end{lem}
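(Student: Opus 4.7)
The plan is to establish essential surjectivity of $\widetilde{\gamma}^{\calP}_{\mu}$ (full faithfulness is already part of the data preceding the lemma) by transporting projectives from the regular block via the graded wall-crossing functor. Once every graded lift $P^{\gr}(w \bullet \mu_0)$ (for $w \in W^0_{\mu}$) is shown to belong to the essential image, essential surjectivity of $\widetilde{\gamma}^{\calP}_{\mu}$ follows by a standard reduction: any object of $\calD^b \Mod^{\fg,\gr}_{\mu}((\calU \frakg)_0)$ admits, after a truncation, a finite projective resolution in $\Mod^{\fg,\gr}((\calU \frakg)_0^{\hat{\mu}})$, and the essential image of a fully faithful triangulated functor is closed under cones, shifts, and direct summands.

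First I would establish a graded analogue of Proposition \ref{prop:translationfixedFr} relating $\widetilde{\gamma}^{\calB}_{\lambda}$ and $\widetilde{\gamma}^{\calP}_{\mu}$. More precisely, I would construct a graded translation functor $T^{\lambda}_{\mu,\Gm}: \calD^b \Mod^{\fg,\gr}_{\lambda}((\calU \frakg)_0) \to \calD^b \Mod^{\fg,\gr}_{\mu}((\calU \frakg)_0)$ using the $\Gm$-equivariant splitting bundles constructed in \S\ref{ss:paragraphgradedmodules} (applied both to $\widehat{\calB^{(1)}}$ and to the formal neighborhood of $\calP^{(1)}$ inside $\wfrakg_{\calP}^{(1)}$), and check that with this definition one has an isomorphism of functors $T^{\lambda}_{\mu,\Gm} \circ \widetilde{\gamma}^{\calB}_{\lambda} \cong \widetilde{\gamma}^{\calP}_{\mu} \circ R(\widehat{\pi}_{\calP,\Gm})_*$. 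The verification reduces, after forgetting the internal grading, to Proposition \ref{prop:translationfixedFr}, together with the compatibility statements of Corollary \ref{cor:cordiagramRf_*Gm} and the argument at the end of \S\ref{ss:paragraphUg_0gr} (which expresses the graded equivalence as the $\Gm$-equivariant avatar of the ungraded one).

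Next I would invoke the graded analogue of Theorem \ref{thm:equivUg_0gr} for the alcove of $\lambda$ (see \S\ref{ss:otheralcoves} and Remark \ref{rk:gammaequivalence}): $\widetilde{\gamma}^{\calB}_{\lambda}$ is an equivalence, so for each $v \in W^0$ we may write $P^{\gr}(v \bullet \lambda_0) \cong \widetilde{\gamma}^{\calB}_{\lambda}(\calP^{\gr,\lambda}_v)$ for some $\calP^{\gr,\lambda}_v$ in $\DGCoh^{\gr}((\wfrakg \, \rcap_{\frakg^* \times \calB} \, \calB)^{(1)})$. Now fix $w \in W^0_{\mu}$ and choose $v \in W^0$ such that the $\calU\frakg$-module $P(w \bullet \mu_0)$ appears as a direct summand of $T^{\lambda_0}_{\mu_0} P(v \bullet \lambda_0)$; such a $v$ exists because wall-crossing is exact, self-adjoint, and its value on $P(v \bullet \lambda_0)$ is the projective cover of $T^{\lambda_0}_{\mu_0} L(v \bullet \lambda_0)$ computed by \eqref{eq:translationsimples}, so running $v$ over $W^0$ produces (up to multiplicities and isomorphism) every $P(w \bullet \mu_0)$ with $w \in W^0_{\mu}$. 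By Theorem \ref{thm:thmGG}$\rmi$, the same holds in the graded setting: some graded shift $P^{\gr}(w \bullet \mu_0)\langle j \rangle$ is a direct summand of $T^{\lambda}_{\mu,\Gm} P^{\gr}(v \bullet \lambda_0) \cong \widetilde{\gamma}^{\calP}_{\mu}(R(\widehat{\pi}_{\calP,\Gm})_* \calP^{\gr,\lambda}_v)$.

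Finally, to conclude I would pull the summand back to the source. Because $\widetilde{\gamma}^{\calP}_{\mu}$ is fully faithful, the idempotent in $\mathrm{End}(R(\widehat{\pi}_{\calP,\Gm})_* \calP^{\gr,\lambda}_v)$ corresponding to the projection onto $P^{\gr}(w \bullet \mu_0)\langle j \rangle$ is well-defined, and it splits in $\DGCoh^{\gr}((\wfrakg_{\calP} \, \rcap_{\frakg^* \times \calP} \, \calP)^{(1)})$ since this category is Karoubian (for instance, because it is realized as the derived category of a dg-algebra where idempotents are known to split; alternatively, one invokes directly the Krull--Schmidt property coming from Proposition \ref{prop:KrullSchmidt} transported through $\widetilde{\gamma}^{\calP}_{\mu}$). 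The split image gives an object of $\DGCoh^{\gr}((\wfrakg_{\calP} \, \rcap_{\frakg^* \times \calP} \, \calP)^{(1)})$ whose image under $\widetilde{\gamma}^{\calP}_{\mu}$ is $P^{\gr}(w \bullet \mu_0)\langle j \rangle$, and hence, after applying $\langle -j\rangle$, produces the desired preimage of $P^{\gr}(w \bullet \mu_0)$. The main obstacle is step one—constructing the graded translation functor and verifying the graded analogue of Proposition \ref{prop:translationfixedFr}; everything else is formal once that compatibility is in place.
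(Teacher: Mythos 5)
Your proposed reduction in the first paragraph is where the argument breaks. You claim that once the graded projectives lie in the essential image of $\widetilde{\gamma}^{\calP}_{\mu}$, essential surjectivity follows because every object of $\calD^b \Mod^{\fg,\gr}_{\mu}((\calU\frakg)_0)$ "admits, after a truncation, a finite projective resolution." But $(\calU\frakg)_0^{\hat{\mu}}$ is a block of the restricted enveloping algebra, a finite-dimensional self-injective (Hopf) algebra, hence of \emph{infinite} global dimension unless the block is semisimple. A non-projective module then has no finite projective resolution, and does not lie in the triangulated (nor even the thick) subcategory generated by the projectives and their shifts --- think of $\bk$ over $\bk[x]/(x^2)$, which is not a perfect complex. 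So "projectives in the image" does not yield "essentially surjective" here, and your global structure (prove the second sentence of the lemma, deduce the first) cannot work.

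The paper's proof goes the other way around: it reduces to showing that the \emph{simple} graded modules $L(w\bullet\mu_0)$ (rather than the projectives) are in the essential image. This suffices because every finitely generated module has a finite Jordan--H\"older filtration, so the triangulated subcategory generated by the simples and their internal shifts is everything; no thickness/Karoubi issue arises. It then transports a graded lift of $L(w\bullet\nu_0)$ (for $\nu$ regular in the alcove $y\bullet C_0$, using that $\widetilde{\gamma}^{\calB}_{\nu}$ is an equivalence by Remark~\ref{rk:gammaequivalence}) through $R(\widehat{\pi}_{\calP,\Gm})_*$, and the key point --- which also makes this route strictly cleaner than yours --- is that translation to the wall sends simples to simples \emph{on the nose}: $T_{\nu}^{\mu} L(w\bullet\nu_0) = L(w\bullet\mu_0)$ for $w\in W^0_{\mu}$ by \eqref{eq:translationsimples}. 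No direct-summand extraction is needed. By contrast, your plan only produces $P^{\gr}(w\bullet\mu_0)\langle j\rangle$ as a \emph{direct summand} of $T^{\lambda}_{\mu,\Gm} P^{\gr}(v\bullet\lambda_0)$, so you must split an idempotent in $\DGCoh^{\gr}((\wfrakg_{\calP}\,\rcap\,\calP)^{(1)})$. Whether this category is Karoubian is not established in the paper, and the appeal to Proposition~\ref{prop:KrullSchmidt} does not transport: Krull--Schmidt holds in the target abelian category, but splitting an idempotent on the source side still requires idempotent-completeness of $\DGCoh^{\gr}$. Your intermediate ideas (the graded version of Proposition~\ref{prop:translationfixedFr} coming from $R(\widehat{\pi}_{\calP,\Gm})_*$, the appeal to Remark~\ref{rk:gammaequivalence}) are exactly the right tools and mirror the paper; you just need to apply them to simples rather than projectives.
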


\begin{proof} It is enough to prove that the lifts of the simple $(\calU \frakg)_0^{\hat{\mu}}$-modules are in the essential image of $\widetilde{\gamma}^{\calP}_{\mu}$. Let $\nu \in y \bullet C_0$, and let $\nu_0=y^{-1} \bullet \nu$. Consider the translation functor $T_{\nu}^{\mu} : \Mod^{\fg}_{\nu}((\calU \frakg)_0) \to \Mod^{\fg}_{\mu}((\calU \frakg)_0)$. For $w \in W^0_{\mu}$ we have $L(w \bullet \mu_0)=T_{\nu}^{\mu} L(w \bullet \nu_0)$. Moreover, by Proposition
\ref{prop:translationfixedFr}, we have an isomorphism $\widehat{\gamma}^{\calP}_{\mu} \circ R(\widehat{\pi}_{\calP})_* \cong T_{\nu}^{\mu} \circ \widehat{\gamma}^{\calB}_{\nu}$. Now $R(\widehat{\pi}_{\calP})_*$ has a graded
version \[ R(\widehat{\pi}_{\calP,\Gm})_*:
\DGCoh^{\gr}((\wfrakg \, \rcap_{\frakg^* \times \calB} \,
\calB)^{(1)}) \ \to \ \DGCoh^{\gr}((\wfrakg_{\calP} \, \rcap_{\frakg^* \times \calP} \,
\calP)^{(1)}).\] The functor $\widehat{\gamma}_{\nu}^{\calB}$ has a ``graded version'' $\widetilde{\gamma}_{\nu}^{\calB}$ (see \S \ref{ss:otheralcoves}) which, by Remark \ref{rk:gammaequivalence}, is an equivalence of categories. If, for $w \in W^0_{\mu}$, $\calM_w$ is the inverse image under $\widetilde{\gamma}_{\nu}^{\calB}$ of a lift of $L(w \bullet \nu_0)$, then $R(\widehat{\pi}_{\calP,\Gm})_* \calM_w$ is sent by $\widetilde{\gamma}^{\calP}_{\mu}$ to a lift of the simple module $L(w \bullet \mu_0) \in \Mod^{\fg}_{\mu}((\calU \frakg)_0)$. \end{proof}

Similarly, as in \S \ref{ss:paragraphgradedUg^0modules}, the completion
of $U^{\lambda}_{\calP}$ with respect to the trivial
central character can be endowed with a $\Gm$-equivariant structure,
and there exists a fully faithful functor $\widetilde{\Upsilon}^{\calP}_{\lambda}$ commuting with internal shifts such that the following diagram
commutes: \[ \xymatrix@R=14pt{ \calD^b \Coh^{\Gm}_{\calP^{(1)}}(\wcalN_{\calP}^{(1)})
  \ar[rr]^-{\widetilde{\Upsilon}^{\calP}_{\lambda}} \ar[d]_-{\For} & & \calD^b
  \Mod^{\fg,\gr}_0(U^{\lambda}_{\calP}) \ar[d]^-{\For} \\ \calD^b
  \Coh_{\calP^{(1)}}(\wcalN_{\calP}^{(1)}) \ar[rr]^-{\Upsilon^{\calP}_{\lambda}}_-{\sim} & &
  \calD^b \Mod^{\fg}_0(U^{\lambda}_{\calP}). } \] The simple objects in $\Mod^{\fg}_0(U^{\lambda}_{\calP})$ are the $L_{\calP}(w \bullet \lambda_0)$, $w \in I_{\lambda}$. They can be lifted to graded modules. We will prove below that the lifts of the simple modules are in the essential image of $\widetilde{\Upsilon}^{\calP}_{\lambda}$; in particular, this is an equivalence. 

Finally, as in \S \ref{ss:paragraphsimplemodules}, there exists a fully faithful functor $$\zeta_{\calP} : \calD^b \Coh^{\Gm}_{\calP^{(1)}}(\wcalN_{\calP}^{(1)}) \ \to \ \DGCoh^{\gr}(\wcalN_{\calP}^{(1)})$$ with the same properties as $\zeta$.

The following theorem is a ``parabolic analogue'' of Theorem \ref{thm:mainthm}.

\begin{thm} \label{thm:mainthmprojective}

Assume $p>h$ is large enough so that Lusztig's conjecture is true\footnote{See \S \ref{ss:introlusztig}.}. Assume
moreover that {\rm \eqref{eq:highervanishing}} and {\rm
  \eqref{eq:phisurjective}} are satisfied.

$\rmi$ We have $I_{\lambda} = \tau_0 W^0_{\mu}$, and the lifts of the simple modules are in the essential image of $\widetilde{\Upsilon}^{\calP}_{\lambda}$.

$\rmii$ There is a unique choice of the lifts\footnote{A priori, these lifts depend on the choice of $\lambda,\mu$, i.e.~on $y$.} $P^{\gr}(v \bullet \mu_0)$
($v \in W^0_{\mu}$), $L_{\calP}^{\gr}(u \bullet \lambda_0)$ ($u \in
I_{\lambda}$) such that, if $\calQ^{y,\gr}_{\calP,v}$,
resp. $\calL^{y,\gr}_{\calP,u}$ is the object of the category
$\DGCoh^{\gr}((\wfrakg_{\calP} \, \rcap_{\frakg^* \times \calP} \,
\calP)^{(1)})$, resp. $\calD^b
\Coh^{\Gm}_{\calP^{(1)}}(\wcalN_{\calP}^{(1)})$, such that $P^{\gr}(v
\bullet \mu_0) \cong
\widetilde{\gamma}^{\calP}_{\mu}(\calQ^{y,\gr}_{\calP,v})$,
resp. $L^{\gr}_{\calP}(u \bullet \lambda_0) \cong
\widetilde{\Upsilon}^{\calP}_{\lambda}(\calL^{y,\gr}_{\calP,u})$, for
all $w \in W^0_{\mu}$ we have: \begin{equation}
  \label{eq:isomparabolic} \kappa_{\calP}^{-1} \calQ^{y,\gr}_{\calP,w}
  \cong \zeta_{\calP}(\calL^{y,\gr}_{\calP,\tau_0 w})
  \otimes_{\calO_{\calP^{(1)}}}
\calO_{\calP^{(1)}}(2\rho_{P}-2\rho) \ \text{in }
\DGCoh^{\gr}(\wcalN_{\calP}^{(1)}). \end{equation}

\end{thm}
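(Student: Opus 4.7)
The plan is to deduce Theorem \ref{thm:mainthmprojective} from Theorem \ref{thm:mainthmalcove} by transporting the regular-case isomorphism along the morphism $\widehat{\pi}_\calP$ on the ``projective'' side, and along its Koszul-dual on the ``simple'' side. The key observation is that $\widetilde{\pi}_\calP : \wfrakg \to \wfrakg_\calP$ factors as $\wfrakg \hookrightarrow \wfrakg_\calP \times_\calP \calB \twoheadrightarrow \wfrakg_\calP$, a sub-bundle inclusion followed by a flat, proper base change; so Propositions \ref{prop:koszulinclusionprop} and \ref{prop:koszulbasechangeprop}, appropriately composed, will allow us to rewrite the Koszul-dual functor explicitly in terms of $\kappa_\calB$ and $\kappa_\calP$, up to a precise line-bundle twist and cohomological/internal shift. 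To begin, fix a regular $\nu \in y \bullet C_0$, set $\nu_0 = y^{-1} \bullet \nu \in C_0$, and invoke Theorem \ref{thm:mainthmalcove} to obtain graded lifts $\calL^{y,\gr}_v, \calP^{y,\gr}_{\tau_0 v}$ ($v \in W^0$) satisfying $\kappa_\calB^{-1}\calP^{y,\gr}_{\tau_0 v} \cong \zeta(\calL^{y,\gr}_v) \otimes_{\calO_{\calB^{(1)}}} \calO_{\calB^{(1)}}(-\rho)$.

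For part $\rmi$, I would combine the translation rules of \S\ref{ss:paragraphtranslationfunctors} for simples and projectives with the alcove analysis of Proposition \ref{prop:tau_0} to show $\tau_0 W^0_\mu \subseteq I_\lambda$; the cardinality equality \eqref{eq:cardinalIW} then forces $I_\lambda = \tau_0 W^0_\mu$. That the lifts of simple $U^\lambda_\calP$-modules lie in the essential image of $\widetilde{\Upsilon}^\calP_\lambda$ follows by an induction on the length of $u \in I_\lambda$ parallel to Proposition \ref{prop:simplesinimage}, using parabolic analogues of the functors $\frakS^{\gr}_\delta$ (or, alternatively, transporting from the regular case via the graded enhancement of the functor $(j_\calP)_*(\rho_\calP)^*$ appearing in Proposition \ref{prop:propBMR2}). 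For part $\rmii$, I would define $\calQ^{y,\gr}_{\calP,v} := R(\widehat{\pi}_{\calP,\Gm})_* \calP^{y,\gr}_v$ for $v \in W^0_\mu$: Proposition \ref{prop:translationfixedFr} together with the standard rule $T_\nu^\mu P(v \bullet \nu_0) \cong P(v \bullet \mu_0)$ identifies $\widetilde{\gamma}^\calP_\mu(\calQ^{y,\gr}_{\calP,v})$ with a graded lift of $P(v \bullet \mu_0)$. Dually, I would take $\calL^{y,\gr}_{\calP,\tau_0 v}$ to be the unique graded lift of $L_\calP(\tau_0 v \bullet \lambda_0)$ such that $(j_{\calP,\Gm})_* (\rho_{\calP,\Gm})^* \calL^{y,\gr}_{\calP,\tau_0 v}$ recovers $\calL^{y,\gr}_{\tau_0 v}$ (up to a fixed shift), whose existence is assured by part $\rmi$.

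With these definitions in hand, I would apply Proposition \ref{prop:koszulinclusionprop} to the inclusion $F_1 = \wcalN^{(1)} \hookrightarrow F_2 = (\wcalN_\calP \times_\calP \calB)^{(1)}$ (of ranks $N$ and $N_\calP$, with top exterior powers of the section sheaves equal to $\calO_{\calB^{(1)}}(-2\rho)$ and $\pi_\calP^{(1)*}\calO_{\calP^{(1)}}(2\rho_P - 2\rho)$ respectively), followed by Proposition \ref{prop:koszulbasechangeprop} for the flat proper map $\rho_\calP$. Applied to the regular-case identity above and combined with the projection formula, this should convert $\calO_{\calB^{(1)}}(-\rho)$ into $\calO_{\calP^{(1)}}(2\rho_P - 2\rho)$ and produce exactly \eqref{eq:isomparabolic}. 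The main obstacle is precisely this last bookkeeping: Proposition \ref{prop:koszulinclusionprop} introduces cohomological and internal shifts $[n_1-n_2]\langle 2n_1 - 2n_2\rangle$ together with a tensor factor $\calL_1 \otimes \calL_2^{-1}$, and Proposition \ref{prop:koszulbasechangeprop} interacts nontrivially with the regrading functors relating $\DGCoh^\gr$ on $\calB^{(1)}$ and on $\calP^{(1)}$; one must verify that all these contributions telescope into the clean form \eqref{eq:isomparabolic}, with the canonical bundle $\calO_\calP(2\rho_P - 2\rho)$ absorbing $\calL_1^{-1} \otimes \calL_2$ after the projection formula, and the shifts cancelling against the discrepancy between $\zeta$ and $\zeta_\calP$. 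The uniqueness of the lifts in $\rmii$ then follows, exactly as in \S\ref{ss:paragraphmainthm}, from the uniqueness of graded lifts up to internal shift combined with the normalizations imposed by \eqref{eq:isomparabolic}.
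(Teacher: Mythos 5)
Your overall strategy — transport the regular-case isomorphism of Theorem \ref{thm:mainthmalcove} along the factorization $\widetilde{\pi}_\calP = \widetilde{\pi}_{\calP,2} \circ \widetilde{\pi}_{\calP,1}$ using Propositions \ref{prop:koszulinclusionprop} and \ref{prop:koszulbasechangeprop}, and then read off the line-bundle and shift bookkeeping — is indeed the paper's approach. But two steps as you wrote them are wrong, and one is unsubstantiated.

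The critical error is your definition $\calQ^{y,\gr}_{\calP,v} := R(\widehat{\pi}_{\calP,\Gm})_* \calP^{y,\gr}_v$, justified by the ``standard rule $T_\nu^\mu P(v \bullet \nu_0) \cong P(v \bullet \mu_0)$''. This rule is false: $T_\nu^\mu$ (translation \emph{into} a wall, which is what $R(\widehat{\pi}_\calP)_*$ realizes by Proposition \ref{prop:translationfixedFr}) does not preserve indecomposable projectives. In fact $T_\nu^\mu T_\mu^\nu$ acts on the $\mu$-block as a multiple of the identity with multiplicity $|W_\mu| > 1$, so $T_\nu^\mu P(v\bullet\nu_0) \cong P(v\bullet\mu_0)^{\oplus |W_\mu|}$, which is decomposable. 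The direction that does preserve indecomposable projectives is translation \emph{out} of the wall, $T_\mu^\nu P(v\bullet\mu_0)\cong P(v\bullet\nu_0)$; this is the rule \eqref{eq:translationprojectives} the paper actually uses. Correspondingly, $\calQ^{y,\gr}_{\calP,v}$ must be characterized by the condition that its \emph{pullback} $L(\widehat{\pi}_{\calP,\Gm})^* \calQ^{y,\gr}_{\calP,v}$ agrees with $\calQ^{y,\gr}_v$ up to the shift $\langle N - N_\calP\rangle$ (equation \eqref{eq:projectiveparabolic} in the paper), not by pushforward. With your definition, $\kappa_\calP^{-1}\calQ^{y,\gr}_{\calP,v}$ would carry an extra multiplicity $|W_\mu|$ and \eqref{eq:isomparabolic} would fail.

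A second error: the sub-bundle inclusion should go the other way. $(\wcalN_\calP\times_\calP\calB)^{(1)}$, of rank $N_\calP$, sits \emph{inside} $\wcalN^{(1)}$, of rank $N > N_\calP$; so in the notation of \S\ref{ss:koszulinclusion} one takes $F_1 = (\wcalN_\calP\times_\calP\calB)^{(1)}$ and $F_2 = \wcalN^{(1)}$, the reverse of what you wrote. (This is probably a slip, but it propagates into signs in the $[n_1 - n_2]\langle 2n_1 - 2n_2\rangle$ shifts, so it matters.)

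Finally, your primary route to part $\rmi$ — that translation rules and the alcove analysis of Proposition \ref{prop:tau_0} directly establish $\tau_0 W^0_\mu \subseteq I_\lambda$ — is not substantiated. The set $I_\lambda$ depends on which simple $(\calU\frakg)^\lambda$-modules factor through the quotient $U^\lambda_\calP$, which the translation combinatorics alone do not control. The paper obtains $\tau_0 W^0_\mu\subseteq I_\lambda$ (and the essential-surjectivity statement for $\widetilde{\Upsilon}^\calP_\lambda$ on simple lifts) as an \emph{output} of the Koszul-duality computation: one shows that the twisted Koszul dual of $\calQ^{y,\gr}_{\calP,w}$ is of the form $\zeta_\calP(\calG_w)$, and then Proposition \ref{prop:propBMR2} together with \eqref{eq:Upsilonepsilon} identifies $(\phi^\lambda_\calP)^*\Upsilon^\calP_\lambda(\calG_w)$ with $L(\tau_0 w\bullet\lambda_0)$. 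Your alternative — transporting from the regular case via a graded enhancement of $(j_\calP)_*(\rho_\calP)^*$ — is essentially this argument and is the right way to proceed; the ``translation rules alone'' route should be dropped. Once part $\rmii$'s definitions are made via the pullback characterization, $\rmi$ falls out of the same computation as in the paper, so there is no need to develop parabolic analogues of $\frakS_\delta^{\gr}$.
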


\begin{proof} We prove $\rmi$ and $\rmii$ simultaneously. Choose the objects $\calP^{y,\gr}_w$, $\calL^{y,\gr}_w$
  ($w \in W^0$) as in Theorem \ref{thm:mainthmalcove} (i.e.~as in Theorem \ref{thm:mainthm} if $y=1$). Here, to avoid
  confusion, we change the notation $\calP^{y,\gr}_w$ in
  $\calQ^{y,\gr}_w$. As for Theorem \ref{thm:mainthm}, the unicity
  statement is easy to prove; we concentrate on the existence.

By Proposition
\ref{prop:translationfixedFr}, we have an isomorphism of functors
\begin{equation} \label{eq:translationin} T_{\mu}^{\lambda} \circ
  \widehat{\gamma}^{\calP}_{\mu} \ \cong \
  \widehat{\gamma}^{\calB}_{\lambda} \circ
  L(\widehat{\pi}_{\calP})^*.\end{equation} By adjunction, and using equation
\eqref{eq:translationsimples}, we have for $w \in W^0_{\mu}$:
\begin{equation} \label{eq:translationprojectives} T_{\mu}^{\lambda}
  P(w \bullet \mu_0) \ \cong \ P(w \bullet \lambda_0). \end{equation} The
functor $L(\widehat{\pi}_{\calP})^*$ has a natural graded
version $L(\widehat{\pi}_{\calP,\Gm})^*$. For $w \in W^0_{\mu}$, we define $P^{\gr}(w \bullet
\mu_0)$ as the unique lift of $P(w \bullet \mu_0)$ such that, if
$\calQ^{y,\gr}_{\calP,w}$ is the object of
$\DGCoh^{\gr}((\wfrakg_{\calP} \, \rcap_{\frakg^* \times \calP} \,
\calP)^{(1)})$ such that $P^{\gr}(w \bullet \mu_0) \cong
\widetilde{\gamma}^{\calP}_{\mu}(\calQ^{y,\gr}_{\calP,w})$,
\begin{equation} \label{eq:projectiveparabolic} \calQ^{y,\gr}_w \langle
  N - N_{\calP} \rangle \ \cong \
  L(\widehat{\pi}_{\calP,\Gm})^*
  \calQ^{y,\gr}_{\calP,w}. \end{equation} Such a lift exists thanks to \eqref{eq:translationin}, \eqref{eq:translationprojectives} and Lemma \ref{lem:projectivesparabolic}.

The morphisms $j_{\calP}$ and $\rho_{\calP}$ induce functors {\small \[ \calD^b \Coh^{\Gm}_{\calP^{(1)}}(\wcalN_{\calP}^{(1)}) \xrightarrow{(\rho_{\calP,\Gm})^*}
  \calD^b \Coh^{\Gm}_{\calB^{(1)}}((\wcalN_{\calP} \times_{\calP}
  \calB)^{(1)}) \xrightarrow{(j_{\calP,\Gm})_*} \calD^b
  \Coh^{\Gm}_{\calB^{(1)}}(\wcalN^{(1)}).\]}
Consider the following factorization of $\widetilde{\pi}_{\calP}$: $ \wfrakg \xrightarrow{\widetilde{\pi}_{\calP,1}}
\wfrakg_{\calP} \times_{\calP} \calB
\xrightarrow{\widetilde{\pi}_{\calP,2}} \wfrakg_{\calP}, $
where $\widetilde{\pi}_{\calP,2}$ is induced by the projection
$\pi_{\calP}$. These morphisms induce \[
  (\wfrakg \, \rcap_{\frakg^* \times \calB} \,
  \calB)^{(1)} \xrightarrow{\widehat{\pi}_{\calP,1}} ((\wfrakg_{\calP} \times_{\calP} \calB)
  \, \rcap_{\frakg^* \times \calB} \, \calB)^{(1)} \xrightarrow{\widehat{\pi}_{\calP,2}}
  (\wfrakg_{\calP} \, \rcap_{\frakg^* \times \calP} \,
  \calP)^{(1)}. \] Then we have
$L(\widehat{\pi}_{\calP,\Gm})^* \ \cong \
L(\widehat{\pi}_{\calP,1,\Gm})^* \circ
L(\widehat{\pi}_{\calP,2,\Gm})^*$. Using this and the results of \S\S \ref{ss:koszulbasechange} and
\ref{ss:koszulinclusion}, one can identify the Koszul dual (with
respect to $\kappa_{\calB}$, $\kappa_{\calP}$) of
$L(\widehat{\pi}_{\calP,\Gm})^*$. Namely, by a proof similar to that
of Theorem \ref{thm:dualreflection} gives an isomorphism
\begin{multline} \label{eq:dualtranslationprojective}
  (\kappa_{\calB})^{-1} \circ L(\widehat{\pi}_{\calP,\Gm})^* \circ
  \kappa_{\calP} \ \cong \\ \bigl( R(\widetilde{j_{\calP}}_{\Gm})_*
  \circ L(\widetilde{\rho_{\calP}}_{\Gm})^* \bigr)
  \otimes_{\calB^{(1)}}
\calO_{\calB^{(1)}}(-2\rho_{P}) [N - N_{\calP}] \langle 2(N -
N_{\calP}) \rangle, \end{multline} where the functors
$R(\widetilde{j_{\calP}}_{\Gm})_*$ and
$L(\widetilde{\rho_{\calP}}_{\Gm})^*$ are defined as in \S\S \ref{ss:koszulbasechange} and \ref{ss:koszulinclusion}.

Now let $w \in W^0_{\mu}$. Consider $\calF_w := \bigl( \kappa_{\calP}^{-1} \calQ^{y,\gr}_{\calP,w}
\bigr) \otimes_{\calO_{\calP^{(1)}}}
\calO_{\calP^{(1)}}(2\rho-2\rho_{P}) \in \DGCoh^{\gr}(\wcalN_{\calP}^{(1)})$. By equation
\eqref{eq:dualtranslationprojective} we have \begin{multline*} \bigl(
  R(\widetilde{j_{\calP}}_{\Gm})_* \circ
  L(\widetilde{\rho_{\calP}}_{\Gm})^* \bigr)(\calF_w) \ \cong \
  \bigl( (\kappa_{\calB})^{-1} \circ
  L(\widehat{\pi}_{\calP,\Gm})^*(\calQ^{y,\gr}_{\calP,w} \\
  \otimes_{\calP^{(1)}} \calO_{\calP^{(1)}}(2\rho-2\rho_{P})) \bigr)
  \otimes_{\calB^{(1)}} \calO_{\calB^{(1)}}(2\rho_{P}) [N_{\calP}
  - N] \langle 2(N_{\calP} - N) \rangle. \end{multline*} Using
\eqref{eq:projectiveparabolic} and
\eqref{eq:isommainthm} (or its analogue in \S \ref{ss:otheralcoves} if $y \neq 1$) we deduce \[\bigl(
R(\widetilde{j_{\calP}}_{\Gm})_* \circ
L(\widetilde{\rho_{\calP}}_{\Gm})^* \bigr)(\calF_w) \ \cong \
\zeta(\calL^{y,\gr}_{\tau_0 w} \langle N_{\calP} - N
\rangle) \otimes_{\calB^{(1)}} \calO_{\calB^{(1)}}(\rho).\] Hence there exists $\calG_w \in \calD^b
\Coh_{\calP^{(1)}}^{\Gm}(\wcalN_{\calP}^{(1)})$ such that $\calF_w
\cong \zeta_{\calP}(\calG_w)$, and
\begin{equation} \label{eq:forsimple} (j_{\calP,\Gm})_*
  (\rho_{\calP,\Gm})^* \calG_w \ \cong \ \calL^{y,\gr}_{\tau_0 w}
  \otimes_{\wcalN^{(1)}} \calO_{\wcalN^{(1)}}(\rho) \langle
  N_{\calP} - N \rangle.\end{equation} By \eqref{eq:forsimple}, Proposition \ref{prop:propBMR2} and
\eqref{eq:Upsilonepsilon}, the image of $\widetilde{\Upsilon}^{\calP}_{\lambda}(\calG_w)$ under the composition
$\calD^b \Mod^{\fg,\gr}_0(U_{\calP}^{\lambda}) \xrightarrow{\For}
\calD^b \Mod^{\fg}_0(U_{\calP}^{\lambda}) \xrightarrow{(\phi^{\lambda}_{\calP})^*} \calD^b \Mod^{\fg}_0((\calU
\frakg)^{\lambda})$ is the simple module $L(\tau_0 w \bullet
\lambda_0)$. Hence $\tau_0 w \in I_{\lambda}$, and a lift of $L_{\calP}(\tau_0 w \bullet \lambda_0)$ is in the essential image of $\widetilde{\Upsilon}^{\calP}_{\lambda}$. If we set $L^{\gr}_{\calP}(\tau_0 w \bullet \lambda_0):=\widetilde{\Upsilon}^{\calP}_{\lambda}(\calG_w)$ and $\calL^{y,\gr}_{\calP,\tau_0 w}:=\calG_w$, then isomorphism \eqref{eq:isomparabolic} is true in this case.

In particular, we have proved that $\tau_0 W^0_{\mu} \subseteq I_{\lambda}$. As these sets have the same cardinality (see \eqref{eq:cardinalIW}), they must coincide. This finishes the proof. \end{proof}



\subsection{Koszulity of singular blocks of $(\calU \frakg)_0$}

The following theorem follows from Theorem \ref{thm:mainthmprojective},
as Theorem \ref{thm:thmkoszul} follows from Theorem
\ref{thm:mainthm}. It is a modular counterpart of \cite[Theorem 3.10.2]{BGS}.

\begin{thm} \label{thm:thmkoszulsingular}

Let $\lambda,\mu$ be as in \S {\rm \ref{ss:Kdualitysingular}}, and keep the assumptions\footnote{Assumptions \eqref{eq:highervanishing} and
  \eqref{eq:phisurjective} should not be essential for this result.} of Theorem {\rm
  \ref{thm:mainthmprojective}}. There exists a Koszul ring
$B_{\calP}$, which is a $\Gamma(\wcalN_{\calP}^{(1)},
\calO_{\wcalN_{\calP}^{(1)}})$-algebra, and equivalences of categories
\[ \Mod^{\fg}_0(B_{\calP}) \ \cong \
  \Mod^{\fg}_0(U^{\lambda}_{\calP}), \qquad \Mod^{\fg}((B_{\calP})^!) \
  \cong \ \Mod^{\fg}_{\mu}((\calU \frakg)_0). \] In particular, the ring $(\calU \frakg)_0^{\hat{\mu}}$ can be endowed
with a Koszul grading. 

\end{thm}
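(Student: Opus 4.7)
The plan is to adapt the strategy used for Theorem \ref{thm:thmkoszul} to the singular parabolic setting, feeding in Theorem \ref{thm:mainthmprojective} in place of Theorem \ref{thm:mainthm}. First I would construct the parabolic analogue of the algebra $A_{\wcalN}$ of \S \ref{ss:ring}. The $\Gm$-action on $\wcalN_{\calP}^{(1)}$ contracts it to the projective variety $\calP^{(1)}$, and the splitting bundle used to define $\Upsilon^{\calP}_{\lambda}$ carries a $\Gm$-equivariant structure on the formal neighborhood of $\calP^{(1)}$ by the analogue of Lemma \ref{lem:lemmaVologodsky} (the vanishing $\Ext^1 = 0$ follows from \eqref{eq:highervanishing} and Theorem \ref{thm:thmBMR2}). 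This equivariant bundle extends uniquely to a $\Gm$-equivariant vector bundle $\calM_{\wcalN_{\calP}}$ on $\wcalN_{\calP}^{(1)}$, and setting
\[ A_{\wcalN_{\calP}} := \Gamma\bigl(\wcalN_{\calP}^{(1)},\, \sheafEnd_{\calO_{\wcalN_{\calP}^{(1)}}}(\calM_{\wcalN_{\calP}})\bigr) \]
yields a graded $\Gamma(\wcalN_{\calP}^{(1)}, \calO)$-algebra, finitely generated over its center. Comparing completions at $0$ in $\Gamma(\wcalN_{\calP}^{(1)},\calO)$ gives an equivalence $\Mod^{\fg}_0(A_{\wcalN_{\calP}}) \cong \Mod^{\fg}_0(U^{\lambda}_{\calP})$, together with a graded version relating $\Mod^{\fg,\gr}_0(A_{\wcalN_{\calP}})$ and $\Mod^{\fg,\gr}_0(U^{\lambda}_{\calP})$, exactly as in \S \ref{ss:paragraphgradedUg^0modules} and \S \ref{ss:ring}.

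Next I would establish the Ext-vanishing required by Theorem \ref{thm:thmkoszulity}. Take the graded lifts $L^{\gr}_{\calP}(u \bullet \lambda_0)$ ($u \in I_{\lambda} = \tau_0 W^0_{\mu}$) supplied by Theorem \ref{thm:mainthmprojective}, and let $\calL^{y,\gr}_{\calP,u}$, $\calQ^{y,\gr}_{\calP,v}$ be the corresponding objects on the geometric side. Because $\widetilde{\Upsilon}^{\calP}_{\lambda}$, $\widetilde{\gamma}^{\calP}_{\mu}$, $\kappa_{\calP}$ and $\zeta_{\calP}$ are all fully faithful and commute with internal shifts (up to the standard twist $\zeta_{\calP}(\calF\langle j\rangle) = \zeta_{\calP}(\calF)[j]\langle j\rangle$), and because $\calQ^{y,\gr}_{\calP,v}$ represents a projective module under $\widetilde{\gamma}^{\calP}_{\mu}$, the isomorphism \eqref{eq:isomparabolic} translates $\Hom$ groups between graded simples on the $U^{\lambda}_{\calP}$-side into $\Hom$ groups between graded projectives on the $(\calU\frakg)_0^{\hat{\mu}}$-side. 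Arguing exactly as in the computation preceding Proposition \ref{prop:propextvanishing}, one obtains
\[ \Hom_{\calD^b \Mod^{\fg,\gr}_0(U^{\lambda}_{\calP})}\bigl(L^{\gr}_{\calP}(u\bullet\lambda_0),\, L^{\gr}_{\calP}(v\bullet\lambda_0)[i]\langle j\rangle\bigr) = 0 \quad \text{unless } i = -j, \]
which is the required vanishing (after passing to $A_{\wcalN_{\calP}}^-$, i.e.\ to the opposite grading so that $\frakg^{(1)}$ sits in positive degree, placing us in the framework of \S \ref{ss:paragraphA}).

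Then I would invoke Theorem \ref{thm:thmkoszulity} applied to $A_{\wcalN_{\calP}}^-$, which produces a Koszul ring $B_{\calP}$ that is a $\Gamma(\wcalN_{\calP}^{(1)},\calO)$-algebra and is (graded) Morita equivalent to $A_{\wcalN_{\calP}}^-$. This gives the first equivalence $\Mod^{\fg}_0(B_{\calP}) \cong \Mod^{\fg}_0(U^{\lambda}_{\calP})$. For the second, Theorem \ref{thm:thmkoszulity} identifies $(B_{\calP})^!$ with $\bigl(\bigoplus_n \Ext^n_{U^{\lambda}_{\calP}}(L,L)\bigr)^{\op}$, where $L = \bigoplus_{u \in I_{\lambda}} L_{\calP}(u\bullet\lambda_0)$. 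Summing the above Hom-identifications over all $i$ and $j$ and using that $I_{\lambda} = \tau_0 W^0_{\mu}$ (Theorem \ref{thm:mainthmprojective}$\rmi$), one obtains a graded algebra isomorphism
\[ \bigoplus_{n \geq 0} \Ext^n_{U^{\lambda}_{\calP}}(L,L) \ \cong \ \End_{(\calU\frakg)_0^{\hat{\mu}}}(P),\qquad P := \bigoplus_{v \in W^0_{\mu}} P(v\bullet\mu_0), \]
the parabolic counterpart of Proposition \ref{prop:propExtsimples}$\rmii$. Since $P$ is a projective generator of $\Mod^{\fg}_{\mu}((\calU\frakg)_0) \cong \Mod^{\fg}((\calU\frakg)_0^{\hat{\mu}})$, Morita theory yields $\Mod^{\fg}((B_{\calP})^!) \cong \Mod^{\fg}_{\mu}((\calU\frakg)_0)$. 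The final assertion — that $(\calU\frakg)_0^{\hat{\mu}}$ itself carries a Koszul grading — follows from \cite[F.3]{AJS}, as in the proof of Theorem \ref{thm:thmkoszul}.

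The main obstacle I expect is not any single deep step but the careful bookkeeping of grading conventions: verifying that the uniquely pinned-down lifts of Theorem \ref{thm:mainthmprojective} coincide (up to a uniform shift) with the natural graded simple $A_{\wcalN_{\calP}}$-modules coming from the $\Gm$-equivariant structure on $\calM_{\wcalN_{\calP}}$, and that the opposite-grading trick of \S \ref{ss:paragraphA} (putting $\frakg^{(1)}$ in positive degree) is applied consistently so that the Koszul-duality sign $i = -j$ matches the $n = m$ hypothesis of Theorem \ref{thm:thmkoszulity}. Once this is in place, the argument is a formal transcription of \S \ref{sec:sectionapplications} with the pair $(\kappa_{\calB}, \zeta)$ replaced by $(\kappa_{\calP}, \zeta_{\calP})$ and the shift by $\calO_{\calB^{(1)}}(-\rho)$ replaced by the shift by $\calO_{\calP^{(1)}}(2\rho_P - 2\rho)$ dictated by \eqref{eq:isomparabolic}.
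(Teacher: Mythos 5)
Your proposal is correct and follows exactly the approach the paper intends: the paper's own proof is the single sentence ``The following theorem follows from Theorem \ref{thm:mainthmprojective}, as Theorem \ref{thm:thmkoszul} follows from Theorem \ref{thm:mainthm},'' and your text supplies precisely the details of that transcription (parabolic $A_{\wcalN_{\calP}}$ via Lemma \ref{lem:lemmaVologodsky}, Ext-vanishing from \eqref{eq:isomparabolic}, Theorem \ref{thm:thmkoszulity}, and the Morita identification of $(B_{\calP})^!$ with ${\rm End}_{(\calU\frakg)_0^{\hat{\mu}}}(P)^{\op}$).
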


For any $\nu \in \bbX$, there exists a standard parabolic subgroup $P$, a weight $\mu \in W_{\aff}' \bullet \nu$, and a weight $\lambda$ which satisfy the hypotheses of Theorem
\ref{thm:thmkoszulsingular} (see \cite[1.5.2]{BMR2}). Hence the
ring $(\calU \frakg)_0^{\hat{\nu}} = (\calU \frakg)_0^{\hat{\mu}}$ can be endowed with a Koszul
grading for $p \gg 0$. As there are finitely many blocks, all the
blocks of $(\calU \frakg)_0$ can be endowed with a Koszul grading if $p \gg 0$. Finally, by \cite[F.4]{AJS} (the
implication we use is trivial) we deduce:

\begin{cor} \label{cor:KoszulUg0}

For $p \gg 0$, $(\calU \frakg)_0$ can be endowed with a
Koszul grading.

\end{cor}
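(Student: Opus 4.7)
The plan is to reduce the statement to the block decomposition of $(\calU \frakg)_0$, handle each block separately using Theorem \ref{thm:thmkoszulsingular}, and then assemble the resulting gradings into a single Koszul grading on the whole algebra. More precisely, I would first observe that the center of $(\calU \frakg)_0$ (which is finite dimensional) decomposes as a product according to generalized Harish-Chandra central characters, inducing a ring decomposition
\[ (\calU \frakg)_0 \ \cong \ \prod_{\nu} \, (\calU \frakg)_0^{\hat{\nu}}, \]
where $\nu$ runs over a (finite) set of representatives of $W_{\aff}' \bullet$-orbits in $\bbX$ parametrizing the blocks. Since there are only finitely many such blocks, it suffices to produce a Koszul grading on each factor $(\calU \frakg)_0^{\hat{\nu}}$ and take the product grading.

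Next, for a fixed $\nu$, I would invoke \cite[1.5.2]{BMR2} to find a standard parabolic $P \subset G$, a weight $\mu \in W_{\aff}' \bullet \nu$ (so that $(\calU \frakg)_0^{\hat{\nu}} = (\calU \frakg)_0^{\hat{\mu}}$), and a regular weight $\lambda \in \bbX_{\calP}$ in the closure of the alcove of $\mu$, satisfying the hypotheses \eqref{eq:highervanishing} and \eqref{eq:phisurjective} needed to apply Theorem \ref{thm:thmkoszulsingular} (provided $p$ is large enough, which, combined with the Lusztig conjecture assumption, forces $p \gg 0$). Theorem \ref{thm:thmkoszulsingular} then provides a Koszul grading on the block $(\calU \frakg)_0^{\hat{\mu}}$ via the Morita equivalence with $(B_{\calP})^!$.

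Finally, I would combine these gradings. The cleanest way is to invoke \cite[F.4]{AJS}: the (easy) implication that a finite product of rings is Koszul as soon as each factor admits a Koszul grading. Taking the product of the gradings on all the blocks then yields a Koszul grading on $(\calU \frakg)_0$ itself. No step here is genuinely difficult once Theorem \ref{thm:thmkoszulsingular} is in hand; the only mild subtlety is to check that the choices of $P$, $\mu$, $\lambda$ can simultaneously be made for every block under a single assumption ``$p \gg 0$'', which is clear since the set of blocks is finite and each block-by-block hypothesis is an open condition on the characteristic.
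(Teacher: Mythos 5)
Your proposal matches the paper's proof essentially verbatim: decompose $(\calU \frakg)_0$ into its finitely many blocks, invoke \cite[1.5.2]{BMR2} to produce, for each block, data $P$, $\mu$, $\lambda$ meeting the hypotheses of Theorem \ref{thm:thmkoszulsingular}, and then assemble the block-wise Koszul gradings via the easy direction of \cite[F.4]{AJS}. One small slip in phrasing: the configuration is that $\mu$ lies in the closure of the alcove of the \emph{regular} weight $\lambda$ (as in \S\ref{ss:Kdualitysingular}), not the other way round, since the singular $\mu$ does not have an alcove of its own; this does not affect the argument.
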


\subsection{Remark on the choice of $\lambda$}

Let $p>h$. Fix a parabolic subgroup $P \supset B$, and let $I \subset \Phi$ be the corresponding simple roots. In \S \ref{ss:Kdualitysingular}, we have chosen $\lambda$ such that the closure of its alcove contains a weight $\mu$ of singularity $P$, i.e.~an integral weight in a facet which is open in $H_{P} := \{ \nu \in \bbX \otimes_{\mathbb{Z}} \mathbb{R} \mid \forall \alpha \in I, \ \langle \nu + \rho, \alpha^{\vee} \rangle = 0 \} $. It is not clear a priori that any regular $\lambda \in \bbX_{\calP}$ satisfies this assumption\footnote{This condition clearly does not hold in general if one does not require $\lambda$ to be integral.}. We claim that it is the case, however.

We can assume that $G$ is quasi simple. Let $A_0$ be the fundamental alcove, $A$ the alcove of $\lambda$, and let $w \in W_{\aff}'$ such that $A=w \bullet A_0$. What we have to check is that $\overline{A} \cap H_{P}$ contains an integral weight in an open facet of $H_{P}$, or that $\overline{A_0} \cap (w^{-1} \bullet H_{P})$ contains an integral weight in an open facet of $w^{-1} \bullet H_P$. Write $w=t_{\nu} v$, with $\nu \in \bbX$ and $v \in W$. Let $\lambda_0:=w^{-1} \bullet \lambda \in C_0$. If $\alpha \in I$ we have $0 = \langle \lambda, \alpha^{\vee} \rangle = \langle \lambda_0 + \rho, v^{-1} \alpha^{\vee} \rangle -1 + p \langle \nu, \alpha^{\vee} \rangle.$ By definition of $C_0$ we have $| \langle \lambda_0 + \rho, v^{-1} \alpha^{\vee} \rangle | < p$. Hence either $\rmi$ $\langle \nu, \alpha^{\vee} \rangle = 0$ and $\langle \lambda_0 + \rho, v^{-1} \alpha^{\vee} \rangle = 1$ (in this case $v^{-1} \alpha$ has to be a simple root), or $\rmii$ $\langle \nu, \alpha^{\vee} \rangle = 1$ and $\langle \lambda_0 + \rho, v^{-1} \alpha^{\vee} \rangle = 1-p$ (in this case $v^{-1} \alpha$ has to be the opposite of the highest short root). It follows that $\overline{A_0} \cap w^{-1} \bullet H_P$ is the closure of the facet of $\overline{A_0}$ defined by the simple roots appearing in $\rmi$ (if there are any) and the affine simple root (if case $\rmii$ occurs). This facet contains integral weights because it is the image under $w^{-1}$ of an open facet in $H_P$. This concludes the proof of the claim.

Hence Theorem \ref{thm:thmkoszulsingular} gives a Koszul duality for \emph{all} algebras $U^{\lambda}_{\calP}$.

\end{document}